\title[CM line bundles]{On positivity of CM line bundles on the moduli space of klt good minimal models with $\kappa=1$}
\author{Masafumi Hattori}
\date{\today}
\subjclass[2020]{Primary 14J10; Secondary: 14J17, 14J27, 14J40}
\address{Masafumi Hattori \\ School of 
Mathematical Sciences, University of Nottingham, Nottingham, England}
\email{masafumi.hattori@nottingham.ac.uk}
\newtheorem{thm}{Theorem}[section]
\newtheorem{lem}[thm]{Lemma}
\newtheorem{dele}[thm]{Definition-Lemma}
\newtheorem{cor}[thm]{Corollary}
\newtheorem{prop}[thm]{Proposition}
\newtheorem{conj}[thm]{Conjecture}
\newtheorem{cond}[thm]{Condition}
\theoremstyle{definition}
\newtheorem{de}[thm]{Definition}
\newtheorem{ex}[thm]{Example}
\newtheorem{rem}[thm]{Remark}
\newtheorem{stup}[thm]{Setup}
\newtheorem*{ack}{Acknowledgments}
\newtheorem{setup}[thm]{Setup}
\newtheorem*{claim*}{Claim}
\newcommand{\hattori}[1]{\textsf{\scriptsize\color{purple} Hattori: {#1}}}
\begin{document}

\maketitle
\begin{abstract}
We study the positivity of CM line bundles on the coarse moduli space of Kawamata log terminal (klt) good minimal models with Kodaira dimension one. We prove that the seminormalization of the moduli space is quasi-projective under a mild assumption on the general fibers of good minimal models. Moreover, we show that the CM line bundle becomes ample after normalization. A key new ingredient is the construction of a moduli space of numerical equivalence classes, which is an extension of the work of Viehweg and allows us to bypass the failure of quasi-finiteness in the approach of the previous work by Hashizume and the author. 

We also establish the projectivity of the moduli space of $\epsilon$-stable quotients, which is introduced by Toda, to a projective space, which plays a central role in our method.
This particular situation is encompassed by our general framework of K-moduli of quasimaps.
\end{abstract}

\setcounter{tocdepth}{3}
\tableofcontents

\section{Introduction}
This paper is a sequel of the paper \cite{HH2}. We work over an algebraically closed field $\mathbbm{k}$ of characteristic zero unless otherwise stated.

\subsection{Background of K-moduli conjecture and positivity of the CM line bundle}

K-stability is an algebro-geometric notion introduced by Tian \cite{T} and Donaldson \cite{Dn2}, and the existence of a constant scalar curvature K\"ahler metric is expected to be equivalent to K-polystability, as Yau--Tian--Donaldson conjecture.

On the other hand, K-stability is also closely related to moduli theory. Odaka \cite{O2} proposed the following conjecture.
\begin{conj}[K-moduli conjecture]\label{conj--K-moduli}
A set of K-polystable polarized varieties forms a separated scheme. 
Furthermore, each connected component is a quasi-projective scheme with the ample CM line bundle.
\end{conj}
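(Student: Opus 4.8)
The plan is to follow the standard three-layer architecture for K-moduli, and to isolate the step at which the intermediate-Kodaira-dimension setting of this paper departs from the Fano template. Throughout one fixes the discrete invariants of a connected component---the dimension together with the relevant numerical data of the polarization---so as to work with one bounded family at a time. First I would establish \emph{boundedness} of the polystable objects (here, klt good minimal models with $\kappa = 1$) carrying the fixed invariants, so that they are parametrized by a locally closed subscheme $Z$ of a suitable Hilbert scheme after rigidification by a high power of a relative canonical-type polarization; combining this with \emph{openness} of the stability condition in families yields a finite-type Artin stack $\mathcal{M} = [Z/G]$ for a reductive group $G$.

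Second, to pass to a \emph{separated} parameter space $\mathcal{M} \to M$ whose closed points are the polystable objects, I would either invoke the intrinsic good-moduli-space criteria of Alper--Halpern-Leistner--Heinloth (namely $\Theta$-reductivity and S-completeness, which encode existence and uniqueness of polystable degenerations) or, in the Viehweg tradition natural to non-negative Kodaira dimension, construct $M$ by geometric invariant theory on $Z$. Separatedness of $M$ reduces to uniqueness of polystable limits along one-parameter degenerations, and one expects the seminormalization $M^{\mathrm{sn}}$ to be the coarse moduli space that must be shown quasi-projective.

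Third---and this is the crux---I would polarize $M$ by the CM line bundle $\lambda_{\mathrm{CM}}$ and prove it positive. Descent of $\lambda_{\mathrm{CM}}$ from $\mathcal{M}$ to $M$ holds once its CM weight vanishes on the stabilizers of polystable points. Nefness should follow from a semipositivity (pushforward-positivity) theorem for the CM class over proper bases. The delicate input is \emph{strict} positivity: by a Nakai--Moishezon-type criterion one must exclude positive-dimensional subvarieties of $M$ along which $\lambda_{\mathrm{CM}}$ is numerically trivial, equivalently rule out essentially isotrivial families.

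I expect this last step to be the genuine obstacle. In the Fano case null loci are controlled by a comparison map to a fixed classifying space, but as the abstract flags, in the $\kappa = 1$ setting the corresponding map fails to be \emph{quasi-finite}, so the earlier strategy of Hashizume and the author cannot conclude ampleness. My plan is therefore to replace the target by a finer \emph{moduli of numerical equivalence classes} extending Viehweg's construction, over which the comparison becomes finite; pulling back an ample class and combining it with nefness and bigness of $\lambda_{\mathrm{CM}}$ then gives ampleness of $\lambda_{\mathrm{CM}}$ after normalization, and quasi-projectivity of $M^{\mathrm{sn}}$. The projectivity of the auxiliary moduli of $\epsilon$-stable quotients, obtained inside the general framework of K-moduli of quasimaps, is precisely what makes this finer target available.
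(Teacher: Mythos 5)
The statement you are attempting to prove is a \emph{conjecture}, and the paper does not prove it; no proof of it exists. What the paper actually establishes (Theorem \ref{thm--main--intro--rough}/\ref{thm--intro--main}) is strictly weaker than Conjecture \ref{conj--K-moduli} in three respects: it treats only klt good minimal models with $\kappa=1$ whose general fibers lie in the normal locus $V_1$ of the moduli of polarized Calabi--Yau varieties (an assumption needed to build the compactification $\overline{V}^{\mathrm{BB}}$ via the $\mathbf{B}$-semiampleness theorem); it proves quasi-projectivity only of the \emph{seminormalization} $\mathscr{M}^{\mathrm{sn}}$; and it proves ampleness only of the \emph{pullback} $\nu^*\Lambda_{\mathrm{CM},t}$ to the normalization. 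Your proposal reproduces the paper's strategy for this weaker theorem (moduli of numerical equivalence classes $\mathscr{N}$ extending Viehweg, quasi-finiteness of $\alpha$ to the projective quasimap moduli, relative ampleness of $\Lambda_{\mathrm{CM},t}$ over $\mathscr{N}$), but then overclaims the conclusion of the conjecture itself.

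The concrete gaps are these. First, your final step ``nefness and bigness of $\lambda_{\mathrm{CM}}$ plus a Nakai--Moishezon-type criterion give ampleness'' fails because the moduli space is \emph{not proper}: the Nakai--Moishezon criterion for algebraic spaces \cite{kollar-moduli-stable-surface-proj,FM3} requires properness, and this non-properness is exactly the first obstacle the paper identifies. The paper circumvents it not by Nakai--Moishezon on $M$ but by decomposing $\nu^*\Lambda_{\mathrm{CM},t}$ into a piece pulled back from the projective quasimap moduli (via $\overline{\alpha}\circ\overline{\xi}$) and a $\overline{\xi}$-ample piece, and then using that $(N^{W,I}_{d,v,u,w,p_a,V_1})^{\mathrm{sn}}$ is quasi-projective because $\overline{\alpha}$ is quasi-finite onto a projective scheme. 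Second, and more fundamentally, positivity obtained after normalization does not descend: as Remark \ref{rem--koex} notes, citing Koll\'ar's example \cite[Proposition 2]{Koex} of a non-ample line bundle on a non-proper scheme whose pullback to the normalization is ample, this method cannot yield ampleness of $\Lambda_{\mathrm{CM},t}$ on the original (non-seminormalized) moduli space, which is what the conjecture demands. Note also that $\Lambda_{\mathrm{CM},t}$ is $\overline{\xi}$-ample, so it is not a pullback of any class from $\mathscr{N}$; your phrasing ``pulling back an ample class'' from the finer target therefore cannot account for the full CM class, only for its limit $\lim_{t\to\infty}\nu^*\Lambda_{\mathrm{CM},t}$. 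So even granting every step of your plan, you would recover the paper's Theorem \ref{thm--intro--main}, not the conjecture.
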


Here, the CM line bundle (see Subsection \ref{subsec--CM}) is a $\mathbb{Q}$-line bundle canonically associated with any scheme parameterizing polarized varieties, which was observed by Fujiki--Schumacher \cite{FS}, independently observed and named by Tian \cite{T} and algebro-geometrically studied by \cite{FR06,PT}.

One of the motivations for Conjecture \ref{conj--K-moduli} came from \cite{FS}, where they constructed a moduli space parameterizing polarized smooth manifolds with a unique cscK metric and showed that this admits a positive Weil--Petersson current representing the first Chern class of the CM line bundle.
Dervan--Naumann \cite{DN} weakened the uniqueness assumption on cscK metrics later.

Conjecture \ref{conj--K-moduli} has been settled in only a few cases, including canonically polarized varieties and log Fano pairs; the ampleness parts were established by \cite{PX} and \cite{XZ} (see \cite{kollar-moduli,Xu} and Subsection \ref{sec--rel--works}).
However, including the intermediate Kodaira dimension case, especially $\kappa=1$, Conjecture \ref{conj--K-moduli} has been a long-standing open problem.

Hashizume and the author \cite{HH} constructed the K-moduli space of {\it klt Calabi--Yau fibrations over curves}, i.e.~contractions $f\colon X\to C$ such that $K_X\sim_{\mathbb{Q},f}0$, including {\it Kawamata log terminal (klt) good minimal models with $\kappa=1$} (cf.~Definition \ref{de--gmm}). Indeed, any such model admits the Iitaka fibration, which coincides with $f$ as above in the case $\kappa=1$.
The authors also addressed the positivity of the CM line bundle when the fibrations have non-nef canonical divisor in \cite{Hat23,HH2}.
We only dealt with the projectivity of proper subspaces in \cite{Hat23} and the approach of \cite{HH2} relies on the K-moduli of quasimaps (see Subsection \ref{sec--intro--qmaps}).

\subsection{Main result}\label{subsec--main--result}

In this paper, we show the positivity of the CM line bundle over the K-moduli of good minimal models with $\kappa=1$ in the following sense.

\begin{thm}\label{thm--main--intro--rough}
Let $\mathscr{M}_{\kappa=1}$ be a connected component of the K-moduli space parameterizing polarized klt good minimal models $f\colon (X,A)\to C$ with $\kappa=1$.
Take the open subspace $\mathscr{M}\subset \mathscr{M}_{\kappa=1}$ such that $f$ belongs to $\mathscr{M}$ if and only if the generic fiber of $f$ lies in the normal locus of the moduli space of polarized Calabi--Yau varieties.
Let $\Lambda_{\mathrm{CM},t}$ be the CM line bundle with respect to the polarization $A+tK_X$ (cf.~Definition \ref{de--CM--level}).

Then, the following hold.
\begin{enumerate}
    \item The seminormalization $\mathscr{M}^{\mathrm{sn}}$ of $\mathscr{M}$ is quasi-projective.
    \item Let $\nu\colon \mathscr{M}^\nu\to\mathscr{M}$ be the normalization.
    Then $\nu^*\Lambda_{\mathrm{CM,t}}$ is ample for any sufficiently large $t>0$.
\end{enumerate}
\end{thm}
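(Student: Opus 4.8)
The plan is to factor the classifying data of $\mathscr{M}$ through an auxiliary projective moduli of numerical equivalence classes $N$ and to identify $\nu^{*}\Lambda_{\mathrm{CM},t}$, for $t\gg0$, with the pullback of an ample class on $N$ along a quasi-finite morphism; both conclusions then follow formally from quasi-finiteness. The central point, and the new ingredient over the previous Hashizume--Hattori approach, is that the naive classifying map to the moduli of polarized Calabi--Yau fibers is not quasi-finite, whereas its refinement by numerical equivalence classes is.

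First I would compute the CM degree along a test family $g\colon(\mathcal{X},\mathcal{A})\to B$ over a complete curve $B\to\mathscr{M}^{\nu}$ and extract its behavior as $t\to\infty$. Since each fiber is a good minimal model with $\kappa=1$, the relative Iitaka fibration presents $\mathcal{X}$ as $\phi\colon\mathcal{X}\to\mathcal{C}$ over a family of curves $\mathcal{C}\to B$ with $K_{\mathcal{X}/\mathcal{C}}\sim_{\mathbb{Q},\phi}0$, so that $K_{\mathcal{X}/B}$ is $\mathbb{Q}$-linearly the pullback of a divisor $\beta$ on the surface $\mathcal{C}$; consequently $K_{\mathcal{X}/B}^{k}\equiv 0$ for $k\geq 3$, and the Knudsen--Mumford expansion of $\deg_{B}\nu^{*}\Lambda_{\mathrm{CM},t}$ in $\mathcal{A}+tK_{\mathcal{X}/B}$ truncates to a quadratic polynomial in $t$ whose leading coefficient is proportional to $(\beta^{2})\cdot\operatorname{vol}$, where $\operatorname{vol}$ is the $\mathcal{A}$-volume of a general Calabi--Yau fiber. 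Applying the canonical bundle formula to $\phi$ yields $\beta\sim_{\mathbb{Q}}K_{\mathcal{C}/B}+M+\Delta$ with $M$ the semipositive moduli part, so $\beta^{2}$ detects simultaneously the variation of the base curve (through $K_{\mathcal{C}/B}$) and of the fibers (through $M$). Thus for $t\gg0$ the class $\nu^{*}\Lambda_{\mathrm{CM},t}$ is nef and its degree is strictly positive on precisely the test curves with moving modulus.

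Second, I would construct the quasi-finite morphism $p\colon\mathscr{M}^{\nu}\to N$ and match the two positivities. The moduli $N$ of numerical equivalence classes carries an ample class $A_{N}$ by the projectivity statement established in this paper (via the moduli of $\epsilon$-stable quotients), and by the degree computation above $\nu^{*}\Lambda_{\mathrm{CM},t}$ agrees $\mathbb{Q}$-linearly with $p^{*}A_{N}$ for $t\gg0$, since both are governed by $\beta^{2}=(K_{\mathcal{C}/B}+M+\Delta)^{2}$ and vanish exactly along isotrivial directions. The key is that $p$ is quasi-finite: the semipositive moduli part $M$ need not be big for the naive map to the Calabi--Yau moduli, but passing to numerical equivalence classes separates the remaining directions, and here the hypothesis that general fibers lie in the normal locus of the Calabi--Yau moduli guarantees the Viehweg--Kawamata semipositivity of $M$ needed to conclude. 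Granting quasi-finiteness, $\nu^{*}\Lambda_{\mathrm{CM},t}\sim_{\mathbb{Q}}p^{*}A_{N}$ is the pullback of an ample class along a quasi-finite separated morphism, hence ample, proving (2); and $\mathscr{M}^{\nu}$, being quasi-finite and separated over the quasi-projective $N$, is itself quasi-projective.

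For (1) I would descend this to the seminormalization: the numerical equivalence invariant factors through $\mathscr{M}^{\mathrm{sn}}$, giving a quasi-finite separated morphism $\mathscr{M}^{\mathrm{sn}}\to N$, so $\mathscr{M}^{\mathrm{sn}}$ is quasi-projective; one uses here that the K-moduli space is separated and of finite type by the earlier construction. The hard part will be the second step---establishing that $p$ is quasi-finite, i.e.\ that the numerical equivalence class of the polarized fibration together with the finitely many discrete invariants is a quasi-finite classifier. This amounts to ruling out positive-dimensional families of non-isomorphic good minimal models with a fixed numerical class, which is exactly where the construction of $N$ and the strict positivity of $M$ in the non-isotrivial fiber directions must be reconciled; the degeneration of $K_{\mathcal{C}/B}$ along isotrivial-base families makes this delicate precisely there.
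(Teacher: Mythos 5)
Your first step---truncating the CM expansion to a quadratic in $t$ and reading off the leading coefficient via the canonical bundle formula---is consistent with what the paper does in Setup \ref{stup--112} and Proposition \ref{prop--approx--CM}. The fatal problem is your second step. The morphism $p\colon\mathscr{M}^{\nu}\to N$ to the moduli of numerical equivalence classes is \emph{not} quasi-finite when $u>0$, and $\nu^{*}\Lambda_{\mathrm{CM},t}$ is \emph{not} the pullback of any class on $N$ for any finite $t$. Indeed, fixing one fibration $f\colon(X,A)\to C$ and twisting $A$ by line bundles in $\mathbf{Pic}^{0}_{X/\mathbbm{k}}$ produces a positive-dimensional family of pairwise non-isomorphic polarized objects (the K-moduli distinguishes linear equivalence classes of the polarization) all having the same numerical class; these sweep out a positive-dimensional fiber of $p$, so no argument can make $p$ quasi-finite---this is exactly the failure recorded in \cite[Remark 4.15]{HH2} that the paper is designed to circumvent, not to rule out. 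Worse, the CM line bundle has strictly positive degree along such numerically isotrivial directions: Example \ref{ex--cm} exhibits a family that is constant in the numerical moduli yet has CM degree $2g(g-1)>0$, and in the paper this is upgraded to the statement that $\Lambda_{\mathrm{CM},t}$ is relatively ample along $\overline{\xi}$ for $t\gg0$ (Theorem \ref{thm--rel--ample}). A class that is relatively ample along a morphism with positive-dimensional fibers cannot be a pullback along that morphism, so your proposed identity $\nu^{*}\Lambda_{\mathrm{CM},t}\sim_{\mathbb{Q}}p^{*}A_{N}$ is impossible, and with it both of your deductions collapse: ampleness cannot ``follow formally from quasi-finiteness,'' and quasi-projectivity of $\mathscr{M}^{\mathrm{sn}}$ cannot be obtained from a quasi-finite map to $N$.

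The paper's actual architecture reverses the roles you assign. The quotient map $\xi\colon\mathscr{M}\to\mathscr{N}$ is proper and smooth with positive-dimensional fibers, and the CM line bundle is $\xi$-ample for $t\gg0$ (Theorem \ref{thm--intro--const--N}); quasi-finiteness holds one level down, for the morphism $\alpha\colon\mathscr{N}^{\mathrm{sn}}\to\mathscr{M}_{\mathrm{qmaps}}$ into the projective moduli of quasimaps (Theorem \ref{thm--technical-core}, Proposition \ref{prop-q-proj--N}). Only the $t\to\infty$ limit $\Lambda_{\mathrm{CM},\infty}$---the leading coefficient $D_{2}$ of the quadratic you wrote down---is a pullback, namely of the ample CM class on the quasimap moduli through the quasi-finite $\alpha$, while the constant term $D_{0}$ supplies the $\xi$-relative ampleness. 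Quasi-projectivity of $\mathscr{M}^{\mathrm{sn}}$ then follows because it is projective over the quasi-projective $\mathscr{N}^{\mathrm{sn}}$, and ampleness of $\nu^{*}\Lambda_{\mathrm{CM},t}$ for $t\gg0$ follows from the standard fact that the sum of the pullback of an ample class and a relatively ample class is ample, as executed in Theorem \ref{thm--ampleness-CM}. Your plan is missing precisely this two-storey structure, and no repair of the quasi-finiteness claim is possible within it.
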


We will restate this theorem in Theorem \ref{thm--intro--main} in a precise form.
The seminormalization preserves the underlying topological space but this result does not imply the quasi-projectivity of the original moduli space (see Remark \ref{rem--koex} below).
We also confirm the positivity of the CM line bundle after normalization.
Moreover, we succeeded in eliminating the assumption that the general fiber of $f$ is Abelian or primitive symplectic from \cite{HH2} by applying the recent proof of the $\mathbf{B}$-semiampleness conjecture \cite[Theorem 1.5]{BFMT} (see Remark \ref{rem--ass}).

The principal obstacles are twofold. First, non‑properness prevents a direct use of the Nakai–Moishezon criterion for algebraic spaces \cite{kollar-moduli-stable-surface-proj,FM3}. Second, the morphism $\beta$ from $\mathscr{M}^{\mathrm{sn}}$ to the K-moduli of quasimaps $\mathscr{M}_{\mathrm{qmaps}}$ constructed in \cite{HH2} fails to be quasi‑finite in the $\kappa=1$ case \cite[Remark 4.15]{HH2}, essentially because the K‑moduli distinguishes {\it linear equivalence classes} while quasimaps only see {\it numerical equivalence}.
By the latter obstacle, we cannot directly deduce the positivity of the CM line bundle from the positivity of the CM line bundle on $\mathscr{M}_{\mathrm{qmaps}}$ via $\beta$.

Our key idea is to introduce a new {\it moduli space $\mathscr{N}$ of numerical equivalence classes} as a quotient of $\mathscr{M}$ in the spirit of \cite{viehweg91}, 
which allows us to circumvent the failure of quasi‑finiteness and to recover the positivity of the CM line bundle via $\mathscr{M}_{\mathrm{qmaps}}$.
Using $\mathscr{N}$, we construct the following commutative diagram:
\begin{equation}\label{diagram}
\begin{tikzcd}
\mathscr{M}^{\mathrm{sn}} \arrow[rd,"\beta"]\arrow[d,"\xi"] & \\
\mathscr{N}^{\mathrm{sn}} \arrow[r,"\alpha"]& \mathscr{M}_{\mathrm{qmaps}}.
\end{tikzcd}
\end{equation}
See Theorems \ref{thm--intro--const--N} and \ref{thm--technical-core} later for more details of the construction of $\mathscr{N}$ and this diagram, respectively.
In this diagram, the key points are the projectivity of $\xi$ and the quasi-finiteness of $\alpha$.
These enable us to check that $\mathscr{M}^{\mathrm{sn}}$ is quasi-projective and to compare $\Lambda_{\mathrm{CM},t}$ with the positivity of ample line bundles on $\mathscr{M}_{\mathrm{qmaps}}$.
More precisely, we will show that $\lim_{t\to\infty}\nu^*\Lambda_{\mathrm{CM},t}$ coincides with the pullback of an ample line bundle on $\mathscr{M}_{\mathrm{qmaps}}$ (see Theorem \ref{thm--intro--corresp}) and $\Lambda_{\mathrm{CM},t}$ is $\xi$-ample for any sufficiently large $t>0$ (see Theorem \ref{thm--intro--const--N}).
These observations show Theorem \ref{thm--main--intro--rough}.

Our approach extends beyond \cite{viehweg91} by working with $\mathbb{Q}$-Cartier Weil polarizations and by allowing klt singularities, and by constructing $\mathscr{N}$ as the coarse moduli of a Deligne–Mumford stack; this clarifies the modular meaning of $\mathscr{N}$, which enables us to construct $\alpha$. 
We will explain these ideas more precisely in Subsection \ref{sec--intro--technicalcore}.

The following observation, which is obtained by the above diagram, is also important.

\begin{rem}   
In the context of K\"ahler geometry, \cite{FS,DN} constructed the moduli space, which only distinguishes numerical classes of the polarization. 
Their moduli space corresponds to our $\mathscr{N}$.
Furthermore, they claimed the existence of a positive Weil--Petersson current and a CM line bundle on their moduli space.
However, the $\xi$-ampleness of $\Lambda_{\mathrm{CM},t}$ shows that $\Lambda_{\mathrm{CM},t}$ is not a pullback of any line bundle on $\mathscr{N}$ and thus we cannot define a CM line bundle on $\mathscr{N}$ that is compatible with the CM line bundle on $\mathscr{M}$. 
See Example \ref{ex--cm} for a simpler example illustrating this incompatibility.

Dervan informed the author that this subtlety can be resolved by modifying the gauge group in the usual “scalar curvature as a moment map” picture so that the universal family carries a natural line bundle. For details, see \cite{Dervan} on this correction.
\end{rem}

\subsection{Main result in a precise form}

Before restating Theorem \ref{thm--main--intro--rough}, we explain the moduli space constructed by \cite{HH} and its objects.
Fix $d,v,w\in\mathbb{Q}_{>0}$ and $u\in\mathbb{Q}_{\ne0}$.
Let $f\colon (X,A)\to C$ be a contraction of projective klt varieties such that
\begin{itemize}
    \item $C$ is a curve,
    \item $A$ is an ample line bundle, 
    \item $\mathrm{dim}\,X=d$, 
    \item there exists a $\mathbb{Q}$-line bundle such that $K_X\sim_{\mathbb{Q}}f^*L$ and $\mathrm{deg}_CL=u$, 
    \item $\mathrm{vol}(A|_F)=v$ for a general fiber $F$ of $f$, and
    \item $\mathrm{vol}(A)=w$.
\end{itemize}
When $u>0$, the above $f$ is a klt good minimal model with $\kappa=1$.
According to \cite[Theorem 1.4]{HH}, fixing a sufficiently divisible $r\in\mathbb{N}$ as \cite[Lemma 3.1]{HH}, there exists a separated algebraic space $M_{d,v,u,r,w}$ of finite type parameterizing uniformly adiabatically K-stable Calabi-Yau fibrations $f\colon (X,A)\to C$ with the above conditions.  
The space arises as the coarse moduli space of a Deligne--Mumford stack defined as in Definition \ref{defn--HH-moduli}.

we now restate Theorem \ref{thm--main--intro--rough} in a precise form.

\begin{thm}\label{thm--intro--main}
Fix an arbitrary sufficiently divisible $k\in\mathbb{Z}_{>0}$.
Let $V_1$ be the normal locus of the moduli space of polarized klt Calabi--Yau varieties $(Y,A_Y)$ of dimension $d-1$ and $\mathrm{vol}(A_Y)=k^{d-1}v$.    
Let $M_{d,v,u,w,V_1}\subset M_{d,v,u,r,w}$ be the locally closed reduced subspace such that $f\colon (X,A)\to C$ belongs to $M_{d,v,u,w,V_1}$ if and only if there exists a general fiber $F$ of $f$ such that $(F,kA|_F)$ belongs to $V_1$.

Then, the seminormalization $(M_{d,v,u,w,V_1})^{\mathrm{sn}}$ of $M_{d,v,u,w,V_1}$ is quasi-projective.
Furthermore, $\nu^*\Lambda_{\mathrm{CM},t}$ is ample for any sufficiently large $t>0$, where $\nu\colon(M_{d,v,u,w,V_1})^\nu\to M_{d,v,u,w,V_1}$ is the normalization. 
\end{thm}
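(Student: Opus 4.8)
The plan is to assemble the commutative diagram \eqref{diagram} together with the positivity properties of its two morphisms, so that the proof of Theorem \ref{thm--intro--main} becomes a formal synthesis of the technical core established later. Write $\mathscr{M}=M_{d,v,u,w,V_1}$ for brevity. From the construction of $\mathscr{N}$ (Theorem \ref{thm--intro--const--N}) and the core comparison (Theorem \ref{thm--technical-core}) I take as input a projective morphism $\xi\colon\mathscr{M}^{\mathrm{sn}}\to\mathscr{N}^{\mathrm{sn}}$ and a separated quasi-finite morphism $\alpha\colon\mathscr{N}^{\mathrm{sn}}\to\mathscr{M}_{\mathrm{qmaps}}$ with $\beta=\alpha\circ\xi$, where $\mathscr{M}_{\mathrm{qmaps}}$ is quasi-projective and carries an ample line bundle $\mathcal{L}$ coming from the projectivity of the K-moduli of $\epsilon$-stable quotients. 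I also use that $\Lambda_{\mathrm{CM},t}$ is $\xi$-ample for $t\gg0$ (Theorem \ref{thm--intro--const--N}) and the limit correspondence (Theorem \ref{thm--intro--corresp}).

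First I would deduce the quasi-projectivity of $\mathscr{N}^{\mathrm{sn}}$. Since $\alpha$ is quasi-finite and separated of finite type, Zariski's Main Theorem for algebraic spaces factors $\alpha$ as an open immersion $\mathscr{N}^{\mathrm{sn}}\hookrightarrow\overline{\mathscr{N}}$ followed by a finite morphism $\overline{\mathscr{N}}\to\mathscr{M}_{\mathrm{qmaps}}$. The finite pullback of the ample $\mathcal{L}$ is ample on $\overline{\mathscr{N}}$, and its restriction to the open locus shows $\alpha^*\mathcal{L}$ is ample on $\mathscr{N}^{\mathrm{sn}}$; a separated algebraic space of finite type with an ample line bundle is a quasi-projective scheme. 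Next, because $\xi$ is projective and $\Lambda_{\mathrm{CM},t}$ is $\xi$-ample, the standard fact that a $\xi$-ample bundle twisted by a sufficiently large pullback of an ample bundle becomes ample yields that $\Lambda_{\mathrm{CM},t}\otimes\xi^*(\alpha^*\mathcal{L})^{\otimes m}$ is ample on $\mathscr{M}^{\mathrm{sn}}$ for $m\gg0$. This establishes the quasi-projectivity of $(M_{d,v,u,w,V_1})^{\mathrm{sn}}$.

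For the ampleness of $\nu^*\Lambda_{\mathrm{CM},t}$ I would pass to the normalization and exploit the asymptotics of the CM line bundle. Let $\xi^\nu\colon\mathscr{M}^\nu\to\mathscr{N}^\nu$ and $\alpha^\nu$ be the morphisms induced on normalizations. By the limit correspondence, $\lim_{t\to\infty}\nu^*\Lambda_{\mathrm{CM},t}$ agrees, up to a positive rational multiple, with the pullback $(\xi^\nu)^*(\alpha^\nu)^*\mathcal{L}$ of an ample bundle. Viewing $\nu^*\Lambda_{\mathrm{CM},t}$ as a $\mathbb{Q}$-line-bundle-valued polynomial in $t$, its top-order term is thus this ample pullback, while it stays relatively ample over $\mathscr{N}^\nu$ for all large $t$. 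Applying the relative-ampleness criterion to the projective $\xi^\nu$, with $g$-ample class $\nu^*\Lambda_{\mathrm{CM},t}$ and ample base class $(\alpha^\nu)^*\mathcal{L}$, shows that for $t$ sufficiently large $\nu^*\Lambda_{\mathrm{CM},t}$ already incorporates both the base-ample contribution and the $\xi$-ample correction, hence is ample. This gives statement (2).

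The main obstacle lies not in this final assembly, which is essentially formal, but in the inputs it rests on: the $\xi$-ampleness of $\Lambda_{\mathrm{CM},t}$ and the limit correspondence. The $\xi$-ampleness requires analysing how the CM line bundle varies along the fibers of $\xi$, which parameterize linear-equivalence classes of the polarization inside a fixed numerical class; this is precisely where the gap between linear and numerical equivalence that broke the quasi-finiteness of $\beta$ re-enters, now exploited in our favour. The limit correspondence in turn demands a precise asymptotic expansion of $\Lambda_{\mathrm{CM},t}$ as $t\to\infty$, where the polarization $A+tK_X$ is dominated by $K_X\sim_{\mathbb{Q}}f^*L$ pulled back from the base curve, together with its identification with the CM line bundle on $\mathscr{M}_{\mathrm{qmaps}}$, which relies on the projectivity of the K-moduli of quasimaps.
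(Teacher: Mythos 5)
Your treatment of the quasi-projectivity statement follows the paper's own route: quasi-finiteness of $\overline{\alpha}$ plus a Zariski-type argument makes $(N^{W,I}_{d,v,u,w,p_a,V_1})^{\mathrm{sn}}$ quasi-affine over the projective quasimap moduli space, hence quasi-projective (this is Proposition \ref{prop-q-proj--N}), and then the $\xi$-ampleness of $\Lambda_{\mathrm{CM},t}$ from Theorem \ref{thm--rel--ample} gives quasi-projectivity upstairs, exactly as in the first paragraph of the proof of Theorem \ref{thm--ampleness-CM}. One minor omission: the diagram actually lives on $M^{W,I}_{d,v,u,w,p_a,V_1}$, not on $M_{d,v,u,w,V_1}$ itself, so you still need the quasi-finite morphism $\gamma^{\mathrm{sn}}$ of Theorem \ref{thm--technical-core} to carry both conclusions back to the space in the statement; that step is routine.

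The ampleness part, however, has a genuine gap, and it sits exactly where you declare the argument to be ``essentially formal.'' Your inputs are: (i) the top-order term in $t$ of $\nu^*\Lambda_{\mathrm{CM},t}$ is a pullback of an ample class (the limit correspondence, Theorem \ref{thm--intro--corresp}), and (ii) $\nu^*\Lambda_{\mathrm{CM},t}$ is relatively ample over $\mathscr{N}^\nu$ for all large $t$ (Theorem \ref{thm--rel--ample}). These two facts do not imply ampleness for $t\gg0$. Writing the rescaled expansion as $t^2L_2+tL_1+L_0$ with $L_2=(\xi^\nu)^*H$, $H$ ample, the hypotheses only force $L_0$ to be relatively ample and $L_1$ to be nef on the fibers of $\xi^\nu$; they control nothing about $L_1$ on horizontal curves. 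Concretely, let $\xi\colon E\times E\to E$ be the first projection of a product of elliptic curves, $F_1$ the fiber class, $F_2$ the class of $E\times\{\mathrm{pt}\}$, $\Delta_E$ the diagonal, and take $L_2=\xi^*H=F_1$, $L_1=2\Delta_E-2F_2$, $L_0=F_2$. Then $L_1$ has degree $0$ on fibers, so $tL_1+L_0$ is $\xi$-ample for every $t>0$, yet pairing $t^2L_2+tL_1+L_0$ with the graph $\Gamma_n$ of multiplication by $n$ (which satisfies $\Gamma_n\cdot F_1=1$, $\Gamma_n\cdot F_2=n^2$, $\Gamma_n\cdot\Delta_E=(n-1)^2$) gives $t^2+n^2-4tn+2t=(n-2t)^2-3t^2+2t$, which is negative for $n$ near $2t$; so the class is not even nef for any large $t$. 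Note also that you cannot invoke openness of the ample cone around the limit, because $\Lambda_{\mathrm{CM},\infty}$ is trivial on the fibers of $\xi^\nu$ and hence lies on the boundary of the nef cone — indeed this is precisely why the CM line bundle does not descend to $\mathscr{N}$. What closes the gap in the paper is Proposition \ref{prop--approx--CM}: a direct intersection-theoretic computation, carried out $G$-equivariantly so that it descends to the coarse space, showing $D_2\sim_{\mathbb{Q}}\frac{dl_0v}{2Iw}D_1$, i.e.\ the linear coefficient is itself a positive rational multiple of the same pullback of an ample class. Only with this extra input does the expansion take the form (growing positive multiple of a pullback of an ample class) plus (a fixed relatively ample class), at which point the criterion you cite (\cite[II, Exercise 7.5]{Ha}, \cite[Tag 0892]{Stacks}) applies, as in the second half of the proof of Theorem \ref{thm--ampleness-CM}.
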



Theorem \ref{thm--intro--main} immediately follows from Section \ref{sec--finish} and is implied by the stronger Theorem \ref{thm--ampleness-CM}. 
Note that $M^{W,I}_{d,v,u,w,p_a,V_1}$ appearing in Theorem \ref{thm--ampleness-CM} is an enlargement of $(M_{d,v,u,r,w,V_1})_{\mathrm{red}}$ up to quasi-finite morphisms (see the discussion after Definition \ref{de--WI}).
See Section \ref{sec--finish} for more details.
For the necessity of assumption on $V_1$, see Remark \ref{rem--ass} later.


\subsection{Quasimaps}\label{sec--intro--qmaps}


Before stating our result on quasimaps, we recall fundamental notions.
If $X\subset \mathbb{P}^N$ is a closed subscheme with $\mathrm{Cone}(X)\subset\mathbb{A}^{N+1}$ the affine cone,
a {\it quasimap} $q\colon C\to [\mathrm{Cone}(X)/\mathbb{G}_m]$ is defined as a morphism from a nodal curve to the quotient stack with some condition in this paper (see Definition \ref{de--qmaps}).
This object can be regarded as a curve with a linear system defining a rational map to $X$ and was discovered by Givental \cite{givental} and also studied in \cite{DP2}.
Now, we obtain an associated line bundle $\mathscr{L}$ on $C$ corresponding to the linear system.
Fix a weight $w\in\mathbb{Q}\cap (0,1]$.
Then, we can define the K-stability of $q$ with respect to $w$ as Definition \ref{de--K-st--qmaps}.
When $K_{C}+w\mathscr{L}$ is ample and $X=\mathbb{P}^N$, we observe that our quasimap is nothing but the $w$-stable quotient with the target $\mathbb{P}^N$, which was introduced by Toda \cite{Toda}.

We now state the existence of the K-moduli of quasimaps as a projective scheme. 

\begin{thm}[{Proposition \ref{lem--quasimap--stack}, Theorem \ref{thm--qm--cm--positive}}]\label{thm--intro--qmaps}
Fix $m\in\mathbb{Z}_{>0}$, $w\in\mathbb{Q}\cap[0,1]$ and $v\in\mathbb{Q}_{>0}$.
Let $\iota\colon X\hookrightarrow\mathbb{P}^N$ be a closed immersion of a normal variety.
Let $\mathcal{M}^{\mathrm{Kss,qm}}_{m,w,v,\iota}$ be a stack such that for any scheme $S$, the collection of objects $\mathcal{M}^{\mathrm{Kss,qm}}_{m,w,v,\iota}(S)$ is 
$$\left\{
 \vcenter{
 \xymatrix@C=12pt{
\mathcal{C}\ar[rr]^-{q}\ar[dr]_{\pi}&& [\mathrm{Cone}(X)_S/\mathbb{G}_{m,S}] \ar[dl]\\
&S
}
}
\;\middle|
\begin{array}{rl}
(i)&\text{$q$ is a family of K-stable quasimaps}\\
&\text{of weight $w$,}\\
(ii)&\text{$K_{\mathcal{C}/S}+w\mathscr{L}$ is $\pi$-ample, where $\mathscr{L}$ is}\\
&\text{the line bundle associated with $q$,}\\
(iii)&\text{$\mathrm{deg}_{\mathcal{C}_{\bar{s}}}(K_{\mathcal{C}_{\bar{s}}})+wm=v$ and $\mathrm{deg}(\mathscr{L}_{\bar{s}})=m$}\\
&\text{for any geometric point $\bar{s}\in S$}
\end{array}\right\},$$
and we set its arrows to be isomorphisms of families of quasimaps.  

Then $\mathcal{M}^{\mathrm{Kss,qm}}_{m,w,v,\iota}$ is a proper Deligne--Mumford stack of finite type with a projective coarse moduli space $M^{\mathrm{Kps,qm}}_{m,w,v,\iota}$.
Furthermore, $M^{\mathrm{Kps,qm}}_{m,w,v,\iota}$ admits an ample $\mathbb{Q}$-line bundle $\Lambda_{\mathrm{CM}}^{\mathrm{qmaps}}$.
\end{thm}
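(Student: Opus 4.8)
The plan is to treat the three assertions in turn: that $\mathcal{M}^{\mathrm{Kss,qm}}_{m,w,v,\iota}$ is a Deligne--Mumford stack of finite type, that it is proper with coarse moduli space $M^{\mathrm{Kps,qm}}_{m,w,v,\iota}$, and that $\Lambda^{\mathrm{qmaps}}_{\mathrm{CM}}$ is ample on the latter. First I would realize $\mathcal{M}^{\mathrm{Kss,qm}}_{m,w,v,\iota}$ as an open substack of an ambient stack parameterizing all quasimaps carrying the fixed numerical data. Because the source curves $\mathcal{C}$ are nodal of a fixed arithmetic genus determined by $m$, $w$ and $v$, the associated line bundle $\mathscr{L}$ has fixed degree $m$, and the quasimap $q$ is encoded by sections landing in the fixed affine cone $\mathrm{Cone}(X)\subset\mathbb{A}^{N+1}$, this ambient stack is of finite type via the usual Hilbert- and Quot-scheme parameterizations. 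Imposing the two defining conditions then cuts out an open locus: the $\pi$-ampleness of $K_{\mathcal{C}/S}+w\mathscr{L}$ is open, and openness of K-semistability of weight $w$ follows from boundedness of potentially destabilizing test configurations together with semicontinuity of the relevant Donaldson--Futaki-type invariant in families. Finiteness of automorphisms --- the Deligne--Mumford property --- holds because the $\pi$-ampleness condition rigidifies the source curve up to finite ambiguity while K-stability excludes the positive-dimensional automorphisms that could otherwise arise from the $\mathbb{G}_m$-action on the cone.

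Next I would establish properness through the valuative criterion over a discrete valuation ring $R$ with fraction field $K$. Beginning with a K-stable quasimap over $\operatorname{Spec}K$, after a finite base change I would produce a limit over $\operatorname{Spec}R$ by first applying semistable reduction to the source curves, then extending $\mathscr{L}$ and the quasimap $q$, and finally running a K-stability reduction so that the special fiber becomes K-polystable; this is the quasimap analogue of stable reduction for K-stable objects, and for $X=\mathbb{P}^N$ it recovers the properness of Toda's stable quotients \cite{Toda}. Separatedness is the uniqueness of this K-polystable limit, which I would obtain by degenerating two candidate limits to a common central fiber. With properness, separatedness and finite inertia in hand, the Keel--Mori theorem furnishes the coarse moduli space $M^{\mathrm{Kps,qm}}_{m,w,v,\iota}$ as a proper separated algebraic space.

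The main obstacle is the ampleness of $\Lambda^{\mathrm{qmaps}}_{\mathrm{CM}}$, and this is exactly where the properness just established --- in contrast with the non-properness of the moduli $\mathscr{M}$ of the main theorem --- becomes decisive, since it makes the Nakai--Moishezon criterion for algebraic spaces \cite{kollar-moduli-stable-surface-proj,FM3} available. I would deduce nefness of $\Lambda^{\mathrm{qmaps}}_{\mathrm{CM}}$ from the semipositivity of pushforwards for families of K-semistable fibrations, in the spirit of Fujita--Kawamata and of the known positivity results for CM line bundles over proper bases. The delicate point is strict positivity: I would verify that $(\Lambda^{\mathrm{qmaps}}_{\mathrm{CM}})^{\dim V}\cdot V>0$ for every positive-dimensional closed subvariety $V\subseteq M^{\mathrm{Kps,qm}}_{m,w,v,\iota}$. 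Reducing via nefness and the intersection-theoretic expansion of the CM degree, this amounts to showing that the universal family over $V$ --- which is non-isotrivial, since distinct points of the coarse space parameterize non-isomorphic K-polystable quasimaps --- has positive CM volume. Combining nefness with strict positivity on all subvarieties then yields ampleness by Nakai--Moishezon, and hence the projectivity of $M^{\mathrm{Kps,qm}}_{m,w,v,\iota}$.
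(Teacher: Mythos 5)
Your proposal has a genuine gap at its central step, the ampleness of $\Lambda^{\mathrm{qmaps}}_{\mathrm{CM}}$. The claim that non-isotriviality of the universal family over a subvariety $V$ forces positive top self-intersection of the CM class is exactly the statement that needs proof, and neither ``semipositivity of pushforwards in the spirit of Fujita--Kawamata'' nor the non-isotriviality observation supplies it: the CM class here carries the boundary term $w\mathscr{L}$, where a divisor in $|\mathscr{L}_{\bar s}|_{q_{\bar s}}$ exists only after choosing a section of the quasimap, so no off-the-shelf semipositivity theorem applies to this object. The paper's mechanism (Proposition \ref{prop--key--to--positivity}) is a comparison with the KSBA moduli space: one chooses general coordinate sections $f_0,\ldots,f_{N+1}$ to convert the family of quasimaps into an honest family of stable pairs of maximal variation, uses the Wang--Xu degree inequality (Proposition \ref{prop--wx}, Lemma \ref{lem--CM-minimize}) to compare $\lambda_{\mathrm{CM},q}$ with the pullback of $\Lambda^{\mathrm{KSBA}}_{\mathrm{CM}}$, deduces nefness from Patakfalvi--Xu positivity \cite{PX} over curves, proves pseudoeffectivity of $\lambda_{\mathrm{CM},q}-g^*\Lambda^{\mathrm{KSBA}}_{\mathrm{CM}}$ via the BDPP criterion \cite{BDPP}, and then obtains bigness from the known bigness of $\Lambda^{\mathrm{KSBA}}_{\mathrm{CM}}$; the constant-quasimap locus is handled separately through the finite morphism to $X\times M^{\mathrm{KSBA}}_{1,v,w}$ (Lemma \ref{lem--cm-posi-const}). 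Only after all of this does Nakai--Moishezon for proper algebraic spaces \cite{kollar-moduli-stable-surface-proj,FM3} have anything to run on; your outline skips the entire mechanism.

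The other two parts are misframed, though more easily repaired. Openness of K-stability via ``semicontinuity of a Donaldson--Futaki-type invariant and boundedness of destabilizers'' is not a workable route here and is not needed: Proposition \ref{prop--K-stable-quasimaps--stability} shows that for $\mu>0$, K-stability, K-semistability and the stable-quasimap condition (slc plus ampleness of $K_{\mathcal{C}}+w\,\mathrm{div}(f)$ for some section) all coincide, so the moduli problem is cut out by concrete, constructible geometric conditions on Hilbert and Quot schemes. Likewise there is no ``K-polystable limit'' to produce in your properness argument --- in this regime semistable equals stable, the stack is a separated Deligne--Mumford stack with a coarse (not good) moduli space --- and the paper does not redo stable reduction at all: it exhibits $\mathcal{M}^{\mathrm{Kss,qm}}_{m,w,v,\iota}$ as a closed substack of $\mathcal{M}^{\mathrm{Kss,qm}}_{m,w,v,\mathrm{id}_{\mathbb{P}^N}}$, identifies the latter with Toda's moduli of stable quotients, and imports properness from \cite[Theorem 2.12]{Toda}. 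Your plan inverts this dependence, proposing to reprove properness by hand and ``recover'' Toda's theorem as a special case, which both duplicates work and imports polystability intuition from the Fano setting that does not apply to this moduli problem.
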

A key novelty of our result is that the coarse moduli space $M^{\mathrm{Kps,qm}}_{m,w,v,\iota}$ is projective even when $X=\mathbb{P}^N$.
Although the case where $X=\mathbb{P}^N$ was treated in \cite{Toda}, the projectivity of the resulting moduli space was not addressed there.

\subsection{Technical core of this paper}\label{sec--intro--technicalcore}

In this subsection, we explain the construction of the diagram \eqref{diagram} in more detail.
 First, we construct $\beta\colon\mathscr{M}^{\mathrm{sn}}\to\mathscr{M}_{\mathrm{qmaps}}$ following the same strategy as in \cite[Theorem 4.4]{HH2}.

 \begin{thm}\label{thm--intro--corresp}
 Notations as in Theorem \ref{thm--intro--main}.
There exist a closed immersion $\iota\colon\overline{V}^{\mathrm{BB}}\hookrightarrow\mathbb{P}^L$ of a normal variety $\overline{V}^{\mathrm{BB}}$ that contains $V_1$ as an open subset for some $L$, and a morphism $\overline{\beta_{v,w}}\colon(M_{d,v,u,w,p_a,V_1})^{\mathrm{sn}}\to M^{\mathrm{Kps,qm}}_{k(u-2p_a+2),\frac{1}{k},u,\iota}$, where $M_{d,v,u,r,w,p_a,V_1}$ is the largest open subset of $M_{d,v,u,r,w,V_1}$ such that the base curve $C$ of $f$ belonging to $M_{d,v,u,r,w,p_a,V_1}$ satisfies that $g(C)=p_a$ and $(M_{d,v,u,w,p_a,V_1})^{\mathrm{sn}}$ is the seminormalization of $M_{d,v,u,r,w,p_a,V_1}$.   

Furthermore, $\lim_{t\to\infty}\nu^*\Lambda_{\mathrm{CM,t}}$ exists as $\nu^*\overline{\beta_{v,w}}^*\Lambda^{\mathrm{qmaps}}_{\mathrm{CM}}$ for some $b\in\mathbb{Q}_{>0}$.
 \end{thm}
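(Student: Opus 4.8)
The plan is to prove Theorem \ref{thm--intro--corresp} in two logically distinct stages: first constructing the morphism $\overline{\beta_{v,w}}$ by producing, over the seminormalized moduli space, a canonical family of K-stable quasimaps of weight $1/k$; and second, comparing the CM line bundle $\Lambda_{\mathrm{CM},t}$ with the pullback of the ample class $\Lambda^{\mathrm{qmaps}}_{\mathrm{CM}}$ furnished by Theorem \ref{thm--intro--qmaps}, in the limit $t\to\infty$.

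For the construction of $\overline{\beta_{v,w}}$, I would proceed functorially following the blueprint of \cite[Theorem 4.4]{HH2}. Given a family $f\colon(X,A)\to C$ over a base $S$ parameterized by $(M_{d,v,u,w,p_a,V_1})^{\mathrm{sn}}$, the adiabatic K-stability hypothesis forces the general fiber $(F,kA|_F)$ to define a point of $V_1$, and the Baily--Borel type compactification supplies a normal variety $\overline{V}^{\mathrm{BB}}$ with a closed immersion $\iota\colon\overline{V}^{\mathrm{BB}}\hookrightarrow\mathbb{P}^L$ into which $V_1$ embeds as an open subset; here the choice of sufficiently divisible $k$ is what guarantees the polarization $kA|_F$ yields a projectively normal embedding of the fibers. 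Sending each fiber of $f$ to its moduli point in $\overline{V}^{\mathrm{BB}}$ produces a rational map $C\dashrightarrow\overline{V}^{\mathrm{BB}}$, which is exactly the data of a quasimap $q\colon\mathcal{C}\to[\mathrm{Cone}(\overline{V}^{\mathrm{BB}})/\mathbb{G}_m]$ after taking the semistable reduction of the base curve and recording the associated line bundle $\mathscr{L}=(q^*\mathcal{O}(1))$. I would then verify the numerical conditions: the degree computation $\deg\mathscr{L}=k(u-2p_a+2)=k(\deg_C L+ \deg_C K_C \cdot(\text{const}))$ follows from $K_X\sim_{\mathbb{Q}}f^*L$, $\deg_C L=u$, and the genus relation $\deg K_C=2p_a-2$, while the weight is read off as $w=1/k$ from the normalization of the polarization. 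The essential point is that uniform adiabatic K-stability of $f$ translates into K-stability of the resulting quasimap in the sense of Definition \ref{de--K-st--qmaps}; this compatibility of stability notions is where the adiabatic limit and the definition of quasimap K-stability must be carefully matched, and I expect this to be the subtler half of the construction.

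For the limit comparison, the plan is to compute both sides as $\mathbb{Q}$-line bundles on the moduli space and match them asymptotically. The CM line bundle $\Lambda_{\mathrm{CM},t}$ with respect to the polarization $A+tK_X$ is, by its intersection-theoretic definition (Definition \ref{de--CM--level}), a polynomial in $t$ whose leading-order behavior as $t\to\infty$ is governed by the intersection numbers involving $K_X$ along the fibers. After pulling back along the normalization $\nu$, the leading coefficient should be expressible through the CM line bundle of the quasimap moduli, because in the adiabatic/large-$t$ regime the relevant curvature contribution localizes to the base curve and its associated line bundle $\mathscr{L}$. Concretely, I would show that $\lim_{t\to\infty}\nu^*\Lambda_{\mathrm{CM},t}$, after appropriate normalization by a positive rational $b$, equals $\nu^*\overline{\beta_{v,w}}^*\Lambda^{\mathrm{qmaps}}_{\mathrm{CM}}$ by matching the defining CM intersection formulas on both moduli spaces — the key identity being that the fiber-Calabi--Yau contribution to $\Lambda_{\mathrm{CM},t}$ stabilizes (since $K_X\sim_{\mathbb{Q},f}0$ makes the fiberwise CM degree vanish) so that only the quasimap-theoretic part survives in the limit.

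The main obstacle I anticipate is precisely the asymptotic matching of the two CM line bundles: one must show that no lower-order terms in $t$ obstruct the existence of the limit and that the surviving leading term is genuinely pulled back from $M^{\mathrm{Kps,qm}}_{k(u-2p_a+2),\frac{1}{k},u,\iota}$ rather than merely numerically proportional to it. This requires a clean intersection-theoretic computation of $\Lambda_{\mathrm{CM},t}$ in terms of the pushforward $f_*$ of powers of $A+tK_X$, combined with the Grothendieck--Riemann--Roch identification of the quasimap CM line bundle; reconciling the $\mathbb{Q}$-Cartier Weil polarization appearing here with the honest line bundle $\mathscr{L}$ on the quasimap side (the novelty emphasized in the introduction) is the step most likely to demand care, since it is exactly the point where the paper's extension beyond \cite{viehweg91} and the passage through the seminormalization interact.
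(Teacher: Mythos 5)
Your two-stage strategy is in fact the same route the paper takes: $\overline{\beta_{v,w}}$ is built by the blueprint of \cite[Theorem 4.4]{HH2} (this is Theorem \ref{thm--previous-hh}), and the limit $\lim_{t\to\infty}\nu^*\Lambda_{\mathrm{CM},t}$ is identified with the pullback of $\Lambda^{\mathrm{qmaps}}_{\mathrm{CM}}$ by expanding the CM class as a polynomial in $t$ whose top-degree terms vanish (Setup \ref{stup--112} and Proposition \ref{prop--approx--CM}). There is, however, a genuine gap at the start of your first stage: you write that ``the Baily--Borel type compactification supplies a normal variety $\overline{V}^{\mathrm{BB}}$'' containing $V_1$ as an open subset. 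In the generality of Theorem \ref{thm--intro--main} the fibers are arbitrary polarized klt Calabi--Yau varieties, and no classical Baily--Borel compactification exists for these; producing $\overline{V}^{\mathrm{BB}}$, together with the essential property that the ample class $k\Lambda_{\mathrm{Hodge}}$ pulls back to $kM_{\tilde{S}}$ (the moduli $\mathbb{Q}$-divisor) under the moduli map of \emph{every} lc--trivial fibration with general fiber in $V_1$, is itself part of what the theorem asserts and is the paper's key new ingredient: Theorem \ref{thm--canonical--bundle--formula-sin}, proved in Section \ref{sec-7} via the compactification of \cite{KX,Bir} and the $\mathbf{B}$-semiampleness conjecture, now a theorem by \cite{BFMT} (cf.\ Remark \ref{rem--ass}). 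Without that property, your degree bookkeeping $\deg\mathscr{L}=k(u-2p_a+2)$ and the identification of the quasimap's fixed part with $k$ times the discriminant divisor have no justification, and the later CM comparison also collapses, since it is exactly the relation $\iota^*\mathcal{O}_{\mathbb{P}^L}(1)|_{\overline{V}^{\mathrm{BB}}}\sim k M$ that ties the quasimap CM class to the canonical bundle formula for $f$.

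Two smaller corrections. First, no ``semistable reduction of the base curve'' occurs or is needed: the base $C$ is already a smooth projective curve, so the moduli map extends to a morphism $C\to\overline{V}^{\mathrm{BB}}$, and the quasimap is the additional datum of sections with fixed part $kB$; the real subtlety your sketch omits is carrying this out in families over seminormal bases (Theorems \ref{thm--Fujino's--period--mapping--theory} and \ref{thm--const--qmaps--from--CY--fib}), which is the reason the seminormalization appears in the statement at all, while the stability translation you single out as the hard point is comparatively routine (Proposition \ref{prop--K-stable-quasimaps--stability}). Second, for the limit identity you rightly worry that the leading term must be genuinely pulled back rather than merely numerically proportional; the paper's mechanism is to perform the expansion $G$-equivariantly on an atlas $[H/G]$ with $G$ semisimple, so that the $\mathbb{Q}$-linear equivalences $D_4\sim_{\mathbb{Q}}D_3\sim_{\mathbb{Q}}0$ and $\gamma D_2\sim_{\mathbb{Q}}\lambda_{\mathrm{CM},q}$ hold with their $G$-linearizations (\cite[Proposition 1.4]{GIT}) and hence descend to the coarse space; your sketch flags the issue but supplies no such mechanism.
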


Theorem \ref{thm--intro--corresp} follows immediately from Proposition \ref{prop--approx--CM}, Theorems \ref{thm--canonical--bundle--formula-sin} and \ref{thm--previous-hh}. 
The construction of $\overline{V}^{\mathrm{BB}}$ is given in Theorem \ref{thm--canonical--bundle--formula-sin}.

As we remarked earlier, $\overline{\beta_{v,w}}$ is not quasi-finite in the case $u>0$ (\cite[Remark 4.15]{HH2}) and we cannot deduce Theorem \ref{thm--intro--main} directly from Theorem \ref{thm--intro--corresp}.
This is because quasimaps detect only numerical equivalence.
To overcome this issue, we introduce $\mathscr{N}$ a moduli space that only distinguishes the numerical classes in the spirit of \cite{viehweg91}.
Our construction extends Viehweg’s original approach by allowing klt singularities and
$\mathbb{Q}$-Cartier Weil polarizations, and by constructing $\mathscr{N}$ as the
coarse moduli space of a Deligne–Mumford stack.
This gives $\mathscr{N}$ a clear modular interpretation and enables the construction of $\alpha$.


Before stating the construction of $\mathscr{N}$, we explain the moduli of varieties with $\mathbb{Q}$-Cartier polarization briefly. 
For some sufficiently divisible $I$ such that $I|l$, we can take a closed algebraic subspace $M^{W,I}_{d,v,u,r,w}\subset M_{d,I^{d-1\cdot }v,u,r,I^d\cdot w}$ (cf.~
Definition \ref{de--Num--consruction}), which consists of isomorphic classes of $f\colon (X,A)\to C$ in $M_{d,I^{d-1\cdot }v,u,r,I^d\cdot w}$ with a $\mathbb{Q}$-Cartier Weil divisor $D$ such that $ID=A$.
Note that there is a natural quasi-finite morphism $\gamma\colon(M_{d,v,u,r,w})_{\mathrm{red}}\to(M^{W,I}_{d,v,u,r,w})_{\mathrm{red}}$.
We can regard $M^{W,I}_{d,v,u,r,w}$ as a space parameterizing varieties polarized by ample $\mathbb{Q}$-Cartier divisorial sheaves $\mathcal{O}_X(D)$.

Then, we construct the desired $\mathscr{N}$ as a quotient of $M^{W,I}_{d,v,u,r,w}$.

\begin{thm}\label{thm--intro--const--N}
Let $\mathcal{M}^{W,I}_{d,v,u,r,w}$ be the moduli stack, which has $M^{W,I}_{d,v,u,r,w}$ as the coarse moduli space.
Then there exist a Deligne--Mumford stack $\mathcal{N}^{W,I}_{d,v,u,r,w}$ of finite type with the coarse moduli space $N^{W,I}_{d,v,u,r,w}$ and a proper smooth surjective morphism
    $$\xi\colon \mathcal{M}^{W,I}_{d,v,u,r,w}\to \mathcal{N}^{W,I}_{d,v,u,r,w}$$
    such that the set of $\mathbbm{k}$-valued points $N^{W,I}_{d,v,u,r,w}(\mathbbm{k})$ is exhibited as $M^{W,I}_{d,v,u,r,w}(\mathbbm{k})/\equiv$, where $(X,A)\equiv(Y,B)$ if and only if there exists an isomorphism $\varphi\colon X\to Y$ such that $\varphi^*B-A$ is numerically trivial.

    Furthermore, $\Lambda_{\mathrm{CM},t}$ is $\overline{\xi}$-ample for any sufficiently large $t>0$, where $\overline{\xi}\colon M^{W,I}_{d,v,u,r,w}\to N^{W,I}_{d,v,u,r,w}$ is the induced morphism.
\end{thm}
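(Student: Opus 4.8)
The plan is to build $\mathscr{N}$ as the coarse moduli of a Deligne--Mumford stack obtained by rigidifying the polarization out of $\mathcal{M}^{W,I}_{d,v,u,r,w}$, and then to analyze $\Lambda_{\mathrm{CM},t}$ fiberwise along $\overline{\xi}$. I break the statement into its two halves and treat them in order.

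\textbf{Construction of $\xi$ and the stack $\mathcal{N}^{W,I}_{d,v,u,r,w}$.}
First I would describe the equivalence $(X,A)\equiv(Y,B)$ at the level of families: two objects of $\mathcal{M}^{W,I}_{d,v,u,r,w}$ over a base $S$ are identified when there is an $S$-isomorphism $\varphi\colon\mathcal{X}\to\mathcal{Y}$ with $\varphi^*\mathcal{B}-\mathcal{A}$ restricting to a numerically trivial class on every geometric fiber. The natural way to realize the quotient as a stack is to construct $\mathcal{N}^{W,I}_{d,v,u,r,w}$ as the \emph{rigidification} of $\mathcal{M}^{W,I}_{d,v,u,r,w}$ along the subgroup of automorphisms acting trivially on the numerical class, or more precisely to define $\mathcal{N}$ so that its fiber over a point records the variety $X$ together with the numerical equivalence class of $A$ rather than $A$ itself. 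Since $\mathrm{Pic}^0$ (or its numerical analogue $\mathrm{Pic}^\tau$) of a klt variety is a connected algebraic group, the relative moduli of polarizations lying in a fixed numerical class, over a fixed $X$, forms a torsor-like fiber; this is what makes $\xi$ proper and smooth. Concretely, I would fix $X$ and consider the functor of $\mathbb{Q}$-Cartier Weil divisors $D'$ with $ID'=A'$ numerically equivalent to the given $A$: this is a translate of $\mathrm{Pic}^\tau(X)$, an abelian-variety-type fiber after passing to $\mathrm{Pic}^0$, whence the relative morphism $\xi$ is representable, proper, and smooth of relative dimension $\dim\mathrm{Pic}^0(X)$. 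The surjectivity and finite type properties follow from boundedness of the family parameterized by $\mathcal{M}^{W,I}_{d,v,u,r,w}$, and the Deligne--Mumford and coarse-moduli statements follow from Keel--Mori once the automorphism groups are checked to be finite after rigidification. The claimed description of $N^{W,I}_{d,v,u,r,w}(\mathbbm{k})$ then reads off directly from the fiberwise orbit description.

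\textbf{$\overline{\xi}$-ampleness of $\Lambda_{\mathrm{CM},t}$.}
For the positivity statement the relevant fact is that $\overline{\xi}$-ampleness is a condition on the fibers of $\overline{\xi}$, i.e.\ on the loci where $X$ is fixed and only the numerical class of the polarization is recorded. Restricting $\Lambda_{\mathrm{CM},t}$ to such a fiber $\mathrm{Pic}^0(X)$-worth of polarizations $A+tK_X$, I would use the leading-order expansion of the CM line bundle in the adiabatic parameter: the CM degree of a polarized family is a polynomial in the polarization, and its derivative in the direction of varying $A$ within a fixed numerical class is governed by the intersection-theoretic expression for the CM line bundle (cf.\ the Paul--Tian / Fujiki--Schumacher formula). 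The key computation is that, for large $t$, the dominant contribution to the fiberwise positivity comes from the term $A\cdot(K_X+tK_X)^{\dim}$-type intersections restricted to the $\mathrm{Pic}^0$-fiber, and one checks that this pairs positively against curves inside the fiber via the theorem of the cube / positivity of the polarization itself. I would therefore reduce $\overline{\xi}$-ampleness to the statement that, fiberwise, $\Lambda_{\mathrm{CM},t}$ restricts to a positive multiple of the principal polarization (or a translate thereof) on the abelian part of the fiber, for $t\gg0$.

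\textbf{Expected main obstacle.}
The hardest part will be the fiberwise positivity computation for $\Lambda_{\mathrm{CM},t}$, specifically controlling the sign of the leading term of the CM line bundle as the polarization varies within a numerical class. Two difficulties compound here: first, one must work with $\mathbb{Q}$-Cartier Weil polarizations and klt singularities, so the intersection numbers defining the CM degree require the machinery for singular fibrations rather than the smooth Fujiki--Schumacher setting; second, since the variation is along numerically trivial directions, all the ``obvious'' leading terms vanish and the positivity must be extracted from a genuinely subleading intersection pairing, which is precisely the numerical content distinguishing $\mathscr{M}$ from $\mathscr{N}$. I expect to handle the singular intersection theory by passing to the fixed-part/moving-part decomposition of $K_X\sim_{\mathbb{Q}}f^*L$ and invoking the canonical bundle formula (Theorem \ref{thm--canonical--bundle--formula-sin}), and to extract the positivity from the fact that the polarization $A$ itself is ample on fibers, so that varying it within $\mathrm{Pic}^0$ produces a strictly convex, hence positive, contribution for $t$ large. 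Once this fiberwise positivity is in hand, $\overline{\xi}$-ampleness follows by a standard relative Nakai--Moishezon / Seshadri-type argument for the proper morphism $\overline{\xi}$.
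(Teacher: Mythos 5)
There is a genuine gap in both halves of your proposal. For the construction: you propose to form $\mathcal{N}$ by ``rigidification'' of $\mathcal{M}^{W,I}_{d,v,u,r,w}$ along the numerically trivial twists and to invoke Keel--Mori ``once the automorphism groups are checked to be finite.'' That check is precisely where the naive approach breaks down, and it cannot be repaired by rigidification (which removes automorphisms of objects, whereas here distinct objects $(X,A)$ and $(X,A\otimes\lambda)$ must be identified). The stabilizer of the $\mathbf{Pic}^0$-action at $(X,A)$ is the subgroup $K=\{\lambda\in\mathbf{Pic}^0_{X}:\ (X,A\otimes\lambda)\cong(X,A)\}$, and this is positive-dimensional exactly when $\mathrm{Aut}_X$ is positive-dimensional (cf.\ Proposition \ref{prop--aut--dimension}, where the fibers of the equivalence relation have dimension $h^1(X,\mathcal{O}_X)-\dim\mathrm{Aut}_X$); this happens for many klt Calabi--Yau fibrations, e.g.\ those with Abelian fibers. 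Consequently the naive quotient stack is an Artin stack with proper, positive-dimensional stabilizers: it is not Deligne--Mumford and does not have affine diagonal, so neither Keel--Mori nor the existence theory of \cite{AHLH} applies --- this is exactly the subtlety the paper flags before Definition \ref{de--smaller--abelian--scheme}. The paper's actual route is to prove that the stabilizer group scheme $K$ is an Abelian scheme over a suitable closed subscheme $H$ (Proposition \ref{prop--de-Y}, which needs the constancy of $\dim\mathrm{Aut}$ and a flattening argument), to form $Y=\mathbf{Pic}^0_{\mathcal{U}_H/H}/K$, and then --- since $Y$ itself no longer acts on $\mathcal{M}$ by twisting --- to use dual Abelian schemes and a polarization isogeny (Lemmas \ref{lem--G-equiv} and \ref{lem--im--str}) to manufacture a subgroup scheme $\widehat{\widehat{Y}}\subset\mathbf{Pic}^0_{\mathcal{U}_H/H}$ that acts with finite stabilizers; $\mathcal{N}$ is then built as the stackification of a prestack of $[\widehat{\widehat{Y}}/G]$-torsors (Definition \ref{de--stack--N}), and the Deligne--Mumford and separatedness properties are proved directly (Proposition \ref{prop--representability--quotient--morph}, Theorem \ref{thm:N--DMstack}). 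Your proposal contains no mechanism for dealing with these positive-dimensional stabilizers, which is the technical core of the construction.

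For the $\overline{\xi}$-ampleness: your first reduction (check ampleness of $\Lambda_{\mathrm{CM},t}$ on fibers of $\overline{\xi}$) agrees with the paper's, via \cite[Tags 0D36 and 0D3A]{Stacks}. But the fiberwise positivity is then asserted, not proved: the claim that $\Lambda_{\mathrm{CM},t}$ restricts on the Abelian fiber to ``a positive multiple of the principal polarization,'' extracted from subleading terms by the theorem of the cube, is exactly the nontrivial point, and no argument is given that would establish it for $\mathbb{Q}$-Cartier Weil polarizations on klt total spaces. The paper instead proves a stronger, cleaner statement: $\Lambda_{\mathrm{CM},\mu}$ is ample on \emph{every} proper subspace of the moduli space for $\mu\gg0$ (Corollary \ref{cor--partial--positivity--of--cm}), which follows from special K-stability together with an extension of Koll\'ar's ampleness criterion (Theorem \ref{thm--Kollar's--lemma}, refining \cite[Theorem 6.6]{XZ}) to polarized families of \emph{maximal variation as polarized pairs} --- the crucial new case being families where the underlying variety is fixed and only the linear equivalence class of the polarization moves (this is what Example \ref{ex--cm} illustrates) --- and then Nakai--Moishezon for proper algebraic spaces \cite{FM3}. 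Since the fibers of $\overline{\xi}$ are proper by Proposition \ref{prop--representability--quotient--morph}, Theorem \ref{thm--rel--ample} follows. Without something playing the role of Theorem \ref{thm--Kollar's--lemma}, your fiberwise convexity heuristic does not close the argument.
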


Theorem \ref{thm--intro--const--N} follows from Corollary \ref{cor--N--closedpoint}, Proposition \ref{prop--unnec--u<0}, and Theorems \ref{thm:N--DMstack} and \ref{thm--rel--ample}.
See Definition \ref{de--Num--consruction} for more details.

Theorem \ref{thm--intro--const--N} extends the main result of \cite{viehweg91} to the case of moduli of varieties polarized by $\mathbb{Q}$-Cartier Weil divisors.
Using $N^{W,I}_{d,v,u,r,w}$, we can relate $M_{d,v,u,r,w}^{\mathrm{sn}}$ to the moduli of quasimaps, despite the lack of quasi-finiteness of $\overline{\beta_{v,w}}$.

Now, we can define  $M^{W,I}_{d,v,u,w,p_a,V_1}\subset M^{W,I}_{d,v,u,r,w}$ as a locally closed reduced subspace consisting of those objects whose general fiber lies in $V_1$ and whose base curve has the arithmetic genus $p_a$.
Then the following holds.

\begin{thm}\label{thm--technical-core}
There exists a locally closed subspace $N^{W,I}_{d,v,u,w,p_a,V_1}\subset N^{W,I}_{d,v,u,r,w}$ such that $$N^{W,I}_{d,v,u,w,p_a,V_1}\times_{N^{W,I}_{d,v,u,r,w}}M^{W,I}_{d,v,u,r,w}\cong M^{W,I}_{d,v,u,w,p_a,V_1}.$$    
Furthermore, there exists a quasi-finite morphism $$\overline{\alpha}\colon  (N^{W,I}_{d,v,u,w,p_a,V_1})^{\mathrm{sn}}\to M^{\mathrm{Kps,qm}}_{k(u-2p_a+2),\frac{1}{k},u,\iota}$$ such that $\overline{\alpha}\circ\overline{\xi}\circ\gamma^{\mathrm{sn}}=\overline{\beta}_{v,w}$, where $(N^{W,I}_{d,v,u,w,p_a,V_1})^{\mathrm{sn}}$ is the seminormalization and $\gamma^{\mathrm{sn}}\colon M_{d,v,u,w}^{\mathrm{sn}}\to(M^{W,I}_{d,v,u,r,w})^{\mathrm{sn}}$ is a natural quasi-finite morphism.

In particular, $(N^{W,I}_{d,v,u,w,p_a,V_1})^{\mathrm{sn}}$ and $(M_{d,v,u,w,V_1})^{\mathrm{sn}}$ are quasi-projective.
\end{thm}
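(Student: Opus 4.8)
The plan is to establish the fiber-product identity first and then the quasi-finite morphism $\overline{\alpha}$, deducing quasi-projectivity as a formal consequence. For the base-change isomorphism $N^{W,I}_{d,v,u,w,p_a,V_1}\times_{N^{W,I}_{d,v,u,r,w}}M^{W,I}_{d,v,u,r,w}\cong M^{W,I}_{d,v,u,w,p_a,V_1}$, the key point is that the locus cut out by ``the general fiber lies in $V_1$ and the base curve has arithmetic genus $p_a$'' is a condition that only depends on the numerical equivalence class of the polarization, hence descends along $\overline{\xi}$. Concretely, I would observe that the Iitaka fibration $f\colon X\to C$ and its general fiber $(F,A|_F)$ are intrinsic to $X$ (independent of which representative $A$ in a numerical class we choose, up to the finite ambiguity the polarization degree fixes), so that the image $N^{W,I}_{d,v,u,w,p_a,V_1}:=\overline{\xi}(M^{W,I}_{d,v,u,w,p_a,V_1})$ is a locally closed subspace and the defining conditions pull back exactly. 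The properness and smoothness of $\xi$ from Theorem \ref{thm--intro--const--N} then guarantee that the set-theoretic fiber over $N^{W,I}_{d,v,u,w,p_a,V_1}$ is precisely $M^{W,I}_{d,v,u,w,p_a,V_1}$, and scheme-theoretically the reduced structures agree since both subspaces are taken reduced.

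Next I would construct $\overline{\alpha}$. By Theorem \ref{thm--intro--corresp} we already have $\overline{\beta_{v,w}}\colon (M_{d,v,u,w,p_a,V_1})^{\mathrm{sn}}\to M^{\mathrm{Kps,qm}}_{k(u-2p_a+2),\frac1k,u,\iota}$, and by Theorem \ref{thm--intro--const--N} the map $\overline{\xi}$ identifies exactly the numerically equivalent polarizations on a fixed $X$. The crucial observation is that $\overline{\beta_{v,w}}$ factors through numerical equivalence: the quasimap recorded by $\overline{\beta_{v,w}}$ sees only the numerical class of the linear system on $C$ (this is precisely the source of the failure of quasi-finiteness of $\overline{\beta_{v,w}}$ noted in \cite[Remark 4.15]{HH2}), so two points of $M^{W,I}$ lying in the same $\overline{\xi}$-fiber have the same image under $\overline{\beta_{v,w}}$. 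Using the universal property of the quotient $\overline{\xi}$ (and the compatibility $\gamma^{\mathrm{sn}}$ between $M_{d,v,u,w}$ and $M^{W,I}_{d,v,u,r,w}$), I would descend $\overline{\beta_{v,w}}$ to a morphism $\overline{\alpha}$ on $(N^{W,I}_{d,v,u,w,p_a,V_1})^{\mathrm{sn}}$ satisfying $\overline{\alpha}\circ\overline{\xi}\circ\gamma^{\mathrm{sn}}=\overline{\beta}_{v,w}$ by construction. The quasi-finiteness of $\overline{\alpha}$ is then the content of the result: since $\overline{\xi}$ has contracted exactly the numerical-equivalence classes, the residual ambiguity in the fibers of $\overline{\alpha}$ is finite — a point of the quasimap moduli determines the numerical class of $(C,\mathscr{L})$ and the general Calabi--Yau fiber in $V_1$, and the B-semiampleness/canonical bundle formula input (Theorem \ref{thm--canonical--bundle--formula-sin}) rigidifies the remaining data up to finitely many choices.

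The main obstacle I anticipate is proving that $\overline{\alpha}$ is genuinely quasi-finite rather than merely that it descends. After quotienting by numerical equivalence, one must rule out positive-dimensional fibers of $\overline{\alpha}$: a priori a single quasimap could arise from a whole family of pairwise numerically inequivalent good minimal models, and $\overline{\xi}$ only kills the numerically trivial variation. The resolution should come from a Torelli-type or period-map rigidity built into the construction of $\overline{V}^{\mathrm{BB}}$ as a Baily--Borel-type compactification in Theorem \ref{thm--canonical--bundle--formula-sin}: the quasimap $q\colon C\to[\mathrm{Cone}(\overline{V}^{\mathrm{BB}})/\mathbb{G}_m]$ records, fiberwise, the moduli point of the Calabi--Yau fiber inside $V_1\subset\overline{V}^{\mathrm{BB}}$, and this determines the Calabi--Yau fibration up to finite ambiguity because distinct numerical classes of the polarization produce distinct quasimaps (the weight and the degrees $k(u-2p_a+2)$, $m$ being fixed). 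Making this precise will require checking that the assignment from an object of $N^{W,I}_{d,v,u,w,p_a,V_1}$ to its quasimap is injective on points modulo finite groups, which is where the normalization/seminormalization subtleties and the $\mathbb{Q}$-Cartier Weil structure must be handled carefully.

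Finally, the quasi-projectivity is formal: $M^{\mathrm{Kps,qm}}_{k(u-2p_a+2),\frac1k,u,\iota}$ is projective with ample $\Lambda^{\mathrm{qmaps}}_{\mathrm{CM}}$ by Theorem \ref{thm--intro--qmaps}, so its pullback along the quasi-finite morphism $\overline{\alpha}$ is ample on $(N^{W,I}_{d,v,u,w,p_a,V_1})^{\mathrm{sn}}$, giving its quasi-projectivity; and since $\xi$ is proper with $\Lambda_{\mathrm{CM},t}$ being $\overline{\xi}$-ample for $t\gg0$ (Theorem \ref{thm--intro--const--N}), the quasi-projectivity ascends to $(M_{d,v,u,w,V_1})^{\mathrm{sn}}$ via $\gamma^{\mathrm{sn}}$ and $\overline{\xi}$, completing the argument.
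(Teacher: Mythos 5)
Your architecture matches the paper's: descend the defining locus along $\xi$ (this is Lemma \ref{lem--N--V_1}), descend $\overline{\beta}_{v,w}$ to $\overline{\alpha}$ (Theorem \ref{thm--quasifinite}), prove quasi-finiteness of $\overline{\alpha}$ (Proposition \ref{prop-q-proj--N}), and deduce quasi-projectivity formally (Theorem \ref{thm--ampleness-CM}). The first and last steps are essentially as in the paper. For the descent step, note however that it is not a pointwise statement: "two points in the same $\overline{\xi}$-fiber have the same image under $\overline{\beta}_{v,w}$" does not by itself produce a morphism on the quotient. The paper descends $\beta$ along a groupoid presentation of $(\mathcal{N}^{W,I}_{d,v,u,w,p_a,V_1})^{\mathrm{sn}}$, and the actual input is that the associated quasimap is invariant under twisting the polarization by an element of $\mathbf{Pic}^0$ (Lemma \ref{lem--invariant--linear--equivalence}), together with the cocycle compatibilities; your appeal to a "universal property of the quotient" presupposes exactly this.

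The genuine gap is the quasi-finiteness of $\overline{\alpha}$, which is the technical core of the theorem, and your proposed mechanism for it is not a proof and is partly misdirected. You claim that "distinct numerical classes of the polarization produce distinct quasimaps" and invoke an unspecified Torelli-type rigidity of $\overline{V}^{\mathrm{BB}}$. But the quasimap only records the fiberwise moduli points and the discriminant; it does not see the numerical class of $A$ on $X$ directly, and, more importantly, the danger is not variation of the polarization on a fixed $X$ (which is automatically discrete modulo numerical equivalence), but variation of the fibration $f\colon X\to C$ itself: a priori a positive-dimensional family of pairwise non-isomorphic fibrations could all have the same associated quasimap. The paper excludes this by an isotriviality argument: if a curve maps into a fiber of $\overline{\alpha}$, the corresponding family of quasimaps is isotrivial, and then Lemma \ref{lem--isotrivial} (resting on \cite[Propositions 4.10 and 4.11]{HH2}, ultimately on Ambro's theorem \cite{Am}) shows that the family of total spaces is trivialized over a dense open subset of the curve, $\mathcal{X}\times_C W\cong \mathcal{X}_w\times W$. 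Once the underlying variety is constant, Corollary \ref{cor--N--closedpoint} (points of $N$ are isomorphism classes modulo numerically trivial twists) forces the induced map from the curve to $(N^{W,I}_{d,v,u,w,p_a,V_1})^{\mathrm{sn}}$ to be constant, a contradiction. This family-theoretic rigidity statement is the missing idea; "injectivity on points modulo finite groups," which is what you propose to check, is not what quasi-finiteness requires and would not be provable by the route you sketch. Granting quasi-finiteness, your concluding deductions (ampleness of the pullback of $\Lambda^{\mathrm{qmaps}}_{\mathrm{CM}}$, hence quasi-projectivity of $(N^{W,I}_{d,v,u,w,p_a,V_1})^{\mathrm{sn}}$, then ascent to $(M^{W,I}_{d,v,u,w,p_a,V_1})^{\mathrm{sn}}$ via the $\overline{\xi}$-ampleness of $\Lambda_{\mathrm{CM},t}$ and to $(M_{d,v,u,w,V_1})^{\mathrm{sn}}$ via $\gamma^{\mathrm{sn}}$) agree with the paper.
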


Theorem \ref{thm--technical-core} follows from Lemma \ref{lem--N--V_1}, Proposition \ref{prop-q-proj--N} and Theorem \ref{thm--ampleness-CM}.
Theorem \ref{thm--technical-core} is the technical core to deduce Theorem \ref{thm--intro--main} of this paper because we can apply the framework of \cite{HH2} with the quasi-finiteness of $\overline{\alpha}$ instead of $\overline{\beta}_{v,w}$ due to this theorem.

\begin{rem}\label{rem--koex}

Note that we cannot deduce the quasi-projectivity or the ampleness of the CM line bundle of the original $M^{W,I}_{d,v,u,w,p_a,V_1}$ before taking the normalization or the seminormalization by the method of this paper for the same reason as \cite[Remark 4.14]{HH2}.  
See also \cite[Proposition 2]{Koex}, which is an example for a non-ample line bundle on a non-proper scheme with pullback to the normalization ample. 
\end{rem}


In the next remark, we explain how to eliminate the assumption on general fibers of $F$ and the necessity of the assumption on $V_1$ in Theorem \ref{thm--intro--main}.

\begin{rem}\label{rem--ass}
In \cite{HH2}, we concentrated only on the case where $F$ is an Abelian variety or holomorphic symplectic variety due to the Baily--Borel compacitification of the moduli space of polarized klt Calabi--Yau varieties established in these cases.
In this paper, due to the contribution of \cite{BFMT}, where they settle the $\mathbf{B}$-semiampleness conjecture, we can construct a compactification $\overline{V}^{\mathrm{BB}}$ of the normalization of the component as the ample model of the Hodge line bundle on the moduli space constructed by \cite{KX,Bir} (see Theorem \ref{thm--canonical--bundle--formula-sin}).
This compactification has good properties similar to the Baily--Borel compactification that enable us to deduce Theorem \ref{thm--intro--corresp}.


 The assumption on $V_1$ in Theorem \ref{thm--intro--main} comes from the construction of $\overline{V}^{\mathrm{BB}}$. 
 Since $V$ is not normal in general as Remark \ref{rem-bfmt} and \cite{Gro}, we cannot embed $V$ but $V_1$ into $\overline{V}^{\mathrm{BB}}$.
 Furthermore, it is still challenging to compactify the moduli space of polarized klt Calabi--Yau varieties such that the Hodge line bundle can be extended to an ample $\mathbb{Q}$-line bundle before normalization.
\end{rem}

\subsection{Related works}\label{sec--rel--works}

For the canonically polarized case, Patakfalvi--Xu \cite{PX} established the positivity of the CM line bundle using the results of Koll\'ar \cite{kollar-moduli-stable-surface-proj}, Fujino \cite{fujino-semi-positivity}, and Kov\'acs--Patakfalvi \cite{KP}.
For the log Fano case, after the work of Codogni--Patakfalvi \cite{CP,CP2} and Posva \cite{P} on positivity of the CM line bundle on proper algebraic subspaces in the K-stable locus, Xu--Zhuang \cite{XZ} finally proved the entire positivity of the CM line bundle on the K-moduli space of log Fano pairs assuming the properness of the K-moduli space, which was later shown by \cite{LXZ}.
Note that their techniques rely on the Nakai--Moishezon criterion for proper algebraic spaces \cite{kollar-moduli-stable-surface-proj,FM3}.

 Viehweg \cite{viehweg95} also dealt with the quasi-projectivity of the non-proper coarse moduli space parameterizing good minimal models with an arbitrary Kodaira dimension and only canonical singularities in \cite[Theorem 8.23]{viehweg95}.
 The work \cite{viehweg91} also only dealt with the moduli of varieties with canonical singularities.
Note that the ample line bundle, which he constructed, is different from the CM line bundle, in general.

\subsection{Structure of the paper}
In Section \ref{sec2}, we provide the reader with fundamental notions and facts on birational geometry, K-stability, klt--trivial fibrations, CM line bundle, and moduli stacks.

Sections \ref{sec--3}, \ref{sec-4}, and \ref{sec-5} are devoted to show Theorem \ref{thm--intro--const--N}.
Firstly, we will introduce the notion of flat $\mathbb{Q}$-Cartier divisorial sheaves (see Definition \ref{de--Q-Cartier--divisorial}) and construct the moduli space $\mathbf{W}^{\mathbb{Q}}\mathbf{Pic}_{X/S}$ of them as the Picard scheme $\mathbf{Pic}_{X/S}$ for line bundles in Section \ref{sec--3} in the case where $f\colon (X,\Delta)\to S$ is locally stable (cf.~Definition \ref{de--locally-stable}) and any geometric fiber is klt.
Secondly, we generalize the main result of \cite[Theorem 1.1]{Hat23} in Section \ref{sec-4}, which is a key step in discussing the relative ampleness of $\Lambda_{\mathrm{CM},t}$ in Theorem \ref{thm--intro--const--N}.
Thirdly, we show Theorem \ref{thm--intro--const--N} in Section \ref{sec-5}.
We construct ${N}^{W,I}_{d,v,u,r,w}$ as the coarse moduli space of a Deligne--Mumford stack $\mathcal{N}^{W,I}_{d,v,u,r,w}$ that is a quotient of the stack $\mathcal{M}^{W,I}_{d,v,u,r,w}$ with the coarse moduli space $M^{W,I}_{d,v,u,r,w}$ by a certain Abelian scheme. 

We deduce Theorem \ref{thm--intro--qmaps} in Section \ref{sec--quasimap} using the positivity of the CM line bundle on the KSBA moduli space \cite{PX}, the results on the degree of CM line bundles \cite{WX}, and the discussion in \cite{HH2}.

 Section \ref{sec-7}  is devoted to construct $\overline{V}^{\mathrm{BB}}$ and find $k$ as in Theorems \ref{thm--intro--corresp} and \ref{thm--technical-core}. 
We first discuss how to generalize a key step \cite[Theorem 2.50]{HH2} to the general klt polarized Calabi--Yau varieties.
We will construct $\overline{V}^{\mathrm{BB}}$ as we explained in Remark \ref{rem--ass}.

Finally, we show Theorems \ref{thm--intro--corresp}, \ref{thm--technical-core} and \ref{thm--intro--main} in Section \ref{sec--finish}.
First, Theorem \ref{thm--intro--corresp} follows from the same discussion as \cite{HH2} using the results of Sections \ref{sec--quasimap} and \ref{sec-7}.
Combining the results of Section \ref{sec-5} with Theorem \ref{thm--intro--corresp}, we show Proposition \ref{prop-q-proj--N}, which is the essential point of Theorem \ref{thm--technical-core}.
Finally, we deduce Theorem \ref{thm--ampleness-CM}, which is a generalization of Theorem \ref{thm--intro--main} and states the quasi-projectivity of $(M^{W,I}_{d,v,u,w,p_a,V_1})^{\mathrm{sn}}$ and the ampleness of the CM line bundle $\Lambda_{\mathrm{CM},t}$ on the normalization $(M^{W,I}_{d,v,u,w,p_a,V_1})^{\nu}$, applying Theorem \ref{thm--intro--const--N} and Proposition \ref{prop-q-proj--N}.

\begin{ack}
The author thanks Hamid Abban, Harold Blum, Ruadha\'i Dervan, Kenta Hashizume, Eiji Inoue, Kentaro Inoue, Xiaowei Jiang, Yuki Koto and Yuji Odaka for fruitful discussions and comments.
The author is supported by Royal Society International Collaboration Award ICA\textbackslash1\textbackslash231019.
\end{ack}

\section{Preliminaries}\label{sec2}

\subsection*{Notations and conventions}
\begin{enumerate}[(i)]
\item If we say that $X$ is a scheme, then we assume $X$ to be a locally Noetherian scheme over $\mathbbm{k}$.
If $X$ is further of finite type over $\mathbbm{k}$, separated, irreducible and reduced, then we say that $X$ is a variety.
For any point $x\in X$, let $\kappa(x)$ denote the residue field of the local ring $\mathcal{O}_{X,x}$.
That is, if we set $\mathfrak{m}_x$ as the maximal ideal of $\mathcal{O}_{X,x}$, $\kappa(x):=\mathcal{O}_{X,x}/\mathfrak{m}_x$.
\item
Let $X$ be a scheme.
We denote 
\[
X(S):=\mathrm{Hom}(S,X)
\]
and call this the set of all $S$-valued points of $X$.
If $S=\mathrm{Spec}\,R$ for some ring, then we will write $X(R)$.
If $S=\mathrm{Spec}\,\Omega$, where $\Omega$ is an algebraically closed field over $\mathbbm{k}$, then we call elements of $X(S)$ {\it geometric points} of $X$.
If the image of $\mathrm{Spec}\,\Omega\to X$ is $x\in X$, we denote this by $\bar{x}\in X$. 
\item Let $S$ be a Noetherian scheme. Take a finitely many locally closed subset $S_i$ of $S$ such that $S_i\cap S_j=\emptyset$ for any $i\ne j$.
Then we say that the natural morphism $\coprod S_i\to S$ is a {\it partial locally closed decomposition of} $S$.
If $S=\bigcup_{j}S_i$ as a set, then we say that the above morphism is a {\it locally closed decomposition of} $S$.
\item Let $f\colon X\to S$ be a projective flat morphism of schemes with a coherent sheaf $\mathscr{F}$ and an $f$-ample line bundle $A$ over $X$.
We say that {\it $\mathscr{F}$ has the Hilbert polynomial $P$ with respect to $A$} if $\chi(X_s,\mathscr{F}_s\otimes A_s^{\otimes m})=P(m)$ for any $m\in\mathbb{Z}$ and $s\in S$.
\item We say that a projective morphism of schemes $f\colon X\to Y$ is a {\it contraction} if $f_*\mathcal{O}_X\cong \mathcal{O}_Y$.
\item Let $X$ be a projective variety with $\mathbb{Q}$-Cartier $\mathbb{Q}$-divisors $D_1$ and $D_2$.
We say that $D_1$ and $D_2$ are {\it numerically equivalent} if $\mathrm{deg}_CD_1=\mathrm{deg}_CD_2$ for any irreducible curve in $X$.
If further $D_2=0$, then $D_1$ is called {\it numerically trivial}.
  \item For any two morphisms of Artin stacks $f\colon \mathscr{X}\to\mathscr{S}$ and $g\colon \mathscr{Y}\to \mathscr{S}$, we write $\mathscr{X}\times_{f,\mathscr{S},g}\mathscr{Y}$ the fiber product induced by $f$ and $g$.
  If there is no fear of confusion, we will write $\mathscr{X}\times_{f,\mathscr{S}}\mathscr{Y}$, $\mathscr{X}\times_{\mathscr{S},g}\mathscr{Y}$, or $\mathscr{X}\times_{\mathscr{S}}\mathscr{Y}$.
  If $\mathscr{S}=\mathrm{Spec}\,\mathbbm{k}$, then we will also omit $\mathscr{S}$.
    \item \label{Notations--(vi)} Let $f\colon \mathscr{X}\to \mathscr{S}$ be a morphism of Artin stacks.
    Let $g\colon T\to \mathscr{S}$ be a morphism from a scheme.
     Then we set $\mathscr{X}_T:=\mathscr{X}\times_{\mathscr{S}}T$ and $f_T\colon \mathscr{X}_T\to T$ as the morphism induced by $f$ and $g$.
    If there exists a morphism $\phi\colon \mathscr{X}\to \mathscr{Y}$ of Artin stacks over $\mathscr{S}$, then we write $\phi_T\colon \mathscr{X}_T\to\mathscr{Y}_T$ as the induced morphism. 
     Let $h\colon \mathscr{X}_T\to \mathscr{X}$ be the canonical morphism, which is denoted by $g_{\mathscr{X}}$ in this paper.
     If $L$ is a $\mathbb{Q}$-line bundle on $\mathscr{X}$, we set $L_T:=h^*L$.
     Suppose that $\mathscr{S}$ is a scheme.
     If $\mathscr{S}=\mathrm{Spec}\,R$ for some ring, we will write the above notions as $\mathscr{X}_R$ and so on.
     For any $s\in \mathscr{S}$, we denote $\mathscr{X}_s:=\mathscr{X}_T$ and $L_s=L_T$, where $T=\mathrm{Spec}\,\kappa(s)$.
     If $T=\mathrm{Spec}\,\overline{\kappa(s)}$, then we denote $\mathscr{X}_{\bar{s}}:=\mathscr{X}_T$ and $L_{\bar{s}}=L_T$.

     For details and the definition of Artin stacks and algebraic spaces, refer to \cite{Ols,HH,HH2}.
     Note that we can take the normalization of an Artin stack of finite type over a field. 
     See \cite[e.g.~Tag 0GMK]{Stacks} for details.
\end{enumerate}

\subsection{Birational geometry and K-stability}
We first recall the fundamental concepts of birational geometry and K-stability.

Let $X$ be a quasi projective variety with a $\mathbb{Q}$-divisor $\Delta$.
Suppose that $K_X+\Delta$ is $\mathbb{Q}$-Cartier.
Then, we say that $(X,\Delta)$ is a {\it sublog pair}.
For any prime divisor $F$ over $X$, we set the log discrepancy $A_{X,\Delta}(F)$ of $(X,\Delta)$ with respect to $F$ as
\[
A_{X,\Delta}(F):=1+\mathrm{ord}_F(K_Y-\pi^*(K_X+\Delta)),
\]
where $\pi\colon Y\to X$ is a log resolution of $(X,\Delta)$ such that $F$ is a prime divisor on $Y$.
If $A_{X,\Delta}(F)\ge 0$ (resp.~$A_{X,\Delta}(F)>0$) for any prime divisor $F$ over $X$, we say that $(X,\Delta)$ is {\it sublc} (resp.~{\it subklt}).
If $\Delta$ is further effective, we say that $(X,\Delta)$ is {\it log canonical} (resp.~{\it Kawamara log terminal}). We simply call this {\it lc} (resp.~{\it klt}).
Furthermore, we say that $X$ is {\it of lc (resp.~klt) type} if $(X,\Delta)$ is lc (resp.~klt) for some $\mathbb{Q}$-divisor $\Delta$.

\begin{de}[Good minimal models]\label{de--gmm}
    Let $(X,\Delta)$ be a projective klt pair.
    If $K_X+\Delta$ is semiample, then we say that $(X,\Delta)$ is a {\it good minimal model}.
    If there exists a contraction $f\colon X\to C$ to a smooth proper curve such that $f^*L\sim_{\mathbb{Q}}K_X+\Delta$ for some ample $\mathbb{Q}$-line bundle $L$ on $C$, then we say that $(X,\Delta)$ is a {\it good minimal model of Kodaira dimension one}.
    For simplicity, we call this a {\it good minimal model with $\kappa=1$}.
    See \cite{KM,BCHM} for more details of minimal models and the minimal model program.
\end{de}

\begin{de}[Slc schemes]
We say that a reduced scheme $Y$ of finite type over $\mathbbm{k}$ is a deminormal scheme if $Y$ satisfies Serre's $S_2$-condition and $Y$ has at worst nodal singularities at any codimension one point.

Let $Y$ be a quasi-projective deminormal scheme with a $\mathbb{Q}$-divisor $\Delta$ such that $ \mathrm{Supp}(\Delta)$ does not contain any irreducible closed subset $F$ with $F\subset\mathrm{Sing}(Y)$ and $\mathrm{codim}_Y(F)=1$.
If $K_Y+\Delta$ is $\mathbb{Q}$-Cartier, then we say that $(Y,\Delta)$ is a {\it deminormal sublog pair}.
If $\Delta$ is further effective, then $(Y,\Delta)$ is a {\it deminormal (log) pair}.

Let $\nu\colon X\to Y$ be the normalization morphism.
Then we set the {\it conductor divisor} as the reduced divisor $D_X\subset X$ defined as the ideal sheaf $\mathcal{H}om_{\mathcal{O}_Y}(\nu_*\mathcal{O}_X,\mathcal{O}_Y)$.
In this case, we say that a deminormal sublog pair $(Y,\Delta)$ is {\it subslc} if $K_Y+\Delta$ is $\mathbb{Q}$-Cartier and $(X,\nu_*^{-1}\Delta+D_X)$ is sublc.
If $\Delta$ is effective, then we say that $(Y,\Delta)$ is {\it semi log canonical (slc)}. See \cite[Section 5]{kollar-mmp} for more details.
We say that $Y$ is {\it of slc type} if there exists a $\mathbb{Q}$-divisor such that $(Y,\Delta)$ is slc.

\end{de}

Let $(X,\Delta)$ be a projective deminormal pair. 
We say that $(X,\Delta;L)$ is a {\it polarized log pair} if $L$ is an ample $\mathbb{Q}$-line bundle.
If $\Delta=0$, let $(X,L)$ denote $(X,0;L)$ for simplicity.

\begin{de}[Test configuration]\label{de--tc}
Let $(X,\Delta;L)$ be a polarized deminormal pair of dimension $n$.
We say that a pair $(\mathcal{X},\mathcal{L})$ is a {\it (semiample) test configuration} for polarized deminormal pair $(X,\Delta)$ if the following are satisfied.
\begin{enumerate}
\item There exists a flat proper morphism $\pi\colon \mathcal{X}\to\mathbb{A}^1$,
\item $\mathbb{G}_m$ acts on $\mathcal{X}$ equivariantly over $\mathbb{A}^1$, where $\mathbb{G}_m$ acts on $\mathbb{A}^1$ by multiplication,
\item $\mathcal{L}$ is a $\pi$-semiample $\mathbb{Q}$-line bundle such that $m\mathcal{L}$ admits a $\mathbb{G}_m$-linearization \cite[\S1.3]{GIT} for some positive integer $m$,
\item $(\mathcal{X}_1,\mathcal{L}_1)\cong(X,L)$ for $1\in \mathbb{A}^1$.
\end{enumerate}
Let $X_{\mathbb{A}^1}$ (resp.~$X_{\mathbb{P}^1}$) denote $X\times \mathbb{A}^1$ (resp.~$X\times\mathbb{P}^1$) with the trivial $\mathbb{G}_m$-action.
We say that $(\mathcal{X},\mathcal{L})$ is {\it trivial} if $\mathcal{X}$ is $\mathbb{G}_m$-equivariantly isomorphic to $X_{\mathbb{A}^1}$.  
Let $\nu\colon\tilde{X}\to X$ be the normalization morphism and $D_{\tilde{X}}$ the conductor divisor.
Let $\mu\colon\widetilde{\mathcal{X}}\to \mathcal{X}$ be the normalization.
We note that $(\widetilde{\mathcal{X}},\mu^*\mathcal{L})$ is a semiample test configuration for $(\tilde{X},\nu^*L)$.
We set the {\it Donaldson--Futaki invariant} as
\[
\mathrm{DF}_{\nu_*^{-1}\Delta+D_{\tilde{X}}}(\widetilde{\mathcal{X}},\mu^*\mathcal{L}):=\frac{1}{L^n}\left((K_{\widetilde{\mathcal{X}}^{\mathrm{c}}/\mathbb{P}^1}+\mathcal{D})\cdot(\mu^*\mathcal{L}^{\mathrm{c}})^n-\frac{n(K_X+\Delta)\cdot L^n}{(n+1)L^n}(\mu^*\mathcal{L}^{\mathrm{c}})^{n+1} \right),
\]
where $(\widetilde{\mathcal{X}}^{\mathrm{c}},\mu^*\mathcal{L}^{\mathrm{c}})$ is a normal scheme projective and flat over $\mathbb{P}^1$ obtained by gluing $(\widetilde{\mathcal{X}},\mu^*\mathcal{L})\to\mathbb{A}^1$ and $(X_{\mathbb{A}^1},L\times\mathbb{A}^1)\to \mathbb{P}^1\setminus\{0\}$, and $\mathcal{D}$ is the closure of $(\nu_*^{-1}\Delta+D_{\tilde{X}})\times\mathbb{G}_m$.

Take a $\mathbb{G}_m$-equivariant projective birational morphism $\sigma\colon \mathcal{Y}\to \widetilde{\mathcal{X}}^c$ such that there exists a $\mathbb{G}_m$-equivariant birational morphism $\rho\colon \mathcal{Y}\to X_{\mathbb{P}^1}$.
For any $\mathbb{Q}$-line bundle $T$ over $X$, we set the {\it non--Archimedean $\mathrm{J}^T$-functional} as
\begin{equation*}
        \mathcal{J}^{T,\mathrm{NA}}(\mathcal{X},\mathcal{L}):=\frac{1}{L^n}\left(\rho^*(L\times\mathbb{P}^1)\cdot\sigma^*(\mu^*\mathcal{L}^c)^n-\frac{nT\cdot L^{n-1}}{(n+1)L^n}(\mu^*\mathcal{L}^c)^{n+1} \right).
    \end{equation*}
    For more details, see \cite{BHJ,Hat2,Xu}, for example.
\end{de}

\begin{de}[K-stability and J-stability]\label{de--k-st}
Let $(X,\Delta;L)$ be a polarized deminormal pair.
We say that $(X,\Delta;L)$ is {\it uniformly K-stable} if there exists $\epsilon>0$ such that
\[
\mathrm{DF}_{\nu_*^{-1}\Delta+D_{\tilde{X}}}(\widetilde{\mathcal{X}},\mu^*\mathcal{L})\ge \epsilon \mathcal{J}^{L,\mathrm{NA}}(\mathcal{X},\mathcal{L}) 
\]
for any semiample test configuration $(\mathcal{X},\mathcal{L})$ for $(X,L)$, where $(\widetilde{\mathcal{X}},\mu^*\mathcal{L})$ is defined as Definition \ref{de--tc}.

Let $H$ be an ample $\mathbb{Q}$-line bundle on $X$.
We say that $(X,L)$ is {\it uniformly $\mathrm{J}^H$-stable} if there exists $\epsilon>0$ such that 
\[
\mathcal{J}^{H,\mathrm{NA}}(\mathcal{X},\mathcal{L})\ge \epsilon \mathcal{J}^{L,\mathrm{NA}}(\mathcal{X},\mathcal{L}) 
\]
for any semiample test configuration $(\mathcal{X},\mathcal{L})$ for $(X,L)$.
\end{de}

\begin{rem}
We note that the uniform K-stability in Definition \ref{de--k-st} is equivalent to the uniform K-stability in the original sense (cf.~\cite{De2,BHJ}).  
\end{rem}

Next, we will introduce the following notion, which detects the K-stability of Fano varieties by \cite{BlJ}.

\begin{de}[Delta invariant]
 Let $(X,\Delta;L)$ be a polarized klt log pair.  
 For any prime divisor $F$ over $X$, we set
 \[
 S_L(F):=\frac{1}{L^n}\int^\infty_0\mathrm{vol}(\pi^*L-xF)dx,
 \]
 where $\pi\colon Y\to X$ is a projective birational morphism such that $F$ is a Cartier divisor on $Y$.
 We note that the above invariant is independent of the choice of $\pi$.
 Then, we set 
 \[
 \delta_{(X,\Delta)}(L):=\inf_{F}\frac{A_{X,\Delta}(F)}{S_L(F)},
 \]
 where $F$ runs over all prime divisors over $X$.
 Note that $\delta_{(X,\Delta)}(L)>0$ by \cite[Theorem A]{BlJ}. See \cite{BlJ} for more details.
\end{de}

\begin{rem}
The delta invariant $\delta_{(X,\Delta)}(L)$ is firstly introduced by Fujita--Odaka \cite{FO} and Blum--Jonsson \cite{BlJ} showed that their delta invariant can be defined as above (see \cite[Theorem A]{BlJ}).
\end{rem}

Combining the uniform J-stability and the delta invariant, we introduce the following subclass of the uniform K-stability.

\begin{de}[Special K-stability {\cite[Definition 3.21]{CM}}]\label{de--special--k-st}
    Let $(X,\Delta;L)$ be a polarized klt log pair.
    We say that $(X,\Delta;L)$ is {\it specially K-stable} if $\delta_{(X,\Delta)}(L)L+K_X+\Delta$ is ample and $(X,L)$ is uniformly J$^{\delta_{(X,\Delta)}(L)L+K_X+\Delta}$-stable. 
\end{de}

\begin{thm}[{\cite[Corollary 3.23]{CM}}]
    Let $(X,\Delta;L)$ be a polarized klt log pair.
    If $(X,\Delta;L)$ is specially K-stable, then $(X,\Delta;L)$ is uniformly K-stable.
\end{thm}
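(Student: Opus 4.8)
The plan is to compare the Donaldson--Futaki invariant directly with the twisted functional $\mathcal{J}^{\delta L+K_X+\Delta,\mathrm{NA}}$ that appears in the definition of special K-stability, and then invoke uniform $\mathrm{J}^{\delta L+K_X+\Delta}$-stability. Write $\delta:=\delta_{(X,\Delta)}(L)$, $n:=\dim X$, and fix an arbitrary semiample test configuration $(\mathcal{X},\mathcal{L})$ for $(X,L)$, with its normalization $(\widetilde{\mathcal{X}},\mu^*\mathcal{L})$, compactification $(\widetilde{\mathcal{X}}^c,\mu^*\mathcal{L}^c)$, and the common model $\sigma\colon\mathcal{Y}\to\widetilde{\mathcal{X}}^c$, $\rho\colon\mathcal{Y}\to X_{\mathbb{P}^1}$ as in Definition \ref{de--tc}. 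Since $\delta L+K_X+\Delta$ is ample by hypothesis and $(X,L)$ is uniformly $\mathrm{J}^{\delta L+K_X+\Delta}$-stable, there is $\epsilon_0>0$ with $\mathcal{J}^{\delta L+K_X+\Delta,\mathrm{NA}}(\mathcal{X},\mathcal{L})\ge\epsilon_0\,\mathcal{J}^{L,\mathrm{NA}}(\mathcal{X},\mathcal{L})$. As $\mathcal{J}^{L,\mathrm{NA}}\ge0$, it therefore suffices to prove the single inequality
\[
\mathrm{DF}_{\nu_*^{-1}\Delta+D_{\tilde X}}(\widetilde{\mathcal{X}},\mu^*\mathcal{L})\ \ge\ \mathcal{J}^{\delta L+K_X+\Delta,\mathrm{NA}}(\mathcal{X},\mathcal{L}),
\]
for then $\mathrm{DF}\ge\mathcal{J}^{\delta L+K_X+\Delta,\mathrm{NA}}\ge\epsilon_0\,\mathcal{J}^{L,\mathrm{NA}}$ yields uniform K-stability with $\epsilon=\epsilon_0$.

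The second step is the bookkeeping that produces the remainder term. Setting $\Phi(\mathcal{X},\mathcal{L}):=\mathrm{DF}_{\nu_*^{-1}\Delta+D_{\tilde X}}(\widetilde{\mathcal{X}},\mu^*\mathcal{L})-\mathcal{J}^{\delta L+K_X+\Delta,\mathrm{NA}}(\mathcal{X},\mathcal{L})$ and expanding both definitions on $\mathcal{Y}$, the $T$-dependent terms differ only through the slope $\tfrac{nT\cdot L^{n-1}}{(n+1)L^n}$; since $(\delta L+K_X+\Delta)\cdot L^{n-1}=\delta L^n+(K_X+\Delta)\cdot L^{n-1}$, a direct computation of intersection numbers gives
\[
\Phi(\mathcal{X},\mathcal{L})=\frac{1}{L^n}\Big((K_{\widetilde{\mathcal{X}}^c/\mathbb{P}^1}+\mathcal{D})\cdot(\mu^*\mathcal{L}^c)^n-\rho^*(L\times\mathbb{P}^1)\cdot\sigma^*(\mu^*\mathcal{L}^c)^n\Big)+\frac{n\delta}{(n+1)L^n}(\mu^*\mathcal{L}^c)^{n+1}.
\]
Thus the whole theorem reduces to showing $\Phi(\mathcal{X},\mathcal{L})\ge0$ for every semiample test configuration.

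The crux is precisely this nonnegativity, and it is the non-Archimedean incarnation of the defining inequality $A_{X,\Delta}(F)\ge\delta\,S_L(F)$. I would pass to the filtration of $R(X,L)$ induced by $(\mathcal{X},\mathcal{L})$ and to the associated Duistermaat--Heckman measure on the Okounkov body: the first bracket in $\Phi$ is expressed as an integral of the log discrepancies $A_{X,\Delta}(\cdot)$ of the valuations read off from the central fibre, while the energy term $\tfrac{1}{(n+1)L^n}(\mu^*\mathcal{L}^c)^{n+1}$ is expressed, via the volume expansion $S_L(F)=\tfrac{1}{L^n}\int_0^\infty\mathrm{vol}(\pi^*L-xF)\,dx$, as the corresponding integral of the $S_L$-type quantities. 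Integrating the pointwise bound $A_{X,\Delta}(\cdot)-\delta\,S_L(\cdot)\ge0$ (which holds because $\delta=\inf_F A_{X,\Delta}(F)/S_L(F)$) against this measure gives $\Phi\ge0$. This is the step carried out à la Blum--Jonsson \cite{BlJ} and Fujita, now in the setting of a general ample polarization $L$ rather than the anticanonical one.

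The main obstacle is exactly this last identification: translating the purely valuative/birational invariant $\delta_{(X,\Delta)}(L)$ into the intersection-theoretic functional $\Phi$ on test configurations, i.e.\ matching $\tfrac1{L^n}(K_{\widetilde{\mathcal{X}}^c/\mathbb{P}^1}+\mathcal{D}-\rho^*(L\times\mathbb{P}^1))\cdot(\mu^*\mathcal{L}^c)^n$ and the energy term with the integrals of $A_{X,\Delta}$ and of $\delta\,S_L$ over the Okounkov body. Two technical points must be handled with care here: first, that one may reduce from arbitrary semiample test configurations to the divisorial valuations computing $\delta$ (using that $\delta$ is an infimum over all prime divisors over $X$ and a continuity/approximation argument for the functionals); and second, that the deminormal normalization $(\widetilde{\mathcal{X}},\mu^*\mathcal{L})$ and the conductor $\mathcal{D}$ are correctly accounted for, which is already built into the definition of $\mathrm{DF}$ in Definition \ref{de--tc}. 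Once $\Phi\ge0$ is in hand, the conclusion is immediate from the first paragraph.
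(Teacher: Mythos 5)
The paper does not actually prove this theorem; it is imported verbatim from \cite[Corollary 3.23]{CM}, so the only meaningful comparison is with the argument given there. Your global strategy is the same as that one: reduce everything to the single inequality $\mathrm{DF}_{\nu_*^{-1}\Delta+D_{\tilde X}}(\widetilde{\mathcal{X}},\mu^*\mathcal{L})\ge\mathcal{J}^{K_X+\Delta+\delta L,\mathrm{NA}}(\mathcal{X},\mathcal{L})$ with $\delta:=\delta_{(X,\Delta)}(L)$, and then conclude by uniform $\mathrm{J}^{\delta L+K_X+\Delta}$-stability. Your first paragraph (the reduction) is fine.

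The gap is in your second step, and it is inherited from reading the formula for $\mathcal{J}^{T,\mathrm{NA}}$ in Definition \ref{de--tc} literally. As printed there, the first intersection term is $\rho^*(L\times\mathbb{P}^1)\cdot\sigma^*(\mu^*\mathcal{L}^c)^n$ independently of $T$; this is a typo (the intended term, as in \cite{Hat2,CM}, is $\rho^*(T\times\mathbb{P}^1)\cdot\sigma^*(\mu^*\mathcal{L}^c)^n$). With the literal reading, $\mathcal{J}^{T,\mathrm{NA}}$ sees $T$ only through the number $T\cdot L^{n-1}$ and is \emph{not} invariant under replacing $\mathcal{L}$ by $\mathcal{L}+c(X\times\{0\})$, whereas $\mathrm{DF}$ is. Consequently your functional $\Phi$ is not twist-invariant, and the key claim ``$\Phi\ge0$ for every semiample test configuration'' is false: for the trivial test configuration $X_{\mathbb{A}^1}$ with $\mathcal{L}=(L\times\mathbb{A}^1)+c(X\times\{0\})$, $c\in\mathbb{Z}$, one has $\mathrm{DF}=0$ and $\mathcal{J}^{L,\mathrm{NA}}=0$, but a direct computation with $(\mu^*\mathcal{L}^c)=(L\times\mathbb{P}^1)+c(X\times\{0\})$ gives
\[
\Phi\;=\;nc\left(\frac{(K_X+\Delta)\cdot L^{n-1}}{L^n}-1+\delta\right),
\]
which changes sign with $c$ unless the bracket happens to vanish. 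Relatedly, your step 3 does not attach to your own formula: the bracket that admits the entropy interpretation you invoke, namely a sum $\sum_E b_E A_{X,\Delta}(v_E)\,(E\cdot(\mu^*\mathcal{L}^c)^n)$ plus a nonnegative correction from non-reduced central fibers, is $\bigl(K_{\widetilde{\mathcal{X}}^c/\mathbb{P}^1}+\mathcal{D}-\rho^*((K_X+\Delta)\times\mathbb{P}^1)\bigr)\cdot(\mu^*\mathcal{L}^c)^n$, whereas your bracket subtracts $\rho^*(L\times\mathbb{P}^1)\cdot\sigma^*(\mu^*\mathcal{L}^c)^n$ and is not a sum of log discrepancies. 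Once the definition is corrected, $\mathcal{J}^{T,\mathrm{NA}}$ becomes linear in $T$, the difference $\mathrm{DF}-\mathcal{J}^{K_X+\Delta+\delta L,\mathrm{NA}}$ equals (entropy $+$ nonnegative correction) $-\,\delta\,\mathcal{J}^{L,\mathrm{NA}}$, and the theorem reduces to $H^{\mathrm{NA}}\ge\delta\,\mathcal{J}^{L,\mathrm{NA}}$. That follows from the pointwise bound $A_{X,\Delta}\ge\delta S_L$ \emph{together with} the inequality $\frac{1}{L^n}\sum_E b_E S_L(v_E)\,(E\cdot(\mu^*\mathcal{L}^c)^n)\ge\mathcal{J}^{L,\mathrm{NA}}(\mathcal{X},\mathcal{L})$; be aware that this last step is a genuine inequality requiring proof (one of the main technical points of \cite{CM}, in the spirit of \cite{BlJ}), not a mere ``identification'' via Duistermaat--Heckman measures as your sketch suggests.
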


\begin{de}[Klt--trivial fibrations]\label{de--klt--triv}
    Let $f\colon X\to Y$ be a contraction of quasi-projective normal varieties and $\Delta$ a $\mathbb{Q}$-divisor on $X$ such that $(X,\Delta)$ is sublc.
    We set $B_Y$ {\it the discriminant $\mathbb{Q}$-divisor} as follows.
    Let $\eta$ be an arbitrary point of $Y$ of codimension one and $F_{\eta}$ the Zariski closure of $\eta$.
    Then, we set $$\mathrm{mult}_{F_{\eta}}(B_Y):=1-\sup\{t\in \mathbb{Q}|(X,\Delta+tf^*F_{\eta})\text{ is lc around $f^{-1}(\eta)$}\}.$$
    Note that $B_Y$ is a well defined $\mathbb{Q}$-divisor since $F_{\eta}$ is Cartier around $\eta$.

Suppose further that $K_X+\Delta\sim_{\mathbb{Q},f}0$ and
$\mathrm{rank}\,f_*\mathcal{O}_X(\lceil \mathbf{A}^*(X,\Delta)\rceil)=1$,
    where the coherent sheaf $\mathcal{O}_X(\lceil \mathbf{A}^*(X,\Delta)\rceil)$ is defined as in \cite[Lemma 3.22]{fujino-bpf}. 
    In this case, we say that $f$ is a {\it sublc-trivial fibration}.
    Let $F$ be the geometric generic fiber of $f$ and let $b\in\mathbb{Z}_{>0}$ the smallest positive integer such that $h^0(F,\mathcal{O}_F(b(K_F+\Delta|_F)))\ne0$.
Then, we set a nonzero rational function $\varphi\in K(X)$ and a $\mathbb{Q}$-divisor $M_Y$ such that
\[
K_{X}+\Delta+\frac{1}{b}(\varphi)=f^*(K_Y+B_Y+M_Y),
\]
where $K(X)$ is the function field of $X$ and $(\varphi)$ is the principal divisor defined by $\varphi$.
We call $M_Y$ the {\it moduli $\mathbb{Q}$-divisor} on $Y$.
For details, see \cite[Definition 2.7]{HH2}.

For any birational morphism $h\colon Y'\to Y$, let $f'\colon X'\to Y'$ be a contraction of quasi-projective normal varieties and $\Delta'$ a $\mathbb{Q}$-divisor on $X'$ such that
\begin{itemize}
\item $X'$ and $X$ are birational,
\item $(X',\Delta')$ and $(X,\Delta)$ are log crepant.
\end{itemize}
Then, we can see that $f'$ is a sublc-trivial fibration.
We can define the moduli $\mathbb{Q}$-divisor $M_{Y'}$ and the discriminant $\mathbb{Q}$-divisor $B_{Y'}$ in the same way.
Then $h_*M_{Y'}=M_Y$, $h_*B_{Y'}=B_Y$, and 
\[
K_{Y'}+B_{Y'}+M_{Y'}=h^*(K_Y+B_Y+M_Y).
\]
They are independent of the choice of $(X',\Delta')$.
We say that if $M_{Y'}$ is nef and $h'^*M_{Y'}\sim M_{Y''}$ for any birational morphism $h'\colon Y''\to Y'$, then we say that $h\colon Y'\to Y$ is an {\it Ambro model}.
We note that for any sublc-trivial fibration, there exists an Ambro model by \cite{A,FG,fujino-slc-trivial}. See also \cite[Definition 2.7]{HH2}.

We say that if $f\colon(X,\Delta)\to S$ is a sublc-trivial fibration and $(X,\Delta)$ is klt (resp.~lc), then we say that $f$ is a {\it klt--trivial fibration} (resp.~{\it lc--trivial fibration}).
\end{de}

\begin{de}\label{de--unif--ad--kst}
    Let $f\colon (X,\Delta)\to C$ be a klt--trivial fibration over a projective smooth curve.
    Let $A$ be an $f$-ample $\mathbb{Q}$-line bundle on $X$.
    Then, we say that $f\colon (X,\Delta;A)\to C$ is a {\it polarized klt--trivial fibration}.
    If $\Delta=0$, we simply write it as $f\colon (X,A)\to C$.
    Let $L$ be an ample $\mathbb{Q}$-Cartier $\mathbb{Q}$-divisor on $C$.
    If there exist $\delta>0$ and $\epsilon_0>0$ such that 
    \[
    \mathrm{DF}_\Delta(\mathcal{X},\mathcal{M})\ge \delta \mathcal{J}^{\epsilon A+f^*L,\mathrm{NA}}(\mathcal{X},\mathcal{M})
    \]
    for any normal semiample test configuration $(\mathcal{X},\mathcal{M})$ for $(X,\epsilon A+f^*L)$ for any $\epsilon\in\mathbb{Q}\cap(0,\epsilon_0)$, then we say that $f$ is {\it uniformly adiabatically K-stable}.
    It is easy to see that this condition does not depend on the $\mathbb{Q}$-linear equivalence classes of $L$ and $A$ but on the numerical equivalence classes.
\end{de}

Then, we obtain the following, which has already been known in the case $\mathbbm{k}=\mathbb{C}$ by \cite[Theorem 1.1]{Hat}.

\begin{thm}\label{thm--kst--u>0--CY-fib}
    Let $f\colon (X,\Delta;A)\to C$ be a polarized klt--trivial fibration.
Then $f$ is uniformly adiabatically K-stable if and only if one of the following holds.
\begin{enumerate}
    \item $K_X+\Delta$ is nef.
    \item Suppose that $K_X+\Delta$ is not nef and let $B_C$ and $M_C$ be the discriminant and the moduli $\mathbb{Q}$-divisors, respectively.
    Then, $\mathrm{mult}_p(B)<\frac{\mathrm{deg}_C(M_C+B_C)}{2}$ for any $p\in C$.
\end{enumerate}
Furthermore, $(X,\Delta;\epsilon A+f^*L)$ is specially K-stable for any sufficiently small $\epsilon>0$ in both cases, where $L$ is an ample line bundle on $C$ such that $\mathrm{deg}_CL=1$.
\end{thm}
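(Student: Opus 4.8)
The plan is to deduce the statement over an arbitrary algebraically closed field $\mathbbm{k}$ of characteristic zero from the known complex case \cite[Theorem 1.1]{Hat} by a spreading-out and base-change argument, after verifying that every quantity entering the statement is insensitive to extensions of algebraically closed fields of characteristic zero. First I would descend the data: since $X\to C$, $\Delta$ and $A$ are of finite type, there is a finitely generated field $k_0/\mathbb{Q}$ inside $\mathbbm{k}$ and a polarized klt--trivial fibration $f_0\colon (X_0,\Delta_0;A_0)\to C_0$ over $\overline{k_0}$ with $f=f_0\times_{\overline{k_0}}\mathbbm{k}$. As $\overline{k_0}$ has finite transcendence degree over $\mathbb{Q}$ it embeds into $\mathbb{C}$, so it suffices to compare the three algebraically closed fields along $\overline{k_0}\subset\mathbbm{k}$ and $\overline{k_0}\subset\mathbb{C}$ and to reduce both the numerical trichotomy and the stability conditions to the model over $\overline{k_0}$.

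The first task is the base-change invariance of the quantities defining the trichotomy. Nefness of $K_X+\Delta$ on a projective variety and degrees of $\mathbb{Q}$-divisors on the curve $C$ are detected by intersection numbers, which are preserved under flat base change of the ground field. The discriminant divisor $B_C$ is built from log canonical thresholds and the moduli divisor $M_C$ from an Ambro model, which exists over any such field by \cite{A,FG,fujino-slc-trivial}; both constructions, together with the local multiplicities $\mathrm{mult}_p(B)$, commute with extension of an algebraically closed base field. Hence condition (1), respectively the inequality in (2), holds over $\mathbbm{k}$ if and only if it holds over $\mathbb{C}$, after passing through the common model over $\overline{k_0}$.

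The heart of the matter is transporting uniform adiabatic K-stability and special K-stability across the field extensions. For a fixed normal semiample test configuration, both $\mathrm{DF}_\Delta$ and the non-Archimedean functionals $\mathcal{J}^{\bullet,\mathrm{NA}}$ are given by intersection numbers on the compactified total space, so they are unchanged under base change; the only subtlety is that a test configuration over the larger field need not descend to the smaller one. I would resolve this through the valuative picture: special K-stability of $(X,\Delta;\epsilon A+f^*L)$ is phrased via $\delta_{(X,\Delta)}(\epsilon A+f^*L)$ and uniform $\mathrm{J}$-stability, and both $A_{X,\Delta}(F)$ and $S_L(F)$ are computed on a fixed birational model, so the infimum over prime divisors over $X$ is unchanged after enlarging the algebraically closed field. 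Consequently special K-stability is equivalent over $\overline{k_0}$, $\mathbb{C}$ and $\mathbbm{k}$; by \cite[Corollary 3.23]{CM} it implies uniform K-stability, which delivers the \emph{Furthermore} clause once established over $\mathbb{C}$ by \cite[Theorem 1.1]{Hat}. Combining the invariance of the trichotomy from the previous paragraph with that of uniform adiabatic K-stability and applying \cite[Theorem 1.1]{Hat} over $\mathbb{C}$ then yields the full equivalence over $\mathbbm{k}$.

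The step I expect to be the main obstacle is the \emph{uniformity in} $\epsilon$: uniform adiabatic K-stability demands a single slope constant $\delta$ and a threshold $\epsilon_0$ valid across the whole range $\epsilon\in(0,\epsilon_0)$, while a test configuration over $\mathbbm{k}$ need not arise by base change from $\mathbb{C}$. To circumvent this I would not try to descend individual destabilizers but instead match the asymptotic expansion of $\mathrm{DF}_\Delta(\mathcal{X},\mathcal{M})$ and $\mathcal{J}^{\epsilon A+f^*L,\mathrm{NA}}(\mathcal{X},\mathcal{M})$ in powers of $\epsilon$: its coefficients are intersection numbers of the closures of $\Delta$, $A$ and $f^*L$ on the compactified test configuration, hence are literally the same over the three fields, and the leading coefficient is governed, exactly as in \cite{Hat}, by the sign of $\tfrac{1}{2}\deg_C(M_C+B_C)-\mathrm{mult}_p(B)$ together with the nef locus of $K_X+\Delta$. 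Matching these leading terms across the fields is what lets the qualitative trichotomy, the uniform slope estimate, and special K-stability for small $\epsilon$ all descend simultaneously; if one prefers to avoid re-running the expansion, the alternative is simply to observe that the algebraic arguments of \cite[Theorem 1.1]{Hat} are field-independent and apply verbatim over $\overline{k_0}$, which then base-changes to $\mathbbm{k}$.
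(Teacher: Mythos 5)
Your overall strategy---spreading out to a finitely generated subfield, embedding it into $\mathbb{C}$, and using base-change invariance of $\mathrm{DF}$, $\mathcal{J}^{\bullet,\mathrm{NA}}$ and the delta invariant---is the same skeleton as the paper's proof, and your second paragraph (field-invariance of the numerical trichotomy) is fine. But there is a genuine gap at exactly the point you single out as the main obstacle, and neither of your two substitutes closes it. The problem is the direction of transfer: a test configuration $(\mathcal{X},\mathcal{M})$ over $\mathbbm{k}$ need not descend to your fixed model field $\overline{k_0}$, so stability of $f_0$ over $\overline{k_0}$ (or of its base change to $\mathbb{C}$) says nothing about $(\mathcal{X},\mathcal{M})$ directly. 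Your ``expansion matching'' cannot even be formulated for such a test configuration---there is no object over $\mathbb{C}$ whose intersection numbers could be ``literally the same''---and your fallback, that the arguments of \cite{Hat} ``apply verbatim over $\overline{k_0}$, which then base-changes to $\mathbbm{k}$'', has the implication backwards: passing from stability over a smaller algebraically closed field to a larger one is precisely the hard direction, since the larger field has strictly more test configurations; base-change invariance of the functionals only yields the opposite implication.

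The paper's fix, which you explicitly decline (``I would not try to descend individual destabilizers''), is to descend each individual test configuration: given $(\mathcal{X},\mathcal{M})$ over $\mathbbm{k}$, choose a finitely generated field $\mathbb{K}\subset\mathbbm{k}$, depending on $(\mathcal{X},\mathcal{M})$, over which both $f$ and $(\mathcal{X},\mathcal{M})$ are defined (\cite[Theorem 4.76]{kollar-moduli}), embed $\overline{\mathbb{K}}$ into $\mathbb{C}$, and---this is the crucial enabling input---use that the constants $\epsilon_0,\delta_0$ in the uniform $\mathrm{J}$-stability statement over $\mathbb{C}$ (from \cite[Proposition 6.1, Theorems 6.2 and 6.4]{HH}) depend only on the numerical invariants $d$, the denominator of $\Delta$, $u$, $v$, $w$, and not on the particular $\mathbb{C}$-model. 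That uniformity is what allows one to apply the complex result to the varying family of $\mathbb{C}$-models (one for each test configuration over $\mathbbm{k}$) and still extract a single slope constant over $\mathbbm{k}$. Without it, or without a purely valuative criterion replacing test configurations, your argument only proves stability of $f$ with respect to the subfamily of test configurations defined over $\overline{k_0}$. A secondary issue: special K-stability also contains a uniform $\mathrm{J}^H$-stability condition, which is again quantified over test configurations, so your ``valuative picture'' covers only the $\delta_{(X,\Delta)}$ half; and even for that half the assertion is not automatic, since prime divisors over $X_{\mathbbm{k}'}$ need not descend to $\mathbbm{k}$ (the infimum could a priori drop), which is why the paper cites \cite[Proposition 4.15]{CP} rather than the fixed-model heuristic you give.
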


\begin{proof}
If $K_X+\Delta$ is numerically trivial, $f$ is uniformly adiabatically K-stable for any base field $\mathbbm{k}$ by the proof of \cite[Corollary 9.4]{BHJ}.
   Therefore, we may assume that $K_X+\Delta$ is not numerically trivial.
   We may further assume that $A$ is an ample line bundle.
   Fix $\mathrm{dim}\,X=d$, $n\in\mathbb{Z}_{>0}$ such that $n\Delta$ is a $\mathbb{Z}$-divisor, $A^{d-1}\cdot L=v$, $\mathrm{deg}_C(K_C+B_C+M_C)=u$, where $B_C$ and $M_C$ are the discriminant and the moduli $\mathbb{Q}$-divisors, respectively, and $\mathrm{vol}(A)=w$.
   Then, there exist constants $\epsilon_0$ and $\delta_0$ such that $(X,\Delta;\epsilon A+f^*L)$ is uniformly $\mathrm{J}^{(\delta_{(X,\Delta)}(\epsilon A+f^*L)-\delta_0)(\epsilon A+f^*L)+K_X+\Delta}$-stable for any $\epsilon\in\mathbb{Q}\cap(0,\epsilon_0)$ if $\mathbbm{k}=\mathbb{C}$ by \cite[Proposition 6.1]{HH} and the proof of \cite[Theorems 6.2 and 6.4]{HH}.  
   
   From now on, we work over an arbitrary algebraically closed field $\mathbbm{k}$ of characteristic zero and fix $d,n,u,v$, and $w$ as above for $f\colon(X,\Delta;A)\to C$, where we assume that $A$ is an ample line bundle.
   Fix also $\delta_0$ and $\epsilon_0$.
First, consider an extension of algebraically closed fields $\mathbbm{k}\subset \mathbbm{k}'$.
For any ample $\mathbb{Q}$-line bundle $H$ on $X$, $\delta_{(X,\Delta)}(H)=\delta_{(X_{\mathbbm{k}'},\Delta_{\mathbbm{k}'})}(H_{\mathbbm{k}'})$ by \cite[Proposition 4.15]{CP} applied to the case where $T=\mathrm{Spec}\,\mathbbm{k}$.
Moreover, for any test configuration $(\mathcal{X},\mathcal{H})$ for $(X,H)$ and $\mathbb{Q}$-line bundle $M$,
\[
\mathcal{J}^{M,\mathrm{NA}}(\mathcal{X},\mathcal{H})=\mathcal{J}^{M_{\mathbbm{k}'},\mathrm{NA}}(\mathcal{X}_{\mathbbm{k}'},\mathcal{H}_{\mathbbm{k}'}).
\]
If $\mathbbm{k}'=\mathbb{C}$, by what we have shown in the first paragraph, we see that $(X,\Delta;\epsilon A+f^*L)$ is uniformly $\mathrm{J}^{(\delta_{(X,\Delta)}(\epsilon A+f^*L)-\delta_0)(\epsilon A+f^*L)+K_X+\Delta}$-stable for any $\epsilon\in\mathbb{Q}\cap(0,\epsilon_0)$, because the intersection numbers, the dimension, and the coefficients of $\Delta$ are invariant under the base change of the fields.

Otherwise, take any test configuration $(\mathcal{X},\mathcal{M})$ for $(X,\Delta;\epsilon A+f^*L)$ for any $\epsilon\in\mathbb{Q}\cap(0,\epsilon_0)$.
Then, there exists a finitely generated field extension $\mathbb{Q}\subset \mathbb{K}$ such that $\mathbb{K}\subset \mathbbm{k}$ with a family of polarized klt--trivial fibrations over curves $f_{\mathbb{K}}\colon(X_{\mathbb{K}},\Delta_{\mathbb{K}};A_{\mathbb{K}})\to C_{\mathbb{K}}$ whose base change to $\mathbbm{k}$ coincides with the original $f$ by using \cite[Theorem 4.76]{kollar-moduli}.
Furthermore, we may assume that there exists a test configuration $(\mathcal{X}_{\mathbb{K}},\mathcal{M}_{\mathbb{K}})$ for $(X_{\mathbb{K}},\Delta_{\mathbb{K}};\epsilon A_{\mathbb{K}}+f_{\mathbb{K}}^*L_{\mathbb{K}})$ whose base change to $\mathbbm{k}$ coincides with the original $(\mathcal{X},\mathcal{M})$.
It is easy to check that there exists a homomorphism $\mathbb{K}\to\mathbb{C}$ since $\mathbb{C}$ is uncountable and fix it.
Then the algebraic closure $\mathbb{K}\subset \overline{\mathbb{K}}$ satisfies that $\overline{\mathbb{K}}\subset \mathbb{C}$ and $\overline{\mathbb{K}}\subset \mathbbm{k}$.
Replace $\mathbb{K}$ with $\overline{\mathbb{K}}$.
Then $(X_{\mathbb{K}},\Delta_{\mathbb{K}};\epsilon A_{\mathbb{K}}+f_{\mathbb{K}}^*L_{\mathbb{K}})$ is uniformly J$^{(\delta_{(X_{\mathbb{K}},\Delta_{\mathbb{K}})}(\epsilon A_{\mathbb{K}}+f_{\mathbb{K}}^*L_{\mathbb{K}})-\delta_0)(\epsilon A_{\mathbb{K}}+f_{\mathbb{K}}^*L_{\mathbb{K}})+K_{X_{\mathbb{K}}}+\Delta_{\mathbb{K}}}$-stable by what we have shown in the second paragraph of this proof.
Therefore, 
\[
\mathcal{J}^{(\delta_{(X_{\mathbb{K}},\Delta_{\mathbb{K}})}(\epsilon A_{\mathbb{K}}+f_{\mathbb{K}}^*L_{\mathbb{K}})-\delta_0)(\epsilon A_{\mathbb{K}}+f_{\mathbb{K}}^*L_{\mathbb{K}})+K_{X_{\mathbb{K}}}+\Delta_{\mathbb{K}},\mathrm{NA}}(\mathcal{X}_{\mathbb{K}},\mathcal{M}_{\mathbb{K}})>0.
\]
This shows the uniform $\mathrm{J}^{(\delta_{(X,\Delta)}(\epsilon A+f^*L)-\delta_0)(\epsilon A+f^*L)+K_X+\Delta}$-stability over $\mathbbm{k}$.
We complete the proof.
\end{proof}

Due to Theorem \ref{thm--kst--u>0--CY-fib}, we can freely change the algebraically closed field when we deal with uniform adiabatic K-stability.

\subsection{$\mathbb{Q}$-Gorenstein family}
Throughout this subsection, we use the following terminologies.
Let $f\colon X\to S$ be a flat and projective morphism.
    Fix $n\in\mathbb{Z}_{>0}$.
    Suppose that all geometric fibers are connected and deminormal of dimension $n$.
    
First, we recall the notion of the universal hull introduced by Koll\'ar.
\begin{de}[Universal hull {\cite[Definition 9.16]{kollar-moduli}}]\label{de--univ--hull}
Let $\mathscr{F}$ be a coherent sheaf on $X$.
Suppose that there exists an open subset $U$ such that $\mathrm{codim}_{X_{\bar{s}}}(X_{\bar{s}}\setminus U)\ge2$ for any geometric point $\bar{s}\in S$ and $\mathscr{F}|_U$ is locally free.
Let $\iota\colon U\to X$ be the canonical open immersion.
We note that if $S$ is a spectrum of a field, then we can set $\mathscr{F}^{[**]}:=\iota_*\mathscr{F}|_U$ as a coherent sheaf.
If there exist a coherent sheaf $\mathscr{G}$ on $X$ flat over $S$ and an $\mathcal{O}_{X}$-linear homomorphism $\varphi\colon \mathscr{F}\to \mathscr{G}$ such that $\varphi_s$ is isomorphic to $\mathscr{F}_s\to \mathscr{F}_s^{[**]}$, then we say that $\mathscr{G}$ is {\it the universal hull} of $\mathscr{F}$ and written as $\mathscr{F}^{[**]}$.
Note that if such a sheaf $\mathscr{G}$ as above exists, then $\mathscr{G}=\iota_*\mathscr{F}|_U$. See \cite[Section 9]{kollar-moduli} for more details (cf.~\cite[Section 2.4]{HH}).
In this paper, we write the universal hull of $\mathscr{F}^{\otimes m}\otimes\mathscr{G}$ as $\mathscr{F}^{[m]}[\otimes]\mathscr{G}^{[**]}$ for any $m\in\mathbb{Z}$.
\end{de}
    
    Note that we say that an effective Cartier divisor $D$ is a {\it relative Cartier divisor} if $D$ is flat over $S$ (cf.~\cite[Lemma 9.3.4]{FGA}).
\begin{de}[Relative Mumford divisor {\cite[Definition 4.68]{kollar-moduli}}]\label{de--rel--mum--div}
    We say that a closed subscheme $M$ of $X$ is an {\it effective relative Mumford divisor} if there exists an open subset $U\subset X$ satisfying the following:
    \begin{itemize}
\item for any geometric point $\bar{s}\in S$, $\mathrm{codim}_{X_{\bar{s}}}(X_{\bar{s}}\setminus U)\ge2$,
\item $M$ is the Zariski closure of $M|_U$, where $M|_U$ is flat over $S$, and
\item $X_{\bar{s}}$ is smooth at any generic point of $M_{\bar{s}}$ for any geometric point $\bar{s}\in S$.
    \end{itemize}
    We say that $\Delta$ is a {\it relative Mumford $\mathbb{Q}$-divisor} if $\Delta=\sum_{i=1}^la_iM_i$, where $M_i$ is an effective relative Mumford divisor for each $i$ and some $a_i\in\mathbb{Q}$.
    If $a_i$ is an integer for each $i$, we call $\Delta$ a {\it relative Mumford $\mathbb{Z}$-divisor}.
If $a_i$ is positive for each $i$, we say that $\Delta$ is {\it effective}.
    We say that a relative Mumford $\mathbb{Q}$-divisor $\Delta$ is {\it $\mathbb{Q}$-Cartier} if $m\Delta$ is Cartier for some $m\in\mathbb{Z}_{>0}$.
\end{de}

Let $\Delta$ be a relative Mumford $\mathbb{Q}$-divisor.
We note that for any morphism $g\colon T\to S$, we can set $g_X^*\Delta$ as the Zariski closure of $g_X|_{U_T}^*\Delta|_U$, where we take $U$ as in Definition \ref{de--rel--mum--div} and $g_X\colon X_T\to X$ is the canonical morphism.
Note that $g_X^*\Delta$ is also a relative Mumford $\mathbb{Q}$-divisor and we write it by $\Delta_T$.

\begin{de}[cf.~{\cite[Definition-Theorem 3.1]{kollar-moduli}}]\label{de--locally-stable}
Let $\Delta$ be an effective relative Mumford $\mathbb{Q}$-divisor on $X$.
Let $U$ be an open subset such that $(K_X+\Delta)|_U$ is $\mathbb{Q}$-Cartier and $\mathrm{codim}_{X_{s}}(X_s\setminus U)\ge2$ for any $s\in S$.
If there exists a universal hull $\omega_{X/S}^{[m]}(m\Delta)$ of a coherent sheaf on $X$ that is an extension of $\mathcal{O}_U(m(K_X+\Delta)|_U)$ for some $m\in\mathbb{Z}_{>0}$ and $\omega_{X/S}^{[m]}(m\Delta)$ is Cartier, then we say that $(X,\Delta)\to S$ is {\it log $\mathbb{Q}$-Gorenstein}.
If further $\Delta=0$, then we say that $f$ is  {\it $\mathbb{Q}$-Gorenstein}.
We denote $\omega_{X/S}(\Delta)$ as $K_{X/S}+\Delta$.

We say that a log $\mathbb{Q}$-Gorenstein family $f\colon (X,\Delta)\to S$ is called {\it locally stable} if $(X_s,\Delta_s)$ is slc for any geometric point $\bar{s}\in S$. 

If $f\colon (X,\Delta)\to S$ is a locally stable family and $K_{X/S}+\Delta$ is $f$-ample, then we say that $f$ is a {\it family of stable log pairs} or {\it stable family}.
\end{de}

\subsection{CM line bundle}\label{subsec--CM}

Let $f\colon X\to S$ be a projective and flat morphism of schemes.
Suppose that all geometric fibers are deminormal, connected, and of dimension $n$, and there exists an effective relative Mumford $\mathbb{Q}$-divisor $\Delta$ such that $(X,\Delta)\to S$ is log $\mathbb{Q}$-Gorenstein.
Let $L$ be a relatively ample $\mathbb{Q}$-line bundle on $X$ over $S$.
Then, we can define by \cite[Proposition 3.3]{Hat23} the {\it CM line bundle} as \cite[Definition 3.2]{Hat23}.
We mainly deal with case $\Delta=0$ and explain the definition here.

By \cite[Theorem 4]{KnMu}, we see that there exist unique line bundles $\lambda_0,\ldots,\lambda_{n+1}$ on $S$ such that
\[
\mathrm{det}(f_*\mathcal{O}_X(mL))=\bigotimes_{i=0}^{n+1}\lambda_i^{\otimes\binom{m}{i}}
\]
for any sufficiently large and divisible $m\in\mathbb{Z}_{>0}$.
On the other hand, we expand 
\[
\chi(X_s,\mathcal{O}_{X_s}(mL_s))=a_0m^n+a_{1}m^{n-1}+O(m^{n-2}).
\]
We set $\mu:=\frac{2a_1}{(n+1)!a_0^2}$. Then, we set the CM line bundle as the following $\mathbb{Q}$-line bundle
\[
\lambda_{\mathrm{CM},f}:=\lambda_{n+1}^{\otimes(\mu+\frac{1}{(n-1)!a_0}))}\otimes \lambda_{n}^{\otimes-\frac{2}{n!a_0}}.
\]
If $S$ is a proper smooth curve, then we note that
\[
\mathrm{deg}_S\lambda_{\mathrm{CM},f}=\mu L^{n+1}+\frac{1}{n!a_0}K_{X/S}\cdot L^n.
\]

We note that for any morphism $g\colon T\to S$ from a scheme,
\[
g^*\lambda_{\mathrm{CM},f}=\lambda_{\mathrm{CM},f_T}.
\]

\subsection{Representable functors}

Let $f\colon X\to S$ be a proper morphism and fix $A$ a $f$-very ample line bundle on $X$.
Consider a coherent sheaf $\mathscr{F}$ on $X$.
Set the following functor $\mathfrak{Quot}_{X/S}^{\mathscr{F},P}\colon \mathsf{Sch}_{S}^{\mathrm{op}}\to \mathsf{Sets}$ such that for any $S$-scheme $T$,
\[
\mathfrak{Quot}_{X/S}^{\mathscr{F},P}(T):=\left\{
 q\colon\mathscr{F}_T\to\mathscr{G}
\;\middle|
\begin{array}{l}
\text{$q$ is a surjective morphism to a coherent}\\
\text{sheaf $\mathscr{G}$ on $X_T$ flat over $T$ such that}\\
\text{$P(l)=\chi(\mathscr{G}_t\otimes A_t^{\otimes l})$ for any $t\in T$ and $l\in\mathbb{Z}$}
\end{array}\right\}/\sim,
\]
where $\sim$ is the equivalence relation such that for any two surjective morphisms $q\colon\mathscr{F}_T\to\mathscr{G}$ and $q'\colon\mathscr{F}_T\to\mathscr{G}'$ as above, $q\sim q'$ if and only if there exists an isomorphism $r\colon \mathscr{G}\to\mathscr{G}'$ such that $q'=r\circ q$.
If $\mathscr{F}=\mathcal{O}_X$, then let $\mathfrak{Hilb}_{X/S}^{P}$ denote $\mathfrak{Quot}_{X/S}^{\mathscr{F},P}$.

We note that $\mathfrak{Quot}_{X/S}^{\mathscr{F},P}$ is represented by a scheme $\mathbf{Quot}_{X/S}^{\mathscr{F},P}$ projective over $S$. On the other hand, $\mathfrak{Hilb}_{X/S}^{P}$ is represented by a scheme $\mathbf{Hilb}_{X/S}^{P}$ projective over $S$.
Note that the group scheme $\mathrm{Aut}^P_{X/S}$ of automorphisms $\varphi$ such that $X$ has the Hilbert polynomial $P$ with respect to $A\otimes\varphi^*A$ is represented by a locally closed scheme of $\mathbf{Hilb}_{X\times_SX/S}^{P}$.
Set $\mathrm{Aut}_{X/S}:=\coprod_P\mathrm{Aut}^P_{X/S}$.
When $S=\mathbbm{k}$, then the identity component of $\mathrm{Aut}_{X/S}$ is an algebraic group and we can set $\mathrm{dim}\,\mathrm{Aut}_{X/S}$ as the dimension of the identity component. 
See \cite[Section 5]{FGA} for details.

We recall the following notations in \cite[\S9]{FGA}.

\begin{de}\label{de--pic}
    Let $f\colon X\to S$ be a flat and projective morphism with only geometrically integral fibers.
Then, we set the following functor 
\[
\mathfrak{Pic}_{X/S}(T):=\{\text{line bundles on $X_T$}\}/\sim_T
\]
for any $S$-scheme $T$.
By \cite[Theorem 9.4.8]{FGA}, we see that the \'etale sheafification of $\mathfrak{Pic}_{X/S}$ is represented by a separated $S$-scheme $\mathbf{Pic}_{X/S}$.
On the other hand, $\mathbf{Pic}_{X/S}$ contains the torsion component $\mathbf{Pic}^\tau_{X/S}$ as an open and closed group subscheme by \cite[Theorem 9.6.19]{FGA}. 
Note that if $S$ is Noetherian, then $\mathbf{Pic}^\tau_{X/S}$ is a quasi-projective scheme over $S$.
For any $S$-scheme $T$, we will denote the multiplication of $\lambda$ and $\lambda'\in\mathbf{Pic}_{X/S}(T)$, and the inverse element of $\lambda$ as $\lambda\otimes\lambda'$ and $\lambda^{\otimes-1}$, respectively.
\end{de}

Let $\mathscr{F}$ and $\mathscr{E}$ be coherent sheaves on $X$. Suppose that $\mathscr{E}$ is flat over $S$.
Then, we set the following functor $\mathfrak{Hom}(\mathscr{F},\mathscr{E})\colon \mathsf{Sch}_S^{\mathrm{op}}\to \mathsf{Sets}$ as for any scheme $T$ over $S$,
\[
\mathfrak{Hom}(\mathscr{F},\mathscr{E})(T):=\{\varphi\colon \mathscr{F}_T\to \mathscr{E}_T|\varphi \text{ is $\mathcal{O}_{X_T}$-linear}\}.
\]
By \cite[Theorem 5.8]{FGA}, we see that $\mathfrak{Hom}(\mathscr{F},\mathscr{E})$ is represented by an affine scheme $\mathbf{Hom}_S(\mathscr{F},\mathscr{E})$ over $S$.

If $S=\mathrm{Spec}\,R$ for some ring, we will write the above notations as $\mathbf{Hilb}_{X/R}$ for example.
If $R=\mathbbm{k}$, then we will write $\mathbf{Pic}_{X}$ for example.

\subsection{Moduli stacks and spaces of polarized varieties}

In this subsection, we discuss some moduli stacks and their basic properties. 

\begin{dele}[{\cite[Lemma 2.15]{HH}}]\label{defn--pol}
Let $\mathfrak{Pol}$ be a category fibered in groupoids over $\mathsf{Sch}_{\mathbbm{k}}$ such that for any scheme $S$, the collection $\mathfrak{Pol}(S)$ of objects is
\[
\left\{
\begin{array}{l}
f\colon(\mathcal{X},\mathscr{A})\to S
\end{array}
\;\middle|
\begin{array}{rl}
\bullet\!\!&\text{$f$ is a surjective proper flat morphism of schemes whose}\\
&\text{fibers are geometrically normal and connected,}\\
\bullet\!\!&\text{$\mathscr{A}\in \mathbf{Pic}_{\mathcal{X}/S}(S)$ such that there exists an \'{e}tale covering}\\
&\text{$S'\to S$ by which the pullback of $\mathscr{A}$ to $\mathcal{X}\times_SS'$ is}\\
&\text{represented by a relatively ample line bundle over $S'$}
\end{array}\right\},
\]
and an arrow $(g,\alpha)\colon(f\colon(\mathcal{X},\mathscr{A})\to S)\to (f'\colon(\mathcal{X}',\mathscr{A}')\to S')$ is defined in the way that $\alpha\colon S\to S'$ is a morphism and $g\colon\mathcal{X}\to \mathcal{X}'\times_{S'}S$ is an isomorphism such that $g^*\alpha_{\mathcal{X}'}^*\mathscr{A}'=\mathscr{A}$ as elements of $\mathbf{Pic}_{\mathcal{X}'/S'}(S)$. 
Then, $\mathfrak{Pol}$ is indeed a stack over $\mathbbm{k}$.
\end{dele}

We will express moduli stacks of certain polarized varieties as substacks of $\mathfrak{Pol}$.
In this paper, we mainly use Deligne--Mumford stacks.
Note that a separated Deligne--Mumford stack of finite type over $\mathbbm{k}$ has a coarse moduli space by \cite{KeM}.
Moreover, we say that $f\colon(\mathcal{X},\mathscr{A})\to S\in\mathfrak{Pol}(S)$ is a {\it polarized family}.

\subsection{K-moduli of klt--trivial fibrations over curves}\label{subsec--k-moduli-cy-fib}

Fix $d\in\mathbb{Z}_{>0}$, $u\in\mathbb{Q}\setminus\{0\}$, and $v\in\mathbb{Q}_{>0}$.
Then we saw the boundedness of the following set
$$\mathfrak{Z}_{d, v,u}:=\left\{
\begin{array}{l}
f\colon (X,\Delta=0;A) \to C
\end{array}
\;\middle|
\begin{array}{rl}
(i)&\text{$f$ is a uniformly adiabatically}\\
&\text{K-stable polarized klt-trivial} \\
&\text{fibration over a curve $C$,}\\
(ii)&\text{${\rm dim}X=d$,}\\
(iii)&\text{$K_X\equiv uf^*H$ for some line bundle} \\
&\text{$H$ on $C$ such that $\mathrm{deg}\,H=1$,} \\
(iv)&\text{$A$ is an $f$-ample $\mathbb{Q}$-Cartier Weil}\\
&\text{divisor on $X$ such that $(H\cdot A^{d-1})=v$.}
\end{array}\right\}$$
in the sense of \cite[Theorem 1.5]{HH}.
For any $w\in\mathbb{Z}_{>0}$, we can consider the following set:
$$\mathfrak{Z}_{d, v,u,w}:=\left\{
\begin{array}{l}
f\colon (X,\Delta=0;A) \to C
\end{array}
\;\middle|
\begin{array}{rl}
(i)&\text{$f$ is a uniformly adiabatically}\\
&\text{K-stable polarized klt-trivial} \\
&\text{fibration over a curve $C$,}\\
(ii)&\text{${\rm dim}X=d$,}\\
(iii)&\text{$K_X\equiv uf^*H$ for some line bundle} \\
&\text{$H$ on $C$ such that $\mathrm{deg}\,H=1$,} \\
(iv)&\text{$A$ is an ample line bundle on}\\
&\text{$X$ such that $(H\cdot A^{d-1})=v$, and}\\
&\text{$\mathrm{vol}(A)=w$.}
\end{array}\right\}.$$
By \cite[Theorem 1.5]{HH}, we see that the set of $X$ of $f\in \mathfrak{Z}_{d, v,u,w}$ is bounded for any $w$ and for some $w$, the forgetful map $\mathfrak{Z}_{d, v,u,w}\to \mathfrak{Z}_{d, v,u}$ is surjective. 
Fix such $w$.
Next, fix a sufficiently divisible $r\in\mathbb{Z}_{>0}$ as \cite[Lemma 3.1]{HH}.
We recall the following moduli stack $\mathcal{M}_{d,v,u,r,w}$ in \cite{HH}.

\begin{de}\label{defn--HH-moduli}
 For any scheme $S$, $\mathcal{M}_{d,v,u,r,w}(S)$ is the following collection:
 $$\left\{
 \vcenter{
 \xymatrix@C=12pt{
(\mathcal{X},\mathscr{A})\ar[rr]^-{f}\ar[dr]_{\pi_{\mathcal{X}}}&& \mathcal{C} \ar[dl]\\
&S
}
}
\;\middle|
\begin{array}{rl}
(i)&\text{$\pi_{\mathcal{X}}$ is a flat projective morphism and $\mathcal{X}$ is a scheme,}\\
(ii)&\text{$\mathscr{A}\in\mathbf{Pic}_{\mathcal{X}/S}(S)$ such that $\mathscr{A}$ is $\pi_{\mathcal{X}}$-ample and}\\
&\text{$f_{\bar{s}}\colon (\mathcal{X}_{\bar{s}},0;\mathscr{A}_{\bar{s}})\to \mathcal{C}_{\bar{s}}$ belongs to $\mathfrak{Z}_{d, v,u,w}$ for any}\\ 
&\text{geometric point $\bar{s}\in S$,}\\
(iii)&\text{$\omega_{\mathcal{X}/S}^{[r]}$ exists as a line bundle,}\\
(iv)&\text{$\pi_{\mathcal{X}*}\omega_{\mathcal{X}/S}^{[lr]}$ is locally free and it generates}\\
&\text{$H^0(\mathcal{X}_{s}, \mathcal{O}_{\mathcal{X}_{s}}(lrK_{\mathcal{X}_{s}}))$ for any point $s\in S$ and any}\\
&\text{$l\in\mathbb{Z}_{>0}$,}\\
(v)&\text{$f$ is the ample model of $\omega_{\mathcal{X}/S}^{[r]}$ over $S$ and}\\
&\text{$(\mathcal{X}_{\overline{s}},0;\mathscr{A}_{\overline{s}})\to \mathcal{C}_{\overline{s}} \in \mathfrak{Z}_{d, v,u,w}$ for any geometric}\\
&\text{point $\overline{s}\in S$}
\end{array}\right\}.$$
Here, we define an isomorphism $\alpha\colon (f \colon (\mathcal{X},\mathscr{A})\to\mathcal{C})\to (f' \colon (\mathcal{X}',\mathscr{A}')\to\mathcal{C}')$ of any two objects of $\mathcal{M}_{d,v,u,r,w}(S)$ to be an $S$-isomorphism $\alpha\colon\mathcal{X}\to\mathcal{X}'$ such that $\alpha^*\mathscr{A}'=\mathscr{A}$ as elements of $\mathbf{Pic}_{\mathcal{X}/S}(S)$.
Note that the ample model is in the sense of \cite[Notation and convention (9)]{HH}.
\end{de}

By Theorem \ref{thm--kst--u>0--CY-fib}, we note that the above moduli stack coincides with the moduli stack we constructed in \cite[Theorem 5.1]{HH}. Thus, we have the following.
\begin{thm}[{\cite[Theorem 1.3]{HH}}]\label{thm--adiabatic--k-moduli}
    $\mathcal{M}_{d,v,u,r,w}$ constructed as above is a separated Deligne--Mumford stack of finite type over $\mathbbm{k}$.
    In particular, $\mathcal{M}_{d,v,u,r,w}$ has a coarse moduli space by \cite{KeM}.
\end{thm}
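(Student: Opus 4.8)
The plan is to identify $\mathcal{M}_{d,v,u,r,w}$ with the moduli stack already produced in \cite[Theorem 5.1]{HH} and to transport its structural properties, the only real issue being that membership in $\mathfrak{Z}_{d,v,u,w}$ must be recognized as the same condition in both descriptions and, crucially, as one that is insensitive to the algebraically closed base field.

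First I would invoke Theorem \ref{thm--kst--u>0--CY-fib}, which replaces the a priori valuative condition ``uniformly adiabatically K-stable'' appearing in condition (ii) of Definition \ref{defn--HH-moduli} by the purely numerical dichotomy: either $K_{\mathcal{X}_{\bar{s}}}$ is nef, or $K_{\mathcal{X}_{\bar{s}}}$ is not nef and $\mathrm{mult}_p(B_{\mathcal{C}_{\bar{s}}})<\tfrac{1}{2}\deg(M_{\mathcal{C}_{\bar{s}}}+B_{\mathcal{C}_{\bar{s}}})$ for every $p$. Since this criterion involves only intersection numbers and the coefficients of the discriminant and moduli $\mathbb{Q}$-divisors, which are invariant under base change of algebraically closed fields (exactly as exploited in the proof of Theorem \ref{thm--kst--u>0--CY-fib}), the fibered category $\mathcal{M}_{d,v,u,r,w}$ picks out the same objects and arrows over every $\mathbbm{k}$-scheme $S$ as the stack of \cite[Theorem 5.1]{HH}. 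Hence the two stacks are canonically isomorphic, and it remains only to transport the conclusions of \cite[Theorem 5.1]{HH}: boundedness from \cite[Theorem 1.5]{HH} realizes the data inside a fixed Hilbert/Quot scheme with conditions (i)--(v) cut out by locally closed conditions, yielding an algebraic stack of finite type; finiteness of the automorphism groups of uniformly K-stable polarized fibrations (there is no nontrivial one-parameter subgroup, as it would produce a product test configuration violating strict stability) makes the stabilizers finite and unramified in characteristic zero, so the stack is Deligne--Mumford; and the valuative criterion, via uniqueness of the relative log canonical model together with uniqueness of uniformly K-stable degenerations, gives separatedness. Keel--Mori \cite{KeM} then furnishes the coarse moduli space.

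The main obstacle is not located in the present statement but in the inputs it absorbs: encoding uniform adiabatic K-stability as a constructible, field-independent condition in families (so that the stack is algebraic and of finite type over an arbitrary $\mathbbm{k}$) and establishing separatedness, both carried out in \cite[Theorem 5.1]{HH}. The one genuinely new point here over \cite{HH} is the field-independence supplied by Theorem \ref{thm--kst--u>0--CY-fib}, which is precisely what allows the entire construction to be performed over any algebraically closed field of characteristic zero rather than only over $\mathbb{C}$; everything else is a matter of reading off the already-proved properties through the canonical isomorphism.
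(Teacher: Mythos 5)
Your proposal is correct and follows essentially the same route as the paper: the paper's entire justification is the remark preceding the theorem, namely that by Theorem \ref{thm--kst--u>0--CY-fib} (field-independence of uniform adiabatic K-stability via the numerical characterization) the stack of Definition \ref{defn--HH-moduli} coincides with the stack constructed in \cite[Theorem 5.1]{HH}, whence the conclusions of \cite[Theorem 1.3]{HH} and \cite{KeM} transport directly. Your additional sketches of boundedness, finite stabilizers, and separatedness merely unpack what is already contained in the cited result and are not needed beyond the identification.
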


As noted in \cite[Remark 5.7]{HH}, we see that the reduced structure of $\mathcal{M}_{d,v,u,r,w}$ is independent of the choice of sufficiently large $r$. 
We can define the following closed substack of $\mathcal{M}_{d,v,u,r,w}$ independent from the choice of $r$.

\begin{de}\label{de--new--stack}
We set a substack $\mathcal{M}_{d,v,u,w}$ of $\mathfrak{Pol}$ as follows.
 For any scheme $S$, $\mathcal{M}_{d,v,u,w}(S)$ is the following collection:
 $$\left\{
\pi_{\mathcal{X}} \colon (\mathcal{X},\mathscr{A})\to S
\;\middle|
\begin{array}{rl}
(i)&\text{$\pi_{\mathcal{X}}$ is a flat projective morphism and $\mathcal{X}$ is a scheme,}\\
(ii)&\text{$\mathscr{A}\in\mathbf{Pic}_{\mathcal{X}/S}(S)$ such that $\mathscr{A}$ is $\pi_{\mathcal{X}}$-ample and}\\
&\text{$f_{\bar{s}}\colon (\mathcal{X}_{\bar{s}},0;\mathscr{A}_{\bar{s}})\to C$ belongs to $\mathfrak{Z}_{d, v,u,w}$ for any}\\ 
&\text{geometric point $\bar{s}\in S$ and for some proper}\\
&\text{smooth curve,}\\
(iii)&\text{$\omega_{\mathcal{X}/S}^{[l]}$ exists for any $l\in\mathbb{Z}$,}\\
(iv)&\text{For some $l\in\mathbb{Z}_{>0}$, $\pi_{\mathcal{X}*}\omega_{\mathcal{X}/S}^{[l]}$ is locally free and it}\\
&\text{generates $H^0(\mathcal{X}_{s}, \mathcal{O}_{\mathcal{X}_{s}}(lK_{\mathcal{X}_{s}}))$ for any point $s\in S$ and}\\
&\text{$\omega_{\mathcal{X}/S}^{[l]}$ is a line bundle.}
\end{array}\right\}.$$
\end{de}

We put the following lemma.
\begin{lem}\label{lem--dubois}
Let $f\colon (X,\Delta)\to S$ be a projective locally stable family over a locally Noetherian scheme such that $X_{\bar{s}}$ is connected for any geometric point $\bar{s}\in S$. 
Suppose that there exist an open subset $U\subset X$ and a coherent sheaf $\mathscr{G}$ such that
\begin{enumerate}
    \item $\omega_{X/S}|_U$ and $\mathscr{G}|_U$ are line bundles and for any $s\in S$, $\mathrm{codim}_{X_s}(X_s\setminus U)\ge2$,
    \item the universal hull $\omega_{X/S}^{[l_1]}[\otimes]\mathscr{G}^{[l_2]}$ of $\omega_{X/S}^{\otimes l_1}\otimes\mathscr{G}^{\otimes l_2}$ exists for any $l_1\in\{0,1\}$, $l_2\in\mathbb{Z}$ and there exists $r\in\mathbb{Z}_{>0}$ such that $\mathscr{G}^{[r]}$ is an $f$-globally generated line bundle.
\end{enumerate}

Then, for any $j\in\mathbb{Z}_{>0}$, $i\in\mathbb{Z}_{\ge0}$ and $s\in S$, $R^if_*\mathscr{G}^{[-j]}$ and $R^if_*(\omega_{X/S}^{[1]}[\otimes]\mathscr{G}^{[j]})$ are locally free and
\begin{align*}
R^if_*\mathscr{G}^{[-j]}\otimes\kappa(s)&\to H^i(X_s,\mathscr{G}_s^{[-j]}), \text{ and}\\
R^if_*(\omega_{X/S}^{[1]}[\otimes]\mathscr{G}^{[j]})\otimes\kappa(s)&\to H^i(X_s,\omega_{X_s}^{[1]}[\otimes]\mathscr{G}_s^{[j]})
\end{align*}
is bijective.
\end{lem}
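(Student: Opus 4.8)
\emph{Proof plan.} The plan is to reduce the assertion to a pure cohomology-and-base-change statement and then to feed in a Koll\'ar-type vanishing package on the positively twisted side and a Du Bois deformation argument on the negatively twisted side. First I would record that, by the very definition of the universal hull (Definition \ref{de--univ--hull}), the sheaves $\mathscr{G}^{[-j]}$ and $\omega_{X/S}^{[1]}[\otimes]\mathscr{G}^{[j]}$ are coherent, flat over $S$, and restrict on each geometric fibre to $\mathscr{G}_s^{[-j]}$ and $\omega_{X_s}^{[1]}[\otimes]\mathscr{G}_s^{[j]}$, respectively; thus the groups on the right-hand side are exactly the fibrewise cohomology of these flat sheaves. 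Since local freeness of $R^if_*$ and bijectivity of the base change map are local on $S$ and compatible with further base change, I may assume $S$ is local Noetherian. By Grothendieck's cohomology-and-base-change theorem, once the base change map is surjective in \emph{every} degree $i$ and at every $s$, descending induction on $i$ yields simultaneously bijectivity in all degrees and local freeness of every $R^if_*$ (the nonzero degrees lie in $0\le i\le n$, so there are only finitely many to control). Hence the whole lemma reduces to proving degreewise surjectivity of the base change maps for the two flat sheaves above.

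For $\omega_{X/S}^{[1]}[\otimes]\mathscr{G}^{[j]}$, write $\mathscr{G}=\mathcal{O}_X(D)$ with $rD$ Cartier; since $\mathscr{G}^{[r]}$ is $f$-globally generated, $D$ is an $f$-semiample $\mathbb{Q}$-Cartier $\mathbb{Q}$-divisor. I would then invoke the relative Koll\'ar vanishing and base change package for a locally stable family twisted by an $f$-semiample $\mathbb{Q}$-divisor: the fibres are slc, hence Du Bois by Koll\'ar--Kov\'acs, and Koll\'ar's torsion-freeness and base change theorems (in the form available for $\mathbb{Q}$-divisors, after Fujino) give that $R^if_*(\omega_{X/S}^{[1]}[\otimes]\mathscr{G}^{[j]})$ is flat over $S$ with surjective, hence bijective, base change maps. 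To dispose of the non-integrality of $jD$ when $r\nmid j$, I would pass to the cyclic cover determined by the line bundle $\mathscr{G}^{[r]}$ (equivalently, to the relative contraction defined by the $f$-globally generated $\mathscr{G}^{[r]}$), where the divisorial sheaf becomes a genuine line bundle, apply Koll\'ar's statement there, and descend by the $\mathbb{Z}/r$-eigenspace decomposition.

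For $\mathscr{G}^{[-j]}$ the positivity is reversed, so Koll\'ar vanishing no longer applies and it is here that Du Bois-ness genuinely enters. The cleanest route avoids duality---which for the non-Cartier sheaf $\mathscr{G}^{[-j]}$ would force delicate Cohen--Macaulayness hypotheses on the fibres---and instead applies the deformation-invariance of Du Bois cohomology: the fibres are slc, hence Du Bois, and after the same reduction turning $\mathscr{G}^{[-j]}$ into a line bundle on a family with Du Bois fibres, the Koll\'ar--Kov\'acs base change theorem for Du Bois families gives flatness and surjective base change for $R^if_*\mathscr{G}^{[-j]}$ in every degree. When the fibres happen to be Cohen--Macaulay---as in our klt applications---one may alternatively read this off from relative Grothendieck duality and the previous paragraph, since $\omega_{X_s}^{[1]}[\otimes]\mathscr{G}_s^{[j]}$ is the Serre dual of $\mathscr{G}_s^{[-j]}$.

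The main obstacle is the simultaneous presence of three features that obstruct a naive vanishing argument: the sheaves are non-Cartier divisorial sheaves, the available positivity is only semiampleness rather than ampleness (so one cannot kill higher cohomology and must prove base change in \emph{every} degree), and the fibres are only slc and hence need not be Cohen--Macaulay. Handling these at once---by reducing the divisorial sheaf to a line bundle through the cyclic cover or the contraction defined by $\mathscr{G}^{[r]}$, and by using Du Bois-ness rather than Cohen--Macaulayness to secure base change on both sides---is the technical heart. I expect the real work to lie in the cyclic-cover step: verifying that the cover (or contraction) has fibres to which the Du Bois base change theorem applies and that the resulting $\mathbb{Z}/r$-eigenspace decomposition is compatible with arbitrary base change on $S$.
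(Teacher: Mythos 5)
Your treatment of $R^if_*\mathscr{G}^{[-j]}$ is essentially the paper's argument: after localizing $S$, one takes a general section $h\in H^0(X,\mathscr{G}^{[mr]})$ with $\mathrm{div}(h)$ flat over $S$ (this is where $f$-global generation of $\mathscr{G}^{[r]}$ enters), forms the $\mu_{mr}$-cover $\pi\colon Y=\mathbf{Spec}_X(\oplus_{j=0}^{mr-1}\mathscr{G}^{[-j]})\to X$, whose geometric fibers over $S$ are of slc type, and applies the Du Bois base-change theorem \cite[Theorem 2.62]{kollar-moduli} to $f\circ\pi$, reading off the conclusion from $R^i(f\circ\pi)_*\mathcal{O}_Y\cong\oplus_{j}R^if_*\mathscr{G}^{[-j]}$. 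One caveat about your phrasing: there is no Du Bois base-change theorem for arbitrary line bundles on the cover (such a statement is false in general, e.g.\ $h^0$ of numerically trivial line bundles jumps in families); what makes the cover work is not that $\mathscr{G}^{[-j]}$ ``becomes a line bundle'' upstairs, but that the sheaves $\mathscr{G}^{[-j]}$ themselves occur as $\mu_{mr}$-eigensheaves of $\pi_*\mathcal{O}_Y$, so the theorem for the structure sheaf of $Y$ suffices.

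The genuine gap is on the other side, $R^if_*(\omega_{X/S}^{[1]}[\otimes]\mathscr{G}^{[j]})$. The ``Koll\'ar torsion-freeness and base change package for $f$-semiample $\mathbb{Q}$-twists, after Fujino'' that you invoke does not exist in the generality the lemma requires: Fujino-type torsion-freeness and vanishing theorems for slc pairs are statements over reduced bases and do not give local freeness and base change over an arbitrary locally Noetherian, possibly non-reduced, $S$; and the Du Bois package gives base change for $R^if_*\mathcal{O}$, not for higher direct images of the \emph{sheaf} $\omega_{X/S}$ (twisted or not) when the slc fibers fail to be Cohen--Macaulay. Since the lemma is applied in the paper over non-reduced bases (e.g.\ in Proposition \ref{prop--univ--closed--r}) and to families with genuinely slc fibers (the KSBA-type families in Section \ref{sec-7}), neither your main route nor your Cohen--Macaulay fallback covers the needed cases; you are assuming exactly the statement that constitutes the technical content of this half of the lemma. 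The paper supplies it by hand: the top-degree relative duality $f_*\mathcal{H}om_{\mathcal{O}_X}(F,\omega_{X/S})\cong\mathcal{H}om_{\mathcal{O}_S}(R^nf_*F,\mathcal{O}_S)$ \cite[2.68.3]{kollar-moduli}, combined with $\mathcal{H}om_{\mathcal{O}_X}(\mathscr{G}^{[j]},\omega_{X/S})\cong\omega_{X/S}^{[1]}[\otimes]\mathscr{G}^{[-j]}$ and the already-proven statement for $R^nf_*\mathscr{G}^{[-j]}$, settles $i=0$; a general relative hyperplane $H\sim M$ with $M$ $f$-ample chosen to kill fiberwise higher cohomology, together with the sequence $0\to\omega_{X/S}^{[1]}[\otimes]\mathscr{G}^{[j]}\to\omega_{X/S}^{[1]}[\otimes]\mathscr{G}^{[j]}\otimes M\to\omega_{H/S}^{[1]}[\otimes]\mathscr{G}|_H^{[j]}\to0$, identifies $R^if_*$ for $i\ge2$ with $R^{i-1}(f|_H)_*$ and runs an induction on the relative dimension (base case $n=1$ being \cite[Corollary 2.69]{kollar-moduli}); and the residual case $i=1$ is handled by a formal-functions argument over the Artinian thickenings $\mathrm{Spec}(\mathcal{O}_{S,s}/\mathfrak{m}_s^d)$ using \cite[2.70]{kollar-moduli} and \cite[III, Theorems 11.1 and 12.11]{Ha}. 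Without a substitute for this induction, your second half does not close.
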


\begin{proof}
    To show the assertion, we may assume that $S$ is affine and we can take for any $m\in\mathbb{Z}_{>0}$ a general section $h\in H^0(X,\mathscr{G}^{[mr]})$ such that $\mathrm{div}(h)$ is flat over $S$.
    Using $h$, we obtain a $\mu_{mr}$-cover $\pi\colon Y:=\mathbf{Spec}_X(\oplus_{j=0}^{mr-1}\mathscr{G}^{[-j]})\to X$.
    We see that $Y\to S$ has only geometric fibers of slc type.
    Applying \cite[Theorem 2.62]{kollar-moduli} to $f\circ \pi$, we see that $R^i(f\circ \pi)_*\mathcal{O}_Y\cong\oplus_{j=0}^{mr-1}R^if_*\mathscr{G}^{[-j]}$ is locally free and $\oplus_{j=0}^{mr-1}R^if_*\mathscr{G}^{[-j]}\to \oplus_{j=0}^{mr-1}H^i(X_s,\mathscr{G}_s^{[-j]})$ is surjective.
    Thus, we obtain the assertion for $R^if_*\mathscr{G}^{[-j]}$. 

    From now on, we deal with $R^if_*(\omega_{X/S}^{[1]}[\otimes]\mathscr{G}^{[j]})$.
    Suppose that $f$ is of relative dimension $n$.
    We deduce the assertion by induction on $n$.
    Note that the case where $n=1$ immediately follows from \cite[Corollary 2.69]{kollar-moduli}.
    Therefore, we may assume that $n\ge2$.
    By \cite[2.68.3]{kollar-moduli}, there exists an isomorphism
    \[
    f_*\mathcal{H}om_{\mathcal{O}_X}(F,\omega_{X/S})\cong \mathcal{H}om_{\mathcal{O}_S}(R^nf_*F,\mathcal{O}_S)
    \]
    for any $F$ that is functorial on $F$.
    Here, we note that $\mathcal{H}om_{\mathcal{O}_X}(\mathscr{G}^{[j]},\omega_{X/S})\cong\omega_{X/S}^{[1]}[\otimes]\mathscr{G}^{[-j]}$ for any $j\in\mathbb{Z}$.
    Indeed, let $\iota\colon U\hookrightarrow X$ be the inclusion.
    Then, it is easy to see that 
    \[
    \mathcal{H}om_{\mathcal{O}_X}(\mathscr{G}^{[j]},\omega_{X/S})=\iota_*\mathcal{H}om_{\mathcal{O}_U}(\mathscr{G}^{[j]}|_U,\omega_{X/S}|_U)
    \]
    by the same argument of \cite[Corollary 10.8]{kollar-moduli}.
    Since $\mathcal{H}om_{\mathcal{O}_U}(\mathscr{G}^{[j]}|_U,\omega_{X/S}|_U)\cong\omega_{X/S}|_U\otimes\mathscr{G}|_U^{\otimes j}$, we see that $\mathcal{H}om_{\mathcal{O}_X}(\mathscr{G}^{[j]},\omega_{X/S})\cong\omega_{X/S}^{[1]}[\otimes]\mathscr{G}^{[-j]}$ by Definition \ref{de--univ--hull}.
    Since $R^nf_*\mathscr{G}^{[-j]}$ is locally free and $R^nf_*\mathscr{G}^{[-j]}\otimes\kappa(s)\cong H^n(X_s,\mathscr{G}_s^{[-j]})$ for any $j\ge0$ and $s\in S$, we see that $f_*(\omega_{X/S}^{[1]}[\otimes]\mathscr{G}^{[j]})$ is also locally free and $f_*(\omega_{X/S}^{[1]}[\otimes]\mathscr{G}^{[j]})\otimes \kappa(s)\cong H^0(X_s,\omega_{X_s}^{[1]}[\otimes]\mathscr{G}^{[j]})$ for any $j\in\mathbb{Z}_{>0}$ and $s\in S$.
    Take an $f$-ample line bundle $M$ such that $H^i(X_s,\omega_{X_s}[\otimes]\mathscr{G}_s^{[j]}\otimes M_s)=0$ for any $s\in S$ and $i,j>0$. 
   By \cite[Corollaries 10.12 and 10.19]{kollar-moduli}, locally on $S$, we can take a relative Cartier divisor $H\sim M$ such that $f|_H\colon H\to S$ satisfies that \begin{enumerate}
    \item $\omega_{H/S}|_{U_H}$ and $(\mathscr{G}|_H)|_{U_H}$ are line bundles and for any $s\in S$, $\mathrm{codim}_{X_s}(X_s\setminus U)\ge2$,
    \item $f|_H\colon (H,\Delta|_H)\to S$ is locally stable and of dimension $n-1$,
    \item the universal hull $\omega_{H/S}^{[l_1]}[\otimes]\mathscr{G}|_H^{[l_2]}$ of $\omega_{H/S}^{\otimes l_1}\otimes\mathscr{G}|_H^{\otimes l_2}$ exists for any $l_1\in\{0,1\}$, $l_2\in\mathbb{Z}$ and there exists $r\in\mathbb{Z}_{>0}$ such that $\mathscr{G}|_H^{[r]}$ is an $f$-globally generated line bundle,
\end{enumerate} 
where $U_H:=U\times_XH$.
By \cite[III, Theorem 12.11]{Ha} and the exact sequence
\[
0\to \omega_{X/S}^{[1]}[\otimes]\mathscr{G}^{[j]}\to \omega_{X/S}^{[1]}[\otimes]\mathscr{G}^{[j]}\otimes M\to \omega_{H/S}^{[1]}[\otimes]\mathscr{G}|_H^{[j]}\to0,
\]
we see that $R^{i-1}(f|_H)_*(\omega_{H/S}^{[1]}[\otimes]\mathscr{G}|_H^{[j]})\cong R^if_*(\omega_{X/S}^{[1]}[\otimes]\mathscr{G}^{[j]})$ for any $i\ge2$.
By induction hypothesis, it suffices to show the assertion for $R^1f_*(\omega_{X/S}^{[1]}[\otimes]\mathscr{G}^{[j]})$.
By the above exact sequence, we obtain the following exact sequence:
\[
0\to f_*\omega_{X/S}^{[1]}[\otimes]\mathscr{G}^{[j]}\to f_*\omega_{X/S}^{[1]}[\otimes]\mathscr{G}^{[j]}\otimes M\to f_*\omega_{H/S}^{[1]}[\otimes]\mathscr{G}|_H^{[j]}\to R^1f_*(\omega_{X/S}^{[1]}[\otimes]\mathscr{G}^{[j]})\to0.
\]
In the above exact sequence, if we replace $S$ with $S_d:=\mathrm{Spec}(\mathcal{O}_{S,s}/\mathfrak{m}^d_s)$ for any $d\ge0$, then $H^1(X_d,\omega_{X_d/S_d}^{[1]}[\otimes]\mathscr{G}|_{X_d}^{[j]})$ is free over $A_d:=\mathcal{O}_{S,s}/\mathfrak{m}^d_s$ by \cite[2.70]{kollar-moduli}, where $\mathfrak{m}_s$ is the maximal ideal of $\mathcal{O}_{S,s}$ and $X_d:=X\times_SS_d$.
Moreover, it is easy to check that $H^1(X_d,\omega_{X_d/S_d}^{[1]}[\otimes]\mathscr{G}|_{X_d}^{[j]})\otimes_{A_d}\kappa(s)\cong H^1(X_s,\omega_{X_s}^{[1]}[\otimes]\mathscr{G}_s^{[j]})$.
This implies that 
\[
\varprojlim H^1(X_d,\omega_{X_d/S_d}^{[1]}[\otimes]\mathscr{G}|_{X_d}^{[j]})\to H^1(X_s,\omega_{X_s}^{[1]}[\otimes]\mathscr{G}_s^{[j]})
\]
is surjective by \cite[III, Theorem 12.11]{Ha}.
By \cite[III, Theorem 11.1]{Ha}, $$\widehat{R^1f_*(\omega_{X/S}^{[1]}[\otimes]\mathscr{G}^{[j]})}\cong \varprojlim H^1(X_d,\omega_{X_d/S_d}^{[1]}[\otimes]\mathscr{G}|_{X_d}^{[ j]}),$$ where $\widehat{R^1f_*(\omega_{X/S}^{[1]}[\otimes]\mathscr{G}^{[j]})}$ is the completion of $R^1f_*(\omega_{X/S}^{[1]}[\otimes]\mathscr{G}^{[j]})$ in the $\mathfrak{m}_s$-adic topology.
Therefore, 
\[
R^1f_*(\omega_{X/S}^{[1]}[\otimes]\mathscr{G}^{[l]})\to H^1(X_s,\omega_{X_s}^{[1]}[\otimes]\mathscr{G}_s^{[l]})
\]
is surjective.
By \cite[III, Theorem 12.11]{Ha}, we see that $R^1f_*(\omega_{X/S}^{[1]}[\otimes]\mathscr{G}^{[l]})$ is locally free.
We complete the proof.
\end{proof}

This lemma shows the following.

\begin{prop}\label{prop--univ--closed--r}
$\mathcal{M}_{d,v,u,w}$ is isomorphic to a closed substack of $\mathcal{M}_{d,v,u,r,w}$ that contains $(\mathcal{M}_{d,v,u,r,w})_{\mathrm{red}}$ for any sufficiently divisible $r\in\mathbb{Z}_{>0}$.
In particular, $\mathcal{M}_{d,v,u,w}$ is a separated Deligne--Mumford stack that admits a coarse moduli space $M_{d,v,u,w}$.
\end{prop}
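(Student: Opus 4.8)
The strategy is to produce an explicit comparison functor from $\mathcal{M}_{d,v,u,w}$ into $\mathcal{M}_{d,v,u,r,w}$ and to recognize its essential image as a closed substack, the technical engine being Lemma \ref{lem--dubois}. First I would record that $\mathcal{M}_{d,v,u,w}$ is a stack: it is the full subcategory of $\mathfrak{Pol}$ (Definition-Lemma \ref{defn--pol}) cut out by conditions (i)--(iv) of Definition \ref{de--new--stack}, all of which are fiberwise conditions or flatness/local-freeness statements compatible with base change and \'etale descent, so the stack structure is inherited from $\mathfrak{Pol}$. Using the boundedness of $\mathfrak{Z}_{d,v,u,w}$, I would fix $r$ sufficiently divisible so that the fiberwise Gorenstein index divides $r$ for every member; then for each object $(\mathcal{X},\mathscr{A})\to S$ of $\mathcal{M}_{d,v,u,w}(S)$ the hull $\omega_{\mathcal{X}/S}^{[r]}$ is a line bundle, and after enlarging $r$ it is relatively globally generated in the sign matching $u$.

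The heart of the argument is to upgrade such an $(\mathcal{X},\mathscr{A})$ to an object of $\mathcal{M}_{d,v,u,r,w}(S)$ by adjoining its relative ample model. Here I would apply Lemma \ref{lem--dubois} with $\mathscr{G}=\omega_{\mathcal{X}/S}$ when $u>0$ and $\mathscr{G}=\omega_{\mathcal{X}/S}^{[-1]}$ when $u<0$: condition (iii) of Definition \ref{de--new--stack} supplies the universal hulls required in the hypothesis, the family is locally stable since $\Delta=0$, the fibers are klt and $\omega^{[r]}$ is a line bundle, and $\mathscr{G}^{[r]}$ is the relatively globally generated line bundle. The lemma then gives that $\pi_{\mathcal{X}*}\omega_{\mathcal{X}/S}^{[mr]}$ is locally free and commutes with base change for all $m$, so that the relative ample model
\[
f\colon \mathcal{X}\longrightarrow \mathcal{C}:=\mathbf{Proj}_S\Bigl(\bigoplus_{m\ge0}\pi_{\mathcal{X}*}\omega_{\mathcal{X}/S}^{[mr]}\Bigr)
\]
commutes with base change; its geometric fibers are therefore the Iitaka fibrations of the $\mathcal{X}_{\bar s}$, and $\mathcal{C}\to S$ is flat with the prescribed curves as fibers. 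This verifies conditions (i)--(v) of Definition \ref{defn--HH-moduli} and defines a functor $\Phi\colon \mathcal{M}_{d,v,u,w}\to\mathcal{M}_{d,v,u,r,w}$. Since in both stacks the arrows are exactly the $\mathscr{A}$-preserving $S$-isomorphisms of the total spaces, and $f$ is canonically determined by $(\mathcal{X},\mathscr{A})$, the functor $\Phi$ is fully faithful.

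It remains to identify the essential image of $\Phi$ with a closed substack. An object of $\mathcal{M}_{d,v,u,r,w}(S)$ lies in the image exactly when the universal hull $\omega_{\mathcal{X}/S}^{[l]}$ exists for every $l\in\mathbb{Z}$, and because $\omega_{\mathcal{X}/S}^{[l+r]}\cong\omega_{\mathcal{X}/S}^{[l]}\otimes\omega_{\mathcal{X}/S}^{[r]}$ with $\omega^{[r]}$ a line bundle, this reduces to the finitely many conditions $l=0,\dots,r-1$. By the representability of the functor of universal hulls \cite[Section 9]{kollar-moduli}, each is cut out by a locally closed immersion, and their intersection defines a locally closed substack $\mathcal{Z}\subset\mathcal{M}_{d,v,u,r,w}$. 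Crucially, over any field all reflexive hulls exist, so every geometric point of $\mathcal{M}_{d,v,u,r,w}$ satisfies these conditions; hence $\mathcal{Z}$ has full support, and a locally closed substack with full support is closed. The same observation shows $(\mathcal{M}_{d,v,u,r,w})_{\mathrm{red}}\subset\mathcal{Z}$, so $\Phi$ is an isomorphism onto a closed substack containing the reduced substack. Finally, a closed substack of the separated Deligne--Mumford stack $\mathcal{M}_{d,v,u,r,w}$ of finite type (Theorem \ref{thm--adiabatic--k-moduli}) is again separated Deligne--Mumford of finite type, and therefore admits a coarse moduli space $M_{d,v,u,w}$ by \cite{KeM}.

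I expect the delicate point to be the last step: proving that the locus where all intermediate hulls exist is genuinely closed rather than merely locally closed. The full-support argument reduces this to the representability and local-closedness of the hull functor in \cite[Section 9]{kollar-moduli}, and one must check that these conditions are stable under the base changes built into the stack. By comparison, the well-definedness of $\Phi$ through Lemma \ref{lem--dubois} is the main computational step, but it is essentially forced once the hypotheses of that lemma --- local stability and the relative global generation of $\omega^{[\pm r]}$ --- have been verified.
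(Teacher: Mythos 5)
Your construction of the comparison functor $\Phi$ via Lemma \ref{lem--dubois}, its full faithfulness, and the identification of its essential image with the locus where all hulls $\omega_{\mathcal{X}/S}^{[l]}$ exist all match the paper's argument (the paper packages the same equivalence in the opposite direction, defining the substack $\mathcal{M}^{\mathrm{K}}_{d,v,u,r,w}$ by condition (iii) of Definition \ref{de--new--stack} and forgetting the fibration). The gap is in your closedness step. \cite[Theorem 9.40]{kollar-moduli} does not represent the condition ``the universal hull of $\omega^{\otimes l}$ exists'' by a single locally closed immersion: it produces a locally closed \emph{partial decomposition}, i.e.\ a disjoint union $\coprod_P \mathcal{Z}_P$ of locally closed substacks indexed by the Hilbert polynomials of the fiberwise hulls $\omega^{[l]}_{\mathcal{X}_{\bar{s}}}$. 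For such a union, full support does not imply closedness: $\{0\}\sqcup(\mathbb{A}^1\setminus\{0\})\to\mathbb{A}^1$ is a surjective monomorphism from disjoint locally closed pieces but is not an immersion, and this is exactly the shape of the hull locus whenever the fiberwise Hilbert polynomial jumps --- a real phenomenon for reflexive powers of Weil divisors (it is ruled out here only because $lK_{\mathcal{X}_{\bar{s}}}$ is $\mathbb{Q}$-Cartier, which your argument never uses). So ``hulls exist over fields'' is too weak a statement to conclude anything beyond surjectivity of $\coprod_P \mathcal{Z}_P\to\mathcal{M}_{d,v,u,r,w}$.

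The missing input, which is precisely what the paper supplies, is that the Hilbert polynomial of $\omega^{[l]}_{\mathcal{X}_{\bar{s}}}$ is locally constant on $\mathcal{M}_{d,v,u,r,w}$, so that each connected component meets only one stratum $\mathcal{Z}_P$; only then does ``locally closed with full support'' give a closed immersion. This local constancy comes from showing that all hulls $\omega^{[l]}$ exist as flat families, commuting with base change, over the \emph{reduced} stack: the fibers are klt, hence Cohen--Macaulay by \cite[Corollary 5.25]{KM}, and $rK_{\mathcal{X}/S}$ is Cartier, so over reduced bases one can invoke the $\mathbb{Q}$-Cartier divisorial sheaf theory (the same mechanism as Lemma \ref{lem--divisor--restr}, via \cite[Proposition 2.79]{kollar-moduli} and a reduction to smooth curves). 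In other words, the containment $(\mathcal{M}_{d,v,u,r,w})_{\mathrm{red}}\subset\mathcal{Z}$ must be proved first, as an input to closedness, rather than deduced from closedness as in your last step. Once this is inserted, the rest of your proof goes through and is essentially the paper's.
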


\begin{proof}
For $d,u,v$, we set $r$ as \cite[Lemma 3.1]{HH}.   
 For any object $\pi_{\mathcal{X}}\colon \mathcal{X}\to S\in \mathcal{M}_{d,v,u,w}(S)$, the condition (iv) of Definition \ref{de--new--stack} implies the $\pi_{\mathcal{X}}$-semiampleness of $\omega_{\mathcal{X}/S}$.
 By Lemma \ref{lem--dubois}, $\pi_{\mathcal{X}*}\omega_{\mathcal{X}/S}^{[l]}$ is locally free and generates $H^0(\mathcal{X}_{s}, \mathcal{O}_{\mathcal{X}_{s}}(lK_{\mathcal{X}_{s}}))$ for any point $s\in S$ and $l\in\mathbb{Z}$.
 By the proof of \cite[Theorem 5.1]{HH}, it is easy to check that $\pi_{\mathcal{X}}$ satisfies condition (v) of Definition \ref{defn--HH-moduli}.
 On the other hand, it is easy to see that $(\mathcal{M}_{d,v,u,r,w})_{\mathrm{red}}$ satisfies condition (iii) of Definition \ref{de--new--stack} by \cite[Corollary 5.25]{KM} and \cite[Theorem 9.40]{kollar-moduli}.
 Set a substack $\mathcal{M}^{\mathrm{K}}_{d,v,u,r,w}$ of $\mathcal{M}_{d,v,u,r,w}$ such that $\mathcal{M}^{\mathrm{K}}_{d,v,u,r,w}(S)$ is the following collection:
 $$\left\{
 \vcenter{
 \xymatrix@C=12pt{
(\mathcal{X},\mathscr{A})\ar[rr]^-{f}\ar[dr]_{\pi_{\mathcal{X}}}&& \mathcal{C} \ar[dl]\\
&S
}
}
\;\middle|
\begin{array}{rl}
\text{$\pi_{\mathcal{X}}$ satisfies condition (iii) of Definition \ref{de--new--stack}.}
\end{array}\right\}.$$
Then we see that $\mathcal{M}^{\mathrm{K}}_{d,v,u,r,w}$ is a closed substack of $\mathcal{M}_{d,v,u,r,w}$ that contains $(\mathcal{M}_{d,v,u,r,w})_{\mathrm{red}}$ by \cite[Theorem 9.40]{kollar-moduli}.
It is easy to see that there exists a natural morphism $\mathcal{M}^{\mathrm{K}}_{d,v,u,r,w}\to\mathcal{M}_{d,v,u,w}$.
It is also easy to check that this defines a category equivalence because of the choice of $r$.
We complete the proof.
\end{proof}

We denote the reduced structure by $(\mathcal{M}_{d,v,u,w})_{\mathrm{red}}$, which is also the reduced structure of $\mathcal{M}_{d,v,u,r,w}$ by Proposition \ref{prop--univ--closed--r}. 

We note that for any $w'\ge w$, we can consider a morphism $$\gamma\colon \mathcal{M}_{d,v,u,w'}\to \mathcal{M}_{d,v,u,w'+druv}$$ by sending $(\mathcal{X},\mathscr{A})\to \mathcal{C}\in \mathcal{M}_{d,v,u,w'}(S)$ to $(\mathcal{X},\mathscr{A}\otimes\omega_{\mathcal{X}/S}^{[r]})\to \mathcal{C}\in \mathcal{M}_{d,v,u,w'+druv}(S)$ for the choice of $r$ as \cite[Lemma 3.1]{HH}.
By \cite[Theorem 1.5]{HH}, we see that $\gamma$ is an isomorphism of Deligne--Mumford stacks.
This means that it is essential to consider $\mathcal{M}_{d,v,u,r,w}$.

We put the following lemma, which we make use of in Section \ref{sec--finish}.

\begin{lem}\label{lem--HH2--de--yatta}
Suppose that $u>0$.
There exists $l_0\in\mathbb{Z}_{>0}$ depending only on $d,u,v,w$ that satisfies the following.
Let $S$ be an arbitrary reduced scheme and $f\colon (\mathcal{X},\mathscr{A})\to \mathcal{C}$ an object of $\mathcal{M}_{d,v,u,w}(S)$.  
Then $f$ is flat, $f_*\mathcal{O}_{\mathcal{X}}\cong\mathcal{O}_{\mathcal{C}}$, and $f_*\omega_{\mathcal{X}/S}^{[l_0]}$ is a line bundle such that $f^*f_*\omega_{\mathcal{X}/S}^{[l_0]}\cong \omega_{\mathcal{X}/S}^{[l_0]}$.
\end{lem}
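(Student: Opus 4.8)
The plan is to fix a uniform exponent $l_0$ by boundedness and then establish the three assertions in the order: (descent of $\omega^{[l_0]}$ along the fibres) $\Rightarrow$ (flatness of $f$ together with $f_*\mathcal{O}_{\mathcal{X}}\cong\mathcal{O}_{\mathcal{C}}$) $\Rightarrow$ (the statement on $f_*\omega_{\mathcal{X}/S}^{[l_0]}$), propagating fibrewise facts to the family by cohomology and base change over the reduced base. First I would produce $l_0$. By the boundedness of $\mathfrak{Z}_{d,v,u,w}$ in the sense of \cite[Theorem 1.5]{HH}, the set of geometric fibres $\mathcal{X}_{\bar s}$ of objects of $\mathcal{M}_{d,v,u,w}$ is bounded; hence there is a single $l_0\in\mathbb{Z}_{>0}$, divisible by the $r$ of \cite[Lemma 3.1]{HH}, such that for every such fibre $\omega_{\mathcal{X}_{\bar s}}^{[l_0]}$ is a line bundle and $l_0K_{\mathcal{X}_{\bar s}}$ is linearly trivial along the fibres of the associated map $f_{\bar s}\colon \mathcal{X}_{\bar s}\to C_{\bar s}$. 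Since $u>0$, the map $f_{\bar s}$ is the Iitaka fibration, i.e. the ample model of $\omega_{\mathcal{X}_{\bar s}}$; in particular it is a contraction onto a smooth proper curve with $K_{\mathcal{X}_{\bar s}}\sim_{\mathbb{Q},f_{\bar s}}0$. As $\omega_{\mathcal{X}_{\bar s}}^{[l_0]}$ is $f_{\bar s}$-trivial and $f_{\bar s*}\mathcal{O}_{\mathcal{X}_{\bar s}}\cong\mathcal{O}_{C_{\bar s}}$, it descends to a line bundle $\mathscr{N}_{\bar s}$ on $C_{\bar s}$, so $\omega_{\mathcal{X}_{\bar s}}^{[l_0]}\cong f_{\bar s}^*\mathscr{N}_{\bar s}$. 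Restricting to any scheme-theoretic fibre $F$ of $f_{\bar s}$ gives $\omega_{\mathcal{X}_{\bar s}}^{[l_0]}|_F\cong\mathcal{O}_F$, and since $\omega_{\mathcal{X}/S}^{[l_0]}|_{\mathcal{X}_{\bar s}}\cong\omega_{\mathcal{X}_{\bar s}}^{[l_0]}$ by the base-change compatibility of the universal hull for locally stable families, I obtain $\omega_{\mathcal{X}/S}^{[l_0]}|_F\cong\mathcal{O}_F$ for every fibre $F$ of $f$.

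Next I would deduce flatness of $f$ and the identity $f_*\mathcal{O}_{\mathcal{X}}\cong\mathcal{O}_{\mathcal{C}}$. Let $\mathcal{C}$ be the relative ample model of $\omega_{\mathcal{X}/S}$, which is reduced because $\mathcal{X}$ is reduced. Over each geometric point $\bar s$ the induced map is $f_{\bar s}$, a dominant morphism from the normal variety $\mathcal{X}_{\bar s}$ to the smooth curve $C_{\bar s}$, hence flat; consequently the fibres of $f$ over points of $\mathcal{C}_s=C_{\bar s}$ have Hilbert polynomial with respect to $\mathscr{A}$ equal to the fixed Calabi--Yau polynomial determined by $v$, independently of the point and of $s$. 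Because $\mathcal{C}$ is reduced and the fibre Hilbert polynomial of the projective morphism $f$ is constant, $f$ is flat by the standard flatness criterion over a reduced base. With $f$ flat, every fibre $F$ is connected with $h^0(F,\mathcal{O}_F)=1$, so cohomology and base change over the reduced scheme $\mathcal{C}$ shows $f_*\mathcal{O}_{\mathcal{X}}$ is invertible of rank one and that the unit $\mathcal{O}_{\mathcal{C}}\to f_*\mathcal{O}_{\mathcal{X}}$ is an isomorphism.

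Finally I would treat $f_*\omega_{\mathcal{X}/S}^{[l_0]}$. By the previous step and the fibrewise identity $\omega_{\mathcal{X}/S}^{[l_0]}|_F\cong\mathcal{O}_F$, Grauert's theorem over the reduced base $\mathcal{C}$ gives that $f_*\omega_{\mathcal{X}/S}^{[l_0]}$ is a line bundle commuting with base change, with $h^0(F,\omega_{\mathcal{X}/S}^{[l_0]}|_F)=1$ on every fibre. The evaluation morphism $f^*f_*\omega_{\mathcal{X}/S}^{[l_0]}\to\omega_{\mathcal{X}/S}^{[l_0]}$ is then a homomorphism of line bundles on $\mathcal{X}$ which is surjective, being so on each fibre where it reads $\mathcal{O}_F\to\mathcal{O}_F$, and hence an isomorphism because $\mathcal{X}$ is reduced. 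This is exactly the asserted $f^*f_*\omega_{\mathcal{X}/S}^{[l_0]}\cong\omega_{\mathcal{X}/S}^{[l_0]}$, and the uniformity of $l_0$ in $d,u,v,w$ was arranged at the outset.

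The main obstacle is the flatness of $f$ over an arbitrary reduced $S$: the genus of $C_{\bar s}$ may jump (this is exactly why the later arguments stratify by $p_a$), so $\mathcal{C}\to S$ need not be flat and the fibrewise flatness criterion relative to $S$ is unavailable. I circumvent this by working directly over the reduced base $\mathcal{C}$ and using constancy of the fibre Hilbert polynomial instead. Making this rigorous requires confirming that $\mathcal{C}$ is reduced and that the $f$-fibres genuinely carry the fixed Calabi--Yau Hilbert polynomial even over the special points of the curves; this is where the descent $\omega_{\mathcal{X}_{\bar s}}^{[l_0]}\cong f_{\bar s}^*\mathscr{N}_{\bar s}$ and the base-change compatibility of the universal hull provided by Lemma \ref{lem--dubois} do the real work, feeding the uniform statement $\omega_{\mathcal{X}/S}^{[l_0]}|_F\cong\mathcal{O}_F$ into both the flatness criterion and the final cohomology-and-base-change argument.
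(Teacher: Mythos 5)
Your overall strategy (fibrewise triviality of $\omega^{[l_0]}$ $\Rightarrow$ flatness of $f$ over the reduced base $\mathcal{C}$ via constancy of Hilbert polynomials $\Rightarrow$ Grauert/base change for $f_*\mathcal{O}_{\mathcal{X}}$ and $f_*\omega_{\mathcal{X}/S}^{[l_0]}$) is reasonable, and it cannot be compared line by line with the paper, whose entire proof is the citation ``the same argument as \cite[Lemmas 2.60 and 2.61]{HH2}''. However, your argument has a genuine gap at its crux: you assert that every scheme-theoretic fibre $F$ of $f$ is ``connected with $h^0(F,\mathcal{O}_F)=1$'' as if this followed from connectedness. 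It does not. The fibres of a klt good minimal model with $\kappa=1$ over the special points of the base curve can be non-reduced (multiple fibres occur already for elliptic surfaces of Kodaira dimension one, which are objects of this moduli problem when $u>0$), and for a non-reduced connected fibre the equality $h^0(\mathcal{O}_F)=1$ is a theorem, not a formality: it is equivalent to the absence of torsion in $R^1f_{\bar s*}\mathcal{O}_{\mathcal{X}_{\bar s}}$ at the point in question, it genuinely fails in positive characteristic (wild fibres), and in characteristic zero it requires an argument --- e.g.\ klt $\Rightarrow$ rational singularities, reduce to a resolution, and combine Koll\'ar's torsion-freeness of $R^qg_*\omega_Y$ with Grothendieck duality over the curve to conclude that $R^1f_{\bar s*}\mathcal{O}$ is locally free. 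Every subsequent step of your proof funnels through this claim: without it, Grauert's theorem does not give that $f_*\mathcal{O}_{\mathcal{X}}$ and $f_*\omega_{\mathcal{X}/S}^{[l_0]}$ are line bundles commuting with base change, and the final surjectivity of the evaluation map $f^*f_*\omega_{\mathcal{X}/S}^{[l_0]}\to\omega_{\mathcal{X}/S}^{[l_0]}$ on fibres cannot be checked. The same tameness issue is hidden in your first step: to get $l_0K_{\mathcal{X}_{\bar s}}$ honestly linearly (not just $\mathbb{Q}$-linearly) trivial along \emph{all} fibres, including multiple ones, the multiplicities and torsion orders must be uniformly bounded; this does follow from boundedness of $\mathfrak{Z}_{d,v,u,w}$, but it is exactly the point that deserves the argument you omit.

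Two smaller inaccuracies: the Hilbert polynomial of the $f$-fibres is not ``the fixed Calabi--Yau polynomial determined by $v$'' ($v$ only fixes the leading coefficient); what is true, and what your flatness criterion over the reduced $\mathcal{C}$ needs, is local constancy on $S$, which one gets by writing the fibre class as $\tfrac{1}{l_0u}c_1(\omega_{\mathcal{X}/S}^{[l_0]})$ and invoking deformation invariance of Euler characteristics in the flat family $\mathcal{X}\to S$. Also, your parenthetical claim that the genus of $C_{\bar s}$ may jump is false: condition (iv) of the moduli functor forces $h^0(\mathcal{X}_{\bar s},lrK_{\mathcal{X}_{\bar s}})$, hence $g(C_{\bar s})$, to be locally constant, which is precisely why the paper's later stratification $\mathcal{M}_{d,v,u,r,w,p_a}$ is by \emph{open and closed} substacks; this error is harmless to your proof since you work over $\mathcal{C}$ directly, but it should not be offered as motivation.
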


\begin{proof}
    This follows from the same argument as \cite[Lemmas 2.60 and 2.61]{HH2}.
\end{proof}

From now on, we discuss the CM line bundle on $(M_{d,v,u,w})_{\mathrm{red}}$.

\begin{de}\label{de--CM--level}
Fix $w\in\mathbb{Z}_{>0}$.
We note that $\mathcal{M}_{d,v,u,r,w}$ can be obtained as a quotient stack $[H/G]$, where $H$ is a quasi-projective scheme and $G$ is a smooth semisimple linear algebraic group (cf.~\cite[Proof of Theorem 5.1]{HH}).
Let $H\to \mathcal{M}_{d,v,u,r,w}$ be the canonical morphism and let $(\mathscr{U},\mathscr{A})\to H$ be the family corresponding to this morphism.
If we choose a suitable $w$, then we see that $(\mathscr{U}_{\bar{s}},\mathscr{A}_{\bar{s}})$ is specially K-stable (cf.~\cite[Corollary 1.7]{HH}).
Let $t\in\mathbb{Q}_{>0}$ and consider $(\mathscr{U},m(\mathscr{A}+tK_{\mathscr{U}/H}))$, where $m\in\mathbb{Z}_{>0}$ such that $mt\in r\cdot\mathbb{Z}$.
Then, we set the CM line bundle on $H$ as
\[
\lambda_{\mathrm{CM},(\mathscr{U},m(\mathscr{A}+tK_{\mathscr{U}/H}))}.
\]
We note that $\lambda_{\mathrm{CM},(\mathscr{U},m(\mathscr{A}+tK_{\mathscr{U}/H}))}$ is independent of the choice of $m$.
By \cite[Subsection 2.5.1]{HH2}, we see that $\lambda_{\mathrm{CM},(\mathscr{U},m(\mathscr{A}+tK_{\mathscr{U}/H}))}^{\otimes m'}$ admits a natural $G$-linearlization for any sufficiently divisible $m'\in\mathbb{Z}_{>0}$.
Therefore, we obtain that there exists a $\mathbb{Q}$-line bundle $\lambda_{\mathrm{CM},t}$ on $\mathcal{M}_{d,v,u,r,w}$ whose pullback to $H$ coincides with $\lambda_{\mathrm{CM},(\mathscr{U},m(\mathscr{A}+tK_{\mathscr{U}/H}))}$.
Let $\Lambda_{\mathrm{CM},t}$ denote the $\mathbb{Q}$-line bundle on $M_{d,v,u,r,w}$ whose pullback to $\mathcal{M}_{d,v,u,r,w}$ coincides with $\lambda_{\mathrm{CM},t}$.
They are uniquely determined up to $\mathbb{Q}$-linear equivalence.
Let $\nu\colon\mathcal{M}^\nu_{d,v,u,w}\to  \mathcal{M}_{d,v,u,r,w}$ be the normalization.
We note that the normalization is independent of the choice of $r$ by Proposition \ref{prop--univ--closed--r} and we omit $r$.
Let $M^\nu_{d,v,u,w}$ be the coarse moduli space of $\mathcal{M}^\nu_{d,v,u,w}$.
\end{de}
\begin{dele}[{\cite[Proposition 2.62, Definition 2.64]{HH2}}]
There exists a $\mathbb{Q}$-line bundle $\lambda_{\mathrm{CM},\infty}$ such that $\lim_{t\to\infty}\nu^*\lambda_{\mathrm{CM},t}=\lambda_{\mathrm{CM},\infty}$.
We write $\Lambda_{\mathrm{CM},\infty}$ on $M^\nu_{d,v,u,w}$ as the $\mathbb{Q}$-line bundle whose pullback to $\mathcal{M}^\nu_{d,v,u,w}$ coincides with $\lambda_{\mathrm{CM},\infty}$, which is uniquely determined up to $\mathbb{Q}$-linear equivalence.
\end{dele}

We note that $\nu^*\lambda_{\mathrm{CM},t}$ converges in $\mathrm{Pic}_{\mathbb{Q}}(\mathcal{M}^\nu_{d,v,u,w})$ in the same way as the proof of \cite[Proposition 2.62]{HH2}.

\subsection{KSBA moduli}
Fix $d\in\mathbb{Z}_{>0}$, $v\in\mathbb{Q}_{>0}$ and  $c\in\mathbb{Q}_{>0}$.
We set the following moduli stack $\mathcal{M}^{\mathrm{KSBA}}_{d,v,c}$ as follows: for any scheme $S$, the collection of objects $\mathcal{M}^{\mathrm{KSBA}}_{d,v,c}(S)$ is
$$\left\{
\begin{array}{l}
f\colon (\mathcal{X},\Delta) \to S
\end{array}
\;\middle|
\begin{array}{rl}
(i)&\text{$\Delta=c\mathcal{D}$, where $\mathcal{D}$ is a K-flat relative Mumford divisor,}\\
(ii)&\text{$f$ is stable,} \\
(iii)&\text{$\mathrm{dim}\,\mathcal{X}_{\bar{s}}=d$, $\mathrm{vol}(K_{\mathcal{X}_{\bar{s}}}+\Delta_{\bar{s}})=v$ for any geometric}\\
&\text{point $\bar{s}\in S$.}
\end{array}\right\}.$$
For the definition of K-flatness, see \cite[Definition 7.1]{kollar-moduli}.
We call this stack as the {\it KSBA moduli stack} after the pioneering works by Koll\'ar--Shepherd-Barron \cite{KSB} and Alexeev \cite{Al}.
Let $f\colon (\mathcal{X},\Delta_{\mathcal{X}})\to S$ and $g\colon (\mathcal{Y},\Delta_{\mathcal{Y}})\to S$ be two objects of $\mathcal{M}^{\mathrm{KSBA}}_{d,v,c}(S)$.
We set an isomorphism $h\colon \mathcal{X}\to \mathcal{Y}$ in the groupoid $\mathcal{M}^{\mathrm{KSBA}}_{d,v,c}(S)$ as an isomorphism over $S$ such that $\Delta_{\mathcal{X}}=h^*\Delta_{\mathcal{Y}}$. 
Furthermore, as the discussion in Subsection \ref{subsec--k-moduli-cy-fib}, we can define a $\mathbb{Q}$-line bundle $\lambda^{\mathrm{KSBA}}_{\mathrm{CM}}$ on $\mathcal{M}^{\mathrm{KSBA}}_{d,v,c}$ such that $\alpha^*\lambda^{\mathrm{KSBA}}_{\mathrm{CM}}=\lambda_{n+1}$ for any morphism $\alpha\colon T\to \mathcal{M}^{\mathrm{KSBA}}_{d,v,c}$ from a scheme, where
\[
\mathrm{det}(f_*\mathcal{O}_X(m(K_{X/S}+\Delta)))=\bigotimes_{i=0}^{n+1}\lambda_i^{\otimes\binom{m}{i}}
\]
is the Knudsen--Mumford expansion for any sufficiently divisible and large $m\in\mathbb{Z}$ and $f\colon (X,\Delta)\to T$ is the family corresponding to $\alpha$. 
We call $\lambda^{\mathrm{KSBA}}_{\mathrm{CM}}$ the CM line bundle on the KSBA moduli stack $\mathcal{M}^{\mathrm{KSBA}}_{d,v,c}$.
Then, we state the following.
\begin{thm}[\cite{kollar-moduli,PX}]\label{thm--ksba}
$\mathcal{M}^{\mathrm{KSBA}}_{d,v,c}$ is a proper Deligne--Mumford stack of finite type over $\mathbbm{k}$ with a projective coarse moduli space $\pi\colon \mathcal{M}^{\mathrm{KSBA}}_{d,v,c}\to M^{\mathrm{KSBA}}_{d,v,c}$.
Furthermore, there exists an ample $\mathbb{Q}$-line bundle $\Lambda^{\mathrm{KSBA}}_{\mathrm{CM}}$ on $M^{\mathrm{KSBA}}_{d,v,c}$ such that $\pi^*\Lambda^{\mathrm{KSBA}}_{\mathrm{CM}}\sim_{\mathbb{Q}}\lambda^{\mathrm{KSBA}}_{\mathrm{CM}}$.
\end{thm}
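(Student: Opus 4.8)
The plan is to assemble the statement from the now-standard KSBA machinery together with the positivity result of Patakfalvi--Xu. First I would establish that $\mathcal{M}^{\mathrm{KSBA}}_{d,v,c}$ is a Deligne--Mumford stack of finite type over $\mathbbm{k}$. Finite type follows from boundedness of the set of stable pairs $(X,\Delta)$ with $\mathrm{dim}\,X=d$, $\mathrm{vol}(K_X+\Delta)=v$ and fixed coefficient $c$, which is the boundedness theorem for stable pairs in \cite{kollar-moduli} (building on Hacon--McKernan--Xu); this lets one realize the stack as a quotient $[H/\mathrm{PGL}]$ of a locally closed subscheme $H$ of a Hilbert scheme of pluricanonically embedded pairs, using an embedding by $|m(K_X+\Delta)|$. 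That the stack is Deligne--Mumford rather than merely Artin is because each geometric fiber $(X_{\bar s},\Delta_{\bar s})$ is stable, so $K_{X_{\bar s}}+\Delta_{\bar s}$ is ample and the automorphism group scheme is finite and, in characteristic zero, reduced. The K-flatness hypothesis $(i)$ is exactly what makes the divisorial part $\mathcal{D}$ behave functorially in families, so that the moduli functor is well defined and separated; this is the refinement introduced in \cite{kollar-moduli}.

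Next I would verify separatedness and properness by the valuative criterion over a discrete valuation ring. Separatedness amounts to the uniqueness of stable limits, and properness to their existence after a finite base change; both are consequences of the minimal model program, namely the stable reduction theorem for pairs, where the existence of good minimal models provides a limit and the ampleness of $K+\Delta$ pins down the canonical model uniquely. The existence of a coarse moduli space $M^{\mathrm{KSBA}}_{d,v,c}$ with the coarse moduli morphism $\pi$ then follows from the Keel--Mori theorem \cite{KeM}, since $\mathcal{M}^{\mathrm{KSBA}}_{d,v,c}$ is a separated Deligne--Mumford stack of finite type.

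The substantive point, and the one I expect to be the main obstacle, is the projectivity of $M^{\mathrm{KSBA}}_{d,v,c}$ together with the ampleness of $\Lambda^{\mathrm{KSBA}}_{\mathrm{CM}}$; here I would follow \cite{PX}. The first task is descent: one shows that a sufficiently divisible power of $\lambda^{\mathrm{KSBA}}_{\mathrm{CM}}=\lambda_{n+1}$ carries a natural linearization and descends to a $\mathbb{Q}$-line bundle $\Lambda^{\mathrm{KSBA}}_{\mathrm{CM}}$ on the coarse space satisfying $\pi^*\Lambda^{\mathrm{KSBA}}_{\mathrm{CM}}\sim_{\mathbb{Q}}\lambda^{\mathrm{KSBA}}_{\mathrm{CM}}$. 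The harder task is to prove that $\Lambda^{\mathrm{KSBA}}_{\mathrm{CM}}$ is ample. Since $M^{\mathrm{KSBA}}_{d,v,c}$ is proper, I would invoke the Nakai--Moishezon criterion for proper algebraic spaces \cite{kollar-moduli-stable-surface-proj,FM3}, reducing to positivity of the top self-intersection on every closed subvariety and ultimately to the behavior over proper curves: that the degree of the CM line bundle on any family is nonnegative, and strictly positive unless the family is isotrivial. Nefness is precisely the semipositivity of CM/Hodge-type sheaves established by Fujino \cite{fujino-semi-positivity} and Kov\'acs--Patakfalvi \cite{KP}, derived from the semipositivity of pushforwards of relative pluricanonical sheaves; strict positivity on non-isotrivial families is the delicate variation-of-Hodge-structure and twisted-positivity input that constitutes the core of \cite{PX}, and using that the coarse moduli map is quasi-finite (no positive-dimensional isotrivial subfamily collapses to a point) this upgrades nefness to strict positivity on every positive-dimensional subvariety. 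Feeding nefness and strict positivity into Nakai--Moishezon then yields the ampleness of $\Lambda^{\mathrm{KSBA}}_{\mathrm{CM}}$, hence the projectivity of $M^{\mathrm{KSBA}}_{d,v,c}$, completing the proof.
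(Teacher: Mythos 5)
The paper gives no proof of Theorem \ref{thm--ksba}: it is quoted as a known result, with the implicit proof being exactly the cited literature \cite{kollar-moduli,PX} (together with \cite{KeM}, \cite{KP}, \cite{fujino-semi-positivity}, and the Nakai--Moishezon criterion of \cite{kollar-moduli-stable-surface-proj,FM3}, all of which the paper invokes elsewhere for this circle of ideas). Your outline is a correct reconstruction of that standard argument --- boundedness and the Hilbert-scheme quotient presentation, finiteness of automorphisms for the Deligne--Mumford property, stable reduction for properness, Keel--Mori for the coarse space, descent of a linearized power of $\lambda^{\mathrm{KSBA}}_{\mathrm{CM}}$, and nefness plus strict positivity on maximal-variation families fed into Nakai--Moishezon --- so it is essentially the same approach as the one the paper relies on.
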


Let $S$ be a scheme and $f\colon (\mathcal{X},\Delta_{\mathcal{X}})\to S$ be an object of $\mathcal{M}^{\mathrm{KSBA}}_{d,v,c}(S)$.
Let $\alpha\colon S\to \mathcal{M}^{\mathrm{KSBA}}_{d,v,c}$ be the induced morphism.
We say that $\alpha^*\lambda^{\mathrm{KSBA}}_{\mathrm{CM}}$ is the {\it CM line bundle for the stable family} $f$.
We write this as $\lambda_{\mathrm{CM},(\mathcal{X},\Delta_{\mathcal{X}})}$.
If $\Delta_{\mathcal{X}}=0$, $\lambda_{\mathrm{CM},(\mathcal{X},\Delta_{\mathcal{X}})}$ is a positive multiple of the CM line bundle defined in Subsection \ref{subsec--CM}.

\section{A generalization of the Picard scheme to $\mathbb{Q}$-Cartier divisors}\label{sec--3}

In this section, we generalize the Picard scheme to flat $\mathbb{Q}$-Cartier Weil divisorial sheaves.

If $S=\mathrm{Spec}\, K$, where $K$ is a field, we can define $\mathbf{Pic}^0_{X/K}$ as the identity component of $\mathbf{Pic}_{X/K}$.
We note that the following lemma holds.

\begin{lem}\label{lem--pic-const}
    In the situation of Definition \ref{de--pic}, we further assume that any geometric fiber is of lc type.
    Then, there exists an open and closed group subscheme $\mathbf{Pic}^0_{X/S}\subset \mathbf{Pic}^\tau_{X/S}$ such that $\mathbf{Pic}^0_{X/S}$ is projective and smooth over $S$ and any fiber of $\mathbf{Pic}^0_{X/S}$ over $s\in S$ coincides with $\mathbf{Pic}^0_{X_{\kappa(s)}/\kappa(s)}$.
\end{lem}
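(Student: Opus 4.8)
The plan is to prove the statement in two stages: first that each geometric fibre $\mathbf{Pic}^0_{X_{\bar s}}$ is an abelian variety, and then that these identity components glue into an abelian scheme over $S$. The normality of the fibres, which is part of the lc hypothesis, will handle the first stage, while the Du Bois property of lc singularities will handle the second, through the invariance of $h^1(\mathcal O)$ in the family.

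For a fixed geometric point $\bar s$, write $Z:=X_{\bar s}$; it is a normal projective variety since it is of lc type. In characteristic zero $\mathbf{Pic}^0_{Z}$ is a smooth connected commutative group scheme by Cartier's theorem, so it remains only to exclude an affine part. Fix a resolution $\pi\colon Y\to Z$. As $Z$ is normal we have $\pi_*\mathcal O_Y=\mathcal O_Z$, and pullback of line bundles preserves algebraic equivalence to zero, giving a homomorphism $\pi^*\colon\mathbf{Pic}^0_{Z}\to\mathbf{Pic}^0_{Y}$ whose target is an abelian variety because $Y$ is smooth and projective. On $\mathbbm{k}$-points $\pi^*$ is injective: if $\pi^*L\cong\mathcal O_Y$ then the projection formula yields $H^0(Z,L)\neq0$ and $H^0(Z,L^{\otimes-1})\neq0$, and on the integral projective variety $Z$ this forces $L\cong\mathcal O_Z$. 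Hence $\ker\pi^*$, being a reduced group scheme of finite type over $\mathbbm{k}=\bar{\mathbbm{k}}$ with no nontrivial $\mathbbm{k}$-point, is trivial; therefore $\pi^*$ is a closed immersion of $\mathbf{Pic}^0_{Z}$ onto a connected subgroup scheme of an abelian variety. Such a subgroup scheme is proper and smooth, so $\mathbf{Pic}^0_{Z}$ is an abelian variety.

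To globalise, I would first invoke that every geometric fibre, being of lc type, is Du Bois (Koll\'ar--Kov\'acs), so that the sheaves $R^if_*\mathcal O_X$ are locally free and their formation commutes with arbitrary base change (see \cite{kollar-moduli}); in particular $\dim\mathbf{Pic}^0_{X_{\bar s}}=h^1(X_{\bar s},\mathcal O)$ is locally constant on $S$. Given this together with the fibrewise properness just established, the theory of the relative Picard scheme (\cite{FGA}) applies: the union of the fibrewise identity components is represented by an open subgroup scheme $\mathbf{Pic}^0_{X/S}\subseteq\mathbf{Pic}^\tau_{X/S}$, where $\mathbf{Pic}^\tau_{X/S}$ is the quasi-projective $S$-scheme recalled in Definition \ref{de--pic}. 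The local freeness and base-change compatibility of all $R^if_*\mathcal O_X$ make the relative Picard scheme smooth over $S$ along the identity section, hence $\mathbf{Pic}^0_{X/S}\to S$ is smooth by translation. Having smooth, geometrically connected, proper fibres of constant dimension, $\mathbf{Pic}^0_{X/S}\to S$ is an abelian scheme; in particular it is proper, so its image is closed in the separated $S$-scheme $\mathbf{Pic}^\tau_{X/S}$, and $\mathbf{Pic}^0_{X/S}$ is the desired open and closed subscheme, smooth and projective over $S$, with fibre $\mathbf{Pic}^0_{X_{\kappa(s)}/\kappa(s)}$ over each $s$.

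I expect the family-level step to be the main obstacle rather than the fibrewise one. The fibrewise statement uses only normality and resolution of singularities, but controlling how the identity components vary requires that they form a flat---indeed smooth and proper---family over $S$. This is precisely where the lc, hence Du Bois, hypothesis is essential: it is the local freeness and base-change compatibility of $R^if_*\mathcal O_X$ that both fixes the fibre dimension and yields smoothness of $\mathbf{Pic}^0_{X/S}\to S$. Without such cohomological control, $h^1(\mathcal O)$ could jump and the fibrewise identity components might fail to assemble into an open and closed subgroup scheme.
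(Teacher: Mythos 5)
Your proof is correct, and at the level of ingredients it is essentially the paper's proof: the paper's argument consists entirely of citations, namely \cite[Corollary 9.5.14, Proposition 9.5.20]{FGA} together with \cite[Corollary 2.64]{kollar-moduli} for the existence of the open subgroup scheme with the prescribed fibres, \cite[Theorem 9.5.4, Proposition 9.5.20]{FGA} for projectivity, and \cite[Corollary 2.64]{kollar-moduli} plus the argument of \cite[Remark 9.5.21]{FGA} for smoothness --- and these are exactly the two pillars you assemble (Du Bois invariance and base change for $R^if_*\mathcal{O}_X$, and the FGA relative theory of $\mathbf{Pic}^0$). The one place you genuinely depart from the paper is the fibrewise input: the paper obtains projectivity of $\mathbf{Pic}^0_{X_{\bar s}}$ by citing Grothendieck's theorem (\cite[Theorem 9.5.4]{FGA}, projectivity of $\mathbf{Pic}^0$ for geometrically normal proper schemes), whereas you reprove it in characteristic zero by pulling back along a resolution $\pi\colon Y\to X_{\bar s}$ and realizing $\mathbf{Pic}^0_{X_{\bar s}}$ as a closed subgroup scheme of the abelian variety $\mathbf{Pic}^0_{Y}$; this is a valid, self-contained substitute (normality gives $\pi_*\mathcal{O}_Y=\mathcal{O}_{X_{\bar s}}$, and in characteristic zero triviality of the kernel on points does force a closed immersion), but it uses resolution of singularities, works only in characteristic zero, and yields properness rather than projectivity, so you still need your closing observation that $\mathbf{Pic}^0_{X/S}$ is closed in the quasi-projective $\mathbf{Pic}^\tau_{X/S}$ to recover the projectivity asserted in the lemma. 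Finally, three of your intermediate appeals should be pinned to precise references rather than ``the theory applies'': the openness of the union of the fibrewise identity components and the smoothness along the identity section are exactly \cite[Proposition 9.5.20]{FGA} and \cite[Remark 9.5.21]{FGA} (the latter fed by \cite[Corollary 2.64]{kollar-moduli}), and the step ``a smooth separated group scheme with proper, geometrically connected fibres of constant dimension is proper, hence an abelian scheme'' is true but not formal and deserves a citation, since it is what makes $\mathbf{Pic}^0_{X/S}$ closed in $\mathbf{Pic}^\tau_{X/S}$.
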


\begin{proof}
    It is easy to see that there exists an open group subscheme $\mathbf{Pic}^0_{X/S}\subset \mathbf{Pic}^\tau_{X/S}$ such that any fiber of $\mathbf{Pic}^0_{X/S}$ over $s\in S$ coincides with $\mathbf{Pic}^0_{X_{\kappa(s)}/\kappa(s)}$ by \cite[Corollary 9.5.14, Proposition 9.5.20]{FGA} and \cite[Corollary 2.64]{kollar-moduli}.
    The projectivity follows from \cite[Theorem 9.5.4 and Proposition 9.5.20]{FGA}.
    The smoothness follows from \cite[Corollary 2.64]{kollar-moduli} and the same argument of \cite[Remark 9.5.21]{FGA}.
\end{proof}

We note that there exists the torsion component $\mathbf{Pic}^\tau_{X/S}\subset \mathbf{Pic}_{X/S}$ as a quasi-projective scheme by \cite[9.6.16]{FGA} in this situation.

\begin{de}\label{de--Q-Cartier--divisorial}
Let $f\colon (X,\Delta)\to S$ be a projective locally stable morphism over a locally Noetherian scheme with only geometrically connected fibers.  
We say that a coherent sheaf $\mathscr{F}$ flat over $S$ is {\it divisorial} if for any geometric point $\bar{s}\in S$,
$\mathscr{F}_{\bar{s}}$ is a $S_2$ sheaf of rank one that is locally free around codimension one points of $X_{\bar{s}}$.
In this case, we note that $\mathscr{F}=\mathscr{F}^{[**]}$.
Set the following functor $\mathfrak{W}\mathfrak{Pic}_{X/S}$ as  
\[
\mathfrak{W}^\mathbb{Q}\mathfrak{Pic}_{X/S}(T):=\{\text{divisorial sheaves on $X_T$ over $T$}\}/\sim_T
\]
for any $S$-scheme $T$, where $\mathscr{F}\sim_T\mathscr{G}$ means that $\mathscr{F}[\otimes]\mathscr{G}^{-1}$ is a line bundle such that $\mathscr{F}[\otimes]\mathscr{G}^{-1}\sim_T\mathcal{O}_{X_T}$.
Let $(\mathfrak{W}\mathfrak{Pic}_{X/S})^{\text{\'et}}$ be the \'etale sheafification of $\mathfrak{W}\mathfrak{Pic}_{X/S}$.
Fix a $f$-very ample line bundle $A$ on $X$ and a polynomial $P$.
Set 
$\mathfrak{W}\mathfrak{Pic}^{P}_{X/S}$ as a subfunctor of $\mathbb{Q}$-Cartier divisorial sheaves with Hilbert polynomial $P$ with respect to $A$ and $(\mathfrak{W}\mathfrak{Pic}_{X/S}^{P})^{\text{\'et}}$ as the \'etale sheafification.
It is easy to check that $(\mathfrak{W}\mathfrak{Pic}_{X/S}^{P})^{\text{\'et}}$ is open and closed in $(\mathfrak{W}\mathfrak{Pic}_{X/S})^{\text{\'et}}$.

Furthermore, if $\mathscr{F}^{[m]}[\otimes]\mathscr{G}$ exists for any $m\in\mathbb{Z}$, and $\mathscr{F}^{[m]}$ is locally free for some $m\ne0$, then we say that $\mathscr{F}$ is {\it $\mathbb{Q}$-Cartier}.
If a $\mathbb{Q}$-Cartier sheaf $\mathscr{F}$ satisfies the following condition, then we say that $\mathscr{F}$ is {\it multiplicative}. 
\begin{itemize}
    \item   $\mathscr{F}_T^{[-1]}[\otimes]\mathscr{G}$ and $\mathscr{F}_T[\otimes]\mathscr{G}$ exist for any \'etale morphism $T\to S$ from a scheme and flat $\mathbb{Q}$-Cartier divisorial sheaf $\mathscr{G}$ on $X_T$.
\end{itemize}

 Set the following functor $\mathfrak{W}^\mathbb{Q}\mathfrak{Pic}_{X/S}$ as  
\[
\mathfrak{W}^\mathbb{Q}\mathfrak{Pic}_{X/S}(T):=\{\text{$\mathbb{Q}$-Cartier divisorial sheaves on $X_T$ over $T$}\}/\sim_T
\]
for any $S$-scheme $T$.
Let $(\mathfrak{W}^{\mathbb{Q}}\mathfrak{Pic}_{X/S})^{\text{\'et}}$ be the \'etale sheafification of $\mathfrak{W}^{\mathbb{Q}}\mathfrak{Pic}_{X/S}$.
Set 
$\mathfrak{W}^\mathbb{Q}\mathfrak{Pic}^{P}_{X/S}$ as a subfunctor of $\mathbb{Q}$-Cartier divisorial sheaves with Hilbert polynomial $P$ with respect to $A$ and $(\mathfrak{W}^{\mathbb{Q}}\mathfrak{Pic}_{X/S}^{P})^{\text{\'et}}$ as the \'etale sheafification.
It is easy to check that $(\mathfrak{W}^{\mathbb{Q}}\mathfrak{Pic}_{X/S}^{P})^{\text{\'et}}$ is open and closed in $(\mathfrak{W}^{\mathbb{Q}}\mathfrak{Pic}_{X/S})^{\text{\'et}}$.
Note also that $(\mathfrak{W}^{\mathbb{Q}}\mathfrak{Pic}_{X/S})^{\text{\'et}}$ is an \'etale subsheaf of $(\mathfrak{W}\mathfrak{Pic}_{X/S})^{\text{\'et}}$.
For any $\lambda\in(\mathfrak{W}^{\mathbb{Q}}\mathfrak{Pic}_{X/S})^{\text{\'et}}(T)$ and $m\in\mathbb{Z}$, where $T$ is a $S$-scheme, we can define $\lambda^{[m]}$, which corresponds to the operation $\mathscr{F}\mapsto \mathscr{F}^{[m]}$ \'etale locally.
\end{de}

First, we put the following lemma, which we use at the end of Section \ref{sec--3}. 

\begin{lem}\label{lem--divisor--restr}
    Let $f\colon (X,\Delta)\to S$ be a projective locally stable morphism over a locally Noetherian scheme with only geometrically connected and normal fibers.
    Suppose that $S$ is reduced.
    If $\mathscr{F}$ is a $\mathbb{Q}$-Cartier divisorial sheaf flat over $S$, then $\mathscr{F}$ is multiplicative.
\end{lem}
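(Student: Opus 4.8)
The plan is to reduce the existence of the two universal hulls to the flatness and base-change compatibility of the structure sheaf of an iterated cyclic cover, in the spirit of the proof of Lemma \ref{lem--dubois}. Since the formation of a universal hull commutes with étale base change and with tensoring by a line bundle (by Definition \ref{de--univ--hull}), and since the étale pullback $\mathscr{F}_T$ of a flat $\mathbb{Q}$-Cartier divisorial sheaf is again flat $\mathbb{Q}$-Cartier divisorial over the reduced scheme $T$, it suffices to treat the case $T=S$: for $S$ reduced (which I may take affine and Noetherian) and any two flat $\mathbb{Q}$-Cartier divisorial sheaves $\mathscr{F},\mathscr{G}$ on $X$, I must show that $\mathscr{F}[\otimes]\mathscr{G}$ and $\mathscr{F}^{[-1]}[\otimes]\mathscr{G}$ exist. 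By Definition \ref{de--Q-Cartier--divisorial}, fix $M,M'>0$ with $\mathscr{F}^{[M]}=:L$ and $\mathscr{G}^{[M']}=:L'$ line bundles; replacing $\mathscr{F},\mathscr{G}$ by their twists with sufficiently $f$-ample line bundles—which affects neither the hypotheses nor, by the line-bundle invariance of hulls, the conclusion—I may assume $L$ and $L'$ are $f$-globally generated.

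Next I would build the covers. Choosing general sections of $L$ and $L'$ whose divisors are flat over $S$, I form the $\mu_M$-cyclic cover $\pi_1\colon Y_1=\mathbf{Spec}_X\big(\bigoplus_{i=0}^{M-1}\mathscr{F}^{[-i]}\big)\to X$ and, over $Y_1$, the $\mu_{M'}$-cyclic cover $\pi_2\colon Y_2\to Y_1$ attached to the reflexive pullback $\pi_1^{[*]}\mathscr{G}$; set $\pi=\pi_1\circ\pi_2$. Exactly as in the proof of Lemma \ref{lem--dubois}, the composite $f\circ\pi\colon Y_2\to S$ is flat with geometric fibers of slc type, so \cite[Theorem 2.62]{kollar-moduli} applies to its structure sheaf. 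By the projection formula and the eigensheaf decomposition for the $\mu_M\times\mu_{M'}$-action,
\[
\pi_*\mathcal{O}_{Y_2}\;\cong\;\bigoplus_{i=0}^{M-1}\bigoplus_{j=0}^{M'-1}\mathscr{F}^{[-i]}[\otimes]\mathscr{G}^{[-j]}.
\]
Since $\pi$ is finite, $\pi_*\mathcal{O}_{Y_2}$ is flat over $S$ and its formation commutes with arbitrary base change, and on each geometric fiber $\bar s$ the cover $\pi_{\bar s}$ computes reflexive hulls, so $(\pi_*\mathcal{O}_{Y_2})_{\bar s}\cong\bigoplus_{i,j}(\mathscr{F}_{\bar s}^{\otimes -i}\otimes\mathscr{G}_{\bar s}^{\otimes -j})^{[**]}$.

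Each eigensheaf $\mathscr{F}^{[-i]}[\otimes]\mathscr{G}^{[-j]}$ is then a direct summand of the flat, base-change-compatible sheaf $\pi_*\mathcal{O}_{Y_2}$ whose fibers are the fiberwise hulls; together with the natural multiplication map out of $\mathscr{F}^{\otimes -i}\otimes\mathscr{G}^{\otimes -j}$, this exhibits each summand as the universal hull in the sense of Definition \ref{de--univ--hull}. It remains only to untwist: using $\mathscr{G}^{[1]}=\mathscr{G}^{[-(M'-1)]}\otimes L'$ and $\mathscr{F}^{[1]}=\mathscr{F}^{[-(M-1)]}\otimes L$, I obtain
\[
\mathscr{F}^{[-1]}[\otimes]\mathscr{G}\cong\big(\mathscr{F}^{[-1]}[\otimes]\mathscr{G}^{[-(M'-1)]}\big)\otimes L',\qquad
\mathscr{F}[\otimes]\mathscr{G}\cong\big(\mathscr{F}^{[-(M-1)]}[\otimes]\mathscr{G}^{[-(M'-1)]}\big)\otimes L\otimes L',
\]
so both arise from the summands $(i,j)=(1,M'-1)$ and $(M-1,M'-1)$ by tensoring with a line bundle, which preserves flatness and base change (the cases $M=1$ or $M'=1$, where $\mathscr{F}$ or $\mathscr{G}$ is already a line bundle, being immediate). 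The step I expect to be the main obstacle is the family statement in the second paragraph, namely verifying that the iterated cyclic covers remain flat over $S$ with slc-type geometric fibers so that \cite[Theorem 2.62]{kollar-moduli} is applicable; this is precisely where the locally stable, fiberwise-normal hypotheses and the genericity of the chosen sections must be used.
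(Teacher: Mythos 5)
There is a genuine gap, and it is worse than the verification you flagged at the end: the construction of the second cover is circular. Forming $Y_1=\mathbf{Spec}_X\bigl(\bigoplus_{i=0}^{M-1}\mathscr{F}^{[-i]}\bigr)$ is legitimate, because the hypothesis that $\mathscr{F}$ is $\mathbb{Q}$-Cartier already provides the sheaves $\mathscr{F}^{[-i]}$ as flat, base-change-compatible universal hulls, so $\pi_{1*}\mathcal{O}_{Y_1}$ is $S$-flat and restricts to the fiberwise covers. But to form $Y_2$ over $Y_1$ you need the algebra $\bigoplus_{j=0}^{M'-1}(\pi_1^{[*]}\mathscr{G})^{[-j]}$, i.e.\ you need $\pi_1^{[*]}\mathscr{G}$ and its powers to exist as flat families on $Y_1$ whose fibers are the fiberwise hulls. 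Since $\pi_1$ is not flat (the eigensheaves $\mathscr{F}^{[-i]}$ are not locally free), $\pi_1^*\mathscr{G}$ is not $S$-flat, and the existence of its relative hull is exactly the statement being proved: pushed forward to $X$, the eigensheaves of the would-be $Y_2$ are precisely the sheaves $\mathscr{F}^{[-i]}[\otimes]\mathscr{G}^{[-j]}$ whose existence is the conclusion of the lemma. Relatedly, ``since $\pi$ is finite, $\pi_*\mathcal{O}_{Y_2}$ is flat over $S$'' is backwards: flatness of $\pi_*\mathcal{O}_{Y_2}$ over $S$ is equivalent to flatness of $Y_2\to S$, which is what has to be proved; and \cite[Theorem 2.62]{kollar-moduli} takes a flat family as input and outputs cohomological base change, it never produces flatness. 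The cyclic-cover argument of Lemma \ref{lem--dubois} is not a model for this lemma precisely because there the existence of all the hulls $\mathscr{G}^{[l_2]}$ is a \emph{hypothesis}, and the cover is used only to deduce cohomological consequences, not the existence of hulls.

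A symptom of the problem is that your argument never really uses the reducedness of $S$, which is essential here. The paper's proof does: by \cite[Theorem 9.40]{kollar-moduli} there is a locally closed decomposition $S'\to S$ through which a morphism $T\to S$ factors if and only if $\mathscr{F}_T^{[-1]}[\otimes]\mathscr{G}_T$ exists; since $S$ is reduced, proving $S'=S$ reduces to the case where $S$ is a smooth affine curve, and there one writes $\mathscr{F}=\mathcal{O}_X(D_1)$ and $\mathscr{G}=\mathcal{O}_X(D_2)$ for $\mathbb{Q}$-Cartier relative Mumford divisors and concludes that $\mathcal{O}_X(D_2-D_1)$ is flat and divisorial by \cite[Proposition 2.79]{kollar-moduli}. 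Any repair of your approach would still have to glue the fiberwise hulls into a flat family, which forces exactly this flattening-decomposition-plus-curve-test mechanism; at that point the iterated covers are doing no work.
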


\begin{proof}
Take a flat $\mathbb{Q}$-Cartier divisorial sheaf $\mathscr{G}$.
It suffices to show that $\mathscr{F}^{[-1]}[\otimes]\mathscr{G} $  exists.
By \cite[Theorem 9.40]{kollar-moduli}, there exists a locally closed decomposition $S'\to S$ such that for any morphism $T\to S$, $\mathscr{F}_T^{[-1]}[\otimes]\mathscr{G}_T $ exists if and only if $T$ factors through $S'$.
Since $S$ is reduced, it suffices to show that $S'\to S$ is a closed immersion.
For this, we may assume that $S$ is a smooth affine curve.
   In this case, we may assume that there exist $\mathbb{Q}$-Cartier relative Mumford divisors $D_1$ and $D_2$ such that $\mathscr{F}=\mathcal{O}_{X}(D_1)$ and $\mathscr{G}=\mathcal{O}_{X}(D_2)$.
   Then $\mathcal{O}_X(D_2-D_1)$ is flat and divisorial over $S$ by \cite[Proposition 2.79]{kollar-moduli}.
   This means that $\mathscr{F}^{[-1]}[\otimes]\mathscr{G} $  exists as $\mathcal{O}_X(D_2-D_1)$.
   We complete the proof.
\end{proof}

We will extend \cite[Theorem 9.2.5]{FGA} to our $\mathfrak{W}^{\mathbb{Q}}\mathfrak{Pic}_{X/S}$.

\begin{prop}\label{prop--sheafif-wpic}
Let $f\colon (X,\Delta)\to S$ be a projective locally stable morphism over a Noetherian scheme with only geometrically connected fibers.  
Then the natural maps $\mathfrak{W}\mathfrak{Pic}_{X/S}\to(\mathfrak{W}\mathfrak{Pic}_{X/S})^{\text{\'et}}$ and $\mathfrak{W}^{\mathbb{Q}}\mathfrak{Pic}_{X/S}\to(\mathfrak{W}^{\mathbb{Q}}\mathfrak{Pic}_{X/S})^{\text{\'et}}$ are injective.
Furthermore, let $X_{\mathrm{sm}}\subset X$ be the largest open subset such that $f|_{X_{\mathrm{sm}}}$ is smooth.
If $f$ admits a section $g$ such that $g(S)\subset X_{\mathrm{sm}}$, then these maps are also surjective. 
\end{prop}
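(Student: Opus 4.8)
The plan is to deduce the statement from the classical comparison theorem for the ordinary relative Picard functor, \cite[Theorem 9.2.5]{FGA}, by exploiting two features: the equivalence relation $\sim_T$ is defined through genuine line bundles, and the operations $[\otimes]$, $[-1]$ and $[**]$ commute with \'etale (indeed flat) base change. Throughout I would use that, since $f$ is projective locally stable with geometrically connected fibers, every geometric fiber is reduced and connected, whence $f_*\mathcal{O}_X=\mathcal{O}_S$ holds universally; this is exactly the hypothesis under which \cite[Theorem 9.2.5]{FGA} applies to the line-bundle functor $\mathfrak{Pic}_{X/S}$.

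For injectivity I would argue as follows. Suppose $\mathscr{F},\mathscr{G}\in\mathfrak{W}^{\mathbb{Q}}\mathfrak{Pic}_{X/S}(T)$ become equivalent after an \'etale cover $T'\to T$. Since $\mathscr{F},\mathscr{G}$ are $\mathbb{Q}$-Cartier, the reflexive product $\mathscr{H}:=\mathscr{F}[\otimes]\mathscr{G}^{[-1]}$ exists as a flat divisorial sheaf on $X_T$; being locally free of rank one is \'etale-local on $X_T$, and $\mathscr{H}_{T'}$ is a line bundle, so $\mathscr{H}$ is a line bundle on $X_T$. Thus $\mathscr{H}\in\mathfrak{Pic}_{X/S}(T)$ is \'etale-locally equivalent to $\mathcal{O}_{X_T}$, and injectivity of $\mathfrak{Pic}_{X/S}\to(\mathfrak{Pic}_{X/S})^{\text{\'et}}$ from \cite[Theorem 9.2.5]{FGA} forces $\mathscr{H}\sim_T\mathcal{O}_{X_T}$, i.e.\ $\mathscr{F}\sim_T\mathscr{G}$. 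The same reasoning applies to $\mathfrak{W}\mathfrak{Pic}_{X/S}$: the \'etale-local equivalence makes the reflexive product \'etale-locally a line bundle, hence a line bundle on $X_T$, after which the classical injectivity applies.

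For surjectivity I would use the section $g$ to rigidify. A section of $(\mathfrak{W}^{\mathbb{Q}}\mathfrak{Pic}_{X/S})^{\text{\'et}}$ over $T$ is represented by a $\mathbb{Q}$-Cartier divisorial sheaf $\mathscr{F}'$ on $X_{T'}$ for some \'etale cover $T'\to T$, together with descent data $p_1^*\mathscr{F}'\sim_{T''}p_2^*\mathscr{F}'$ on $X_{T''}$, $T''=T'\times_TT'$, satisfying the cocycle condition up to $\sim$. Because $g(S)\subset X_{\mathrm{sm}}$, every divisorial sheaf is locally free along $g$, so $g_{T'}^*\mathscr{F}'$ is an honest line bundle on $T'$ and pullback along the section commutes with base change. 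Replacing $\mathscr{F}'$ by $\mathscr{F}'[\otimes]f_{T'}^*(g_{T'}^*\mathscr{F}')^{[-1]}$, I may assume $g_{T'}^*\mathscr{F}'\cong\mathcal{O}_{T'}$. Pulling back the relation $p_1^*\mathscr{F}'\sim_{T''}p_2^*\mathscr{F}'$ along the section shows that the line bundle on $T''$ measuring the difference is trivial, so the descent datum is a genuine isomorphism $p_1^*\mathscr{F}'\cong p_2^*\mathscr{F}'$; normalizing it to be the identity along $g$ (possible since $f_*\mathcal{O}_X=\mathcal{O}_S$ makes global units on $X_{T''}$ come from $T''$, and one trivial along $g$ is $1$) makes the cocycle condition strict. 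By \'etale descent for quasi-coherent sheaves these isomorphisms are effective, giving a coherent sheaf $\mathscr{F}$ on $X_T$ with $\mathscr{F}_{T'}\cong\mathscr{F}'$; flatness over $T$, the $S_2$/rank-one/codimension-one conditions defining ``divisorial,'' and $\mathbb{Q}$-Cartierness are all \'etale-local on $T$, hence inherited by $\mathscr{F}$, so $\mathscr{F}\in\mathfrak{W}^{\mathbb{Q}}\mathfrak{Pic}_{X/S}(T)$ maps to the given section.

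The main obstacle I anticipate is the bookkeeping around the reflexive operations: one must verify that forming $[\otimes]$, the reflexive dual, and the universal hull commutes with the \'etale base changes $p_i$ and with descent, so that the rigidification and the cocycle manipulations genuinely take place in the category of flat $\mathbb{Q}$-Cartier divisorial sheaves. This is where the flatness hypotheses and the universal-hull formalism of Definition \ref{de--univ--hull}, together with \cite[Theorem 9.40]{kollar-moduli}, are essential; granting these compatibilities, the argument is a faithful transcription of the classical line-bundle case. A secondary point is confirming $f_*\mathcal{O}_X=\mathcal{O}_S$ universally and the vanishing of rigidified automorphisms, both of which follow from geometric connectedness and reducedness of the fibers.
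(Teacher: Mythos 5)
Your overall route is the paper's: reduce everything to the classical comparison theorem for the line-bundle Picard functor, using $f_*\mathcal{O}_X=\mathcal{O}_S$ (valid universally since the fibers are deminormal, hence reduced, and geometrically connected) together with the section $g$ through the fiberwise smooth locus to rigidify. Your surjectivity argument --- twist so that $g_{T'}^*\mathscr{F}'\cong\mathcal{O}_{T'}$, evaluate the comparison datum along the section to see that the difference line bundle on $T''$ is trivial, normalize the resulting isomorphism to be the identity along $g$ so that the cocycle discrepancy is a global unit equal to $1$ (the argument of \cite[Lemma 9.2.10]{FGA}), descend, and check that flatness, divisoriality and $\mathbb{Q}$-Cartierness are \'etale-local --- is a faithful match of the paper's proof of this proposition.

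There is, however, one step whose justification is genuinely wrong: the opening of your injectivity argument, where you assert that $\mathscr{H}:=\mathscr{F}[\otimes]\mathscr{G}^{[-1]}$ exists as a flat divisorial sheaf on $X_T$ \emph{because} $\mathscr{F}$ and $\mathscr{G}$ are $\mathbb{Q}$-Cartier. In this paper, $\mathbb{Q}$-Cartierness only provides the self-powers $\mathscr{F}^{[m]}$; the existence of reflexive products with \emph{other} divisorial sheaves is precisely the separate ``multiplicative'' condition of Definition \ref{de--Q-Cartier--divisorial}, and Lemma \ref{lem--divisor--restr} establishes it only when the base is reduced (and the fibers normal) --- its proof reduces to smooth curves and breaks down otherwise. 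Here $T$ is an arbitrary Noetherian $S$-scheme, possibly non-reduced, where universal hulls need not exist or be flat; this failure is the entire reason the paper isolates multiplicativity as a hypothesis rather than a consequence. The same unproved existence is implicitly used in your treatment of $\mathfrak{W}\mathfrak{Pic}_{X/S}$, where not even the $\mathbb{Q}$-Cartier crutch is available. The repair is cheap and is what the paper does: derive existence from the \emph{hypothesis}, not from $\mathbb{Q}$-Cartierness. \'Etale-locally on $T$ the product exists and is a line bundle (this is the meaning of $\mathscr{F}_{T'}\sim_{T'}\mathscr{G}_{T'}$); since two $T''$-flat sheaves with $S_2$ fibers that agree on an open set whose complement has fiberwise codimension at least two are canonically isomorphic (uniqueness of the universal hull), these local line bundles carry canonical descent data and glue to a line bundle on $X_T$ computing $\mathscr{F}[\otimes]\mathscr{G}^{[-1]}$; your appeal to the injectivity half of \cite[Theorem 9.2.5]{FGA} then closes the argument. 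With that substitution your proof is complete and coincides with the paper's.
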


\begin{proof}
We only deal with $\mathfrak{W}\mathfrak{Pic}_{X/S}$. 
Take two elements $\mathscr{F},\mathscr{G}\in\mathfrak{W}\mathfrak{Pic}_{X/S}(T)$ for some scheme.   
If $\mathscr{F}$ and $\mathscr{G}$ are mapped to the same element in $(\mathfrak{W}\mathfrak{Pic}_{X/S})^{\text{\'et}}(T)$, then it is easy to see that $\mathscr{F}^{[-1]}[\otimes]\mathscr{G}$ exists as a line bundle.
Then the first part of \cite[Theorem 9.2.5]{FGA} shows that $\mathscr{F}^{[-1]}[\otimes]\mathscr{G}\sim_T\mathcal{O}_{X_T}$.
Thus, the natural map is injective.

Suppose that $f$ admits a section $g$ such that $g(S)\subset X_{\mathrm{sm}}$.
Then, we note that any divisorial sheaf $\mathscr{F}$ is locally trivial around $g(S)$.
Therefore, $g^*\mathscr{F}$ is also locally trivial.
Take any element $\lambda\in (\mathfrak{W}\mathfrak{Pic}_{X/S})^{\text{\'et}}(T)$ for any scheme $T$.
By taking an \'etale surjection $h\colon T'\to T$ such that $h_{X_T}^*\lambda$ is represented by a divisorial sheaf $\mathscr{F}$.
Here, we may assume that there exists an isomorphism $u_{T'}\colon g_{T'}^*\mathscr{F}\cong \mathcal{O}_{T'}$.
Furthermore, on $X_{T'\times_TT'}$, there exists an isomorphism $v'$ from the pullback of $\mathscr{F}$ via the first projection on to the pullback via the second preserving the pullbacks of $u_{T'}$.
Consider the three projections $X_{T'\times_TT'\times_TT'}\to X_{T'\times_TT'}$.
Let $v'_{ij}$ denote the pullback of $v'$ via the projection to the $i$th and $j$th factors.
Then $v'^{-1}_{13}v'_{23}v'_{12}$ is an automorphism of the pullback of $\mathscr{F}$ via the first projection preserving the pullback of $u_{T'}$.
As \cite[Lemma 9.2.10]{FGA}, we see that $v'^{-1}_{13}v'_{23}v'_{12}$ is the identity.
Hence, $\mathscr{F}$ descends to a coherent sheaf $\mathscr{G}$ on $X_T$.
It is easy to check that $\mathscr{G}$ is also a divisorial sheaf.
Furthermore, if $\mathscr{F}^{[l]}$ is a line bundle for some $l\in\mathbb{Z}$, then so is $\mathscr{G}^{[l]}$.
Therefore, two maps are surjective in this case.
We complete the proof.
\end{proof}

The following is the main result of this section.

\begin{thm}\label{thm--wpic--existence}
Let $f\colon (X,\Delta)\to S$ be a projective locally stable morphism over a Noetherian scheme with only geometrically connected klt fibers. 
Fix a $f$-very ample line bundle $A$ on $X$ and a polynomial $P$.

Then, $(\mathfrak{W}^{\mathbb{Q}}\mathfrak{Pic}_{X/S})^{\text{\'et}}$ is represented by a separated scheme $\mathbf{W}^{\mathbb{Q}}\mathbf{Pic}_{X/S}$ over $S$ and there exists a canonical open immersion $\mathbf{Pic}_{X/S}\hookrightarrow \mathbf{W}^{\mathbb{Q}}\mathbf{Pic}_{X/S}$.
Moreover, $(\mathfrak{W}^{\mathbb{Q}}\mathfrak{Pic}^{P}_{X/S})^{\text{\'et}}$ is represented by a quasi-projective scheme $\mathbf{W}^{\mathbb{Q}}\mathbf{Pic}_{X/S}^P$ over $S$.
\end{thm}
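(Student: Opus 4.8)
The plan is to follow the blueprint for the construction of the Picard scheme in \cite[Theorem 9.4.8]{FGA}, systematically replacing line bundles by $\mathbb{Q}$-Cartier divisorial sheaves and using Koll\'ar's representability of universal hulls \cite[Theorem 9.40]{kollar-moduli} to control the reflexive operations $\mathscr{F}\mapsto\mathscr{F}^{[m]}$. The heart of the argument is the quasi-projective piece of the third assertion; once $\mathbf{W}^{\mathbb{Q}}\mathbf{Pic}^{P}_{X/S}$ is built for every Hilbert polynomial $P$, the separated scheme $\mathbf{W}^{\mathbb{Q}}\mathbf{Pic}_{X/S}$ of the first assertion is obtained as the disjoint union over $P$, since each $(\mathfrak{W}^{\mathbb{Q}}\mathfrak{Pic}^{P}_{X/S})^{\text{\'et}}$ is open and closed in $(\mathfrak{W}^{\mathbb{Q}}\mathfrak{Pic}_{X/S})^{\text{\'et}}$ by Definition \ref{de--Q-Cartier--divisorial} and the Hilbert polynomial is constant on $\sim_{T}$-classes (tensoring by a line bundle pulled back from $T$ does not change the fiberwise sheaves). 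I would postpone the open immersion $\mathbf{Pic}_{X/S}\hookrightarrow\mathbf{W}^{\mathbb{Q}}\mathbf{Pic}_{X/S}$ to the very end.

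First I would reduce to the case where $f$ admits a section $g$ with $g(S)\subset X_{\mathrm{sm}}$. Since each geometric fiber is a normal, hence integral, klt variety, $X_{\mathrm{sm}}\to S$ is smooth and surjective and therefore admits sections after an \'etale surjection $S'\to S$. By Proposition \ref{prop--sheafif-wpic}, over such an $S'$ the natural map $\mathfrak{W}^{\mathbb{Q}}\mathfrak{Pic}^{P}_{X_{S'}/S'}\to(\mathfrak{W}^{\mathbb{Q}}\mathfrak{Pic}^{P}_{X_{S'}/S'})^{\text{\'et}}$ is an isomorphism, so it suffices to represent the rigidified functor of divisorial sheaves $\mathscr{F}$ equipped with a trivialization $g^{*}\mathscr{F}\cong\mathcal{O}_{S'}$ and then glue by \'etale descent, whose effectivity is guaranteed by the injectivity half of Proposition \ref{prop--sheafif-wpic}. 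In the presence of the rigidification there is no $\sim_{T}$-ambiguity, so a rigidified $\mathbb{Q}$-Cartier divisorial sheaf is an honest object that can be parametrized directly.

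To parametrize these objects I would twist by a large power $A^{\otimes n}$: by boundedness of divisorial sheaves with fixed Hilbert polynomial on a bounded family of klt fibers (\cite[\S9]{kollar-moduli}), there is a uniform $n$ for which every $\mathscr{F}(n)$ is globally generated with vanishing higher cohomology, realizing each $\mathscr{F}$ as a point of a relative Quot scheme, which is projective over $S$. On it the conditions of being $S_2$, of rank one, and locally free in codimension one, that is, divisorial, are locally closed by \cite[Theorem 9.40]{kollar-moduli}, and the divisorial locus, after the rigidification eliminates the residual scaling, descends to a quasi-projective scheme. To impose $\mathbb{Q}$-Cartierness I use that, for each fixed $m$, the locus where the universal hull $\mathscr{F}^{[m]}$ exists flatly and is a line bundle is locally closed by \cite[Theorem 9.40]{kollar-moduli}; a Noetherian-induction argument on the components of the parameter space shows the index is generically constant on each component and hence bounded by some $N$, so that the $\mathbb{Q}$-Cartier locus coincides with the single locally closed locus where $\mathscr{F}^{[N]}$ is a line bundle. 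This yields the quasi-projective representing scheme in the section case, and \'etale descent over $S'\to S$ then produces $\mathbf{W}^{\mathbb{Q}}\mathbf{Pic}^{P}_{X/S}$.

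It remains to check separatedness and the open immersion. Separatedness I would verify by the valuative criterion: over a DVR $R$ with fraction field $K$ (reduced, with normal fibers) two flat $\mathbb{Q}$-Cartier divisorial sheaves $\mathscr{F},\mathscr{G}$ agreeing on the generic fiber differ by $\mathscr{F}^{[-1]}[\otimes]\mathscr{G}$, which exists by the multiplicativity of Lemma \ref{lem--divisor--restr}, restricts to $\mathcal{O}_{X_{K}}$, and hence, by uniqueness of the flat divisorial extension of $\mathcal{O}_{X_{K}}$ forced by the $S_2$ property, equals $\mathcal{O}_{X_{R}}$. Finally, a line bundle is exactly a divisorial sheaf for which $\mathscr{F}$ itself is locally free, an open condition, so $\mathbf{Pic}_{X/S}$ is identified with the open subfunctor of $\mathbf{W}^{\mathbb{Q}}\mathbf{Pic}_{X/S}$ cut out by the condition that $\mathscr{F}$ be a line bundle, and compatibility of the two \'etale sheafifications gives the canonical open immersion. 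I expect the main obstacle to be the $\mathbb{Q}$-Cartier step, namely securing the uniform index bound $N$ and handling the fact that the hull operation $\mathscr{F}\mapsto\mathscr{F}^{[m]}$ does not commute with arbitrary base change, so that the locus where $\mathscr{F}^{[N]}$ is a line bundle must be analyzed fiberwise through \cite[Theorem 9.40]{kollar-moduli}; a secondary point is ensuring the sheafification is represented by a \emph{scheme} rather than merely an algebraic space, which I would deduce as in \cite[Theorem 9.4.8]{FGA}, using the quasi-projectivity of $\mathbf{Pic}^{\tau}_{X/S}$ from \cite[Theorem 9.6.19]{FGA}.
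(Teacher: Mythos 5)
Your first half runs parallel to the paper's proof (boundedness of rank-one torsion-free sheaves, embedding into a relative Quot scheme, cutting out the divisorial locus), though you get representability by rigidifying along a section and descending, where the paper instead observes that divisorial sheaves on the fibers are geometrically stable in the sense of \cite[Lemma 1.5.10]{HL}, so that the quotient $[Z_1/PGL_S(P(m))]$ is quasi-projective by the GIT theory of moduli of stable sheaves \cite[Theorem 4.3.7]{HL}; this difference is a matter of route, not of substance. The genuine problem is your treatment of the $\mathbb{Q}$-Cartier condition. The $\mathbb{Q}$-Cartier locus is the increasing countable union $\bigcup_m V_m$, where $V_m$ is the (constructible, by \cite[Theorem 9.40]{kollar-moduli}) locus on which $\mathscr{F}^{[m]}$ exists and is a line bundle. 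A countable increasing union of constructible subsets of a Noetherian scheme need not stabilize and need not be constructible, and your Noetherian-induction argument cannot repair this: the generic point of (the closure of) $\bigcup_m V_m$ need not lie in any $V_m$, and a dense countable union of constructible sets need not contain a single dense member, so ``the index is generically constant on each component'' is not something you can extract formally from the parameter space. Concretely, your argument nowhere uses the klt hypothesis beyond boundedness, so if it were correct it would apply to families with lc fibers; but for the constant family of the projective cone over an elliptic curve $E$, the $\mathbb{Q}$-Cartier Weil divisorial sheaves of degree zero correspond to the torsion points of $E$, with Cartier index equal to the order of torsion --- an unbounded, countable, Zariski-dense, non-constructible locus. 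So no formal argument of this shape can produce the bound $N$.

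What the paper does at this exact point is invoke Proposition \ref{prop--xz}: there is a single integer $I$, uniform over all geometric fibers, such that $ID$ is Cartier for every $\mathbb{Q}$-Cartier Weil divisor $D$ on any fiber. This is a substantive theorem about klt singularities, proved via \cite[Theorem 1.4]{XZuniqueness} and \cite[Theorem D]{BlJ} together with the lower semicontinuity of the delta invariant \cite[Theorem 6.6]{BL}; it is precisely where the klt assumption in the statement is used. With $I$ in hand, the $\mathbb{Q}$-Cartier locus becomes the single locus where $\mathscr{F}^{[j]}$ exists for $1\le j\le I$ and $\mathscr{F}^{[I]}$ is locally free, which \cite[Theorem 9.40]{kollar-moduli} exhibits as a partial locally closed decomposition $Z_2\to Z_1$, and the rest of your outline (disjoint union over $P$, separatedness, the open immersion $\mathbf{Pic}_{X/S}\hookrightarrow\mathbf{W}^{\mathbb{Q}}\mathbf{Pic}_{X/S}$ via openness of local freeness) goes through. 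So the fix is not a better induction but importing the uniform index bound from K-stability theory.
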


Before showing this, we put the following proposition.

\begin{prop}\label{prop--xz}
  Let $f\colon (X,\Delta)\to S$ be a projective flat log $\mathbb{Q}$-Gorenstein family such that $(X_{\bar{s}},\Delta_{\bar{s}})$ is klt for any geometric point $\bar{s}\in S$, where $S$ is a Noetherian scheme.

Then, there exists a positive integer $I$ such that for any $\mathbb{Q}$-Cartier Weil divisor $D$ on $X_{\bar{s}}$ for any geometric point $\bar{s}\in S$, $ID$ is Cartier.
\end{prop}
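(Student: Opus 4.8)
The plan is to reduce the statement to a uniform bound on the torsion of the local divisor class groups of the geometric fibres, and then to extract such a bound from boundedness of the family together with the finiteness of local fundamental groups of klt singularities.

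First I would reduce to the case where $S$ is of finite type over $\mathbbm{k}$. The conclusion depends only on the geometric fibres, so a standard spreading-out argument allows us to descend $f\colon(X,\Delta)\to S$ to a projective locally stable family over a base of finite type over $\mathbbm{k}$ with the same collection of geometric fibres; stratifying the base into finitely many reduced locally closed pieces and taking the least common multiple of the resulting indices, we may further assume $S$ integral. In particular the geometric fibres $(X_{\bar s},\Delta_{\bar s})$ range over a bounded family of klt pairs.

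Next I would localise. Since $X_{\bar s}$ is a normal variety, for a Weil divisor $D$ the reflexive sheaf $\mathcal{O}_{X_{\bar s}}(ID)$ is invertible if and only if $I\cdot[D]_x=0$ in the local class group $\mathrm{Cl}(\mathcal{O}_{X_{\bar s},x})$ for every closed point $x\in X_{\bar s}$. As $D$ is $\mathbb{Q}$-Cartier, each $[D]_x$ lies in the torsion subgroup $\mathrm{Cl}(\mathcal{O}_{X_{\bar s},x})_{\mathrm{tors}}$, so it suffices to find an integer $I$ annihilating this torsion subgroup for all $x$ and all geometric points $\bar s$. A torsion class of order $m$ corresponds to a connected cyclic quasi-\'etale cover of the germ $(x\in X_{\bar s})$ of degree $m$; since $(X_{\bar s},\Delta_{\bar s})$ is klt, the local fundamental group is finite (see \cite{kollar-moduli} and the references therein), whence $m$ divides its order and each individual torsion subgroup has bounded exponent.

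The crux is uniformity: I would invoke boundedness to conclude that, as $(x,\bar s)$ varies over the bounded family, the orders of these local fundamental groups --- equivalently the exponents of $\mathrm{Cl}(\mathcal{O}_{X_{\bar s},x})_{\mathrm{tors}}$ --- are bounded above by a single integer, using the boundedness results of \cite{XZ,Bir,kollar-moduli}. Setting $I$ equal to this bound then yields the claim. The finiteness of each separate local fundamental group is a deep but citable input, so the one genuinely non-formal point, and the main obstacle, is precisely this passage from fibrewise finiteness to a uniform bound over the whole bounded family.
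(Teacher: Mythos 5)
Your fibrewise analysis is sound: for a $\mathbb{Q}$-Cartier Weil divisor $D$ on a klt fibre, each local class $[D]_x$ is torsion, and finiteness of local fundamental groups of klt singularities bounds each individual exponent. But the step you yourself flag as the crux --- passing from fibrewise finiteness to a single integer $I$ valid for all geometric fibres --- is a genuine gap, not a citable formality. As written it is circular: after spreading out and stratifying, ``the fibres form a bounded family'' just means they occur in finitely many finite-type families, and a uniform annihilator of local class-group torsion across such a family is precisely Proposition \ref{prop--xz} for those families. None of \cite{XZ}, \cite{Bir}, \cite{kollar-moduli} contains the needed uniform statement in this form, and no off-the-shelf semicontinuity argument is available: torsion of local class groups can jump under specialization (a smoothing of an $A_n$ singularity has trivial local class groups on the general fibre and $\mathbb{Z}/(n+1)$ on the special one), so a bound obtained on the generic fibre says nothing about nearby fibres, and a naive Noetherian induction on the stratified base fails.

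The paper closes exactly this gap with a quantitative mechanism rather than an abstract boundedness appeal: by \cite[Theorem 1.4]{XZuniqueness} and \cite[Theorem D]{BlJ}, as in the proof of \cite[Theorem 1.5]{XZuniqueness}, any integer $m$ with
\[
m\ \ge\ \frac{(n+1)^n}{\bigl(\delta_{(X_{\bar{s}},\Delta_{\bar{s}})}(L_{\bar{s}})\bigr)^n\,L_{\bar{s}}^n}
\]
has the property that $m!\,D$ is Cartier for every $\mathbb{Q}$-Cartier Weil divisor $D$ on $X_{\bar{s}}$; and by \cite[Theorem 6.6]{BL} the map $s\mapsto \delta_{(X_{\bar{s}},\Delta_{\bar{s}})}(L_{\bar{s}})$ is lower semicontinuous, hence uniformly bounded below on the quasi-compact base, so $I=m!$ works for a single $m$. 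Note that this argument runs directly over the Noetherian base (after replacing $S$ by its normalization), with no spreading out or stratification at all. If you prefer to salvage your route, the statement you actually need is Birkar's uniform bound on Cartier indices over bounded families of klt-type varieties, \cite[Lemma 2.24]{Bir-bab} --- which this paper invokes elsewhere, in Proposition \ref{prop--birkar} --- but that lemma is itself a nontrivial theorem with its own proof, not a formal consequence of boundedness, so citing it (rather than ``boundedness'') is the missing ingredient.
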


\begin{proof}
We may assume that $S$ is connected.
We may also replace $S$ with its normalization and assume that $S$ is normal and irreducible.
    Take a $f$-ample line bundle $L$ on $X$.
    Suppose that $\mathrm{dim}\,X_{\bar{s}}=n$.
    By \cite[Theorem 1.4]{XZuniqueness} and \cite[Theorem D]{BlJ}, we can see as the proof of \cite[Theorem 1.5]{XZuniqueness} that if a positive integer $m$ satisfies that $m\ge \frac{(n+1)^n}{(\delta_{(X_{\bar{s}},\Delta_{\bar{s}})}(L_{\bar{s}}))^nL_{\bar{s}}^n}$, then $m!D$ is Cartier for any $\mathbb{Q}$-Cartier Weil divisor $D$ on $X_{\bar{s}}$.
    By \cite[Theorem 6.6]{BL}, the map
    \[
    S\ni s\mapsto \delta_{(X_{\bar{s}},\Delta_{\bar{s}})}(L_{\bar{s}})\in\mathbb{R}_{>0}
    \]
    is lower semicontinuous.
    Since $S$ is quasi-compact, there exists a positive integer $m$ such that $m\ge \frac{(n+1)^n}{(\delta_{(X_{\bar{s}},\Delta_{\bar{s}})}(L_{\bar{s}}))^nL_{\bar{s}}^n}$ for any geometric point $\bar{s}\in S$.
    Therefore, we complete the proof by putting $I=m!$.
\end{proof}

We recall the Castelnuovo--Mumford regularity (cf.~\cite[Section 5.2]{FGA}), which is necessary for the proof of Theorem \ref{thm--wpic--existence}.

\begin{de}
    Let $X$ be a projective variety over a field with a very ample line bundle $A$.
    For any $m\in\mathbb{Z}$, we say that a coherent sheaf $\mathscr{F}$ is $m$-{\it regular} if $H^i(X,\mathscr{F}\otimes A^{\otimes (m-i)})=0$ for each $i>0$.
\end{de}

\begin{proof}[Proof of Theorem \ref{thm--wpic--existence}]
The assertion is local on $S$ and hence we may assume that $A$ induces a closed immersion $X\subset \mathbb{P}^N_S$ by shrinking $S$.
First, we deal with $(\mathfrak{W}\mathfrak{Pic}^{P}_{X/S})^{\text{\'et}}$.    
Note that if $S$ is a spectrum of a field, then a divisorial sheaf $\mathscr{F}$ is geometrically stable with respect to $A$ in the sense of \cite[Definition 1.2.9]{HL} by the fact that $\mathscr{F}$ is of rank one torsion free sheaf and \cite[Lemma 1.5.10]{HL}.
The set of sheaves on $X_{\bar{s}}$ of rank one with Hilbert polynomial $P$ is bounded for any geometric point $\bar{s}\in S$ by \cite[Theorem 3.3.7]{HL}.
By this, there exists $m\in\mathbb{Z}_{>0}$ such that $\mathscr{F}$ is $m$-regular for any sheaf $\mathscr{F}$ on $X_{\bar{s}}$ of rank one with Hilbert polynomial $P$.
Consider $Q:=\mathbf{Quot}^{\mathcal{O}_{X}(-m)^{\oplus P(m)},P}_{X/S}$.
Then, there exists an open subscheme $Z_1\subset Q$ that parameterizes all quotients that are divisorial by \cite[Corollary 10.12]{kollar-moduli}.
It is easy to see that the quotient stack $[Z_1/PGL_S(P(m))]$ is an algebraic space by \cite[Corollary 8.3.5]{Ols}.
By \cite[Theorem 4.3.7]{HL}, $[Z_1/PGL_S(P(m))]$ is quasi-projective.
Then, it is easy to see that $(\mathfrak{W}\mathfrak{Pic}^{P}_{X/S})^{\text{\'et}}$ is represented by $[Z_1/PGL_S(P(m))]$ by using Proposition \ref{prop--sheafif-wpic} since $f\colon X\to S$ \'etale locally admits a section $g$ such that $g(S)$ lies in the smooth locus of $f$.

Next, we deal with $(\mathfrak{W}^{\mathbb{Q}}\mathfrak{Pic}^{P}_{X/S})^{\text{\'et}}$.
Let $\mathscr{U}$ be the universal quotient sheaf on $Q$.
Take $I$ as Proposition \ref{prop--xz}.
Then, take a partial locally closed decomposition $Z_2\to Z_1$ such that for any morphism $g\colon T\to Z_1$ from a scheme, $g$ factors through $Z_2$ if and only if $(g_{X_{Z_1}}^*(\mathscr{U}|_{X_{Z_1}}))^{[j]}$ exists for any $1\le j\le I$ and $(g_{X_{Z_1}}^*(\mathscr{U}|_{X_{Z_1}}))^{[I]}$ is locally free by \cite[Theorem 9.40]{kollar-moduli}.
Then, we see that $(\mathfrak{W}^{\mathbb{Q}}\mathfrak{Pic}^{P}_{X/S})^{\text{\'et}}$ is represented by a quasi-projective scheme $[Z_2/PGL_S(P(m))]$. Now, the assertions for $\mathbf{W}^\mathbb{Q}\mathbf{Pic}_{X/S}$ are trivial. We complete the proof.
\end{proof}

Then, we can define the following notion.

\begin{de}\label{de--mu--i}
Let $k\in\mathbb{Z}$ and $f\colon (X,\Delta)\to S$ be a projective locally stable morphism over a Noetherian scheme with only geometrically connected klt fibers.
For any $\mathbb{Q}$-Cartier divsiroial sheaf $\mathscr{F}$, we can take $\mathscr{F}^{[k]}$ by the definition.
By this operation, we can define the multiplication morphism by $k$:
\[
\mu_k\colon\mathbf{W}^\mathbb{Q}\mathbf{Pic}_{X/S}\to\mathbf{W}^\mathbb{Q}\mathbf{Pic}_{X/S}.
\]   
Take $I$ as Proposition \ref{prop--xz}.
Then, $\mu_I$ factors through $\mathbf{Pic}_{X/S}$.
We define an open subscheme $\mathbf{W}^\mathbb{Q}\mathbf{Pic}_{X/S}^\tau:=\mu_I^{-1}\mathbf{Pic}_{X/S}^\tau$.
Note that $\mathbf{W}^\mathbb{Q}\mathbf{Pic}_{X/S}^\tau$ parameterizes all numerically trivial $\mathbb{Q}$-Cartier divisorial sheaves.
\end{de}

First, we will put the following lemma, which shows that $\mu_k$ is of finite type.

\begin{lem}\label{lem--boundedness-tau}
Let $f\colon (X,\Delta)\to S$ be a projective locally stable family over a Noetherian scheme such that any geometric fiber is klt and connected.
Fix $k\in\mathbb{Z}_{>0}$, a polynomial $P$, and a $f$-very ample line bundle $L$ on $X$.
Then there exist $m\in\mathbb{Z}_{>0}$ and finitely many polynomials $P_1,\ldots,P_l$ that satisfy the following:
\begin{itemize}
    \item For any geometric point $\bar{s}\in S$, let $\mathscr{F}$ be a $\mathbb{Q}$-Cartier divisorial sheaf on $X_{\bar{s}}$ such that $\mathscr{F}^{[k]}$ has a Hilbert polynomial $P$.
    Then, $\mathscr{F}$ is $m$-regular and the Hilbert polynomial coincides with one of $P_1,\ldots,P_l$ with respect to $L_{\bar{s}}$.
\end{itemize}
\end{lem}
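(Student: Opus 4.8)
The plan is to deduce everything from a single boundedness statement: that the set $\mathcal{S}$ of all $\mathbb{Q}$-Cartier divisorial sheaves $\mathscr{F}$ on the geometric fibers $X_{\bar s}$ ($\bar s\in S$) with $\mathscr{F}^{[k]}$ of Hilbert polynomial $P$ is bounded. Once $\mathcal{S}$ is bounded, it is parameterized by finitely many families of finite type, on each of which the Hilbert polynomial of $\mathscr{F}$ is locally constant; this yields the finite list $P_1,\ldots,P_l$. The uniform regularity $m$ then follows exactly as in the proof of Theorem \ref{thm--wpic--existence}: apply \cite[Theorem 3.3.7]{HL} to each $P_i$ to get a regularity bound and take the maximum. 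Since $S$ is Noetherian, hence quasi-compact, and intersection numbers are constructible, I may freely stratify $S$ into finitely many connected pieces and assume $f$ sits in a bounded family, so the whole problem reduces to proving boundedness of $\mathcal{S}$.

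First I would bound the reflexive powers. Because $\mathscr{F}^{[k]}$ is a rank-one (hence geometrically stable) divisorial sheaf with fixed Hilbert polynomial $P$, the set of all such $\mathscr{F}^{[k]}$ is bounded by \cite[Theorem 3.3.7]{HL}, just as in the proof of Theorem \ref{thm--wpic--existence}. Taking $I$ as in Proposition \ref{prop--xz}, so that $\mathscr{F}^{[I]}$ is a line bundle on every geometric fiber, the reflexive power $\mathscr{F}^{[kI]}=(\mathscr{F}^{[k]})^{[I]}$ again lies in a bounded family: over a locally closed decomposition of the parameter space the relative reflexive powers exist and are flat by \cite[Theorem 9.40]{kollar-moduli}. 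As $\mathscr{F}^{[kI]}=(\mathscr{F}^{[I]})^{\otimes k}$ is a line bundle in a bounded family, its numerical class takes only finitely many values; dividing by $k$ and using that numerical equivalence is torsion-free on $\mathbb{Q}$-Cartier divisors, the numerical class of the line bundle $\mathscr{F}^{[I]}$ lies in a finite set. By the quasi-projectivity of $\mathbf{Pic}^{\tau}_{X/S}$ (Lemma \ref{lem--pic-const} and \cite[9.6.16]{FGA}), the line bundles $\mathscr{F}^{[I]}$ therefore form a bounded family.

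The remaining, and genuinely hardest, step is to descend from boundedness of a reflexive power to boundedness of $\mathscr{F}$ itself. I would attempt this by the cyclic-cover technique already used in Lemma \ref{lem--dubois}: writing $\mathscr{F}=\mathcal{O}_{X_{\bar s}}(D)$ with $ID$ Cartier, the divisorial sheaves $\mathscr{F}^{[-j]}$ ($0\le j<I$) assemble into the index-one cover $\pi\colon Y\to X_{\bar s}$, and $\mathscr{F}$ is recovered as a direct summand of $\pi_*\mathcal{O}_Y$ twisted by the bounded line bundle $N:=\mathscr{F}^{[I]}$, so that boundedness of $Y$ would force boundedness of $\mathscr{F}$. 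A cleaner packaging, which I would try first, is a Bézout reduction: with $g=\gcd(k,I)$ and $ak+bI=g$ one has $\mathscr{F}^{[g]}\cong(\mathscr{F}^{[k]})^{[a]}[\otimes](\mathscr{F}^{[I]})^{\otimes b}$, so $\mathscr{F}^{[g]}$ is bounded; when $g=1$ this already gives boundedness of $\mathscr{F}$, and in general it reduces the problem to extracting a $g$-th reflexive root for a fixed $g\mid I$.

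The crux is precisely this root-extraction: bounding the divisorial sheaves with a fixed bounded reflexive power amounts to the finiteness of the relevant torsion, i.e. to quasi-compactness of the fibers of $\mu_{g}$ on $\mathbf{W}^{\mathbb{Q}}\mathbf{Pic}^{\tau}_{X/S}$ of Definition \ref{de--mu--i}. I expect this to be the main obstacle, and I would control it using Proposition \ref{prop--xz} (every $\mathbb{Q}$-Cartier Weil class is $I$-torsion modulo $\mathrm{Pic}$, and the class group modulo $\mathrm{Pic}$ is finitely generated, so such torsion is finite) together with the multiplicativity of Lemma \ref{lem--divisor--restr}, which lets me transport a bound from one numerical class to all of them; a uniform bound over $S$ then follows from quasi-compactness of the base. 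Care is needed to keep this argument logically independent of the finite-typeness of $\mu_k$, which is the very consequence this lemma is meant to provide.
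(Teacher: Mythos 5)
Your reduction to boundedness of reflexive powers is fine as far as it goes: $\mathscr{F}^{[k]}$ is bounded by \cite[Theorem 3.3.7]{HL}, and the passage to $\mathscr{F}^{[kI]}$, $\mathscr{F}^{[I]}$ and $\mathscr{F}^{[g]}$ via \cite[Theorem 9.40]{kollar-moduli}, Proposition \ref{prop--xz} and B\'ezout is sound. The gap is the final descent step, and it is not a technicality: for rank-one torsion-free (divisorial) sheaves, boundedness is, again by \cite[Theorem 3.3.7]{HL}, essentially \emph{equivalent} to finiteness of the set of their Hilbert polynomials --- which is the very conclusion of the lemma. Knowing that, fiberwise, any two $g$-th reflexive roots of a given sheaf differ by a $g$-torsion class in $\mathrm{Cl}(X_{\bar s})$, and even granting (via your finite-generation argument for $\mathrm{Cl}/\mathbf{Pic}^0$, which is fiberwise plausible for klt fibers but not in the paper) that each fiber carries only finitely many such classes, does not control the Hilbert polynomials of the roots: twisting a divisorial sheaf by a torsion \emph{Weil} class changes $\chi(\cdot\otimes L^{m})$ in general (the Riemann--Roch corrections at the non-Cartier points do not vanish), so the polynomial of $\mathscr{F}$ is not determined by that of $\mathscr{F}^{[g]}$, and a bound on the \emph{number} of roots per fiber says nothing about how the resulting polynomials vary with $\bar s$. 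Your appeal to ``quasi-compactness of the base'' presupposes that the locus in $S$ where a given polynomial occurs is constructible, which is exactly the kind of statement unavailable before the lemma is proved; and the reformulation via quasi-compactness of fibers of $\mu_g$ on $\mathbf{W}^{\mathbb{Q}}\mathbf{Pic}^{\tau}_{X/S}$ is circular, since finite-typeness of $\mu_g$ (Proposition \ref{prop--mu_k}) and Corollary \ref{cor--torsion--components--number} are both downstream of this lemma, as you yourself note without resolving it.

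For contrast, the paper never tries to bound $\mathscr{F}$ itself at this stage, and its two conclusions are obtained in the opposite order from yours. Uniform $m$-regularity comes first and almost for free: since $\mathscr{F}\equiv\frac{1}{k}\mathscr{F}^{[k]}$ as $\mathbb{Q}$-divisor classes and the sheaves $\mathscr{F}^{[k]}$ move in a bounded family, \cite[Proposition 1.41]{KM} gives a uniform $m$ with $(m-d)L_{\bar s}+\mathscr{F}-K_{X_{\bar s}}-\Delta_{\bar s}$ ample, and Kawamata--Viehweg vanishing (this is where the klt hypothesis enters) yields $m$-regularity of $\mathscr{F}$ using nothing about $\mathscr{F}$ beyond the numerical class of its $k$-th power. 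Then, since an $m$-regular Hilbert polynomial of degree at most $d$ is pinned down by the $d+1$ values $h^{0}(\mathscr{F}\otimes L^{m_i})$, finiteness of the polynomials is proved by induction on the fiber dimension: after shrinking $S$ (Noetherian induction), one cuts by a general relative Mumford divisor $H\in|m'L|$ so that $(H,\Delta|_H)\to S$ is again locally stable with klt fibers, bounds $h^{0}$ through the restriction sequence, and starts the induction at $d=1$ using quasi-projectivity of $\mathbf{Pic}^{\tau}$ \cite[Theorem 9.6.16]{FGA}. To salvage your plan you would need a mechanism of exactly this kind --- an induction or a constructibility statement converting fiberwise torsion finiteness into uniform control of $h^{0}(\mathscr{F}\otimes L^{m})$ over the Noetherian base; as written, that mechanism is absent.
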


\begin{proof}
By Noetherian induction on $S$, we may freely shrink $S$.
Shrinking $S$, we may assume that $d:=\mathrm{dim}\,X_{s}$ for any $s\in S$.
We note that $\mathscr{F}^{[k]}$ is geometrically stable and hence bounded by \cite[Theorem 3.3.7]{HL}.
Therefore, there exists $m\in\mathbb{Z}_{>0}$ independent from the choice of $\bar{s}$ such that  $(m-d)L_{\bar{s}}+\mathscr{F}-K_{X_{\bar{s}}}-\Delta_{\bar{s}}$ is ample by \cite[Proposition 1.41]{KM}.
By the Kawamata--Viehweg vanishing theorem, $\mathscr{F}$ is $m$-regular for $\mathscr{F}$ as above.

Therefore, it suffices to bound $H^0(X_{\bar{s}},\mathscr{F}\otimes\mathcal{O}_{X_{\bar{s}}}(m_iL_{\bar{s}}))$ for some sufficiently large $m_i$ for $i=0,1,\ldots,d$ by the $m$-regularity since $\chi(X_{\bar{s}},\mathscr{F}\otimes\mathcal{O}_{X_{\bar{s}}}(mL_{\bar{s}}))$ is a polynomial of degree at most $d$. 
We will show the assertion by induction on $d$.
When $d=1$, the assertion easily follows from the existence of $\mathbf{Pic}^\tau_{X/S}$ as a quasi-projective scheme \cite[Theorem 9.6.16]{FGA}.
Suppose that the assertion holds for families of dimension $d-1$.
Shrinking $S$ if necessary, take a general element $H\in |m'L|$ for any sufficiently large $m'\in\mathbb{Z}_{>0}$ such that $H$ is a relative Mumford divisor on $X$.
Then,
\[
0\to H^0(X_{\bar{s}},\mathscr{F})\to H^0(X_{\bar{s}},\mathscr{F}\otimes\mathcal{O}_{X_{\bar{s}}}(mL_{\bar{s}}))\to H^0(H_{\bar{s}},\mathcal{O}_{H_{\bar{s}}}(mL|_{H_{\bar{s}}})\otimes \mathscr{F}|_{H_{\bar{s}}}),
\]
and we see that $h^0(X_{\bar{s}},\mathscr{F}\otimes\mathcal{O}_{X_{\bar{s}}}(mL_{\bar{s}}))\le h^0(H_{\bar{s}},\mathcal{O}_{H_{\bar{s}}}(mL|_{H_{\bar{s}}})\otimes \mathscr{F}|_{H_{\bar{s}}})+1$.
By shrinking $S$, we may assume that $(H,\Delta|_H)$ is again locally stable whose geometric fibers are klt.
Thus, we may assume that $\mathscr{F}|_{H_{\bar{s}}}$ is $m'$-regular by what we have shown in the first paragraph.
By the induction hypothesis, there are only finitely many choices of the Hilbert polynomials of $\mathscr{F}|_{H_{\bar{s}}}$.
Hence, there are also only finitely many possibilities of the Hilbert polynomials of $\mathscr{F}$.
We complete the proof.
\end{proof}

The following is the key in Section \ref{sec-5}.

\begin{prop}\label{prop--mu_k}
 The situation is as in Definition \ref{de--mu--i} and fix $k\in\mathbb{Z}_{>0}$.
 Then $\mu_k$ is a finite morphism.
\end{prop}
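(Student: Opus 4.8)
The plan is to establish that $\mu_k$ is separated, of finite type, quasi-finite and universally closed, and then invoke the fact that a separated, universally closed, quasi-finite morphism of finite type is finite. Separatedness of $\mu_k$ is automatic from the separatedness of $\mathbf{W}^\mathbb{Q}\mathbf{Pic}_{X/S}$ over $S$ (Theorem \ref{thm--wpic--existence}), and finite type is exactly the content of Lemma \ref{lem--boundedness-tau}: given a polynomial $P$, the preimage $\mu_k^{-1}\bigl(\mathbf{W}^\mathbb{Q}\mathbf{Pic}^{P}_{X/S}\bigr)$ is contained in the finitely many quasi-projective strata $\mathbf{W}^\mathbb{Q}\mathbf{Pic}^{P_i}_{X/S}$ produced by that lemma, on each of which $\mu_k$ restricts to a morphism of quasi-projective $S$-schemes. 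The essential point throughout is that $\mu_k$ is nothing but multiplication by $k$, written additively as $[k]$, in the commutative group functor $\mathbf{W}^\mathbb{Q}\mathbf{Pic}_{X/S}$, since $\mathscr{F}^{[k]}$ is the $k$-th power in the group law given by the reflexive tensor product.

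For quasi-finiteness I would argue fibrewise over a geometric point $\bar s\in S$, where every non-empty fibre of $\mu_k=[k]$ is a torsor under the kernel $\mathbf{W}^\mathbb{Q}\mathbf{Pic}_{X_{\bar s}}[k]$. Applying Lemma \ref{lem--boundedness-tau} with the trivial target class (so $\mathscr{F}^{[k]}\sim\mathcal{O}_{X_{\bar s}}$, whose Hilbert polynomial is fixed) shows that $\mathbf{W}^\mathbb{Q}\mathbf{Pic}_{X_{\bar s}}[k]$ is bounded, hence a group scheme of finite type over $\bar s$. Let $B_0$ be its reduced identity component. For any $\mathscr{F}\in B_0$ the class $\mathscr{F}^{[I]}$ (with $I$ as in Proposition \ref{prop--xz}) is a line bundle lying in $\mathbf{Pic}^0_{X_{\bar s}}[k]$, which is finite because $\mathbf{Pic}^0_{X_{\bar s}}$ is an abelian variety by Lemma \ref{lem--pic-const}; since $B_0$ is connected, $[I]$ is constant on it, so every element of $B_0$ is $I$-torsion. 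A connected commutative algebraic group over a field of characteristic zero all of whose elements have bounded order is trivial, so $B_0=\{0\}$ and $\mathbf{W}^\mathbb{Q}\mathbf{Pic}_{X_{\bar s}}[k]$ is finite. Thus every fibre of $\mu_k$ is finite and $\mu_k$ is quasi-finite.

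The remaining and most delicate part is universal closedness, which I would check via the valuative criterion. Let $R$ be a discrete valuation ring over $S$ with fraction field $K$ and uniformizer $\pi$; after a finite extension of $R$ we may represent the given classes by honest divisorial sheaves. Suppose we are given a class $[\mathscr{F}_K]$ on $X_K$ together with an extension $[\mathscr{G}_R]$ of $[\mathscr{F}_K^{[k]}]$ over $X_R$. Choosing a relative Mumford divisor $D_K$ with $\mathscr{F}_K=\mathcal{O}_{X_K}(D_K)$, taking its Zariski closure $D_R$ in the normal scheme $X_R$, and setting $\mathscr{F}_R:=\mathcal{O}_{X_R}(D_R)^{[**]}$ produces a divisorial sheaf, flat over $R$, extending $\mathscr{F}_K$. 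Now $\mathscr{F}_R^{[k]}$ and $\mathscr{G}_R$ are two divisorial sheaves on $X_R$ agreeing over the generic fibre, so their difference is supported on the special fibre $X_0=\mathrm{div}(\pi)$, which is a single prime divisor linearly equivalent to zero because the fibres of $f$ are geometrically integral; hence $\mathscr{F}_R^{[k]}\sim\mathscr{G}_R$. Since $\mathscr{G}_R$ is $\mathbb{Q}$-Cartier, a suitable power $\mathscr{F}_R^{[km]}\cong\mathscr{G}_R^{[m]}$ is a line bundle, so $\mathscr{F}_R$ is $\mathbb{Q}$-Cartier and $[\mathscr{F}_R]$ gives the desired $R$-point lifting $[\mathscr{F}_K]$; uniqueness comes from separatedness. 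Here I expect the \emph{main obstacle} to be precisely this inheritance of the $\mathbb{Q}$-Cartier property: because klt fibres need not be $\mathbb{Q}$-factorial, the reflexive extension $\mathscr{F}_R$ is a priori only divisorial and could fail to be $\mathbb{Q}$-Cartier on the special fibre, and it is only the constraint imposed through $\mathscr{G}_R=\mu_k([\mathscr{F}_R])$ that forces $\mathbb{Q}$-Cartierness there. Combining separatedness, finite type, quasi-finiteness and universal closedness yields that $\mu_k$ is finite.
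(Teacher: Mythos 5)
Your proposal is correct, and its skeleton is the same as the paper's: establish finite type via Lemma \ref{lem--boundedness-tau} and Theorem \ref{thm--wpic--existence}, then quasi-finiteness, then properness by the valuative criterion, and conclude with (separated + quasi-finite + proper $\Rightarrow$ finite). Your properness argument is essentially identical to the paper's: after a finite extension of $R$ one represents the classes by honest sheaves, takes the Zariski closure of a representing Mumford divisor, observes that the two extensions of $\mathscr{F}_K^{[k]}$ differ by a multiple of the (principal, irreducible) special fibre, and deduces $\mathbb{Q}$-Cartierness of the extension from that of $\mathscr{G}_R$ --- the paper runs the same comparison via \cite[II, Proposition 6.5]{Ha} and settles flatness/base-change of the extension by \cite[Proposition 2.79]{kollar-moduli}, which is the reference you implicitly need for your assertion that $\mathcal{O}_{X_R}(D_R)^{[**]}$ is a flat family of divisorial sheaves. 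The one step where you genuinely diverge is quasi-finiteness. The paper argues by contradiction: a positive-dimensional fibre would give an affine curve of $\mathbb{Q}$-Cartier divisorial sheaves degenerating to $\mathcal{O}_X$, which after shrinking are line bundles, contradicting finiteness of $\mu_k$ restricted to $\mathbf{Pic}^\tau_{X/\mathbbm{k}}$. You instead analyze the kernel directly: its reduced identity component $B_0$ maps under $\mu_I$ into the finite group $\mathbf{Pic}^0_{X}[k]$, is therefore $I$-torsion by connectedness, and hence trivial since a connected commutative group in characteristic zero is divisible. Both arguments rest on the same inputs (Proposition \ref{prop--xz} and Lemma \ref{lem--pic-const}), but yours buys a cleaner structural statement (finiteness of the $k$-torsion of the Weil--Picard functor, essentially anticipating Corollary \ref{cor--torsion--components--number}), at the cost of one point you should make explicit: your argument needs $B_0$ to actually be a group scheme, i.e.\ the multiplication morphism must exist on the reduced kernel. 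This is not automatic from the definition of $\mathbf{W}^{\mathbb{Q}}\mathbf{Pic}_{X/S}$; it requires the multiplicativity statement of Lemma \ref{lem--divisor--restr} together with the universal sheaf provided by Proposition \ref{prop--sheafif-wpic}, exactly as the paper does when it constructs the group structure on $(\mathbf{W}^\mathbb{Q}\mathbf{Pic}^\tau_{X/\mathbbm{k}})_{\mathrm{red}}$ in the proof of Corollary \ref{cor--torsion--components--number}. Without that justification the kernel argument is circular (knowing pointwise that $B_0$ consists of $I$-torsion classes only says $B_0\subset\mu_I^{-1}([\mathcal{O}_X])$, whose finiteness is an instance of what is being proved); with it, your route is a complete and valid alternative.
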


\begin{proof}
First, we see that $\mu_k$ is of finite type by Lemma \ref{lem--boundedness-tau}, \cite[Theorem 3.3.7]{HL}, and Theorem \ref{thm--wpic--existence}.
To see that $\mu_k$ is quasi-finite, we may assume that $S=\mathrm{Spec}\,\mathbbm{k}$.
Then, it suffices to show that $\mu_{k}^{-1}\mu_k(F)$ is zero-dimensional for any $F\in \mathbf{W}^{\mathbb{Q}}\mathbf{Pic}_X(\mathbbm{k})$.
We see that for any $\mathbbm{k}$-valued point $G\in \mu_{k}^{-1}\mu_k(F)$, $G[\otimes]F^{[-1]}$ is numerically trivial.
Assume that $\mu_{k}^{-1}\mu_k(F)$ is not zero-dimensional around $G$.
Then there exist an affine curve $C$ and a family of $\mathbb{Q}$-Cartier divisorial sheaves $\mathscr{F}$ on $X\times C$ such that $\mathscr{F}_c[\otimes]G\in \mu_{k}^{-1}\mu_k(F)$ for any closed point $c\in C$ and for some closed point $0\in C$ such that $\mathscr{F}_0\cong \mathcal{O}_X$.
By shrinking $C$, we may assume that $\mathscr{F}$ is locally free.
Thus, $\mathscr{F}_c\in (\mathbf{Pic}^\tau_{X/\mathbbm{k}}\cap \mu_k^{-1}\mu_k(\mathcal{O}_X))(\mathbbm{k})$.
Because $\mu_k|_{\mathbf{Pic}^\tau_{X/\mathbbm{k}}}$ is finite, $(\mathbf{Pic}^\tau_{X/\mathbbm{k}}\cap \mu_k^{-1}\mu_k(\mathcal{O}_X))$ is also finite.
This leads to a contradiction.
Therefore, $\mu_k$ is quasi-finite.

Next, we will show that $\mu_k$ is proper.
We know that $\mu_k$ is separated.
Therefore, we can apply \cite[Theorem 11.5.1]{Ols} to $\mu_k$.
Let $R$ be an arbitrary discrete valuation ring with fractional field $K$ over $S$.
Consider $X_R:=X\times_S\mathrm{Spec}\,R$ and suppose that there exists an element $F\in \mathbf{W}^{\mathbb{Q}}\mathbf{Pic}_{X/S}(R)$ such that $F_K=\mu_k(G)$ for some $G\in \mathbf{W}^{\mathbb{Q}}\mathbf{Pic}_{X/S}(K)$.
By replacing $R$ with a finite extension $R'$ that is regular, we may assume that $F$ and $G$ are $\mathbb{Q}$-Cartier divisorial sheaves.
Note that $R$ is no longer a discrete valuation ring in general but a Dedekind domain.
In this case, $F_K=G^{[k]}$.
By replacing $F$ with $F\otimes \mathcal{O}_{X_R}(kA)$ and $G$ with $G\otimes\mathcal{O}_{X_K}(A_K)$ for some ample line bundle $A$, we may assume that there exists a $\mathbb{Q}$-Cartier effective divisor $D$ such that $G\cong\mathcal{O}_{X_K}(D)$.
Let $\overline{D}$ be the closure of $D$ in $X_{R}$.
$\overline{D}$ is an effective relative Mumford divisor and $\mathcal{O}_{X_R}(k\overline{D})\cong F$ by \cite[II, Proposition 6.5]{Ha}.
This means that $\overline{D}$ is also $\mathbb{Q}$-Cartier.
Then, we set $\overline{G}:=\mathcal{O}_{X_R}(\overline{D})$.
By \cite[Proposition 2.79]{kollar-moduli}, we see that $\overline{G}$ coincides with its universal hull.
Thus, $\mu_k$ is proper.
Since $\mu_k$ is quasi-finite and proper, $\mu_k$ is finite by \cite[Tag 02OG]{Stacks}.
\end{proof}

We note that $\mu_k|_{\mathbf{Pic}_{X/S}}$ is not proper in general (e.g.~consider a $\mathbb{Q}$-Gorenstein smoothing of $\mathbb{P}(a^2:b^2:c^2)$ to $\mathbb{P}^2$, where $a,b$ and $c\in\mathbb{Z}_{>0}$ satisfy $a^2+b^2+c^2=3abc$ cf.~\cite{HP}).

The following are useful observations, which follow from Proposition \ref{prop--mu_k}.

\begin{cor}\label{cor--torsion--components--number}
Let $f\colon (X,\Delta)\to S$ be a projective locally stable morphism over a Noetherian scheme with only geometrically connected klt fibers.

Then $(\mathbf{W}^\mathbb{Q}\mathbf{Pic}_{X_{\bar{s}}/\overline{\kappa(s)}}^\tau)_{\mathrm{red}}/\mathbf{Pic}^0_{X_{\bar{s}}/\overline{\kappa(s)}}$ is a finite group and there exists a positive integer $m\in\mathbb{Z}_{>0}$ such that $$\#((\mathbf{W}^\mathbb{Q}\mathbf{Pic}_{X_{\bar{s}}/\overline{\kappa(s)}}^\tau)_{\mathrm{red}}/\mathbf{Pic}^0_{X_{\bar{s}}/\overline{\kappa(s)}})\le m$$ for any $s\in S$.
\end{cor}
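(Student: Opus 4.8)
The plan is to reduce everything to the finiteness of $\mu_I$ from Proposition \ref{prop--mu_k}, where $I$ is the integer produced by Proposition \ref{prop--xz}. I would fix a geometric point $\bar s\in S$, write $G_s:=(\mathbf{W}^\mathbb{Q}\mathbf{Pic}^\tau_{X_{\bar s}/\overline{\kappa(s)}})_{\mathrm{red}}$ and $A_s:=\mathbf{Pic}^0_{X_{\bar s}/\overline{\kappa(s)}}$, and first settle the finiteness of $G_s/A_s$ for this fixed $\bar s$ before upgrading to the uniform bound. Since $X_{\bar s}$ is klt and therefore normal, every divisorial sheaf on it is reflexive of rank one, so the reflexive tensor product equips $G_s$ with the structure of a commutative group scheme over $\overline{\kappa(s)}$ (the absolute case of Lemma \ref{lem--divisor--restr}), on which $\mu_I$ acts as multiplication by $I$.

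For the finiteness, the crucial step I would carry out is to identify $A_s$ with the identity component of $G_s$. By Theorem \ref{thm--wpic--existence} the canonical map $\mathbf{Pic}_{X_{\bar s}}\hookrightarrow\mathbf{W}^\mathbb{Q}\mathbf{Pic}_{X_{\bar s}}$ is an open immersion, so $A_s$, being open in $\mathbf{Pic}^\tau_{X_{\bar s}}$, is open in $G_s$; on the other hand $A_s$ is proper by Lemma \ref{lem--pic-const} while $G_s$ is separated, so $A_s$ is also closed. As $A_s$ is connected, open and closed, and contains the identity, it is the identity component, whence $G_s/A_s$ is exactly the group of connected components of $G_s$. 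Finiteness then follows from $G_s$ being of finite type over $\overline{\kappa(s)}$, which I get from the finite type assertion of the next paragraph applied to the fibre over $\bar s$.

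The uniform bound I would obtain by showing that $\mathbf{W}^\mathbb{Q}\mathbf{Pic}^\tau_{X/S}$ is of finite type over $S$. By Definition \ref{de--mu--i} one has $\mathbf{W}^\mathbb{Q}\mathbf{Pic}^\tau_{X/S}=\mu_I^{-1}(\mathbf{Pic}^\tau_{X/S})$; since $\mu_I$ is finite (Proposition \ref{prop--mu_k}) and $\mathbf{Pic}^\tau_{X/S}$ is quasi-projective over the Noetherian $S$ (Definition \ref{de--pic}), the restriction $\mu_I\colon\mathbf{W}^\mathbb{Q}\mathbf{Pic}^\tau_{X/S}\to\mathbf{Pic}^\tau_{X/S}$ is finite and its source is of finite type over $S$. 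Because $\#(G_s/A_s)$ equals the number of connected components of the geometric fibre of $\mathbf{W}^\mathbb{Q}\mathbf{Pic}^\tau_{X/S}\to S$ over $\bar s$, and the number of (connected, hence a fortiori irreducible) components of the geometric fibres of a finite type morphism over a Noetherian base is bounded above, a single $m$ working for all $s$ exists.

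The hard part will be organizational, in the second paragraph: one has to make sure that $A_s=\mathbf{Pic}^0$ is genuinely the identity component of the a priori non-reduced and possibly disconnected $\mathbf{W}^\mathbb{Q}\mathbf{Pic}^\tau$, which is precisely why the open immersion of Theorem \ref{thm--wpic--existence}, the properness from Lemma \ref{lem--pic-const}, and the separatedness must all be combined. I would also flag that a naive degree count for $\mu_I=[I]$ on fibres controls only the $I$-torsion of the component group $G_s/A_s$ and not its full order, so the finite type argument, rather than a computation of degrees, is the route that delivers the full uniform bound.
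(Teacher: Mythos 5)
Your proposal is correct and takes essentially the same route as the paper: the paper's proof simply invokes ``the same argument as \cite[Corollary 9.6.17]{FGA}'' after checking that $(\mathbf{W}^\mathbb{Q}\mathbf{Pic}^\tau_{X_{\bar{s}}/\overline{\kappa(s)}})_{\mathrm{red}}$ is a group scheme via Lemma \ref{lem--divisor--restr}, and your write-up is exactly that argument made explicit --- identify $\mathbf{Pic}^0$ as the identity component (open via Theorem \ref{thm--wpic--existence}, closed via properness from Lemma \ref{lem--pic-const} and separatedness), then bound the number of components uniformly by finite-typeness over $S$. The only cosmetic difference is that you deduce finite-typeness of $\mathbf{W}^\mathbb{Q}\mathbf{Pic}^\tau_{X/S}$ from the finiteness of $\mu_I$ (Proposition \ref{prop--mu_k}), whereas the FGA-style argument rests directly on the boundedness in Lemma \ref{lem--boundedness-tau}; both ingredients are available in the paper, and indeed Proposition \ref{prop--mu_k} itself is proved using that lemma.
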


\begin{proof}
    This follows from the same argument of \cite[Corollary 9.6.17]{FGA}.
    We note that it is easy to see that $(\mathbf{W}^\mathbb{Q}\mathbf{Pic}_{X_{\bar{s}}/\overline{\kappa(s)}}^\tau)_{\mathrm{red}}$ is a group scheme by Lemma \ref{lem--divisor--restr} because if $\mathscr{F}$ is the universal sheaf on $(\mathbf{W}^\mathbb{Q}\mathbf{Pic}_{X_{\bar{s}}/\overline{\kappa(s)}}^\tau)_{\mathrm{red}}\times X_{\bar{s}}$ (cf.~Proposition \ref{prop--sheafif-wpic}), then we can construct a morphism $\mu\colon (\mathbf{W}^\mathbb{Q}\mathbf{Pic}_{X_{\bar{s}}/\overline{\kappa(s)}}^\tau)_{\mathrm{red}}\times(\mathbf{W}^\mathbb{Q}\mathbf{Pic}_{X_{\bar{s}}/\overline{\kappa(s)}}^\tau)_{\mathrm{red}}\to(\mathbf{W}^\mathbb{Q}\mathbf{Pic}_{X_{\bar{s}}/\overline{\kappa(s)}}^\tau)_{\mathrm{red}}$ that is induced by $p_1^*\mathscr{F}[\otimes]p_2^*\mathscr{F}$, where $$p_1,p_2\colon (\mathbf{W}^\mathbb{Q}\mathbf{Pic}_{X_{\bar{s}}/\overline{\kappa(s)}}^\tau)_{\mathrm{red}}\times(\mathbf{W}^\mathbb{Q}\mathbf{Pic}_{X_{\bar{s}}/\overline{\kappa(s)}}^\tau)_{\mathrm{red}}\times X_{\bar{s}}\to (\mathbf{W}^\mathbb{Q}\mathbf{Pic}_{X_{\bar{s}}/\overline{\kappa(s)}}^\tau)_{\mathrm{red}}\times X_{\bar{s}}$$
    are the morphisms induced by the first and second projections, respectively.
    We can check that $\mu$ corresponds to the multiplication morphism of $(\mathbf{W}^\mathbb{Q}\mathbf{Pic}_{X_{\bar{s}}/\overline{\kappa(s)}}^\tau)_{\mathrm{red}}$.
\end{proof}

\begin{cor}\label{cor--mu_i-proper}
Let $f\colon (X,\Delta)\to S$ be a projective locally stable morphism over a Noetherian scheme with only geometrically connected klt fibers.
    Then $\mathbf{W}^\mathbb{Q}\mathbf{Pic}_{X/S}^\tau$ is projective over $S$.
\end{cor}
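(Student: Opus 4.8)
The plan is to exhibit $\mathbf{W}^\mathbb{Q}\mathbf{Pic}_{X/S}^\tau$ as a finite scheme over the projective $S$-scheme $\mathbf{Pic}^\tau_{X/S}$, and then conclude by the fact that a composition of projective morphisms over a Noetherian base is projective. Thus the two things to establish are the projectivity of $\mathbf{Pic}^\tau_{X/S}$ over $S$ and the finiteness of the induced map $\mu_I\colon\mathbf{W}^\mathbb{Q}\mathbf{Pic}_{X/S}^\tau\to\mathbf{Pic}^\tau_{X/S}$.

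First I would record that $\mathbf{Pic}^\tau_{X/S}$ is projective over $S$. By Lemma \ref{lem--pic-const} its identity component $\mathbf{Pic}^0_{X/S}$ is a projective smooth (hence abelian) scheme over $S$, while the component group $\mathbf{Pic}^\tau_{X/S}/\mathbf{Pic}^0_{X/S}$ is finite over $S$ by \cite[Corollary 9.6.17]{FGA}, which is the same input used in the proof of Corollary \ref{cor--torsion--components--number}. Since properness is \'etale-local on the base, after an \'etale cover trivializing the component group $\mathbf{Pic}^\tau_{X/S}$ becomes a finite disjoint union of $\mathbf{Pic}^0_{X/S}$-torsors, each proper over $S$; so $\mathbf{Pic}^\tau_{X/S}\to S$ is proper. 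Combined with its quasi-projectivity (recorded after Lemma \ref{lem--pic-const} via \cite[9.6.16]{FGA}), this yields that $\mathbf{Pic}^\tau_{X/S}$ is projective over $S$.

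Next I would check that $\mu_I$ restricts to a finite morphism onto $\mathbf{Pic}^\tau_{X/S}$. By Definition \ref{de--mu--i}, $\mu_I$ factors through the open immersion $\mathbf{Pic}_{X/S}\hookrightarrow\mathbf{W}^\mathbb{Q}\mathbf{Pic}_{X/S}$, and by Proposition \ref{prop--mu_k} the morphism $\mu_I$ is finite; since an immersion is a separated monomorphism, the induced map $\mathbf{W}^\mathbb{Q}\mathbf{Pic}_{X/S}\to\mathbf{Pic}_{X/S}$ is itself finite (a proper affine morphism of finite type). As $\mathbf{W}^\mathbb{Q}\mathbf{Pic}_{X/S}^\tau=\mu_I^{-1}(\mathbf{Pic}^\tau_{X/S})$ by definition, restricting this finite morphism to the preimage gives a finite morphism $\mu_I\colon\mathbf{W}^\mathbb{Q}\mathbf{Pic}_{X/S}^\tau\to\mathbf{Pic}^\tau_{X/S}$. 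A finite morphism is projective, so composing with the projective morphism $\mathbf{Pic}^\tau_{X/S}\to S$ from the previous paragraph, and using that $S$ is Noetherian so that projectivity is preserved under composition, we conclude that $\mathbf{W}^\mathbb{Q}\mathbf{Pic}_{X/S}^\tau\to S$ is projective. The main obstacle is the first step: verifying that $\mathbf{Pic}^\tau_{X/S}$ is genuinely \emph{proper} over $S$, not merely quasi-projective. This rests essentially on the klt hypothesis, which forces $\mathbf{Pic}^0_{X/S}$ to be an abelian scheme (Lemma \ref{lem--pic-const}), together with the uniform finiteness of the component group over the base $S$; once these are in hand, the remainder of the argument is formal.
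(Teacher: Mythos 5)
Your second step is sound: by Definition \ref{de--mu--i} the morphism $\mu_I$ factors through the open immersion $\mathbf{Pic}_{X/S}\hookrightarrow\mathbf{W}^{\mathbb{Q}}\mathbf{Pic}_{X/S}$, and since $\mu_I$ is finite (Proposition \ref{prop--mu_k}) and an open immersion is separated, cancellation plus quasi-finiteness does give that $\mathbf{W}^{\mathbb{Q}}\mathbf{Pic}_{X/S}^{\tau}=\mu_I^{-1}(\mathbf{Pic}^{\tau}_{X/S})\to\mathbf{Pic}^{\tau}_{X/S}$ is finite. The gap is entirely in your first step: the claim that $\mathbf{Pic}^{\tau}_{X/S}$ is proper over $S$ is not only unproven by your argument, it is false in this generality. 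The input you cite (\cite[Corollary 9.6.17]{FGA}, the same one behind Corollary \ref{cor--torsion--components--number}) only gives a \emph{fiberwise} bound on the orders of the component groups $\mathbf{Pic}^{\tau}_{X_{\bar{s}}}/\mathbf{Pic}^0_{X_{\bar{s}}}$; it does not make the component group $\mathbf{Pic}^{\tau}_{X/S}/\mathbf{Pic}^0_{X/S}$ \emph{finite as a morphism to $S$}, which is exactly what your ``finite disjoint union of $\mathbf{Pic}^0_{X/S}$-torsors after an \'etale cover'' picture presupposes. That group space is \'etale over $S$ but its properness over $S$ is equivalent to the very statement at issue, and it genuinely fails for locally stable klt families: a torsion line bundle on the generic fiber can specialize to a $\mathbb{Q}$-Cartier Weil divisorial sheaf on the special fiber that is \emph{not} Cartier. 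For instance, in $\mathbb{Q}$-Gorenstein (locally stable) smoothings of rational klt surfaces with T-singularities to surfaces with nontrivial torsion in $H_1$ (Enriques-, Dolgachev- or Godeaux-type constructions), the special fiber is rational, so its Picard group is torsion free, and the nontrivial torsion line bundle on the nearby fiber has no limit in $\mathbf{Pic}^{\tau}$ --- its unique limit lives in $\mathbf{W}^{\mathbb{Q}}\mathbf{Pic}^{\tau}_{X/S}\setminus\mathbf{Pic}_{X/S}$. This is the same phenomenon as the paper's warning, immediately after Proposition \ref{prop--mu_k}, that $\mu_k|_{\mathbf{Pic}_{X/S}}$ is not proper; indeed the failure of properness of $\mathbf{Pic}^{\tau}$ is precisely why $\mathbf{W}^{\mathbb{Q}}\mathbf{Pic}^{\tau}$ is introduced at all.

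The repair is to aim your finite morphism at $\mathbf{Pic}^0_{X/S}$ rather than at $\mathbf{Pic}^{\tau}_{X/S}$, which is what the paper does. Take $m$ as in Corollary \ref{cor--torsion--components--number}, so that multiplication by $m$ kills the component group $(\mathbf{W}^{\mathbb{Q}}\mathbf{Pic}^{\tau}_{X_{\bar{s}}/\overline{\kappa(s)}})_{\mathrm{red}}/\mathbf{Pic}^0_{X_{\bar{s}}/\overline{\kappa(s)}}$ of every geometric fiber. Then $\mathbf{W}^{\mathbb{Q}}\mathbf{Pic}_{X/S}^{\tau}=\mu_m^{-1}(\mathbf{Pic}^0_{X/S})$, and your cancellation argument, applied verbatim to $\mu_m$ in place of $\mu_I$, exhibits $\mathbf{W}^{\mathbb{Q}}\mathbf{Pic}_{X/S}^{\tau}$ as finite over $\mathbf{Pic}^0_{X/S}$, which \emph{is} projective over $S$ by Lemma \ref{lem--pic-const} (an honest abelian scheme, with no component-group issues); composing finishes the proof. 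In short: your formal skeleton (finite onto a projective target) is the right one, but the only target for which projectivity is actually available is $\mathbf{Pic}^0_{X/S}$, not $\mathbf{Pic}^{\tau}_{X/S}$.
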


\begin{proof}
This follows from Proposition \ref{prop--mu_k} applied to $\mu_{m}$, where $m$ is as in Corollary \ref{cor--torsion--components--number}, because $\mu_{m}$ factors through $\mathbf{Pic}^0_{X/S}$, which is projective over $S$ by Lemma \ref{lem--pic-const}.
\end{proof}

\section{CM line bundle and specially K-stable varieties} 
\label{sec-4}

In this section, we will discuss CM line bundles and their positivity for families of polarized specially K-stable varieties.
In \cite{Hat23}, we dealt with the bigness of the CM line bundle for families $f\colon (X,\Delta;L)\to S$ of specially K-stable polarized log pairs with maximal variation of $(X,\Delta)$.
In the previous paper, we did not deal with the bigness of the CM line bundle for families $f\colon (X,\Delta;L)\to S$, where the linear equivalence classes of $L_{\bar{s}}$ only vary for geometric points $\bar{s}\in S$.
The following example indicates that the CM line bundle depends not only on the isomorphic classes of the varieties but also the polarizations.

\begin{ex}\label{ex--cm}
Let $C$ be a smooth curve of genus $g$ at least three such that $\mathrm{Aut}(C)$ is trivial (cf.~\cite{Ba}).
We know that the diagonal $\Delta\subset C\times C$ satisfies $\Delta^2=2-2g$.
Via the second projection $p_2\colon C\times C\to C$, we can regard $(C\times C,\mathcal{O}_{C\times C}(\Delta))$ as a polarized family of curves with line bundles of degree one.
For any point $c\in C$, $(p_2^{-1}(c),\mathcal{O}_{C\times C}(\Delta)|_{p_2^{-1}(c)})$ admits the same cscK metric since $\mathrm{Aut}(C)$ is trivial (cf.~\cite{CPZ}).
On the other hand, let $\mathscr{M}$ be the analytic moduli space parameterizing cscK metrics of curves constructed in \cite{FS,DN}.
Note that $\mathscr{M}$ does not distinguish the linear equivalence class of polarization.
Then, $(C\times C,\mathcal{O}_{C\times C}(\Delta))$ defines a constant morphism $C\to \mathscr{M}$ but the linear equivalence class of $\mathcal{O}_C(c)$ varies for $c\in C$.
Here, one might expect that the degree of the CM line bundle of $(C\times C,\mathcal{O}_{C\times C}(\Delta))$ would vanish as \cite[Corollary 4.5]{DN}.
However, this is not the case.
Indeed, 
\begin{align*}
    \mathrm{deg}(\lambda_{\mathrm{CM},(C\times C,\mathcal{O}_{C\times C}(\Delta))})&=K_{C\times C/C}\cdot \Delta-(g-1)\Delta^2\\
    &=2g(g-1)>0.
\end{align*}
 At least, this phenomenon indicates that we cannot descend the CM line bundle to $\mathscr{M}$.
\end{ex}

To pick the information of the variation of the linear equivalence classes, we define the following notion.

\begin{de}
    Let $f\colon (X,\Delta;L)\to S$ be a {\it polarized log family} over a scheme $S$ projective over $\mathbbm{k}$, that is, $f\colon (X,\Delta)\to S$ is log $\mathbb{Q}$-Gorenstein with a $f$-ample line bundle $L$.
    We say that $f$ is {\it isotrivial} over $S$ if there exists an isomorphism $\varphi\colon (X_s,\Delta_s)\to (X_{s'},\Delta_{s'})$ such that $\varphi^*L_{s'}\sim L_s$ for any closed points $s,s'\in S$.
    We say that $f$ is of {\it maximal variation} (as a family of polarized log pairs) if for any general closed point $s\in S$ and any curve $C$ passing through $s$, $(X_C,\Delta_C;L_C)\to C$ is not isotrivial. 
\end{de}

We extend Koll\'ar's ampleness criterion (cf.~\cite[Theorem 6.6]{XZ}) for polarized log families of maximal variation.

\begin{stup}[cf.~{\cite[Definition 6.1]{XZ}}]\label{setup--d-cond}
    Let $f\colon (X,\Delta;L)\to S$ be a polarized log family over a scheme $S$ projective and smooth over $\mathbbm{k}$. Suppose that $L$ is a $f$-very ample line bundle, i.e., there exists a closed immersion $\iota\colon X\hookrightarrow \mathbb{P}_S(f_*\mathcal{O}_X(L))$.
    Set $D:=\mathrm{Supp}(\Delta)$.
Suppose that there exists a positive integer $d\in\mathbb{Z}_{>0}$ such that the following hold.
\begin{enumerate}
    \item $H^j(X_s,\mathcal{O}_{X_s}(kL_s))=H^j(D_s,\mathcal{O}_{D_s}(kL_s|_{D_s}))=0$ for any $k,j\in\mathbb{Z}_{>0}$ and closed point $s\in S$,
    \item two natural homomorphisms $\mu_s\colon\mathrm{Sym}^dH^0(X_s,\mathcal{O}_{X_s}(L_s))\to H^0(X_s,\mathcal{O}_{X_s}(dL_s))$ and $\nu_s\colon H^0(X_s,\mathcal{O}_{X_s}(dL_s))\to H^0(D_s,\mathcal{O}_{D_s}(dL_s|_{D_s}))$ are surjective for any point $s\in S$,
    \item for any $s\in S$, elements of $\mathrm{Ker}(\mu_s)$ and $\mathrm{Ker}(\nu_s\circ\mu_s)$ generate the ideal sheaves of  $\iota_s\colon X_s\hookrightarrow \mathbb{P}^{h^0(X_s,\mathcal{O}_{X_s}(L_s))-1}$ and $D_s\subset \mathbb{P}^{h^0(X_s,\mathcal{O}_{X_s}(L_s))-1}$ respectively.
\end{enumerate}
Let $H$ be an ample line bundle on $S$, $W:=f_*\mathcal{O}_X(L)$, and $Q:=f_*\mathcal{O}_X(dL)\oplus f_*\mathcal{O}_{D}(dL|_D)$.
 Note that $W$ is a locally free sheaf but $Q$ is not.
 There exists an open subset $S^\circ\subset S$ such that $\mathrm{codim}_{S}(S\setminus S^\circ)\ge2$ and $D|_{S^\circ}$ is flat over $S^\circ$.
 Then, $Q|_{S^\circ}$ is a locally free sheaf.
 Note that $\mathrm{det}(Q|_{S^\circ})$ can be extended to a unique Cartier divisor $B$ up to linear equivalence since $S$ is smooth.
 Let $w$ and $q$ be the rank of $W$ and $Q$.
 We note that there exists a natural homomorphism $(\mathrm{Sym}^dW)^{\oplus 2}\to Q$ that is surjective on $S^\circ$.

\end{stup}

\begin{thm}\label{thm--Kollar's--lemma}
 In the situation of Setup \ref{setup--d-cond}, suppose that $f\colon (X,\Delta;L)\to S$ has a maximal variation.
 Then, there exists a positive integer $m$ depending only on $d$, $H$ and the family $f$ such that there exists a nonzero homomorphism 
 \[
 \mathrm{Sym}^{dqm}(W^{\oplus 4w})\to \mathcal{O}_S(mB-H).
 \]
\end{thm}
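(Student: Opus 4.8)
The plan is to deduce the statement from Koll\'ar's ampleness lemma in its effective form, which manufactures an explicit homomorphism out of a symmetric power of $W$. First I would pass to the frame bundle $\pi\colon\mathrm{Fr}(W)\to S$, a $GL_w$-torsor on which $\pi^*W$ is canonically trivialized as $\mathcal{O}^{\oplus w}$. Over $\mathrm{Fr}(W)$ the homomorphism $(\mathrm{Sym}^dW)^{\oplus 2}\to Q$ of Setup \ref{setup--d-cond}, which is surjective on the big open $S^\circ$, becomes a $GL_w$-equivariant surjection onto a rank-$q$ quotient of the constant sheaf with fiber $V:=(\mathrm{Sym}^d\mathbbm{k}^w)^{\oplus 2}$, hence a $GL_w$-equivariant classifying morphism $\widetilde{u}\colon \mathrm{Fr}(W)^\circ\to \mathrm{Grass}(q,V)$. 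Taking top exterior powers identifies $\det Q$ with $\widetilde{u}^*\mathcal{O}(1)$ for the Pl\"ucker polarization, so $\det(Q|_{S^\circ})$ is the descent of $\widetilde{u}^*\mathcal{O}(1)$; since $S$ is smooth and $S\setminus S^\circ$ has codimension at least two, this identification extends across all of $\mathcal{O}_S(B)$.

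Next I would isolate the group-theoretic content. The center $\mathbb{G}_m\subset GL_w$ scales $\mathbbm{k}^w$ by $t$, hence acts on $V$ by $t^d$ and on $\det Q$ by $t^{dq}$, while it acts on $\det W$ by $t^w$; therefore $(\det Q)^{\otimes w}\otimes(\det W)^{\otimes -dq}$ has trivial central character and descends to an $SL_w$-linearized line bundle to which geometric invariant theory applies, and this computation is what forces the exponents $d$ and $q$ in the final conclusion. The remaining task is to show that the map induced by $\widetilde{u}$ from $S^\circ$ to the GIT quotient $\mathrm{Grass}(q,V)/\!\!/SL_w$ is quasi-finite onto its image and that the classifying points are stable with finite stabilizers; granting this, the pullback of the ample bundle on the quotient is big, which is the positivity that produces the summand $-H$.

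The main obstacle is precisely this finiteness, and here the hypotheses of Setup \ref{setup--d-cond} together with maximal variation enter. The two components of $(\mathrm{Sym}^dW)^{\oplus 2}\to Q$ are $\mu_s$ and $\nu_s\circ\mu_s$, both surjective by condition (2), so the kernels of the classifying quotient $Q_s$ recover the degree-$d$ pieces of the homogeneous ideals of $X_s$ and of $D_s=\mathrm{Supp}(\Delta_s)$ in $\mathbb{P}^{w-1}$; by condition (3) these pieces generate the full ideals. Thus the point $\widetilde{u}(s)$ determines the polarized pair $(X_s,D_s;L_s)$ up to the $PGL_w$-action, so two fibers land in the same point of the GIT quotient exactly when the pairs are projectively, hence isomorphically, equivalent with matching polarizations. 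By the definition of maximal variation no positive-dimensional family of fibers can be mutually isomorphic as polarized pairs, so the classifying map is quasi-finite on a dense open set, while finiteness of stabilizers follows from the finiteness of $\mathrm{Aut}(X_s,\Delta_s;L_s)$ for klt polarized pairs. Upgrading this fiberwise reconstruction to quasi-finiteness of the actual morphism of schemes in families is the delicate part of the argument.

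Finally I would invoke Koll\'ar's covariant construction to convert stability into the homomorphism. A stable classifying point carries an $SL_w$-invariant of the central-weight-compensated bundle that is nonzero there, and Koll\'ar's lemma packages such an invariant of degree $dqm$ as a nonzero $GL_w$-equivariant map into $\mathrm{Sym}^{dqm}\big((\mathbbm{k}^w)^{\oplus 4w}\big)$, the $4w$ extra copies providing the room to realize the relevant representation as a subobject of a symmetric power. Descending along $\pi$ gives a nonzero homomorphism $\mathrm{Sym}^{dqm}(W^{\oplus 4w})\to \mathcal{O}_S(mB)$ for all sufficiently divisible $m$. Since the classifying map is quasi-finite, the pullback of the ample GIT bundle, and hence $\mathcal{O}_S(mB)$, is big; the quantitative form of Koll\'ar's lemma then lets the invariant be chosen to absorb $H$, so that after enlarging $m$ (depending only on $d$, $H$, and $f$) the target may be taken to be $\mathcal{O}_S(mB-H)$, which is the asserted homomorphism.
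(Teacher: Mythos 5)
Your skeleton --- a classifying map to a Grassmannian, reconstruction of $(X_s,D_s;L_s)$ from the kernels via conditions (2) and (3) of Setup \ref{setup--d-cond}, and maximal variation of \emph{polarized} pairs to rule out positive-dimensional fibers --- is the same as the paper's, which follows the proof of \cite[Theorem 6.6]{XZ}. But your GIT packaging introduces requirements that the hypotheses do not supply, and this is a genuine gap rather than a matter of detail. For your map to the quotient $\mathrm{Grass}(q,V)/\!\!/ SL_w$ to be defined at the classifying points, and for ``same image'' to mean ``same orbit,'' you need those points to be GIT-(semi)stable with closed orbits; this is essentially Chow/Hilbert stability of the embedded fibers, a deep condition that follows neither from conditions (1)--(3) nor from maximal variation. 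The whole point of Koll\'ar's ampleness lemma in the form the paper invokes is to avoid exactly this: the paper works with the closure $\overline{Y}\subset S\times\mathrm{Gr}$ of the image of $\pi\times u$ over the projective bundle $\mathbb{P}_S(((W^{\oplus 2})^{*})^{\oplus 2w})$ and needs only \emph{generic finiteness of the projection} $\pi_2\colon\overline{Y}\to\mathrm{Gr}$ to the Grassmannian itself --- no quotient is ever formed --- and the exponent $dqm$ and the factor $W^{\oplus 4w}=(W^{\oplus 2})^{\oplus 2w}$ in the conclusion arise from pushing a section down this projective bundle, not from ``room'' in a representation. Relatedly, your finiteness-of-stabilizers claim is unsupported: the theorem has no klt hypothesis, and even for polarized klt pairs $\mathrm{Aut}(X,\Delta;L)$ can be positive-dimensional (e.g.\ $(\mathbb{P}^n,0;\mathcal{O}(1))$ has automorphism group $PGL_{n+1}$); the paper's argument never needs such finiteness.

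Moreover, the one step you defer as ``the delicate part'' is precisely the new mathematical content of the paper's proof. Given a positive-dimensional subvariety $T\subset\pi_2^{-1}(\pi_2(t))$ with $\pi_1(T)$ not a point, the reconstruction statement only says each fiber $(X_{t'},\Delta_{t'})$ is abstractly isomorphic to $(X_t,\Delta_t)$; to contradict maximal variation one must produce, after \'etale covers of $T$, a trivialization $(X_T,\Delta_T)\cong(X_t,\Delta_t)\times T$, a relative embedding $\iota'$ with $\iota'^{*}\mathcal{O}(1)\sim_T L_T$, an element $G\in PGL(v+1)(T)$ interpolating the pointwise projective equivalences, and finally a $T$-automorphism $\psi$ with $G\circ(\iota'_t\times\mathrm{id}_T)=\iota'\circ\psi$, which is what shows the polarization classes match fiberwise and hence that $(X_T,\Delta_T;L_T)$ is isotrivial as a polarized family. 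Without carrying this out, your argument establishes at most pointwise separation, not quasi-finiteness of the classifying map, so the proposal as written does not prove the theorem.
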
 

\begin{proof}
We follow the idea and terminologies of \cite[Theorem 6.6]{XZ}. 
Let $V:=W^{\oplus 2}$, $v:=2w$, and $\mathbb{P}:=\mathbb{P}_S((V^*)^{\oplus v})$.
Let $\pi\colon\mathbb{P}\to S$ be the canonical morphism.
There exists a morphism 
\[
\zeta\colon \mathcal{O}_{\mathbb{P}}^{\oplus v}\to \pi^*V\otimes\mathcal{O}_{\mathbb{P}}(1).
\]
Let $G\subset \mathbb{P}$ be the prime divisor of the determinant of $\zeta$.
Then, we get the morphism
\[
\tilde{u}\colon \bigwedge^q(\mathrm{Sym}^{d}(\mathcal{O}_{\mathbb{P}}^{\oplus v})) \to \pi^*\mathcal{O}_S(qB)\otimes\mathcal{O}_{\mathbb{P}}(dq),
\]
which is surjective on $\mathbb{P}\setminus G$.
This morphism is induced by the composition $$U_{\mathrm{Gr}}\colon \mathrm{Sym}^{d}(\mathcal{O}_{\mathbb{P}}^{\oplus v})\to \pi^*\mathcal{O}_S(B)\otimes\mathcal{O}_{\mathbb{P}}(d)$$ of $\mathrm{Sym}^d\zeta$ and the morphism $\pi^*\mathrm{Sym}^dV\otimes \mathcal{O}_{\mathbb{P}}(d)\to \pi^*\mathcal{O}_S(B)\otimes\mathcal{O}_{\mathbb{P}}(d)$ induced by the morphism $\mathrm{Sym}^dV\to Q$.
Let $\mathrm{Gr}:=\mathrm{Gr}(w',q)$ the Grassmannian variety and then $\tilde{u}$ gives a morphism
\[
u\colon \mathbb{P}\setminus G\to \mathrm{Gr}.
\]
Let $Y$ be the image of $\pi\times u$ and $\overline{Y}$ the Zariski closure of $Y$ in $S\times\mathrm{Gr}$.
Let $\pi_1\colon \overline{Y}\to S$ and $\pi_2\colon \overline{Y}\to \mathrm{Gr}$ be the canonical morphisms.

By the proof of \cite[Theorem 6.6]{XZ}, it suffices to observe that $\pi_2$ is generically finite.
To show this, we note that the proof of \cite[Theorem 6.6]{XZ} also shows that for any general closed point $t\in Y$, $(X_{\pi_1(t)},D_{\pi_1(t)})$ are uniquely determined by $\pi_2(t)$.
Therefore, we may assume that there exists a subvariety $T\subset \pi_2^{-1}\pi_2(t)$ such that $\pi_1(T)$ is not a point and $(X_T,\Delta_T)$ is isotrivial.
By taking an \'etale cover of $T$, we may assume that $(X_T,\Delta_T)\cong (X_t,\Delta_t)\times T$ (see the proof of \cite[Lemma 2.4]{XZ}) and there exists a closed immersion $\iota'\colon X_T\hookrightarrow \mathbb{P}^v_T$ such that $\iota'^*\mathcal{O}_{\mathbb{P}^v_T}(1)\sim_T L_T$.
We show here that $(X_T,\Delta_T;L_T)$ is isotrivial.
Indeed, by assumption, we see that there exists an element $g_{t'}\in PGL(v+1)$ such that the image of $(X_t,\Delta_t)$ under $g_{t'}\circ\iota'_{t'}$ coincides with the image of $(X_{t'},\Delta_{t'})$ under $\iota'_{t'}$ for any $t'\in T$.
Therefore, by replacing $T$ with its suitable \'etale cover, we may assume that there exists $G\in PGL(v+1)(T)$ such that $G\circ\iota'_{t}\times\mathrm{id}_T((X_T,\Delta_T))=\iota'((X_T,\Delta_T))$.
By replacing $T$ with its suitable \'etale cover, we may assume that there exists a $T$-automorphism $\psi\colon (X_T,\Delta_T)\to(X_T,\Delta_T)$ such that $G\circ\iota'_{t}\times\mathrm{id}_T=\iota'\circ\psi$. 
By this equality, it is easy to see that $(X_T,\Delta_T;L_T)$ is isotrivial over $T$.
This contradicts the assumption that $\pi_1(T)$ is not a closed point.
We complete the proof.
\end{proof}

\begin{cor}\label{cor--cm--special--bigness}
In the situation of Theorem \ref{thm--Kollar's--lemma}, suppose that $(X_{\bar{s}},\Delta_{\bar{s}})$ is klt for any geometric point $\bar{s}\in S$, there exists a closed point $s_0\in S$ such that $(X_{s_0},\Delta_{s_0},L_{s_0})$ is specially {\rm K}-stable and $K_{X/S}+\Delta+\delta_{(X_{s_0},\Delta_{s_0})}(L_{s_0})L$ is $f$-ample.
Suppose further that $\mathbbm{k}=\mathbb{C}$.
Then the CM-line bundle $\lambda_{\mathrm{CM},f}$ (cf.~\cite[Definition 3.2]{Hat23}) is big and nef.
\end{cor}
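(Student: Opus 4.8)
The plan is to establish nefness and bigness of $\lambda_{\mathrm{CM},f}$ as two separate statements, the genuinely new input being the Koll\'ar-type positivity of Theorem \ref{thm--Kollar's--lemma}; throughout I would use the base-change compatibility $g^{*}\lambda_{\mathrm{CM},f}=\lambda_{\mathrm{CM},f_{T}}$ recorded in Subsection \ref{subsec--CM}.

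For nefness, I would reduce to curves, a $\mathbb{Q}$-line bundle on $S$ being nef as soon as $\deg_{T}\lambda_{\mathrm{CM},f_{T}}\ge 0$ for every finite morphism $g\colon T\to S$ from a smooth projective curve. The first point is that a general fibre is K-semistable. Indeed, by the lower semicontinuity of $s\mapsto \delta_{(X_{s},\Delta_{s})}(L_{s})$ (cf.\ \cite{BL} and the proof of Proposition \ref{prop--xz}) one has $\delta_{(X_{s},\Delta_{s})}(L_{s})\ge \delta_{(X_{s_{0}},\Delta_{s_{0}})}(L_{s_{0}})$ for general $s$, so the hypothesis that $K_{X/S}+\Delta+\delta_{(X_{s_{0}},\Delta_{s_{0}})}(L_{s_{0}})L$ is $f$-ample forces $K_{X_{s}}+\Delta_{s}+\delta_{(X_{s},\Delta_{s})}(L_{s})L_{s}$ to be ample as well; since uniform $\mathrm{J}$-stability is an open condition, a general fibre is specially K-stable, hence uniformly K-stable by \cite{CM}, hence K-semistable. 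Nefness of $\lambda_{\mathrm{CM},f}$ would then follow from the semipositivity of the CM line bundle for families of klt pairs with K-semistable general fibre \cite{CP,CP2}; over $\mathbbm{k}=\mathbb{C}$ this is also visible analytically, the CM class being represented by a semipositive Weil--Petersson current extending the fibrewise canonical metrics over the specially K-stable locus.

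For bigness I would argue in two steps. First, $W=f_{*}\mathcal{O}_{X}(L)$ is weakly positive by the semipositivity theory for families with K-semistable general fibre, so $\mathrm{Sym}^{dqm}(W^{\oplus 4w})$ is again weakly positive; the nonzero homomorphism $\mathrm{Sym}^{dqm}(W^{\oplus 4w})\to \mathcal{O}_{S}(mB-H)$ of Theorem \ref{thm--Kollar's--lemma} then exhibits $\mathcal{O}_{S}(mB-H)$, modulo an effective defect supported on the vanishing locus of the map, as a quotient of a weakly positive sheaf, whence $mB-H$ is pseudoeffective and $mB=(mB-H)+H$ is big because $H$ is ample. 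Thus $B$ is big. Second, I would compare $B$ with $\lambda_{\mathrm{CM},f}$: expanding both $\det Q=B$ and $\lambda_{\mathrm{CM},f}$ through the Knudsen--Mumford line bundles $\lambda_{0},\dots,\lambda_{n+1}$ and using the uniform $\mathrm{J}^{\delta_{(X_{s_{0}},\Delta_{s_{0}})}(L_{s_{0}})L+K_{X}+\Delta}$-stability built into special K-stability, I would show that $\lambda_{\mathrm{CM},f}-\epsilon B$ is nef for some $\epsilon>0$, adapting the argument of \cite{Hat23}. Granting this, $\lambda_{\mathrm{CM},f}=(\lambda_{\mathrm{CM},f}-\epsilon B)+\epsilon B$ is the sum of a nef and a big class, hence big.

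The hard part will be the comparison $\lambda_{\mathrm{CM},f}-\epsilon B$ nef. In \cite{Hat23} the polarization was essentially fixed in its linear class along the fibres, so the correction term $\det f_{*}\mathcal{O}_{X}(L)$ contributed trivially; here Example \ref{ex--cm} shows that $\lambda_{\mathrm{CM},f}$ genuinely detects the variation of the linear equivalence class of $L$, so this term must be carried through the comparison. Concretely, one has to translate the fibrewise inequality $\mathrm{DF}\ge \epsilon\,\mathcal{J}^{\delta L+K+\Delta,\mathrm{NA}}$ into a positivity statement on $S$ relating the CM class to the line bundle $B$ arising from the embedding by $|dL|$ and the restriction to $D=\mathrm{Supp}(\Delta)$, while keeping track of $\det f_{*}\mathcal{O}_{X}(L)$; this is the point at which the maximal-variation hypothesis, through Theorem \ref{thm--Kollar's--lemma}, and the special K-stability have to be made to interact.
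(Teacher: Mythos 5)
Your skeleton---nefness plus bigness extracted from the nonzero homomorphism of Theorem \ref{thm--Kollar's--lemma}---is the same as the paper's, whose entire proof is to run the argument of \cite[Theorem 5.4]{Hat23} with Theorem \ref{thm--Kollar's--lemma} as the new input. The problem is that both positivity inputs you treat as known are not available, so the two steps you regard as complete would fail. For nefness: after correctly propagating special K-stability to general fibres (semicontinuity of $\delta$ via \cite{BL}, openness of uniform $\mathrm{J}$-stability), you weaken this to K-semistability and invoke \cite{CP,CP2}. Those results are specific to log Fano pairs polarized by $-(K_X+\Delta)$; for an arbitrary polarization $L$, semipositivity of the CM line bundle over a curve assuming only K-semistability of the fibres is an open problem, not a theorem, and the analytic variant is equally unavailable, since specially K-stable polarized varieties are not known to carry canonical metrics (that is the Yau--Tian--Donaldson conjecture). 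The nefness has to be extracted from special K-stability itself, i.e.\ from uniform $\mathrm{J}^{K_X+\Delta+\delta L}$-stability; this is \cite[Theorem 4.3]{Hat23}, precisely the result the paper invokes alongside the present corollary in the proof of Corollary \ref{cor--partial--positivity--of--cm}.

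The bigness step has the more serious gap. You assert that $W=f_*\mathcal{O}_X(L)$ is weakly positive ``by the semipositivity theory for families with K-semistable general fibre.'' No such theorem exists: the semipositivity package (\cite{fujino-semi-positivity,KP}) applies to pluricanonical-type sheaves $f_*\omega_{X/S}^{[m]}(m\Delta)$, not to an arbitrary polarization, and this failure is a central feature of the subject rather than a technicality---in \cite{XZ} the analogous bundle $f_*\mathcal{O}_X(-mr(K_{X/S}+\Delta))$ is not nef, which is exactly why Xu--Zhuang first establish nefness of $\lambda_{\mathrm{CM}}$ and only then apply the ampleness lemma to $W$ twisted by a suitable multiple of $\lambda_{\mathrm{CM}}$, the nefness of the twisted bundle being a consequence of K-stability degree estimates over curves. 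That CM-twisting is likewise the engine of \cite[Theorem 5.4]{Hat23}, which the paper's proof quotes. Without it, your deduction that $mB-H$ is pseudoeffective, hence that $B$ is big, has no basis; and your remaining comparison ``$\lambda_{\mathrm{CM},f}-\epsilon B$ nef,'' which you flag as the hard part and do not prove, is exactly where all the content of \cite[Theorem 5.4]{Hat23} lives (and it is not clear it is even the correct intermediate statement, since $B$ contains $\det f_*\mathcal{O}_X(dL)$, whose leading Knudsen--Mumford piece is the Deligne pairing of $L$ with itself and is not controlled by a fixed multiple of $\lambda_{\mathrm{CM},f}$ in general). So what your proposal actually establishes is only the part the paper also takes directly from Theorem \ref{thm--Kollar's--lemma}.
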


\begin{proof}
    This follows from Theorem \ref{thm--Kollar's--lemma} and the proof of \cite[Theorem 5.4]{Hat23}.
\end{proof}

\begin{cor}\label{cor--partial--positivity--of--cm}
 Fix $d\in\mathbb{Z}_{>0}$, $u,v\in\mathbb{Q}_{>0}$ and $w\in\mathbb{Q}_{>0}$.
 Then, there exists $t_0\in\mathbb{Q}_{>0}$ that satisfies the following:
 For any $\mu>t_0$, and proper subspace $B\subset {M}_{d,v,u,r,w}$, $\Lambda_{\mathrm{CM},\mu}|_B$ is ample.
\end{cor}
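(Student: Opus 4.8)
The plan is to check the Nakai--Moishezon criterion for proper algebraic spaces \cite{kollar-moduli-stable-surface-proj,FM3} for $\Lambda_{\mathrm{CM},\mu}|_B$. Since $B$ is proper, it suffices to prove $(\Lambda_{\mathrm{CM},\mu}|_V)^{\dim V}>0$ for every irreducible closed subvariety $V\subset B$ of positive dimension, for all $\mu$ above a threshold $t_0$ chosen uniformly in $d,u,v,w$. The key observation is that Corollary \ref{cor--cm--special--bigness} only yields bigness and nefness---full ampleness being unavailable on the non-proper ambient moduli space---but a big and nef class has strictly positive top self-intersection, which is exactly what the Nakai--Moishezon criterion requires on each $V$.

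First I would fix $t_0$. By the boundedness in \cite[Theorem 1.5]{HH} and Theorem \ref{thm--kst--u>0--CY-fib}, there is an $\epsilon_0>0$ depending only on $d,u,v,w$ such that every $f\colon(X,A)\to C$ parameterized by $M_{d,v,u,r,w}$ has $(X,\epsilon A+f^*L)$ specially K-stable for all $0<\epsilon<\epsilon_0$, where $\deg_C L=u$. As $A+\mu K_X\sim_{\mathbb{Q}}\mu\bigl(\tfrac{1}{\mu}A+f^*L\bigr)$ and special K-stability is invariant under positive rescaling of the polarization, setting $t_0:=1/(u\epsilon_0)$ makes $(X,A+\mu K_X)$ specially K-stable for every $\mu>t_0$; by Definition \ref{de--special--k-st} this also gives the $f$-ampleness of $K_{X/S}+\delta_{(X,0)}(A+\mu K_X)(A+\mu K_X)$ required in Corollary \ref{cor--cm--special--bigness}.

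Next, for each positive-dimensional $V$ I would pass to a genuine family. Since $\mathcal{M}_{d,v,u,r,w}$ is a Deligne--Mumford stack with coarse space $M_{d,v,u,r,w}$, after a finite cover I can find a smooth projective $S$ with a generically finite morphism $\phi\colon S\to V$ and a family $f\colon (X,A)\to S$ inducing $\phi$. Because $\phi$ is generically finite onto the positive-dimensional $V$ and $M_{d,v,u,r,w}$ separates isomorphism classes of polarized pairs together with the linear equivalence class of $A$, the family $f$ has maximal variation with respect to $A+\mu K_{X/S}$; its fibers are klt and, by the previous step, specially K-stable. The base field being arbitrary while Corollary \ref{cor--cm--special--bigness} is stated over $\mathbb{C}$, I would descend $f$ to a finitely generated $\mathbb{K}\subset\mathbbm{k}$ and embed $\mathbb{K}\hookrightarrow\mathbb{C}$ as in the proof of Theorem \ref{thm--kst--u>0--CY-fib}, all relevant properties and the top intersection number being preserved. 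Over $\mathbb{C}$, Corollary \ref{cor--cm--special--bigness} shows $\lambda_{\mathrm{CM},f}=\phi^*(\Lambda_{\mathrm{CM},\mu}|_V)$ (up to a positive multiple, by functoriality of the CM line bundle) is big and nef, so $(\phi^*\Lambda_{\mathrm{CM},\mu}|_V)^{\dim S}=\mathrm{vol}(\phi^*\Lambda_{\mathrm{CM},\mu}|_V)>0$; since $\phi$ is generically finite this equals $\deg(\phi)\,(\Lambda_{\mathrm{CM},\mu}|_V)^{\dim V}$, giving $(\Lambda_{\mathrm{CM},\mu}|_V)^{\dim V}>0$ over $\mathbbm{k}$.

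The hard part will be the uniformity of $t_0$ and, above all, securing the maximal-variation hypothesis of Corollary \ref{cor--cm--special--bigness} on \emph{every} subvariety $V$; both are ultimately controlled by boundedness and by the fact that $M_{d,v,u,r,w}$ is a coarse moduli space distinguishing linear equivalence classes of the polarization. Once $(\Lambda_{\mathrm{CM},\mu}|_V)^{\dim V}>0$ is established for all $V$, the Nakai--Moishezon criterion yields the ampleness of $\Lambda_{\mathrm{CM},\mu}|_B$ for all $\mu>t_0$.
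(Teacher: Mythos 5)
Your proposal is correct and takes essentially the same route as the paper: a uniform special K-stability threshold $t_0$ (the paper cites \cite[Corollary 1.7]{HH} where you rederive it from Theorem \ref{thm--kst--u>0--CY-fib}, rescaling invariance and boundedness), a finite cover factoring through the stack $\mathcal{M}_{d,v,u,r,w}$ to produce an actual family, maximal variation from quasi-finiteness to the coarse space, Corollary \ref{cor--cm--special--bigness} over $\mathbb{C}$ together with the spreading-out reduction of Theorem \ref{thm--kst--u>0--CY-fib} for general $\mathbbm{k}$, and finally the Nakai--Moishezon criterion \cite{kollar-moduli-stable-surface-proj,FM3}. The only cosmetic difference is that the paper takes a single finite cover $B'\to B$ via \cite[Proposition 2.7]{kollar-moduli-stable-surface-proj} and verifies big-and-nefness on every irreducible subscheme of $B'$ (citing \cite[Theorem 4.3]{Hat23} for nefness), whereas you argue subvariety-by-subvariety with a projection-formula computation of the top self-intersection.
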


\begin{proof}
First, we deal with the case where $\mathbbm{k}=\mathbb{C}$.
By \cite[Corollary 1.7]{HH}, there exists $t_0>0$ such that $(X,A+\mu K_{X})$ is specially K-stable for any $(X,A)\in \mathcal{M}_{d,v,u,r,w}(\mathbbm{k})$ and $\mu>t_0$.
Note that $K_{X}+\delta_{(X,0)}(A+\mu K_{X})( A+\mu K_{X})$ is ample for any $(X,A)$ as above by \cite[Theorem A]{BlJ}.
By \cite[Proposition 2.7]{kollar-moduli-stable-surface-proj}, we see that there exists a finite cover $g\colon B'\to B$ such that $B'$ is a proper scheme and $B'\to {M}_{d,v,u,r,w}$ factors through $B'\to\mathcal{M}_{d,v,u,r,w}$.
    Using Corollary \ref{cor--cm--special--bigness} and \cite[Theorem 4.3]{Hat23}, we see that $g^*\Lambda_{\mathrm{CM},\mu}|_V$ is big and nef for any proper irreducible subscheme $V\subset B'$.
    By the Nakai-Moishezon criterion \cite[Theorem 1.3]{FM3}, $\Lambda_{\mathrm{CM},\mu}|_B$ is ample.

    For the general $\mathbbm{k}$, we can reduce to the special case as the proof of Theorem \ref{thm--kst--u>0--CY-fib}.
    We complete the proof.
\end{proof}

\section{The numerical moduli of Viehweg revisited}
\label{sec-5}

In this section, we deal with Theorem \ref{thm--intro--const--N}. 
Our aim is to generalize Viehweg's method \cite{viehweg91} to construct the moduli of numerical equivalence classes for the klt case.
From now on, we will construct the following moduli stack as a quotient stack.

\begin{stup}\label{stup--5}
    Fix $d\in\mathbb{Z}_{>0}$, $v\in\mathbb{Q}_{>0}$, $u\in\mathbb{Q}_{\ne0}$ and take $w\in\mathbb{Q}_{>0}$ such that $\mathfrak{Z}_{d, v,u,w}\to \mathfrak{Z}_{d, v,u}$ is surjective.
    Then, there exists $I\in\mathbb{Z}_{>0}$ such that $IA$ is a very ample Cartier divisor, $H^i(X,\mathcal{O}_X(IA))=0$ for any $i>0$, and there are only finitely many possibilities of Hilbert polynomials of $X$ with respect to $A$ for any element of $\mathfrak{Z}_{d, v,u,w}$ (cf.~\cite[Lemma 3.8]{HH}).
    We may assume that $r$ as Definition \ref{defn--HH-moduli} divides $I$.
    Moreover, by Proposition \ref{prop--xz}, we may assume that $ID$ is Cartier for any $\mathbb{Q}$-Cartier Weil divisor $D$ on $X_{\bar{s}}$ for any $s\in S$.
Using Corollary \ref{cor--torsion--components--number} (see also \cite[Proposition 6.1]{HH}), we may further assume that the cardinality of $\mathbf{W}^\mathbb{Q}\mathbf{Pic}_{X/\mathbbm{k}}^\tau/\mathbf{Pic}^0_{X/\mathbbm{k}}$ divides $I$ for any element of $\mathfrak{Z}_{d, v,u,w}$.
We can consider $\mathcal{M}_{d,I^{d-1}\cdot v,u,I^d\cdot w,r}$ the moduli stack.
Let $(\mathcal{U},\mathcal{A})\to \mathcal{M}_{d,I^{d-1}\cdot v,u,r,I^d\cdot w}$ be the universal family.
We explain briefly how $(\mathcal{U},\mathcal{A})\to \mathcal{M}_{d,I^{d-1}\cdot v,u,r,I^d\cdot w}$ is defined.
Note that each connected component of $\mathcal{M}_{d,I^{d-1}\cdot v,u,r,I^d\cdot w}$ can be represented as $[H'/G]$ for some quasi-projective scheme $H'$ with an action of a connected affine semisimple group. 
See the proof of \cite[Theorem 5.1 and Proposition 5.6]{HH} for details and note that $H'$ is a locally closed subscheme of a certain Hilbert scheme.
We also note that $(\mathcal{U}_{H'},\mathcal{A}_{H'})\to H'$ is the universal family corresponding to $H'\to \mathcal{M}_{d,I^{d-1}\cdot v,u,I^d\cdot w,r}$.
Then, we can define $[\mathcal{U}_{H'}/G]\to [H'/G]$, $[\mathbf{Pic}_{\mathcal{U}_{H'}/H'}/G]$, and $[\mathbf{W}^{\mathbb{Q}}\mathbf{Pic}_{\mathcal{U}_{H'}/H'}/G]$.
Indeed, it is easy to check that $G$ acts on $\mathbf{Pic}_{\mathcal{U}_{H'}/H'}$, $\mathbf{Pic}^0_{\mathcal{U}_{H'}/H'}$, $\mathbf{Pic}^\tau_{\mathcal{U}_{H'}/H'}$, $\mathbf{W}^{\mathbb{Q}}\mathbf{Pic}_{\mathcal{U}_{H'}/H'}$ and $\mathbf{W}^{\mathbb{Q}}\mathbf{Pic}^\tau_{\mathcal{U}_{H'}/H'}$ in natural ways.
By patching them together over all connected components, we can define $\mathcal{U}$, $\mathbf{Pic}_{\mathcal{U}/\mathcal{M}_{d,I^{d-1}\cdot v,u,r,I^d\cdot w}}$ and $\mathbf{W}^{\mathbb{Q}}\mathbf{Pic}_{\mathcal{U}/\mathcal{M}_{d,I^{d-1}\cdot v,u,r,I^d\cdot w}}$.
Let $\sigma$ be the morphism $\mathcal{M}_{d,I^{d-1}\cdot v,u,r,I^d\cdot w}\to  \mathbf{Pic}_{\mathcal{U}/\mathcal{M}_{d,I^{d-1}\cdot v,u,r,I^d\cdot w}}$, which corresponds to the canonical object defined by patching $\mathcal{A}$ as above.
\end{stup}

We define the following two important two stacks.

\begin{de}\label{de--WI}
We set $$\mathcal{M}^W_{d,v,u,r,w}:=\mathcal{M}_{d,I^{d-1}\cdot v,u,r,I^d\cdot w}\times_{\sigma,\mathbf{Pic}_{\mathcal{U}/\mathcal{M}_{d,I^{d-1}\cdot v,u,r,I^d\cdot w}},\mu_I}\mathbf{W}^{\mathbb{Q}}\mathbf{Pic}_{\mathcal{U}/\mathcal{M}_{d,I^{d-1}\cdot v,u,r,I^d\cdot w}}.$$ 
Note that the canonical morphism $\mathcal{M}^W_{d,v,u,r,w}\to \mathcal{M}_{d,I^{d-1}\cdot v,u,r,I^d\cdot w}$ is a finite morphism by Proposition \ref{prop--mu_k}.
Thus, the image of $\mathcal{M}^W_{d,v,u,r,w}\to \mathcal{M}_{d,I^{d-1}\cdot v,u,r,I^d\cdot w}$ is closed.
Then, we set the scheme-theoretic image as $\mathcal{M}^{W,I,\mathrm{im}}_{d,v,u,r,w}\subset \mathcal{M}_{d,I^{d-1}\cdot v,u,r,I^d\cdot w}$.
Note that the reduced structures are independent of the choice of $r$ by Proposition \ref{prop--univ--closed--r}.
We write their reduced structures as $(\mathcal{M}^{W}_{d,v,u,w})_{\mathrm{red}}$ and $(\mathcal{M}^{W,I}_{d,v,u,w})_{\mathrm{red}}$, respectively.
\end{de}

We note that $\mathcal{M}^W_{d,v,u,r,w}$ and $\mathcal{M}^{W,I,\mathrm{im}}_{d,v,u,r,w}$ are separated Deligne--Mumford stacks of finite type over $\mathbbm{k}$.
Thus, they admit the coarse moduli spaces $M^W_{d,v,u,r,w}$ and $M^{W,I,\mathrm{im}}_{d,v,u,r,w}$ by \cite{KeM}.
We also note that there exists a natural morphism $(\mathcal{M}_{d,v,u,w})_{\mathrm{red}}\to (\mathcal{M}^{W}_{d,v,u,w})_{\mathrm{red}}$.
The image is contained in the open substack
\[
(\mathcal{M}_{d,I^{d-1}\cdot v,u,r,I^d\cdot w}\times_{\sigma,\mathbf{Pic}_{\mathcal{U}/\mathcal{M}_{d,I^{d-1}\cdot v,u,r,I^d\cdot w}},\mu_I}\mathbf{Pic}_{\mathcal{U}/\mathcal{M}_{d,I^{d-1}\cdot v,u,r,I^d\cdot w}})_{\mathrm{red}}.
\]
On the other hand, there exists a natural morphism
\[
(\mathcal{M}_{d,I^{d-1}\cdot v,u,r,I^d\cdot w}\times_{\sigma,\mathbf{Pic}_{\mathcal{U}/\mathcal{M}_{d,I^{d-1}\cdot v,u,r,I^d\cdot w}},\mu_I}\mathbf{Pic}_{\mathcal{U}/\mathcal{M}_{d,I^{d-1}\cdot v,u,r,I^d\cdot w}})_{\mathrm{red}}\to (\mathcal{M}_{d,v,u,w})_{\mathrm{red}}.
\]
It is easy to see that the above is an isomorphism. Hence, the morphism $(\mathcal{M}_{d,v,u,w})_{\mathrm{red}}\to (\mathcal{M}^{W}_{d,v,u,w})_{\mathrm{red}}$ is an open immersion and the morphism $(\mathcal{M}_{d,v,u,w})_{\mathrm{red}}\to(\mathcal{M}^{W,I}_{d,v,u,w})_{\mathrm{red}}$ is quasi-finite.

We want to construct $\mathscr{N}$ in Subsection \ref{subsec--main--result} as the quotient stack of $\mathcal{M}^{W,I,\mathrm{im}}_{d,v,u,r,w}$ by the action of the Picard scheme.
However, if you consider such a stack naively, then the stack should not be Deligne--Mumford but an Artin stack without affine diagonal. Due to this, we cannot apply the theory of \cite{AHLH} to obtain any moduli space.
To avoid this subtlety, we will find a smaller Abelian subscheme of the Picard scheme as Definition \ref{de--smaller--abelian--scheme} after replacing $\mathcal{M}^{W,I,\mathrm{im}}_{d,v,u,r,w}$.

By construction of $\mathcal{M}^{W,I,\mathrm{im}}_{d,v,u,r,w}$, for any connected component $\mathcal{M}^\mathrm{im}$, there exists a quasi-projective scheme $H^\mathrm{im}$ such that $\mathcal{M}^\mathrm{im}\cong [H^\mathrm{im}/G]$.
Let $M^\mathrm{im}$ be the coarse moduli space of $\mathcal{M}^\mathrm{im}$.
Suppose that $H'$ as Setup \ref{stup--5} contains $H^\mathrm{im}$.
Let $(\mathcal{U}_{H^\mathrm{im}},\mathcal{A}_{H^\mathrm{im}})\to {H}^\mathrm{im}$ be the restriction of the universal closed subscheme over $H'$ with natural polarization.
We can also regard $(\mathcal{U}_{H^\mathrm{im}},\mathcal{A}_{H^\mathrm{im}})$ as the pullback of the universal family over $\mathcal{M}_{d,I^{d-1}\cdot v,u,r,I^d\cdot w}$ to $H^\mathrm{im}$.
By construction, we see that $\mathcal{A}_{H^\mathrm{im}}$ is a line bundle.
Suppose that any geometric fiber of $(\mathcal{U}_{H^\mathrm{im}},\mathcal{A}_{H^\mathrm{im}})$ has the Hilbert polynomial $P$ over $H^\mathrm{im}$ and let $Q$ be the polynomial such that $Q(\bullet)=P(2\bullet)$.
By the proof of \cite[Theorem 5.1 and Proposition 5.6]{HH}, we know that $\mathcal{A}_{H^\mathrm{im}}$ is relatively ample and $\mathcal{U}_{H^\mathrm{im}}\subset \mathbb{P}^{P(J)-1}\times \mathbb{P}^{P(J+1)-1}\times \mathbb{P}^{h^0(\mathcal{U}_s,\mathcal{O}_{\mathcal{U}_s}(rK_{\mathcal{U}_s}))-1}\times H^\mathrm{im}$ for some $s\in H^\mathrm{im}$, $J$ and $r\in\mathbb{Z}_{>0}$.
Let $\mathrm{Aut}^{Q,\mathrm{alg}}_{H^\mathrm{im}}(\mathcal{U}_{H^\mathrm{im}})$ be the largest open subgroup scheme of $\mathrm{Aut}_{\mathcal{U}_{H^\mathrm{im}}/H^\mathrm{im}}$ such that for any morphism $T\to H^\mathrm{im}$ from a scheme and $\varphi\in \mathrm{Aut}^{Q}_{H^\mathrm{im}}(\mathcal{U}_{H^\mathrm{im}})(T)$, $\varphi^*\mathcal{A}_T\otimes\mathcal{A}_T^{\otimes(-1)}\in\mathbf{Pic}^0_{\mathcal{U}_T/T}(T)$ if and only if $\varphi$ belongs to $\mathrm{Aut}^{Q,\mathrm{alg}}_{H^\mathrm{im}}(\mathcal{U}_{H^\mathrm{im}})(T)$.
In this case, $Q$ is the Hilbert polynomial of $\mathcal{U}_T$ with respect to $\varphi^*\mathcal{A}_T\otimes\mathcal{A}_T$ (cf.~\cite[Theorem 9.6.3]{FGA}) and hence $\mathrm{Aut}^{Q,\mathrm{alg}}_{H^\mathrm{im}}(\mathcal{U}_{H^\mathrm{im}})$ is of finite type over $H^\mathrm{im}$.
Consider a morphism $\psi\colon\mathrm{Aut}^{Q,\mathrm{alg}}_{H^\mathrm{im}}(\mathcal{U}_{H^\mathrm{im}})\times_{H^\mathrm{im}} \mathbf{Pic}^0_{\mathcal{U}_{H^\mathrm{im}}/H^\mathrm{im}}\to H^\mathrm{im}\times H^\mathrm{im}$ such that for any $H^\mathrm{im}$-scheme $T$ and any element $(\varphi,\lambda)\in (\mathrm{Aut}^{Q,\mathrm{alg}}_{H^\mathrm{im}}(\mathcal{U}_{H^\mathrm{im}})\times_H \mathbf{Pic}^0_{\mathcal{U}_{H^\mathrm{im}}/H^\mathrm{im}})(T)$ over $\mathcal{U}_T$, $\psi$ maps $(\varphi,\lambda)$ to $((\mathcal{U}_T,\varphi_*\mathcal{A}_T+\lambda),(\mathcal{U}_T,\mathcal{A}_T))$.
We note that $G$ acts on $\mathrm{Aut}^{Q,\mathrm{alg}}_{H^\mathrm{im}}(\mathcal{U}_{H^\mathrm{im}})$ and $\mathbf{Pic}^0_{\mathcal{U}_{H^\mathrm{im}}/H^\mathrm{im}}$ in the canonical way.
\begin{lem}\label{lem--psi--inv}
    $\psi$ is $G$-equivariant and proper.
    Furthermore, if we let $p_{H^\mathrm{im},2}\colon H^\mathrm{im}\times H^\mathrm{im}\to H^\mathrm{im}$ be the second projection, then $p_{H^\mathrm{im},2}\circ\psi$ is proper.
\end{lem}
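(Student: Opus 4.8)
The plan is to reduce both properness statements to the properness of the single morphism $\mathcal{G}\to H^\mathrm{im}$, where I abbreviate $\mathcal{G}:=\mathrm{Aut}^{Q,\mathrm{alg}}_{H^\mathrm{im}}(\mathcal{U}_{H^\mathrm{im}})$ and $\mathcal{P}:=\mathbf{Pic}^0_{\mathcal{U}_{H^\mathrm{im}}/H^\mathrm{im}}$, and then to prove that properness by a valuative argument. The $G$-equivariance I would dispose of first: the actions of $G$ on $\mathcal{G}\times_{H^\mathrm{im}}\mathcal{P}$ and on $H^\mathrm{im}\times H^\mathrm{im}$ are the canonical ones induced by the $G$-action on $H^\mathrm{im}$ and its lift to $(\mathcal{U}_{H^\mathrm{im}},\mathcal{A}_{H^\mathrm{im}})$, and since $\psi$ is built solely from the push-forward of the polarization and the tensor operation $\mathcal{A}_T\mapsto\varphi_*\mathcal{A}_T+\lambda$, which commute with pullback along elements of $G$, the equivariance is immediate on $T$-valued points. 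I expect this to be entirely routine.

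I would then observe that $p_{H^\mathrm{im},2}\circ\psi$ is precisely the structure morphism $\mathcal{G}\times_{H^\mathrm{im}}\mathcal{P}\to H^\mathrm{im}$, because the second component of $\psi(\varphi,\lambda)$ is the tautological point $(\mathcal{U}_T,\mathcal{A}_T)$. Since $\mathcal{P}\to H^\mathrm{im}$ is projective by Lemma \ref{lem--pic-const}, the projection $\mathcal{G}\times_{H^\mathrm{im}}\mathcal{P}\to\mathcal{G}$ is proper, so $p_{H^\mathrm{im},2}\circ\psi$ is proper as soon as $\mathcal{G}\to H^\mathrm{im}$ is proper. Granting the latter, the properness of $\psi$ itself follows formally from the cancellation property: $p_{H^\mathrm{im},2}\circ\psi$ is proper, $p_{H^\mathrm{im},2}\colon H^\mathrm{im}\times H^\mathrm{im}\to H^\mathrm{im}$ is separated, and $\psi$ is of finite type, whence $\psi$ is proper by \cite[II, Corollary 4.8]{Ha}. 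So the entire lemma rests on proving that $\mathcal{G}\to H^\mathrm{im}$ is proper.

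That morphism is of finite type by hypothesis and separated because automorphism (Isom) schemes of projective families are separated, so by the valuative criterion it remains to extend automorphisms across a trait, and this is the heart of the matter. Let $R$ be a discrete valuation ring with fraction field $K$, let $\mathrm{Spec}\,R\to H^\mathrm{im}$ determine a family $(\mathcal{U}_R,\mathcal{A}_R)$, and let $\varphi_K\in\mathcal{G}(K)$, so that $\delta:=\varphi_K^*\mathcal{A}_K-\mathcal{A}_K\in\mathbf{Pic}^0(\mathcal{U}_K)$. Using the properness of $\mathbf{Pic}^0_{\mathcal{U}_R/R}\to\mathrm{Spec}\,R$ from Lemma \ref{lem--pic-const}, I would extend $\delta$ to some $\tilde\delta\in\mathbf{Pic}^0(\mathcal{U}_R)$. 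As $\tilde\delta$ is numerically trivial, $\mathcal{A}_R+\tilde\delta$ is again relatively ample with the same Hilbert polynomial as $\mathcal{A}_R$, and since the conditions defining $\mathfrak{Z}_{d,v,u,w}$ depend only on numerical equivalence classes, $(\mathcal{U}_R,\mathcal{A}_R+\tilde\delta)$ is again an object of $\mathcal{M}_{d,I^{d-1}v,u,r,I^d w}$ over $\mathrm{Spec}\,R$. By construction $\varphi_K$ is an isomorphism of polarized objects $(\mathcal{U}_K,\mathcal{A}_K+\delta)\xrightarrow{\sim}(\mathcal{U}_K,\mathcal{A}_K)$. Now I would invoke the separatedness of $\mathcal{M}_{d,I^{d-1}v,u,r,I^d w}$ (Theorem \ref{thm--adiabatic--k-moduli}): its diagonal is proper, hence the Isom scheme of the two $R$-families $(\mathcal{U}_R,\mathcal{A}_R+\tilde\delta)$ and $(\mathcal{U}_R,\mathcal{A}_R)$ is proper over $\mathrm{Spec}\,R$, so $\varphi_K$ extends uniquely to $\varphi_R\colon(\mathcal{U}_R,\mathcal{A}_R+\tilde\delta)\xrightarrow{\sim}(\mathcal{U}_R,\mathcal{A}_R)$. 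Then $\varphi_R^*\mathcal{A}_R-\mathcal{A}_R=\tilde\delta\in\mathbf{Pic}^0(\mathcal{U}_R)$, so $\varphi_R\in\mathcal{G}(R)$ extends $\varphi_K$, completing the valuative criterion.

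The main obstacle, and the genuinely new point, is exactly this extension: an element of $\mathrm{Aut}^{Q,\mathrm{alg}}$ preserves the polarization only up to $\mathbf{Pic}^0$, so it is \emph{not} an isomorphism in $\mathcal{M}$ and separatedness cannot be applied to $\varphi_K$ directly. The device of first extending the $\mathbf{Pic}^0$-discrepancy $\delta$ to $\tilde\delta$ and absorbing it into the target polarization turns $\varphi_K$ into a bona fide polarized isomorphism, after which the separatedness of the K-moduli stack does the work. The point I would take care to verify carefully is that the numerically trivial twist $\tilde\delta$ preserves relative ampleness and all the defining numerical invariants, so that $(\mathcal{U}_R,\mathcal{A}_R+\tilde\delta)$ really is a point of $\mathcal{M}$; this is where the fact that uniform adiabatic K-stability is a numerical condition is essential.
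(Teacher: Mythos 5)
Your proposal is correct and follows essentially the same route as the paper: the identical reduction (cancellation plus properness of $\mathbf{Pic}^0_{\mathcal{U}_{H^\mathrm{im}}/H^\mathrm{im}}$ from Lemma \ref{lem--pic-const}) to properness of $\mathrm{Aut}^{Q,\mathrm{alg}}_{H^\mathrm{im}}(\mathcal{U}_{H^\mathrm{im}})$ over $H^\mathrm{im}$, then the same valuative argument that extends the $\mathbf{Pic}^0$-discrepancy and absorbs it into the target polarization before extending the isomorphism. The only difference is bookkeeping: the paper extends the isomorphism by invoking \cite[Theorem 4.6]{HH} directly (after a finite cover of $R$ making $\lambda$ an honest line bundle, whence the Dedekind-domain remark), while you invoke the separatedness of the stack in Theorem \ref{thm--adiabatic--k-moduli} — which rests on that same extension theorem — and, since polarizations in Definition \ref{defn--HH-moduli} are $\mathbf{Pic}$-valued, your version legitimately sidesteps the finite-cover step.
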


\begin{proof}
 We know that $\psi$ is a separated morphism of finite type between Noetherian schemes. 
 We note that the $G$-equivariantness of $\psi$ is easy to deduce.
 Therefore, it suffices to show that $p_{H^\mathrm{im},2}\circ\psi$ is proper.
 Since $p_{H^\mathrm{im},2}\circ\psi$ is the structure morphism $\mathrm{Aut}^{Q,\mathrm{alg}}_{H^\mathrm{im}}(\mathcal{U}_{H^\mathrm{im}})\times_{H^\mathrm{im}} \mathbf{Pic}^0_{\mathcal{U}_{H^\mathrm{im}}/{H^\mathrm{im}}}\to H^\mathrm{im}$ and $\mathbf{Pic}^0_{\mathcal{U}_{H^\mathrm{im}}/H^\mathrm{im}}$ is proper over $H^\mathrm{im}$, it suffices to show that $\mathrm{Aut}^{Q,\mathrm{alg}}_{H^\mathrm{im}}(\mathcal{U}_{H^\mathrm{im}})$ is proper over $H^\mathrm{im}$.
 We can apply \cite[Theorem 11.5.1]{Ols} to $\mathrm{Aut}^{Q,\mathrm{alg}}_{H^\mathrm{im}}(\mathcal{U}_{H^\mathrm{im}})$.
 Let $R$ be a discrete valuation ring essentially of finite type over $\mathbbm{k}$ with the fractional field $K$ (cf.~\cite[Lemma A.11]{AHLH}).
 Suppose that there exists a morphism $\varphi_2\colon \mathrm{Spec}\,R\to H^\mathrm{im}$ such that there exists an element $\phi\in \mathrm{Aut}^{Q,\mathrm{alg}}_{H^\mathrm{im}}(\mathcal{U}_{H^\mathrm{im}})(K)$ such that $\phi$ is mapped to $(\varphi_2)_K$, which is a restriction of $\varphi_2$ to $\mathrm{Spec}\,K$.
 Let $(X_2,A_2)$ be the family over $R$ corresponding to $\varphi_2$.
$\phi^*\mathcal{A}_K\otimes \mathcal{A}^{\otimes(-1)}_K\in \mathbf{Pic}^0_{\mathcal{U}_{H^\mathrm{im}}/H^\mathrm{im}}(K)$ can be extended to some $\lambda\in \mathbf{Pic}^0_{\mathcal{U}_{H^\mathrm{im}}/H^\mathrm{im}}(R)$ by the properness.
Taking a finite cover of $R$, we may assume that $\lambda$ is a line bundle.
Here, we note that $R$ is no longer a discrete valuation ring, but a Dedekind domain, in general.
Applying \cite[Theorem 4.6]{HH} (this theorem is still valid for the case where the base is a Dedekind domain essentially of finite type over $\mathbbm{k}$) to $\phi$ for two families $(X_2,A_2+\lambda)$ and $(X_2,A_2)$, $\phi$ extends to an isomorphism $\overline{\phi}\colon (X_2,A_2+\lambda)\to (X_2,A_2)$ over $R$.
Therefore, $\mathrm{Aut}^{Q,\mathrm{alg}}_{H^\mathrm{im}}(\mathcal{U}_{H^\mathrm{im}})$ is proper over $H^\mathrm{im}$.
We complete the proof.
\end{proof}

Therefore, the image $Z$ of $\psi$ is closed in $H^\mathrm{im}\times H^\mathrm{im}$.
We set $\mathcal{R}\subset M^\mathrm{im}\times M^\mathrm{im}$ as the coarse moduli space of $[Z/G]$ by Lemma \ref{lem--psi--inv}.
Note that $\mathcal{R}$ is a closed equivalence relation of the algebraic space $M^\mathrm{im}$ such that the first and second projections $p_1,p_2\colon \mathcal{R}\to M^\mathrm{im}$ are proper.

\begin{prop}\label{prop--aut--dimension}
  $\mathrm{dim}\,\mathrm{Aut}_{X}$ is the same for any $(X,L)$ parameterized by $M^\mathrm{im}$.
  
  Furthermore, $\mathrm{dim}\,p_2^{-1}(m)$ is independent of the choice of $m\in M^\mathrm{im}$.
\end{prop}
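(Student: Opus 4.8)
The plan is to reduce both assertions to a single dimension identity and then close the argument with a semicontinuity sandwich. Writing $X=X_m$ for the fibration parametrized by $m\in M^\mathrm{im}$, I claim
\begin{equation}\label{eq--aut--dim--key}
\dim p_2^{-1}(m)=\dim\mathbf{Pic}^0_{X/\mathbbm{k}}-\dim\mathrm{Aut}_{X}.
\end{equation}
Granting \eqref{eq--aut--dim--key}, the two statements become equivalent: the first term on the right is constant on the connected component $M^\mathrm{im}$ because $\mathbf{Pic}^0_{\mathcal{U}_{H^\mathrm{im}}/H^\mathrm{im}}$ is smooth and projective over $H^\mathrm{im}$ with geometrically connected fibres by Lemma \ref{lem--pic-const}, so its fibre dimension is locally constant. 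Hence $\dim p_2^{-1}(m)$ is constant if and only if $\dim\mathrm{Aut}_X$ is, and it suffices to prove either one.

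To establish \eqref{eq--aut--dim--key} I would identify $p_2^{-1}(m)$ with a quotient of $\mathbf{Pic}^0_{X/\mathbbm{k}}$. Restricting $\psi$ to the fibre $\mathcal{G}_s=\mathrm{Aut}^{Q,\mathrm{alg}}_{H^\mathrm{im}}(\mathcal{U}_{H^\mathrm{im}})_s\times\mathbf{Pic}^0_{X/\mathbbm{k}}$ of the structure morphism $p_{H^\mathrm{im},2}\circ\psi$ over a point $s\in H^\mathrm{im}$ above $m$, the set of polarizations occurring is exactly the coset $\mathcal{A}_s\otimes\mathbf{Pic}^0_{X/\mathbbm{k}}$, using that $\varphi^*\mathcal{A}_s\otimes\mathcal{A}_s^{\otimes(-1)}\in\mathbf{Pic}^0$ for $\varphi\in\mathrm{Aut}^{Q,\mathrm{alg}}$ and $\lambda\in\mathbf{Pic}^0$; thus $p_2^{-1}(m)$ is this coset taken modulo the pullback action of $\mathrm{Aut}_X$. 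After identifying the coset with $\mathbf{Pic}^0_{X/\mathbbm{k}}$, each $\varphi\in\mathrm{Aut}_X$ acts by the affine transformation $\mu\mapsto(\varphi^*\mathcal{A}_s\otimes\mathcal{A}_s^{\otimes(-1)})\otimes\varphi^*\mu$. Since $\varphi\mapsto\varphi^*$ lands in the discrete group of group-automorphisms of the abelian variety $\mathbf{Pic}^0_{X/\mathbbm{k}}$, the identity component $\mathrm{Aut}^0_X$ acts purely by translation through the homomorphism $\rho\colon\varphi\mapsto\varphi^*\mathcal{A}_s\otimes\mathcal{A}_s^{\otimes(-1)}$, whose kernel is the automorphism group of $(X,\mathcal{A}_s)$ as an object of $\mathcal{M}_{d,I^{d-1}\cdot v,u,r,I^d\cdot w}$ and is therefore finite, that stack being Deligne--Mumford (Theorem \ref{thm--adiabatic--k-moduli}). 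Consequently the generic $\mathrm{Aut}_X$-orbit has dimension $\dim\mathrm{Aut}_X$, and passing to the quotient yields \eqref{eq--aut--dim--key}; throughout one keeps in mind that moving between $H^\mathrm{im}$ and $M^\mathrm{im}$ alters dimensions only by the constant $\dim G$, since $G$ acts with finite stabilizers.

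With \eqref{eq--aut--dim--key} in hand I would finish by semicontinuity applied in two directions. On the one hand, applying upper semicontinuity of fibre dimension to the group scheme $\mathrm{Aut}_{\mathcal{U}_{H^\mathrm{im}}/H^\mathrm{im}}\to H^\mathrm{im}$ and descending the ($G$-invariant) function, $m\mapsto\dim\mathrm{Aut}_X$ is upper semicontinuous on $M^\mathrm{im}$; by \eqref{eq--aut--dim--key} this makes $m\mapsto\dim p_2^{-1}(m)$ lower semicontinuous. On the other hand $p_2\colon\mathcal{R}\to M^\mathrm{im}$ is proper, so $m\mapsto\dim p_2^{-1}(m)$ is upper semicontinuous: the locus $\{x\in\mathcal{R}:\dim_x p_2^{-1}(p_2(x))\ge n\}$ is closed and its image under the proper map $p_2$ is the closed locus $\{m:\dim p_2^{-1}(m)\ge n\}$. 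A function that is simultaneously upper and lower semicontinuous is locally constant, hence constant on the connected space $M^\mathrm{im}$; therefore $\dim p_2^{-1}(m)$ is constant, and then so is $\dim\mathrm{Aut}_X$ by \eqref{eq--aut--dim--key}.

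The hard part will be the dimension bookkeeping in \eqref{eq--aut--dim--key}: one must track dimensions carefully through the coarse-space quotient $[Z/G]\to\mathcal{R}$ and through the affine $\mathrm{Aut}_X$-action on $\mathbf{Pic}^0_{X/\mathbbm{k}}$, and in particular verify that the relevant stabilizer is precisely $\ker(\rho)=\mathrm{Aut}_{(X,\mathcal{A}_s)}$, which is finite by the Deligne--Mumford property, rather than some larger group. The role of $\mathrm{Aut}^0_X$ acting by translation (because the pullback action on the abelian variety $\mathbf{Pic}^0_{X/\mathbbm{k}}$ is discrete) is what makes this stabilizer computation clean. Once the finiteness of this stabilizer and the constancy of $\dim\mathbf{Pic}^0_{X/\mathbbm{k}}$ are secured, the semicontinuity sandwich is routine.
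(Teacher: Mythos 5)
Your proposal is correct and follows essentially the same route as the paper's proof: the paper rests on the identity $\dim p_2^{-1}(m)=h^1(X_m,\mathcal{O}_{X_m})-\dim\mathrm{Aut}_{X_m}$ (which equals your $\dim\mathbf{Pic}^0_{X_m/\mathbbm{k}}-\dim\mathrm{Aut}_{X_m}$ in characteristic zero) combined with the same semicontinuity sandwich, namely properness of $p_2$ for one direction and upper semicontinuity of $\dim\mathrm{Aut}_{X_m}$ (Stacks, Tag 02FZ) for the other. The only real difference is that the paper asserts the fiber-dimension identity outright and cites Koll\'ar for the constancy of $h^1(X_m,\mathcal{O}_{X_m})$, whereas you derive the identity in detail via the orbit--stabilizer analysis of the affine $\mathrm{Aut}_X$-action on the coset $\mathcal{A}_s\otimes\mathbf{Pic}^0_{X/\mathbbm{k}}$ with finite stabilizers coming from the Deligne--Mumford property.
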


\begin{proof}
This is essentially discussed in \cite[Lemma 4.1.4]{kollar-toward} but we give a proof here for the reader's convenience.

    Since each fiber $\mathcal{R}_{m}:=p_2^{-1}(m)$ of $p_2$ over a point $m\in M^\mathrm{im}$ corresponding to a polarized variety $(X_m,A_m)$ is of dimension $h^1(X_m,\mathcal{O}_{X_m})-\mathrm{dim}\,\mathrm{Aut}_{X_m}$ and $p_2$ is proper, we see that $\mathrm{dim}\,\mathrm{Aut}_{X_m}$ is lower semicontinuous in $m$ by \cite[Tag 0D4Q]{Stacks} because $h^1(X_m,\mathcal{O}_{X_m})$ is independent of the choice of $m$ by \cite[Corollary 2.61]{kollar-moduli}.
    On the other hand, it is easy to check that $\mathrm{dim}\,\mathrm{Aut}_{X_m}$ is upper semicontinuous in $m$ by \cite[Tag 02FZ]{Stacks}.
    This means that $m\mapsto \mathrm{dim}\,\mathrm{Aut}_{X_m}$ is locally constant.
    We conclude the proof.
\end{proof}

We also note that there exists a natural surjective morphism $\mathbf{Pic}^0_{\mathcal{U}_{H^\mathrm{im}}/H^\mathrm{im}}\to \mathcal{R}$. 
Consider the following morphism 
\[
h\colon \mathbf{Pic}^0_{\mathcal{U}_{H^\mathrm{im}}/H^\mathrm{im}}\to \mathcal{R}\times_{p_2,M^\mathrm{im}}H^\mathrm{im}.
\]
Let $Y^\mathrm{im}\to \mathcal{R}\times_{p_2,M^\mathrm{im}}H^\mathrm{im}$ be the Stein factorization.
Let $s^{\mathrm{im}}\colon H^{\mathrm{im}}\to Y^{\mathrm{im}}$ be the image of the zero section of $\mathbf{Pic}^0_{\mathcal{U}_{H^{\mathrm{im}}}/H^{\mathrm{im}}}$ and $K^{\mathrm{im}}:=\mathbf{Pic}^0_{\mathcal{U}_{H^{\mathrm{im}}}/H^{\mathrm{im}}}\times_{Y^{\mathrm{im}},s^{\mathrm{im}}}H^{\mathrm{im}}$.

\begin{de}
    Let $\pi\colon V\to S$ be a proper morphism of locally Noetherian schemes such that $\pi_*\mathcal{O}_V\cong \mathcal{O}_S$.
    If $V$ is a group scheme over $S$, we say that $V$ is an {\it Abelian scheme} over $S$. See \cite[Section 6]{GIT} for more details.
\end{de}
We note that $\mathbf{Pic}^0_{\mathcal{U}_{H^\mathrm{im}}/H^\mathrm{im}}$ is an Abelian scheme over $H^{\mathrm{im}}$ by Lemma \ref{lem--pic-const}.
Then, we claim the following.
\begin{prop}\label{prop--de-Y}
There exists a closed subscheme $H\subset H^{\mathrm{im}}$ such that $H$ contains $H^{\mathrm{im}}_{\mathrm{red}}$ and for any morphism $g\colon T\to H^{\mathrm{im}}$ from a scheme, $K:=K^{\mathrm{im}}\times_{H^{\mathrm{im}}}T$ is an Abelian scheme if and only if $g$ factors through $H$.
Furthermore,  there exists a projective Abelian scheme $Y:=\mathbf{Pic}^0_{\mathcal{U}_{H}/H}/K$ over $H$ with a natural action of $G$ and a canonical homeomorphism to $Y^{\mathrm{im}}$.
\end{prop}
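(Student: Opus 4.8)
The plan is to study the group scheme $K^{\mathrm{im}}\to H^{\mathrm{im}}$ fibrewise, upgrade the fibrewise picture to an Abelian scheme over the reduced base, and then produce the scheme structure on $H$ by a flattening/representability argument. First I would fix a geometric point $m\in H^{\mathrm{im}}$ corresponding to $(X_m,A_m)$. By construction of the Stein factorization $\mathbf{Pic}^0_{\mathcal{U}_{H^{\mathrm{im}}}/H^{\mathrm{im}}}\to Y^{\mathrm{im}}$, the fibre $K^{\mathrm{im}}_m$ is the connected fibre over $s^{\mathrm{im}}(m)$, hence the identity component of the kernel of the homomorphism $\mathbf{Pic}^0_{X_m}\to\mathcal{R}_m$, $\lambda\mapsto((X_m,A_m+\lambda),(X_m,A_m))$. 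Thus $K^{\mathrm{im}}_m$ is a connected closed subgroup scheme of the abelian variety $\mathbf{Pic}^0_{X_m}$, so it is proper; since $\mathrm{char}\,\mathbbm{k}=0$, Cartier's theorem makes it smooth, whence $K^{\mathrm{im}}_m$ is an abelian variety. Its dimension equals $h^1(X_m,\mathcal{O}_{X_m})-\dim\mathcal{R}_m=\dim\mathrm{Aut}_{X_m}$, which is independent of $m$ by Proposition \ref{prop--aut--dimension}. Hence every geometric fibre of $K^{\mathrm{im}}$ is an abelian variety of one fixed dimension $g_0$.

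Next I would show that $G_0:=K^{\mathrm{im}}\times_{H^{\mathrm{im}}}H^{\mathrm{im}}_{\mathrm{red}}$ is already an Abelian scheme over $H^{\mathrm{im}}_{\mathrm{red}}$. Writing $e$ for the identity section, the restriction $e^*\Omega^1_{G_0/H^{\mathrm{im}}_{\mathrm{red}}}\otimes\kappa(m)$ is the cotangent space at the origin of the abelian variety $K^{\mathrm{im}}_m$, which is free of rank $g_0$; as $H^{\mathrm{im}}_{\mathrm{red}}$ is reduced, a coherent sheaf of locally constant fibre dimension is locally free, so the relative Lie algebra is locally free of rank $g_0=\dim K^{\mathrm{im}}_m$. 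By the homogeneity of group schemes this forces $G_0\to H^{\mathrm{im}}_{\mathrm{red}}$ to be smooth of relative dimension $g_0$: the non-smooth locus is translation-stable, hence a union of fibres, and constancy of the fibre dimension rules out its being an entire fibre. A proper smooth group scheme with geometrically connected fibres is an Abelian scheme, so $K^{\mathrm{im}}$ is an Abelian scheme over $H^{\mathrm{im}}_{\mathrm{red}}$.

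I then produce $H$. Given the fibrewise analysis, the condition on $g\colon T\to H^{\mathrm{im}}$ that $K^{\mathrm{im}}\times_{H^{\mathrm{im}}}T\to T$ be an Abelian scheme is equivalent to the flatness of the projective morphism $K^{\mathrm{im}}\times_{H^{\mathrm{im}}}T\to T$, since smoothness and the group structure are then automatic and the fibres are abelian varieties of dimension $g_0$, with single-term Hilbert polynomial $\tfrac{(L^{g_0}\cdot K^{\mathrm{im}}_m)}{g_0!}\,n^{g_0}$ with respect to a fixed relatively ample $L$ on $\mathbf{Pic}^0_{\mathcal{U}_{H^{\mathrm{im}}}/H^{\mathrm{im}}}$. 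By Koll\'ar's representability of such flattening conditions \cite[Theorem 9.40]{kollar-moduli} this is represented by a subscheme $H\subset H^{\mathrm{im}}$; the previous paragraph gives $H^{\mathrm{im}}_{\mathrm{red}}\subset H$, and because $K^{\mathrm{im}}\to H^{\mathrm{im}}$ is proper with fibres of constant dimension $g_0$ the flat locus admits no proper specialization lowering the Hilbert polynomial, so $H$ is closed. This yields the first assertion. For the second, over $H$ the closed subgroup scheme $K:=K^{\mathrm{im}}|_H\subset\mathbf{Pic}^0_{\mathcal{U}_H/H}$ is flat with abelian-variety fibres, hence an Abelian subscheme, and the fppf quotient $Y:=\mathbf{Pic}^0_{\mathcal{U}_H/H}/K$ exists as a proper smooth group algebraic space with abelian-variety fibres; by Raynaud's theorem it is a scheme, indeed an Abelian scheme over $H$, and it is projective because it is a quotient of the projective $H$-scheme $\mathbf{Pic}^0_{\mathcal{U}_H/H}$ carrying the descent of a $K$-invariant relatively ample line bundle (cf.\ \cite[Section 6]{GIT}). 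As $H$ is cut out by a $G$-invariant condition, $G$ acts compatibly on $K$ and $\mathbf{Pic}^0_{\mathcal{U}_H/H}$, so the action descends to $Y$; and the inclusion $K\subset\ker(\mathbf{Pic}^0_{\mathcal{U}_H/H}\to Y^{\mathrm{im}})$ built into $K^{\mathrm{im}}=\mathbf{Pic}^0\times_{Y^{\mathrm{im}},s^{\mathrm{im}}}H^{\mathrm{im}}$ produces a finite bijective morphism $Y\to Y^{\mathrm{im}}$, which is the desired homeomorphism since $H^{\mathrm{im}}_{\mathrm{red}}\subset H$ gives $|H|=|H^{\mathrm{im}}|$.

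The main obstacle I expect is the middle step: converting the constancy of fibre dimension (Proposition \ref{prop--aut--dimension}) into honest flatness of $K^{\mathrm{im}}$ over the reduced base $H^{\mathrm{im}}_{\mathrm{red}}$. The mechanism is local freeness of the relative Lie algebra together with the homogeneity of the group scheme, and care is needed to ensure that Koll\'ar's stratification produces a \emph{closed} (not merely locally closed) subscheme $H$; this is where the fibres being abelian varieties of a single fixed dimension, with their monomial Hilbert polynomials, is used decisively.
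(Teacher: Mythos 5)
Your overall skeleton (fibrewise identification of $K^{\mathrm{im}}_m$ as an abelian variety of dimension $\dim\mathrm{Aut}_{X_m}$, constancy via Proposition \ref{prop--aut--dimension}, flatness over the reduced base, a flattening stratification producing a closed $H$, then the quotient as an algebraic space plus Raynaud/Faltings--Chai) is the same as the paper's, but your mechanism for the crucial middle step has a genuine gap. You claim that local freeness of $e^*\Omega^1_{G_0/H^{\mathrm{im}}_{\mathrm{red}}}$ plus translation-invariance of the non-smooth locus forces smoothness, because ``constancy of the fibre dimension rules out its being an entire fibre.'' That last implication is a non sequitur: since the fibres are group schemes in characteristic zero they are automatically smooth, so non-smoothness of the \emph{morphism} along an entire fibre is caused purely by non-flatness (e.g.\ torsion or embedded structure in $\mathcal{O}_{G_0}$ along a fibre that dies in the fibre itself), and such a failure is invisible both to the fibre dimension and to the relative sheaf of differentials: $\Omega^1_{G_0/S}\cong\pi^*e^*\Omega^1_{G_0/S}$ can be locally free of rank $g_0$ while the map is non-flat, exactly as an unramified morphism need not be \'etale. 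Ruling this out genuinely requires properness and connectedness: the paper connects any two points of $H^{\mathrm{im}}_{\mathrm{red}}$ by a curve $C$, observes that the reduced total space of $K^{\mathrm{im}}\times_{H^{\mathrm{im}}}C$ is flat over the smooth curve $C$ and has the same fibres (the fibres being reduced), so the Hilbert polynomial with respect to a relatively ample $A'$ is constant, and then invokes the criterion that over a \emph{reduced} Noetherian base constancy of the Hilbert polynomial implies flatness (\cite[Proposition 2.1.2]{HL}); flat plus smooth fibres then gives smoothness, and \cite[Theorem 5.13]{FGA} gives the (automatically closed, by full support) subscheme $H$. Your Lie-algebra argument cannot substitute for this Hilbert-polynomial counting.

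A second, smaller flaw: your projectivity argument for $Y=\mathbf{Pic}^0_{\mathcal{U}_H/H}/K$ via ``descent of a $K$-invariant relatively ample line bundle'' does not work, because translation by a positive-dimensional abelian subscheme $K$ never preserves an ample class (the stabilizer of the class of an ample bundle under translation is the finite kernel of the polarization map $\Lambda(L)$), so no such $K$-linearized ample bundle exists; and since $H$ need not be normal, one cannot appeal to Raynaud's projectivity of abelian schemes over normal bases either. The paper instead proves projectivity of $Y^{\mathrm{im}}\to H^{\mathrm{im}}$ by applying Corollary \ref{cor--partial--positivity--of--cm} (ampleness of the CM line bundle on proper subspaces of the K-moduli) together with \cite[Tags 0D36 and 0D3A]{Stacks} to the proper morphism $p_2$, and then transfers this to $Y$ through the finite morphism $Y\to Y^{\mathrm{im}}$; this use of CM-line-bundle positivity is an input your proposal is missing.
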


\begin{proof}
    We claim that $\mathbf{Pic}^0_{\mathcal{U}_{H^\mathrm{im}}/H^\mathrm{im}}$ acts on $Y^\mathrm{im}$ over $H^\mathrm{im}$.
Indeed, it is easy to check that $h$ is naturally $\mathbf{Pic}^0_{\mathcal{U}_{H^\mathrm{im}}/H^\mathrm{im}}$-equivariant, where $\mathbf{Pic}^0_{\mathcal{U}_{H^\mathrm{im}}/H^\mathrm{im}}$ acts on $\mathcal{R}$ on the first factor.
By \cite[III, Proposition 9.3]{Ha}, we see that $Y^{\mathrm{im}}$ also admits a natural $\mathbf{Pic}^0_{\mathcal{U}_{H^{\mathrm{im}}}/H^{\mathrm{im}}}$-action over $H^{\mathrm{im}}$ such that $h$ is $\mathbf{Pic}^0_{\mathcal{U}_{H^{\mathrm{im}}}/H^{\mathrm{im}}}$-equivariant. 
Similarly, it is easy to see that $Y^{\mathrm{im}}$ also admits a natural action of $G$.
Therefore, $K^{\mathrm{im}}$ is the stabilizer group scheme of $s^{\mathrm{im}}$ and admits a natural $G$-action.

Next, we will show that $K^{\mathrm{im}}\times_{H^{\mathrm{im}}}H_{\mathrm{red}}^{\mathrm{im}}$ is smooth over $H_{\mathrm{red}}^{\mathrm{im}}$.
Since we are working over an algebraically closed field $\mathbbm{k}$ of characteristic zero, 
any group scheme over $\mathbbm{k}$ is smooth (cf.~\cite[Section 11, Theorem]{Ab}).
Thus, we see that any geometric fiber of $K^{\mathrm{im}}\times_{H^{\mathrm{im}}}H_{\mathrm{red}}^{\mathrm{im}}$ over $H_{\mathrm{red}}^{\mathrm{im}}$ is smooth, connected, proper, and of the same dimension by Proposition \ref{prop--aut--dimension} and construction.
Fix a relatively ample line bundle $A'$ on $K^{\mathrm{im}}\times_{H^{\mathrm{im}}}H_{\mathrm{red}}^{\mathrm{im}}$ over $H_{\mathrm{red}}^{\mathrm{im}}$.
For any morphism $C\to H^{\mathrm{im}}_{\mathrm{red}}$ from a smooth curve, consider $K^{\mathrm{im}}\times_{H^{\mathrm{im}}}C$.
We note that the reduced structure of $K^{\mathrm{im}}\times_{H^{\mathrm{im}}}C$ is a variety projective over $C$ such that any fiber over $C$ is smooth.
By \cite[III, Theorem 9.11]{Ha}, we see that any geometric fiber over $C$ has the same Hilbert polynomial with respect to $A'$.
By the fact that we can find a morphism $C\to H_{\mathrm{red}}$ from a connected curve passing through two points (cf.~\cite[Section 6, Lemma]{Ab}) and \cite[Proposition 2.1.2]{HL}, $K^{\mathrm{im}}\times_{H^{\mathrm{im}}}H_{\mathrm{red}}^{\mathrm{im}}$ is smooth over $H_{\mathrm{red}}^{\mathrm{im}}$.
This shows the claims in this paragraph.


By \cite[Theorem 5.13]{FGA} and what we have shown, there exists a closed subscheme $H$ such that $H$ contains $H^{\mathrm{im}}_{\mathrm{red}}$ and for any morphism $g\colon T\to H^{\mathrm{im}}$, $K^{\mathrm{im}}\times_{H^{\mathrm{im}}}T$ is flat if and only if $g$ factors through $H$.
Since any geometric fiber is smooth and connected, $K^{\mathrm{im}}\times_{H^{\mathrm{im}}}T$ is an Abelian scheme over $T$ in the above situation.
Consider an Artin stack $[\mathbf{Pic}^0_{\mathcal{U}_{H}/H}/K]$ (cf.~\cite[Example 8.1.12]{Ols}).
By \cite[Corollary 8.3.5]{Ols}, $[\mathbf{Pic}^0_{\mathcal{U}_{H}/H}/K]$ is an algebraic space that admits a quotient group structure. We write it as $Y:=\mathbf{Pic}^0_{\mathcal{U}_{H}/H}/K$.
By \cite[Theorem 1.9]{FC}, $Y$ is an Abelian scheme over $H$.
Since $K$ and $\mathbf{Pic}^0_{\mathcal{U}_{H}/H}$ admit natural $G$-actions, so does $Y$.
By construction of $Y$, it is easy to see that there is a canonical homeomorphism to $Y^{\mathrm{im}}$.
Applying Corollary \ref{cor--partial--positivity--of--cm} and \cite[Tags 0D36 and 0D3A]{Stacks} to $p_2$, we see that $Y^\mathrm{im}\to H^{\mathrm{im}}$ is projective. Therefore, $Y$ is also projective over $H$.
We complete the proof.
\end{proof}

Let $\mathcal{M}:=[H/G]$, $\widehat{Y}:=\mathbf{Pic}^0_{Y/H}$ and $M$ be the coarse moduli space of $\mathcal{M}$. 
Let $X$ denote $\mathbf{Pic}^0_{\mathcal{U}_H/H}$. 
We note that $\widehat{X}:=\mathbf{Pic}^0_{X/H}$ is also an Abelian scheme projective over $H$.
Fix a relatively ample line bundle $L_{\widehat{X}}$ on $\widehat{X}$ over $H$.
Let $m\colon \widehat{X}\times_H\widehat{X}\to \widehat{X}$ be the multiplication morphism and $p_1,p_2\colon \widehat{X}\times_H\widehat{X}\to \widehat{X}$ the first and second projections.
Regarding $m^*L_{\widehat{X}}\otimes p_1^*L_{\widehat{X}}^{\otimes-1}\otimes p_2^*L_{\widehat{X}}^{\otimes-1}$ as a line bundle over $\widehat{X}$, we see that there exists a morphism $ \widehat{X}\to X$.
This is a finite \'etale morphism by \cite[(20.G)]{Mat} (cf.~\cite[Section 6.2]{GIT}).
Let $\Lambda(L_{\widehat{X}})$ denote this morphism.
We note that  $\widehat{Y}$ and $\widehat{X}$ also admit natural $G$-actions.
Then, the following holds.
\begin{lem}\label{lem--G-equiv}
    $\Lambda(L_{\widehat{X}})$ is $G$-equivariant.
\end{lem}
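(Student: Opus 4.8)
The plan is to recognize $\Lambda(L_{\widehat{X}})$ as the polarization homomorphism of the abelian scheme $\widehat{X}/H$ and then to deduce its $G$-equivariance from the rigidity of homomorphisms of abelian schemes together with the connectedness of $G$. First I would make the identification explicit: regarding $m^*L_{\widehat{X}}\otimes p_1^*L_{\widehat{X}}^{\otimes-1}\otimes p_2^*L_{\widehat{X}}^{\otimes-1}$ as a family over the first factor, its restriction to $\{a\}\times\widehat{X}$ is $t_a^*L_{\widehat{X}}\otimes L_{\widehat{X}}^{\otimes-1}\in\mathbf{Pic}^0_{\widehat{X}/H}$, where $t_a$ denotes translation by $a$. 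Hence $\Lambda(L_{\widehat{X}})$ is the homomorphism $\phi_{L_{\widehat{X}}}\colon\widehat{X}\to\mathbf{Pic}^0_{\widehat{X}/H}\cong X$, the last isomorphism being the canonical biduality of abelian schemes; in particular $\Lambda(L_{\widehat{X}})$ is a homomorphism of abelian schemes over $H$ covering $\mathrm{id}_H$.

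Next I would fix the two induced actions. Write $\rho_g\colon X\to X$ for the automorphism of $X=\mathbf{Pic}^0_{\mathcal{U}_H/H}$ induced by $g\in G$, and $\hat\rho_g\colon\widehat{X}\to\widehat{X}$ for the induced automorphism of $\widehat{X}=\mathbf{Pic}^0_{X/H}$; both respect the abelian-scheme group structures and both cover the base automorphism $g_H\colon H\to H$. For each $g$ set $c_g:=\rho_g^{-1}\circ\Lambda(L_{\widehat{X}})\circ\hat\rho_g$. Since $\hat\rho_g$ covers $g_H$, $\Lambda(L_{\widehat{X}})$ covers $\mathrm{id}_H$, and $\rho_g^{-1}$ covers $g_H^{-1}$, the composite $c_g$ covers $g_H^{-1}\circ\mathrm{id}_H\circ g_H=\mathrm{id}_H$ and is therefore again a homomorphism $\widehat{X}\to X$ of abelian schemes over $H$, with $c_e=\Lambda(L_{\widehat{X}})$. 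The assertion that $\Lambda(L_{\widehat{X}})$ is $G$-equivariant, i.e.\ $\rho_g\circ\Lambda(L_{\widehat{X}})=\Lambda(L_{\widehat{X}})\circ\hat\rho_g$, is exactly the statement that $c_g=\Lambda(L_{\widehat{X}})$ for all $g$.

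The heart of the argument is then a rigidity step. Using the action morphisms $G\times\widehat{X}\to\widehat{X}$ and $G\times X\to X$, the assignment $(g,x)\mapsto c_g(x)$ is a morphism $G\times\widehat{X}\to X$ over $H$, giving a homomorphism of abelian schemes $(G\times\widehat{X})\to(G\times X)$ over $G\times H$ and hence a section $s\colon G\times H\to\underline{\mathrm{Hom}}_H(\widehat{X},X)$ over $H$, with $s|_{\{e\}\times H}$ equal to the section $\Lambda(L_{\widehat{X}})$. By the rigidity of homomorphisms of abelian schemes, $\underline{\mathrm{Hom}}_H(\widehat{X},X)$ is unramified and separated over $H$ (cf.\ \cite{FC,GIT}); therefore the locus in $G\times H$ on which $s$ agrees with the constant section $\Lambda(L_{\widehat{X}})$ is both open and closed. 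This locus contains $\{e\}\times H$, so it meets every fibre $G\times\{h\}$ in a nonempty open-and-closed subset of $G$ containing the identity; as $G$ is connected this forces it to be all of $G\times H$. Hence $c_g=\Lambda(L_{\widehat{X}})$ for every $g$, which is the desired equivariance.

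I expect the main obstacle to be the bookkeeping of the base automorphism $g_H$: because $G$ acts on $H$ and not merely on the fibres, one must verify carefully that the conjugate $c_g$ genuinely covers $\mathrm{id}_H$ and that the $G$-family of such homomorphisms is a morphism into the relative Hom-scheme, before connectedness can be invoked. As a conceptual cross-check I would mention the Néron–Severi route: the polarization homomorphism $\phi_L$ depends only on the class of $L$ in the relative Néron–Severi group, and since the component group $\mathbf{Pic}/\mathbf{Pic}^0$ is \'etale over $H$ while $G$ is connected, $\hat\rho_g^*L_{\widehat{X}}$ lies in the same relative Néron–Severi class as $L_{\widehat{X}}$, so $\phi_{\hat\rho_g^*L_{\widehat{X}}}=\phi_{L_{\widehat{X}}}$; combining this with the functoriality $\phi_{\hat\rho_g^*L_{\widehat{X}}}=\widehat{\hat\rho_g}\circ\phi_{L_{\widehat{X}}}\circ\hat\rho_g$ and the $G$-equivariance of biduality (under which $\widehat{\hat\rho_g}$ corresponds to $\rho_g^{-1}$) again yields $\rho_g\circ\Lambda(L_{\widehat{X}})=\Lambda(L_{\widehat{X}})\circ\hat\rho_g$.
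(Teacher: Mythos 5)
Your proposal is correct and runs on the same engine as the paper's own proof: a rigidity argument for abelian schemes over $G\times H$ anchored at $g=e$ and propagated by connectedness of $G$ — the paper applies the rigidity lemma \cite[Lemma 6.1]{GIT} directly to the difference $a_X\circ(\mathrm{id}_G\times\Lambda(L_{\widehat{X}}))-(\mathrm{id}_G\times\Lambda(L_{\widehat{X}}))\circ a_{\widehat{X}}$, which contracts the fibers over $\{e\}\times H$ and preserves the zero sections, hence factors through the zero section. Your conjugated family $c_g$ together with the open-and-closed equalizer in the unramified, separated Hom scheme is a repackaging of exactly this difference-plus-rigidity step (and correctly handles possible non-reducedness of $H$), so the two arguments are essentially the same.
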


\begin{proof}
    Let $a_{\widehat{X}}\colon G\times\widehat{X}\to G\times \widehat{X}$ and $a_X\colon G\times X\to G\times X$ be the product of the action morphisms and the first projections.
    Let $\pi_{\widehat{X}}\colon G\times\widehat{X}\to G\times H$ and $\pi_{X}\colon G\times {X}\to G\times H$ be the canonical projections.
    We set $\xi\colon G\times H\to G\times H$ as the morphism that is obtained by the action morphism $G\times H\to H$ and the first projection.
    Then, it is easy to see that $\xi\circ \pi_{\widehat{X}}=\pi_X\circ a_{\widehat{X}}$.
    Let $\mathrm{id}_G\times\Lambda(L_{\widehat{X}})\colon G\times \widehat{X}\to G\times X$ be the morphism induced by $\Lambda(L_{\widehat{X}})$.
    Here, it suffices to show that $a_X\circ(\mathrm{id}_G\times\Lambda(L_{\widehat{X}}))=(\mathrm{id}_G\times\Lambda(L_{\widehat{X}}))\circ a_{\widehat{X}}$.
    Via $\xi\circ \pi_{\widehat{X}}$, we can regard $a_X\circ(\mathrm{id}_G\times\Lambda(L_{\widehat{X}}))$ and $(\mathrm{id}_G\times\Lambda(L_{\widehat{X}}))\circ a_{\widehat{X}}$ as morphisms of schemes proper and flat over $G\times H$.
    Since $G\times{X}$ is an Abelian scheme over $G\times H$, it is enough to show that $a_X\circ(\mathrm{id}_G\times\Lambda(L_{\widehat{X}}))-(\mathrm{id}_G\times\Lambda(L_{\widehat{X}}))\circ a_{\widehat{X}}$ factors through the zero section.
    We note that $a_X\circ(\mathrm{id}_G\times\Lambda(L_{\widehat{X}}))-(\mathrm{id}_G\times\Lambda(L_{\widehat{X}}))\circ a_{\widehat{X}}$ maps $\pi_{\widehat{X}}^{-1}(\{e_G\}\times H)$ into the zero section, where $e_G$ is the image of the identity section $\mathbf{0}_{X}\colon H\to X$.
    Let $\mathbf{0}_{\widehat{X}}\colon H\to\widehat{X}$ be the zero section.
    Then, $a_X\circ(\mathrm{id}_G\times\Lambda(L_{\widehat{X}}))-(\mathrm{id}_G\times\Lambda(L_{\widehat{X}}))\circ a_{\widehat{X}}$ maps $(\mathbf{0}_{\widehat{X}})\times\{h\}$ to $(\mathbf{0}_{X})\times\{g\cdot h\}$ for any closed points $g\in G$ and $h\in H$.
    By \cite[Lemma 6.1]{GIT}, $a_X\circ(\mathrm{id}_G\times\Lambda(L_{\widehat{X}}))-(\mathrm{id}_G\times\Lambda(L_{\widehat{X}}))\circ a_{\widehat{X}}$ factors through the zero section.
    We are done.
\end{proof}

We note that the natural morphism $\iota_{\widehat{Y}}\colon \widehat{Y}\hookrightarrow \widehat{X}$ is a closed immersion.
It is also easy to see that $p\circ \Lambda(L_{\widehat{X}})\circ\iota_{\widehat{Y}}=\Lambda(L_{\widehat{X}}|_{\widehat{Y}})$ and hence it is proper and \'etale.

\begin{lem}\label{lem--im--str}
Let $\widehat{\widehat{Y}}$ be the image structure of $\Lambda(L_{\widehat{X}})\circ\iota_{\widehat{Y}}$.
Then, $\widehat{\widehat{Y}}$ is an Abelian scheme over $H$ and $\widehat{Y}\to\widehat{\widehat{Y}}$ is an \'etale and proper morphism.
\end{lem}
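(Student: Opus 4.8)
The plan is to identify the morphism $g:=\Lambda(L_{\widehat{X}})\circ\iota_{\widehat{Y}}\colon \widehat{Y}\to X$ as a finite homomorphism of Abelian schemes over $H$ and to realise its scheme-theoretic image as a quotient of $\widehat{Y}$ by a finite étale subgroup scheme. First I would record that $g$ is a homomorphism of Abelian schemes over $H$, being the composite of the closed immersion $\iota_{\widehat{Y}}=p^\vee$ (dual of the quotient $p\colon X\to Y=X/K$) with the polarisation isogeny $\Lambda(L_{\widehat{X}})\colon \widehat{X}\to X$, which was shown before the lemma to be finite étale. Since $\iota_{\widehat{Y}}$ is a closed immersion and $\Lambda(L_{\widehat{X}})$ is finite, $g$ is finite; hence its scheme-theoretic image $\widehat{\widehat{Y}}$ is a closed subscheme of $X$ and $\widehat{Y}\to\widehat{\widehat{Y}}$ is finite and surjective. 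Moreover, as $\widehat{Y}$ is proper and $X$ is separated over $H$, the morphism $\widehat{Y}\to\widehat{\widehat{Y}}$ is automatically proper. Identifying $\iota_{\widehat{Y}}(\widehat{Y})$ with the Abelian subscheme $\ker q\subset\widehat{X}$, where $q\colon \widehat{X}\to Z:=\widehat{X}/\widehat{Y}$ is the quotient by this Abelian subscheme (an Abelian scheme, again by \cite[Theorem 1.9]{FC}), we get $N:=\ker g=\widehat{Y}\cap \ker(\Lambda(L_{\widehat{X}}))=\ker\!\big(q|_{\ker(\Lambda(L_{\widehat{X}}))}\big)$.

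The key step is to show that $N$ is finite étale over $H$; this is the delicate point, since $H$ may be nonreduced and a naive fibrewise length count does not suffice. Write $E:=\ker(\Lambda(L_{\widehat{X}}))$, the kernel of a finite étale isogeny, which is finite étale over $H$, and let $c:=q|_E\colon E\to Z$, so that $N=c^{-1}(0_Z)$. As $E$ is a finite commutative group scheme, it is annihilated by some $n\in\mathbb{Z}_{>0}$, whence $c$ factors through the $n$-torsion subscheme $Z[n]$, which is finite étale over $H$ because we are in characteristic zero. After passing to an étale cover $H'\to H$ trivialising $E$ as a disjoint union $\bigsqcup_{\gamma}H'$ indexed by its torsion sections, the zero section $0_Z(H')$ is open and closed in $Z[n]_{H'}$ (a section of a finite étale morphism is a clopen immersion), so each $c_\gamma^{-1}(0_Z(H'))$ is open and closed in $H'$ as the preimage of a clopen set under the continuous map $c_\gamma$. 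Therefore $N_{H'}=\bigsqcup_\gamma c_\gamma^{-1}(0_Z(H'))$ is open and closed in $E_{H'}$, and by descent $N$ is open and closed in $E$, hence finite étale over $H$.

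Finally I would invoke the quotient theory for Abelian schemes: by \cite[Theorem 1.9]{FC} the quotient $\widehat{Y}/N$ is an Abelian scheme and $\widehat{Y}\to\widehat{Y}/N$ is an isogeny, that is, finite étale. The induced homomorphism $\widehat{Y}/N\to X$ has trivial kernel, so it is a monomorphism, and it is proper because $\widehat{Y}/N$ is proper over $H$; hence it is a closed immersion, and its scheme-theoretic image coincides with $\widehat{\widehat{Y}}$. Consequently $\widehat{\widehat{Y}}\cong \widehat{Y}/N$ is an Abelian scheme over $H$, and $\widehat{Y}\to\widehat{\widehat{Y}}$, being the isogeny $\widehat{Y}\to\widehat{Y}/N$, is étale and proper, as desired. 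I expect the main obstacle to be precisely the flatness (equivalently, finite étaleness) of $N$ over the possibly nonreduced base $H$, which is why the argument is routed through the torsion subscheme $Z[n]$ and the clopen-section observation rather than through fibre dimensions.
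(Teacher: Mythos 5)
Your proof is correct and follows essentially the same strategy as the paper: the crux in both is to show that $\mathrm{Ker}(\Lambda(L_{\widehat{X}})\circ\iota_{\widehat{Y}})$ is a finite \'etale subgroup scheme of $\widehat{Y}$ by exhibiting it as an open and closed subscheme of a finite \'etale group scheme (so that no fibrewise dimension count over the possibly nonreduced $H$ is needed), and then to identify $\widehat{\widehat{Y}}$ with the quotient of $\widehat{Y}$ by this kernel, an Abelian scheme via \cite[Theorem 1.9]{FC}. The only differences are in bookkeeping: the paper embeds the kernel into $\mathrm{Ker}(\Lambda(L_{\widehat{X}}|_{\widehat{Y}}))=\mathrm{Ker}(p\circ\Lambda(L_{\widehat{X}})\circ\iota_{\widehat{Y}})$ and maps it via $\Lambda(L_{\widehat{X}})\circ\iota_{\widehat{Y}}$ into a finite \'etale (torsion) subgroup scheme of $X$, whereas you embed it into $\mathrm{Ker}(\Lambda(L_{\widehat{X}}))$ and map it via the auxiliary quotient $q\colon\widehat{X}\to\widehat{X}/\widehat{Y}$ into the $n$-torsion of that quotient; moreover your final identification of the scheme-theoretic image with $\widehat{Y}/N$ (trivial kernel, hence proper monomorphism, hence closed immersion) is spelled out more fully than in the paper, which simply asserts $\widehat{\widehat{Y}}=\widehat{Y}/\mathrm{Ker}(\Lambda(L_{\widehat{X}})\circ\iota_{\widehat{Y}})$.
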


\begin{proof}
Since $\Lambda(L_{\widehat{X}}|_{\widehat{Y}})$ is a finite \'etale group morphism, we see that $\mathrm{Ker}(\Lambda(L_{\widehat{X}}|_{\widehat{Y}}))$ is a group scheme \'etale over $H$.   
Consider $\mathrm{Ker}(\Lambda(L_{\widehat{X}})\circ\iota_{\widehat{Y}})$.
Then $\mathrm{Ker}(\Lambda(L_{\widehat{X}})\circ\iota_{\widehat{Y}})\subset \mathrm{Ker}(\Lambda(L_{\widehat{X}}|_{\widehat{Y}}))$.
We note that $(\Lambda(L_{\widehat{X}})\circ\iota_{\widehat{Y}})(\mathrm{Ker}(\Lambda(L_{\widehat{X}}|_{\widehat{Y}})))$ is contained in a group subscheme in $X$ finite and \'etale over $H$.
Therefore, we see that $\mathrm{Ker}(\Lambda(L_{\widehat{X}})\circ\iota_{\widehat{Y}})\to H$ is an open morphism.
We conclude that $\mathrm{Ker}(\Lambda(L_{\widehat{X}})\circ\iota_{\widehat{Y}})$ is a group scheme \'etale over $H$. 
Then, we note that $\widehat{\widehat{Y}}=\widehat{Y}/\mathrm{Ker}(\Lambda(L_{\widehat{X}})\circ\iota_{\widehat{Y}})$.
We complete the proof.
\end{proof}

\begin{de}\label{de--smaller--abelian--scheme}
Note that $\widehat{\widehat{Y}}$ admits a natural $G$-action inherited from $\mathbf{Pic}^0_{\mathcal{U}_H/H}$.
We set $\mathcal{Y}$ as the quotient stack $[\widehat{\widehat{Y}}/G]$. 
We note that the natural morphism $\mathcal{Y}\to \mathcal{M}$ is representable, proper and smooth.
\end{de}

\begin{lem}\label{lem--y--identify}
    Let $g_1,g_2\colon T\to \mathcal{M}$ be two morphisms from the same scheme and $(\mathcal{X}_1,\mathcal{L}_1)$ and $(\mathcal{X}_2,\mathcal{L}_2)$ the families corresponding to $g_1$ and $g_2$, respectively.
    Suppose that there exists an isomorphism $f\colon \mathcal{X}_1\to \mathcal{X}_2$ over $T$ and let $f^*\colon \mathbf{Pic}^0_{\mathcal{X}_2/T}\to \mathbf{Pic}^0_{\mathcal{X}_1/T}$ be the induced isomorphism.
    Suppose further that $f^*\mathcal{L}_2-\mathcal{L}_1$ is an element of $\mathbf{Pic}^0_{\mathcal{X}_1/T}(T)$.

    Then, $f^*$ maps $\mathcal{Y}\times_{\mathcal{M},g_2}T$ to $\mathcal{Y}\times_{\mathcal{M},g_1}T$.
\end{lem}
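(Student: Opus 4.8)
The plan is to reduce the assertion to a \emph{translation-invariance} property of the Abelian subscheme $\widehat{\widehat{Y}}$ and then verify that property by inspecting each step of its construction. First I would record the concrete shape of the objects in play: since $\mathcal{Y}=[\widehat{\widehat{Y}}/G]$ and $\mathcal{M}=[H/G]$, the pullback $\mathcal{Y}\times_{\mathcal{M},g_i}T$ is canonically the pullback of $\widehat{\widehat{Y}}\subset X=\mathbf{Pic}^0_{\mathcal{U}_H/H}$ along $g_i$, hence a closed Abelian subscheme $\widehat{\widehat{Y}}_i\subset \mathbf{Pic}^0_{\mathcal{X}_i/T}$; the claim is exactly $f^*(\widehat{\widehat{Y}}_2)=\widehat{\widehat{Y}}_1$. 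The reduction step is to set $\mathcal{L}_1':=f^*\mathcal{L}_2$, so that $f\colon(\mathcal{X}_1,\mathcal{L}_1')\to(\mathcal{X}_2,\mathcal{L}_2)$ is an isomorphism of polarized families. Then the induced $T$-points $g_1'$ and $g_2$ are $2$-isomorphic, $f^*$ realizes this $2$-isomorphism on Picard schemes, and by functoriality of the fibre product $f^*$ carries $\widehat{\widehat{Y}}_2$ isomorphically onto the subscheme $\widehat{\widehat{Y}}_1'$ attached to $(\mathcal{X}_1,\mathcal{L}_1')$. Thus it remains only to prove $\widehat{\widehat{Y}}_1'=\widehat{\widehat{Y}}_1$ inside $\mathbf{Pic}^0_{\mathcal{X}_1/T}$, where $\mu:=\mathcal{L}_1'-\mathcal{L}_1\in \mathbf{Pic}^0_{\mathcal{X}_1/T}(T)$.

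Next I would isolate the only polarization-dependent ingredient. The schemes $X=\mathbf{Pic}^0_{\mathcal{X}_1/T}$ and $\widehat{X}=\mathbf{Pic}^0_{X/T}$, the chosen relatively ample $L_{\widehat{X}}$, and the polarization isogeny $\Lambda(L_{\widehat{X}})\colon\widehat{X}\to X$ are all manifestly built without reference to $\mathcal{L}_1$. Since $\widehat{\widehat{Y}}=\Lambda(L_{\widehat{X}})(\widehat{Y})$ and $\widehat{Y}=\mathbf{Pic}^0_{Y/T}$ with $Y=X/K$, the whole construction depends on the polarization only through the Abelian subscheme $K$. Consequently it suffices to show that $K$ is unchanged when $\mathcal{L}_1$ is replaced by $\mathcal{L}_1+\mu$.

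For this I would identify $K$ explicitly. By its definition through the Stein factorization $Y^{\mathrm{im}}\to\mathcal{R}\times_{p_2}H$ and the stabilizer $s^{\mathrm{im}}$ of the zero section, $K$ is the connected component through the origin of $\{\lambda\in\mathbf{Pic}^0:(\mathcal{X}_1,\mathcal{L}_1+\lambda)\cong(\mathcal{X}_1,\mathcal{L}_1)\}$, which is precisely the image of the homomorphism $\mathrm{Aut}^0_{\mathcal{X}_1/T}\to\mathbf{Pic}^0_{\mathcal{X}_1/T}$, $\varphi\mapsto\varphi^*\mathcal{L}_1-\mathcal{L}_1$ (that this is a homomorphism with Abelian image follows from the theorem of the cube). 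For the shifted polarization the corresponding homomorphism is $\varphi\mapsto\varphi^*(\mathcal{L}_1+\mu)-(\mathcal{L}_1+\mu)=(\varphi^*\mathcal{L}_1-\mathcal{L}_1)+(\varphi^*\mu-\mu)$. The decisive point is that the identity component $\mathrm{Aut}^0_{\mathcal{X}_1/T}$ acts trivially on $\mathbf{Pic}^0_{\mathcal{X}_1/T}$ by group automorphisms: a connected group maps into the (discrete) automorphism group scheme of an Abelian scheme, so fibrewise the action is trivial, and by rigidity it is trivial in the family. Hence $\varphi^*\mu=\mu$ for all $\varphi\in\mathrm{Aut}^0$, the two homomorphisms coincide, and $K$ — and therefore $\widehat{Y}$ and $\widehat{\widehat{Y}}$ — is unchanged. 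This yields $\widehat{\widehat{Y}}_1'=\widehat{\widehat{Y}}_1$ and finishes the proof.

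The hard part will be the bookkeeping in the relative and stacky setting rather than any single geometric idea: namely, matching the above description of $K$ as $\mathrm{im}(\mathrm{Aut}^0_{\mathcal{X}_1/T})$ with its actual definition over $H$ via the Stein factorization, controlling the relative identity component (its flatness and connectedness) and the passage between $\mathrm{Aut}^{Q,\mathrm{alg}}$ and $\mathrm{Aut}^0$, and checking that the $G$-equivariant formation of $\Lambda(L_{\widehat{X}})$ (Lemma \ref{lem--G-equiv}) is compatible with the pullbacks along $g_1,g_1',g_2$. The triviality of the $\mathrm{Aut}^0$-action on $\mathbf{Pic}^0$ is the conceptual crux but is standard once the relative identity component is pinned down.
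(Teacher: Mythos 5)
Your overall skeleton — reduce to comparing $(\mathcal{X}_1,\mathcal{L}_1)$ with $(\mathcal{X}_1,\mathcal{L}_1+\mu)$ for $\mu\in\mathbf{Pic}^0_{\mathcal{X}_1/T}(T)$, then show the construction of $\widehat{\widehat{Y}}$ is insensitive to this translation — is the same as the paper's, and your treatment of the $K$-invariance is a genuinely different and viable route: the paper identifies the Stein factorizations $Y_{x_1}\cong Y_{x_2}$ through the canonical identification of the fibres of the equivalence relation $\mathcal{R}$, whereas you identify $K$ with the image of $\mathrm{Aut}^0_{\mathcal{X}_1}\to\mathbf{Pic}^0_{\mathcal{X}_1}$, $\varphi\mapsto\varphi^*\mathcal{L}_1-\mathcal{L}_1$, and use that a connected group acts trivially on an Abelian variety by group automorphisms to conclude $\varphi^*\mu=\mu$. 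That rigidity argument is correct (though ``theorem of the cube'' is not the right citation for the homomorphism property; it is exactly this triviality that gives it), and the identification of $K$ with $\mathrm{im}(\mathrm{Aut}^0)$ can indeed be made to work, but it is more than bookkeeping: the full fibre of $h$ through the origin is only a finite union of translates of abelian subvarieties indexed by $\pi_0$ of $\mathrm{Aut}^{Q,\mathrm{alg}}$, and one needs Proposition \ref{prop--aut--dimension} together with finiteness of $\mathrm{Aut}(X,\mathcal{L})$ (the Deligne--Mumford property) to see that the component through the origin is exactly $\mathrm{im}(\mathrm{Aut}^0)$.

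The genuine gap is the sentence claiming that ``the chosen relatively ample $L_{\widehat{X}}$ and the polarization isogeny $\Lambda(L_{\widehat{X}})$ are manifestly built without reference to $\mathcal{L}_1$.'' They are not objects over $T$: $L_{\widehat{X}}$ is a single relatively ample line bundle fixed once over the atlas $H$ of $\mathcal{M}$, and $\mathcal{Y}\times_{\mathcal{M},g}T$ is by definition the pullback of the global subscheme $\widehat{\widehat{Y}}=\Lambda(L_{\widehat{X}})(\iota_{\widehat{Y}}(\widehat{Y}))$ along the classifying map $g$. The two maps $g_1,g_1'$ attached to $\mathcal{L}_1$ and $\mathcal{L}_1+\mu$ land at \emph{different} points of $H$; the pullbacks of $\widehat{X}$ are canonically the same scheme $\mathbf{Pic}^0_{\mathbf{Pic}^0_{\mathcal{X}_1/T}/T}$, but the pullbacks $g_1^*L_{\widehat{X}}$ and $g_1'^*L_{\widehat{X}}$ are a priori different line bundles on it. Since $\Lambda(L)$ depends on $L$ through its N\'eron--Severi class — and for an Abelian variety such as a product of elliptic curves, distinct polarizations really do send a fixed abelian subvariety $\widehat{Y}$ to distinct images — knowing that $K$, $Y$, $\widehat{Y}$ are unchanged does not finish the proof: you still must prove $\Lambda(g_1^*L_{\widehat{X}})=\Lambda(g_1'^*L_{\widehat{X}})$. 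This is exactly the step the paper's proof supplies and your proposal omits: it connects the two points of $H$ by a curve $C'$ lifting a path in the $\mathbf{Pic}^0$-direction, observes that $\widehat{X}\times_HC'\cong\widehat{X}_{x_1}\times C'$ is a constant family because the underlying $\mathcal{X}_1$ does not move, deduces that the two restrictions of $L_{\widehat{X}}$ are algebraically equivalent, and then invokes \cite[Proposition 6.1]{GIT} to conclude the two isogenies agree. Without this comparison your ``depends only on $K$'' claim fails, so the proposal as written is incomplete.
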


\begin{proof}
It suffices to deal with the case $T$ is a spectrum of an algebraically closed field.
Thus, we may assume that $T=\mathrm{Spec}\,\mathbbm{k}$.
First, we consider the case where $f^*\mathcal{L}_2\sim_T \mathcal{L}_1$.
Then $f^*$ maps $\mathcal{Y}\times_{\mathcal{M},g_2}T$ to $\mathcal{Y}\times_{\mathcal{M},g_1}T$ by Proposition \ref{prop--de-Y}, Lemmas \ref{lem--G-equiv} and \ref{lem--im--str}.

Next, we consider the general case.
We identify $\mathcal{X}_1$ and $\mathcal{X}_2$ by $f$.
By assumption, it is easy to see that there exists a family $(\mathcal{X}_1\times C,\mathcal{L})\to C$ corresponding to 
a morphism $h\colon C\to \mathcal{M}$ from an irreducible smooth affine curve $C$ with two closed points $c_1$ and $c_2$ such that $h(c_1)$ and $h(c_2)$ correspond to $(\mathcal{X}_1,\mathcal{L}_1)$ and $(\mathcal{X}_1,\mathcal{L}_2)$, respectively.
Since the canonical morphism $\pi_H\colon H\to \mathcal{M}$ is smooth, for any $c\in C$ there exists an \'etale morphism $\pi_{C'}\colon C'\to C$ with a closed point $c'\in C'$ such that $\pi_{C'}(c')=c$ and a morphism $h'\colon C'\to H$ such that $h\circ \pi_{C'}$ and $\pi_H\circ h'$ are isomorphic.  
Thus, by what we have shown in the first paragraph, we may assume that there exists an \'etale morphism $\pi_{C'}\colon C'\to C$ as above with two closed points $c'_1$ and $c'_2\in C'$ such that $C'$ is irreducible and that $\pi_{C'}(c'_1)=c_1$ and $\pi_{C'}(c'_2)=c_2$.
Let $h'(c_1')$ and $h'(c_2')$ be $x_1$ and $x_2\in H$, respectively.

In this paragraph, we show that $Y_{x_1}$ and $Y_{x_2}$ are canonically isomorphic.
Note that there exists a canonical isomorphism $\mathcal{R}\times_{p_2,M}\mathrm{Spec}(\kappa(x_1))\cong \mathcal{R}\times_{p_2,M}\mathrm{Spec}(\kappa(x_2))$.
Using this, we may identify the canonical morphism $\mathbf{Pic}^0_{\mathcal{X}_{1}}\to \mathcal{R}\times_{p_2,M}\mathrm{Spec}(\kappa(x_1))$ with the morphism $\mathbf{Pic}^0_{\mathcal{X}_{1}}\to \mathcal{R}\times_{p_2,M}\mathrm{Spec}(\kappa(x_2))$.
Because $Y_{x_1}$ and $Y_{x_2}$ are characterized by the Stein factorizations of $\mathbf{Pic}^0_{\mathcal{X}_{1}}\to \mathcal{R}\times_{p_2,M}\mathrm{Spec}(\kappa(x_1))$ and $\mathbf{Pic}^0_{\mathcal{X}_{1}}\to \mathcal{R}\times_{p_2,M}\mathrm{Spec}(\kappa(x_2))$, respectively, we obtain the claim.

To show the assertion of this lemma, it suffices to show that $f^*$ induces $\widehat{\widehat{Y}}_{x_1}\cong\widehat{\widehat{Y}}_{x_2}$.
Note that $\widehat{\widehat{Y}}_{x_1}$ and $\widehat{\widehat{Y}}_{x_2}$ are the images of $\Lambda(L_{\widehat{X}})\circ\iota_{\widehat{Y}}|_{\widehat{Y}_{x_1}}$ and $\Lambda(L_{\widehat{X}})\circ\iota_{\widehat{Y}}|_{\widehat{Y}_{x_2}}$, respectively. 
By the canonical isomorphism $Y_{x_1}\cong Y_{x_2}$, it is easy to see that $\widehat{Y}_{x_1}=\widehat{Y}_{x_2}$ as subvarieties of $\mathbf{Pic}^0_{\mathbf{Pic}^0_{\mathcal{X}_1}}=\widehat{X}_{x_1}=\widehat{X}_{x_2}$.
Thus, it suffices to show that $\Lambda(L_{\widehat{X}}|_{\widehat{X}_{x_1}})=\Lambda(L_{\widehat{X}}|_{\widehat{X}_{x_2}})$.
By the construction of $h'$, we see that $\widehat{X}\times_{H}C'\cong \widehat{X}_{x_1}\times C'$.
Therefore $L_{\widehat{X}}|_{\widehat{X}_{x_1}}$ and $L_{\widehat{X}}|_{\widehat{X}_{x_2}}$ are algebraically equivalent.
By \cite[Proposition 6.1]{GIT}, we see that $\Lambda(L_{\widehat{X}}|_{\widehat{X}_{x_1}})=\Lambda(L_{\widehat{X}}|_{\widehat{X}_{x_2}})$.
We complete the proof.
\end{proof}

We define the following prestack $\mathcal{N}^{\mathrm{pre}}$ as follows.

\begin{de}\label{de--stack--N}
For any scheme $T$, we set the collection of objects $\mathcal{N}^{\mathrm{pre}}(T)$ as the collection of $\mathcal{Y}\times_{\mathcal{M},g}T$ for some morphism $g\colon T\to\mathcal{M}$ that corresponds to a family of polarized varieties $(\mathcal{X},\mathcal{L})\to T$.
Note that there exists a natural morphism $\Phi_g\colon \mathcal{Y}\times_{\mathcal{M},g}T\to\mathcal{M}$ such that
for any $T$-scheme $S$, $\mathcal{Y}\times_{\mathcal{M},g}T(S)\to\mathcal{M}(S)$ maps $\mathcal{L}'\in\mathcal{Y}\times_{\mathcal{M},g}T(S)$ to $(\mathcal{X}_S,\mathcal{L}_S\otimes\mathcal{L}')$.
We also set the collection of isomorphisms $\mathbf{Isom}^{\mathcal{N}^{\mathrm{pre}}}_T(\mathcal{Y}\times_{\mathcal{M},g_1}T,\mathcal{Y}\times_{\mathcal{M},g_2}T)$ of two objects $\mathcal{Y}\times_{\mathcal{M},g_1}T$ and $\mathcal{Y}\times_{\mathcal{M},g_2}T$ for some $g_1,g_2\colon T\to\mathcal{M}$ as follows.
Suppose that $g_i$ corresponds to $(\mathcal{X}_i,\mathcal{L}_i)\to T$ for $i=1,2$.
A pair of isomorphisms $\sigma\colon\mathcal{Y}\times_{\mathcal{M},g_1}T\to \mathcal{Y}\times_{\mathcal{M},g_2}T $ over $\mathcal{M}\times T$ and $f\colon \mathcal{X}_1\to\mathcal{X}_2$ over $T$ belongs to $\mathbf{Isom}^{\mathcal{N}^{\mathrm{pre}}}_T(\mathcal{Y}\times_{\mathcal{M},g_1}T,\mathcal{Y}\times_{\mathcal{M},g_2}T)$ if the following hold.

\begin{itemize}
\item Using $f^*\colon \mathcal{Y}\times_{\mathcal{M},g_2}T\to \mathcal{Y}\times_{\mathcal{M},g_1}T$, which is the restriction of $f^*$ as in Lemma \ref{lem--y--identify}, we can identify $\mathcal{Y}\times_{\mathcal{M},g_2}T$ with a trivial principal $\mathcal{Y}\times_{\mathcal{M},g_1}T$-fiber bundle by Lemma \ref{lem--y--identify}.
Then $\sigma$ is an isomorphism of principal $\mathcal{Y}\times_{\mathcal{M},g_1}T$-fiber bundles. 
    \item Let $\iota\colon T\to \mathcal{Y}\times_{\mathcal{M},g_1}T$ be the zero section. 
     Then $(\mathcal{X}_2,\mathcal{L}_2)$ is the family corresponding to the composition of $\sigma\circ\iota$ and the natural morphism $\Phi_{g_2}\colon\mathcal{Y}\times_{\mathcal{M},g_2}T\to\mathcal{M}$.
\end{itemize}
Note that a composition of two morphisms is defined.
Then $\mathcal{N}^{\mathrm{pre}}$ is well defined as a prestack.
\end{de}

We set $\mathcal{N}$ as the stackification of $\mathcal{N}^{\mathrm{pre}}$ (cf.~\cite[Theorem 4.6.5]{Ols}) with respect to the big \'etale topology and let $\xi\colon \mathcal{M}\to\mathcal{N}$ be the canonical forgetful morphism of stacks.

\begin{prop}\label{prop--representability--quotient--morph}
    $\mathcal{N}$ is an Artin stack of finite type over $\mathbbm{k}$ such that $\mathbf{Isom}^{\mathcal{N}}_T(u_1,u_2)$ is a separated algebraic space over $T$ for any $u_1,u_2\colon T\to \mathcal{N}$, where $T$ is an arbitrary scheme, and $\xi$ is representable by algebraic spaces, proper, surjective, and smooth.
\end{prop}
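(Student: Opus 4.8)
The plan is to establish the assertion of Proposition~\ref{prop--representability--quotient--morph} by verifying the defining criteria of an Artin stack of finite type, using the concrete presentation of $\mathcal{N}^{\mathrm{pre}}$ as a quotient-type prestack built from $\mathcal{M}=[H/G]$ and the projective Abelian scheme $\widehat{\widehat{Y}}$ from Lemma~\ref{lem--im--str}. The essential structural observation is that $\mathcal{N}$ is, morally, the quotient $[\mathcal{M}/\mathcal{Y}]$ where $\mathcal{Y}=[\widehat{\widehat{Y}}/G]$ acts on $\mathcal{M}$ by translating the polarization inside its numerical equivalence class; the morphism $\xi\colon\mathcal{M}\to\mathcal{N}$ is the associated quotient map. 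So the first step is to make this identification precise: I would show that $\xi$ presents $\mathcal{M}$ as a $\mathcal{Y}$-torsor (in the \'etale topology) over $\mathcal{N}$, which is exactly what the prestack structure in Definition~\ref{de--stack--N} encodes — an object of $\mathcal{N}^{\mathrm{pre}}(T)$ is a principal $\mathcal{Y}\times_{\mathcal{M},g}T$-bundle, and $\mathbf{Isom}$ is built so that the fiber of $\xi$ over such an object recovers $\mathcal{M}$.

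\emph{First I would} analyze the diagonal, i.e.\ $\mathbf{Isom}^{\mathcal{N}}_T(u_1,u_2)$ for two maps $u_1,u_2\colon T\to\mathcal{N}$. After an \'etale cover of $T$ we may lift $u_1,u_2$ to maps $g_1,g_2\colon T\to\mathcal{M}$, so it suffices to describe isomorphisms in $\mathcal{N}^{\mathrm{pre}}$ between $\mathcal{Y}\times_{\mathcal{M},g_1}T$ and $\mathcal{Y}\times_{\mathcal{M},g_2}T$. By the two conditions in Definition~\ref{de--stack--N}, such an isomorphism is the data of an isomorphism $f\colon\mathcal{X}_1\to\mathcal{X}_2$ of the underlying families together with a torsor isomorphism $\sigma$ compatible with the zero section; here Lemma~\ref{lem--y--identify} is the crucial input, since it guarantees that $f^*$ carries $\mathcal{Y}\times_{\mathcal{M},g_2}T$ to $\mathcal{Y}\times_{\mathcal{M},g_1}T$ whenever $f^*\mathcal{L}_2-\mathcal{L}_1$ is numerically trivial, making the comparison well defined. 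The space of such $f$ is a locally closed subspace of the relative isomorphism space $\mathbf{Isom}_T(\mathcal{X}_1,\mathcal{X}_2)$ cut out by the numerical-triviality condition on polarizations (which is representable by the openness/closedness of $\mathbf{W}^{\mathbb{Q}}\mathbf{Pic}^\tau$ inside $\mathbf{W}^{\mathbb{Q}}\mathbf{Pic}$, Definition~\ref{de--mu--i} and Corollary~\ref{cor--mu_i-proper}), and the choice of $\sigma$ is a further torsor over $\widehat{\widehat{Y}}_T$; both are separated algebraic spaces over $T$ because $\mathbf{Isom}$ for the KSBA-type families is separated (this is the separatedness of $\mathcal{M}_{d,v,u,r,w}$, Theorem~\ref{thm--adiabatic--k-moduli}) and $\widehat{\widehat{Y}}$ is projective over $H$.

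\emph{Next} I would produce a smooth presentation. The canonical map $\mathcal{M}\to\mathcal{N}$ is itself a candidate atlas composed with the atlas $H\to\mathcal{M}$: since $H$ is a scheme and $H\to\mathcal{M}$ is smooth surjective, it suffices to show $\xi\colon\mathcal{M}\to\mathcal{N}$ is representable by algebraic spaces, smooth, proper and surjective, which is precisely the second half of the claim and simultaneously the tool for the first half. Representability of $\xi$ follows from the already-established representability of the diagonal together with the torsor description: the fiber product $\mathcal{M}\times_{\mathcal{N},u}T$ over a map $u\colon T\to\mathcal{N}$, after an \'etale cover lifting $u$ to $\mathcal{M}$, becomes $\widehat{\widehat{Y}}_T$-valued and hence an algebraic space, and its structure map to $T$ inherits the properties of $\widehat{\widehat{Y}}\to H$: proper and smooth (it is an Abelian scheme, Lemma~\ref{lem--im--str}), and surjective by construction. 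That $\mathcal{N}$ is then an Artin stack of finite type over $\mathbbm{k}$ follows from the standard criterion (e.g.\ \cite[Tag~04TK]{Stacks} / \cite[Theorem~8.3.3]{Ols}): we have exhibited a smooth surjective representable morphism from the scheme $H$ onto $\mathcal{N}$ (namely $H\to\mathcal{M}\xrightarrow{\xi}\mathcal{N}$, smooth as a composite of the smooth atlas $H\to\mathcal{M}$ and the smooth $\xi$), and the diagonal is representable by separated algebraic spaces by the $\mathbf{Isom}$ analysis; finiteness of type is inherited from that of $H$.

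\emph{The hard part will be} the diagonal/$\mathbf{Isom}$ step, specifically controlling the interaction between the isomorphism $f$ of the families and the induced map $f^*$ on the numerical Picard torsors, and checking that the gluing data survives stackification. The subtlety is that $\mathcal{N}^{\mathrm{pre}}$ is only a prestack and its isomorphisms are defined via the auxiliary choice of $f$; one must verify that after stackification with respect to the big \'etale topology the isomorphism functor becomes an \'etale sheaf and that Lemma~\ref{lem--y--identify}'s compatibility is preserved under the \'etale-local choices of liftings $g_i$ of $u_i$ (independence of the lift, up to the torsor action, is what makes $\mathbf{Isom}^{\mathcal{N}}$ well defined). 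I expect this to reduce, via descent, to a routine but delicate cocycle check analogous to the descent argument in Proposition~\ref{prop--sheafif-wpic}, together with the observation from Lemma~\ref{lem--G-equiv} that all constructions are $G$-equivariant so they descend from $H$ to $\mathcal{M}$. Once the diagonal is shown to be a separated algebraic space of finite type and $\xi$ is shown smooth, proper, surjective and representable, the conclusion that $\mathcal{N}$ is an Artin stack of finite type is immediate.
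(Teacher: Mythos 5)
Your proposal is correct and follows essentially the same route as the paper's proof: étale-locally lift $u_1,u_2$ to $\mathcal{M}$, realize $\mathbf{Isom}^{\mathcal{N}}_T(u_1,u_2)$ inside $\mathbf{Isom}_T(\mathcal{X}_1,\mathcal{X}_2)\times_T(\mathcal{Y}\times_{\mathcal{M},g_1}T)$ (the paper cuts out a closed subscheme and descends via \cite[Tag 0BGQ]{Stacks}), and then identify $\mathcal{M}\times_{\mathcal{N}}S\cong\mathcal{Y}\times_{\mathcal{M}}S$ after an étale cover of $S$, so that representability, properness, smoothness and surjectivity of $\xi$ all follow from the corresponding properties of $\mathcal{Y}\to\mathcal{M}$, with the Artin property then coming from the presentation $H\to\mathcal{M}\xrightarrow{\xi}\mathcal{N}$. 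Your extra discussion of the torsor picture and the stackification/cocycle check is a faithful elaboration of what the paper treats implicitly, not a different argument.
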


\begin{proof}
Let $u_1,u_2\colon T\to \mathcal{N}$ be morphisms from a scheme.
Replacing $T$ with its \'etale cover if necessary, we may assume that $u_1$ and $u_2$ correspond to $\mathcal{Y}\times_{\mathcal{M},g_1}T$ and $\mathcal{Y}\times_{\mathcal{M},g_2}T$ for some $g_1,g_2\colon T\to\mathcal{M}$.
By definition, it is easy to check that $\mathbf{Isom}^{\mathcal{N}}_T(u_1,u_2)$ is isomorphic to a closed subscheme of the scheme $\mathbf{Isom}_T(\mathcal{X}_1,\mathcal{X}_2)\times_T (\mathcal{Y}\times_{\mathcal{M},g_1}T)$ separated over $T$.
This and \cite[Tag 0BGQ]{Stacks} show that $\mathbf{Isom}^{\mathcal{N}}_T(u_1,u_2)$ is an algebraic space separated over $T$ in general.

Therefore, it suffices to deal with the latter assertion.
Consider a morphism $g\colon S\to\mathcal{N}$ from a scheme.
Here, we can replace $S$ with its \'etale cover freely.
By the definition of $\mathcal{N}$, we may assume that there exists a morphism $h\colon S\to\mathcal{M}$ such that $g=\xi\circ h$ by replacing $S$ with its \'etale cover.
Then it is easy to see that $\mathcal{M}\times_{\mathcal{N}}S\cong \mathcal{Y}\times_{\mathcal{M}}S$.
Since $\mathcal{Y}\times_{\mathcal{M}}S\to S$ is projective, smooth and surjective, we obtain the proof. 
\end{proof}

\begin{thm}\label{thm:N--DMstack}
    $\mathcal{N}$ is a separated Deligne-Mumford stack of finite type over $\mathbbm{k}$.
\end{thm}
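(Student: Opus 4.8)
The plan is to verify the two defining properties separately: that the diagonal of $\mathcal{N}$ is proper (separatedness) and that it is unramified (the Deligne--Mumford property). By Proposition \ref{prop--representability--quotient--morph} we already know that $\mathcal{N}$ is an Artin stack of finite type over $\mathbbm{k}$ whose $\mathbf{Isom}$-spaces are separated algebraic spaces, and that $\xi\colon\mathcal{M}\to\mathcal{N}$ is representable by algebraic spaces, proper, smooth and surjective. Since we work over a field of characteristic zero, every group scheme of finite type over a field is smooth, so a finite-type algebraic stack is Deligne--Mumford precisely when the automorphism group scheme $\underline{\mathrm{Aut}}_{\mathcal{N}}(\eta)$ of every geometric point $\eta$ is finite (reducedness being automatic), and it is separated precisely when its diagonal is proper. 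Thus the whole statement reduces to these two assertions.

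For separatedness I would argue by descent along $\xi$. The morphism $\xi$ is surjective and proper, hence universally closed, and $\mathcal{M}$ is separated (being a locally closed substack of the separated Deligne--Mumford stack $\mathcal{M}^{W,I,\mathrm{im}}_{d,v,u,r,w}$). Separatedness of the source then descends to the target along a universally closed surjection (the stack analogue of descent of separatedness, see \cite{Stacks,Ols}), giving that $\mathcal{N}$ is separated. Equivalently one can run the valuative criterion on $\mathbf{Isom}^{\mathcal{N}}_T$: by the proof of Proposition \ref{prop--representability--quotient--morph} this space is a closed subspace of $\mathbf{Isom}_T(\mathcal{X}_1,\mathcal{X}_2)\times_T(\mathcal{Y}\times_{\mathcal{M},g_1}T)$, and a given isomorphism over the generic point of a discrete valuation ring extends because the isomorphism $f$ of the underlying families extends by separatedness of $\mathcal{M}$, while the $\widehat{\widehat{Y}}$-component extends by properness of the abelian scheme $\widehat{\widehat{Y}}$ (Proposition \ref{prop--de-Y}).

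The crux is finiteness of $\underline{\mathrm{Aut}}_{\mathcal{N}}(\eta)$. Fix a geometric point $\eta$ and a lift $\tilde\eta=(X,A)$ along $\xi$. The embedding used above identifies $\underline{\mathrm{Aut}}_{\mathcal{N}}(\eta)$ with a closed subscheme of $\mathrm{Aut}(X)\times\widehat{\widehat{Y}}_\eta$; the zero-section condition in Definition \ref{de--stack--N} forces the $\widehat{\widehat{Y}}_\eta$-coordinate of any automorphism to be determined by its $f$-coordinate, so $\underline{\mathrm{Aut}}_{\mathcal{N}}(\eta)$ embeds into the group $\{f\in\mathrm{Aut}(X)\mid f^*A-A\text{ is numerically trivial}\}$. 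Now $\mathrm{Aut}^0(X)$ acts trivially on numerical classes, hence lands in this group and maps to $\mathbf{Pic}^0_{X}$ via $f\mapsto[f^*A-A]$; this is exactly the subgroup quotiented out in passing from $\mathbf{Pic}^0$ to $Y=\mathbf{Pic}^0/K$ and then to $\widehat{\widehat{Y}}$ (Proposition \ref{prop--de-Y}, Lemma \ref{lem--im--str}). Consequently, by Lemma \ref{lem--y--identify}, the elements of $\mathrm{Aut}^0(X)$ induce the trivial automorphism of the $\mathcal{N}$-object and contribute nothing to $\underline{\mathrm{Aut}}_{\mathcal{N}}(\eta)$. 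What remains is the image in the component group $\{f\mid f^*A\equiv A\}/\mathrm{Aut}^0(X)$, which is finite because the automorphisms of a polarized variety preserving the numerical class of an ample line bundle form a group with finitely many connected components, together with the finiteness of $\mathbf{Pic}^\tau/\mathbf{Pic}^0$ (Corollary \ref{cor--torsion--components--number}). Hence $\underline{\mathrm{Aut}}_{\mathcal{N}}(\eta)$ is finite, which yields the Deligne--Mumford property. The main obstacle will be to make this ``absorption'' step fully rigorous, i.e.\ to match the positive-dimensional part $\mathrm{Aut}^0(X)$ precisely with the kernel $K$ built into $\widehat{\widehat{Y}}$ so that it acts trivially on the $\mathcal{N}$-object; this is exactly where the careful construction of $Y$ and $\widehat{\widehat{Y}}$ in Section \ref{sec-5}, together with the $G$-equivariance of $\Lambda(L_{\widehat{X}})$ (Lemma \ref{lem--G-equiv}) and the quotient structure of Definition \ref{de--smaller--abelian--scheme}, is used.
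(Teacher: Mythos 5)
Your separatedness argument is essentially the paper's. The paper also verifies the valuative criterion for the diagonal: after lifting the two $R$-points of $\mathcal{N}$ to $\mathcal{M}$, it first extends the twist $\mathcal{L}_2\otimes\mathcal{L}_1^{-1}\in\mathcal{Y}(K)$ over the valuation ring using properness of $\mathcal{Y}\to\mathcal{M}$, and only then invokes separatedness of $\mathcal{M}$ for the re-polarized families. In your second route the order matters: the isomorphism $f$ of the underlying families does \emph{not} preserve the polarizations, so separatedness of $\mathcal{M}$ cannot be applied to $f$ until the $\widehat{\widehat{Y}}$-component has been extended and $f$ is viewed as an isomorphism onto the twisted family. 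Your descent route also works, but only because $\xi$ is smooth: one checks that $\mathcal{M}\times_{\mathcal{N}}\mathcal{M}\to\mathcal{M}\times\mathcal{M}$ is proper (its composition with the separated projection $\mathrm{pr}_1$ is a base change of the proper $\xi$) and then descends properness of $\Delta_{\mathcal{N}}$ along the fppf cover $\xi\times\xi$; "universally closed surjection" alone would not suffice for this descent.

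The genuine gap is in the finite-stabilizer step, and your proposed repair points in the wrong direction. Unwinding Definition \ref{de--stack--N} (this is also how the Isom space is described in the proof of Proposition \ref{prop--representability--quotient--morph}), an automorphism of $\xi(X,A)$ is a pair $(\sigma,f)$ in which the point $y_0=\sigma(0)\in\widehat{\widehat{Y}}_\eta\subset\mathbf{Pic}^0$ is tied to $f$ by the zero-section condition, so that the twist $f^*A\otimes A^{-1}$ must itself lie in $\widehat{\widehat{Y}}_\eta$. Consequently an element $f\in\mathrm{Aut}^0(X)$ occurs in the stabilizer if and only if $f^*A\otimes A^{-1}\in K_\eta\cap\widehat{\widehat{Y}}_\eta$, where $K_\eta$ (the fiber of $K$ in Proposition \ref{prop--de-Y}) is exactly the image of $\mathrm{Aut}^0(X)$ under $f\mapsto f^*A\otimes A^{-1}$. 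A general element of $\mathrm{Aut}^0(X)$ is therefore \emph{not an automorphism of the $\mathcal{N}$-object at all}: it is excluded, not "absorbed" or "identified with the identity"; nothing is quotiented out of the automorphism groups, and Lemma \ref{lem--y--identify} does not say otherwise. Finiteness of the stabilizer rests on the finiteness of $K_\eta\cap\widehat{\widehat{Y}}_\eta$, and this is precisely what the construction of $\widehat{\widehat{Y}}$ delivers: the composite $p\circ\Lambda(L_{\widehat{X}})\circ\iota_{\widehat{Y}}=\Lambda(L_{\widehat{X}}|_{\widehat{Y}})\colon\widehat{Y}\to Y$ is an isogeny (proper and \'etale, as noted before Lemma \ref{lem--im--str}), so $\widehat{\widehat{Y}}$ is an abelian subscheme of $\mathbf{Pic}^0$ Poincar\'e-complementary to $K$, i.e.\ $K\cap\widehat{\widehat{Y}}$ is finite. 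Your statement that $K$ is "built into" $\widehat{\widehat{Y}}$ so that $\mathrm{Aut}^0(X)$ "acts trivially" inverts this: $\widehat{\widehat{Y}}$ is built to be \emph{transverse} to $K$, not to contain it. Without this complementarity your embedding of the stabilizer into $\{f\mid f^*A\equiv A\}$ proves nothing, since that group is positive-dimensional whenever $\mathrm{Aut}^0(X)\neq 1$ (e.g.\ $X=E\times C$ with $E$ elliptic and $g(C)\geq 2$, which is a legitimate point of $\mathcal{M}$ by Theorem \ref{thm--kst--u>0--CY-fib}); indeed, quotienting by all of $\mathbf{Pic}^0$ instead of by $\widehat{\widehat{Y}}$ would produce positive-dimensional stabilizers, which is exactly the failure discussed before Definition \ref{de--smaller--abelian--scheme}. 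Once $K_\eta\cap\widehat{\widehat{Y}}_\eta$ is known to be finite, the identity component of the stabilizer is finite, hence trivial, and your component-group count (finite type of $\mathrm{Aut}^{Q,\mathrm{alg}}$ together with Corollary \ref{cor--torsion--components--number}) correctly finishes the proof; this complementarity is the actual content of the paper's one-line assertion that the stabilizers are finite "by the definition of $\mathcal{Y}$".
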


\begin{proof}
By the definition of $\mathcal{Y}$, it is easy to see that each stabilizer of $\mathcal{N}$ is a finite group. This and Proposition \ref{prop--representability--quotient--morph} show that $\mathcal{N}$ is a Deligne-Mumford stack (cf.~\cite[Remark 8.3.4]{Ols}).
To see the conclusion, it suffices to show that $\mathcal{N}$ is separated.
Note that the diagonal morphism of $\mathcal{N}$ is separated by Proposition \ref{prop--representability--quotient--morph}.
Take a discrete valuation ring $R$ with the fractional field $K$.
Let $f_1,f_2\colon \mathrm{Spec}\,R\to \mathcal{N}$ be morphisms such that $f_1|_{\mathrm{Spec}\,K}=f_2|_{\mathrm{Spec}\,K}$.
Replacing $R$ with its finite cover $R'$ with the fractional field $K'$, we may assume that there exist $g_1\colon \mathrm{Spec}\,R'\to \mathcal{M}$ and $g_2\colon \mathrm{Spec}\,R'\to \mathcal{M}$ such that $f_1\circ\pi_{R'}\cong \xi\circ g_1$ and $f_2\circ \pi_{R'}\cong\xi\circ g_2$, where $\pi_{R'}\colon \mathrm{Spec}\,R'\to\mathrm{Spec}\,R$ is the canonical morphism by Proposition \ref{prop--representability--quotient--morph}.
Suppose that $(\mathcal{X}_1,\mathcal{L}_1)$ and $(\mathcal{X}_2,\mathcal{L}_2)$ are the families corresponding to $g_1$ and $g_2$.
By the assumption that $f_1|_{\mathrm{Spec}\,K}=f_2|_{\mathrm{Spec}\,K}$, we may identify $(\mathcal{X}_1)_{K'}$ with $(\mathcal{X}_2)_{K'}$ and $\mathcal{L}_2\otimes \mathcal{L}_1^{-1}|_{(\mathcal{X}_2)_{K'}}\in \mathcal{Y}(K')$.
Since $\mathcal{Y}\to\mathcal{M}$ is proper, $\mathcal{L}_2\otimes \mathcal{L}_1^{-1}|_{(\mathcal{X}_2)_{K'}}$ can be extended to some $\mathcal{L}_3 \in \mathcal{Y}(R')$.
If we replace $g_2$ by the morphism corresponding to $(\mathcal{X}_2,\mathcal{L}_2\otimes\mathcal{L}_3^{\otimes-1})$, we may assume that $g_1|_{\mathrm{Spec}\,K'}=g_2|_{\mathrm{Spec}\,K'}$. 
By the separatedness of $\mathcal{M}$, we see that $g_1=g_2$.
Therefore, $f_1\circ\pi_{R'}=f_2\circ\pi_{R'}$.
By \cite[Theorem 11.5.1]{Ols}, this shows the properness of the diagonal morphism of $\mathcal{N}$.
We conclude that $\mathcal{N}$ is separated.
\end{proof}

\begin{cor}\label{cor--N--closedpoint}
    Let $N$ be the coarse moduli space of $\mathcal{N}$ and let $\Omega$ be an algebraically closed field over $\mathbbm{k}$.
    Then, $N(\Omega)$ is bijective to the set
    \[
    \{(X,L)\in M(\Omega)\}/\equiv.
    \]
    Here, the equivalence relation $\equiv$ is defined in the way that $(X,A)\equiv(Y,B)$ if and only if there exists an isomorphism $\varphi\colon X\to Y$ such that $\varphi^*B-A$ is numerically trivial.
\end{cor}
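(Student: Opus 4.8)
The plan is to read off $N(\Omega)$ directly from the groupoid $\mathcal{N}(\Omega)$ and then to translate the isomorphism relation of $\mathcal{N}^{\mathrm{pre}}$ into the stated numerical equivalence, the delicate point being the passage between algebraic and numerical equivalence of the polarizations. First I would use that $\mathcal{N}$ is a separated Deligne--Mumford stack of finite type (Theorem \ref{thm:N--DMstack}), so by \cite{KeM} its coarse space $N$ satisfies $N(\Omega)\cong|\mathcal{N}(\Omega)|$, the set of isomorphism classes of objects of $\mathcal{N}(\Omega)$. Since $\Omega$ is algebraically closed, $\mathrm{Spec}\,\Omega$ admits no nontrivial \'etale covers, so the stackification morphism induces an equivalence of groupoids $\mathcal{N}^{\mathrm{pre}}(\Omega)\simeq\mathcal{N}(\Omega)$ and hence $N(\Omega)=|\mathcal{N}^{\mathrm{pre}}(\Omega)|$. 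By Definition \ref{de--stack--N} the objects of $\mathcal{N}^{\mathrm{pre}}(\Omega)$ are the $\mathcal{Y}\times_{\mathcal{M},g}\Omega$ for $g\colon\Omega\to\mathcal{M}$, so they are in bijection with objects $(X,A)$ of $\mathcal{M}(\Omega)$, and passing to isomorphism classes yields a surjection $\overline{\xi}\colon M(\Omega)=|\mathcal{M}(\Omega)|\twoheadrightarrow N(\Omega)$. It then remains to identify the fibres of $\overline{\xi}$ with the equivalence classes of $\equiv$.

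For the easy (forward) inclusion, suppose $g_1,g_2$ correspond to $(X_1,A_1),(X_2,A_2)$ that become isomorphic in $\mathcal{N}^{\mathrm{pre}}(\Omega)$ via a pair $(\sigma,f)$. By Definition \ref{de--stack--N} this forces $f\colon X_1\to X_2$ to be an isomorphism in the situation of Lemma \ref{lem--y--identify}, so $f^*A_2-A_1\in\mathbf{Pic}^0_{X_1}(\Omega)$. As $\mathbf{Pic}^0\subseteq\mathbf{Pic}^\tau$ consists of numerically trivial classes, $f^*A_2-A_1$ is numerically trivial and $(X_1,A_1)\equiv(X_2,A_2)$. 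This shows that the fibres of $\overline{\xi}$ are contained in the $\equiv$-classes.

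The converse is the main obstacle. Suppose $\varphi\colon X_1\to X_2$ with $\tau:=\varphi^*A_2-A_1$ numerically trivial. The key observation is that the polarizations occurring here arise as $A_i=I D_i$ for $\mathbb{Q}$-Cartier Weil divisors $D_i$ (Setup \ref{stup--5}, Definition \ref{de--WI}). Then $\varphi^*D_2-D_1$ is a numerically trivial $\mathbb{Q}$-Cartier divisorial class, hence lies in $\mathbf{W}^{\mathbb{Q}}\mathbf{Pic}^\tau$, and its image in the finite group $\mathbf{W}^{\mathbb{Q}}\mathbf{Pic}^\tau/\mathbf{Pic}^0$ is annihilated by $I$ because $\#(\mathbf{W}^{\mathbb{Q}}\mathbf{Pic}^\tau/\mathbf{Pic}^0)$ divides $I$ by Corollary \ref{cor--torsion--components--number} and the choice in Setup \ref{stup--5}. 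Consequently $\tau=I(\varphi^*D_2-D_1)\in\mathbf{Pic}^0_{X_1}(\Omega)$; this is precisely the step that converts numerical triviality into algebraic triviality and is the reason $I$ was arranged to kill the N\'eron--Severi torsion. After replacing $(X_2,A_2)$ by $(X_1,\varphi^*A_2)=(X_1,A_1\otimes\tau)$ via $\varphi$, it remains to realise the algebraically trivial translation $\tau$ as an isomorphism in $\mathcal{N}^{\mathrm{pre}}$, i.e.\ to prove $(X_1,A_1)\sim(X_1,A_1\otimes\tau)$ in $\mathcal{N}$.

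For this last reduction I would use that translations by $\widehat{\widehat{Y}}_{x_1}\subseteq\mathbf{Pic}^0_{X_1}$ give isomorphisms in $\mathcal{N}^{\mathrm{pre}}$ (Definition \ref{de--stack--N}, Lemma \ref{lem--y--identify}), together with the fact that, by construction of $\widehat{\widehat{Y}}$ through the polarization isogeny $\Lambda(L_{\widehat{X}})$ (Definition \ref{de--smaller--abelian--scheme}, Lemma \ref{lem--im--str}), the composite $\widehat{\widehat{Y}}_{x_1}\hookrightarrow\mathbf{Pic}^0_{X_1}\twoheadrightarrow Y_{x_1}=\mathbf{Pic}^0_{X_1}/K_{x_1}$ is an isogeny and in particular surjective. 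Writing $\tau=\ell'+\kappa$ with $\ell'\in\widehat{\widehat{Y}}_{x_1}$ and $\kappa\in K_{x_1}$, the part $\ell'$ is absorbed by a $\widehat{\widehat{Y}}$-translation identifying $(X_1,A_1)$ with $(X_1,A_1\otimes\ell')$ in $\mathcal{N}$, while $\kappa$, lying in the kernel $K_{x_1}$ of $\mathbf{Pic}^0_{X_1}\to Y_{x_1}$, is induced by an automorphism of $X_1$ (this being exactly the content of the equivalence relation $\mathcal{R}$ built in Proposition \ref{prop--de-Y}), so that $(X_1,A_1\otimes\tau)\cong(X_1,A_1\otimes\ell')$. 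I expect the careful bookkeeping here---checking that the abstract isomorphism datum $(\sigma,f)$ of Definition \ref{de--stack--N} corresponds precisely to a pair consisting of an automorphism and a $\widehat{\widehat{Y}}$-translation, and that the decomposition $\tau=\ell'+\kappa$ is compatible with the $G$-equivariant identifications of Lemma \ref{lem--y--identify}---to be the most delicate part, though all the required structural inputs (the divisibility forcing $\tau\in\mathbf{Pic}^0$, the surjectivity $\widehat{\widehat{Y}}_{x}\to Y_{x}$, and the identification of $K_x$ with automorphism translations) are already available.
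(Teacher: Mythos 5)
Your proposal is correct, and it is in substance the argument the paper intends; note, though, that the paper's own proof of this corollary is a single line (``$N$ exists by \cite{KeM} and Theorem \ref{thm:N--DMstack}, and the assertion follows from the definition of $\mathcal{N}$''), so everything you write out is left implicit there. What you supply --- and what genuinely needs to be said, since the relation built into $\mathcal{N}$ is \emph{a priori} translation by $\widehat{\widehat{Y}}$ (this is how Definition \ref{de--stack--N} is used, e.g., in the separatedness proof of Theorem \ref{thm:N--DMstack}, where the difference of polarizations is required to lie in $\mathcal{Y}(K')$) --- are exactly the two conversions that upgrade this to numerical equivalence: (i) numerically trivial implies algebraically trivial here, because $A=ID$ for a $\mathbb{Q}$-Cartier Weil divisorial sheaf $D$ and $\#\bigl((\mathbf{W}^{\mathbb{Q}}\mathbf{Pic}^{\tau})_{\mathrm{red}}/\mathbf{Pic}^0\bigr)$ divides $I$ by Setup \ref{stup--5} and Corollary \ref{cor--torsion--components--number}; and (ii) a $\mathbf{Pic}^0$-translation decomposes as a $\widehat{\widehat{Y}}$-translation plus an element of $K$, using that $\widehat{\widehat{Y}}_x\to Y_x=\mathbf{Pic}^0_{X}/K_x$ is surjective (Lemma \ref{lem--im--str}) and that $K_x$ is, by construction in Proposition \ref{prop--de-Y}, the identity component of $\{\mu:\exists\,\psi\in\mathrm{Aut}(X),\ \psi^*A\sim A\otimes\mu\}$. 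One correction in your final step: the automorphism $\psi$ attached to $\kappa$ need not fix $\ell'$, so what one actually obtains is $(X_1,A_1\otimes\tau)\cong(X_1,A_1\otimes(\psi^{-1})^*\ell')$ in $\mathcal{M}$, not $(X_1,A_1\otimes\ell')$; this is harmless, since $(\psi^{-1})^*A_1-A_1=-(\psi^{-1})^*\kappa\in\mathbf{Pic}^0$, so Lemma \ref{lem--y--identify} applies to $\psi^{-1}$ and shows $(\psi^{-1})^*\ell'\in\widehat{\widehat{Y}}_{x_1}$, after which the $\widehat{\widehat{Y}}$-translation closes the argument. This is precisely the compatibility you flag as delicate, and the inputs you name are sufficient to resolve it.
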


\begin{proof}
 We note that $N$ actually exists by \cite{KeM} and Theorem \ref{thm:N--DMstack}.
    Then the assertion follows from the definition of $\mathcal{N}$.
\end{proof}

\begin{de}\label{de--Num--consruction}
Fix $d\in\mathbb{Z}_{>0}$, $v\in\mathbb{Q}_{>0}$, $u\in\mathbb{Q}_{\ne0}$ and take $w\in\mathbb{Q}_{>0}$ such that $\mathfrak{Z}_{d, v,u,w}\to \mathfrak{Z}_{d, v,u}$ is surjective.
For any connected component $\mathcal{M}^{\mathrm{im}}\subset \mathcal{M}^{W,I,\mathrm{im}}_{d,v,u,r,w}$, we can construct stacks $\mathcal{M}$ and $\mathcal{N}$ as in Definition \ref{de--stack--N}.  
Let $\mathcal{M}^{W,I}_{d,v,u,r,w}$ and $\mathcal{N}^{W,I}_{d,v,u,r,w}$ be the disjoint union of such $\mathcal{M}$ and $\mathcal{N}$, respectively.
By construction $\mathcal{M}^{W,I}_{d,v,u,r,w}$ contains $(\mathcal{M}^{W,I}_{d,v,u,w})_{\mathrm{red}}$ as the reduced structure.

Note that there exists a natural morphism $\xi\colon \mathcal{M}^{W,I}_{d,v,u,r,w}\to\mathcal{N}^{W,I}_{d,v,u,r,w}$ that is constructed by patching the forgetful morphisms.
We call this the {\it quotient morphism}.
Note that $\xi$ is representable by algebraic spaces, proper, surjective, and smooth by Proposition \ref{prop--representability--quotient--morph}.

Let $\overline{\xi}\colon M^{W,I}_{d,v,u,r,w}\to N^{W,I}_{d,v,u,r,w}$ be the morphism between the coarse moduli spaces induced by $\xi$.

Let $\Lambda_{\mathrm{CM},t}$ be the CM line bundle on $M_{d,I^{d-1}\cdot v,u,r,I^d\cdot w}$, as Definition \ref{de--CM--level}.
Let $\Lambda_{\mathrm{CM},t}$ also denote the restriction of $\Lambda_{\mathrm{CM},t}$ to $M^{W,I}_{d,v,u,r,w}$.
\end{de}
Then the following holds for the case where $u>0$.
\begin{thm}\label{thm--rel--ample}
In Definition \ref{de--Num--consruction}, suppose further that $u>0$.
  Then there exists $t_0\in\mathbb{Q}_{>0}$ such that $\Lambda_{\mathrm{CM},\mu}$ is $\overline{\xi}$-ample in the sense of \cite[Tag 0D30]{Stacks} for any $\mu>t_0$. 
\end{thm}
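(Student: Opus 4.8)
The plan is to reduce the $\overline{\xi}$-ampleness to a fiberwise ampleness statement combined with the relative ampleness criterion of \cite[Tag 0D30]{Stacks}. Recall that $\overline{\xi}\colon M^{W,I}_{d,v,u,r,w}\to N^{W,I}_{d,v,u,r,w}$ is induced by the quotient morphism $\xi$, and by construction the geometric fibers of $\overline{\xi}$ are precisely the equivalence classes under $\equiv$ appearing in Corollary \ref{cor--N--closedpoint}; that is, each fiber of $\overline{\xi}$ over a point of $N^{W,I}_{d,v,u,r,w}$ parameterizes the ways of twisting a fixed $(X,A)$ by a numerically trivial $\mathbb{Q}$-Cartier divisorial sheaf, and this fiber is (up to finite quotients) the Abelian variety $\widehat{\widehat{Y}}_x$ built in Definition \ref{de--smaller--abelian--scheme}. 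Since $\overline{\xi}$ is proper (as $\xi$ is proper by Proposition \ref{prop--representability--quotient--morph} and coarse moduli preserve properness), by \cite[Tag 0D30]{Stacks} it suffices to verify that $\Lambda_{\mathrm{CM},\mu}$ restricts to an ample line bundle on every geometric fiber of $\overline{\xi}$, uniformly for $\mu>t_0$.

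First I would fix a geometric point $x$ of $N^{W,I}_{d,v,u,r,w}$ and analyze $\Lambda_{\mathrm{CM},\mu}$ restricted to the fiber $F_x:=\overline{\xi}^{-1}(x)$. Over such a fiber we have a fixed variety $(X,A)$ together with a family of polarizations $A+\mathcal{L}'$, where $\mathcal{L}'$ ranges over the Abelian variety $\widehat{\widehat{Y}}_x$ of numerically trivial twists. The key computation is to evaluate the CM line bundle $\lambda_{\mathrm{CM},\mu}$ along this family. Using the degree formula for the CM line bundle over a curve recorded in Subsection \ref{subsec--CM}, namely $\deg_S\lambda_{\mathrm{CM},f}=\mu L^{n+1}+\frac{1}{n!a_0}K_{X/S}\cdot L^n$ with $L=A+\mu K_X$, the isotriviality of the underlying variety $X$ over $F_x$ means that the variation is entirely in the numerical twist. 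The essential point is that restricting to a test curve $C\to F_x$, the family $(X\times C, A_C+p^*\mathcal{L}')$ is the universal-twist analogue of Example \ref{ex--cm}: there the CM degree was computed to be strictly positive ($2g(g-1)>0$) precisely because the polarization varies in its linear equivalence class even though the variety is constant. I would carry out the analogous intersection-theoretic computation here, expressing $\deg_C\Lambda_{\mathrm{CM},\mu}|_{F_x}$ as a quadratic form in the twisting class $\mathcal{L}'$ whose leading behavior is governed by the theta-type polarization $L_{\widehat{X}}$ used to define $\Lambda(L_{\widehat{X}})$ in Proposition \ref{prop--de-Y} and Lemma \ref{lem--im--str}.

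The crucial step is showing that this induced polarization on the Abelian variety $\widehat{\widehat{Y}}_x$ is the one coming from $L_{\widehat{X}}$ under the isogeny $\Lambda(L_{\widehat{X}})$, which is an ample (indeed, a principal-type) polarization by the standard theory of Abelian varieties \cite[Section 6]{GIT}. Concretely, I would verify that the Néron–Severi class of $\Lambda_{\mathrm{CM},\mu}|_{F_x}$ is a positive multiple of $c_1(L_{\widehat{X}})$ pulled back along the isogeny, for which the $G$-equivariance established in Lemma \ref{lem--G-equiv} guarantees that the identification is canonical and varies well in families. The uniformity in $\mu$ (the threshold $t_0$) comes from the fact that the coefficient of the leading $\mu$-term in the CM degree is $L^{n+1}$-type and dominates the lower-order $K_{X/S}$-contribution once $\mu$ is large; since the parameters $d,v,u,w$ are fixed and the objects form a bounded family by \cite[Theorem 1.5]{HH}, a single $t_0$ works across all fibers. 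The main obstacle I anticipate is precisely this identification of the CM polarization with the theta polarization on $\widehat{\widehat{Y}}_x$: one must show that the second-order (numerical) variation of the CM functional in the polarizing class recovers, up to positive scalar, the canonical principal polarization of $\mathbf{Pic}^0$, and that this matching is compatible with the finite isogenies $\widehat{Y}\to\widehat{\widehat{Y}}$ and with the descent from $\mathbf{Pic}^0_{\mathcal{U}_H/H}$ to $Y$. Once this positivity of the Hessian is in hand, Nakai–Moishezon on each fiber (an Abelian variety, where ampleness is detected by positivity of the associated quadratic form) finishes the argument, and \cite[Tag 0D30]{Stacks} upgrades fiberwise ampleness to $\overline{\xi}$-ampleness.
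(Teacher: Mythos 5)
Your opening reduction is the same as the paper's: $\overline{\xi}$ is proper (Proposition \ref{prop--representability--quotient--morph}), so relative ampleness follows once $\Lambda_{\mathrm{CM},\mu}$ is ample on every fiber $\overline{\xi}^{-1}(p)$ (the paper cites \cite[Tags 0D36 and 0D3A]{Stacks} for this step; Tag 0D30 is only the definition). But after that point there is a genuine gap. Your entire argument for fiberwise ampleness rests on the claim that the restriction of $\Lambda_{\mathrm{CM},\mu}$ to the fiber --- an Abelian variety (up to finite quotient and isogeny) of numerically trivial twists --- is a positive multiple of a theta-type polarization, and you yourself flag this identification as ``the main obstacle I anticipate'' rather than proving it. That identification \emph{is} the theorem: without it you have only a plan. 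Carrying it out is not routine here, because the fibers of $f$ are klt (not smooth), so the Hodge--Riemann-type negativity of $p_*\bigl(\mathcal{P}^2\cdot (A+\mu K_X)^{n-1}\bigr)$ that underlies the theta-divisor positivity needs an argument in the singular setting; the sign of the full quadratic form in the CM expansion also depends on the competition between the $\mu'L^{n+1}$ and $K\cdot L^n$ terms, which is exactly why a threshold $t_0$ is needed and must be controlled uniformly. Moreover, matching the CM class with the specific bundle $L_{\widehat{X}}$ of Lemma \ref{lem--G-equiv} is the wrong target: $L_{\widehat{X}}$ was an \emph{arbitrary} relatively ample bundle chosen only to build the isogeny $\Lambda(L_{\widehat{X}})$, so no canonical matching with it can be expected, and none is needed --- only ampleness of the restriction is.

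The paper closes this gap by a different and much shorter route, which bypasses any computation on the Abelian fiber. It observes that each fiber $\overline{\xi}^{-1}(p)$ is a \emph{proper closed subspace} of the ambient moduli space $M_{d,I^{d-1}\cdot v,u,r,I^d\cdot w}$, and then invokes Corollary \ref{cor--partial--positivity--of--cm}: for $\mu>t_0$ the CM line bundle is ample on \emph{every} proper subspace of that moduli space. That corollary is where the real work was done, in Section \ref{sec-4}: special K-stability of $(X,A+\mu K_X)$ for large $\mu$ (\cite[Corollary 1.7]{HH}), the extension of Koll\'ar's ampleness criterion to polarized families of maximal variation (Theorem \ref{thm--Kollar's--lemma}), which is precisely the tool designed to detect variation of the polarization's linear equivalence class on a fixed underlying variety, and the Nakai--Moishezon criterion for proper algebraic spaces \cite{FM3}. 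In other words, the positivity phenomenon you are trying to establish by hand on the Abelian fiber (the phenomenon illustrated by Example \ref{ex--cm}) has already been packaged into Theorem \ref{thm--Kollar's--lemma} and Corollary \ref{cor--partial--positivity--of--cm}; citing them makes the fiberwise step immediate and uniform in $\mu$, with $t_0$ supplied by that corollary rather than by a delicate sign analysis.
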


\begin{proof}
Choose $t_0$ as Corollary \ref{cor--partial--positivity--of--cm} for $M_{d,I^{d-1}\cdot v,u,r,I^d\cdot w}$.
    By \cite[Tags 0D36 and 0D3A]{Stacks}, it suffices to check for any closed point $p\in N^{W,I}_{d,v,u,r,w}$, $\Lambda_{\mathrm{CM},\mu}|_{\overline{\xi}^{-1}(p)}$ is ample.
    Since $\overline{\xi}^{-1}(p)\subset M_{d,I^{d-1}\cdot v,u,r,I^d\cdot w}$ is a proper closed subspace by Proposition \ref{prop--representability--quotient--morph}, we complete the proof by Corollary \ref{cor--partial--positivity--of--cm}.
\end{proof}
As we saw in Example \ref{ex--cm}, $\overline{\xi}$ is not finite in general in the case where $u>0$.
Finally, we note the following phenomenon in the case where $u<0$.

\begin{prop}\label{prop--unnec--u<0}
   In Definition \ref{de--Num--consruction}, if $u<0$, then $\xi$ is finite.
   In particular, $\Lambda_{\mathrm{CM},t}$ is $\overline{\xi}$-ample for any $t\in\mathbb{Q}_{>0}$. 
\end{prop}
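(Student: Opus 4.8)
The plan is to show that the proper, smooth, representable morphism $\xi$ of Proposition \ref{prop--representability--quotient--morph} has finite geometric fibers, since a proper quasi-finite morphism is finite by \cite[Tag 02OG]{Stacks}. By Definition \ref{de--smaller--abelian--scheme} the geometric fibers of $\xi$ are the Abelian varieties $\widehat{\widehat{Y}}_{\bar s}$, so it is equivalent to prove $\dim\widehat{\widehat{Y}}_{\bar s}=0$ for every geometric point. Tracing the construction, $\widehat{\widehat{Y}}$ is isogenous to $\widehat{Y}=\mathbf{Pic}^0_{Y/H}$ and $Y=\mathbf{Pic}^0_{\mathcal U_H/H}/K$; hence for a fibration $(X,A)$ over $\bar s$ one has $\dim\widehat{\widehat{Y}}_{\bar s}=\dim\mathbf{Pic}^0(X)-\dim K_{\bar s}=q(X)-\dim(\mathrm{im}\,\rho)$, where $q(X)=h^1(X,\mathcal O_X)$ and $\rho\colon\mathrm{Aut}^0(X)\to\mathbf{Pic}^0(X)$, $\varphi\mapsto\varphi^*A-A$, is the homomorphism recording how the connected automorphisms move the linear class of the polarization within its numerical class (here one uses that $\mathrm{Aut}^0(X)$ acts trivially on $N^1(X)$ and on the Abelian variety $\mathbf{Pic}^0(X)$, so $\rho$ is a homomorphism with image an Abelian subvariety). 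Thus everything reduces to the inequality $q(X)\le\dim(\mathrm{im}\,\rho)$.

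First I would pin down the base. By the canonical bundle formula (Definition \ref{de--klt--triv}) we may write $L\sim_{\mathbb Q}K_C+B_C+M_C$ with $u=\deg_C L$, $B_C\ge0$ and $M_C$ nef, so $u<0$ forces $2g(C)-2\le\deg_C(K_C+B_C+M_C)=u<0$ and therefore $g(C)=0$, i.e.\ $C\cong\mathbb P^1$. In particular the horizontal contribution $f^*\mathbf{Pic}^0(C)$ to $\mathbf{Pic}^0(X)$ vanishes, so every continuous numerically trivial twist of $A$ is relative; concretely, since $f_*\mathcal O_X\cong\mathcal O_{\mathbb P^1}$, the Leray spectral sequence gives $q(X)=h^0(\mathbb P^1,R^1f_*\mathcal O_X)$. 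This is in sharp contrast with the case $u>0$, where $g(C)$ may be positive and the Jacobian of $C$ already produces positive-dimensional fibers of $\xi$.

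The heart of the argument --- and the step I expect to be the main obstacle --- is to show that this relative contribution is exhausted, which I would do by establishing $q(X)=0$. Using relative duality $R^1f_*\mathcal O_X\cong(R^{\dim X-1}f_*\omega_{X/\mathbb P^1})^\vee$ together with the semipositivity theorem for Hodge-theoretic direct images (Kawamata, Koll\'ar, Fujino), the sheaf $R^{\dim X-1}f_*\omega_{X/\mathbb P^1}$ is a nef bundle on $\mathbb P^1$, so $h^0$ of its dual equals the rank of its degree-$0$ (flat, unitary) part. The remaining point is to rule out a nonzero flat part. Such a part would yield a constant sub-Hodge-structure, hence an isotrivial direction, which should be incompatible with the strict negativity $u<0$: for instance, the parallel ``Hodge'' Leray summand $H^1(\mathbb P^1,f_*\omega_X\otimes\mathcal O_{\mathbb P^1}(-u))$ already vanishes because $\deg_{\mathbb P^1}f_*\omega_{X/\mathbb P^1}\ge0>u$. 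Converting this into the literal vanishing $q(X)=0$, while accounting for klt (rather than smooth) fibers and the possibly isotrivial locus, is where the genuine work lies; alternatively, one may route the argument through the Albanese morphism of $X$, which is a locally constant fibration because $-K_X=f^*(-L)$ is nef, and show that with $g(C)=0$ it must be trivial. In either approach the strict inequality $u<0$ is what supplies the needed slack.

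Granting $q(X)=0$, so that $\dim\widehat{\widehat{Y}}_{\bar s}=0$ for all $\bar s$, the morphism $\xi$ is quasi-finite and proper, hence finite; consequently the induced morphism $\overline\xi$ on coarse moduli spaces is finite, in particular affine. Over an affine (indeed finite) morphism every invertible sheaf is relatively ample in the sense of \cite[Tag 0D30]{Stacks}, so $\Lambda_{\mathrm{CM},t}$ is $\overline\xi$-ample for every $t\in\mathbb Q_{>0}$, with no largeness hypothesis on $t$. This yields the ``in particular'' clause and completes the argument.
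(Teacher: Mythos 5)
Your reduction is sound up to the point where the real work begins: by Proposition \ref{prop--representability--quotient--morph} and \cite[Tag 02OG]{Stacks} it suffices to show the fibers of $\xi$ are zero-dimensional, and these are indeed controlled by $q(X)-\dim(\mathrm{im}\,\rho)$ for $\rho\colon\mathrm{Aut}^0(X)\to\mathbf{Pic}^0(X)$, $\varphi\mapsto\varphi^*A-A$. The gap is in how you discharge the inequality $q(X)\le\dim(\mathrm{im}\,\rho)$: you propose to prove $q(X)=0$, and that claim is false for $u<0$. Take an elliptic curve $E$, let $V_4=E[2]\cong(\mathbb{Z}/2)^2$ act on $E$ by translations and on $\mathbb{P}^1$ through the Klein four subgroup of $PGL_2$, and set $X=(E\times\mathbb{P}^1)/V_4$ with $f\colon X\to\mathbb{P}^1/V_4\cong\mathbb{P}^1$. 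This is a smooth isotrivial elliptic surface with exactly three multiple fibers of multiplicity $2$, so $K_X\sim_{\mathbb{Q}}f^*L$ with $\deg L=-2+3\cdot\tfrac{1}{2}=-\tfrac{1}{2}<0$, $B_C=\tfrac{1}{2}(p_1+p_2+p_3)$ and $\deg M_C=0$; condition (2) of Theorem \ref{thm--kst--u>0--CY-fib} holds since $\tfrac{1}{2}<\tfrac{3}{4}$, so $f$ is uniformly adiabatically K-stable and gives a point of the moduli space with $u<0$. Yet $q(X)=\dim H^1(E,\mathcal{O}_E)^{V_4}=1$, because translations act trivially on cohomology; moreover the Albanese map of $X$ is the nontrivial $\mathbb{P}^1$-fibration $X\to E/V_4$. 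So both your Hodge-theoretic step (ruling out a flat summand of $R^1f_*\mathcal{O}_X$) and your fallback via the Albanese fail: isotriviality in the fiber direction is perfectly compatible with strict negativity of $u$, and $u<0$ supplies no ``slack'' of the kind you hope for.

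What makes the proposition true is not vanishing of $q(X)$ but surjectivity of $\rho$, and this is exactly how the paper argues: its proof consists of Proposition \ref{prop--representability--quotient--morph} together with \cite[Proposition 6.3]{Hat23}, which for these Fano-type fibrations ($u<0$) says that any two numerically equivalent polarizations on $X$ differ by an automorphism up to linear equivalence, i.e.\ the whole of $\mathbf{Pic}^0(X)$ is realized by $\mathrm{Aut}^0(X)$-twists. In the example above one sees this directly: $\mathbf{Pic}^0(X)\cong\mathbf{Pic}^0(E/V_4)$ is one-dimensional, the translation action of $E$ descends to $X$, and $a\mapsto\varphi_a^*A-A$ is a nonzero (hence surjective) homomorphism onto $\mathbf{Pic}^0(X)$, so $K$ is everything and the fiber of $\xi$ is still finite. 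Your outer skeleton (quasi-finite $+$ proper $\Rightarrow$ finite $\Rightarrow$ $\overline{\xi}$ affine $\Rightarrow$ every $\mathbb{Q}$-line bundle, in particular $\Lambda_{\mathrm{CM},t}$ for all $t\in\mathbb{Q}_{>0}$, is $\overline{\xi}$-ample) is correct, but the heart of the argument must be the automorphism-theoretic statement, not $q(X)=0$.
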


\begin{proof}
    It follows from Proposition \ref{prop--representability--quotient--morph} and \cite[Proposition 6.3]{Hat23} by construction of $\xi$.
\end{proof}

\section{Projectivity of moduli spaces of stable quasimaps}\label{sec--quasimap}

Let $\iota\colon X\hookrightarrow \mathbb{P}^N$ be a closed immersion of a proper normal variety.
We define the affine cone $\mathrm{Cone}(X)\subset \mathbb{A}^{N+1}$ of $X$.

\begin{de}[Quasimaps]\label{de--qmaps}
Let $C$ be a proper connected curve with only nodal singularities.
We say that $f\colon C\to [\mathrm{Cone}(X)/\mathbb{G}_m]$ is a {\it quasimap} if the generic point of any irreducible component is mapped to $X\subset [\mathrm{Cone}(X)/\mathbb{G}_m]$.
If $f(C)\cap X$ is a closed point of $X$, then we say that $f$ is {\it constant}.

Let $S$ be a scheme over $\mathbbm{k}$ and $\pi\colon \mathcal{C}\to S$ a flat proper morphism whose geometric fibers are connected and have only nodal singularities.
We say that $q\colon \mathcal{C}\to [\mathrm{Cone}(X)\times S/\mathbb{G}_{m,S}]$ is a {\it family of quasimaps} if $q_{\bar{s}}$ is a quasimap for any geometric point $\bar{s}$.
We note that there exists the {\it line bundle} $\mathscr{L}$ on $\mathcal{C}$ {\it associated with $q$} defined as follows:
By definition, $q$ is associated to a principal $\mathbb{G}_m$-bundle $\mathcal{P}$ over $\mathcal{C}$ and a $\mathbb{G}_m$-equivariant morphism $p\colon \mathcal{P}\to\mathrm{Cone}(X)$.
Then, we set $\mathscr{L}$ such that there exists an isomorphism $\mathbb{A}_{\mathcal{C}}(\mathscr{L})\setminus\mathbf{0}=\mathcal{P}$ as principal $\mathbb{G}_m$-bundles, where $\mathbf{0}$ is the zero section.
Let $\mathcal{O}_{\mathcal{C}}^{\oplus N+1}\to \mathscr{L}$ be the canonical morphism induced by $q$ and we call the image of the morphism $\pi_*\mathcal{O}_{\mathcal{C}}^{\oplus N+1}\to\pi_* \mathscr{L}$ induced by the above morphism {\it the module of sections} denoted by $\mathrm{Sec}_q(\mathscr{L})$.
We also define $|\mathscr{L}|_q:=(\mathrm{Sec}_q(\mathscr{L})\setminus\{0\})/\mathbbm{k}^\times$.
For any nonzero section $f\in \mathrm{Sec}_q(\mathscr{L})$, we set $\mathrm{div}_{\mathscr{L}}(f)\in |\mathscr{L}|_q$ as the closed subscheme defined by $f$.
Furthermore, we fix a canonical coordinate of $\mathbb{A}^{N+1}$ and let $e_0,\ldots,e_N$ be the canonical coordinates.
If $S$ is the spectrum of a discrete valuation ring or a field $R$ and $f\in \mathrm{Sec}_q(\mathscr{L})$ is not zero at any generic point of fibers, we note that $\mathrm{div}_{\mathscr{L}}(f)$ is a Cartier divisor.
In this case, we set the {\it fixed part} of $q$ as the largest effective divisor $\mathrm{Bs}(q)$ contained in $\mathrm{div}_{\mathscr{L}}(f)$ for any $f\in \mathrm{Sec}_q(\mathscr{L})$ nonzero at any generic point.
We will write $\mathrm{div}_{\mathscr{L}}(f)$ as $\mathrm{div}(f)$ if there is no fear of confusion.

 We set an isomorphism $\phi$ from $q_1$ to $q_2$ for any two families of quasimaps $q_1\colon \mathcal{C}_1\to [\mathrm{Cone}(X)_S/\mathbb{G}_{m,S}]$ and $q_2\colon \mathcal{C}_2\to [\mathrm{Cone}(X)_S/\mathbb{G}_{m,S}]$, as an isomorphism $\phi\colon \mathcal{C}_1\to \mathcal{C}_2$ such that $q_2\circ\phi$ and $q_1$ are isomorphic.

Let $w\in\mathbb{Q}\cap (0,1]$ and $m\in\mathbb{Q}_{>0}$.
We say that a family of quasimaps $q$ is a {\it family of stable quasimaps of weight $w$ and of degree $m$} if $\mathrm{deg}\mathscr{L}_{\bar{s}}=m$ and there exists a divisor $D\in |\mathscr{L}_{\bar{s}}|_{q_{\bar{s}}}$ such that $(\mathcal{C}_{\bar{s}},wD)$ is slc and $K_{\mathcal{C}_{\bar{s}}}+wD$ is ample for any geometric point $\bar{s}\in S$. 
If $S=\mathrm{Spec}\,\mathbbm{k}$, then we say that $q$ is a {\it stable quasimap} of weight $w$.
\end{de}

\begin{rem}\label{rem--toda}
Consider the case where $X=\mathbb{P}^N$.
We note that our stable quasimap $q$ to $X$ of weight $\epsilon$ and degree $d$ is nothing but the $\epsilon$-stable quotient of type $(N-1,N,d)$ with the target $\mathbb{P}^N$, which was introduced by Toda \cite[Definition 2.3]{Toda}.
Recall that a family of stable quotients $\pi\colon \mathcal{C}\to B$ of type $(N,N+1,d)$ is defined to be a projective locally stable family $\pi$ and a short exact sequence 
\[
0\to \mathcal{S}\to\mathcal{O}_{\mathcal{C}}^{\oplus N+1}\to \mathcal{Q}\to0, 
\]
where $\mathcal{Q}$ is flat over $B$, $\mathcal{S}$ is of rank one, $(\mathcal{C}_{\bar{s}},\epsilon \tau(\mathcal{Q}_{\bar{s}}))$ is slc and $K_{\mathcal{C}_{\bar{s}}}+\epsilon \mathrm{det}(\mathcal{Q}_{\bar{s}})$ is ample for any geometric point $\bar{s}\in B$.
Here, $\tau(\mathcal{Q}_{\bar{s}})$ denotes the torsion part of $\mathcal{Q}_{\bar{s}}$.
Note that $\mathcal{Q}_{\bar{s}}$ is locally free around the singular locus of $\mathcal{C}_{\bar{s}}$.
Considering the dual morphism $\mathcal{O}_{\mathcal{C}}^{\oplus N+1}\to \mathscr{L}$, where $\mathscr{L}^\vee=\mathcal{S}$, we obtain the information of a family of stable quasimaps. 
Conversely, suppose that a structure of a family $q$ of stable quasimaps of weight $\epsilon$ and degree $d$ is given to $\pi$. Let $\mathscr{L}$ be the associated line bundle and $\mathcal{S}$ the dual of $\mathscr{L}$.
Taking the dual of the induced morphism $\mathcal{O}_{\mathcal{C}}^{\oplus N+1}\to \mathscr{L}$, we obtain a short exact sequence
\[
0\to \mathcal{S}\to\mathcal{O}_{\mathcal{C}}^{\oplus N+1}\to \mathcal{Q}\to0, 
\]
where $\mathcal{Q}$ is the cokernel.
Since $\mathcal{S}_b\to \mathcal{O}_{\mathcal{C}_b}^{\oplus N+1}$ is injective for any point $b\in B$, $\mathcal{Q}$ is flat over $B$ by \cite[(20.E)]{Mat}.
Therefore, the short exact sequence gives a structure of a family of stable quotients (cf.~\cite[Remark 2.6]{Toda}).
Therefore, if $\epsilon\le\frac{1}{d}$, then our stable quasimap to $\mathbb{P}^N$ of weight $\epsilon$ and degree $d$ coincides with stable quotients by \cite[Theorem 1.1]{Toda}, which was introduced by \cite{MOP}.
\end{rem}

\begin{de}[K-stability of quasimaps]\label{de--K-st--qmaps}
Let $q\colon C\to [\mathrm{Cone}(X)/\mathbb{G}_m]$ be a quasimap of degree $m$.
Let $w\in(0,1]$ and suppose that $\frac{|\mu|}{\mu}(K_C+w\mathscr{L})$ is ample, where $\mu:=2p_a(C)-2+wm\ne0$. 
Take a semiample test configuration $(\mathcal{C},\mathcal{L})$ for $(C,\frac{|\mu|}{\mu}(K_C+w\mathscr{L}))$, where $\mathscr{L}$ is the line bundle associated with $q$.
Then we set
\[
\mathrm{DF}_q(\mathcal{C},\mathcal{L}):=\mathrm{DF}_{w\mathrm{div}_\mathscr{L}(f)}(\widetilde{\mathcal{C}},\nu^*\mathcal{L}),
\]
for any general $f\in\mathrm{Sec}_q(\mathscr{L})$, where $\nu\colon \widetilde{\mathcal{C}}\to \mathcal{C}$ is the normalization.

We say that $q$ is {\it K-stable} if $\mathrm{DF}_q(\mathcal{C},\mathcal{L})>0$ for any nontrivial deminormal semiample test configuration and $q$ is {\it K-semistable} if $\mathrm{DF}_q(\mathcal{C},\mathcal{L})\ge0$ for any semiample test configuration.
\end{de}

As in the following proposition, stable quasimaps are indeed K-stable.

\begin{prop}\label{prop--K-stable-quasimaps--stability}
In Definition \ref{de--K-st--qmaps}, suppose that $\mu>0$. Then the following are equivalent.
\begin{enumerate}
    \item $q$ is K-stable,
    \item $q$ is K-semistable, 
    \item $q$ is a stable quasimap in the sense of Definition \ref{de--qmaps}.
\end{enumerate}
\end{prop}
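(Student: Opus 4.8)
The plan is to reduce the statement to the classical equivalence, for log-canonically polarized pairs, between K-(semi)stability and being a stable pair in the Koll\'ar--Shepherd-Barron--Alexeev sense. Observe first that both sides of the claimed equivalence presuppose ampleness of the polarization: the invariant $\mathrm{DF}_q$ in Definition~\ref{de--K-st--qmaps} is only defined when $\frac{|\mu|}{\mu}(K_C+w\mathscr{L})$ is ample, and a stable quasimap requires $K_C+wD$ ample by Definition~\ref{de--qmaps}. Since $\mu>0$ we have $\frac{|\mu|}{\mu}=1$, so the relevant polarization is $K_C+w\mathscr{L}$, of total degree $\mu$. For a general $f\in\mathrm{Sec}_q(\mathscr{L})$ the divisor $D:=\mathrm{div}_{\mathscr{L}}(f)$ is an effective relative Mumford divisor meeting $C$ in its smooth locus, with $\mathcal{O}_C(D)\cong\mathscr{L}$ and $K_C+wD\sim_{\mathbb{Q}}K_C+w\mathscr{L}$. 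Unwinding Definition~\ref{de--K-st--qmaps}, for every semiample test configuration $(\mathcal{C},\mathcal{L})$ for $(C,K_C+w\mathscr{L})$ one has $\mathrm{DF}_q(\mathcal{C},\mathcal{L})=\mathrm{DF}_{wD}(\widetilde{\mathcal{C}},\nu^*\mathcal{L})$, which is exactly the Donaldson--Futaki invariant (Definition~\ref{de--tc}) of the polarized deminormal log pair $(C,wD;K_C+wD)$. As all general members $D$ are linearly equivalent and the conductor contribution in the definition is computed on the normalization, the right-hand side is independent of the chosen $f$. Hence the K-(semi)stability of $q$ coincides, tautologically, with the K-(semi)stability of the log-canonically polarized pair $(C,wD)$.

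Granting this reduction, the implication $(1)\Rightarrow(2)$ is immediate. For $(3)\Rightarrow(1)$, a stable quasimap furnishes $D$ with $(C,wD)$ slc and $K_C+wD$ ample, so $(C,wD)$ is a one-dimensional stable pair canonically polarized by its own log canonical class. Stable pairs polarized by $K+\Delta$ are K-stable: in the normal case this is Odaka's discrepancy criterion, and the deminormal formulation follows by passing to $\widetilde{C}$ with the conductor $D_{\widetilde{C}}$ adjoined, precisely as $\mathrm{DF}_{wD}$ is defined, with the relevant intersection-theoretic input provided by \cite{WX} and the KSBA framework of \cite{kollar-moduli,PX}. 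Alternatively, since $\dim C=1$, I would verify it by hand: for a one-dimensional test configuration the formula in Definition~\ref{de--tc} collapses to a single intersection number on $\widetilde{\mathcal{C}}^{\mathrm{c}}$, and strict positivity of $\mathrm{DF}_q$ for every nontrivial degeneration reduces to positivity of the log discrepancies of the slc pair $(C,wD)$, giving $\mathrm{DF}_q>0$, i.e.~(1).

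For $(2)\Rightarrow(3)$ I argue by contraposition; ampleness being part of the standing setup, only the slc condition must be produced. In dimension one, nodality of $C$, the bound $w\le 1$, and the Mumford hypothesis make $(C,wD)$ automatically slc away from the base points of $q$, so the sole possible failure is a point $p$ in the fixed part $\mathrm{Bs}(q)$ with $w\cdot\mathrm{mult}_p(D)>1$. Degenerating along such a non-log-canonical place (the weighted blow-up / deformation-to-the-normal-cone test configuration) produces, by the slc refinement of Odaka's destabilization, a semiample test configuration with $\mathrm{DF}_q<0$, contradicting K-semistability. Therefore $(C,wD)$ is slc, and together with the given ampleness this shows $q$ is a stable quasimap, completing the cycle.

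The genuine content, and the main obstacle, lies in the input invoked in the last two paragraphs: the equivalence, for the deminormal log-canonically polarized curve $(C,wD)$, between K-(semi)stability and KSBA-stability. The normal case is classical, but the deminormal/slc case requires the semi-log-canonical refinement, namely the explicit construction of destabilizing test configurations from non-log-canonical base points and the verification of the sign of $\mathrm{DF}_q$ on the normalization with conductor. I would isolate this as a separate lemma; the saving grace is that $\dim C=1$ renders the discrepancy bookkeeping completely explicit, so both the destabilizing configurations and their Donaldson--Futaki invariants can be written down directly.
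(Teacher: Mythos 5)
Your proposal follows essentially the same route as the paper: the paper likewise identifies $\mathrm{DF}_q$ with the Donaldson--Futaki invariant of the log pair $(C,w\,\mathrm{div}(f))$, proves $(2)\Rightarrow(3)$ by contraposition using the blow-up test configuration $\mathrm{Bl}_{p\times\{0\}}(C\times\mathbb{A}^1)$ at a point $p\in\mathrm{Bs}(q)$ around which no member of $|\mathscr{L}|_q$ yields an slc pair (following the proof of \cite[Theorem 9.1]{BHJ}), and deduces $(3)\Rightarrow(1)$ from the K-stability of canonically polarized slc pairs via \cite[Corollary 9.3]{BHJ} together with \cite[Proposition 6]{Fjtop}.

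The one place where you are too quick is $(1)\Rightarrow(2)$, which you call immediate. It is not, because of a quantifier mismatch built into Definition \ref{de--K-st--qmaps}: K-stability only requires $\mathrm{DF}_q>0$ on \emph{nontrivial deminormal} semiample test configurations, whereas K-semistability requires $\mathrm{DF}_q\ge0$ on \emph{all} semiample test configurations. To pass from the first class to the second one must compare the Donaldson--Futaki invariant of an arbitrary (possibly non-deminormal) test configuration with that of an associated deminormal one, without increasing it; this is precisely what the paper's citations \cite[Proposition 3.8]{Odaka} and \cite[Proposition 4]{Fjtop} supply. The gap is fixable, since the needed comparison results exist in the literature, but it cannot be waved away as a triviality: it is the only implication among the three for which the paper needs external input beyond the blow-up destabilization and the canonically polarized case.
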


\begin{proof}
(1)$\Rightarrow$(2) follows from \cite[Proposition 3.8]{Odaka} and \cite[Proposition 4]{Fjtop}.

For (2)$\Rightarrow$(3), assume that there exists a K-semistable quasimap $q$ that is not stable.
Then $(C,w\mathrm{div}(f))$ is not slc for any $f\in\mathrm{Sec}_q(\mathscr{L})$.
It is easy to check that there exists a point $p\in \mathrm{Bs}(q)$ such that $(C,w\mathrm{div}(f))$ is not slc around $p$ for any $f\in\mathrm{Sec}_q(\mathscr{L})$.
Then, considering a test configuration $\mathrm{Bl}_{p\times\{0\}}(C\times\mathbb{A}^1)$ that is the blow up at $p\times\{0\}$, we can see that $q$ is not K-semistable by the proof of \cite[Theorem 9.1]{BHJ}.
Therefore, we obtain (2)$\Rightarrow$(3).

(3)$\Rightarrow$(1) follows from \cite[Corollary 9.3]{BHJ} and \cite[Proposition 6]{Fjtop}.
\end{proof}

We summarize the results from \cite{HH2} on K-semistable quasimaps with $\mu<0$ in Definition \ref{de--K-st--qmaps}, which we call {\it log Fano K-semistable quasimaps}.

\begin{de}[Moduli stack of K-semistable log Fano quasimaps]\label{de--k-moduli--log--fano--qmaps}
Fix $m\in\mathbb{Z}_{>0}$, $w\in\mathbb{Q}\cap(0,1)$ and $v\in\mathbb{Q}_{>0}$. 
Let $\iota\colon X\hookrightarrow\mathbb{P}^N$ be a closed immersion of a normal variety.
Let $\mathcal{M}^{\mathrm{Kss,qm}}_{m,w,-v,\iota}$ be a stack such that for any scheme $S$, the collection of objects $\mathcal{M}^{\mathrm{Kss,qm}}_{m,w,-v,\iota}(S)$ is 
$$\left\{
 \vcenter{
 \xymatrix@C=12pt{
\mathcal{C}\ar[rr]^-{q}\ar[dr]_{\pi}&& [\mathrm{Cone}(X)_S/\mathbb{G}_{m,S}] \ar[dl]\\
&S
}
}
\;\middle|
\begin{array}{rl}
(i)&\text{$q$ is a family of K-semistable log Fano}\\
&\text{quasimaps of degree $m$ and weight $w$,}\\
(ii)&\text{$\mathrm{deg}_{\mathcal{C}_{\bar{s}}}(K_{\mathcal{C}_{\bar{s}}})+wm=-v$}\\
&\text{for any geometric point $\bar{s}\in S$}
\end{array}\right\},$$
and we set its arrows to be isomorphisms of families of quasimaps.
\end{de}

\begin{thm}[{\cite[Theorem 3.45]{HH2}}]\label{thm--logfanoqmaps}
 Notations as in Definition \ref{de--k-moduli--log--fano--qmaps}.
 $\mathcal{M}^{\mathrm{Kss,qm}}_{m,w,-v,\iota}$ is an Artin stack of finite type with a projective good moduli space $M^{\mathrm{Kps,qm}}_{m,w,-v,\iota}$.   
 Furthermore, $M^{\mathrm{Kps,qm}}_{m,w,-v,\iota}$ admits an ample $\mathbb{Q}$-line bundle $\Lambda^{\mathrm{qmaps}}_{\mathrm{CM}}$, which is defined as \cite[Definition 3.34 and Lemma 3.36]{HH2}.
\end{thm}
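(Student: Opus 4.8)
The plan is to realize $\mathcal{M}^{\mathrm{Kss,qm}}_{m,w,-v,\iota}$ as a K-moduli problem for one-dimensional log Fano pairs enriched with the quasimap (section) data, and then to transport the now-standard existence, properness and projectivity statements for K-moduli of log Fano pairs to this setting. Concretely, a family of log Fano quasimaps of weight $w$ and degree $m$ determines, for a general $f\in\mathrm{Sec}_q(\mathscr{L})$, the log pair $(\mathcal{C}_{\bar s},w\,\mathrm{div}_{\mathscr{L}}(f))$; since $\mu=2p_a(\mathcal{C}_{\bar s})-2+wm=-v<0$ this is a one-dimensional log Fano pair (of fixed arithmetic genus $p_a$), and by Definition \ref{de--K-st--qmaps} the K-semistability of $q$ is exactly the K-semistability of this pair. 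The remaining data of $q$ is the degree-$m$ line bundle $\mathscr{L}$ together with the $N+1$ sections taken up to $\mathbb{G}_m$, whose induced rational map lands in $X\subset\mathbb{P}^N$; because $\iota$ is a closed immersion of a proper normal variety, this is a proper (indeed closed) condition on the section data.

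First I would prove boundedness: with $p_a$ and $\deg\mathscr{L}=m$ fixed, the underlying K-semistable log Fano curves $(\mathcal{C},wD)$ and the line bundle $\mathscr{L}$ form a bounded family, and since $H^0(\mathcal{C}_{\bar s},\mathscr{L}_{\bar s})$ has bounded dimension the admissible $(N+1)$-tuples of sections are bounded as well. Combining boundedness with the openness of K-semistability, one realizes $\mathcal{M}^{\mathrm{Kss,qm}}_{m,w,-v,\iota}$ as a quotient stack $[Z/G]$, where $Z$ is a locally closed subscheme of a suitable parameter scheme carrying the sections (an appropriate Quot or Hilbert scheme, cf.\ Remark \ref{rem--toda}) cut out by the quasimap, log Fano, and K-semistability conditions, and $G$ is a reductive group. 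This exhibits it as an Artin stack of finite type.

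Next I would produce the good moduli space. The intrinsic criteria of \cite{AHLH}, namely $\Theta$-reductivity and S-completeness, follow for the log Fano part from the valuative theory of K-stability (uniqueness of K-semistable degenerations and the existence of special test configurations) as developed in \cite{XZ,LXZ}; the extra section data is handled using that the quasimap condition is proper over $X$, so limits of sections are again sections landing in $X$. This yields a good moduli space $M^{\mathrm{Kps,qm}}_{m,w,-v,\iota}$. Properness is then the valuative criterion: given a family over a punctured curve, stable reduction for K-moduli of log Fano pairs \cite{LXZ} provides, after a finite base change, a unique K-polystable limit of $(\mathcal{C},wD)$, and the properness of $X\hookrightarrow\mathbb{P}^N$ lets one extend the section data uniquely, producing the limiting quasimap.

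The main obstacle is projectivity, that is, showing that the descended CM $\mathbb{Q}$-line bundle $\Lambda^{\mathrm{qmaps}}_{\mathrm{CM}}$ (defined as in \cite[Definition 3.34, Lemma 3.36]{HH2}) is ample on the proper space $M^{\mathrm{Kps,qm}}_{m,w,-v,\iota}$. Here I would invoke the positivity of the CM line bundle on K-moduli of log Fano pairs \cite{CP,XZ}, adapted so as to incorporate the quasimap polarization, to show that $\Lambda^{\mathrm{qmaps}}_{\mathrm{CM}}$ is nef and is big on every positive-dimensional subvariety; since the good moduli space is proper, the Nakai--Moishezon criterion for proper algebraic spaces (used exactly as in Theorem \ref{thm--ksba} via \cite{FM3}) then upgrades this to ampleness. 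The delicate point is to verify the CM positivity in the presence of the extra section data and of the weighting by $w$, rather than for a fixed log Fano polarization; this is precisely where the log Fano K-moduli positivity machinery must be re-examined, and it is the step I expect to be the hardest.
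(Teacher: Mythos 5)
You should first be aware that this paper does not actually prove Theorem \ref{thm--logfanoqmaps}: it is quoted wholesale from the earlier work \cite[Theorem 3.45]{HH2}, and Section \ref{sec--quasimap} of the present paper only establishes the complementary case $\mu>0$ (Proposition \ref{lem--quasimap--stack} and Theorem \ref{thm--qm--cm--positive}). So there is no internal proof to compare your proposal against; the fair benchmark is the cited machinery together with the way the paper handles the $\mu>0$ analogue. Measured against that, your overall architecture --- boundedness, a quotient-stack presentation, existence of a good moduli space via \cite{AHLH}, properness, and finally CM ampleness via Nakai--Moishezon --- is the expected one, and your identification of K-semistability of $q$ with K-semistability of the pair $(\mathcal{C},w\,\mathrm{div}_{\mathscr{L}}(f))$ for general $f$ is consistent with Definition \ref{de--K-st--qmaps}.

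Two steps of your sketch, however, have genuine gaps. First, properness: you propose to take the K-polystable limit of the underlying log Fano pairs via \cite{LXZ} and then ``extend the section data uniquely'' using properness of $X\hookrightarrow\mathbb{P}^N$. This overlooks the quasimap-specific phenomena: in a limit the $N+1$ sections can acquire a common vanishing divisor or vanish identically along components of the special fiber, so the naive extension need not satisfy the quasimap condition, and the limit curve compatible with the section data need not coincide with a pair-theoretic K-polystable limit chosen in advance. Handling this requires base-locus/twisting arguments of the kind used for stable quotients and quasimaps (cf.\ \cite{MOP,Toda}); note that even in the $\mu>0$ case the paper does not reprove properness but imports it from \cite[Theorem 2.12]{Toda}. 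Second, projectivity: you explicitly defer the hardest point, positivity of the CM line bundle in the presence of the section data and the weight $w$, to an unspecified adaptation of \cite{CP,XZ}. For $\mu>0$ the paper does something concretely different: it compares the quasimap CM line bundle with the CM line bundle of an associated stable family via the degree inequality of \cite{WX} (Proposition \ref{prop--wx}, Lemma \ref{lem--CM-minimize}), combines this with ampleness on the KSBA moduli \cite{PX} and the pseudoeffectivity criterion of \cite{BDPP} to get bigness and nefness, and only then applies Nakai--Moishezon \cite{kollar-moduli-stable-surface-proj,FM3}. An analogous explicit comparison (against the CM line bundle on the log Fano K-moduli of \cite{XZ}) is what would have to be written down for $\mu<0$; asserting that the \cite{XZ} machinery ``adapts'' is precisely the missing content, not a proof of it.
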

For the definition of good moduli space, see \cite{alper,AHLH} or \cite[Section 8]{Xu} for example.
From now on, we concentrate on stable quasimaps.

\begin{de}[Moduli stack of stable quasimaps]\label{de--moduli--stablequot}
Fix $m\in\mathbb{Z}_{>0}$, $w\in\mathbb{Q}\cap(0,1]$ and $v\in\mathbb{Q}_{>0}$.
Let $\iota\colon X\hookrightarrow\mathbb{P}^N$ be a closed immersion of a normal variety.
Let $\mathcal{M}^{\mathrm{Kss,qm}}_{m,w,v,\iota}$ be a stack such that for any scheme $S$, the collection of objects $\mathcal{M}^{\mathrm{Kss,qm}}_{m,w,v,\iota}(S)$ is 
$$\left\{
 \vcenter{
 \xymatrix@C=12pt{
\mathcal{C}\ar[rr]^-{q}\ar[dr]_{\pi}&& [\mathrm{Cone}(X)_S/\mathbb{G}_{m,S}] \ar[dl]\\
&S
}
}
\;\middle|
\begin{array}{rl}
(i)&\text{$q$ is a family of stable quasimaps}\\
&\text{of degree $m$ and weight $w$,}\\
(ii)&\text{$\mathrm{deg}_{\mathcal{C}_{\bar{s}}}(K_{\mathcal{C}_{\bar{s}}})+wm=v$}\\
&\text{for any geometric point $\bar{s}\in S$}
\end{array}\right\},$$
and we set its arrows to be isomorphisms of families of quasimaps.
\end{de}

\begin{prop}\label{lem--quasimap--stack}
 Notations as in Definition \ref{de--moduli--stablequot}.
 $\mathcal{M}^{\mathrm{Kss,qm}}_{m,w,v,\iota}$ is a proper Deligne--Mumford stack. In particular, there exists a coarse moduli space $M^{\mathrm{Kps,qm}}_{m,w,v,\iota}$.   
\end{prop}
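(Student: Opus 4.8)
The plan is to establish the three required properties of $\mathcal{M}^{\mathrm{Kss,qm}}_{m,w,v,\iota}$ separately: that it is a Deligne--Mumford stack of finite type, that it is separated, and that it is proper, after which the existence of the coarse moduli space follows immediately from \cite{KeM}. The crucial structural fact I would exploit is Proposition \ref{prop--K-stable-quasimaps--stability}: since we are in the regime $\mu>0$ (because $v=\mathrm{deg}_{\mathcal{C}_{\bar s}}(K_{\mathcal{C}_{\bar s}})+wm>0$), K-stability, K-semistability, and stability of the quasimap in the sense of Definition \ref{de--qmaps} all coincide. Hence every geometric point has \emph{finite and reduced} automorphism group: an automorphism of a stable quasimap $q$ induces an automorphism of $(\mathcal{C}_{\bar s},w\mathrm{div}(f))$ with $K_{\mathcal{C}_{\bar s}}+w\mathrm{div}(f)$ ample, and such an slc polarized pair has finite automorphism group. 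This finiteness of stabilizers is what will upgrade the algebraic stack to a Deligne--Mumford stack.

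First I would exhibit $\mathcal{M}^{\mathrm{Kss,qm}}_{m,w,v,\iota}$ as an algebraic stack of finite type. The natural approach is to use boundedness: a family of stable quasimaps of fixed degree $m$ and weight $w$ with fixed $v$ amounts to a family of nodal curves $\mathcal{C}\to S$ together with a line bundle $\mathscr{L}$ of relative degree $m$ and a map $\mathcal{O}_{\mathcal{C}}^{\oplus N+1}\to\mathscr{L}$ landing in $\mathrm{Cone}(X)$, subject to the stability condition that $K_{\mathcal{C}/S}+w\mathscr{L}$ behaves like an ample polarization. I would realize the objects inside a relative $\mathbf{Quot}$- or $\mathbf{Hom}$-scheme over the Artin stack of prestable curves (using the representability results recalled in the Representable functors subsection), cutting out by the locally closed conditions that the cokernel is flat, that the pair is slc, and that $K+w\mathscr{L}$ is ample; the stability locus is open by openness of slc and of ampleness in families. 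Finite type then follows from boundedness of stable quotients/quasimaps with fixed numerical invariants, which is essentially Toda's boundedness via the identification in Remark \ref{rem--toda}.

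For separatedness and properness I would verify the valuative criterion over a discrete valuation ring $R$ with fraction field $K$. Here the key input is Proposition \ref{prop--val--crit--stablemap} on uniqueness and existence of limits of families of stable maps (the stable-maps/stable-pairs semistable reduction machinery), combined with the translation in Remark \ref{rem--toda} between stable quasimaps and stable quotients. Concretely: given a family of stable quasimaps over $\mathrm{Spec}\,K$, I would first take the limit of the underlying family of stable pairs $(\mathcal{C}_K, w\mathrm{div}(f))$ using properness of the KSBA-type moduli of stable maps, then extend the line bundle $\mathscr{L}$ and the section module over the central fiber, possibly after a finite base change, and check that the resulting central object is again a stable quasimap. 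Uniqueness of the extension follows from the corresponding uniqueness statement for stable maps together with the finiteness of automorphisms. The \emph{main obstacle} I anticipate is precisely this limit step: one must ensure that the limiting section of $\mathscr{L}$ does not vanish on a whole component (so that the fixed part $\mathrm{Bs}(q)$ stays compatible with stability) and that the central fiber remains a \emph{quasimap}, i.e.\ that the generic point of each component still maps into $X$ rather than degenerating into the boundary $\mathrm{Cone}(X)\setminus(X\times\mathbb{G}_m)$; controlling this base-point behavior and verifying that stability (equivalently K-stability, via Proposition \ref{prop--K-stable-quasimaps--stability}) is preserved under the limit is the technical heart of the argument.
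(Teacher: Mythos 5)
Your construction of the stack as a finite-type algebraic (then Deligne--Mumford) stack matches the paper's in substance: the paper also uses boundedness (via \cite[Theorem 1.1]{hmx-boundgentype}) and then cuts out a parameter scheme inside a Hilbert scheme, a Quot scheme, and the Hom scheme $\mathbf{Hom}(\mathcal{O}_{\mathcal{U}}^{N+1},\mathscr{F})$ by open/locally closed slc, ampleness, and factorization conditions, and presents $\mathcal{M}^{\mathrm{Kss,qm}}_{m,w,v,\iota}$ as a quotient $[Z_5/PGL(P(1))\times PGL(Q(l_1))]$.

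Where you genuinely diverge is properness, and this is also where your proposal has a gap. The paper does \emph{not} run the valuative criterion directly: it observes that the factorization-through-$\mathrm{Cone}(X)$ condition is closed, so $\mathcal{M}^{\mathrm{Kss,qm}}_{m,w,v,\iota}$ is a closed substack of $\mathcal{M}^{\mathrm{Kss,qm}}_{m,w,v,\mathrm{id}_{\mathbb{P}^N}}$, and the latter is proper by Toda's theorem on $\epsilon$-stable quotients (\cite[Theorem 2.12]{Toda}) via the dictionary of Remark \ref{rem--toda}. Your plan instead redoes semistable reduction by hand: take the KSBA limit of $(\mathcal{C}_K,w\,\mathrm{div}(f))$, extend $\mathscr{L}$ and the sections, and check the central fiber is again a stable quasimap. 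That is essentially Toda's original argument, and it can be made to work, but as written you only \emph{name} the technical heart (controlling the fixed part, ruling out components degenerating into the cone, reconstructing all $N+1$ sections and not just the divisor of one general section, verifying stability of the limit) without resolving it; moreover the statement you lean on for limits of stable maps, Proposition \ref{prop--val--crit--stablemap}, is not actually available in the paper (it sits in a commented-out block and is never proved), so your argument rests on an unestablished input. The paper's route buys exactly this: by embedding into the $X=\mathbb{P}^N$ case as a closed substack, all of the delicate limit analysis is outsourced to Toda, and no new valuative-criterion argument is needed. If you want to keep your direct approach, you must supply the limit construction (fixed part, MMP/relative log canonical model to produce the correct central curve, and reconstruction of the quasimap sections) rather than flag it as an obstacle.
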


\begin{proof}
It is well known that there exists a positive integer $l_0\in\mathbb{Z}_{>0}$ such that $l_0(K_C+w\mathscr{L})$ is a very ample Cartier divisor such that $H^1(C,\mathcal{O}_C(ll_0(K_C+w\mathscr{L})))=0$ for any $l\in\mathbb{Z}_{>0}$ and object $(q\colon C\to [\mathrm{Cone}(X)/\mathbb{G}_m])\in \mathcal{M}^{\mathrm{Kss,qm}}_{m,w,v,\mathrm{id}_{\mathbb{P}^N}}(\mathbbm{k})$ (cf.~\cite[Theorem 1.1]{hmx-boundgentype}).
Here, $\mathscr{L}$ is the line bundle induced by $q$.
Furthermore, there are only finitely many possibilities of the Hilbert polynomial $H^0(C,\mathcal{O}_C(ll_0(K_C+w\mathscr{L})))$.
By \cite[Theorem 1.1]{hmx-boundgentype}, we also see that $\mathscr{L}$ is bounded, that is, there exists $l_1\in\mathbb{Z}_{>0}$ such that $\mathscr{L}$ is $l_1$-regular with respect to $l_0(K_C+w\mathscr{L})$ for any object $(q\colon C\to [\mathrm{Cone}(X)/\mathbb{G}_m])\in \mathcal{M}^{\mathrm{Kss,qm}}_{m,r,u,v,\mathrm{id}_{\mathbb{P}^N}}(\mathbbm{k})$ and there are only finitely many possibilities of Hilbert polynomials $P(l)=H^0(C,\mathcal{O}_C(ll_0(K_C+w\mathscr{L})))$ and $Q(l)=\chi(C,\mathscr{L}\otimes\mathcal{O}_C(ll_0(K_C+w\mathscr{L})))$.  
To show the assertion, it suffices to show that the following closed and open substack $\mathcal{M}^{\mathrm{Kss,qm}}_{P,Q}$ is an Artin stack of finite type:
$$\mathcal{M}^{\mathrm{Kss,qm}}_{P,Q}(S):=\left\{
 q 
\;\middle|
\begin{array}{rl}
&\text{$P(l)=H^0(\mathcal{C}_{\bar{s}},\mathcal{O}_{\mathcal{C}_{\bar{s}}}(ll_0(K_{\mathcal{C}_{\bar{s}}}+w\mathscr{L}^q_{\bar{s}})))$ and}\\
&\text{$Q(l)=\chi(\mathcal{C}_{\bar{s}},\mathscr{L}^q_{\bar{s}}\otimes\mathcal{O}_{\mathcal{C}_{\bar{s}}}(ll_0(K_{\mathcal{C}_{\bar{s}}}+w\mathscr{L}^q_{\bar{s}})))$,}\\
&\text{for any geometric point $\bar{s}\in S$, where}\\
&\text{$\mathscr{L}^q$ is the induced line bundle by $q$.}
\end{array}\right\}.$$

Consider the Hilbert scheme $H$ of closed subschemes of $\mathbb{P}^{P(1)-1}$ with the Hilbert polynomial $P$.
Let $\pi_{\mathcal{U}}\colon\mathcal{U}\to H$ be the universal closed subscheme of $\mathbb{P}^{P(1)-1}_H$ and $\mathcal{A}:=\mathcal{O}_{\mathbb{P}^{P(1)-1}_H}(1)|_{\mathcal{U}}$.
By \cite[Proposition 13.2.15]{Ols}, take an open subscheme $Z_1\subset H$ such that 
\[
Z_1:=\{s\in H| \text{ $\mathcal{U}_{\bar{s}}$ has at worst nodal singularities}\}.
\]
Next, consider the following open subscheme $Z_2\subset \mathbf{Quot}_{\mathcal{U}_{Z_1}/Z_1}^{\mathcal{O}(-l_1\mathcal{A})^{Q(l_1)-1}, Q}$ such that if we let $\mathscr{F}$ be the universal quotient sheaf, then
\[
Z_2:=\{s\in \mathbf{Quot}_{\mathcal{U}_{Z_1}/Z_1}^{\mathcal{O}(-l_1\mathcal{A})^{Q(l_1)-1}, Q}| \mathscr{F}_{s} \text{ is an invertible sheaf}\}.
\]
By the same argument in \cite[Section 3.2]{HH2}, we take a locally closed subscheme $Z_3\subset Z_2$ such that for any morphism $g\colon T\to Z_2$ from a scheme, $g$ factors through $Z_3$ if and only if $g$ satisfies the following:
\begin{itemize}
    \item $\omega_{\mathcal{U}_T/T}^{\otimes l_0}\otimes \mathscr{F}^{\otimes l_0w}_T\sim_{T}\mathcal{A}$, and 
    \item $\pi_{\mathcal{U},T*}\mathcal{O}_{\mathcal{U}_T}(\mathcal{A}_T)\to H^0(\mathcal{A}_{t})$ is surjective for any $t\in T$.
\end{itemize} 
Next, we consider $\mathcal{H}:=\mathbf{Hom}_{Z_{3}}(\mathcal{O}_{\mathcal{U}_{Z_{3}}}^{N+1},\mathscr{F}_{Z_3})$.
Let $g\in\mathfrak{Hom}(\mathcal{O}_{\mathcal{U}_{\mathcal{H}}}^{N+1},\mathscr{F}_{\mathcal{H}})(\mathcal{H})$ be the universal homomorphism.
We note that $g$ corresponds to a global section $(f_0,\ldots,f_N)\in H^0(\mathcal{H},\pi_{\mathcal{U}_{\mathcal{H}}*}\mathscr{F}_\mathcal{H})^{\oplus N+1}$.
Let $Z_4\subset H$ be an open subscheme such that for any $s\in \mathcal{H}$, $s$ is contained in $Z_4$ if and only if $(\mathcal{U}_{\bar{s}},w\mathrm{div}(f_{\bar{s}}))$ is slc for some $(a_i)_{i=0}^N\in\overline{\kappa(s)}$, where $f_{\bar{s}}=\sum_{i=0}^N a_if_{i,\bar{s}}$.
Furthermore, as in the discussion of \cite[Section 3.2]{HH2}, there exists a closed subscheme $Z_5\subset Z_4$ such that for any morphism $\mu\colon T\to Z_4$, $\mu$ factors through $Z_5$ if and only if $g_{\mathcal{U}_T}$ factors through $\mathrm{Cone}(X)\times T\subset \mathbb{A}^{N+1}_{T}$.
Note that $Z_4$ is a scheme quasi-projective over $\mathbbm{k}$.

By the argument of the proof of \cite[Theorem 3.23]{HH2}, we see that $\mathcal{M}^{\mathrm{Kss,qm}}_{P,Q}\cong [Z_5/PGL(P(1))\times PGL(Q(l_1))]$.
Thus, $\mathcal{M}^{\mathrm{Kss,qm}}_{m,w,v,\iota}$ is an Artin stack of finite type and a closed substack of $\mathcal{M}^{\mathrm{Kss,qm}}_{m,w,v,\mathrm{id}_{\mathbb{P}}^N}$, which has $[Z_4/PGL(P(1))\times PGL(Q(l_1))]$ as a connected component.
$\mathcal{M}^{\mathrm{Kss,qm}}_{m,w,v,\mathrm{id}_{\mathbb{P}}^N}$ is a proper Deligne--Mumford stack by \cite[Theorem 2.12]{Toda} (cf.~Remark \ref{rem--toda}) and hence so is $\mathcal{M}^{\mathrm{Kss,qm}}_{m,w,v,\iota}$.
The last assertion follows from \cite{KeM}.
\end{proof}

\begin{de}
    Let $q\colon \mathcal{C}\to [\mathrm{Cone}(X)_S/\mathbb{G}_{m,S}]$ be a family of stable quasimaps of degree $m$ and weight $w$.
    Then, we set the CM line bundle $\lambda_{\mathrm{CM},q}$ as follows:
    Let $\pi\colon \mathcal{C}\to S$ be the canonical morphism and $\mathscr{L}$ the induced line bundle.
    Take a sufficiently large and divisible integer $M>0$ such that $M(K_{\mathcal{C}/S}+w\mathscr{L})$ is a $\pi$-very ample line bundle. By \cite[Theorem 4]{KnMu}, there exist unique line bundles $\lambda_0$, $\lambda_1$, and $\lambda_2$ such that 
    \[
    \mathrm{det}\pi_*\mathcal{O}_Y(m(M(K_{Y/S}+B)+Mw\mathscr{L}))\sim\lambda_0^{\otimes\binom{m}{2}}\otimes\lambda_1^{\otimes m}\otimes \lambda_2
    \]
    for any $m\in\mathbb{Z}_{>0}$.
    Then, we set
    \[
    \lambda_{\mathrm{CM},q}:=\frac{2}{M^2}\lambda_0.
    \]
    For any morphism $h\colon C\to S$ from a smooth proper curve, we see that
    \[
    \mathrm{deg}_{C}h^*\lambda_{\mathrm{CM},q}=(K_{\mathcal{C}_C/C}+w\mathscr{L}_C)^2.
    \]

By using the data of the CM line bundle of the universal family of $\mathcal{M}^{\mathrm{Kss,qm}}_{m,w,v,\iota}$, we can define a $\mathbb{Q}$-line bundle $\lambda_{\mathrm{CM}}^{\mathrm{qmaps}}$ such that for any object $q\colon \mathcal{C}\to [\mathrm{Cone}(X)_S/\mathbb{G}_{m,S}]$ of $\mathcal{M}^{\mathrm{Kss,qm}}_{m,w,v,\iota}(S)$, $\lambda_{\mathrm{CM},q}$ is the pullback of $\lambda_{\mathrm{CM}}^{\mathrm{qmaps}}$.
Moreover, $\lambda_{\mathrm{CM}}^{\mathrm{qmaps}}$ is a pullback of a $\mathbb{Q}$-line bundle $\Lambda_{\mathrm{CM}}^{\mathrm{qmaps}}$ on $M^{\mathrm{Kps,qm}}_{m,w,v,\iota}$.
These facts can be shown by the same discussion as \cite[Lemma 3.36]{HH2}.
We also call $\lambda_{\mathrm{CM}}^{\mathrm{qmaps}}$ and $\Lambda_{\mathrm{CM}}^{\mathrm{qmaps}}$ the CM line bundles.
\end{de}

The following is the main result in this section.

\begin{thm}\label{thm--qm--cm--positive}
   $\Lambda_{\mathrm{CM}}^{\mathrm{qmaps}}$ is ample. In particular,  $M^{\mathrm{Kps,qm}}_{m,w,v,\iota}$ is a projective scheme.
\end{thm}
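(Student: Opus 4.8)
The plan is to combine the properness of $M:=M^{\mathrm{Kps,qm}}_{m,w,v,\iota}$, already available from Proposition \ref{lem--quasimap--stack}, with the Nakai--Moishezon criterion for proper algebraic spaces \cite{kollar-moduli-stable-surface-proj,FM3}, feeding the required positivity into it through the ample CM line bundle on the KSBA moduli space (Theorem \ref{thm--ksba}). Since $M$ is the coarse moduli space of a proper Deligne--Mumford stack, it is a proper algebraic space, so $\Lambda^{\mathrm{qmaps}}_{\mathrm{CM}}$ is ample as soon as it is nef and $(\Lambda^{\mathrm{qmaps}}_{\mathrm{CM}}|_V)^{\dim V}>0$ for every integral closed subspace $V\subseteq M$. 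Both conditions may be checked after replacing $V$ by a finite cover $V'\to V$ through which $V'\to M$ lifts to $\mathcal{M}^{\mathrm{Kss,qm}}_{m,w,v,\iota}$; over $V'$ we then have a genuine family of stable quasimaps $q\colon\mathcal{C}\to[\mathrm{Cone}(X)_{V'}/\mathbb{G}_{m,V'}]$ with associated line bundle $\mathscr{L}$, to which $\Lambda^{\mathrm{qmaps}}_{\mathrm{CM}}$ restricts as $\lambda_{\mathrm{CM},q}$.

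The first step is to realise $\lambda_{\mathrm{CM},q}$ as the pullback of the KSBA CM line bundle. Because $q$ is a family of stable quasimaps, a general section produces a divisor $\mathcal{D}\in|\mathscr{L}|$ whose fibrewise restrictions give slc pairs; after a further finite cover of $V'$ and using the properness of $\mathcal{M}^{\mathrm{KSBA}}$ together with the theory of $K$-flat relative Mumford divisors, one extends this to a relative boundary $\mathcal{D}$ making $(\mathcal{C},w\mathcal{D})\to V'$ a family of stable pairs of type $(1,v,w)$. Since $\mathcal{D}\sim_{\mathbb{Q},V'}\mathscr{L}$, we have $K_{\mathcal{C}/V'}+w\mathcal{D}\sim_{\mathbb{Q},V'}K_{\mathcal{C}/V'}+w\mathscr{L}$, so by functoriality of the CM construction the induced morphism $\Phi\colon V'\to M^{\mathrm{KSBA}}_{1,v,w}$ satisfies $\lambda_{\mathrm{CM},q}\sim_{\mathbb{Q}}\Phi^*\Lambda^{\mathrm{KSBA}}_{\mathrm{CM}}$. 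As $\Lambda^{\mathrm{KSBA}}_{\mathrm{CM}}$ is ample by Theorem \ref{thm--ksba}, this already shows that $\Lambda^{\mathrm{qmaps}}_{\mathrm{CM}}$ is nef, and that $(\Lambda^{\mathrm{qmaps}}_{\mathrm{CM}}|_V)^{\dim V}>0$ holds precisely when $\Phi$ is generically finite onto its image.

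It therefore remains to prove this generic finiteness, i.e.\ that the associated family of stable pairs has maximal variation whenever $V'$ is positive-dimensional. The crucial computation is the degree formula $\deg_B\lambda_{\mathrm{CM},q}=(K_{\mathcal{C}_B/B}+w\mathscr{L}_B)^2$ for families over a smooth proper curve $B$. Using the semipositivity of the CM degree for families of stable pairs \cite{WX} (equivalently, the nefness just established), a complete curve $B\subseteq V'$ contracted by $\Phi$ must satisfy $(K_{\mathcal{C}_B/B}+w\mathscr{L}_B)^2=0$. I would then argue that this vanishing forces the family of quasimaps to be constant along $B$: once the polarised curve $(\mathcal{C}_B,K_{\mathcal{C}_B/B}+w\mathscr{L}_B)$ is rigid, $\mathscr{L}_B$ cannot acquire a nontrivial positive-degree twist from $B$ without making the self-intersection strictly positive, and over the complete base $B$ any nonzero defining sections of $\mathscr{L}_B$ are then pulled back from a single fibre, so the sections, and with them the quasimap, cannot vary. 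Hence the locus of constant quasimaps is finite, $\Phi$ has no positive-dimensional complete fibres, and $(\Lambda^{\mathrm{qmaps}}_{\mathrm{CM}}|_V)^{\dim V}>0$ for all $V$; the bookkeeping of the isotrivial loci follows the pattern of \cite{HH2} and \cite{PX}.

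The main obstacle I expect is exactly this last step together with the construction of $\mathcal{D}$: one must simultaneously (a) extend the generically chosen divisor to a relative Mumford divisor with slc fibres over the whole base, controlling degenerations at special fibres, nodes and base points, and (b) translate nonisotriviality of the quasimap into strict positivity of $(K_{\mathcal{C}/B}+w\mathscr{L})^2$, i.e.\ rule out complete positive-dimensional families on which the polarised curve $(\mathcal{C}_B,K_{\mathcal{C}_B/B}+w\mathscr{L}_B)$ stays constant. Step (a) parallels the properness argument for $\mathcal{M}^{\mathrm{Kss,qm}}_{m,w,v,\iota}$ and the filling-in constructions of \cite{HH2}, while step (b) is the genuine positivity input, supplied by the degree formula and \cite{WX}. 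Once these are in place, ampleness of $\Lambda^{\mathrm{qmaps}}_{\mathrm{CM}}$, and hence projectivity of $M^{\mathrm{Kps,qm}}_{m,w,v,\iota}$, follows from the Nakai--Moishezon criterion.
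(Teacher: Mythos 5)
Your high-level frame (properness plus Nakai--Moishezon on finite covers lifting to the stack, comparison with the KSBA moduli, the degree inequality of \cite{WX}) is the same as the paper's, which carries this out in Proposition \ref{prop--key--to--positivity} together with Lemmas \ref{lem--cm-posi-const} and \ref{lem--CM-minimize}. But your central identity $\lambda_{\mathrm{CM},q}\sim_{\mathbb{Q}}\Phi^*\Lambda^{\mathrm{KSBA}}_{\mathrm{CM}}$ is false, and the error is precisely the subtlety this paper is organized around. From $\mathcal{D}\sim_{\mathbb{Q},V'}\mathscr{L}$ you may only conclude that $\mathcal{O}(\mathcal{D})$ and $\mathscr{L}$ agree up to a twist by a $\mathbb{Q}$-line bundle pulled back from the base, and the CM construction is \emph{not} invariant under such twists: twisting the polarization by $p^*M$ changes $(K_{\mathcal{C}/V'}+w\mathscr{L})^2$ by $2wv\,\deg M$ (this is exactly the phenomenon of Example \ref{ex--cm}). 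Moreover a relative divisor $\mathcal{D}\in|\mathscr{L}|$ with slc fibres over \emph{all} of $V'$ and the \emph{same} underlying curve need not exist even after finite covers; if you instead extend the generic stable pairs using properness of the KSBA moduli, the underlying curve changes over the boundary, and the correct relation is the inequality of Proposition \ref{prop--wx} (i.e.\ \cite[Theorem 6]{WX}), recorded in Lemma \ref{lem--CM-minimize}, which yields only pseudoeffectivity of $\lambda_{\mathrm{CM},q}-g^*\Lambda^{\mathrm{KSBA}}_{\mathrm{CM}}$ (via \cite{BDPP}), never equality.

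The locus where this genuinely breaks your argument, rather than merely requiring repair, is the locus of constant quasimaps. For a constant quasimap every member of $|\mathscr{L}|_q$ equals the fixed part $\mathrm{Bs}(q)$, so any divisor-associated map $\Phi$ to a KSBA space forgets the image point in $X$ entirely. Take a family of constant stable quasimaps over a curve $B$ whose image point moves along a curve in $X$, with the nodal curve and $\mathrm{Bs}$ fixed: it is contracted by $\Phi$, so $\Phi^*\Lambda^{\mathrm{KSBA}}_{\mathrm{CM}}$ has degree zero on $B$, it is \emph{not} isotrivial, and nevertheless $\deg_B\lambda_{\mathrm{CM},q}=2wv\cdot\deg(B\to\mathbb{P}^N)>0$, because letting the sections vary forces $\mathscr{L}=p_1^*L_F\otimes p_2^*M$ with $\deg M>0$. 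This simultaneously refutes your steps ``contracted by $\Phi$ $\Rightarrow$ CM degree $0$ $\Rightarrow$ isotrivial'' and the claim that the constant locus is finite (it is finite over $X\times M^{\mathrm{KSBA}}_{1,v,w}$), and it shows your scheme produces no positivity at all in the $X$-direction. The paper supplies this missing positivity in Lemma \ref{lem--cm-posi-const}, where on the constant locus $\Lambda^{\mathrm{qmaps}}_{\mathrm{CM}}\equiv 2wv\,p_1^*\iota^*\mathcal{O}_{\mathbb{P}^N}(1)+p_2^*\Lambda^{\mathrm{KSBA}}_{\mathrm{CM}}$; and on the non-constant locus, since a single divisor does not determine the quasimap, it uses the $2N+3$ divisors $\mathrm{div}_q(f_i)$ and $\mathrm{div}_q(f_i+f_0)$ with small coefficients to obtain a \emph{generically finite} map to a KSBA space, concluding bigness from Theorem \ref{thm--ksba} plus the pseudoeffectivity above.
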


To show this, we introduce the following notion.

\begin{de}
  Let $q\colon \mathcal{C}\to [\mathrm{Cone}(X)_S/\mathbb{G}_{m,S}]$ be a family of stable quasimaps of degree $m$ and weight $w$.
  We say that $q$ has {\it maximal variation} if the morphism $S\to M^{\mathrm{KSBA,qm}}_{m,w,v,\iota}$ induced by $q$ is generically finite.
\end{de}

\begin{lem}\label{lem--cm-posi-const}
   Let $q\colon \mathcal{C}\to [\mathrm{Cone}(X)_S/\mathbb{G}_{m,S}]$ be a family of stable quasimaps of degree $m$ and weight $w$ over a projective normal variety $S$.
   Suppose that $q$ has maximal variation and $q_s$ is constant for any general closed point $s\in S$.
   Then, $\lambda_{\mathrm{CM},q}$ is big and nef on $S$.
\end{lem}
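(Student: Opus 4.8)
The goal is to show that for a maximally varying family of stable quasimaps whose general member is constant, the CM line bundle $\lambda_{\mathrm{CM},q}$ is big and nef on $S$. The situation is that of stable quasimaps with $\mu>0$, so by Proposition \ref{prop--K-stable-quasimaps--stability} each fiber $q_s$ is K-stable, and the CM line bundle should behave positively; the constancy of the general $q_s$ means that the only genuine variation is in the curve $\mathcal{C}_s$ together with the marking datum $\mathrm{div}_{\mathscr{L}}(f)$, so the positivity must be extracted from the variation of the associated slc pair $(\mathcal{C}_s, w\,\mathrm{div}(f))$. The plan is to reduce the assertion to the positivity of the CM line bundle on the KSBA moduli space, which is available through Theorem \ref{thm--ksba}.

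First I would recall, from the definition of $\lambda_{\mathrm{CM},q}$, that for any $h\colon C\to S$ from a smooth proper curve one has $\deg_C h^*\lambda_{\mathrm{CM},q} = (K_{\mathcal{C}_C/C}+w\mathscr{L}_C)^2$. Since $q_s$ is constant for general $s$, the associated line bundle $\mathscr{L}_s$ is, up to the boundary data, determined by the target point, and I would argue that $(\mathcal{C}, w\,\mathrm{div}(f))\to S$ defines a family of stable pairs, hence a morphism $\Phi\colon S\to M^{\mathrm{KSBA}}_{1,v,w}$ (the relevant KSBA moduli space of slc curves with boundary). The \emph{maximal variation} hypothesis should translate precisely into the statement that $\Phi$ is generically finite onto its image. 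Indeed, maximal variation is defined via generic finiteness of $S\to M^{\mathrm{KSBA,qm}}_{m,w,v,\iota}$, and when $q_s$ is generically constant the forgetful comparison between the quasimap moduli and the KSBA moduli of pairs should be generically finite, so that $\Phi$ inherits generic finiteness.

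Next I would identify $\lambda_{\mathrm{CM},q}$ with (a positive multiple of) the pullback $\Phi^*\lambda^{\mathrm{KSBA}}_{\mathrm{CM}}$ of the CM line bundle on the KSBA moduli space. The Knudsen--Mumford leading-coefficient definition of $\lambda_{\mathrm{CM},q}$ using $K_{\mathcal{C}/S}+w\mathscr{L}$ agrees, via the constancy of the target, with the CM line bundle of the stable-pair family $(\mathcal{C}, w\,\mathrm{div}(f))$, which by definition is $\Phi^*\lambda^{\mathrm{KSBA}}_{\mathrm{CM}}$. Granting this identification, Theorem \ref{thm--ksba} gives that $\lambda^{\mathrm{KSBA}}_{\mathrm{CM}}$ is the pullback of an ample $\mathbb{Q}$-line bundle $\Lambda^{\mathrm{KSBA}}_{\mathrm{CM}}$ on the projective coarse space $M^{\mathrm{KSBA}}$. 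The pullback of an ample line bundle under a generically finite projective morphism is big and nef, so $\lambda_{\mathrm{CM},q}\sim_{\mathbb{Q}}\Phi^*(\text{ample})$ is big and nef on $S$, as desired.

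\textbf{Main obstacle.} The delicate point is the precise comparison in the step above: I must verify that when $q_s$ is constant, the quasimap datum is genuinely equivalent (up to generically finite ambiguity) to the stable-pair datum $(\mathcal{C}_s, w\,\mathrm{div}(f_s))$, and that the two CM line bundles match as $\mathbb{Q}$-line bundles and not merely numerically. The subtlety is that the associated $\mathscr{L}$ carries more information than just the boundary divisor class when the target $X$ is positive-dimensional, so one must check that the contribution of $\mathscr{L}$ to the intersection form $(K_{\mathcal{C}/S}+w\mathscr{L})^2$ is accounted for entirely by the boundary under the constancy hypothesis. I would expect this to follow from unwinding the definition of the module of sections $\mathrm{Sec}_q(\mathscr{L})$ together with the fact that $\mathrm{div}(f)\in|\mathscr{L}|_q$, which realizes $\mathscr{L}$ as $\mathcal{O}_{\mathcal{C}}(\mathrm{div}(f))$ fiberwise; the constancy of $q_s$ then pins down the map to $X$ and leaves only the boundary varying. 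Establishing this identification rigorously, and confirming generic finiteness of $\Phi$ from the definition of maximal variation, is the crux of the argument.
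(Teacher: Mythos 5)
There is a genuine gap, and it sits exactly at the step you flagged as the crux: for a constant quasimap the datum is not just the stable pair $(\mathcal{C}_s,w\,\mathrm{Bs}(q_s))$ but also the point of $X$ at which $q_s$ is constant, and isomorphisms of quasimaps preserve that point. Your map $\Phi\colon S\to M^{\mathrm{KSBA}}_{1,v,w}$ forgets it, and both of your key claims fail as a result. First, maximal variation does \emph{not} give generic finiteness of $\Phi$: take $S=X$ (with $\dim X\ge1$), fix a single stable pair $(C_0,wB_0)$ with $\deg B_0=m$, set $\mathcal{C}=C_0\times X$ with projections $\mathrm{pr}_1,\mathrm{pr}_2$, let $\mathscr{L}=\mathrm{pr}_1^*\mathcal{O}_{C_0}(B_0)\otimes\mathrm{pr}_2^*\iota^*\mathcal{O}_{\mathbb{P}^N}(1)$, and define the quasimap by $e_i\mapsto \mathrm{pr}_1^*s_{B_0}\otimes\mathrm{pr}_2^*x_i$, where $s_{B_0}$ cuts out $B_0$ and the $x_i$ are the coordinate functions restricted to $X$. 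Over $s\in X$ this is the constant quasimap at $\iota(s)$ with fixed part $B_0$; distinct points of $X$ give non-isomorphic quasimaps, so the family has maximal variation, yet $\Phi$ is constant and $\Phi^*\Lambda^{\mathrm{KSBA}}_{\mathrm{CM}}=0$ is not big. Second, and correspondingly, the identification $\lambda_{\mathrm{CM},q}\sim_{\mathbb{Q}}\Phi^*\Lambda^{\mathrm{KSBA}}_{\mathrm{CM}}$ is false: $\mathscr{L}-\mathrm{Bs}(q)$ is fiberwise trivial but globally it is the pullback from $S$ of the class recording the motion of the target point (in the example, the ample class $\iota^*\mathcal{O}_{\mathbb{P}^N}(1)|_X$), so expanding $(K_{\mathcal{C}_C/C}+w\mathscr{L}_C)^2$ over a curve $C\to S$ produces, besides the KSBA contribution, a cross term $2wv\deg_C$ of that class ($M^2=0$ and $(K+w\mathrm{Bs}(q))\cdot M=v\deg_C M$ for $M$ a pullback from the base). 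Your expectation that the contribution of $\mathscr{L}$ is "accounted for entirely by the boundary" is exactly what breaks.

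The paper's proof keeps both pieces of data: it constructs a natural \emph{finite} morphism $\mu$ from the locus $M^{\mathrm{Kps,cqm}}_{m,w,v,\iota}$ of constant quasimaps to the product $X\times M^{\mathrm{KSBA}}_{1,v,w}$, and proves the numerical equivalence
\[
\Lambda^{\mathrm{qmaps}}_{\mathrm{CM}}|_{M^{\mathrm{Kps,cqm}}_{m,w,v,\iota}}\equiv 2wv\,p_1^*\iota^*\mathcal{O}_{\mathbb{P}^N}(1)+p_2^*\Lambda^{\mathrm{KSBA}}_{\mathrm{CM}},
\]
i.e.\ the restriction of the CM line bundle is the $\mu$-pullback of an ample class on the product. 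Maximal variation then makes $S\to X\times M^{\mathrm{KSBA}}_{1,v,w}$ generically finite, and bigness and nefness of $\lambda_{\mathrm{CM},q}$ follow. So your overall strategy (reduce to the positivity in Theorem \ref{thm--ksba} via a generically finite classifying map) is the right one, but the reduction must be to $X\times M^{\mathrm{KSBA}}_{1,v,w}$ rather than to $M^{\mathrm{KSBA}}_{1,v,w}$ alone; the extra summand $2wv\,p_1^*\iota^*\mathcal{O}_{\mathbb{P}^N}(1)$ is precisely what supplies positivity in the directions of variation that your $\Phi$ contracts.
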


\begin{proof}
    Consider the following substack $\mathcal{M}^{\mathrm{Kss,ncqm}}_{m,w,v,\iota}$ of $\mathcal{M}^{\mathrm{Kss,qm}}_{m,w,v,\iota}$ such that for any object $q\in \mathcal{M}^{\mathrm{Kss,qm}}_{m,w,v,\iota}(S)$, $q$ belongs to $ \mathcal{M}^{\mathrm{Kss,ncqm}}_{m,w,v,\iota}$ if and only if
    $q_{\bar{s}}$ is not constant for any geometric point $\bar{s}\in S$.
    As in \cite[Lemma 3.26]{HH2}, we see that $\mathcal{M}^{\mathrm{Kss,ncqm}}_{m,w,v,\iota}$ is an open substack.
Let $M^{\mathrm{Kps,cqm}}_{m,w,v,\iota}\subset M^{\mathrm{Kps,qm}}_{m,w,v,\iota}$ be the image of the complement of $\mathcal{M}^{\mathrm{Kss,ncqm}}_{m,w,v,\iota}$ with the reduced structure.
By the same argument as in \cite[Lemma 3.42]{HH2}, we see that there exists a natural finite morphism $\mu\colon M^{\mathrm{Kps,cqm}}_{m,w,v,\iota}\to X\times M^{\mathrm{KSBA}}_{1,v,w}$.
Let $p_1$ and $p_2$ be the projections of $X\times M^{\mathrm{KSBA}}_{1,v,w}$.
To show the assertion, it suffices to show that $\Lambda_{\mathrm{CM}}^{\mathrm{qmaps}}|_{M^{\mathrm{Kps,cqm}}_{m,w,v,\iota}}\equiv 2wvp_1^*\iota^*\mathcal{O}_{\mathbb{P}^N}(1)+p_2^*\Lambda^{\mathrm{KSBA}}_{\mathrm{CM}}$.

Finally, we deal with the numerical equivalence above.
To show this, it is enough to show that for any family of stable quasimaps $q\colon Y\to [\mathrm{Cone}(X)_C/\mathbb{G}_{m,C}]$ over a proper smooth curve that induces a morphism $f\colon C\to M^{\mathrm{Kps,cqm}}_{m,w,v,\iota}$,
\[
(K_{Y/C}+w\mathscr{L})^2=(K_{Y/C}+\mathrm{Bs}(q))^2+2wv\mathrm{deg}_C(f^*p_1^*\iota^*\mathcal{O}_{\mathbb{P}^N}(1)),
\]
where $\mathscr{L}$ is the induced line bundle and $\mathrm{Bs}(q)$ is the horizontal part of $\mathrm{div}(f)$ for any general $f\in \mathrm{Sec}_q(\mathscr{L})$.
Since $\mathscr{L}-\mathrm{Bs}(q)$ is linearly equivalent to $\pi^*f^*p_1^*\iota^*\mathcal{O}_{\mathbb{P}^N}(1)$, where $\pi\colon Y\to C$ is the natural morphism. We obtain the proof.
\end{proof}

\begin{prop}\label{prop--wx}
Let $f\colon (X,\Delta;L)\to C$ be a polarized flat family of relative dimension $d$ over a proper smooth curve $C$ with $f_*\mathcal{O}_X\cong \mathcal{O}_C$.
Suppose that there exists a non-empty open subset $U\subset C$ such that $K_{X_U}+\Delta_U\sim_{\mathbb{Q}}L_U$ and $f_U$ is stable.
Then, there exist a surjective finite morphism $\pi\colon C'\to C$ of degree $r$ and a stable family $(X',\Delta')\to C'$ such that $(X'_U,\Delta'_U)\cong (X_U\times_U\pi^{-1}U,\Delta_U\times_U\pi^{-1}U)$.
Furthermore, 
\[
(K_{X'/C'}+\Delta')^{d+1}\le r((d+1)(K_{X/C}+\Delta)\cdot L^d-dL^{d+1}).
\]
\end{prop}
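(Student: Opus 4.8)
The plan is to build the stable family by a limiting (properness) argument and then to extract the numerical inequality from a comparison on a common resolution, where the genuinely delicate point is a vertical self-intersection estimate.

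\textbf{Construction of the stable family.} Over $U$ the family $f_U\colon(X_U,\Delta_U)\to U$ is stable, so after fixing the fibrewise volume $v=\mathrm{vol}(K_{X_s}+\Delta_s)$ and the boundary coefficient $c$ it defines a morphism $U\to\mathcal{M}^{\mathrm{KSBA}}_{d,v,c}$. By Theorem \ref{thm--ksba} this stack is a proper Deligne--Mumford stack, so applying the valuative criterion of properness at each of the finitely many points of $C\setminus U$—and using that a proper DM stack admits the desired extension only after a finite extension of each local ring—produces a finite surjection $\pi\colon C'\to C$ of some degree $r$ together with a morphism $C'\to\mathcal{M}^{\mathrm{KSBA}}_{d,v,c}$. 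The corresponding family is the desired stable $(X',\Delta')\to C'$, and by construction $(X'_{U'},\Delta'_{U'})\cong(X_U,\Delta_U)\times_U U'$ with $U'=\pi^{-1}(U)$. Concretely $(X',\Delta')$ is the relative log canonical model of the base change $(X_{C'},\Delta_{C'})$.

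\textbf{Reduction to one base.} Writing $(X_{C'},\Delta_{C'};L_{C'})$ for the pullback of $(X,\Delta;L)$, pullback multiplies intersection numbers by $r=\deg\pi$, so $(K_{X_{C'}/C'}+\Delta_{C'})\cdot L_{C'}^d=r\,(K_{X/C}+\Delta)\cdot L^d$ and $L_{C'}^{d+1}=r\,L^{d+1}$. Hence the asserted bound is equivalent to
\[
(K_{X'/C'}+\Delta')^{d+1}\le(d+1)(K_{X_{C'}/C'}+\Delta_{C'})\cdot L_{C'}^d-d\,L_{C'}^{d+1},
\]
a comparison of two families over the same curve $C'$ that agree over $U'$ and whose relative log canonical classes restrict there to (a multiple of) the polarization. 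I would then choose a common resolution $p\colon W\to X_{C'}$, $q\colon W\to X'$ which is an isomorphism over $U'$, and set $N:=q^*(K_{X'/C'}+\Delta')$, $B:=p^*L_{C'}$, $A:=p^*(K_{X_{C'}/C'}+\Delta_{C'})$, all nef, with $A\cdot B^d=(K_{X_{C'}/C'}+\Delta_{C'})\cdot L_{C'}^d$, $B^{d+1}=L_{C'}^{d+1}$ and $N^{d+1}=(K_{X'/C'}+\Delta')^{d+1}$.

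\textbf{The numerical inequality.} Since $X'$ is the relative log canonical model of the log canonical pair $(X_{C'},\Delta_{C'})$, the negativity lemma gives $A=N+F$ with $F\ge0$ effective and $q$-exceptional; as $F\ge0$ and $B$ is nef, $F\cdot B^d\ge0$, so it suffices to prove $(d+1)\,N\cdot B^d-d\,B^{d+1}-N^{d+1}\ge0$. The class $\Gamma:=B-N$ is numerically trivial on the generic fibre, hence is supported on finitely many fibres over $C'$, and substituting $N=B-\Gamma$ rewrites the left-hand side as
\[
-\sum_{i=2}^{d+1}\binom{d+1}{i}B^{d+1-i}(-\Gamma)^i .
\]

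\textbf{Main obstacle.} The crux is the positivity of this vertical self-intersection expression. When $d=1$ the surface $W$ fibres over $C'$ and the expression is exactly $-\Gamma^2$, which is $\ge0$ by Zariski's lemma; this already settles the quasimap application in Lemma \ref{lem--cm-posi-const}, where the relative dimension is one. For general $d$ the required estimate is the higher-dimensional Hodge-index/negativity statement for vertical cycles, which is precisely the content of the degree estimate for CM line bundles of Wang--Xu \cite{WX}; the plan is to invoke that result, and the technical heart is to check that its hypotheses—the nefness of $B$ and $N$ and the triviality of $\Gamma$ on the generic fibre—are met in our situation. Combining this positivity with $F\cdot B^d\ge0$ yields the displayed inequality over $C'$, and unwinding the scaling of Step~2 gives the proposition.
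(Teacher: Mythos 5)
Your endpoints coincide with the paper's proof---the first assertion does follow from properness of $\mathcal{M}^{\mathrm{KSBA}}_{d,v,c}$, and the inequality is ultimately delegated to Wang--Xu---but the paper's second step is simply: reduce to normal $X$ by \cite[Lemma 3.2]{PX} and then cite \cite[Theorem 6]{WX}, which \emph{is} the displayed inequality for normal total spaces. The intermediate apparatus you build in between is where the genuine problems lie. You assert that $(X',\Delta')$ is the relative log canonical model of the base change $(X_{C'},\Delta_{C'})$ and invoke the negativity lemma to write $p^*(K_{X_{C'}/C'}+\Delta_{C'})=q^*(K_{X'/C'}+\Delta')+F$ with $F\ge0$ and $q$-exceptional. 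Nothing in the hypotheses makes the total-space pair $(X_{C'},\Delta_{C'})$ log canonical: only the fibers over $U$ are controlled, and the fibers over $C'\setminus U'$ (hence the total space) can be arbitrarily singular, so this log canonical model need not be defined, and the stable limit produced by properness is instead the log canonical model of a suitable modification (e.g.\ after semistable reduction), not of $(X_{C'},\Delta_{C'})$ itself. Moreover the difference $A-N$ is vertical rather than $q$-exceptional, so the negativity lemma does not apply in the form you use; the effectivity of that difference is exactly the kind of statement that must be proven, not assumed.

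The ``crux'' you then isolate---nonnegativity of $-\sum_{i=2}^{d+1}\binom{d+1}{i}B^{d+1-i}(-\Gamma)^i$ for vertical $\Gamma$ and nef $B$---is not a theorem for $d\ge2$: Zariski/Hodge-index type arguments control only the quadratic term ($B^{d-1}\cdot\Gamma^2\le0$), while the terms with $i\ge3$ have no fixed sign. Nor is such a statement ``precisely the content'' of \cite{WX}: their Theorem 6 is the full inequality of this proposition for normal families, proved by a different comparison argument, not a standalone positivity lemma for vertical cycles that you could slot into your expansion. So, unwound, your plan cites the same theorem the paper cites, but only after reductions that are both unnecessary and incorrect as stated. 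Finally, your argument implicitly assumes $X$ is normal (common resolutions, negativity lemma), whereas the proposition does not---over $U$ the fibers are merely slc, so $X$ may be non-normal---and one must first pass to normalizations; this is the role of \cite[Lemma 3.2]{PX} in the paper's proof and is missing from your proposal.
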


\begin{proof}
The first assertion follows from the properness of the moduli stack $\mathcal{M}^{\mathrm{KSBA}}_{d,v,c}$.    
To see the second assertion, we may replace $(X,\Delta)$ and $(X',\Delta')$ with their normalizations by \cite[Lemma 3.2]{PX}. 
Therefore, we may assume that $X$ is normal and \cite[Theorem 6]{WX} shows the assertion in this case.
We obtain the proof.
\end{proof}

\begin{lem}\label{lem--CM-minimize}
  Let $q\colon \mathcal{C}\to [\mathrm{Cone}(X)_S/\mathbb{G}_{m,S}]$ be a family of stable quasimaps of degree $m$ and weight $w$ over a projective smooth curve $S$.
  Let $\mathscr{L}$ be the induced line bundle and $\mathcal{D}$ a horizontal effective $\mathbb{Q}$-divisor on $\mathcal{C}$ such that $(\mathcal{C}_\eta,\mathcal{D}_\eta)$ is slc and  $w\mathscr{L}|_{\mathcal{C}_{\eta}}\sim_{\mathbb{Q}}\mathcal{D}|_{\mathcal{C}_{\eta}}$, where $\eta$ is the generic point.

  If $w\mathscr{L}-\mathcal{D}$ is effective, then there exist a finite cover $S'\to S$ from a smooth curve of degree $r$ and a stable family $(\mathcal{C}',\mathcal{D}')$ such that $(\mathcal{C}'_{\eta},\mathcal{D}'_{\eta})$ is isomorphic to $((\mathcal{C},\mathcal{D})\times_SS')_{\eta}$ and $\mathrm{deg}\lambda_{\mathrm{CM},q}\ge \frac{1}{r}\mathrm{deg}\lambda_{\mathrm{CM},(\mathcal{C}',\mathcal{D}')}$.
\end{lem}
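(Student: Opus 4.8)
The plan is to realize $(\mathcal{C},\mathcal{D})$ as a family of stable log pairs polarized by its own log canonical class, invoke Proposition \ref{prop--wx}, and then absorb the defect between $w\mathscr{L}$ and $\mathcal{D}$ using the effectivity hypothesis. First I would note that $(\mathcal{C}_\eta,\mathcal{D}_\eta)$ is a stable pair: it is slc by assumption, and $K_{\mathcal{C}_\eta}+\mathcal{D}_\eta\sim_{\mathbb{Q}}K_{\mathcal{C}_\eta}+w\mathscr{L}|_{\mathcal{C}_\eta}$ is ample by stability of $q_\eta$. Shrinking to a nonempty open $U\subset S$, the family $(\mathcal{C}_U,\mathcal{D}_U)\to U$ is stable. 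I would then apply Proposition \ref{prop--wx} with relative dimension $d=1$, boundary $\Delta=\mathcal{D}$, and the $\mathbb{Q}$-polarization $L=K_{\mathcal{C}/S}+\mathcal{D}$, so that $K_{\mathcal{C}_U}+\mathcal{D}_U\sim_{\mathbb{Q}}L_U$ holds tautologically and $f_*\mathcal{O}_{\mathcal{C}}\cong\mathcal{O}_S$ since the fibers are connected. This yields a degree-$r$ finite cover $S'\to S$, which I replace by its normalization to obtain a smooth curve, and a stable family $(\mathcal{C}',\mathcal{D}')\to S'$ with $(\mathcal{C}'_\eta,\mathcal{D}'_\eta)\cong((\mathcal{C},\mathcal{D})\times_SS')_\eta$, together with the estimate
\[
(K_{\mathcal{C}'/S'}+\mathcal{D}')^{2}\le r\bigl(2(K_{\mathcal{C}/S}+\mathcal{D})\cdot L-L^{2}\bigr)=r\,(K_{\mathcal{C}/S}+\mathcal{D})^{2},
\]
the last equality being the effect of choosing $L=K_{\mathcal{C}/S}+\mathcal{D}$.

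Next I would compare $(K_{\mathcal{C}/S}+\mathcal{D})^2$ with the quasimap CM degree $(K_{\mathcal{C}/S}+w\mathscr{L})^2$ using that $E:=w\mathscr{L}-\mathcal{D}$ is effective. Since $E_\eta\sim_{\mathbb{Q}}0$ is an effective divisor of degree zero on the curve $\mathcal{C}_\eta$, it vanishes; hence $E$ has no horizontal component and is vertical over $S$. Writing $K_{\mathcal{C}/S}+\mathcal{D}=(K_{\mathcal{C}/S}+w\mathscr{L})-E$ and expanding,
\[
(K_{\mathcal{C}/S}+w\mathscr{L})^{2}-(K_{\mathcal{C}/S}+\mathcal{D})^{2}=2(K_{\mathcal{C}/S}+w\mathscr{L})\cdot E-E^{2}.
\]
Here $(K_{\mathcal{C}/S}+w\mathscr{L})\cdot E\ge0$ because $K_{\mathcal{C}/S}+w\mathscr{L}$ is relatively ample and $E$ is an effective vertical divisor, while $E^{2}\le0$ by the negative semidefiniteness of the intersection pairing on the components of the fibers (Zariski's lemma), applied to the surface $\mathcal{C}$ over the curve $S$ (after passing to the normalization or a resolution of $\mathcal{C}$ if necessary). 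Therefore $(K_{\mathcal{C}/S}+w\mathscr{L})^{2}\ge(K_{\mathcal{C}/S}+\mathcal{D})^{2}$.

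Finally I would match the normalizations of the two CM degrees. By definition $\mathrm{deg}\,\lambda_{\mathrm{CM},q}=(K_{\mathcal{C}/S}+w\mathscr{L})^{2}$, while on the KSBA side $\lambda_{\mathrm{CM},(\mathcal{C}',\mathcal{D}')}=\lambda_{n+1}$ with $n=1$, and a Knudsen--Mumford expansion together with Grothendieck--Riemann--Roch gives $\mathrm{deg}\,\lambda_{2}=(K_{\mathcal{C}'/S'}+\mathcal{D}')^{2}$; thus both CM degrees are the corresponding self-intersection numbers in the same normalization. Combining the three displayed relations yields
\[
\mathrm{deg}\,\lambda_{\mathrm{CM},q}=(K_{\mathcal{C}/S}+w\mathscr{L})^{2}\ge(K_{\mathcal{C}/S}+\mathcal{D})^{2}\ge\tfrac{1}{r}(K_{\mathcal{C}'/S'}+\mathcal{D}')^{2}=\tfrac{1}{r}\,\mathrm{deg}\,\lambda_{\mathrm{CM},(\mathcal{C}',\mathcal{D}')},
\]
as desired. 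I expect the second paragraph to be the main obstacle: one must check that the effectivity hypothesis genuinely forces $E$ to be vertical and then control the sign of $E^{2}$ on the (possibly non-normal) total space $\mathcal{C}$, which is precisely where the assumption that $w\mathscr{L}-\mathcal{D}$ is effective is used; ensuring that the intersection-theoretic reductions are compatible with the CM normalization on both the quasimap and KSBA sides is the delicate bookkeeping.
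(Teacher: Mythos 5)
Your overall architecture (Proposition \ref{prop--wx} plus the effectivity of $w\mathscr{L}-\mathcal{D}$) is the right one, but the way you invoke Proposition \ref{prop--wx} has a genuine gap. You apply it with the ``polarization'' $L=K_{\mathcal{C}/S}+\mathcal{D}$, whereas Proposition \ref{prop--wx} requires $L$ to be an $f$-ample $\mathbb{Q}$-line bundle on the whole family, and your hypotheses only control the generic fiber: $\mathcal{D}$ is merely a horizontal Weil $\mathbb{Q}$-divisor, which need not be $\mathbb{Q}$-Cartier on the (typically non-normal) surface $\mathcal{C}$, and even when it is, $K_{\mathcal{C}/S}+\mathcal{D}=(K_{\mathcal{C}/S}+w\mathscr{L})-E$ with $E$ effective and vertical can fail to be ample on components of special fibers. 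For the same reason the quantities $(K_{\mathcal{C}/S}+\mathcal{D})^2$ and $E^2$ in your second paragraph are not well-defined intersection numbers: a non-$\mathbb{Q}$-Cartier Weil divisor has no canonical pullback to a normalization or resolution, so the appeal to Zariski's lemma cannot be made sense of as written. Note also that your intermediate inequality $(K_{\mathcal{C}/S}+\mathcal{D})^2\ge\tfrac1r(K_{\mathcal{C}'/S'}+\mathcal{D}')^2$ is strictly \emph{stronger} than what Proposition \ref{prop--wx} yields with any legitimate polarization: with the valid choice of $L$ (see below), the bound obtained is $\tfrac1r\deg\lambda_{\mathrm{CM},(\mathcal{C}',\mathcal{D}')}\le 2(K_{\mathcal{C}/S}+\mathcal{D})\cdot(K_{\mathcal{C}/S}+w\mathscr{L})-(K_{\mathcal{C}/S}+w\mathscr{L})^2=(K_{\mathcal{C}/S}+\mathcal{D})^2-E^2$, whose right-hand side exceeds $(K_{\mathcal{C}/S}+\mathcal{D})^2$ by exactly the Zariski term $-E^2\ge0$ you are trying to exploit; so the discrepancy is not bookkeeping but a real loss.

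The paper's proof avoids all of this by never squaring $\mathcal{D}$. It applies Proposition \ref{prop--wx} with $\Delta=\mathcal{D}$ and $L=K_{\mathcal{C}/S}+w\mathscr{L}$, which \emph{is} a relatively ample $\mathbb{Q}$-line bundle by the definition of a family of stable quasimaps; for $d=1$ this gives $r\bigl(2(K_{\mathcal{C}/S}+\mathcal{D})\cdot(K_{\mathcal{C}/S}+w\mathscr{L})-(K_{\mathcal{C}/S}+w\mathscr{L})^2\bigr)\ge\deg\lambda_{\mathrm{CM},(\mathcal{C}',\mathcal{D}')}$, in which $\mathcal{D}$ enters only linearly, as a $1$-cycle paired against a $\mathbb{Q}$-line bundle, so every term is defined. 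The effectivity hypothesis is then used on the other side: since $w\mathscr{L}-\mathcal{D}$ is effective and (by your correct degree argument on $\mathcal{C}_\eta$) vertical, relative ampleness gives $(K_{\mathcal{C}/S}+w\mathscr{L})\cdot(w\mathscr{L}-\mathcal{D})\ge0$, i.e.\ $\deg\lambda_{\mathrm{CM},q}=(K_{\mathcal{C}/S}+w\mathscr{L})^2\ge 2(K_{\mathcal{C}/S}+\mathcal{D})\cdot(K_{\mathcal{C}/S}+w\mathscr{L})-(K_{\mathcal{C}/S}+w\mathscr{L})^2$. Combining the two displayed inequalities finishes the proof; if you want to salvage your write-up, the repair is to replace your choice of polarization by $K_{\mathcal{C}/S}+w\mathscr{L}$ and your Zariski-lemma paragraph by this one-line ampleness estimate.
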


\begin{proof}
By the relative ampleness of $K_{\mathcal{C}/S}+w\mathscr{L}$, we have that
\[
\mathrm{deg}\lambda_{\mathrm{CM},q}\ge 2(K_{\mathcal{C}/S}+\mathcal{D})\cdot (K_{\mathcal{C}/S}+w\mathscr{L})-(K_{\mathcal{C}/S}+w\mathscr{L})^2.
\]
By Proposition \ref{prop--wx}, there exist a finite morphism $S'\to S$ from a smooth curve of degree $r$ and a family of stable log pairs $(\mathcal{C}',\mathcal{D}')$ over $S'$ such that $(\mathcal{C}'_{\eta},\mathcal{D}'_{\eta})$ is isomorphic to $((\mathcal{C},\mathcal{D})\times_SS')_{\eta}$ and
\[
r(2(K_{\mathcal{C}/S}+\mathcal{D})\cdot (K_{\mathcal{C}/S}+w\mathscr{L})-(K_{\mathcal{C}/S}+w\mathscr{L})^2)\ge \mathrm{deg}\lambda_{\mathrm{CM},(\mathcal{C}',\mathcal{D}')}.
\]    
We obtain the proof by the two inequalities.
\end{proof}

\begin{prop}\label{prop--key--to--positivity}
    Let $q\colon \mathcal{C}\to [\mathrm{Cone}(X)_S/\mathbb{G}_{m,S}]$ be a family of stable quasimaps of degree $m$ and weight $w$ over a projective normal variety $S$.
   Suppose that $q$ has maximal variation.
   Then, $\lambda_{\mathrm{CM},q}$ is big and nef.
\end{prop}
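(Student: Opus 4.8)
The plan is to prove nefness and bigness of $\lambda_{\mathrm{CM},q}$ separately, checking nefness by restriction to curves and obtaining bigness from the maximal-variation hypothesis. At the outset I would reduce to the case $\mathbbm{k}=\mathbb{C}$: since all the relevant intersection numbers, dimensions and boundedness data are invariant under extension of algebraically closed fields, the spreading-out argument used in the proof of Theorem \ref{thm--kst--u>0--CY-fib} and in Corollary \ref{cor--partial--positivity--of--cm} lets me assume $\mathbbm{k}=\mathbb{C}$, so that the bigness inputs of Theorem \ref{thm--Kollar's--lemma} and Corollary \ref{cor--cm--special--bigness} become available. It then suffices to prove that $\lambda_{\mathrm{CM},q}$ is nef and big.

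For nefness I would verify that $\deg_C(\lambda_{\mathrm{CM},q}|_C)\ge 0$ for every irreducible curve $C\subset S$. Normalizing $C$ and pulling back $q$, I get a family of stable quasimaps over a smooth projective curve, with $\deg_C\lambda_{\mathrm{CM},q}|_C=(K_{\mathcal{C}/C}+w\mathscr{L})^2$. Taking a general section $f\in\mathrm{Sec}_q(\mathscr{L})$ and setting $\mathcal{D}:=w\,\mathrm{div}(f)$, stability of $q$ gives that $(\mathcal{C}_\eta,\mathcal{D}_\eta)$ is slc with $w\mathscr{L}|_{\mathcal{C}_\eta}\sim_{\mathbb{Q}}\mathcal{D}|_{\mathcal{C}_\eta}$, while $w\mathscr{L}-\mathcal{D}\sim_{\mathbb{Q}}0$ is effective on the total space. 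Hence Lemma \ref{lem--CM-minimize} produces a degree-$r$ finite cover $S'\to C$ and a stable family $(\mathcal{C}',\mathcal{D}')$ with $\deg\lambda_{\mathrm{CM},q}|_C\ge\frac{1}{r}\deg\lambda_{\mathrm{CM},(\mathcal{C}',\mathcal{D}')}$. Since $(\mathcal{C}',\mathcal{D}')$ induces a morphism to $M^{\mathrm{KSBA}}_{1,v,w}$ along which $\lambda_{\mathrm{CM},(\mathcal{C}',\mathcal{D}')}$ is the pullback of the ample, hence nef, class $\Lambda^{\mathrm{KSBA}}_{\mathrm{CM}}$ of Theorem \ref{thm--ksba}, its degree is nonnegative, and nefness of $\lambda_{\mathrm{CM},q}$ follows.

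For bigness I would split according to the behaviour of the general fibre. If $q_s$ is constant for general $s\in S$, then $\lambda_{\mathrm{CM},q}$ is big and nef directly by Lemma \ref{lem--cm-posi-const}, whose proof records the variation of the image point in $X$ through a $2wv\,p_1^*\iota^*\mathcal{O}_{\mathbb{P}^N}(1)$-contribution. If $q_s$ is non-constant for general $s$, the moving part $\mathscr{L}-\mathrm{Bs}(q)$ becomes $\pi$-ample on the general fibre and, using Proposition \ref{prop--K-stable-quasimaps--stability}, the associated data define a family of stable (specially K-stable) polarized log pairs $(\mathcal{C},w\,\mathrm{div}(f);K_{\mathcal{C}}+w\mathscr{L})$; I would then feed this family into Kollár's ampleness lemma (Theorem \ref{thm--Kollar's--lemma}) and the bigness mechanism of \cite[Theorem 5.4]{Hat23} and Proposition \ref{prop--wx}, transferring the resulting positivity to $\lambda_{\mathrm{CM},q}$ through the degree comparison already used in the nefness step. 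The hard part will be exactly this non-constant case: one must show that maximal variation of $q$ forces $\lambda_{\mathrm{CM},q}$ to record the \emph{full} variation, i.e.\ that the variation carried both by the line bundle $\mathscr{L}$ and by the map to $X$ (the moving part) is genuinely detected by the polarization $K_{\mathcal{C}}+w\mathscr{L}$—a phenomenon already visible in Example \ref{ex--cm}, where $\lambda_{\mathrm{CM}}$ has positive degree even though the underlying curve is rigid. Reconciling the CM bundle of the auxiliary pair with $\lambda_{\mathrm{CM},q}$ and confirming the maximal variation and special K-stability hypotheses is the crux; once these are established, the weak-positivity input underlying Corollary \ref{cor--cm--special--bigness} yields bigness, completing the proof.
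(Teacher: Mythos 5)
Your nefness step is essentially the paper's own: restrict to a smooth proper curve, take a general section, and combine Lemma \ref{lem--CM-minimize} with nonnegativity of the KSBA CM degree (the paper quotes \cite[Theorem 1.1]{PX}; pulling back the ample class of Theorem \ref{thm--ksba} amounts to the same thing). The constant-fibre case of bigness is likewise handled by Lemma \ref{lem--cm-posi-const} in both arguments. The genuine gap is the non-constant case, exactly the point you flag as ``the crux'' but do not resolve. The tools you propose there do not apply: Corollary \ref{cor--cm--special--bigness} (and the mechanism of \cite[Theorem 5.4]{Hat23} behind it) requires the geometric fibres to be klt, whereas your fibres are nodal curves, which are only slc; Theorem \ref{thm--Kollar's--lemma} concerns polarized log families in the setting of Setup \ref{setup--d-cond} and needs maximal variation \emph{of that auxiliary polarized family}, which you never establish. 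This last point is not a formality: a single general divisor $w\,\mathrm{div}(f)$ does not remember the quasimap---two quasimaps with quite different modules of sections and maps to $X$ can produce isomorphic pairs $(\mathcal{C}_s,w\,\mathrm{div}(f)_s)$---so maximal variation of $q$ does not transfer to the family $(\mathcal{C},w\,\mathrm{div}(f))$.

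The paper closes this gap with a device absent from your proposal. One takes $N+2$ general sections $f_0,\ldots,f_{N+1}$ corresponding to coordinate functions and forms the boundary
\[
w\left(\frac{1}{2N+4}\sum_{i=0}^{N+1}\mathrm{div}_{q}(f_i)+\frac{1}{2N+2}\sum_{i=1}^{N+1}\mathrm{div}_{q}(f_i+f_0)\right),
\]
which by \cite[Lemmas 3.35 and 3.43]{HH2} yields a family of stable pairs that \emph{does} have maximal variation, because this particular weighted combination of divisors reconstructs the quasimap up to finite ambiguity. This gives a generically finite rational map $g\colon S\dashrightarrow M^{\mathrm{KSBA}}_{1,v,(2(N+1)(N+2))^{-1}w}$; after resolving indeterminacy, the class $\lambda_{\mathrm{CM},q}-g^*\Lambda^{\mathrm{KSBA}}_{\mathrm{CM}}$ is shown to be pseudoeffective by testing on general curves via \cite[Theorem 2.2]{BDPP} together with Lemma \ref{lem--CM-minimize}, and bigness of $\lambda_{\mathrm{CM},q}$ follows since $g^*\Lambda^{\mathrm{KSBA}}_{\mathrm{CM}}$ is big by Theorem \ref{thm--ksba}. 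Note also that your opening reduction to $\mathbbm{k}=\mathbb{C}$ is unnecessary in this argument (none of the inputs above is specific to $\mathbb{C}$); it is only forced on you by the inapplicable Corollary \ref{cor--cm--special--bigness}.
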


\begin{proof}
First, we deal with nefness.
To show this, we may assume that $S$ is a proper smooth curve.
Take a general section $f\in \mathrm{Sec}_q(\mathscr{L})$ corresponding to a general coordinate function of $\mathbb{A}^{N+1}_{\mathbbm{k}}$.
Then, $(\mathcal{C}_s,w\mathrm{div}(f)_s)$ is stable for any general $s\in S$.
By Lemma \ref{lem--CM-minimize}, there exist a finite cover $S'\to S$ of degree $r$ and a family of stable log pairs $(\mathcal{C}',\mathcal{D}')\to S'$ such that $\mathrm{deg}\lambda_{\mathrm{CM},q}\ge \frac{1}{r}\mathrm{deg}\lambda_{\mathrm{CM},(\mathcal{C}',\mathcal{D}')}$.
By \cite[Theorem 1.1]{PX}, we see that $\mathrm{deg}\lambda_{\mathrm{CM},(\mathcal{C}',\mathcal{D}')}\ge0$.
Therefore, $\lambda_{\mathrm{CM},q}$ is nef.

    By Lemma \ref{lem--cm-posi-const}, we may assume that $q_s$ is not constant for any general $s\in S$.
By \cite[Lemma 3.35]{HH2} and the same argument in the proof of \cite[Lemma 3.43]{HH2}, taking general $N+2$ sections $f_0,\ldots, f_{N+1}$ corresponding to coordinate functions of $\mathbb{A}^{N+1}$, we see that
 \[
 \left(\mathcal{C},w\left(\frac{1}{2N+4}\sum_{i=0}^{N+1}\mathrm{div}_{q}(f_i)+\frac{1}{2N+2}\sum_{i=1}^{N+1}\mathrm{div}_{q}(f_i+f_0)\right)\right)
 \]
 has maximal variation and general fibers are stable.
 This means that there exists a generically finite rational map $g\colon S\dashrightarrow M^{\mathrm{KSBA}}_{1,v,(2(N+1)(N+2))^{-1}w}$ induced by the above family.
 Taking a resolution of the indeterminacy, we may assume that $g$ is a morphism.

 Next, we claim that $\lambda_{\mathrm{CM},q}-g^*\Lambda^{\mathrm{KSBA}}_{\mathrm{CM}}$ is pseudoeffective.
 To see this, by \cite[Theorem 2.2]{BDPP}, it suffices to show that for any general curve $C\subset S$, $(\lambda_{\mathrm{CM},q}-g^*\Lambda^{\mathrm{KSBA}}_{\mathrm{CM}})|_C$ has a nonnegative degree.
 This follows from Lemma \ref{lem--CM-minimize}.
 Therefore, $\lambda_{\mathrm{CM},q}-g^*\Lambda^{\mathrm{KSBA}}_{\mathrm{CM}}$ is pseudoeffective.
 Since $g^*\Lambda^{\mathrm{KSBA}}_{\mathrm{CM}}$ is big by Theorem \ref{thm--ksba}, $\lambda_{\mathrm{CM},q}$ is also big.
\end{proof}

\begin{proof}[Proof of Theorem \ref{thm--qm--cm--positive}]
  This immediately follows from Proposition \ref{prop--key--to--positivity} and \cite[Theorem 3.11]{kollar-moduli-stable-surface-proj} as the proof of Corollary \ref{cor--partial--positivity--of--cm}.  
\end{proof}

\section{Compactification of Moduli of klt Calabi-Yau varieties under the B-semiampleness conjecture}
\label{sec-7}

We say that $X$ is a projective {\it klt Calabi-Yau variety} if $X$ is a projective klt variety such that $K_X$ is numerically trivial.
In this case, $K_X\sim_{\mathbb{Q}}0$ by \cite[Theorem 1.2]{gongyo}.
Define the following set for any $d,v\in\mathbb{Z}_{>0}$;
$$\mathfrak{F}^{\text{klt,CY}}_{d,v}:=\left\{
 (X,L)
\;\middle|
\begin{array}{l}
\text{$X$ is a klt projective Calabi-Yau variety of dimension $d$ and}\\
\text{$L$ is an ample $\mathbb{Q}$-Cartier Weil divisor with $\mathrm{vol}(L)=v$.}
\end{array}\right\}.$$

Note that the following holds.

\begin{prop}\label{prop--birkar}
    There exists a positive integer $m\in\mathbb{Z}_{>0}$ depending only on $d$ and $v$ such that $mL$ is Cartier, very ample, and $H^i(X,\mathcal{O}_X(mjL))=0$ for any $(X,L)\in \mathfrak{F}^{\text{klt,CY}}_{d,v}$ and $i,j>0$.
    Furthermore, there are only finitely many polynomials $P_1,\ldots,P_k$ such that for any $(X,L)\in \mathfrak{F}^{\text{klt,CY}}_{d,v}$, there exists $j\in\{1,\ldots,k\}$ such that $P_j(l)=\chi(X,\mathcal{O}_X(lmL))$.
\end{prop}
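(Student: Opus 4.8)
The plan is to deduce the entire statement from the \emph{boundedness} of the family $\mathfrak{F}^{\text{klt,CY}}_{d,v}$, and then to extract the three quantitative conclusions by standard techniques. First I would invoke the boundedness of polarized klt Calabi--Yau varieties of fixed dimension $d$ and volume $v$, which is due to Birkar (cf.~\cite{Bir}, see also \cite{KX}) and is already the input underlying the construction of the moduli space in \cite{HH}. Concretely, this provides a projective flat morphism $\mathcal{X}\to T$ with $T$ of finite type over $\mathbbm{k}$, carrying a relatively ample $\mathbb{Q}$-Cartier Weil divisorial sheaf $\mathcal{L}$, such that every $(X,L)\in\mathfrak{F}^{\text{klt,CY}}_{d,v}$ occurs as a geometric fiber $(\mathcal{X}_{\bar t},\mathcal{L}_{\bar t})$. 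After stratifying $T$ (and using that $K_{\mathcal{X}_{\bar t}}\equiv0$ gives $K_{\mathcal{X}_{\bar t}}\sim_{\mathbb{Q}}0$ by \cite[Theorem 1.2]{gongyo}, so that a suitable reflexive power of $\omega_{\mathcal{X}/T}$ is a line bundle fibrewise) I may assume $\mathcal{X}\to T$ is a projective flat log $\mathbb{Q}$-Gorenstein family with klt geometric fibers.

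With such a family in hand, I would apply Proposition \ref{prop--xz} to $\mathcal{X}\to T$ to obtain a single positive integer $I$, depending only on $d$ and $v$, such that $ID$ is Cartier for every $\mathbb{Q}$-Cartier Weil divisor $D$ on any geometric fiber; in particular $IL$ is Cartier for all $(X,L)$. Then, shrinking and stratifying the Noetherian base $T$ further and applying relative effective very ampleness over $T$ (generic flatness together with upper-semicontinuity of cohomology, or Matsusaka's big theorem fibrewise), I obtain a multiple $m$ of $I$, still depending only on $d$ and $v$, for which $mL$ is a very ample Cartier divisor for every member of $\mathfrak{F}^{\text{klt,CY}}_{d,v}$.

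The cohomology vanishing is then immediate and in fact does not use the family at all: since $X$ is klt with $K_X\sim_{\mathbb{Q}}0$ and $mL$ is an ample Cartier divisor, for every $j>0$ we have $mjL-K_X\sim_{\mathbb{Q}}mjL$, which is ample, hence nef and big; Kawamata--Viehweg vanishing for klt varieties (with boundary $0$) gives $H^i(X,\mathcal{O}_X(mjL))=0$ for all $i,j>0$. Finally, for the Hilbert polynomials: after the above stratification $T$ has finitely many connected components, and on each of them the Euler characteristic $\chi(\mathcal{X}_{\bar t},\mathcal{O}(lm\mathcal{L}_{\bar t}))$ is a single polynomial in $l$ by flatness and the constancy of Euler characteristics on connected bases; since $mL=(m\mathcal{L})_{\bar t}$ for the fiber realizing $(X,L)$, this yields the finite list $P_1,\dots,P_k$ with $\chi(X,\mathcal{O}_X(lmL))$ equal to one of them.

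I expect the main obstacle to be concentrated entirely in the first step, namely the boundedness of $\mathfrak{F}^{\text{klt,CY}}_{d,v}$: klt Calabi--Yau varieties are \emph{not} bounded without fixing the polarization, and the boundedness once the polarization volume is fixed rests on Birkar's boundedness machinery and the theory of complements. Once boundedness is granted, the remaining steps are routine applications of Proposition \ref{prop--xz}, effective very ampleness over a Noetherian base, and Kawamata--Viehweg vanishing, so I would present them briefly rather than in full detail.
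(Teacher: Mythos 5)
Your proposal has a genuine gap, and it sits exactly where you placed your confidence: the first step. The result you cite, \cite[Corollary 1.6]{Bir}, bounds the set of \emph{underlying varieties} $X$ occurring in $\mathfrak{F}^{\text{klt,CY}}_{d,v}$; it does not supply a finite-type family $\mathcal{X}\to T$ carrying a relatively ample $\mathbb{Q}$-Cartier Weil divisorial sheaf $\mathcal{L}$ through which every \emph{pair} $(X,L)$ factors. Upgrading variety-boundedness to that polarized statement is not formal: a single klt Calabi--Yau $X$ of large Picard rank (an abelian variety, say) carries infinitely many ample Weil divisor classes of the same volume, so to put all polarizations into a family one must know that they lie in finitely many finite-type pieces of the relative (Weil) Picard space, i.e.\ that they realize only finitely many Hilbert polynomials --- which is part of the conclusion of Proposition \ref{prop--birkar}. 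Assuming polarized boundedness as input is therefore circular in spirit, and the two places where your argument actually uses the sheaf $\mathcal{L}$ --- the stratification argument for uniform very ampleness and the connected-components argument for finiteness of Hilbert polynomials --- have nothing to run on. Your fallback for very ampleness, ``Matsusaka's big theorem fibrewise,'' also fails: Matsusaka is a theorem about smooth varieties, and no klt version with constants depending only on the numerical data is available. The paper bridges precisely this gap by a different mechanism: effective non-vanishing \cite[Theorem 1.1]{Bir} writes $m_2L\sim E$ with $E$ effective, \cite[Corollary 1.6]{Bir} then gives \emph{log} boundedness of the pairs $(X,\mathrm{Supp}(E))$, and \cite[Lemma 2.24]{Bir-bab} converts this into a uniform Cartier index for $L$; very ampleness and vanishing are then obtained fibrewise from the dimension-effective theorems of Koll\'ar \cite[Theorem 1.1]{kollar-eff-basepoint-free} and Fujino \cite[Lemma 7.1]{fujino-eff-slc}, and the finiteness of Hilbert polynomials from \cite[Lemma 3.9]{HH}, none of which require a polarized family.

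That said, your middle step is a genuinely good alternative idea and is salvageable. Proposition \ref{prop--xz} bounds the Cartier index of \emph{every} $\mathbb{Q}$-Cartier Weil divisor on every geometric fiber, so it needs only a log $\mathbb{Q}$-Gorenstein klt family of the underlying varieties --- which variety-boundedness does provide after the stratifications you sketch --- and not the polarization itself; indeed the paper uses Proposition \ref{prop--xz} in exactly this way in Setup \ref{stup--5}. So the uniform Cartier index of $L$ can be obtained along your route (via \cite{XZuniqueness} and \cite{BL}) instead of via effective non-vanishing and log boundedness. To finish correctly, you should then drop the family entirely for the remaining claims and argue member by member: once $mL$ is an ample Cartier divisor and $K_X\sim_{\mathbb{Q}}0$ (your appeal to \cite[Theorem 1.2]{gongyo} is fine), Koll\'ar's effective base point freeness combined with Fujino's lemma gives very ampleness with constants depending only on $d$; Kawamata--Viehweg vanishing gives $H^i(X,\mathcal{O}_X(mjL))=0$ exactly as you wrote; and the vanishing plus the embedding of $X$ with fixed degree $m^d v$ in a projective space of bounded dimension bounds $h^0(X,mjL)=\chi(X,mjL)$ and yields the finite list of Hilbert polynomials as in \cite[Lemma 3.9]{HH}.
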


\begin{proof}
This proposition is well-known to specialists, but we write a proof for the reader's convenience.
 By \cite[Corollary 1.6]{Bir}, we see that the set of $X$ appearing in $\mathfrak{F}^{\text{klt,CY}}_{d,v}$ is bounded and hence there exists $m_1\in\mathbb{Z}_{>0}$ depending only on $d$ and $v$ such that $m_1K_X$ is Cartier by \cite[Lemma 2.24]{Bir-bab}. 
 Then \cite[Theorem 1.1]{Bir} implies that there exists $m_1\in\mathbb{Z}_{>0}$ depending only on $d$ and $v$ such that for any $(X,L)\in \mathfrak{F}^{\text{klt,CY}}_{d,v}$, there exists an effective Weil divisor $E$ such that $E\sim m_2L$.
 Again by \cite[Corollary 1.6]{Bir}, the set of $(X,\mathrm{Supp}(E))$ such that there exists an element $(X,L)\in \mathfrak{F}^{\text{klt,CY}}_{d,v}$ such that $E$ is an effective Weil divisor satisfying $m_2L\sim E$ is log bounded.
By \cite[Lemma 2.24]{Bir-bab}, we can choose $m_1$ as above such that $m_1m_2L$ is Cartier.

The remaining assertions follow from the Kawamata--Viehweg vanishing theorem, \cite[Theorem 1.1]{kollar-eff-basepoint-free}, \cite[Lemma 7.1]{fujino-eff-slc}, and \cite[Lemma 3.9]{HH}.
\end{proof}

Then, the following result is well-known. 

\begin{thm}\label{thm--klt--CY--moduli}
    Let $\mathcal{M}^{\mathrm{klt,CY}}_{d,v}$ be a substack of $\mathfrak{Pol}$ such that the objects $\mathcal{M}^{\mathrm{klt,CY}}_{d,v}(S)$ for any scheme $S$ is the following collection.
    \[
\left\{
 f\colon(\mathcal{X},\mathcal{L})\to S
\;\middle|
\begin{array}{l}
\text{$f$ is a projective and locally stable family such that $(\mathcal{X}_{\bar{s}},\mathcal{L}_{\bar{s}})$}\\
\text{belongs to $\mathfrak{F}^{\mathrm{klt,CY}}_{d,v}$ for any geometric point $\bar{s}\in S$.}
\end{array}\right\}.
    \]
    Then $\mathcal{M}^{\mathrm{klt,CY}}_{d,v}$ is a separated Deligne--Mumford stack of finite type over $\mathbbm{k}$.  

    Furthermore, for the canonical morphism $\beta\colon \mathcal{M}^{\mathrm{klt,CY}}_{d,v}\to M^{\mathrm{klt,CY}}_{d,v}$ to the coarse moduli space $M^{\mathrm{klt,CY}}_{d,v}$ of $\mathcal{M}^{\mathrm{klt,CY}}_{d,v}$, there exists an ample $\mathbb{Q}$-line bundle $\Lambda_{\mathrm{Hodge}}$ on $M^{\mathrm{klt,CY}}_{d,v}$ such that $\beta^*\Lambda_{\mathrm{Hodge}}\sim_{\mathbb{Q}}\lambda_{\mathrm{Hodge}}$ unique up to $\mathbb{Q}$-linear equivalence, where $\lambda_{\mathrm{Hodge}}$ is the CM line bundle canonically defined on $\mathcal{M}^{\mathrm{klt,CY}}_{d,v}$. We call these line bundles the {\rm Hodge line bundles}.
\end{thm}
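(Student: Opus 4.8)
The plan is to realize $\mathcal{M}^{\mathrm{klt,CY}}_{d,v}$ as a quotient stack exactly as in the construction of the K-moduli in \cite[Theorem 5.1]{HH} and then to deduce the ampleness of $\Lambda_{\mathrm{Hodge}}$ from the positivity package of Section \ref{sec-4} together with Kollár's ampleness lemma, rather than from a Nakai--Moishezon argument. First I would use Proposition \ref{prop--birkar} to fix $m\in\mathbb{Z}_{>0}$ such that $mL$ is very ample with vanishing higher cohomology and only finitely many Hilbert polynomials $P_1,\ldots,P_k$ occur. For each $P_j$ the $m$-th Veronese embeds the fibers into $\mathbb{P}^{P_j(1)-1}$, and the families are parameterized by a locally closed subscheme $H_j$ of the corresponding Hilbert scheme: the locally stable condition and the klt Calabi--Yau condition on fibers define locally closed loci by \cite{kollar-moduli}. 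With the natural $PGL_{P_j(1)}$-action one obtains $\mathcal{M}^{\mathrm{klt,CY}}_{d,v}\cong\coprod_j[H_j/PGL_{P_j(1)}]$, an algebraic stack of finite type over $\mathbbm{k}$. Separatedness follows from the valuative criterion: two locally stable extensions of a family over a punctured spectrum of a DVR with matching polarization classes agree, by the uniqueness of stable limits \cite[Theorem 4.6]{HH} and the separatedness of $\mathbf{Pic}$. Finally the stack is Deligne--Mumford because the automorphism group of a polarized klt Calabi--Yau $(X,\mathcal{L})$, consisting of $\varphi\in\mathrm{Aut}(X)$ with $\varphi^*\mathcal{L}\sim\mathcal{L}$, is finite: since $L$ is ample and $K_X\sim_{\mathbb{Q}}0$ (by \cite{gongyo}), the identity component of $\mathrm{Aut}(X)$ is an abelian variety, and the stabilizer of the linear-equivalence class of an ample class is the finite kernel of the associated polarization homomorphism. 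As the stabilizers are finite and reduced in characteristic zero, the coarse moduli space $M^{\mathrm{klt,CY}}_{d,v}$ exists by \cite{KeM}.

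Next I would define $\lambda_{\mathrm{Hodge}}$ to be the CM line bundle $\lambda_{\mathrm{CM},(\mathcal{U},\mathcal{L})}$ of the universal polarized family, polarized by $\mathcal{L}$ itself, in the sense of Subsection \ref{subsec--CM} (the level $t=0$ of Definition \ref{de--CM--level}, the choice of level being immaterial here as $K_{\mathcal{X}/S}$ is fiberwise torsion). On any presentation $[H/G]$ a suitable power of $\lambda_{\mathrm{CM},(\mathcal{U},\mathcal{L})}$ carries a natural $G$-linearization by \cite[Subsection 2.5.1]{HH2}, so it descends to a $\mathbb{Q}$-line bundle $\Lambda_{\mathrm{Hodge}}$ on $M^{\mathrm{klt,CY}}_{d,v}$ with $\beta^*\Lambda_{\mathrm{Hodge}}\sim_{\mathbb{Q}}\lambda_{\mathrm{Hodge}}$, uniquely up to $\mathbb{Q}$-linear equivalence.

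For ampleness, the essential point is that $M^{\mathrm{klt,CY}}_{d,v}$ is not proper, so I would avoid the Nakai--Moishezon criterion for proper algebraic spaces and instead invoke Kollár's ampleness lemma \cite[Theorem 3.11]{kollar-moduli-stable-surface-proj} precisely as in the proof of Theorem \ref{thm--qm--cm--positive}. It then suffices to prove that for every family $(\mathcal{X},\mathcal{L})\to S$ of maximal variation (as a family of polarized pairs) over a projective normal variety $S$ factoring through the stack, the pullback of $\lambda_{\mathrm{CM}}$ is big and nef. Over $\mathbb{C}$ this follows from Corollary \ref{cor--cm--special--bigness} once I observe that a polarized klt Calabi--Yau $(X,0;L)$ is specially K-stable in the sense of Definition \ref{de--special--k-st}: the ample test divisor $\delta_{(X,0)}(L)L+K_X\sim_{\mathbb{Q}}\delta_{(X,0)}(L)L$ is ample since $K_X\sim_{\mathbb{Q}}0$, and the requisite uniform $\mathrm{J}$-stability holds by the proof of \cite[Corollary 9.4]{BHJ} using the finiteness of automorphisms established above. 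For general $\mathbbm{k}$ I would reduce to the case $\mathbbm{k}=\mathbb{C}$ by the field-extension argument of Theorem \ref{thm--kst--u>0--CY-fib}. Applying Kollár's lemma then yields ampleness of $\Lambda_{\mathrm{Hodge}}$.

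The main obstacle is the ampleness statement, and more precisely proving bigness of $\lambda_{\mathrm{CM}}$ on maximal-variation families while correctly registering the variation of the linear-equivalence class of $L$, not merely the isomorphism class of $X$. As Example \ref{ex--cm} shows, the CM line bundle can be strictly positive even along families that are constant in $X$, so the bigness cannot be extracted from the variation of the underlying varieties alone; this is exactly what the extension of Kollár's ampleness criterion to polarized log pairs of maximal variation (Theorem \ref{thm--Kollar's--lemma} and Corollary \ref{cor--cm--special--bigness}) is designed to handle. A secondary technical point is verifying the full special K-stability of klt Calabi--Yau polarized varieties rather than mere uniform K-stability, which I expect to follow from the K-triviality of $K_X$ but should be checked with care.
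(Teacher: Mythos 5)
Your outline of the stack construction and the finiteness of stabilizers is broadly in the spirit of the paper, but the proposal has a gap at the step that matters most: ampleness. You plan to deduce ampleness of $\Lambda_{\mathrm{Hodge}}$ on $M^{\mathrm{klt,CY}}_{d,v}$ from bigness and nefness of the CM line bundle on maximal-variation families (Corollary \ref{cor--cm--special--bigness}) by invoking \cite[Theorem 3.11]{kollar-moduli-stable-surface-proj} ``as in the proof of Theorem \ref{thm--qm--cm--positive}.'' But that proof is a Nakai--Moishezon-type argument carried out on a \emph{proper} space: $M^{\mathrm{Kps,qm}}_{m,w,v,\iota}$ is proper by Proposition \ref{lem--quasimap--stack}, and the argument it follows (Corollary \ref{cor--partial--positivity--of--cm}) passes to a finite proper cover and applies the Nakai--Moishezon criterion of \cite{FM3}. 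Properness is exactly what $M^{\mathrm{klt,CY}}_{d,v}$ lacks, and on a non-proper space ``big and nef on every maximal-variation family'' --- even ``ample after pullback to the normalization'' --- does not imply ampleness; this is precisely Remark \ref{rem--koex} and \cite[Proposition 2]{Koex}, and it is the first of the two principal obstacles announced in the introduction. So your route yields at best positivity statements of the type in Theorem \ref{thm--ampleness-CM} (after normalization/seminormalization), not the asserted ampleness on $M^{\mathrm{klt,CY}}_{d,v}$ itself. The paper does not attempt this route: after presenting the stack it defers separatedness, the Deligne--Mumford property, the coarse space, and the Hodge line bundle with its ampleness to the argument of \cite[Theorem 2.26]{HH2}; and the positivity actually exploited later in the paper is that of the Hodge bundle on the compactification $\overline{V}^{\mathrm{BB}}$ (Theorem \ref{thm--canonical--bundle--formula-sin}), which is produced on a \emph{proper} KSBA-type space via the $\mathbf{B}$-semiampleness theorem of \cite{BFMT} and then restricted, rather than proved directly on the open moduli. (A side remark: your justification of special K-stability of klt Calabi--Yau varieties via finiteness of automorphisms and the proof of \cite[Corollary 9.4]{BHJ} is off; it holds for the simpler reason that $K_X\sim_{\mathbb{Q}}0$ makes uniform $\mathrm{J}^{\delta_{(X,0)}(L)L+K_X}$-stability the same as uniform $\mathrm{J}^{\delta_{(X,0)}(L)L}$-stability, which is automatic. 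The conclusion you need there is nevertheless correct.)

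There is also a defect in your presentation of the stack. Embedding the fibers by $|mL|$ alone and taking $\coprod_j[H_j/PGL_{P_j(1)}]$ does not give $\mathcal{M}^{\mathrm{klt,CY}}_{d,v}$: a point of that Hilbert scheme remembers only $X$ together with $mL$, so isomorphisms in your quotient stack are maps $\varphi$ with $\varphi^*(mL')\sim mL$, which is strictly weaker than $\varphi^*L'\sim L$. Both isomorphism classes and automorphism groups can differ by torsion twists killed by multiplication by $m$ (this matters for Calabi--Yau varieties with torsion in $\mathrm{Pic}$, e.g.\ Enriques surfaces), so the claimed isomorphism of stacks fails; and keeping track of the actual linear equivalence class of the polarization is a central theme of this paper. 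The paper's proof avoids this by embedding with the two consecutive powers $l_0L$ and $(l_0+1)L$ into $\mathbb{P}^{P(l_0)-1}\times\mathbb{P}^{P(l_0+1)-1}$, recovering the polarization itself as $\mathcal{A}=p_1^*\mathcal{O}(-1)\otimes p_2^*\mathcal{O}(1)|_{\mathcal{U}}$, with the group $PGL(P(l_0))\times PGL(P(l_0+1))$ acting. Relatedly, cutting out the Calabi--Yau locus is not just ``locally closed by \cite{kollar-moduli}'': one needs a uniform index $m_1$ with $m_1K_X\sim\mathcal{O}_X$ from \cite[Lemma 7.2]{birkar-nefpart}, the partial decomposition of \cite[Theorem 9.40]{kollar-moduli} for the reflexive powers, and then $\omega^{[m_1]}_{\mathcal{U}_T/T}\sim_T0$ as a closed condition via separatedness of the Picard scheme. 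This first group of issues is repairable along the paper's lines; the ampleness gap is not repairable with the tools you cite.
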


\begin{proof}
By Proposition \ref{prop--birkar}, we note that there exists $l_0\in\mathbb{Z}_{>0}$ such that $l_0L$ is very ample and $H^j(X,\mathcal{O}_{X}(lL))$ for any $(X,L)\in\mathfrak{F}^{\mathrm{klt,CY}}_{d,v}$ such that $L$ is Cartier, $j>0$, and $l\ge l_0$, and there are only finitely many possibilities $P_1,\ldots,P_k$ of the Hilbert polynomial $\chi(X,\mathcal{O}_X(mL))$ for any $(X,L)\in\mathfrak{F}^{\mathrm{klt,CY}}_{d,v}$ such that $L$ is Cartier.
Fix $P\in \{P_1,\ldots,P_k\}$ and consider the open and closed substack $\mathcal{M}^{\mathrm{klt,CY}}_{d,v,P}\subset\mathcal{M}^{\mathrm{klt,CY}}_{d,v}$ such that
\[
\left\{
 f\colon(\mathcal{X},\mathcal{L})\to S
\;\middle|
\begin{array}{l}
\text{$\chi(\mathcal{X}_{\bar{s}},\mathcal{L}^{\otimes m}_{\bar{s}})=P(m)$ for any $m\in\mathbb{Z}$}\\
\text{and geometric point $\bar{s}\in S$.}
\end{array}\right\}.
\]
Then, it suffices to show that $\mathcal{M}^{\mathrm{klt,CY}}_{d,v,P}$ is a separated Deligne--Mumford stack of finite type over $\mathbbm{k}$.
Consider $\mathbf{Hilb}_{\mathbb{P}^{P(l_0)-1}\times\mathbb{P}^{P(l_0+1)-1}/\mathbbm{k}}^{Q,p_1^*\mathcal{O}(1)\otimes p_2^*\mathcal{O}(1)}$, where $p_1\colon \mathbb{P}^{P(l_0)-1}\times\mathbb{P}^{P(l_0+1)-1}\to \mathbb{P}^{P(l_0)-1}$ and $p_2\colon \mathbb{P}^{P(l_0)-1}\times\mathbb{P}^{P(l_0+1)-1}\to \mathbb{P}^{P(l_0+1)-1}$ are natural projections and $Q(k):=P((2l_0+1)k)$.
Let 
$$\iota\colon \mathcal{U}\hookrightarrow \mathbb{P}^{P(l_0)-1}\times\mathbb{P}^{P(l_0+1)-1}\times \mathbf{Hilb}_{\mathbb{P}^{P(l_0)-1}\times\mathbb{P}^{P(l_0+1)-1}/\mathbbm{k}}^{Q,p_1^*\mathcal{O}(1)\otimes p_2^*\mathcal{O}(1)}$$
 be the universal object, and we put $\mathcal{A}:=p_1^*\mathcal{O}(-1)\otimes p_2^*\mathcal{O}(1)|_{\mathcal{U}}$.
 By the same argument just after \cite[Theorem 2.26]{HH2}, there exists a locally closed subscheme $Z_1 \subset \mathbf{Hilb}_{\mathbb{P}^{P(l_0)-1}\times\mathbb{P}^{P(l_0+1)-1}/\mathbbm{k}}^{Q,p_1^*\mathcal{O}(1)\otimes p_2^*\mathcal{O}(1)}$ such that for any morphism $T\to \mathbf{Hilb}_{\mathbb{P}^{P(l_0)-1}\times\mathbb{P}^{P(l_0+1)-1}/\mathbbm{k}}^{Q,p_1^*\mathcal{O}(1)\otimes p_2^*\mathcal{O}(1)}$ from a scheme, $T$ factors through $Z_1$ if and only if the following conditions are satisfied.
\begin{enumerate}
    \item For every geometric point $\bar{t}\in T$, $\mathcal{U}_{\bar{t}}$ is normal and connected,
    \item \label{item--Z--required}$\mathcal{A}^{\otimes l_0}\sim_Tp_{1,T}^*\mathcal{O}(1)|_{\mathcal{U}_T}$ and $\mathcal{A}^{\otimes l_0+1}\sim_Tp_{2,T}^*\mathcal{O}(1)|_{\mathcal{U}_T}$,
    \item for any point $t\in T$, $p_{1,t}\circ\iota_t$ and $p_{2,t}\circ\iota_t$ are closed immersion,
    \item for every point $t\in T$, the natural morphisms $\mathcal{O}_{T}^{\oplus  P(l_0)}\to H^0(\mathcal{U}_t,\mathcal{A}^{\otimes l_0}_{t})$ and $\mathcal{O}_{T}^{\oplus P(l_0+1)}\to H^0(\mathcal{U}_t,\mathcal{A}^{\otimes P(l_0+1)}_{t})$ induced by (\ref{item--Z--required}) are surjective, and
    \item for every point $t\in T$, $\mathcal{A}_t$ is ample, and the equalities $h^0(\mathcal{U}_t,\mathcal{A}^{\otimes l_0}_{t})=P(l_0)$, $h^0(\mathcal{U}_t,\mathcal{A}^{\otimes l_0+1}_{t})=P(l_0+1)$ and $H^i(\mathcal{U}_t,\mathcal{A}^{\otimes j}_{t})=0$ hold for any $i,j>0$.
\end{enumerate} 
On the other hand, there exists a positive integer $m_1$ such that $m_1K_X\sim\mathcal{O}_X$ for any $(X,L)\in \mathfrak{F}^{\mathrm{klt,CY}}_{d,v}$ by \cite[Lemma 7.2]{birkar-nefpart}. 
Thus, there exists a partial locally closed decomposition $Z_2\to Z_1$ such that for any morphism $g\colon T\to Z_1$ from a scheme, there exists $\omega_{\mathcal{U}_T/T}^{[j]}$ for any $1\le j\le m_1$ and $\omega_{\mathcal{U}_T/T}^{[m_1]}$ is a line bundle if and only if $g$ factors through $Z_2$ by \cite[Theorem 9.40]{kollar-moduli}.
Then $$Z_3:=\{t\in Z_2|\mathcal{U}_t\text{ is klt}\}$$ is open by \cite[Corollary 4.10]{kollar-mmp}. 
Furthermore, there exists a closed subscheme $Z\subset Z_3$ such that for any morphism $g\colon T\to Z_3$ from a scheme, $g$ factors through $T$ if and only if $\omega_{\mathcal{U}_T/T}^{[m_1]}\sim_T0$ by the separatedness of $\mathbf{Pic}_{\mathcal{U}_{Z_3}/Z_3}$.
We note that for any object $f\colon (\mathcal{X},\mathcal{L})\to S\in\mathcal{M}^{\mathrm{klt,CY}}_{d,v}(S)$, $f^*f_*\omega_{\mathcal{X}/S}^{[m_1]}\cong\omega_{\mathcal{X}/S}^{[m_1]}$ by Lemma \ref{lem--dubois}.
By this, it is easy to show that $[Z/PGL(P(l_0))\times PGL(P(l_0+1))]\cong \mathcal{M}^{\mathrm{klt,CY}}_{d,v,P}$ as stacks.
The remaining part can be shown as \cite[Theorem 2.26]{HH2}.
\end{proof}

From now on, we fix an irreducible component $\mathcal{V}\subset \mathcal{M}^{\mathrm{klt,CY}}_{d,v}$.
Let $\mathcal{V}^{(m)}$ be the image structure of the canonical morphism $f_m\colon\mathcal{V}\to \mathcal{M}^{\mathrm{klt,CY}}_{d,m^d\cdot v}$ maps $(\mathcal{X},\mathcal{L})$ to $(\mathcal{X},\mathcal{L}^{\otimes m})$ for any $m\in\mathbb{Z}_{>0}$. 
\begin{lem}\label{lem--quasi-finite-exponent}
  The canonical morphism $\mathcal{V}\to \mathcal{V}^{(m)}$ is representable and quasi-finite, and $\mathcal{V}^{(m)}$ is an irreducible component of $\mathcal{M}^{\mathrm{klt,CY}}_{d,m^d\cdot v}$ for any $m\in\mathbb{Z}_{>0}$.
\end{lem}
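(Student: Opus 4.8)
The plan is to deduce all three assertions from properties of the composite morphism $f_m\colon \mathcal{V}\to \mathcal{M}^{\mathrm{klt,CY}}_{d,m^d\cdot v}$, $(\mathcal{X},\mathcal{L})\mapsto(\mathcal{X},\mathcal{L}^{\otimes m})$ (which lands in $\mathcal{M}^{\mathrm{klt,CY}}_{d,m^d\cdot v}$ since $\mathrm{vol}(\mathcal{L}^{\otimes m}_{\bar s})=m^d v$). By definition $\mathcal{V}^{(m)}$ is the image structure of $f_m$, hence a closed substack, and $f_m$ factors as $\mathcal{V}\to \mathcal{V}^{(m)}\hookrightarrow \mathcal{M}^{\mathrm{klt,CY}}_{d,m^d\cdot v}$ with the second arrow a monomorphism. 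Therefore representability and quasi-finiteness of $f_m$ automatically descend to $\mathcal{V}\to\mathcal{V}^{(m)}$, while the component statement is a property of the image of $f_m$. The geometric mechanism throughout is that, fibrewise, $f_m$ is the multiplication-by-$m$ endomorphism $\mu_m\colon \mathscr{L}\mapsto\mathscr{L}^{\otimes m}$ on the relative Picard scheme.

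For representability I would use the criterion that a morphism of Deligne--Mumford stacks is representable by algebraic spaces exactly when it induces monomorphisms on automorphism groups. Here the map on automorphisms at $(X,L)$ is the inclusion $\mathrm{Aut}(X,L)\hookrightarrow\mathrm{Aut}(X,L^{\otimes m})$ of stabilisers inside $\mathrm{Aut}(X)$ (an automorphism fixing $L$ fixes $L^{\otimes m}$), which is injective; hence $f_m$ is representable. For quasi-finiteness, since $f_m$ is representable and of finite type it suffices to bound geometric fibres: the fibre over $(X,M)$ injects into the set of $m$-th roots $\{L'\colon L'^{\otimes m}\cong M\}$ modulo $\mathrm{Aut}(X,M)$, and the root set, if nonempty, is a torsor under the finite group $\mathbf{Pic}(X)[m]$. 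Finiteness of $\mathbf{Pic}(X)[m]$ follows from Lemma~\ref{lem--pic-const} (so that $\mathbf{Pic}^0_{X}$ is an abelian variety, whose $m$-torsion is finite) together with Corollary~\ref{cor--torsion--components--number} (bounding the component group $\mathbf{Pic}^\tau_{X}/\mathbf{Pic}^0_{X}$). Thus the geometric fibres are finite, so $f_m$, and with it $\mathcal{V}\to\mathcal{V}^{(m)}$, is quasi-finite.

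For the last assertion I would show that $f_m$ is \emph{\'etale}, whence its image is open. Working \'etale-locally on the target, choose a smooth atlas $U\to \mathcal{M}^{\mathrm{klt,CY}}_{d,m^d\cdot v}$ with universal family $(\mathcal{X}_U,\mathcal{M}_U)$; then the base change of $f_m$ is identified with (a union of connected components of) the space of $m$-th roots
\[
R:=\mathbf{Pic}_{\mathcal{X}_U/U}\times_{\mu_m,\ \mathbf{Pic}_{\mathcal{X}_U/U},\ \sigma_{\mathcal{M}_U}}U,
\]
where $\sigma_{\mathcal{M}_U}$ is the section given by $\mathcal{M}_U$. The morphism $\mu_m$ is \'etale in characteristic zero: on $\mathbf{Pic}^0_{\mathcal{X}_U/U}$ it is an isogeny by Lemma~\ref{lem--pic-const}, and in general it is a group-scheme homomorphism inducing multiplication by $m$ on the tangent space $H^1(\mathcal{X}_u,\mathcal{O}_{\mathcal{X}_u})$, which is invertible. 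Being a base change of $\mu_m$, the morphism $R\to U$ is \'etale, so $f_m$ is representable and \'etale; consequently $f_m(\mathcal{V})$ is an open substack. As the continuous image of the irreducible $\mathcal{V}$ it is irreducible, and $\mathcal{V}^{(m)}$ is its closure; since the closure of a nonempty irreducible open subset is an irreducible component (its generic point admits no proper generisation), $\mathcal{V}^{(m)}$ is an irreducible component of $\mathcal{M}^{\mathrm{klt,CY}}_{d,m^d\cdot v}$.

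The \textbf{main obstacle} is precisely this \'etaleness step. Two points require care. First, I must verify that a unique $m$-th root lifts over each infinitesimal thickening: matching the deformation--obstruction theory of a root $L_0$ with that of $L_0^{\otimes m}\cong \mathcal{M}_U$, the obstruction in $H^2(\mathcal{X}_u,\mathcal{O}_{\mathcal{X}_u})$ for $L_0$ differs from that for $\mathcal{M}_U$ by the invertible factor $m$, and the latter vanishes because $\mathcal{M}_U$ already exists over all of $U$; this is where working with the root space $R$ directly, rather than with $\mathbf{Pic}$ as a possibly non-reduced group scheme when $H^2(\mathcal{O}_{\mathcal{X}_u})\neq 0$, is essential. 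Second, I must ensure the uniquely-deforming root stays in the single component $\mathcal{V}$, for which I would argue (after reducing to reduced structures, as is done systematically via $(\cdot)_{\mathrm{red}}$ elsewhere in the paper) that those connected components of the regular space $R$ whose tautological image meets a general point of $\mathcal{V}$ map entirely into $\mathcal{V}$, so that their open image in $U$ computes $f_m(\mathcal{V})$.
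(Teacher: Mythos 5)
Your proposal is correct and takes essentially the same route as the paper: representability via the inclusion $\mathrm{Aut}(X,L)\subset\mathrm{Aut}(X,L^{\otimes m})$, quasi-finiteness via finiteness of the set of $m$-th roots (a torsor under the finite $m$-torsion of the Picard group; the paper packages this as Proposition \ref{prop--mu_k}), and the component statement via \'etaleness of the $m$-th power operation at a general point of $\mathcal{V}$, established by exactly the same deformation-theoretic facts you use, namely that the obstruction class of $L$ is $m^{-1}$ times that of $L^{\otimes m}$ and that lifts are controlled by multiplication by $m$ on $H^1(\mathcal{O}_X)$ (equivalently, \'etaleness of the $m$-torsion of $\mathbf{Pic}^0$), followed by openness of the image plus irreducibility. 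The only differences are organizational: the paper verifies the infinitesimal lifting criterion directly with Artin local rings instead of passing through the root space $R$ over an atlas, and it avoids your ``stays in $\mathcal{V}$'' concern by choosing the general point $p$ outside all other irreducible components (note your incidental claim that $R$ is regular is unjustified, since the atlas $U$ need not be regular, but your argument never actually needs it).
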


\begin{proof}
Let $f_{m}\colon \mathcal{V}\to \mathcal{V}^{(m)}$ be the canonical morphism.
  Fix a closed point of $\mathcal{V}$ corresponding to $(X,L)$.
  Then the set of $(X',L')$ such that $(X,L^{\otimes m})$ and $(X',L'^{\otimes m})$ are isomorphic is finite
 by Proposition \ref{prop--mu_k}.
 Therefore, the canonical morphism $\mathcal{V}\to \mathcal{V}^{(m)}$ is quasi-finite.
 Furthermore, $\mathrm{Aut}(X,L)\subset \mathrm{Aut}(X,L^{\otimes m})$ and this shows that $f_{m}$ is representable by \cite[Lemma 4.4.3]{AV}, where $\mathrm{Aut}(X,L)$ denotes the closed subgroup of $\mathrm{Aut}_X$ preserving the linear equivalence class of $L$.

 Take a general closed point $p$ of $\mathcal{V}$ that is not contained in any other irreducible component of $\mathcal{V}$.
 To see that $\mathcal{V}^{(m)}$ is an irreducible component, it suffices to show that $f_{m}\colon \mathcal{V}\to\mathcal{M}^{\mathrm{klt,CY}}_{d,m^d\cdot v}$ is \'etale around $p$.
Set $A$ be an Artin local ring with the maximal ideal $\mathfrak{m}$ and an ideal $I$ such that $\mathfrak{m}\cdot I=0$.
Let $\kappa:=A/\mathfrak{m}$. 
Let $\pi_1\colon (\mathcal{X},\mathcal{L})\to \mathrm{Spec}\,A/I$ be an object of $\mathcal{V}(A/I)$ such that the restriction to $\mathrm{Spec}\,\kappa$ coincides with $(X,L)$.
On the other hand, let $\pi_2\colon (\overline{\mathcal{X}},\overline{\mathcal{A}})\to \mathrm{Spec}\,A$ be an object of $\mathcal{M}^{\mathrm{klt,CY}}_{d,m^d\cdot v}(A)$ such that the restriction to $\mathrm{Spec}\,A/I$ coincides with $(\mathcal{X},\mathcal{L}^{\otimes m})$.
Then, we claim that there exists a unique object $\overline{\pi_1}\colon (\overline{\mathcal{X}}',\overline{\mathcal{L}})\to \mathrm{Spec}\,A$ up to isomorphism whose restriction to $\mathrm{Spec}\,A/I$ coincides with $\pi_1$ such that there exists an isomorphism $g\colon (\overline{\mathcal{X}}',\overline{\mathcal{L}}^{\otimes m})\to (\overline{\mathcal{X}},\overline{\mathcal{A}})$ over $\mathrm{Spec}\,A$ such that $g_{(A/I)}$ is the identity. 
To show the existence of $\overline{\pi_1}$, we may choose $\overline{\mathcal{X}}'=\overline{\mathcal{X}}$.
Let $o(\mathcal{L})\in H^2(\mathcal{X}_{\kappa},\mathcal{O}_{\mathcal{X}_{\kappa}})\otimes_{\kappa}I$ be the deformation obstruction.
Since $mo(\mathcal{L})=o(\mathcal{L}^{\otimes m})=0$, we see that $o(\mathcal{L})=0$ and hence there exists a line bundle $\overline{\mathcal{L}}$ on $\overline{\mathcal{X}}$ such that $\overline{\mathcal{L}}|_{\mathcal{X}}=\mathcal{L}$ and $\overline{\mathcal{L}}^{\otimes m}\sim \overline{\mathcal{A}}$.
For the uniqueness, take an object $\overline{\pi_1}\colon (\overline{\mathcal{X}}',\overline{\mathcal{L}})\to \mathrm{Spec}\,A$ as above.
It is easy to see that $\overline{\mathcal{X}}'\cong \overline{\mathcal{X}}$ and hence we may assume that $\overline{\mathcal{X}}'= \overline{\mathcal{X}}$.
Suppose that there exists another line bundle $\overline{\mathcal{L}}'$ such that $\overline{\mathcal{L}}'|_{\mathcal{X}}=\mathcal{L}$ and $\overline{\mathcal{L}}'^{\otimes m}\sim \overline{\mathcal{L}}^{\otimes m}$.
Then, $\overline{\mathcal{L}}'\otimes\overline{\mathcal{L}}^{\otimes-1}$ induces a morphism $\eta\colon \mathrm{Spec}\,A\to\mathbf{Pic}^0_{\overline{\mathcal{X}}/A}$.
Since the kernel of the $m$-th power morphism of $\mathbf{Pic}^0_{\overline{\mathcal{X}}/A}$ is \'etale over $\mathrm{Spec}\,A$, we see that $\eta$ coincides with the identity section. 
Thus, $\overline{\mathcal{L}}'\sim \overline{\mathcal{L}}$.
This and \cite[Theorem 3.1 (iii)]{SGA} show that $f_{m}\colon \mathcal{V}\to\mathcal{M}^{\mathrm{klt,CY}}_{d,m^d\cdot v}$ is \'etale around $p$.
We obtain the proof.
\end{proof}

For any positive integers $l$ and $m$ such that $\frac{m}{l}\in\mathbb{Z}$, we can consider the canonical morphism $f_{l,m}\colon \mathcal{V}^{(l)}\to\mathcal{V}^{(m)}$.
\begin{prop}\label{prop--v^l-stop}
There exists $l\in\mathbb{Z}_{>0}$ such that $f_{l,m}$ is finite and surjective, and the morphism $\overline{f}_{l,m}\colon V^{(l)}\to V^{(m)}$ for the coarse moduli space induced by $f_{l,m}$ is bijective for any positive integer $m$ such that $\frac{m}{l}\in\mathbb{Z}$.  
Moreover, for any $(X,A)\in\mathcal{V}^{(l)}(\mathbbm{k})$, $A$ is very ample and $H^j(X,\mathcal{O}_X(ilA))=0$ for any $i,j>0$.
\end{prop}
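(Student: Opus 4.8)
The plan is to separate three inputs: a uniform boundedness statement for the family, the finiteness of the power map $f_{l,m}$, and a stabilization argument for its degree. I would first dispose of the ``Moreover'' clause, which is a pure boundedness assertion and is insensitive to the later choice of $l$ beyond divisibility. By Proposition \ref{prop--birkar} applied to $\mathfrak{F}^{\mathrm{klt,CY}}_{d,v}$ there is an integer $m_0$, depending only on $d$ and $v$, such that for every $(X,L)$ occurring in $\mathcal{V}$ the divisor $m_0L$ is very ample and $H^i(X,\mathcal{O}_X(m_0jL))=0$ for all $i,j>0$. I would then insist that the eventual $l$ be a multiple of $m_0$; writing $A=lL$ for $(X,A)\in\mathcal{V}^{(l)}(\mathbbm{k})$, very ampleness of $A$ follows since $A$ is a positive multiple of $m_0L$, and $H^j(X,\mathcal{O}_X(ilA))=H^j(X,\mathcal{O}_X(m_0(il^2/m_0)L))=0$ for $i,j>0$ directly from Proposition \ref{prop--birkar}.

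Next I would show that $f_{l,m}$ is finite and surjective for every such $l$ and every $m$ with $l\mid m$. Writing $m=lk$, the morphism $f_{l,m}$ is the $k$-th power on polarizations; it is representable and quasi-finite by the argument of Lemma \ref{lem--quasi-finite-exponent}, its fibres being finite because $\mu_k$ is finite (Proposition \ref{prop--mu_k}). Surjectivity is automatic: since $\mathcal{V}^{(m)}$ is the image of $f_m=f_{l,m}\circ f_l$ and $f_l$ is surjective onto $\mathcal{V}^{(l)}$, the image of $f_{l,m}$ is all of $\mathcal{V}^{(m)}$. For properness I would apply the valuative criterion in the form of \cite[Theorem 11.5.1]{Ols}: given a family over a discrete valuation ring in $\mathcal{V}^{(m)}$ together with a lift of its generic fibre to $\mathcal{V}^{(l)}$, the underlying variety extends by separatedness of $\mathcal{M}^{\mathrm{klt,CY}}_{d,v}$ (Theorem \ref{thm--klt--CY--moduli}), while the $k$-th root polarization extends, after a finite base change, because the ambiguity lies in a torsion class and $\mathbf{W}^{\mathbb{Q}}\mathbf{Pic}^\tau$ is projective (Corollary \ref{cor--mu_i-proper}) with $\mu_k$ finite. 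Hence $f_{l,m}$ is proper and quasi-finite, and therefore finite.

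It remains to produce a single $l$ for which $\overline{f}_{l,m}$ is bijective on $\mathbbm{k}$-points for all admissible $m$. I would let $d_l$ denote the generic degree of the coarse map $\overline{f}_l\colon V\to V^{(l)}$ induced by $f_l$; from $\overline{f}_m=\overline{f}_{l,m}\circ\overline{f}_l$ one gets $d_m=\deg(\overline{f}_{l,m})\,d_l$, so $d_\bullet$ is non-decreasing along divisibility and $\deg(\overline{f}_{l,m})=d_m/d_l$. The crucial point is that $d_l$ is bounded independently of $l$: the fibre of $\overline{f}_l$ over $[(X,lL_0)]$ consists of the classes $[(X,L)]$ with $l(L-\varphi^*L_0)=0$ for some $\varphi\in\mathrm{Aut}_X$, and the quasi-finiteness of $\overline{f}_l$ forces the connected part $\mathbf{Pic}^0(X)$ of this ambiguity to be absorbed by the identity component $\mathrm{Aut}^0(X)$ of $\mathrm{Aut}_X$ (otherwise the fibre would be positive dimensional). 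What survives is governed by the component group $\mathbf{W}^{\mathbb{Q}}\mathbf{Pic}^\tau(X)/\mathbf{Pic}^0(X)$, whose order is bounded by a constant $m_1$ uniform over $\mathcal{V}$ by Corollary \ref{cor--torsion--components--number}. Hence $d_l\le m_1$, so $d_\bullet$ stabilizes, and I would fix $l$ in the stable range and divisible by both $m_0$ and $m_1$.

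Finally, with $l$ so chosen I would establish bijectivity of $\overline{f}_{l,m}$ directly, since finiteness together with generic degree one does not by itself give injectivity at non-generic points. Given two classes $[(X,lL_1)],[(X,lL_2)]$ mapping to the same $[(X,B)]$, one has $mL_1\cong\varphi^*(mL_2)$ for some $\varphi\in\mathrm{Aut}_X$, so $\gamma:=L_1-\varphi^*L_2$ is $m$-torsion in $\mathbf{W}^{\mathbb{Q}}\mathbf{Pic}(X)$; absorbing its $\mathbf{Pic}^0$-component by an element of $\mathrm{Aut}^0(X)$ and using that the class of $\gamma$ in the component group has order dividing $m_1\mid l$, I would conclude $l\gamma=0$ up to automorphism, i.e.\ $[(X,lL_1)]=[(X,lL_2)]$. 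Combined with surjectivity this yields bijectivity. The hard part of the whole argument is precisely this last absorption step: reconciling the continuous Picard data $\mathbf{Pic}^0$, which is invisible to the numerical moduli, with the automorphisms of $X$, while simultaneously bounding the torsion exponent uniformly. This is exactly the phenomenon that the quasi-finiteness of Lemma \ref{lem--quasi-finite-exponent} and the component-group bound of Corollary \ref{cor--torsion--components--number} are designed to control.
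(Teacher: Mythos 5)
Your proposal assembles the right ingredients (Proposition \ref{prop--birkar}, Corollary \ref{cor--torsion--components--number}, Proposition \ref{prop--mu_k}, Lemma \ref{lem--quasi-finite-exponent}), and your final injectivity step is essentially the paper's, but there are two genuine gaps. The central one is that you repeatedly treat every $\mathbbm{k}$-point of $\mathcal{V}^{(l)}$ as an honest $l$-th power: you write $A=lL$ in the ``Moreover'' step, you assert that $f_l$ is surjective onto $\mathcal{V}^{(l)}$ to deduce surjectivity of $f_{l,m}$, and your injectivity argument starts from classes of the form $[(X,lL_1)]$, $[(X,lL_2)]$. But $\mathcal{V}^{(l)}$ is by definition the image structure (scheme-theoretic image) of $f_l$, and $f_l$ is only quasi-finite, not proper, so its set-theoretic image need not be closed: a closed point of $\mathcal{V}^{(l)}$ is a priori only a \emph{limit} of $l$-th powers. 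The first paragraph of the paper's proof is devoted precisely to closing this gap: one joins $(X,A)$ to the dense image locus by a curve, produces an $l$-th root line bundle over the punctured curve (after a finite cover), extends it across the puncture as a $\mathbb{Q}$-Cartier divisorial sheaf using normality and irreducibility of the central fibre together with \cite[II, Proposition 6.5]{Ha} and \cite[Corollary 5.25]{KM}, and restricts to obtain a root $(X,L)\in\mathfrak{F}^{\mathrm{klt,CY}}_{d,v}$ with $lL\sim A$. Without this root-extension argument the ``Moreover'' clause, the surjectivity claim, and your last paragraph all collapse, since each needs roots to exist at arbitrary, not just general, points; moreover the same technique is needed inside the properness proof (one must show the special fibre of the extended polarization admits an $l$-th root in $\mathfrak{F}^{\mathrm{klt,CY}}_{d,v}$, hence is Cartier by the choice of $l$, before it defines a point of $\mathcal{V}^{(l)}$), so your valuative-criterion sketch via finiteness of $\mu_k$ silently relies on it as well.

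The second gap is your justification of the absorption step. You claim quasi-finiteness of $\overline{f}_l$ ``forces'' $\mathbf{Pic}^0(X)$-differences to be absorbed by $\mathrm{Aut}^0(X)$, since otherwise the fibre would be positive dimensional. This reasoning fails: the fibres of $\overline{f}_l$ and $\overline{f}_{l,m}$ consist of polarizations differing by torsion classes in $\mathbf{W}^{\mathbb{Q}}\mathbf{Pic}_X$, and torsion is finite whether or not absorption holds, so no contradiction with quasi-finiteness arises if absorption fails. The absorption statement --- if $g^*A'-A\in\mathbf{Pic}^0_{X/\mathbbm{k}}$ then some automorphism $g'$ of $X$ satisfies $g'^*g^*A'\sim A$ --- is a genuine theorem about polarized klt Calabi--Yau varieties (for abelian varieties it is the transitivity of translations on $\mathbf{Pic}^0$), which the paper imports as \cite[Lemma 6.2]{Hat23}; neither Lemma \ref{lem--quasi-finite-exponent} nor Corollary \ref{cor--torsion--components--number} yields it, and it is also what your bound $d_l\le m_1$ secretly uses. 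Finally, once the root-existence statement and the absorption lemma are in hand, your stabilization-of-generic-degree detour is superfluous: the paper simply fixes $l$ divisible by the Cartier index of Proposition \ref{prop--birkar} and by the exponent of the component group from Corollary \ref{cor--torsion--components--number}, and then proves bijectivity of $\overline{f}_{l,m}$ directly, which is exactly your concluding argument.
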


\begin{proof}
By Proposition \ref{prop--birkar}, we may take $l$ such that $lL$ is Cartier for any $(X,L)\in \mathfrak{F}^{\text{klt,CY}}_{d,v}$.
Furthermore, we may assume that for any $(X,L)\in \mathfrak{F}^{\text{klt,CY}}_{d,v}$, if $M\in \mathbf{W}^{\mathbb{Q}}\mathbf{Pic}^\tau_{X/\mathbbm{k}}$, then $lM\in \mathbf{Pic}^0_{X/\mathbbm{k}}$ by Corollary \ref{cor--torsion--components--number}.

First, we see that for any $(X,A)\in \mathcal{V}^{(l)}(\mathbbm{k})$, there exists an element $(X,L)$ of $\mathfrak{F}^{\text{klt,CY}}_{d,v}$ such that $lL\sim A$. 
If $(X,A)$ corresponds to a general closed point of $\mathcal{V}^{(l)}$, then the claim follows from the fact that $f_l$ is dominant.
We deal with the general case.
Let $\mathcal{U}\subset\mathcal{V}^{(l)}$ be an open dense substack that is contained in the image of $f_{l}$. 
Fix a $\mathbbm{k}$-valued point $p\in \mathcal{V}^{(l)}$ and then it is easy to see that there exists a morphism $C\to \mathcal{V}^{(l)}$ from an affine smooth curve with a closed point $c\in C$ such that $c$ is assigned to $p$ but every point of $C\setminus\{c\}$ is assigned to $\mathcal{U}$.
Let $(\mathcal{X},\mathcal{A})$ be the family over $C$ corresponding to the above morphism.
Replacing $C$ with its finite cover and shrinking $C$ if necessary, we may assume that $C\setminus\{c\}\to\mathcal{U}$ factors through $\mathcal{V}$.
This means that there exists a line bundle $\mathcal{L}$ on $\mathcal{X}\times_C(C\setminus\{c\})$ such that $\mathcal{L}^{[l]}\sim \mathcal{A}|_{\mathcal{X}\times_C(C\setminus\{c\})}$.
Since $\mathcal{X}_c$ is irreducible and normal, there exists a divisorial sheaf $\overline{\mathcal{L}}$ on $\mathcal{X}$ such that $\overline{\mathcal{L}}|_{\mathcal{X}\times_C(C\setminus\{c\})}\sim\mathcal{L}$ and $\mathcal{A}\sim_C\overline{\mathcal{L}}^{[l]}$ by \cite[II, Proposition 6.5]{Ha}.
This means that $\overline{\mathcal{L}}$ is $\mathbb{Q}$-Cartier and by $\overline{\mathcal{L}}|_{\mathcal{X}_c}$ is also a $\mathbb{Q}$-Cartier divisorial sheaf by \cite[Corollary 5.25]{KM}.
Thus, $(\mathcal{X}_c,\overline{\mathcal{L}}|_{\mathcal{X}_c})\in \mathfrak{F}^{\text{klt,CY}}_{d,v}$ and $\overline{\mathcal{L}}|_{\mathcal{X}_c}^{[l]}\sim \mathcal{A}_c$.
By replacing $l$ if necessary, we have that for any $(X,A)\in\mathcal{V}^{(l)}(\mathbbm{k})$, $A$ is very ample and $H^j(X,\mathcal{O}_X(iA))=0$ for any $i,j>0$.

As in the proof of Lemma \ref{lem--quasi-finite-exponent}, it is easy to see that $f_{l,m}$ is representable.  
Since $f_m=f_{l,m}\circ f_l$, we see that $f_{l,m}$ is quasi-finite and dominant by Lemma \ref{lem--quasi-finite-exponent}.
To show that $f_{l,m}$ is finite and surjective, it suffices to show that $f_{l,m}$ is proper.
Let $R$ be a discrete valuation ring with the maximal ideal $\mathfrak{m}$, the fractional field $K$ and the residue field $\kappa$.
Let $g\colon\mathrm{Spec}\,R\to \mathcal{V}^{(m)}$ and $h\colon\mathrm{Spec}\,K\to \mathcal{V}^{(l)}$ be morphisms such that $g|_{\mathrm{Spec}\,K}=f_{l,m}\circ h$.
Then we claim that there exist a finite extension $R\subset R'$ and a unique morphism $\bar{h}\colon\mathrm{Spec}\,R'\to \mathcal{V}^{(l)}$ such that $\bar{h}|_{\mathrm{Spec}\,K}=h\circ\pi$ and $g\circ\pi=f_{l,m}\circ \bar{h}$, where $\pi\colon \mathrm{Spec}\,R'\to\mathrm{Spec}\,R$ is the canonical morphism.
Indeed, take a flat projective morphism $\mathcal{X}\to \mathrm{Spec}\,R$ with a line bundle $\mathcal{L}$ on $\mathcal{X}_K$ such that $\mathcal{L}^{\otimes \frac{m}{l}}$ can be extended to a line bundle $\mathcal{A}$ such that $(\mathcal{X},\mathcal{A})$ corresponds to $g$ and $(\mathcal{X}_K,\mathcal{L})$ corresponds to $h$.
Since $\mathcal{X}_0$ the fiber over the closed point of $\mathrm{Spec}\,R$ is normal and irreducible, $\mathcal{L}$ can be extended to a divisorial sheaf $\overline{\mathcal{L}}$ on $\mathcal{X}$.
Note that $\frac{m}{l}\overline{\mathcal{L}}\sim \mathcal{A}$ and hence $\overline{\mathcal{L}}$ is $\mathbb{Q}$-Cartier. \cite[Corollary 5.25]{KM} shows that $\overline{\mathcal{L}}$ is Cohen--Macaulay and $\mathcal{L}_\kappa:=\overline{\mathcal{L}}|_{\mathcal{X}_\kappa}$ is also a $\mathbb{Q}$-Cartier divisorial sheaf. 
On the other hand, there exists a finite extension $R\subset R'$ with the fractional field $K'$ with a $\mathbb{Q}$-Cartier divisorial sheaf $\mathcal{L}'$ on $\mathcal{X}_{K'}$ such that $\mathcal{L}_{K'}\sim \mathcal{L}'$ by what we have shown in the previous paragraph.
By the above discussion, we see that there exists a $\mathbb{Q}$-Cartier divisorial sheaf $\mathcal{L}'_0$ such that $\mathcal{L}'^{[l]}_0\sim\mathcal{L}_0$, where $0\in\mathrm{Spec}\,R'$ is an arbitrary closed point mapped to the closed point of $\mathrm{Spec}\,R$.
By this, we see that $(\mathcal{X}_0,\mathcal{L}_0)$ belongs to $\mathfrak{F}^{\text{klt,CY}}_{d,v}$.
Thus, $\mathcal{L}_0$ is Cartier by the choice of $l$.
This means that $(\mathcal{X},\overline{\mathcal{L}})$ belongs to $\mathcal{V}^{(l)}$ and $\bar{h}$ corresponding to $(\mathcal{X},\overline{\mathcal{L}})$ is the desired one.
Therefore, $f_{l,m}$ is proper by \cite[Theorem 11.5.1]{Ols}.

From now on, we will show that $\overline{f}_{l,m}$ is bijective.
Note that $\overline{f}_{l,m}$ is surjective, as is $f_{l,m}$.
Because $V^{(l)}$ and $V^{(m)}$ are algebraic spaces of finite type over $\mathbbm{k}$, it suffices to show that for any two elements $(X,A)$ and $(X',A')$ in $\mathcal{V}^{(l)}(\mathbbm{k})$ such that there is an isomorphism $g\colon X\to X'$ such that $\frac{m}{l}g^*A'\sim\frac{m}{l}A$, then there exists an isomorphism $h\colon X\to X'$ such that $h^*A'\sim A$.
Note that there exists a divisorial sheaf $L$ and $L'$ such that $lL=A$ and $lL'=A'$.
Then, $g^*L'-L\in \mathbf{W}^{\mathbb{Q}}\mathbf{Pic}^\tau_{X/\mathbbm{k}}$.
By the choice of $l$, we see that $g^*A'-A\in \mathbf{Pic}^0_{X/\mathbbm{k}}$.
By \cite[Lemma 6.2]{Hat23}, we see that there exists an automorphism $g'\colon X\to X$ such that $g'^*g^*A'\sim A$.
By letting $h=g\circ g'$, we obtain the assertion.
Therefore, $\overline{f}_{l,m}$ is injective.
\end{proof}

Take $l$ as the above proposition. Let $V^{\mathrm{sn}}$ be the seminormalization of $V^{(l)}$ (for the definition of the seminormalization and seminormal schemes, see \cite{kollar-moduli} and \cite[Definition 9.51]{AEHK}).
By Proposition \ref{prop--v^l-stop}, $V^{\mathrm{sn}}$ is the seminormalization of $V^{(m)}$ for any $m\in l\mathbb{Z}_{>0}$.
Let $ V^\nu$ be the normalization of $V^{\mathrm{sn}}$.

We will assume the following condition.

\begin{cond}\label{ass--b-ss}
    Let $f\colon (X,\Delta)\to S$ be an lc--trivial fibration over a normal projective variety $S$.
    Fix an ample line bundle $A$ on $X$.
    Suppose that there exists $m\in l\mathbb{Z}_{>0}$ such that $\Delta_s=0$ and $(X_s,A_s)$ belongs to $\mathcal{V}^{(m)}(\mathbbm{k})$ for any general closed point $s\in S$.
    Then, for any Ambro model $\tilde{S}\to S$, the moduli $\mathbb{Q}$-divisor $M_{\tilde{S}}$ is semiample.

    Furthermore, let $V_1\subset V^{\mathrm{sn}}$ be the normal locus.
\end{cond}

\begin{rem}\label{rem-bfmt}
The first part of the condition follows from the $\mathbf{B}$-semiampleness conjecture, which is solved by \cite[Theorem 1.5]{BFMT}.
We note that the latter part holds in the following cases.
\begin{enumerate}
    \item $V^{\mathrm{sn}}$ parameterizes Abelian varieties, or polarized symplectic varieties smoothable to irreducible holomorphic symplectic manifolds (cf.~ \cite[Remark 2.34 and Theorem 2.45]{HH2}).
    In this case, we can take $V_1$ as $V^{\mathrm{sn}}$.
    \item If $V_1$ parametrizes polarized smooth Calabi--Yau varieties with trivial canonical divisor, then $V_1$ is the coarse moduli space of a smooth stack (cf.~\cite{GHJ}).
    However, $\mathcal{M}^{\mathrm{klt,CY}}_{d,v}$ is not smooth in general by the example of Gross \cite{Gro}.
    \item If $V_1$ parameterizes polarized Enriques surfaces, then $V_1$ is normal because they have no deformation obstruction and the geometric genus is zero (cf.~\cite{BHPV}). 
\end{enumerate}
\end{rem}

\begin{thm}\label{thm--canonical--bundle--formula-sin}
Under Condition \ref{ass--b-ss}, fix $l$ as in Proposition \ref{prop--v^l-stop}.
Then, there exist a positive integer $k\in l\cdot\mathbb{Z}_{>0}$ and a normal projective scheme $\overline{V}^{\mathrm{BB}}$ that contains $V^{\nu}$ as an open subscheme with an ample $\mathbb{Q}$-line bundle $H$ depending only on $l$  that satisfy the following properties for any $m\in l\cdot\mathbb{Z}_{>0}$.
\begin{enumerate}
    \item $kH$ is very ample and fix a closed embedding $\iota\colon \overline{V}^{\mathrm{BB}}\hookrightarrow \mathbb{P}^L$ that is induced by the complete linear system $|kH|$,
    \item Let $f\colon (X,\Delta)\to S$ be an lc--trivial fibration over a normal projective variety $S$.
    Fix an $f$-ample line bundle $A$ on $X$.
    Suppose that there exists an open dense subset $U\subset S$ such that $\Delta_s=0$ and $(X_s,A_s)$ belongs to $\mathcal{V}^{(m)}(\mathbbm{k})$ for any closed point $s\in U$. 
    Suppose further that there exists the canonical morphism $\mathcal{P}\colon U\to V_1$.
    Let $\tilde{S}\to S$ be an Ambro model and $M_{\tilde{S}}$ the moduli $\mathbb{Q}$-divisor on $\tilde{S}$.
    Then there exists a unique morphism $\widetilde{\mathcal{P}}\colon \tilde{S}\to \overline{V}^{\mathrm{BB}}$ that is equivalent to $\mathcal{P}$ as birational maps such that $\widetilde{\mathcal{P}}^*(kH)\sim kM_{\tilde{S}}$.
\end{enumerate}
\end{thm}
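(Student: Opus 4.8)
The plan is to construct $\overline{V}^{\mathrm{BB}}$ as the ample model of the Hodge line bundle $\Lambda_{\mathrm{Hodge}}$ on the normalization of the component, and to identify the pullback of this ample model with the moduli $\mathbb{Q}$-divisor via the canonical bundle formula. First I would recall from Theorem \ref{thm--klt--CY--moduli} that on $M^{\mathrm{klt,CY}}_{d,m^d\cdot v}$ there is a Hodge $\mathbb{Q}$-line bundle $\Lambda_{\mathrm{Hodge}}$ with $\beta^*\Lambda_{\mathrm{Hodge}}\sim_{\mathbb{Q}}\lambda_{\mathrm{Hodge}}$. Restricting to the image $V^{(m)}$ and pulling back along the finite bijective morphisms $\overline{f}_{l,m}$ of Proposition \ref{prop--v^l-stop}, I obtain a Hodge line bundle on $V^{(l)}$ independent of $m$ up to the normalization $V^\nu$; the key point is that the exponents have been stabilized at level $l$, so that all the $V^{(m)}$ share a common seminormalization $V^{\mathrm{sn}}$ and normalization $V^\nu$. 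The plan is then to define $\overline{V}^{\mathrm{BB}}$ as $\mathrm{Proj}$ of the section ring of a suitable multiple of the Hodge line bundle on $V^\nu$ — in other words, the ample model — and to let $H$ be the induced ample $\mathbb{Q}$-line bundle, choosing $k\in l\mathbb{Z}_{>0}$ sufficiently divisible that $kH$ is very ample, which gives part (1) and the embedding $\iota$.

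The heart of the argument is part (2), the comparison $\widetilde{\mathcal{P}}^*(kH)\sim kM_{\tilde S}$. Here I would follow the strategy of \cite[Theorem 2.50]{HH2}, generalizing it from the Abelian/symplectic cases to arbitrary klt Calabi--Yau fibers, which is exactly what Condition \ref{ass--b-ss} is designed to enable. Given an lc--trivial fibration $f\colon(X,\Delta)\to S$ whose general fiber lies in $\mathcal{V}^{(m)}$, the classifying map $\mathcal{P}\colon U\to V_1$ is defined on the open locus; since $V_1$ is the normal locus of $V^{\mathrm{sn}}$ and $\widetilde{\mathcal P}$ is to land in $\overline{V}^{\mathrm{BB}}\supset V^\nu$, I would compose with the normalization and use the valuative/properness properties of the ample model to extend $\mathcal{P}$ over the Ambro model $\tilde S\to S$ to a morphism $\widetilde{\mathcal P}\colon\tilde S\to\overline{V}^{\mathrm{BB}}$; the $\mathbf{B}$-semiampleness input guarantees $M_{\tilde S}$ is semiample, so that the moduli divisor genuinely defines a morphism to which the Hodge bundle pullback can be compared. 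The linear equivalence $\widetilde{\mathcal P}^*(kH)\sim kM_{\tilde S}$ should then follow from the defining property of the Hodge line bundle as the CM/determinant-of-pushforward line bundle together with the canonical bundle formula $K_{\tilde S}+B_{\tilde S}+M_{\tilde S}=h^*(K_S+B_S+M_S)$, matching the variation of Hodge structure on both sides. Uniqueness of $\widetilde{\mathcal P}$ follows from separatedness of $\overline{V}^{\mathrm{BB}}$ and the fact that two extensions agreeing on the dense open $U$ must coincide.

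The main obstacle I anticipate is showing that the Hodge line bundle on $V^\nu$ is big and nef — hence that its ample model $\overline{V}^{\mathrm{BB}}$ exists as a projective scheme containing $V^\nu$ as an open subset — and that the embedding is compatible with the moduli divisor at the level of linear (not merely numerical) equivalence. The semiampleness of $M_{\tilde S}$ from \cite{BFMT} is precisely what converts the nefness of the moduli divisor into an honest morphism to a projective model, but pinning down the multiple $k$ uniformly in $m$ requires the boundedness from Proposition \ref{prop--birkar} and the stabilization of exponents in Proposition \ref{prop--v^l-stop}. A secondary subtlety is that $V^{\mathrm{sn}}$ is generally non-normal (see Remark \ref{rem-bfmt} and \cite{Gro}), which forces the embedding to be of $V_1$ rather than all of $V$; I would handle this by working on the normalization $V^\nu$ throughout, where the Hodge bundle is well-behaved, and only afterward restrict to the normal locus $V_1$ to read off the classifying map, exactly as the statement demands. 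Passing from $\mathbb{Q}$-linear to honest linear equivalence $\widetilde{\mathcal P}^*(kH)\sim kM_{\tilde S}$ will require a careful bookkeeping of denominators, which I expect to absorb into the choice of sufficiently divisible $k$.
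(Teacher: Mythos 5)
Your construction of $\overline{V}^{\mathrm{BB}}$ has a genuine gap: you propose to take the ample model (Proj of the section ring) of the Hodge line bundle directly on $V^{\nu}$, but $V^{\nu}$ is only quasi-projective, not proper, so this does not produce the required object. On a non-proper variety there is no finite generation available for the section ring, and even granting a morphism defined by sections, nothing forces it to be an open immersion onto its image, nor the image to be a projective variety containing $V^{\nu}$ as an open subscheme. Note also that your anticipated "main obstacle" (bigness and nefness of the Hodge bundle on $V^{\nu}$) is not the issue at all: by Theorem \ref{thm--klt--CY--moduli} the Hodge line bundle is already \emph{ample} on the moduli of polarized klt Calabi--Yau varieties, hence on $V^{\nu}$; the entire difficulty is compactification. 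The paper resolves this by compactifying the moduli problem first: it passes to the Koll\'ar--Xu moduli of KSBA-stable pairs $(X,\epsilon H)$ with $H\sim A$, i.e.\ the proper Deligne--Mumford stack $\mathcal{M}^{\mathrm{KX},\epsilon}_{V,(m)}$ with projective coarse space (Theorem \ref{thm--ksba}), extends the Hodge line bundle $\overline{\Lambda}_{\mathrm{Hodge}}$ to it, and applies Condition \ref{ass--b-ss} to a family over a finite Galois cover of the \emph{projective} space $M^{\mathrm{KX},\nu}_{V,(m)}$ to conclude that $k_m\nu^*\overline{\Lambda}_{\mathrm{Hodge}}$ is globally generated; then $\overline{V}^{\mathrm{BB}}$ is the image of the resulting morphism $\theta$ (Proposition \ref{prop--baily-borel-V^bb}). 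Moreover, to see that $V^{\nu}$ sits in $\overline{V}^{\mathrm{BB}}$ as an open subscheme one must prove $\theta^{-1}\theta(M^{\mathrm{KX},\nu,\circ}_{V,(m)})=M^{\mathrm{KX},\nu,\circ}_{V,(m)}$, i.e.\ that $\theta$ contracts exactly the fibres of the forgetful map $(X,\epsilon H)\mapsto (X,\mathcal{O}_X(H))$ over the interior and nothing more; this uses Ambro's theorems ($M_{\tilde{C}}\sim_{\mathbb{Q}}0$ forces isotriviality, \cite[Theorems 3.3 and 4.7]{Am}) and \cite[Theorem 4.2]{O3}, none of which appears in your plan.

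Your part (2) inherits the same problem. Extending $\mathcal{P}$ from $U$ to $\tilde{S}$ cannot be done by "valuative/properness properties of the ample model": the valuative criterion controls only codimension-one points, and morphisms to projective varieties need not extend over a higher-dimensional normal base. The paper instead extends the \emph{family}, not the map: after an \'etale cover $U'\to U$ descending the polarization to level $l$ (via Proposition \ref{prop--v^l-stop}), it chooses a relative Cartier divisor $H\sim A'$, invokes \cite[Lemma 2.51]{HH2} to produce a KSBA-stable extension $(X',\epsilon H')\to S'$ over a generically finite cover $S'$ of a compactification of $U'$, obtains $\tau\colon S'\to M^{\mathrm{KX},\nu}_{V,(l)}$ from properness of the compactified moduli, and descends $\theta\circ\tau$ through a Galois quotient $g\colon S''\to\tilde{S}$. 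The honest linear equivalence $\widetilde{\mathcal{P}}^*(kH)\sim kM_{\tilde{S}}$ then comes from Galois descent of a $G$-equivariant isomorphism $\mathcal{O}_{S'}(k\tau^*\theta^*\Lambda)\cong\mathcal{O}_{S'}(kg'^*g^*M_{\tilde{S}})$, anchored over the open locus by the identification with $(f|_U)_*\omega^{[k]}_{X_U/U}$; "absorbing denominators into $k$" does not by itself produce this equivariant identification.
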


\begin{de}\label{de--hodge}
    We call such $H$ as Theorem \ref{thm--canonical--bundle--formula-sin} the {\it Hodge line bundle} on $\overline{V}^{\mathrm{BB}}$ and let $\Lambda_{\mathrm{Hodge}}$ denote $H$.
\end{de}

To show this, we have to consider the following.
\begin{de}
Fix $\epsilon\in\mathbb{Q}_{>0}$ an arbitrary sufficiently small number.  
Take an arbitrary $m\in l\mathbb{Z}_{>0}$.
Consider the following locally closed substack $\mathcal{M}^{\mathrm{KX},\epsilon,\circ}_{V,(m)}$ of $\mathcal{M}^{\mathrm{KSBA}}_{d,(\epsilon ml^{-1})^d\cdot v,\epsilon}$ such that for any scheme $S$, the collection $\mathcal{M}^{\mathrm{KX},\epsilon,\circ}_{V,(m)}(S)$ of objects is 
$$\left\{
 (\mathcal{X},\epsilon \mathcal{H})\to S
\;\middle|
\begin{array}{l}
\text{for any geometric point $\bar{s}\in S$, there exists $(X,A)\in \mathcal{V}^{(m)}(\overline{\kappa(s)})$}\\
\text{such that $f\colon X\cong \mathcal{X}_{\bar{s}}$  and $f^*\mathcal{H}_{\bar{s}}\sim A$ for some $f$.}
\end{array}\right\}.$$
Let $\mathcal{M}^{\mathrm{KX},\epsilon}_{V,(m)}$ be the closure stack, which is an irreducible proper Deligne--Mumford stack by the result of \cite{KX}.
It is easy to see that the seminormalization $\mathcal{M}^{\mathrm{KX},\mathrm{sn}}_{V,(m)}$ of $\mathcal{M}^{\mathrm{KX},\epsilon}_{V,(m)}$ is independent of the choice of any sufficiently small $\epsilon>0$ by \cite[Theorem 2]{KX}.
Let $\pi_{\mathrm{KX}}^m\colon \mathcal{M}^{\mathrm{KX},\mathrm{sn}}_{V,(m)}\to M^{\mathrm{KX},\mathrm{sn}}_{V,(m)}$ be the coarse moduli space, which is projective by Theorem \ref{thm--ksba}.
Note that $M^{\mathrm{KX},\mathrm{sn}}_{V,(m)}$ is also seminormal by \cite[Lemma 13.7]{ABB+}.
Let $\mathcal{M}^{\mathrm{KX},\nu,\circ}_{V,(m)}$, $M^{\mathrm{KX},\nu,\circ}_{V,(m)}$, $\mathcal{M}^{\mathrm{KX},\nu}_{V,(m)}$ and $M^{\mathrm{KX},\nu}_{V,(m)}$ be the normalizations of $\mathcal{M}^{\mathrm{KX},\epsilon,\circ}_{V,(m)}$, $M^{\mathrm{KX},\epsilon,\circ}_{V,(m)}$, $\mathcal{M}^{\mathrm{KX},\mathrm{sn}}_{V,(m)}$ and $M^{\mathrm{KX},\mathrm{sn}}_{V,(m)}$, respectively.
Let $\pi^{\nu,m}_{\mathrm{KX}}\colon\mathcal{M}^{\mathrm{KX},\nu}_{V,(m)}\to M^{\mathrm{KX},\nu}_{V,(m)}$ denote the morphism of the coarse moduli space.

On the other hand, since $\mathcal{M}^{\mathrm{KX},\epsilon}_{V,(m)}$ is bounded for any sufficiently small $\epsilon>0$, we can take $\mu\in\mathbb{Z}_{>0}$ such that for any scheme and $f\colon(X,\epsilon H)\to S\in \mathcal{M}^{\mathrm{KX},\epsilon}_{V,(m)}(S)$, $f_*\omega_{X/S}^{[\mu]}$ is a line bundle over $S$ by Lemma \ref{lem--dubois}.
Using this argument, we can take a line bundle $\mathcal{L}$ on $\mathcal{M}^{\mathrm{KX},\epsilon}_{V,(m)}$ such that for any morphism $h\colon S\to \mathcal{M}^{\mathrm{KX},\epsilon}_{V,(m)}$ corresponding to $f\colon (X,\epsilon H)\to S$, $h^*\mathcal{L}\cong f_*\omega_{X/S}^{[\mu]}$.
Then, we set $\overline{\lambda}_{\mathrm{Hodge}}:=\frac{1}{\mu}\mathcal{L}$.
Let $\overline{\lambda}_{\mathrm{Hodge}}$ also denote the $\mathbb{Q}$-line bundle pullback to $\mathcal{M}^{\mathrm{KX},\mathrm{sn}}_{V,(m)}$.
We note that there exists a unique $\mathbb{Q}$-line bundle $\overline{\Lambda}_{\mathrm{Hodge}}$ on $M^{\mathrm{KX},\mathrm{sn}}_{V,(m)}$ up to $\mathbb{Q}$-linear equivalence such that $\pi_{\mathrm{KX}}^*\overline{\Lambda}_{\mathrm{Hodge}}\sim_{\mathbb{Q}}\overline{\lambda}_{\mathrm{Hodge}}$.
\end{de}

We note that $(\mathcal{M}^{\mathrm{KX},\epsilon,\circ}_{V,(m)})^{\mathrm{sn}}=\mathcal{M}^{\mathrm{KX},\mathrm{sn}}_{V,(m),\mathrm{klt}}$ by the proof of Proposition \ref{prop--v^l-stop}.

\begin{prop}
There exists a proper smooth morphism $\tau_{(m)}\colon \mathcal{M}^{\mathrm{KX},\epsilon,\circ}_{V,(m)}\to \mathcal{V}^{(m)}$ that maps $(\mathcal{X},\epsilon\mathcal{H})\in \mathcal{M}^{\mathrm{KX},\epsilon,\circ}_{V,(m)}(S)$ to $(\mathcal{X},\mathcal{A})\in \mathcal{V}^{(m)}(S)$, where $\mathcal{A}$ is the line bundle associated with $\mathcal{H}$ for any $m\in l\mathbb{Z}_{>0}$ and sufficiently small $\epsilon>0$. 

Furthermore, let $\tau_m\colon M^{\mathrm{KX},\nu,\circ}_{V,(m)}\to V^{\nu}$ be the morphism of the normalizations of the coarse moduli spaces induced by $\tau_{(m)}$.
Then, $(\tau_m)_*\mathcal{O}_{M^{\mathrm{KX},\nu,\circ}_{V,(m)}}\cong\mathcal{O}_{V^\nu}$.
\end{prop}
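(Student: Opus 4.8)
The plan is to construct the morphism $\tau_{(m)}$ directly on the level of stacks by exhibiting the assignment on families and then verifying the stack-theoretic properties, after which the statement about coarse moduli spaces will follow by a connectedness-of-fibers argument. First I would define $\tau_{(m)}$ on objects: given a family $(\mathcal{X},\epsilon\mathcal{H})\to S$ in $\mathcal{M}^{\mathrm{KX},\epsilon,\circ}_{V,(m)}(S)$, the polarization $\mathcal{H}$ is a relative Mumford divisor whose associated divisorial sheaf, via the condition defining the $\circ$-locus, becomes a genuine line bundle $\mathcal{A}$ of the correct numerical type fiberwise. Concretely one sets $\mathcal{A}:=\mathcal{O}_{\mathcal{X}}(\mathcal{H})^{[**]}$; over the $\circ$-locus this is locally free because each geometric fiber lies in $\mathcal{V}^{(m)}(\overline{\kappa(s)})$, where the polarization is Cartier by the choice of $l$ in Proposition~\ref{prop--v^l-stop}. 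I would then check functoriality of this assignment under base change using the formalism of universal hulls (Definition~\ref{de--univ--hull}) and Lemma~\ref{lem--dubois}, so that $\tau_{(m)}$ is a well-defined $1$-morphism of stacks; compatibility with arrows is immediate since an isomorphism of polarized pairs sends $\mathcal{H}$ to an equivalent divisor and hence $\mathcal{A}$ to an equivalent line bundle.

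Next I would establish that $\tau_{(m)}$ is proper and smooth. Smoothness is the heart of the matter: the fiber of $\tau_{(m)}$ over a point $(X,\mathcal{A})$ parameterizes the choices of an actual effective divisor $H\in|\mathcal{A}|$ (up to the relevant equivalence) making $(X,\epsilon H)$ stable, so the fiber is, after accounting for automorphisms, governed by the projective space $\mathbb{P}(H^0(X,\mathcal{A}))$ intersected with the open locus where $(X,\epsilon H)$ is klt. The dimension $h^0(X,\mathcal{A})$ is constant on $\mathcal{V}^{(m)}$ by the vanishing $H^j(X,\mathcal{O}_X(mL))=0$ from Proposition~\ref{prop--birkar} (equivalently Proposition~\ref{prop--v^l-stop}), so the relevant cohomology is locally free and the fiber has constant dimension; this, together with the fact that the obstruction to deforming such divisors vanishes in the $\circ$-locus, yields smoothness. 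Properness I would verify by the valuative criterion \cite[Theorem 11.5.1]{Ols}: given a family over the punctured spectrum of a DVR together with a limiting polarized Calabi–Yau $(X_0,\mathcal{A}_0)$, one extends the divisor $H$ by taking its closure and using the KX-stability/properness of $\mathcal{M}^{\mathrm{KX},\epsilon}_{V,(m)}$ established in \cite{KX} to fill in the central fiber; uniqueness follows from separatedness of the KSBA moduli (Theorem~\ref{thm--ksba}).

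The remaining assertion $(\tau_m)_*\mathcal{O}_{M^{\mathrm{KX},\nu,\circ}_{V,(m)}}\cong\mathcal{O}_{V^\nu}$ is the Stein-factorization statement and would be the final step. Since $\tau_m$ is the induced morphism of normalizations of coarse spaces of a proper smooth surjective stack morphism, $\tau_m$ is itself proper and surjective; as both source and target are normal, it suffices to show that $\tau_m$ has connected fibers. For this I would argue that the fiber over a geometric point $(X,A)\in V^{\nu}$ is (a normalization of) the open subscheme of $\mathbb{P}(H^0(X,A))$ parameterizing klt-stable divisors modulo the finite automorphism group, and this space is connected because $\mathbb{P}(H^0(X,A))$ is irreducible and the klt-stable locus is a nonempty Zariski-open (hence connected, being an open subset of an irreducible variety). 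Connectedness of geometric fibers together with properness and normality of $V^\nu$ forces $(\tau_m)_*\mathcal{O}\cong\mathcal{O}_{V^\nu}$ by \cite[III, Corollary 11.3]{Ha} or the theorem on formal functions.

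The main obstacle I anticipate is the smoothness of $\tau_{(m)}$, specifically controlling the deformation theory along the boundary of the $\circ$-locus and confirming that the klt-stable locus inside the linear system is open, nonempty, and of the expected dimension uniformly in the moduli parameter; the subtlety is that $\mathcal{M}^{\mathrm{klt,CY}}_{d,v}$ need not be smooth (Remark~\ref{rem-bfmt}, \cite{Gro}), so I must be careful to use only properties of $\mathcal{V}^{(m)}$ that survive this, working relatively over $\mathcal{V}^{(m)}$ rather than over an ambient smooth base and invoking the constancy of $h^0(X,\mathcal{A})$ to get a locally free pushforward of sections.
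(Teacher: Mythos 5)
Your proposal has a genuine gap, and it sits at the center of both the smoothness and the properness claims: you describe the fiber of $\tau_{(m)}$ over $(X,A)$ as ``$\mathbb{P}(H^0(X,A))$ intersected with the open locus where $(X,\epsilon H)$ is klt,'' and in the final step you again treat it as ``the open subscheme of $\mathbb{P}(H^0(X,A))$ parameterizing klt-stable divisors.'' If that locus were a proper open subset, the fibers would not be proper and $\tau_{(m)}$ could not be a proper morphism, so your fiber description contradicts your own properness claim. What actually makes the proposition work is that, since $\mathcal{V}^{(m)}$ is bounded, one can choose $\epsilon$ small uniformly so that $(X,\epsilon H)$ is klt (hence a stable pair, as $K_X+\epsilon H\sim_{\mathbb{Q}}\epsilon A$ is ample) for \emph{every} member $H\in|A|$ and every $(X,A)$ parameterized by $\mathcal{V}^{(m)}$; this uniform statement is exactly what you never establish. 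The paper's proof exploits it by base-changing along an arbitrary $S\to\mathcal{V}^{(m)}$ (with $\mathcal{A}$ a line bundle and $f_*\mathcal{O}_{\mathcal{X}}(\mathcal{A})$ trivialized) and identifying $S\times_{\mathcal{V}^{(m)}}\mathcal{M}^{\mathrm{KX},\epsilon,\circ}_{V,(m)}$ with a quotient of the \emph{full} projective bundle $\mathbb{P}_S(f_*\mathcal{O}_{\mathcal{X}}(\mathcal{A}))$, which yields smoothness, properness of fibers, and connectedness of fibers in one stroke. Your substitute for this --- constancy of $h^0$ plus an unproved assertion that ``the obstruction to deforming such divisors vanishes'' --- does not prove smoothness of a stack morphism (constant fiber dimension is not flatness), especially since, as you note yourself, $\mathcal{M}^{\mathrm{klt,CY}}_{d,v}$ can be singular.

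The second gap is in properness. In the valuative criterion you must produce, after a finite base change, a lift over $\mathrm{Spec}\,R$ that lies in the $\circ$-locus \emph{and} maps to the prescribed extension $(\mathcal{X}_R,\mathcal{A}_R)\in\mathcal{V}^{(m)}(R)$. Invoking properness of the closure $\mathcal{M}^{\mathrm{KX},\epsilon}_{V,(m)}$ only gives some slc limit pair $(\mathcal{X}',\epsilon\mathcal{H}')$, whose underlying family has no a priori reason to coincide with $\mathcal{X}_R$: the limit could be a genuinely different slc degeneration, i.e., it could leave the $\circ$-locus. Separatedness of the KSBA moduli (Theorem \ref{thm--ksba}) is not what rules this out --- it gives uniqueness of the pair-limit, not a comparison with the Calabi--Yau-side extension. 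The paper closes this gap with Odaka's uniqueness theorem for polarized slc Calabi--Yau limits (\cite[Theorem 4.2]{O3}): since $K_{\mathcal{X}'_{s'}}\sim_{\mathbb{Q}}0$ for the central fiber, the two extensions $\mathcal{X}'$ and $\mathcal{X}_{R'}$ of the same generic fiber must agree, and $\mathcal{H}'\sim\mathcal{A}_{R'}$. Your alternative idea of extending $H$ by Zariski closure could in principle avoid Odaka's theorem, but only if one knows $(\mathcal{X}_0,\epsilon\overline{H}_0)$ is slc --- which is again the uniform-$\epsilon$ statement above. Your final Stein-factorization argument for $(\tau_m)_*\mathcal{O}_{M^{\mathrm{KX},\nu,\circ}_{V,(m)}}\cong\mathcal{O}_{V^\nu}$ is sound in itself (the paper instead proves flatness via constancy of Hilbert polynomials and cites \cite[Exercise 9.3.11]{FGA}), but it rests on connectedness of the fibers, which again comes only from the full-linear-system identification you are missing.
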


\begin{proof}
Let $S\to \mathcal{V}^{(m)}$ be an arbitrary morphism from a scheme that corresponds to a family $f\colon(\mathcal{X},\mathcal{A})\to S$.   
Replacing $S$ with its \'etale cover, we assume that $\mathcal{A}$ is a line bundle.
By shrinking $S$, we may assume that $f_*\mathcal{O}_{\mathcal{X}}(\mathcal{A})\cong \mathcal{O}_S^{\oplus h^0(\mathcal{X}_s,\mathcal{A}_s)}$ for any $s\in S$.
Then, $S\times_{\mathcal{V}^{(m)}}\mathcal{M}^{\mathrm{KX},\epsilon,\circ}_{V,(m)}\cong [\mathbb{P}_S(f_*\mathcal{O}_X(\mathcal{A}))/PGL_S(h^0(\mathcal{X}_s,\mathcal{A}_s))]$ for any sufficiently small $\epsilon>0$.
The right hand side is smooth over $S$.
Thus, $\tau_{(m)}$ is smooth and each geometric fiber is connected.

To show the properness, let $R$ be an arbitrary discrete valuation ring with a morphism $g\colon \mathrm{Spec}\,R\to \mathcal{V}^{(m)}$.
Let $K$ be the fractional field of $R$ and suppose that there exists a morphism $g'\colon \mathrm{Spec}\,K\to \mathcal{M}^{\mathrm{KX},\epsilon,\circ}_{V,(m)}$. 
Suppose that $g|_{\mathrm{Spec}\,K}=\tau_{(m)}\circ g'$.
Let $(\mathcal{X},\mathcal{A})$ be the family corresponding to $g$ and $\mathcal{H}_{K}$ the Cartier divisor such that $(\mathcal{X}_K,\epsilon \mathcal{H}_K)$ corresponds to $g'$.
There exist a finite cover $R'$ of $R$ with the fractional field $K'$ and the family $(\mathcal{X}',\epsilon\mathcal{H}')\to \mathrm{Spec}\,R'$ corresponding to an object of $\mathcal{M}^{\mathrm{KX},\epsilon}_{V,(m)}(R')$ such that $(\mathcal{X}'_{K'},\epsilon\mathcal{H}'_{K'})\cong (\mathcal{X}_{K'},\epsilon\mathcal{H}_{K'})$.
$\mathcal{X}'_{s'}$ is slc and $K_{\mathcal{X}'_{s'}}\sim_{\mathbb{Q}}0$ for any closed point $s'\in\mathrm{Spec}\,R'$ by the choice of $\epsilon$.
\cite[Theorem 4.2]{O3} shows that $\mathcal{X}'\cong \mathcal{X}_{R'}$ and $\mathcal{H}'\sim \mathcal{A}_{R'}$. 
This and \cite[Theorem 11.5.1]{Ols} show that $\tau_{(m)}$ is proper.

Let $(\tau_{(m)})^{\nu}\colon \mathcal{M}^{\mathrm{KX},\nu,\circ}_{V,(m)}\to (\mathcal{V}^{(m)})^{\nu}$ be the morphism induced by the normalizations of stacks.
Note that $M^{\mathrm{KX},\nu,\circ}_{V,(m)}$ and $V^{\nu}$ are the coarse moduli spaces of  $\mathcal{M}^{\mathrm{KX},\nu,\circ}_{V,(m)}$ and $(\mathcal{V}^{(m)})^{\nu}$, respectively.
By the observation in the first paragraph, it is easy to see that any geometric fiber of $\tau_m$ is normal and connected.
We note that $\tau_m$ is projective by the second paragraph of this proof since $M^{\mathrm{KX},\nu,\circ}_{V,(m)}$ is quasi-projective (cf.~Theorem \ref{thm--ksba}).
By \cite[III, Theorems 9.9 and 9.11]{Ha}, $\tau_m$ is flat.
By \cite[Exercise 9.3.11]{FGA}, we can deduce $(\tau_m)_*\mathcal{O}_{M^{\mathrm{KX},\nu,\circ}_{V,(m)}}\cong\mathcal{O}_{V^\nu}$.
\end{proof}

Therefore, we see that $\mathcal{M}^{\mathrm{KX},\nu,\circ}_{V,(m)}=\mathcal{M}^{\mathrm{KX},\epsilon,\circ}_{V,(m)}\times_{\mathcal{M}^{\mathrm{KX},\epsilon}_{V,(m)}}\mathcal{M}^{\mathrm{KX},\nu}_{V,(m)}$ is smooth and proper over $(\mathcal{V}^{(m)})^{\nu}$.

\begin{prop}\label{prop--baily-borel-V^bb}
Under Condition \ref{ass--b-ss}, there exists a positive integer $k_m\in\mathbb{Z}_{>0}$ for any $m\in l\mathbb{Z}_{>0}$ with the following property. 
Let $\nu\colon M^{\mathrm{KX},\nu}_{V,(m)}\to M^{\mathrm{KX},\mathrm{sn}}_{V,(m)}$ be the normalization. 
Then $k_m\nu^*\overline{\Lambda}_{\mathrm{Hodge}}$ is a globally generated line bundle and $|k_m\nu^*\overline{\Lambda}_{\mathrm{Hodge}}|$ defines a morphism $\theta\colon M^{\mathrm{KX},\nu}_{V,(m)}\to \mathbb{P}^{h^0(k_m\nu^*\overline{\Lambda}_{\mathrm{Hodge}})-1}$ such that
    \begin{enumerate}
\item $\overline{V}^{\mathrm{BB}}$ is a normal projective variety, where $\overline{V}^{\mathrm{BB}}$ denotes the image of $\theta$,
\item $\theta_*\mathcal{O}_{M^{\mathrm{KX},\nu}_{V,(m)}}\cong \mathcal{O}_{\overline{V}^{\mathrm{BB}}}$, and 
\item $\theta(M^{\mathrm{KX},\nu,\circ}_{V,(m)})$ is an open subset of $\overline{V}^{\mathrm{BB}}$ isomorphic to $V^{\mathrm{sn}}$ and $$\theta^{-1}\theta(M^{\mathrm{KX},\nu,\circ}_{V,(m)})=M^{\mathrm{KX},\nu,\circ}_{V,(m)}.$$
    \end{enumerate}
    Furthermore, $\overline{V}^{\mathrm{BB}}$ is independent from the choice of $m$.
    In particular, there exists a canonical open immersion $V^\nu\subset \overline{V}^{\mathrm{BB}}$.
\end{prop}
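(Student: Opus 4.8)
The plan is to construct the morphism $\theta$ from the normalized Hodge line bundle and then verify that its image has all the asserted properties, chiefly that it stabilizes in $m$ and recovers $V^{\mathrm{sn}}$ on the open part. First I would exploit Condition \ref{ass--b-ss}: by the $\mathbf{B}$-semiampleness conjecture (now a theorem via \cite{BFMT}), the moduli $\mathbb{Q}$-divisor $M_{\tilde S}$ is semiample on any Ambro model for families landing in $\mathcal{V}^{(m)}$. The key bridge is that $\nu^*\overline{\Lambda}_{\mathrm{Hodge}}$ on $M^{\mathrm{KX},\nu}_{V,(m)}$ is, up to a positive rational multiple, the pushforward of the moduli part under the canonical bundle formula for the universal KSBA family; this is precisely the content encoded in the construction of $\overline{\lambda}_{\mathrm{Hodge}}=\tfrac{1}{\mu}\mathcal{L}$ with $\mathcal{L}\cong f_*\omega_{X/S}^{[\mu]}$. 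Since $M^{\mathrm{KX},\nu}_{V,(m)}$ is a normal projective variety admitting a generically finite map from an Ambro model of the universal family, semiampleness of the moduli part descends to semiampleness of $\nu^*\overline{\Lambda}_{\mathrm{Hodge}}$, so some positive multiple $k_m\nu^*\overline{\Lambda}_{\mathrm{Hodge}}$ is globally generated and defines $\theta$.

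Next I would take $\overline{V}^{\mathrm{BB}}$ to be the image of $\theta$ and form the Stein factorization of $\theta$ onto its image; normality of $\overline{V}^{\mathrm{BB}}$ and the relation $\theta_*\mathcal{O}_{M^{\mathrm{KX},\nu}_{V,(m)}}\cong\mathcal{O}_{\overline{V}^{\mathrm{BB}}}$ then follow from passing to this factorization (replacing $k_m$ by a suitable multiple so that no finite covering intervenes, using that the target of the ample model of a semiample $\mathbb{Q}$-line bundle is the Proj of its section ring and hence normal with connected fibers). For property (3), I would use the previously established fact that on the open locus $(\mathcal{M}^{\mathrm{KX},\epsilon,\circ}_{V,(m)})^{\mathrm{sn}}=\mathcal{M}^{\mathrm{KX},\mathrm{sn}}_{V,(m),\mathrm{klt}}$ and the proper smooth morphism $\tau_m\colon M^{\mathrm{KX},\nu,\circ}_{V,(m)}\to V^\nu$ with $(\tau_m)_*\mathcal{O}\cong\mathcal{O}_{V^\nu}$: since $\overline{\lambda}_{\mathrm{Hodge}}$ is pulled back along $\tau_{(m)}$ from the Hodge line bundle on $\mathcal{V}^{(m)}$, the map $\theta$ restricted to the open part factors through $\tau_m$ and, by ampleness of $\Lambda_{\mathrm{Hodge}}$ on the Calabi--Yau moduli (Theorem \ref{thm--klt--CY--moduli}), identifies $\theta(M^{\mathrm{KX},\nu,\circ}_{V,(m)})$ with $V^{\mathrm{sn}}$; the equality $\theta^{-1}\theta(M^{\mathrm{KX},\nu,\circ}_{V,(m)})=M^{\mathrm{KX},\nu,\circ}_{V,(m)}$ comes from the fact that boundary strata are collapsed but the interior is not, which follows from ampleness of $\overline{\Lambda}_{\mathrm{Hodge}}$ already on the open KSBA moduli.

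The independence of $\overline{V}^{\mathrm{BB}}$ from $m$ is where I expect the real work, and it is the main obstacle. The point is that the transition morphisms $\overline{f}_{l,m}\colon V^{(l)}\to V^{(m)}$ are bijective by Proposition \ref{prop--v^l-stop}, and more importantly the Hodge line bundles are compatible: $\overline{\lambda}_{\mathrm{Hodge}}$ for different $m$ differ only by the multiplicity $m/l$ of the polarization, which rescales the moduli $\mathbb{Q}$-divisor by the corresponding factor without changing its $\mathbb{Q}$-linear equivalence class of ample models. I would argue that the section rings $\bigoplus_k H^0(k\nu^*\overline{\Lambda}_{\mathrm{Hodge}})$ for different $m\in l\mathbb{Z}_{>0}$ are isomorphic as graded rings up to Veronese re-grading, using the canonical bundle formula to compare moduli parts and the fact that semiampleness produces a canonical ample model independent of rescaling. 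This yields a canonical isomorphism of the Proj's, hence a single $\overline{V}^{\mathrm{BB}}$; I would then set $k$ to be a common multiple of the $k_m$ compatible with this identification and define $H$ as the descended ample $\mathbb{Q}$-line bundle. The final open immersion $V^\nu\subset\overline{V}^{\mathrm{BB}}$ is obtained by combining (3) with the normalization $V^\nu\to V^{\mathrm{sn}}\cong\theta(M^{\mathrm{KX},\nu,\circ}_{V,(m)})$, noting that $\theta$ restricted to the open normal locus is already an isomorphism onto its image.
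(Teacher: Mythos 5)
Your construction of $\theta$ from semiampleness, the Stein-factorization argument for (1)--(2), and the ample-model comparison for independence of $m$ all run parallel to the paper's proof, with one repairable imprecision: there is no universal family over the coarse space $M^{\mathrm{KX},\nu}_{V,(m)}$, so the link between $\nu^*\overline{\Lambda}_{\mathrm{Hodge}}$ and a semiample pushforward must be made on a finite Galois cover $S$ that factors through the stack (the paper uses \cite[Proposition 2.6]{Vis}, proves semiampleness of $f_*\omega_{\mathcal{X}/S}^{[k_m]}$ there by Condition \ref{ass--b-ss}, and descends global generation via $G$-invariant sections). Note also that, contrary to your assessment, the independence of $m$ is the \emph{formal} part of the argument: via the rescaling morphism $\psi\colon M^{\mathrm{KX},\nu}_{V,(m_1)}\to M^{\mathrm{KX},\nu}_{V,(m)}$ one has $\psi^*\Lambda_{(m)}\sim_{\mathbb{Q}}\Lambda_{(m_1)}$, so both images are ample models of $\mathbb{Q}$-linearly equivalent semiample bundles and hence canonically isomorphic.

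The genuine gap is in (3), which is where the real work of the proposition lies. The equality $\theta^{-1}\theta(M^{\mathrm{KX},\nu,\circ}_{V,(m)})=M^{\mathrm{KX},\nu,\circ}_{V,(m)}$ is a separation statement --- no boundary point shares a $\theta$-image with an interior point --- and your justification for it fails on both counts. First, $\overline{\Lambda}_{\mathrm{Hodge}}$ is \emph{not} ample on the open KSBA moduli $M^{\mathrm{KX},\nu,\circ}_{V,(m)}$: by your own (correct) observation it is pulled back from $V^\nu$ along $\tau_m$, whose fibers are positive-dimensional (they parameterize the choice of a divisor in $|\mathcal{A}|$ up to automorphisms), so $\theta$ \emph{does} contract the interior, precisely along these fibers. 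Second, even ampleness of $\Lambda_{\mathrm{Hodge}}$ on $V$ cannot rule out a boundary point hitting an interior image, because $\theta$ is defined only by those sections of $k_m\nu^*\overline{\Lambda}_{\mathrm{Hodge}}$ that extend across the boundary of the compactified moduli; positivity on the open locus says nothing about how $\theta$ behaves on the closure. The paper's proof of this step is a degeneration argument that is entirely absent from your proposal: given $q\in\theta^{-1}\theta(p)$ with $p$ interior, connect $p$ and $q$ by a proper curve $C$ inside the connected fiber (connectedness coming from (2)), lift to a family over a smooth proper curve $\tilde{C}$; since $\theta$ contracts $C$, the moduli divisor has degree zero on $\tilde{C}$, hence is $\mathbb{Q}$-linearly trivial by \cite[Theorem 3.3]{Am}; then \cite[Theorem 4.7]{Am} gives isotriviality $\mathcal{X}_U\cong U\times \mathcal{X}_u$ over a quasi-finite cover $U\to\tilde{C}$, and \cite[Theorem 4.2 (i)]{O3} extends this product structure over the compactification $\overline{U}$, forcing the boundary fiber to be isomorphic to the klt fiber, so $q$ is again interior. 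This, combined with \cite[Lemma 6.2]{Hat23} (to see that $\theta$-fibers in the interior are exactly $\tau_m$-fibers once polarizations are accounted for) and Zariski's main theorem (to upgrade the resulting bijective morphism $V^\nu\to\theta(M^{\mathrm{KX},\nu,\circ}_{V,(m)})$ to an isomorphism), is the content your proof would need to supply.
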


\begin{proof}
We note that $M^{\mathrm{KX},\nu}_{V,(m)}$ is a projective normal variety.   
By \cite[Proposition 2.6]{Vis}, there exists a normal variety $S$ finite over $M^{\mathrm{KX},\nu}_{V,(m)}$ such that $h\colon S\to M^{\mathrm{KX},\nu}_{V,(m)}$ factors through $\mathcal{M}^{\mathrm{KX},\nu}_{V,(m)}$.
We may assume that $h$ is a Galois cover with $G$ the Galois group.
Let $f\colon (\mathcal{X},\epsilon \mathcal{H})\to S$ be the family corresponding to the morphism $S\to \mathcal{M}^{\mathrm{KX},\epsilon}_{V,(m)}$.
Then, $f_*\omega_{\mathcal{X}/S}^{[\mu]}$ is a line bundle $\mathbb{Q}$-linear equivalent to $\mu h^*\overline{\Lambda}_{\mathrm{Hodge}}$.
Since $G$ is a finite group, there exists a positive integer $k_m\in\mu\mathbb{Z}_{>0}$ such that we have a $G$-equivariant linear equivalence $f_*\omega_{\mathcal{X}/S}^{[k_m]}\sim k_mh^*\overline{\Lambda}_{\mathrm{Hodge}}$.
Since $f_*\omega_{\mathcal{X}/S}^{[k_m]}$ is semiample by Condition \ref{ass--b-ss} and $G$ is finite, we may assume that $f_*\omega_{\mathcal{X}/S}^{[k_m]}$ is globally generated by $G$-invariant sections by replacing $k_m$ if necessary.
Thus, $k_m\overline{\Lambda}_{\mathrm{Hodge}}$ is also globally generated.
Replacing $k_m$, we may assume that $|k_m\overline{\Lambda}_{\mathrm{Hodge}}|$ defines $\theta\colon M^{\mathrm{KX},\nu}_{V,(m)}\to \mathbb{P}^{h^0(k_m\nu^*\overline{\Lambda}_{\mathrm{Hodge}})-1}$ such that if we let $\overline{V}_{(m)}^{\mathrm{BB}}$ be the image, then  $\overline{V}_{(m)}^{\mathrm{BB}}$ satisfies conditions (1) and (2)
by \cite[Theorem 2.1.27]{Laz}.

From now on, we will check condition (3).
Let $p\in M^{\mathrm{KX},\nu,\circ}_{V,(m)}$ and $q\in \theta^{-1}\theta(p)$ be closed points.
Since $\theta^{-1}\theta(p)$ is connected by (2), we may assume that there exists an irreducible proper curve $C\subset \theta^{-1}\theta(p)$ such that $p,q\in C$.
Suppose that there exists a morphism $\tilde{C}\to \mathcal{M}^{\mathrm{KX},\nu}_{V,(m)}$ from a proper smooth curve such that the image of $\tilde{C}$ to $M^{\mathrm{KX},\nu}_{V,(m)}$ coincides with $C$. 
Let $(\mathcal{X},\epsilon\mathcal{H})\to \tilde{C}$ be the corresponding family.
Then we claim that $\mathcal{X}_{\tilde{p}}\cong\mathcal{X}_{\tilde{q}}$, where $\tilde{p}$ and $\tilde{q}$ are closed points that are assigned to $p$ and $q$.
Note that the generic fiber of $\mathcal{X}\to \tilde{C}$ is klt.
We see by \cite[Theorem 3.3]{Am} that  $M_{\tilde{C}}\sim_{\mathbb{Q}}0$, where $M_{\tilde{C}}$ is the moduli $\mathbb{Q}$-divisor with respect to the klt--trivial fibration $\mathcal{X}\to \tilde{C}$.
Since any geometric fiber of $\mathcal{X}$ is slc, there exists a quasi finite morphism $U\to \tilde{C}$ such that $\mathcal{X}_U\cong U\times \mathcal{X}_{u}$ over $U$ for some $u\in U$ by \cite[Theorem 4.7]{Am}.
Let $\overline{U}$ be the compactification of $U$. 
Then, the latter can be extended to $\overline{U}\times \mathcal{X}_u$.
By \cite[Theorem 4.2 (i)]{O3}, we see that $\mathcal{X}_{\overline{U}}\cong \overline{U}\times \mathcal{X}_u$.
This shows that $\mathcal{X}_p\cong\mathcal{X}_q\cong \mathcal{X}_u$.
Therefore, $C\subset M^{\mathrm{KX},\nu,\circ}_{V,(m)}$ and hence $\theta^{-1}\theta(M^{\mathrm{KX},\nu,\circ}_{V,(m)})=M^{\mathrm{KX},\nu,\circ}_{V,(m)}$.
Since $\theta$ is a proper surjection, $\theta(M^{\mathrm{KX},\nu,\circ}_{V,(m)})$ is an open subset.

Let $W$ be an arbitrary affine open subscheme of $\theta(M^{\mathrm{KX},\nu,\circ}_{V,(m)})$.
By the above claim and \cite[Lemma 6.2]{Hat23}, it is easy to see that $\tau_m^{-1}\tau_m(\theta^{-1}(W))=\theta^{-1}(W)$.
Since $\tau_m$ is proper, we see that $\tau_m(\theta^{-1}(W))$ is open.
Furthermore, $\tau_m$ contracts the same curves as $\theta$.
Therefore, there exists a set-theoretical map $\varphi\colon V^{\nu}\to\theta(M^{\mathrm{KX},\nu,\circ}_{V,(m)})$ such that $\varphi\circ \tau_m=\theta$.
We see that $\varphi$ is continuous.
It is also easy to see that $\Gamma(W,\mathcal{O}_W)\cong \Gamma(M^{\mathrm{KX},\nu,\circ}_{V,(m)},\mathcal{O})\cong \Gamma (\tau_m(\theta^{-1}(W)),\mathcal{O})$.
By using this, we can construct a morphism $\varphi\colon V^{\nu}\to\theta(M^{\mathrm{KX},\nu,\circ}_{V,(m)})$.
Since $\varphi$ is quasi-finite and separated, we see that $V^{\nu}$ is a quasi-projective scheme.
It is easy to see that $\varphi$ is bijective and hence $\varphi$ is an isomorphism due to \cite[III, Corollary 11.4]{Ha}.

Finally, we claim that such $\overline{V}_{(m)}^{\mathrm{BB}}$ is independent of the choice of $m$.
To show this, it suffices to deal with the following case: 
pick $m_1\in l\mathbb{Z}_{>0}$ such that $m_1|m$.
Then, there exists a canonical isomorphism $\overline{V}_{(m)}^{\mathrm{BB}}\cong \overline{V}_{(m_1)}^{\mathrm{BB}}$.
Indeed, for any sufficiently small $\epsilon>0$, we have a natural morphism $\mathcal{M}^{\mathrm{KX},\frac{m}{m_1}\epsilon}_{V,(m_1)}\to \mathcal{M}^{\mathrm{KX},\epsilon}_{V,(m)}$.
By this morphism, we have canonical morphisms $\psi\colon M^{\mathrm{KX},\nu}_{V,(m_1)}\to M^{\mathrm{KX},\nu}_{V,(m)}$ and $\theta'=\theta\circ\psi \colon M^{\mathrm{KX},\nu}_{V,(m_1)}\to \overline{V}_{(m)}^{\mathrm{BB}}$.
Over $V^{\nu}$, fibers of $\theta'$ are connected.
Since $\overline{V}_{(m)}^{\mathrm{BB}}$ is normal, we have that $\theta'_*\mathcal{O}_{M^{\mathrm{KX},\nu}_{V,(m_1)}}\cong \mathcal{O}_{\overline{V}_{(m)}^{\mathrm{BB}}}$.
Let $\theta_{(m_1)}\colon M^{\mathrm{KX},\nu}_{V,(m_1)}\to \overline{V}_{(m_1)}^{\mathrm{BB}}$ be the canonical morphism.
Let $\Lambda_{(m)}$ and $\Lambda_{(m_1)}$ denote $\nu^*\overline{\Lambda}_{\mathrm{Hodge}}$ on $M^{\mathrm{KX},\nu}_{V,(m)}$ and $M^{\mathrm{KX},\nu}_{V,(m_1)}$, respectively.
It is easy to see that $\psi^*\Lambda_{(m)}\sim_{\mathbb{Q}}\Lambda_{(m_1)}$.
By construction, there are ample $\mathbb{Q}$-line bundles $L_1$ on $\overline{V}^{\mathrm{BB}}_{(m_1)}$ and $L_2$ on $\overline{V}^{\mathrm{BB}}_{(m)}$ such that $\theta_{(m_1)}^*L_2\sim_{\mathbb{Q}}\Lambda_{(m_1)}$ and $\theta'^*L_2\sim_{\mathbb{Q}}\Lambda_{(m_1)}$.
This means that $\overline{V}^{\mathrm{BB}}_{(m_1)}\cong \overline{V}^{\mathrm{BB}}_{(m)}$.
We complete the proof.
\end{proof}

\begin{proof}[Proof of Theorem \ref{thm--canonical--bundle--formula-sin}]
We use notations of Proposition \ref{prop--baily-borel-V^bb}.
First note that $|k_l\nu^*\overline{\Lambda}_{\mathrm{Hodge}}|$ defines a morphism $\theta\colon M^{\mathrm{KX},\nu}_{V,(l)}\to \mathbb{P}^{h^0(k_l\nu^*\overline{\Lambda}_{\mathrm{Hodge}})-1}$ by Proposition \ref{prop--baily-borel-V^bb}.
Let $\Lambda$ be a $\mathbb{Q}$-Cartier $\mathbb{Q}$-divisor on $\overline{V}^{\mathrm{BB}}$ such that $k_l\Lambda\sim \mathcal{O}_{\mathbb{P}^{h^0(k_l\nu^*\overline{\Lambda}_{\mathrm{Hodge}})-1}}(1)|_{\overline{V}^{\mathrm{BB}}}$.
Take $k\in k_l\cdot\mathbb{Z}_{>0}$ such that the pullback of $k\overline{\Lambda}_{\mathrm{Hodge}}$ to $\mathcal{M}^{\mathrm{KX},\nu}_{V,(l)}$ is linearly equivalent to $ k\overline{\lambda}_{\mathrm{Hodge}}$

Let $f\colon (X,\Delta)\to S$ be an lc--trivial fibration over a normal projective variety $S$.
Fix an $f$-ample line bundle $A$ on $X$.
Let $\mathcal{V}_1^{(m)}$ be the inverse image of the image of $V_1$ under $\mathcal{V}^{(m)}\to V^{(m)}$.
By assumption, there exists an open dense subset $U\subset S$ such that $\Delta_s=0$ and $(X_s,A_s)$ belongs to $\mathcal{V}_1^{(m)}(\mathbbm{k})$ for any closed point $s\in U$. 
This yields a morphism $U\to \mathcal{V}_1^{(m)}$.
By Proposition \ref{prop--v^l-stop}, there exists a finite morphism $\mathcal{V}_1^{(l)}\to\mathcal{V}_1^{(m)}$.
Thus, there exists an \'etale morphism $U'\to U$ from a variety such that $X_{U'}$ has a relatively ample line bundle $A'$ over $U'$ such that $\frac{m}{l}A'\sim_{U'}A_{U'}$.
Shrinking $U'$ if necessary, we may assume that there exists an effective relative Cartier divisor $H$ on $X_{U'}$ over $U'$ such that $H\sim A'|_{X_{U'}}$ and $(X_{\bar{s}},\epsilon H_{\bar{s}})$ belongs to $\mathcal{M}^{\mathrm{KX},\epsilon,\circ}_{V,(l)}(\overline{\kappa(s)})$ for any geometric point $\bar{s}\in U'$ and any sufficiently small $\epsilon$.
Note that there exists the canonical morphism $\mathcal{P}\colon U\to V_1$.
Let $\mu\colon\tilde{S}\to S$ be a projective birational morphism from an Ambro model and $M_{\tilde{S}}$ the moduli $\mathbb{Q}$-divisor. 
Taking a compactification and shrinking $U'$ and $U$ if necessary, we can take a projective normal variety $\hat{S}$ containing $U'$ with a morphism $\tilde{\mu}\colon \hat{S}\to \tilde{S}$ that is rationally equivalent to $U'\to U$.
By applying \cite[Lemma 2.51]{HH2}, there exist a projective generically finite morphism $\varphi\colon S'\to \hat{S}$ from a normal variety and $(X',\epsilon H')\to S'\in \mathcal{M}^{\mathrm{KX},\epsilon}_{V,(l)}(S')$ such that $(X'\times_{S}U',H'|_{X'\times_{S}U'})\cong (X_{U'}\times_{U'}\varphi^{-1}(U'),H|_{X_{U'}\times_{U'}\varphi^{-1}(U')})$.
Replacing $S'$ if necessary, we may assume that $\tilde{\mu}\circ\varphi$ is a composition of a birational morphism to a projective normal variety $g'\colon S'\to S''$ and a finite Galois cover $g\colon S''\to \tilde{S}$ with the Galois group $G$.
Then, $(X',\epsilon H')\to S'$ induces a morphism $\tau\colon S'\to M^{\mathrm{KX},\nu}_{V,(l)}$ and let $\theta\colon M^{\mathrm{KX},\nu}_{V,(l)}\to \overline{V}^{\mathrm{BB}}$ be the canonical morphism obtained in Proposition \ref{prop--baily-borel-V^bb}.
We note that $\theta\circ\tau$ can be descent to $\psi\colon S''\to \overline{V}^{\mathrm{BB}}$.
It is easy to see that $\psi$ is $G$-invariant.
Thus, $\psi$ induces a morphism $\widetilde{\mathcal{P}}\colon \tilde{S}\to \overline{V}^{\mathrm{BB}}$ such that $\widetilde{\mathcal{P}}\circ g=\psi$.

Then, by the choice of $k$, we obtain that $$\mathcal{O}_{S'}(k\tau^*\theta^*\Lambda)\cong \mathcal{O}_{S'}(kg'^*g^*M_{\tilde{S}}).$$
The restriction of this isomorphism to $g'^{-1}g^{-1}\mu^{-1}(U)$ coincides with the pullback of $$\mathcal{O}_{U}(k{\mathcal{P}}^*\Lambda)\cong(f|_U)_*\omega^{[k]}_{X_U/U}.$$ 
This means that the above isomorphism on $S'$ is $G$-equivariant and hence we obtain 
\[
\mathcal{O}_{\tilde{S}}(k\widetilde{\mathcal{P}}^*\Lambda)\cong \mathcal{O}_{\tilde{S}}(kM_{\tilde{S}}).
\]
We obtain the proof.
\end{proof}

\section{A morphism from moduli of good minimal models to moduli of quasimaps}
\label{sec--finish}

Fix $d,v\in\mathbb{Z}_{>0}$ and $u\in\mathbb{Q}_{\ne0}$.
Fix an irreducible component $\mathcal{V}\subset \mathcal{M}^{\mathrm{klt,CY}}_{d-1,v}$.
As in Proposition \ref{prop--v^l-stop}, we choose $l\in\mathbb{Z}_{>0}$.
Pick $k\in\mathbb{Z}_{>0}$ as Theorem \ref{thm--canonical--bundle--formula-sin} for $l=m$.
Take a sufficiently large $w\in\mathbb{Q}_{>0}$ such that $\mathfrak{Z}_{d, v,u,w}\to \mathfrak{Z}_{d, v,u}$ is surjective.
Consider $I\in\mathbb{Z}_{>0}$ as Setup \ref{stup--5} and $l_0\in\mathbb{Z}_{>0}$ as Lemma \ref{lem--HH2--de--yatta}.
Replacing $l$ if necessary, we may assume that $l$ is divisible by $Il_0$.
Fix a nonempty normal open subset $V_1\subset V_{\mathrm{sn}}$ as Condition \ref{ass--b-ss}.

For any $p_a\in\mathbb{Z}_{>0}$ and sufficiently divisible $r\in\mathbb{Z}_{>0}$, let $\mathcal{M}_{d,v,u,r,w,p_a}$ be an open and closed substack of $\mathcal{M}_{d,v,u,r,w}$ such that 
$$\mathcal{M}_{d,v,u,r,w,p_a}(S)=\left\{
 f\colon(\mathcal{X},\mathcal{A})\to \mathcal{C}\in \mathcal{M}_{d,v,u,r,w}(S)
\;\middle|
\begin{array}{rl}
&\text{$p_a=p_a(\mathcal{C}_{\bar{s}})$ for any}\\
&\text{geometric point $\bar{s}$.}
\end{array}\right\}$$
for any scheme $S$.
Let $f_{\mathrm{univ}}\colon(\mathcal{U},\mathscr{A})\to\mathcal{C}$ be the universal family over $\mathcal{M}_{d,I^{d-1}\cdot v,u,r,I^d\cdot w,p_a}$.
Let $\pi_{\mathcal{U}}\colon\mathcal{U}\to \mathcal{M}_{d,I^{d-1}\cdot v,u,r,I^d\cdot w,p_a}$ and $\pi_{\mathcal{C}}\colon\mathcal{C}\to \mathcal{M}_{d,I^{d-1}\cdot v,u,r,I^d\cdot w,p_a}$ be the canonical morphisms.
We note that 
\[
\mathcal{Z}=\left\{
 p\in\mathcal{C}
\;\middle|
\begin{array}{rl}
&\text{$(\mathcal{U}_{\bar{p}},\mathscr{A}_{\bar{p}})\in \mathcal{V}^{(m)}(\bar{p})$ corresponds to}\\
&\text{a geometric point of $V_1\subset V^{\mathrm{sn}}$}
\end{array}\right\}
\]
is locally closed.
Since $\pi_{\mathcal{C}}$ is proper and flat, it is easy to see that $\pi_{\mathcal{C}}(\mathcal{Z})$ is also locally closed.
Let $(\mathcal{M}_{d,v,u,w,p_a})_{\mathrm{red}}$ denote the reduced structure of $\mathcal{M}_{d,v,u,r,w,p_a}$.
Set $\mathcal{M}^W_{d,v,u,w,p_a}$ as $$(\mathcal{M}_{d,I^{d-1}\cdot v,u,I^d\cdot w,p_a})_{\mathrm{red}}\times_{\sigma,\mathbf{Pic}_{\mathcal{U}/(\mathcal{M}_{d,I^{d-1}\cdot v,u,I^d\cdot w,p_a})_{\mathrm{red}}},\mu_I}\mathbf{W}^{\mathbb{Q}}\mathbf{Pic}_{\mathcal{U}/(\mathcal{M}_{d,I^{d-1}\cdot v,u,I^d\cdot w,p_a})_{\mathrm{red}}}$$
and $\mathcal{M}^{W,I}_{d,v,u,w,p_a}$ as the reduced structure of the image of the finite morphism $\mathcal{M}^W_{d,v,u,w,p_a}\to (\mathcal{M}_{d,I^{d-1}\cdot v,u,I^d\cdot w,p_a})_{\mathrm{red}}$.
Then, let $\mathcal{M}^{W,I}_{d,v,u,w,p_a,V_1}$ be the reduced locally closed substack $\mathcal{M}^{W,I}_{d,v,u,w,p_a}\cap\pi_{\mathcal{C}}(\mathcal{Z})$.
We set $\mathcal{N}^{W,I}_{d,v,u,w,p_a}$ as the image of $\mathcal{M}^{W,I}_{d,v,u,w,p_a}$ under $\xi\colon\mathcal{M}^{W,I}_{d,v,u,w}\to\mathcal{N}^{W,I}_{d,v,u,w}$ (cf.~Definition \ref{de--Num--consruction}).
Then we see that the following holds.
\begin{lem} \label{lem--N--V_1}  
There exists a unique locally closed reduced substack $\mathcal{N}^{W,I}_{d,v,u,w,p_a,V_1}$ such that $\mathcal{M}^{W,I}_{d,v,u,w,p_a,V_1}\cong\mathcal{N}^{W,I}_{d,v,u,w,p_a,V_1}\times_{\mathcal{N}^{W,I}_{d,v,u,r,w}}\mathcal{M}^{W,I}_{d,v,u,r,w}$.
\end{lem}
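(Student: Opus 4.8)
The plan is to show that the reduced locally closed substack $Z:=\mathcal{M}^{W,I}_{d,v,u,w,p_a,V_1}$ is \emph{saturated} for the quotient morphism $\xi\colon\mathcal{M}^{W,I}_{d,v,u,r,w}\to\mathcal{N}^{W,I}_{d,v,u,r,w}$, that is, a union of geometric fibres of $\xi$, and then to descend it. Recall from Proposition \ref{prop--representability--quotient--morph} that $\xi$ is representable by algebraic spaces, proper, smooth and surjective; in particular, on underlying topological spaces $\xi$ is open (being smooth) and closed (being proper), and its geometric fibres are connected. By Corollary \ref{cor--N--closedpoint}, two geometric points $(X,A)$ and $(X',A')$ lie in the same fibre of $\xi$ exactly when there is an isomorphism $\varphi\colon X\to X'$ with $\varphi^*A'-A$ numerically trivial. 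Thus it suffices to check that the two conditions defining $Z$ — that the base curve has arithmetic genus $p_a$, and that a general fibre $(F,\mathscr{A}|_F)$ corresponds to a point of $V_1\subset V^{\mathrm{sn}}$ — depend only on this equivalence class.

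The genus condition is immediate: the base curve is the ample model of $K_X$, hence intrinsic to $X$ and unaffected by $\varphi$ or by the choice of polarization, so $\mathcal{M}^{W,I}_{d,v,u,w,p_a}$ is already $\xi$-saturated. The membership in $V_1$ is the key point and the main obstacle. Here changing $A$ to $A'$ with $\varphi^*A'-A$ numerically trivial on $X$ restricts, on a general fibre $F$, to a numerically trivial $\mathbb{Q}$-Cartier divisorial sheaf on the klt Calabi--Yau variety $F$. By the choice of $l$ (recall $Il_0\mid l$ and $l\mid m$) made in Proposition \ref{prop--v^l-stop}, such a numerically trivial class lies, after passing to the appropriate power, in $\mathbf{Pic}^0_{F/\mathbbm{k}}$, and by \cite[Lemma 6.2]{Hat23} — exactly as used at the end of the proof of Proposition \ref{prop--v^l-stop} — two polarizations differing by a $\mathbf{Pic}^0$-class are related by an automorphism of $F$. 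Consequently $(F,\mathscr{A}|_F)$ and $(F,\mathscr{A}'|_F)$ determine the same geometric point of $V^{\mathrm{sn}}$, so one lies in $V_1$ if and only if the other does. This shows that $\pi_{\mathcal{C}}(\mathcal{Z})$, and hence $Z$, is $\xi$-saturated.

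It then remains to descend $Z$. Write $\overline{Z}$ for its closure in $\mathcal{M}^{W,I}_{d,v,u,r,w}$. Using only the saturation $\xi^{-1}\xi(Z)=Z$, one verifies the set-theoretic identity $\xi(Z)=\xi(\overline{Z})\setminus\xi(\overline{Z}\setminus Z)$: a point of $\xi(\overline{Z})$ lies in $\xi(Z)$ precisely when its entire fibre meets $\overline{Z}$ inside $Z$. As $\xi$ is closed, both $\xi(\overline{Z})$ and $\xi(\overline{Z}\setminus Z)$ are closed, so $\xi(Z)$ is locally closed in $\mathcal{N}^{W,I}_{d,v,u,r,w}$ (and clearly contained in $\mathcal{N}^{W,I}_{d,v,u,w,p_a}$); endow it with the reduced structure and call it $\mathcal{N}^{W,I}_{d,v,u,w,p_a,V_1}$. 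Saturation gives $\xi^{-1}(\mathcal{N}^{W,I}_{d,v,u,w,p_a,V_1})=Z$ on underlying spaces, and since $\xi$ is smooth, the fibre product $\mathcal{N}^{W,I}_{d,v,u,w,p_a,V_1}\times_{\mathcal{N}^{W,I}_{d,v,u,r,w}}\mathcal{M}^{W,I}_{d,v,u,r,w}$ is reduced with the same support as the reduced substack $Z$, whence the asserted isomorphism. Uniqueness follows from surjectivity of $\xi$: a reduced locally closed substack of $\mathcal{N}^{W,I}_{d,v,u,r,w}$ is determined by the support of its $\xi$-preimage. The only genuinely delicate step is the $V_1$-invariance, which is precisely where the Calabi--Yau hypothesis on the fibres (through Proposition \ref{prop--v^l-stop} and \cite[Lemma 6.2]{Hat23}) enters.
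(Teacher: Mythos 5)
Your overall strategy coincides with the paper's: prove that $Z:=\mathcal{M}^{W,I}_{d,v,u,w,p_a,V_1}$ is $\xi$-saturated, then descend it along $\xi$. Your descent step is sound and essentially equivalent to the paper's argument (the paper takes the reduced scheme-theoretic image $\mathcal{W}$ of $\xi|_{Z}$ and uses that the image of the smooth morphism $\xi|_{Z}\colon Z\to\mathcal{W}$ is open, whereas you obtain local closedness of $\xi(Z)$ from properness via $\xi(Z)=\xi(\overline{Z})\setminus\xi(\overline{Z}\setminus Z)$; both work, and your reducedness and uniqueness arguments are fine). You are also right that saturation is the crux and right to make it explicit --- the paper disposes of it with the words ``by definition''.

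However, your justification of the $V_1$-invariance, which you yourself identify as the delicate step, has a genuine gap. Starting from the coarse description of the fibres (Corollary \ref{cor--N--closedpoint}), the difference $\mu$ of the two polarizations is only known to be numerically trivial, and you claim that ``after passing to the appropriate power'' its restriction to a general fibre $F$ lies in $\mathbf{Pic}^0_{F}$, so that \cite[Lemma 6.2]{Hat23} applies. But that lemma requires the difference class itself, not a multiple of it, to lie in $\mathbf{Pic}^0_F$: knowing $N\mu|_F\in\mathbf{Pic}^0_F$ only identifies the moduli points of $(F,N\mathscr{A}|_F)$ and $(F,N\mathscr{A}'|_F)$ at level $Nm$, and one then still has to show that the twisted fibre $(F,\mathscr{A}'|_F)$ is an object of $\mathcal{V}^{(m)}$ at all (which is part of the definition of $\mathcal{Z}$) --- precisely the torsion/divisibility problem that the $W,I$-construction was designed to eliminate, and which a multiple lying in $\mathbf{Pic}^0_F$ does not resolve. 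The repair uses structure you did not invoke: either use the stack-theoretic fibres of $\xi$, which by Definition \ref{de--stack--N} and Proposition \ref{prop--representability--quotient--morph} consist of twists by elements of $\widehat{\widehat{Y}}\subset\mathbf{Pic}^0$ of the total space; or note that inside $\mathcal{M}^{W,I}$ both polarizations are $I$-th multiples of $\mathbb{Q}$-Cartier Weil divisorial sheaves (Definition \ref{de--WI}), so their numerically trivial difference equals $I$ times a class in $\mathbf{W}^{\mathbb{Q}}\mathbf{Pic}^{\tau}_{X}$ and hence lies in $\mathbf{Pic}^0_{X}$ by the choice of $I$ in Setup \ref{stup--5}. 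Either way the difference itself restricts into $\mathbf{Pic}^0_F$; then \cite[Lemma 6.2]{Hat23} applies directly, and divisibility of the abelian variety $\mathbf{Pic}^0_F$ lets you write the twisted fibre polarization as an $m$-th multiple of a $\mathbf{Pic}^0$-twist of an element of $\mathcal{V}$, giving simultaneously the $\mathcal{V}^{(m)}$-membership and the equality of the corresponding points of $V^{\mathrm{sn}}$, hence of $V_1$-membership.
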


\begin{proof}
The uniqueness is obvious.
It is easy to see that $\mathcal{M}^{W,I}_{d,v,u,w,p_a}\cong\mathcal{N}^{W,I}_{d,v,u,w,p_a}\times_{\mathcal{N}^{W,I}_{d,v,u,r,w}}\mathcal{M}^{W,I}_{d,v,u,r,w}$. Therefore, it suffices to find a locally closed reduced substack $\mathcal{N}^{W,I}_{d,v,u,w,p_a,V_1}$ of $\mathcal{N}^{W,I}_{d,v,u,w,p_a}$ such that $\mathcal{M}^{W,I}_{d,v,u,w,p_a,V_1}\cong\mathcal{N}^{W,I}_{d,v,u,w,p_a,V_1}\times_{\mathcal{N}^{W,I}_{d,v,u,w,p_a}}\mathcal{M}^{W,I}_{d,v,u,w,p_a}$. 

Let $\mathcal{W}$ be the scheme theoretic image of $\xi|_{\mathcal{M}^{W,I}_{d,v,u,w,p_a,V_1}}$ with the reduced stack structure by \cite[Tag 0CPU]{Stacks}.
We first show that $\xi^{-1}\xi(\mathcal{M}^{W,I}_{d,v,u,w,p_a,V_1})=\mathcal{M}^{W,I}_{d,v,u,w,p_a,V_1}$ set-theoretically.       
For any point $x\in \mathcal{M}^{W,I}_{d,v,u,w,p_a,V_1}$, we can take a morphism $\mathrm{Spec}\,\Omega\to \mathcal{M}^{W,I}_{d,v,u,w,p_a,V_1}$ whose image coincides with $\{x\}$, where $\Omega$ is an algebraically closed field.
We see that $\mathrm{Spec}\,\Omega\times_{\mathcal{N}^{W,I}_{d,v,u,w,p_a}}\mathcal{M}^{W,I}_{d,v,u,w,p_a}\to \mathcal{M}^{W,I}_{d,v,u,w,p_a}$ factors through $\mathcal{M}^{W,I}_{d,v,u,w,p_a,V_1}$ by definition.
This means that $\xi^{-1}\xi(\mathcal{M}^{W,I}_{d,v,u,w,p_a,V_1})=\mathcal{M}^{W,I}_{d,v,u,w,p_a,V_1}$.
By the properness and smoothness of $\xi$, it is easy to see that the scheme theoretic image of $\mathcal{M}^{W,I}_{d,v,u,w,p_a,V_1}\hookrightarrow \mathcal{M}^{W,I}_{d,v,u,w}$ coincides with $\xi^{-1}(\mathcal{W})$ (see \cite[Tag 0CML]{Stacks}).
Since $\mathcal{M}^{W,I}_{d,v,u,w,p_a,V_1}$ is locally closed, we see that $\xi|_{\mathcal{M}^{W,I}_{d,v,u,w,p_a,V_1}}\colon \mathcal{M}^{W,I}_{d,v,u,w,p_a,V_1}\to\mathcal{W}$ is also smooth.
Therefore, $$\mathcal{N}^{W,I}_{d,v,u,w,p_a,V_1}:=\xi|_{\mathcal{M}^{W,I}_{d,v,u,w,p_a,V_1}}(\mathcal{M}^{W,I}_{d,v,u,w,p_a,V_1})$$ is an open substack of $\mathcal{W}$.
By construction, it is easy to see that $$\mathcal{M}^{W,I}_{d,v,u,w,p_a,V_1}\cong\mathcal{N}^{W,I}_{d,v,u,w,p_a,V_1}\times_{\mathcal{N}^{W,I}_{d,v,u,w,p_a}}\mathcal{M}^{W,I}_{d,v,u,w,p_a}.$$

We complete the proof.
\end{proof}

We also write $\xi|_{\mathcal{M}^{W,I}_{d,v,u,w,p_a,V_1}}$ as $\xi$.

\begin{de}[Associated Quasimap]
Let $f\colon (X,A)\to C$ be a pair of a uniformly adiabatically K-stable klt-trivial fibration over a smooth curve and an $f$-ample line bundle that corresponds to a closed point of $\mathcal{M}_{d,v,u,r,w,p_a,V_1}$.
Choose a sufficiently large and divisible $a\in\mathbb{Z}_{>0}$.
Let $\iota\colon \overline{V}^{\mathrm{BB}}\hookrightarrow \mathbb{P}^L$ be the closed immersion induced by the complete linear system $|\Lambda_{\mathrm{Hodge}}^{\otimes a}|$ and $\mathrm{Cone}(\overline{V}^{\mathrm{BB}})$ the affine cone with respect to $\iota$ (see Definition \ref{de--hodge}).
A quasimap $q\colon C\to [\mathrm{Cone}(\overline{V}^{\mathrm{BB}})/\mathbb{G}_{m,\mathbbm{k}}]$ is called {\it the quasimap associated with $f$} if there exists an open dense subset $U\subset C$ such that $(X_p,A_p)$ corresponds to the point of $V_1$, and $q|_U$ coincides with the canonical morphism $U\to V_1\subset \overline{V}^{\mathrm{BB}}$ and $\mathrm{Bs}(q)=aB$, where $B$ is the discriminant divisor with respect to $f$.
It is easy to check by \cite[Theorem 3.1]{A} and the argument of \cite[Definition 3.22]{HH2} that $q$ is K-stable then. 
Note that if $u>0$, then $q$ is nothing but a stable quasimap by Proposition \ref{prop--K-stable-quasimaps--stability}.

Let $S$ be a normal quasi-projective variety.
Let $f\colon(\mathcal{X},\mathcal{A})\to \mathcal{C}\in \mathcal{M}_{d,v,u,r,w,p_a,V_1}(S)$ be an arbitrary object such that $\mathcal{A}$ is a genuine line bundle.
A family of K-stable quasimaps $q\in \mathcal{C}\to[\mathrm{Cone}(\overline{V}^{\mathrm{BB}})_S/\mathbb{G}_{m,S}]\in \mathcal{M}^{\mathrm{Kss,qm}}_{a(u-2p_a+2),\frac{1}{a},u,\iota}(S)$, where $\iota\colon \overline{V}^{\mathrm{BB}}\hookrightarrow \mathbb{P}^L$ is the closed immersion induced by the complete linear system $|\Lambda_{\mathrm{Hodge}}^{\otimes a}|$ for some sufficiently large and divisible $a\in\mathbb{Z}_{>0}$, is called {\it the family of stable quasimaps associated with $f$} if $q_{\bar{s}}$ is the quasimap associated with $f_{\bar{s}}$ for any geometric point $\bar{s}\in S$. 
\end{de}

The following can be shown by Theorem \ref{thm--canonical--bundle--formula-sin} and the same argument of the proof of \cite[Theorem 4.4]{HH2}.

\begin{thm}\label{thm--previous-hh}
    There exist a sufficiently divisible positive integer $k\in\mathbb{Z}_{>0}$ and a morphism $\beta_{v,w}\colon(\mathcal{M}_{d,v,u,w,p_a,V_1})^{\mathrm{sn}}\to \mathcal{M}^{\mathrm{Kss,qm}}_{k(u-2p_a+2),\frac{1}{k},u,\iota}$, where $(\mathcal{M}_{d,v,u,w,p_a,V_1})^{\mathrm{sn}}$ is the seminormalization of $(\mathcal{M}_{d,v,u,w,p_a,V_1})_{\mathrm{red}}$ and $\iota\colon \overline{V}^{\mathrm{BB}}\hookrightarrow \mathbb{P}^L$ is the closed immersion induced by the complete linear system $|\Lambda_{\mathrm{Hodge}}^{\otimes k}|$. 
\end{thm}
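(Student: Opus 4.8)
The plan is to construct $\beta_{v,w}$ by sending a family of good minimal models to its associated family of stable quasimaps, mirroring the strategy of \cite[Theorem 4.4]{HH2} but now feeding in the compactification $\overline{V}^{\mathrm{BB}}$ produced by Theorem \ref{thm--canonical--bundle--formula-sin} in place of the Baily--Borel compactification used there. First I would work \'etale-locally on $(\mathcal{M}_{d,v,u,w,p_a,V_1})^{\mathrm{sn}}$, so that given an object $f\colon(\mathcal{X},\mathcal{A})\to\mathcal{C}$ over a base $S$ (which we may assume normal quasi-projective, with $\mathcal{A}$ a genuine line bundle after passing to a cover), the generic fibers over each component of $\mathcal{C}$ land in $V_1$. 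The point is that each such family is an lc--trivial (indeed klt--trivial) fibration to which Theorem \ref{thm--canonical--bundle--formula-sin}(2) applies: after passing to an Ambro model $\tilde{S}\to S$, the moduli $\mathbb{Q}$-divisor $M_{\tilde{S}}$ is semiample (Condition \ref{ass--b-ss} via \cite{BFMT}), and we obtain a canonical morphism $\widetilde{\mathcal{P}}\colon\tilde{S}\to\overline{V}^{\mathrm{BB}}$ with $\widetilde{\mathcal{P}}^{*}(kH)\sim kM_{\tilde{S}}$. Pulling the complete linear system $|\Lambda_{\mathrm{Hodge}}^{\otimes k}|$ back along the rational map $\mathcal{C}\dashrightarrow\overline{V}^{\mathrm{BB}}$, and using the discriminant divisor $B$ as the fixed part, produces the quasimap structure $q\colon\mathcal{C}\to[\mathrm{Cone}(\overline{V}^{\mathrm{BB}})_S/\mathbb{G}_{m,S}]$, exactly the associated quasimap of the preceding definition.

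The next step is to verify that this assignment genuinely lands in $\mathcal{M}^{\mathrm{Kss,qm}}_{k(u-2p_a+2),\frac{1}{k},u,\iota}$. The numerical invariants must be checked: the degree of the associated line bundle $\mathscr{L}$ is computed from $kM_{\mathcal{C}}$ via the canonical bundle formula, giving degree $k u$ after normalizing $\deg_C(K_C+B_C+M_C)=u$, and the combination $\mathrm{deg}_{\mathcal{C}_{\bar s}}(K_{\mathcal{C}_{\bar s}})+wm$ with $w=\tfrac1k$ should reduce to the prescribed $u$ using $\deg K_{\mathcal{C}_{\bar s}}=2p_a-2$. Stability on fibers follows from the definition of the associated quasimap: each $q_{\bar s}$ is K-stable by the cited \cite[Theorem 3.1]{A} and the argument of \cite[Definition 3.22]{HH2}, and since $u>0$ gives $\mu>0$, Proposition \ref{prop--K-stable-quasimaps--stability} upgrades K-stability to the property of being a genuine stable quasimap, so the family indeed defines an object of the target stack.

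The construction so far is only \'etale-local and depends on choices (the Ambro model, the cover trivializing $\mathcal{A}$), so the essential remaining task is descent: I must show the locally defined quasimaps glue to a morphism of stacks on the seminormalization. This is where the seminormal hypothesis is used, precisely as in \cite[Theorem 4.4]{HH2}: the uniqueness clause of Theorem \ref{thm--canonical--bundle--formula-sin}(2), asserting that $\widetilde{\mathcal{P}}$ is the unique morphism equivalent to $\mathcal{P}$ as birational maps, forces the local constructions to agree on overlaps, and seminormality guarantees that a continuous assignment agreeing on a dense open and compatible with the coarse moduli identifications extends to an honest morphism. I would assemble the cocycle data from the isomorphisms $\widetilde{\mathcal{P}}$ on double overlaps, invoke the uniqueness to see it is trivial, and conclude the glued quasimap family is canonically associated with $f$.

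The hard part will be the descent/gluing step together with the careful bookkeeping of the fixed part $\mathrm{Bs}(q)=kB$: one must ensure that the fixed divisor extracted from the discriminant behaves well in families over a seminormal (possibly singular, non-normal) base, since the definition of the associated quasimap passes through the Ambro model and the moduli divisor $M_{\tilde S}$, objects defined only after birational modification. Controlling how $M_{\tilde S}$ and the induced quasimap vary in a family over the seminormalization—so that the degree, the weight $\tfrac1k$, and the slc structure $K_{\mathcal{C}_{\bar s}}+\tfrac1k\,\mathrm{div}(f)$ are simultaneously locally constant—is the technical crux, and this is exactly what Theorem \ref{thm--canonical--bundle--formula-sin} was engineered to supply, so I expect the proof to reduce, after the numerical verification, to citing that theorem and transcribing the descent argument of \cite[Theorem 4.4]{HH2} essentially verbatim.
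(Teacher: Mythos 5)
Your overall route coincides with the paper's: construct the associated quasimap via the compactification $\overline{V}^{\mathrm{BB}}$ of Theorem \ref{thm--canonical--bundle--formula-sin}, check that it is a stable quasimap via Proposition \ref{prop--K-stable-quasimaps--stability}, and then descend to the seminormalization by uniqueness, transcribing \cite[Theorem 4.4]{HH2}. The paper implements this in two intermediate steps --- Theorem \ref{thm--Fujino's--period--mapping--theory} (construction over smooth affine bases) and Theorem \ref{thm--const--qmaps--from--CY--fib} (existence and uniqueness of the associated family over quasi-projective \emph{seminormal} bases) --- and then quotes the argument of \cite[Theorem 4.4]{HH2}. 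Your proposal, however, has a genuine gap exactly at the interface of these two steps.

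You claim that, working \'etale-locally on $(\mathcal{M}_{d,v,u,w,p_a,V_1})^{\mathrm{sn}}$, one ``may assume [the base] normal quasi-projective after passing to a cover.'' This is false: \'etale covers of a seminormal stack are seminormal but in general not normal, so if the associated quasimap family is only constructed over normal bases, you obtain a morphism from the normalization rather than from the seminormalization. Your proposed repair --- glue the normal-cover constructions using uniqueness of $\widetilde{\mathcal{P}}$ and ``compatibility with the coarse moduli identifications'' --- does not close this gap, because such gluing at best yields a map to the coarse space $M^{\mathrm{Kps,qm}}_{k(u-2p_a+2),\frac{1}{k},u,\iota}$; to define the morphism of stacks $\beta_{v,w}$ you must exhibit an actual family of quasimaps (the line bundle $\mathscr{L}$ together with its defining sections) over each seminormal atlas scheme, and families do not descend along the non-flat normalization $T^\nu\to T$ by topological or pointwise considerations alone. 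This is precisely what the paper's Theorem \ref{thm--const--qmaps--from--CY--fib} supplies: over a quasi-projective seminormal base the associated family exists and is unique, the point (following \cite[Theorem 4.6 and Corollary 4.7]{HH2}) being that the defining sections, produced over smooth bases by Theorem \ref{thm--Fujino's--period--mapping--theory}, descend to the seminormal base. Two smaller issues: Theorem \ref{thm--canonical--bundle--formula-sin}(2) requires the base of the lc--trivial fibration to be \emph{projective}, so one must first compactify $\mathcal{C}$ and extend the fibration (the paper does this with \cite[Corollary 2.11]{HH2}) before invoking it; and the degree of $\mathscr{L}$ is $k(u-2p_a+2)=k\,\mathrm{deg}_C(M_C+B_C)$, not $ku$ as you wrote, although your subsequent check of $\mathrm{deg}(K_{\mathcal{C}_{\bar{s}}})+wm=u$ uses the correct value.
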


For this, we show the following.

\begin{thm}\label{thm--Fujino's--period--mapping--theory}
    Fix $l$ and $k$ as the first part of this section. 
    Then the following holds.
    Let $S$ be a smooth affine variety and $f\colon (\mathcal{X},\mathcal{A})\to\mathcal{C}$ an object of $\mathcal{M}_{d,v,u,r,w,p_a,V_1}(S)$, where $\mathcal{C}$ is as condition (v) of Definition \ref{defn--HH-moduli}.
    Suppose that $\mathcal{A}$ is represented by a line bundle on $\mathcal{X}$.
    Let $\mathcal{C}_{\mathrm{klt}}$ be the largest open subset of $\mathcal{C}$ such that all geometric fibers of $f$ over $\mathcal{C}_{\mathrm{klt}}$ correspond to points of $V_1$. 
    Let $\mu^\circ\colon \mathcal{C}_{\mathrm{klt}}\to V_1$ be the morphism induced by the polarized locally stable family $(\mathcal{X}_{\mathcal{C}_{\mathrm{klt}}},\mathcal{A}|_{\mathcal{C}_{\mathrm{klt}}})$. 
    Take a projective birational morphism  $\rho\colon\widetilde{\mathcal{C}}\to \mathcal{C}$ from an Ambro model with respect to $f$, and let $\widetilde{\mathcal{M}}$ and $\widetilde{\mathcal{B}}$ be the moduli $\mathbb{Q}$-divisor and the discriminant $\mathbb{Q}$-divisor on $\mathcal{C}$, respectively.
    Then, after replacing $\widetilde{\mathcal{C}}$ with a higher birational model in a suitable way if necessary, we have the following:
    \begin{enumerate}
       \item \label{thm--Fujino's--period--mapping--theory-(1)} $k\widetilde{\mathcal{M}}$ and $k\widetilde{\mathcal{B}}$ are both $\mathbb{Z}$-divisors, and
        \item \label{thm--Fujino's--period--mapping--theory-(2)} putting $L:=h^0(\overline{V}^{\mathrm{BB}},\Lambda^{\otimes k}_{\mathrm{Hodge}})-1$, then there exist $L+1$ sections $$\widetilde{\varphi}_0,\ldots,\widetilde{\varphi}_L\in H^0(\widetilde{\mathcal{C}},\mathcal{O}_{\widetilde{\mathcal{C}}}(k\widetilde{\mathcal{M}}))$$ that generate $k\widetilde{\mathcal{M}}$ and define a morphism $\widetilde{\mu}\colon\widetilde{\mathcal{C}}\to \mathbb{P}^L$ such that $\widetilde{\mu}^*\mathcal{O}(1)\sim k\widetilde{\mathcal{M}}$, $\widetilde{\mu}$ factors $\iota\colon \overline{V}^{\mathrm{BB}}\hookrightarrow \mathbb{P}^L$ and this is equivalent to $\iota \circ \mu^\circ \circ \rho$ as rational maps.  
        Here, $\iota\colon \overline{V}^{\mathrm{BB}}\to \mathbb{P}^L$ is induced by the complete linear system $|\Lambda^{\otimes k}_{\mathrm{Hodge}}|$. 
    \end{enumerate}
\end{thm}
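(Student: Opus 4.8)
The plan is to extract the period map data from the canonical bundle formula and the construction of $\overline{V}^{\mathrm{BB}}$ in Theorem \ref{thm--canonical--bundle--formula-sin}, and then show that after a suitable birational modification the linear system $|k\widetilde{\mathcal{M}}|$ is base-point free and realizes the morphism to $\mathbb{P}^L$. First I would observe that since $f$ is a uniformly adiabatically K-stable klt-trivial fibration and the general fiber lies in $V_1$, the associated discriminant and moduli $\mathbb{Q}$-divisors $\widetilde{\mathcal{B}}$ and $\widetilde{\mathcal{M}}$ are well-defined on any Ambro model $\widetilde{\mathcal{C}}\to\mathcal{C}$, and by definition of an Ambro model $\widetilde{\mathcal{M}}$ is nef and its formation commutes with further birational pullbacks. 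The integrality statement \eqref{thm--Fujino's--period--mapping--theory-(1)} should follow from the choice of $k\in l\cdot\mathbb{Z}_{>0}$ together with the finiteness of denominators in the canonical bundle formula: after replacing $\widetilde{\mathcal{C}}$ by a higher model that resolves the support of $\widetilde{\mathcal{B}}$ and makes the fibration have reduced fibers over the relevant points, $k\widetilde{\mathcal{B}}$ and $k\widetilde{\mathcal{M}}$ become $\mathbb{Z}$-divisors. This is where I would invoke the explicit bound on denominators coming from the boundedness results (Proposition \ref{prop--birkar}) and the choice of $l$ in Proposition \ref{prop--v^l-stop}.

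For part \eqref{thm--Fujino's--period--mapping--theory-(2)}, the key input is Theorem \ref{thm--canonical--bundle--formula-sin}(2): applied to our lc-trivial fibration $f$ over $S$ (base-changed appropriately so that the total space has the required form), it produces a unique morphism $\widetilde{\mathcal{P}}\colon\widetilde{\mathcal{C}}\to\overline{V}^{\mathrm{BB}}$, equivalent to the period map $\mu^\circ\circ\rho$ as birational maps, satisfying $\widetilde{\mathcal{P}}^*(k\Lambda_{\mathrm{Hodge}})\sim k\widetilde{\mathcal{M}}$. I would then set $\widetilde{\mu}:=\iota\circ\widetilde{\mathcal{P}}$, where $\iota\colon\overline{V}^{\mathrm{BB}}\hookrightarrow\mathbb{P}^L$ is the embedding by $|\Lambda_{\mathrm{Hodge}}^{\otimes k}|$; since $k\Lambda_{\mathrm{Hodge}}$ is very ample on $\overline{V}^{\mathrm{BB}}$ by Theorem \ref{thm--canonical--bundle--formula-sin}(1), pulling back the $L+1$ coordinate sections of $\mathcal{O}_{\mathbb{P}^L}(1)$ along $\widetilde{\mathcal{P}}$ yields sections $\widetilde{\varphi}_0,\ldots,\widetilde{\varphi}_L\in H^0(\widetilde{\mathcal{C}},\mathcal{O}_{\widetilde{\mathcal{C}}}(k\widetilde{\mathcal{M}}))$ that generate $k\widetilde{\mathcal{M}}$ and give exactly the required factorization $\widetilde{\mu}^*\mathcal{O}(1)\sim k\widetilde{\mathcal{M}}$. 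The equivalence of $\widetilde{\mu}$ with $\iota\circ\mu^\circ\circ\rho$ as rational maps is immediate from the uniqueness clause in Theorem \ref{thm--canonical--bundle--formula-sin}(2), since both restrict to the canonical period map on the klt locus $\mathcal{C}_{\mathrm{klt}}$.

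The main obstacle I anticipate is the careful bookkeeping needed to apply Theorem \ref{thm--canonical--bundle--formula-sin}, which is stated for lc-trivial fibrations over a \emph{projective} normal base, to the present situation where $S$ is only affine and we work fiberwise in families. I would handle this by passing to a projective compactification of the base as in the proof of Theorem \ref{thm--canonical--bundle--formula-sin} (taking $\hat{S}$ and then a generically finite $S'\to\hat{S}$ with a Galois descent), and checking that the resulting sections of $k\widetilde{\mathcal{M}}$ descend to $\widetilde{\mathcal{C}}$ and are compatible with the family structure over $S$. A secondary technical point is ensuring that the higher birational model of $\widetilde{\mathcal{C}}$ chosen to achieve integrality in \eqref{thm--Fujino's--period--mapping--theory-(1)} does not disturb the base-point freeness or the factorization through $\overline{V}^{\mathrm{BB}}$; this is controlled because $\widetilde{\mathcal{M}}$'s formation is compatible with birational pullback and $\widetilde{\mathcal{P}}^*(k\Lambda_{\mathrm{Hodge}})\sim k\widetilde{\mathcal{M}}$ persists under further blow-ups, so the globally generating sections pull back to globally generating sections on any higher model.
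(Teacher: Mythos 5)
Your strategy for part (2) --- invoke Theorem \ref{thm--canonical--bundle--formula-sin}(2) to produce the morphism to $\overline{V}^{\mathrm{BB}}$ and pull back the coordinate sections of $\mathbb{P}^L$ --- is the same as the paper's, and you correctly flag the real obstacle: Theorem \ref{thm--canonical--bundle--formula-sin} requires a projective base, whereas here the base of the klt-trivial fibration is $\mathcal{C}$ (not $S$; $f$ is a fibration over the family of curves $\mathcal{C}$, which is only quasi-projective because $S$ is affine). But your fix does not close this gap. The paper's proof first takes a projective compactification $\mathcal{C}\hookrightarrow\mathcal{C}^{c}$ and, crucially, \emph{extends the fibration and the polarization} across it: by \cite[Corollary 2.11]{HH2} there is a klt-trivial fibration $f^{c}\colon\mathcal{X}^{c}\to\mathcal{C}^{c}$ with an $f^{c}$-ample $\mathbb{Q}$-divisor $\mathcal{A}^{c}$ restricting to $(f,\mathcal{A})$; only then can Theorem \ref{thm--canonical--bundle--formula-sin} be applied (to $f^{c}$ and $m'\mathcal{A}^{c}$), giving $\mu\colon\widetilde{\mathcal{C}}^{c}\to\overline{V}^{\mathrm{BB}}$ with $\mu^*\Lambda_{\mathrm{Hodge}}^{\otimes k}\sim\mathcal{O}_{\widetilde{\mathcal{C}}^{c}}(k\widetilde{\mathcal{M}}^{c})$ on a \emph{projective} Ambro model $\widetilde{\mathcal{C}}^{c}$ that contains $\widetilde{\mathcal{C}}$ as an open subset, after which everything restricts. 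Your proposal instead re-runs the internal steps of the proof of Theorem \ref{thm--canonical--bundle--formula-sin} (the compactification $\hat{S}$ of the \'etale cover, the generically finite $S'\to\hat{S}$, Galois descent). Those steps presuppose that the lc-trivial fibration and its Ambro model are already defined over a projective base: the morphism $\tilde{\mu}\colon\hat{S}\to\tilde{S}$ used there exists precisely because $\tilde{S}$ is proper. With $\widetilde{\mathcal{C}}$ only quasi-projective, the map from a projective compactification of the \'etale cover of $\mathcal{C}_{\mathrm{klt}}$ does not factor through $\widetilde{\mathcal{C}}$, so at best you obtain the morphism and the linear equivalence over $\mathcal{C}_{\mathrm{klt}}$, not on all of $\widetilde{\mathcal{C}}$ --- and the entire content of the theorem is the extension over the locus where the fibers leave $V_1$ (where the associated quasimap acquires base points). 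Without the extension step of \cite[Corollary 2.11]{HH2}, or some substitute producing a fibration over a projective base, the argument does not go through.

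On part (1), your inputs are also off target. Proposition \ref{prop--birkar} and Proposition \ref{prop--v^l-stop} control the fibers, not the log canonical thresholds on the total space that determine the coefficients of $\widetilde{\mathcal{B}}$; moreover, replacing $\widetilde{\mathcal{C}}$ by a higher model cannot repair non-integrality, since the discriminant b-divisor's coefficients on any given divisor are birationally intrinsic (a higher model only adds new components). In the paper, the integrality of $k\widetilde{\mathcal{B}}$ and $k\widetilde{\mathcal{M}}$ comes from Lemma \ref{lem--HH2--de--yatta} (the existence of $l_0$ with $f_*\omega_{\mathcal{X}/S}^{[l_0]}$ a line bundle and $f^*f_*\omega_{\mathcal{X}/S}^{[l_0]}\cong\omega_{\mathcal{X}/S}^{[l_0]}$) combined with the divisibility built into the choice of $k$ (namely $Il_0\mid l$ and $k\in l\cdot\mathbb{Z}_{>0}$), following the argument of \cite[Theorem 4.2]{HH2}.
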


\begin{proof}
We follow the proof of \cite[Theorem 4.2]{HH2} heavily.
Since $S$ is smooth, we note that $\mathcal{C}$ is a quasi-projective variety and smooth, and $K_{\mathcal{X}}$ is $\mathbb{Q}$-Cartier. 
Let $\mathcal{C} \hookrightarrow \mathcal{C}^{c}$ be an open immersion to a normal projective variety $\mathcal{C}^{c}$. 
Applying \cite[Corollary 2.11]{HH2}, we get a diagram
$$
\xymatrix
{
\mathcal{X} \ar[d]_{f} \ar@{^{(}->}[r] & \mathcal{X}^{c} \ar[d]^{f^{c}} 
\\
\mathcal{C} \ar@{^{(}->}[r] & \mathcal{C}^{c}  
}
$$
and an $f^{c}$-ample $\mathbb{Q}$-divisor $\mathcal{A}^{c}$ on $\mathcal{X}^{c}$ such that $f^{c} \colon (\mathcal{X}^{c},0) \to \mathcal{C}^{c}$ is a klt-trivial fibration and $\mathcal{A}^{c}|_{\mathcal{X}}=\mathcal{A}$. 
Take a sufficiently divisible $m'\in\mathbb{Z}_{>0}$ such that $m'\mathcal{A}^{c}$ is Cartier and $m'$ is divisible by $l$, and a projective birational morphism $\widetilde{\rho} \colon \widetilde{\mathcal{C}}^{c} \to \mathcal{C}^{c}$ from an Ambro model $\widetilde{\mathcal{C}}^{c}$. 
Fixing $\widetilde{\mathcal{C}}^{c}$ and replacing $\widetilde{\mathcal{C}}$ if necessary, we may assume that $\widetilde{\mathcal{C}}$ is an open subscheme of $\widetilde{\mathcal{C}}^{c}$. 
By Theorem \ref{thm--canonical--bundle--formula-sin} applied to $f^{c} \colon \mathcal{X}^{c} \to \mathcal{C}^{c}$ and $m'\mathcal{A}^{c}$, 
there exists a morphism $\mu\colon\widetilde{\mathcal{C}}^{c} \to  \overline{V}^{\mathrm{BB}}$ 
such that 
\begin{equation*}
\mu^*\Lambda^{\otimes k}_{\mathrm{Hodge}}\sim \mathcal{O}_{\widetilde{\mathcal{C}}^{c}}(k\widetilde{\mathcal{M}}^{c}),
\end{equation*}
where $\widetilde{\mathcal{M}}^{c}$ is the moduli $\mathbb{Q}$-divisor defined on $\widetilde{\mathcal{C}}^{c}$. 
Put $L:=h^0(\overline{V}^{\mathrm{BB}},\Lambda^{\otimes l}_{\mathrm{Hodge}})-1$ and $\widetilde{\varphi}_0,\ldots,\widetilde{\varphi}_L$ as the pullbacks of the coordinate functions of $\mathbb{P}^L$. 
Then (\ref{thm--Fujino's--period--mapping--theory-(2)}) of Theorem \ref{thm--Fujino's--period--mapping--theory} follows from the same argument as the proof of \cite[Theorem 4.2]{HH2}. 

(\ref{thm--Fujino's--period--mapping--theory-(1)}) follows from Lemma \ref{lem--HH2--de--yatta} and the same argument as the proof of \cite[Theorem 4.2]{HH2}.
We finish the proof.  
\end{proof}

\begin{thm}\label{thm--const--qmaps--from--CY--fib}
Fix $k$ as above. 
    Let $S$ be a quasi-projective seminormal scheme of finite type over $\mathbbm{k}$ and let $f\colon(\mathcal{X},\mathcal{A})\to\mathcal{C}$ be an object of $\mathcal{M}_{d,v,u,r,w,p_a,V_1}(S)$, where $\mathcal{A}$ is represented by a line bundle and $\mathcal{C}$ is as condition (v) of Definition \ref{defn--HH-moduli}.
Then, there exists the following object $q\colon \mathcal{C}\to [\mathrm{Cone}(\overline{V}^{\mathrm{BB}})_S/\mathbb{G}_{m,S}]$ of $\mathcal{M}^{\mathrm{Kss,qm}}_{k(u-2p_a+2),\frac{1}{k},u,\iota}(S)$ uniquely up to isomorphism satisfying the following. 
\begin{enumerate}
\item Let $\mathscr{L}$ be the associated line bundle on $\mathcal{C}$ with $q$. Then, $\frac{|u|}{u}f^*\mathscr{L}\sim k(K_{\mathcal{X}/S}-f^*K_{\mathcal{C}/S})$.
\item For any geometric point $\bar{s}\in S$, let $B_{\bar{s}}$ be the discriminant $\mathbb{Q}$-divisor of $f_{\bar{s}}$.
Then $kB_{\bar{s}}$ is the fixed part of $q_{\bar{s}}$ and $q_{\bar{s}}$ coincides with the quasimap associated to $f_{\bar{s}}$.
\end{enumerate}       
\end{thm}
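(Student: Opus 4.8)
The plan is to construct the family of quasimaps $q$ fiberwise from the relative canonical data and the period-type morphism of Theorem \ref{thm--Fujino's--period--mapping--theory}, then to verify that these patch into a genuine family over $S$ and that it satisfies the two stated normalizations. First I would reduce to the case where $S$ is smooth and affine. Indeed, since $S$ is seminormal and quasi-projective, I can take a stratification and an \'etale or resolution cover $S'\to S$ with $S'$ smooth affine; because the target moduli stack $\mathcal{M}^{\mathrm{Kss,qm}}_{k(u-2p_a+2),\frac{1}{k},u,\iota}$ is a proper Deligne--Mumford stack (Proposition \ref{lem--quasimap--stack}) and the object is required to be unique up to isomorphism, descent along $S'\to S$ will follow from the separatedness of that stack, exactly as in the descent step of \cite[Theorem 4.4]{HH2}. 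Over the smooth affine base I would apply Theorem \ref{thm--Fujino's--period--mapping--theory} to the given $(\mathcal{X},\mathcal{A})\to\mathcal{C}$: this produces, after passing to an Ambro model $\rho\colon\widetilde{\mathcal{C}}\to\mathcal{C}$, the divisor $k\widetilde{\mathcal{M}}$ and sections $\widetilde{\varphi}_0,\dots,\widetilde{\varphi}_L$ defining a morphism $\widetilde{\mu}\colon\widetilde{\mathcal{C}}\to\mathbb{P}^L$ factoring through $\iota\colon\overline{V}^{\mathrm{BB}}\hookrightarrow\mathbb{P}^L$.

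The next step is to descend this period morphism from the Ambro model $\widetilde{\mathcal{C}}$ back to $\mathcal{C}$ in the form of a quasimap, rather than an honest morphism. The moduli divisor $\widetilde{\mathcal{M}}$ and the discriminant divisor $\widetilde{\mathcal{B}}$ on $\widetilde{\mathcal{C}}$ push forward to $\mathcal{C}$, and by the defining identity $K_{\widetilde{\mathcal{C}}}+\widetilde{\mathcal{B}}+\widetilde{\mathcal{M}}=\rho^*(K_{\mathcal{C}}+B_{\mathcal{C}}+M_{\mathcal{C}})$ of the canonical bundle formula I can transport the sections $\widetilde{\varphi}_i$ to rational sections on $\mathcal{C}$; the fixed part is precisely $kB_{\mathcal{C}}$, where $B_{\mathcal{C}}$ is the discriminant divisor. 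Setting $\mathscr{L}$ to be the line bundle on $\mathcal{C}$ associated with this base-point data, the sections $\varphi_i$ induce a morphism $q\colon\mathcal{C}\to[\mathrm{Cone}(\overline{V}^{\mathrm{BB}})_S/\mathbb{G}_{m,S}]$ whose associated line bundle is $\mathscr{L}$. The normalization (1) is then imposed by the relation $\frac{|u|}{u}f^*\mathscr{L}\sim k(K_{\mathcal{X}/S}-f^*K_{\mathcal{C}/S})$, which I would verify from the identity $K_{\mathcal{X}/S}\sim_{\mathbb{Q},f}0$ together with $k\widetilde{\mathcal{M}}\sim\widetilde{\mu}^*\mathcal{O}(1)$ and the compatibility of the canonical bundle formula with the given polarization; the sign factor $\frac{|u|}{u}$ accounts for whether $K_{\mathcal{X}}$ is or is not nef. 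Property (2) and the fiberwise identification with the quasimap associated to $f_{\bar{s}}$ follow from the fiberwise statements in Theorem \ref{thm--Fujino's--period--mapping--theory} together with the construction of the associated quasimap, and K-stability of each $q_{\bar{s}}$ is supplied by \cite[Theorem 3.1]{A} and Proposition \ref{prop--K-stable-quasimaps--stability}.

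For flatness and the degree and genus conditions defining the target stack, I would use Lemma \ref{lem--HH2--de--yatta}: it gives $l_0$ with $f_*\omega_{\mathcal{X}/S}^{[l_0]}$ a line bundle and $f$ flat with $f_*\mathcal{O}_{\mathcal{X}}\cong\mathcal{O}_{\mathcal{C}}$, which lets me compute $\deg\mathscr{L}_{\bar{s}}=k(u-2p_a+2)$ and confirm $\deg_{\mathcal{C}_{\bar{s}}}(K_{\mathcal{C}_{\bar{s}}})+\frac{1}{k}\cdot k(u-2p_a+2)=u$ using $g(\mathcal{C}_{\bar{s}})=p_a$. The main obstacle I anticipate is the passage from the Ambro model $\widetilde{\mathcal{C}}$ back to $\mathcal{C}$ in families: the Ambro model and the semiampleness of $k\widetilde{\mathcal{M}}$ are guaranteed only after a birational modification that depends on the fiber, so I must ensure that the resulting quasimap structure on $\mathcal{C}$ varies in a flat family over $S$ and does not acquire unwanted base locus or fail to be a morphism to the quotient stack at bad fibers. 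Controlling this uniformly is exactly where the seminormality of $S$ and the boundedness built into the choice of $k$ and $l$ enter, and I would handle it by the same spreading-out and specialization argument used in \cite[Theorem 4.4]{HH2}, checking that the two normalizations (1) and (2) pin down $q$ uniquely up to isomorphism so that the fiberwise construction glues.
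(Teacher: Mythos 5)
Your proposal is essentially the paper's own argument: the paper proves this theorem simply by remarking that the proof of \cite[Theorem 4.6 and Corollary 4.7]{HH2} goes through once Theorem \ref{thm--Fujino's--period--mapping--theory} is in place, and your outline (period map on an Ambro model over a smooth affine cover, descent to $\mathcal{C}$ as a quasimap whose fixed part is $kB$, verification of the normalizations (1)--(2) via the canonical bundle formula and Lemma \ref{lem--HH2--de--yatta}, and gluing over the seminormal base via fiberwise uniqueness) is exactly that argument. The one caution is that descent along a non-flat cover $S'\to S$ does not follow from separatedness of the target stack alone; it is the seminormality of $S$ (hence of $\mathcal{C}$, which is smooth over $S$) combined with the uniqueness forced by (1)--(2) that makes the construction glue, as you correctly indicate in your final paragraph.
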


\begin{proof}
 The same argument as in the proof of \cite[Theorem 4.6 and Corollary 4.7]{HH2} also works in this case due to Theorem \ref{thm--Fujino's--period--mapping--theory}. 
 We leave the details to the reader.
\end{proof}

\begin{proof}[Proof of Theorem \ref{thm--previous-hh}]
By applying Theorem \ref{thm--const--qmaps--from--CY--fib}, we obtain this theorem in the same way as \cite[Theorem 4.4]{HH2}.    
\end{proof}

Set $\beta:=\beta_{I^{d-1}\cdot v,I^d\cdot w}|_{\mathcal{M}^{W,I}_{d,v,u,w,p_a,V_1}}$.
Then the following holds.

\begin{thm}\label{thm--quasifinite}
Fix $k$, $w$ and $I$ as Theorem \ref{thm--previous-hh}.
    Then there exists a morphism $$\alpha\colon(\mathcal{N}^{W,I}_{d,v,u,w,p_a,V_1})^{\mathrm{sn}}\to \mathcal{M}^{\mathrm{Kss,qm}}_{k(u-2p_a+2),\frac{1}{k},u,\iota}$$ such that $\alpha\circ\xi^{\mathrm{sn}}\cong\beta$, where $(\mathcal{N}^{W,I}_{d,v,u,w,p_a,V_1})^{\mathrm{sn}}$ is the seminormalization of $\mathcal{N}^{W,I}_{d,v,u,w,p_a,V_1}$ and $\xi^{\mathrm{sn}}\colon (\mathcal{M}^{W,I}_{d,v,u,w,p_a,V_1})^{\mathrm{sn}}\to(\mathcal{N}^{W,I}_{d,v,u,w,p_a,V_1})^{\mathrm{sn}}$ is the morphism induced by $\xi$.
\end{thm}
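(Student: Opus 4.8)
```latex
The plan is to factor the morphism $\beta$ through $\xi^{\mathrm{sn}}$ by exploiting the universal property of the stackification $\mathcal{N}$ together with the fact that $\beta$ is insensitive to numerical equivalence of the polarization. First I would recall the defining feature of the equivalence relation used in Section \ref{sec-5}: two objects $(X,A)$ and $(X,A')$ of $\mathcal{M}^{W,I}_{d,v,u,w,p_a,V_1}$ with $A'-A$ numerically trivial are identified in $\mathcal{N}^{W,I}_{d,v,u,w,p_a,V_1}$. On the other hand, the associated quasimap of Theorem \ref{thm--const--qmaps--from--CY--fib} depends on $(X,A)$ only through the Ambro model data, the moduli $\mathbb{Q}$-divisor $\widetilde{\mathcal{M}}$, and the discriminant divisor $B$. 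By the construction in Theorem \ref{thm--Fujino's--period--mapping--theory}, the morphism $\widetilde{\mu}$ to $\overline{V}^{\mathrm{BB}}$ and the line bundle $\mathscr{L}$ with $\frac{|u|}{u}f^*\mathscr{L}\sim k(K_{\mathcal{X}/S}-f^*K_{\mathcal{C}/S})$ are determined purely by the klt--trivial fibration structure and the canonical class; they do not see the linear equivalence class of $A$ but only its numerical class through the locus $V_1$. Thus $\beta$ is constant on fibers of $\xi^{\mathrm{sn}}$, which is exactly the descent datum we need.

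Concretely, I would show that $\beta$ descends along the smooth proper surjective morphism $\xi^{\mathrm{sn}}$. Since $\xi$ (and hence $\xi^{\mathrm{sn}}$) is representable by algebraic spaces, proper, surjective, and smooth by Proposition \ref{prop--representability--quotient--morph}, it is in particular an fppf (indeed smooth) cover, so a morphism out of its source descends to the target provided it is compatible with the two pullbacks to the fiber product $(\mathcal{M}^{W,I}_{d,v,u,w,p_a,V_1})^{\mathrm{sn}}\times_{(\mathcal{N}^{W,I}_{d,v,u,w,p_a,V_1})^{\mathrm{sn}}}(\mathcal{M}^{W,I}_{d,v,u,w,p_a,V_1})^{\mathrm{sn}}$. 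By the explicit description of $\mathcal{N}$ in Definition \ref{de--stack--N}, a point of this fiber product over a scheme $T$ records two objects $(\mathcal{X}_1,\mathcal{A}_1)$ and $(\mathcal{X}_2,\mathcal{A}_2)$ together with an isomorphism $\varphi\colon \mathcal{X}_1\to\mathcal{X}_2$ such that $\varphi^*\mathcal{A}_2-\mathcal{A}_1$ lands in the Abelian scheme $\widehat{\widehat{Y}}$, in particular is numerically trivial. The two pullbacks of $\beta$ correspond to the families of quasimaps associated with $f_1$ and $f_2$ in the sense of Theorem \ref{thm--const--qmaps--from--CY--fib}, and $\varphi$ induces an isomorphism between them: the induced line bundles $\mathscr{L}_i$ satisfy $\frac{|u|}{u}f_i^*\mathscr{L}_i\sim k(K_{\mathcal{X}_i/T}-f_i^*K_{\mathcal{C}_i/T})$, and since $\varphi$ is an isomorphism of the underlying fibrations preserving the canonical bundle, it carries $\mathscr{L}_1$ to $\mathscr{L}_2$ and $B_1$ to $B_2$, hence $q_1$ to $q_2$ as families of quasimaps. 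This furnishes the required $2$-isomorphism between the two pullbacks of $\beta$.

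Having verified the cocycle/compatibility condition, effectiveness of descent for a morphism into the Deligne--Mumford stack $\mathcal{M}^{\mathrm{Kss,qm}}_{k(u-2p_a+2),\frac{1}{k},u,\iota}$ along the smooth cover $\xi^{\mathrm{sn}}$ produces the desired $\alpha\colon(\mathcal{N}^{W,I}_{d,v,u,w,p_a,V_1})^{\mathrm{sn}}\to \mathcal{M}^{\mathrm{Kss,qm}}_{k(u-2p_a+2),\frac{1}{k},u,\iota}$ together with a canonical $2$-isomorphism $\alpha\circ\xi^{\mathrm{sn}}\cong\beta$. Here I would invoke that a morphism to a stack with separated (indeed affine or finite) diagonal satisfies fppf descent, which applies since the target is a proper Deligne--Mumford stack by Proposition \ref{lem--quasimap--stack}, so its diagonal is finite; the relevant $\mathbf{Isom}$ spaces are therefore finite, and the descent datum glues uniquely.

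The main obstacle I anticipate is the precise verification that the isomorphism $\varphi$ with $\varphi^*\mathcal{A}_2-\mathcal{A}_1$ merely numerically trivial (rather than linearly trivial) still induces an honest isomorphism of the associated families of quasimaps. The subtlety is that the associated quasimap is built from the moduli divisor $\widetilde{\mathcal{M}}$ on an Ambro model, and one must check that numerically equivalent polarizations yield the same period map $\widetilde{\mu}$ into $\overline{V}^{\mathrm{BB}}$; this is where the normalization to $V_1$ and the invariance properties established in Theorem \ref{thm--canonical--bundle--formula-sin} and Theorem \ref{thm--Fujino's--period--mapping--theory} must be used carefully, since those results construct $\widetilde{\mu}$ from the fibration data alone. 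I would handle this by reducing to geometric points via the seminormal structure and invoking that the associated quasimap of Theorem \ref{thm--const--qmaps--from--CY--fib} is characterized by conditions (1) and (2) there, both of which are manifestly preserved under an isomorphism of fibrations inducing a numerically trivial change of polarization, and then propagating the pointwise agreement to a genuine isomorphism of families using the uniqueness clause in Theorem \ref{thm--const--qmaps--from--CY--fib}.
```
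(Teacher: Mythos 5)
Your overall strategy is the same as the paper's: descend $\beta$ along the smooth, proper, surjective morphism $\xi^{\mathrm{sn}}$ by exhibiting a canonical $2$-isomorphism between the two pullbacks of $\beta$ to the fiber product, and then invoke descent for morphisms into the quasimap stack. (The paper packages this by choosing an \'etale scheme cover $S\to(\mathcal{M}^{W,I}_{d,v,u,w,p_a,V_1})^{\mathrm{sn}}$, presenting $(\mathcal{N}^{W,I}_{d,v,u,w,p_a,V_1})^{\mathrm{sn}}$ as a groupoid quotient and applying \cite[Tag 044U]{Stacks}; that difference in formalism is harmless.) The problem is that the one step carrying all the content — the canonical isomorphism of the two pulled-back quasimap families — is not actually established in your proposal.

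Here is the gap. You claim that conditions (1) and (2) of Theorem \ref{thm--const--qmaps--from--CY--fib} are ``manifestly preserved under an isomorphism of fibrations inducing a numerically trivial change of polarization.'' Condition (2) is not polarization-free: the associated quasimap, restricted to the dense open locus $U$, is by definition the classifying morphism $p\mapsto[(X_p,A_p)]\in V_1\subset\overline{V}^{\mathrm{BB}}$, and this depends a priori on the linear equivalence class of $A_p$; matching the line bundles $\mathscr{L}_i$ and the fixed parts, as you do, does not determine the quasimap, because the period map into $\overline{V}^{\mathrm{BB}}$ is part of the data. Two separate points are missed. First, mere numerical triviality is too weak: if $A'_p-A_p$ is torsion in $\mathbf{Pic}^\tau$ but not in $\mathbf{Pic}^0$, then $(X_p,A_p)$ and $(X_p,A'_p)$ may be distinct points of $V_1$, so an argument relying only on numerical triviality cannot work; this is precisely why the paper passes to $\mathcal{M}^{W,I}$ (with $I$ chosen to kill $\mathbf{W}^{\mathbb{Q}}\mathbf{Pic}^{\tau}/\mathbf{Pic}^0$) and why the fibers of $\xi$ are built from $\widehat{\widehat{Y}}\subset\mathbf{Pic}^0$, as recorded in Definition \ref{de--stack--N}. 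Second, even for a twist lying in $\mathbf{Pic}^0$ — which is what the fiber product of $\xi^{\mathrm{sn}}$ actually gives you, and which you note but then discard in favor of the weaker statement — the invariance is not manifest: it is exactly the content of Lemma \ref{lem--invariant--linear--equivalence}, whose proof requires \cite[Lemma 6.2]{Hat23}, namely that an algebraically trivial twist of the polarization on a polarized klt Calabi--Yau fiber is realized by an automorphism, so the moduli point in $V_1$ (hence the induced morphism $\mathcal{C}\to\overline{V}^{\mathrm{BB}}$) is unchanged; combined with the equality of fixed parts $\mathrm{Bs}(q)=kB$ and the uniqueness clause of Theorem \ref{thm--const--qmaps--from--CY--fib}, this yields $q_1=q_2$. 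Your proposal never supplies this input, so the descent datum is not constructed. Once you replace ``numerically trivial'' by ``lies in $\mathbf{Pic}^0$'' and insert Lemma \ref{lem--invariant--linear--equivalence} (or reprove it via \cite[Lemma 6.2]{Hat23}), your argument becomes the paper's proof.
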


Before showing Theorem \ref{thm--quasifinite}, we check that $q$ depends only on the algebraic equivalence of $\mathcal{A}$ in Theorem \ref{thm--const--qmaps--from--CY--fib}.

\begin{lem}\label{lem--invariant--linear--equivalence}
In the situation of Theorem \ref{thm--const--qmaps--from--CY--fib}, suppose that there exists another line bundle $\mathcal{A}'$ on $\mathcal{X}$ such that $f\colon(\mathcal{X},\mathcal{A}')\to\mathcal{C}$ belongs to $\mathcal{M}_{d,v,u,r,w,p_a,V_1}(S)$.
Suppose that $\mathcal{A}^{\otimes-1}\otimes\mathcal{A}'\in\mathbf{Pic}^0_{\mathcal{X}/S}$.
Let $q'$ be the family of quasimaps obtained by applying Theorem \ref{thm--const--qmaps--from--CY--fib} to $f\colon(\mathcal{X},\mathcal{A})\to\mathcal{C}$.
Then $q=q'$.
\end{lem}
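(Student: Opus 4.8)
The plan is to reduce the equality $q=q'$ to the uniqueness clause of Theorem \ref{thm--const--qmaps--from--CY--fib} by showing that $q$ is insensitive to replacing $\mathcal{A}$ by an algebraically equivalent line bundle $\mathcal{A}'$. The crucial observation is that the associated line bundle $\mathscr{L}$ and the fixed part of the quasimap are built entirely out of the \emph{canonical} data of the fibration, not the polarization: by condition (1) of Theorem \ref{thm--const--qmaps--from--CY--fib}, $\frac{|u|}{u}f^*\mathscr{L}\sim k(K_{\mathcal{X}/S}-f^*K_{\mathcal{C}/S})$, which depends only on $(\mathcal{X},\mathcal{C},f)$ and not on $\mathcal{A}$ at all. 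Likewise, by condition (2), the fixed part is $k$ times the discriminant divisor $B_{\bar{s}}$ of $f_{\bar{s}}$, and the quasimap structure away from the fixed part is the period map $\mathcal{C}_{\mathrm{klt}}\to V_1\hookrightarrow \overline{V}^{\mathrm{BB}}$, which is determined by the polarized \emph{locally stable} family $(\mathcal{X}_{\mathcal{C}_{\mathrm{klt}}},\mathcal{A}|_{\mathcal{C}_{\mathrm{klt}}})$.

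Accordingly, first I would verify that $f\colon(\mathcal{X},\mathcal{A}')\to\mathcal{C}$ and $f\colon(\mathcal{X},\mathcal{A})\to\mathcal{C}$ produce the same period morphism $\mu^\circ\colon \mathcal{C}_{\mathrm{klt}}\to V_1$. This is exactly where the hypothesis $\mathcal{A}^{\otimes-1}\otimes\mathcal{A}'\in\mathbf{Pic}^0_{\mathcal{X}/S}$ enters: restricting to a geometric fiber $\bar{s}$ and then to a fiber $X_p$ of $f_{\bar{s}}$ over a point $p\in\mathcal{C}_{\mathrm{klt},\bar{s}}$, the difference $\mathcal{A}'_p-\mathcal{A}_p$ lies in $\mathbf{Pic}^0_{X_p}$, hence is numerically trivial. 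By Corollary \ref{cor--N--closedpoint} (equivalently, by the very definition of the numerical moduli $\mathcal{N}$ and Lemma \ref{lem--y--identify}), $(X_p,\mathcal{A}_p)$ and $(X_p,\mathcal{A}'_p)$ determine the same point of the Calabi--Yau moduli up to numerical equivalence, so they map to the same point of $V_1\subset V^{\mathrm{sn}}$. Thus $\mu^\circ$ is unchanged, and since the composite with $\iota$ defines the quasimap on the klt locus, $q$ and $q'$ agree on $\mathcal{C}_{\mathrm{klt}}$.

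Next I would confirm that the fixed parts and associated line bundles coincide. The discriminant $\mathbb{Q}$-divisor $B_{\bar{s}}$ is intrinsic to the klt--trivial fibration $f_{\bar{s}}$ and independent of the polarization, so the fixed parts $kB_{\bar{s}}$ agree for $q$ and $q'$; similarly $\mathscr{L}$ and $\mathscr{L}'$ are both characterized by condition (1) and hence are linearly equivalent after pullback, forcing the two quasimap structures to extend identically across the fixed locus. Having matched both the data on $\mathcal{C}_{\mathrm{klt}}$ and the fixed part, I would then invoke the uniqueness assertion of Theorem \ref{thm--const--qmaps--from--CY--fib}: since $q$ and $q'$ both satisfy conditions (1) and (2) with respect to $f$, and such an object is unique up to isomorphism, we conclude $q=q'$.

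The main obstacle, and the step deserving the most care, is the fiberwise argument that $\mathbf{Pic}^0_{\mathcal{X}/S}$-translation of the polarization does not move the period point in $V_1$. One must be attentive that the period map factors through the \emph{numerical} moduli rather than the linear-equivalence moduli; this is precisely the content that motivates the construction of $\mathscr{N}$ and is encapsulated in the equivalence relation $\equiv$ of Corollary \ref{cor--N--closedpoint}. Concretely I would restrict to geometric fibers to reduce to a statement about a single polarized klt Calabi--Yau variety, use that numerically trivial $\mathbb{Q}$-Cartier divisorial sheaves on a klt Calabi--Yau variety lie in $\mathbf{W}^\mathbb{Q}\mathbf{Pic}^\tau$ and hence define the same point of $V^{\mathrm{sn}}$, and then spread this out over $S$ using the seminormality hypothesis so that the two morphisms $S\to V_1$ coincide rather than merely agreeing pointwise. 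The remaining identifications are formal consequences of the explicit description of $\mathscr{L}$ and the fixed part, and I would leave those routine verifications to the reader as in the cited proofs.
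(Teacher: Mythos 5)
Your overall skeleton is the same as the paper's: reduce to the uniqueness clause of Theorem \ref{thm--const--qmaps--from--CY--fib}, observe that condition (1) involves only $K_{\mathcal{X}/S}$ and $f$ and so is polarization-independent, note that condition (2) is a fiberwise condition so one may take $S=\mathrm{Spec}\,\mathbbm{k}$, and then match the fixed parts (which equal $k$ times the discriminant divisor, intrinsic to $f$) and the induced morphisms $\mathcal{C}\to\overline{V}^{\mathrm{BB}}$. However, there is a genuine gap at the crucial step, namely your justification that the period map $\mu^\circ\colon\mathcal{C}_{\mathrm{klt}}\to V_1$ is unchanged when $\mathcal{A}$ is replaced by $\mathcal{A}'$. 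You argue that $\mathcal{A}'_p-\mathcal{A}_p\in\mathbf{Pic}^0_{X_p}$ is numerically trivial and then invoke Corollary \ref{cor--N--closedpoint} (and Lemma \ref{lem--y--identify}) to conclude that $(X_p,\mathcal{A}_p)$ and $(X_p,\mathcal{A}'_p)$ give the same point of $V_1$. This is circular: Corollary \ref{cor--N--closedpoint} describes the $\mathbbm{k}$-points of the numerical moduli $\mathscr{N}$ of \emph{fibrations}, not of the Calabi--Yau moduli, whereas $V_1$ sits inside $V^{\mathrm{sn}}$, the moduli of polarized klt Calabi--Yau varieties, which by its very definition distinguishes linear equivalence classes of polarizations up to isomorphism. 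Two polarized varieties with numerically equivalent but non-isomorphic polarizations define \emph{distinct} points of such a moduli space in general --- this is exactly the phenomenon of Example \ref{ex--cm}, where for a genus $g\ge 3$ curve the polarizations $\mathcal{O}_C(c)$ are all numerically equivalent yet the induced map to the moduli space is non-constant. So "numerically trivial, hence the same point of $V^{\mathrm{sn}}$" is false as a general principle and cannot be deduced from the construction of $\mathscr{N}$.

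What closes the gap, and what the paper actually uses, is the Calabi--Yau-specific statement \cite[Lemma 6.2]{Hat23}: on a klt Calabi--Yau variety $X$, if $A'-A\in\mathbf{Pic}^0_{X}$ (or more generally $A'-A$ is numerically trivial, after the reduction via Corollary \ref{cor--torsion--components--number}), then there exists an automorphism $\varphi$ of $X$ with $\varphi^*A'\sim A$; hence $(X,A)$ and $(X,A')$ are isomorphic as polarized varieties and define the same point of the coarse space $V_1$. This is the same lemma the paper invokes in Proposition \ref{prop--v^l-stop} and Proposition \ref{prop--baily-borel-V^bb}, and it is precisely where the Calabi--Yau hypothesis (via the action of $\mathrm{Aut}^0$ on $\mathbf{Pic}^0$) enters. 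Once you replace your appeal to $\mathscr{N}$ by this lemma, the rest of your argument (agreement of fixed parts, agreement of $\mathscr{L}$ via condition (1), and the final invocation of uniqueness) goes through as in the paper; also note that your worry about spreading out over $S$ via seminormality is unnecessary, since condition (2) is checked geometric point by geometric point and the uniqueness clause of Theorem \ref{thm--const--qmaps--from--CY--fib} already operates at the level of families.
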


\begin{proof}
 By the uniqueness of $q'$, it suffices to show that $q$ also satisfies the conditions in Theorem \ref{thm--quasifinite} for $f\colon(\mathcal{X},\mathcal{A}')\to\mathcal{C}$.
 It is easy to see that $q$ satisfies condition (1) of Theorem \ref{thm--quasifinite} for $f\colon(\mathcal{X},\mathcal{A}')\to\mathcal{C}$.
 To check that condition (2) is satisfied by $q$ for $f\colon(\mathcal{X},\mathcal{A}')\to\mathcal{C}$, we may assume that $S=\mathrm{Spec}\,\mathbbm{k}$ and it is enough to show that $q=q'$.
 Since $\mathrm{Bs}(q)=kB=\mathrm{Bs}(q')$, where $B$ is the discriminant $\mathbb{Q}$-divisor of $f$, and the morphisms $\mathcal{C}\to \overline{V}^{\mathrm{BB}}$ induced by $(\mathcal{X},\mathcal{A})$ and $(\mathcal{X},\mathcal{A}')$ are the same by \cite[Lemma 6.2]{Hat23}, we see that $q=q'$ in this case.
 We complete the proof.
\end{proof}

\begin{proof}[Proof of Theorem \ref{thm--quasifinite}]
Let $f\colon S\to (\mathcal{M}^{W,I}_{d,v,u,w,p_a,V_1})^{\mathrm{sn}}$ be an arbitrary \'etale surjective morphism from a scheme.
Then $S$ is seminormal.
Since $\xi$ is smooth, we see that $(\mathcal{M}^{W,I}_{d,v,u,w,p_a,V_1})^{\mathrm{sn}}=\mathcal{M}^{W,I}_{d,v,u,w,p_a,V_1}\times_{\mathcal{N}^{W,I}_{d,v,u,w,p_a,V_1}}(\mathcal{N}^{W,I}_{d,v,u,w,p_a,V_1})^{\mathrm{sn}}$ and the natural morphism $\xi^{\mathrm{sn}}\circ f$ is smooth.
Let $p_1, p_2\colon S\times_{(\mathcal{N}^{W,I}_{d,v,u,w,p_a,V})^{\mathrm{sn}}} S\to S$ be the first and second projections.
Let $R:=S\times_{(\mathcal{N}^{W,I}_{d,v,u,w,p_a,V_1})^{\mathrm{sn}}} S$, then we see that $R$ is naturally an \'etale groupoid and $(\mathcal{N}^{W,I}_{d,v,u,w,p_a,V_1})^{\mathrm{sn}}\cong [S/R]$.
We note that if $\beta\circ f\circ p_1$ and $\beta\circ f\circ p_2$ are canonically isomorphic with compatibility conditions as \cite[Tag 044U]{Stacks}, then there exists a unique morphism $\alpha\colon (\mathcal{N}^{W,I}_{d,v,u,w,p_a,V_1})^{\mathrm{sn}}\to \mathcal{M}^{\mathrm{Kss,qm}}_{l(u-2p_a+2),\frac{1}{l},u,\iota}$ up to isomorphism such that $\alpha\circ \xi^{\mathrm{sn}} $ is isomorphic to $\beta$.
We can see that $\beta\circ f\circ p_1$ and $\beta\circ f\circ p_2$ are canonically isomorphic as follows.
Consider $S\times_{(\mathcal{N}^{W,I}_{d,v,u,w,p_a,V_1})^\mathrm{sn}}(\mathcal{M}^{W,I}_{d,v,u,w,p_a,V_1})^{\mathrm{sn}}$ and let $q_1\colon S\times_{(\mathcal{N}^{W,I}_{d,v,u,w,p_a,V_1})^{\mathrm{sn}}}(\mathcal{M}^{W,I}_{d,v,u,w,p_a,V_1})^{\mathrm{sn}} \to S$ and $q_2\colon S\times_{(\mathcal{N}^{W,I}_{d,v,u,w,p_a,V_1})^{\mathrm{sn}}}(\mathcal{M}^{W,I}_{d,v,u,w,p_a,V})^{\mathrm{sn}}\to(\mathcal{M}^{W,I}_{d,v,u,w,p_a,V_1})^{\mathrm{sn}}$ be the projections.
Then, it suffices to show that $\beta\circ f\circ q_1$ and $\beta \circ q_2$ are canonically isomorphic.
Let $(\mathcal{U},\mathcal{A})$ be a family over $(\mathcal{M}^{W,I}_{d,v,u,w,p_a,V_1})^{\mathrm{sn}}$ that is the pullback of the universal polarized family over $\mathcal{M}_{d,I^{d-1}\cdot v,u,I^d\cdot w,p_a,V_1}$.
Let $(\mathcal{U}_1,\mathcal{A}_1)$ and $(\mathcal{U}_2,\mathcal{A}_2)$ be the pullback of $(\mathcal{U},\mathcal{A})$ under $f\circ q_1$ and $q_2$. 
By replacing $S$ with its \'etale cover, we may assume that $\mathcal{A}$ is a line bundle and then $\mathcal{A}_1$ and $\mathcal{A}_2$ are line bundles.
By construction of $\mathcal{N}^{W,I}_{d,v,u,w,p_a,V}$, we see that $\mathcal{U}_1$ and $\mathcal{U}_2$ are canonically isomorphic, and $\mathcal{A}_1-\mathcal{A}_2\in\mathbf{Pic}^0_{\mathcal{U}_1/S}$ via this identification. 
Thus, the corresponding families of quasimaps by $\beta$ are canonically isomorphic by Lemma \ref{lem--invariant--linear--equivalence}.
We can easily check the condition (3) of \cite[Tag 044U]{Stacks} for the isomorphism between $\beta\circ f\circ p_1$ and $\beta\circ f\circ p_2$.
By this, we can see that there exists the desired $\alpha$.
\end{proof}

The following derives from \cite[Proposition 4.10]{HH2}.

\begin{lem}\label{lem--isotrivial}
Let $f\colon (\mathcal{X},\mathcal{A})\to D\times C\in \mathcal{M}_{d,I^{d-1}\cdot v,u,r,I^d\cdot w,p_a}(C)$ be an object such that $\mathcal{A}$ is a line bundle, where $C$ is an affine smooth curve.    
For any closed point $p\in C$, suppose that $f_p$ defines a closed point of $(\mathcal{M}^{W,I}_{d,v,u,w,p_a,V_1})^{\mathrm{sn}}$.
Let $q\colon D\times C\to [\mathrm{Cone}(\overline{V}^{\mathrm{BB}})_C/\mathbb{G}_{m,C}]$ be the associated family of quasimaps.
Suppose further that $q$ is isotrivial.
Let $f$ also denote the morphism $C\to(\mathcal{M}^{W,I}_{d,v,u,w,p_a,V_    1})^{\mathrm{sn}}$ induced by $f$.
Then, the morphism $C\to (N^{W,I}_{d,v,u,w,p_a,V_1})^{\mathrm{sn}}$ induced by $\xi^{\mathrm{sn}}\circ f$ is constant.
\end{lem}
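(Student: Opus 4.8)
The plan is to follow the argument of \cite[Proposition 4.10]{HH2}, with Theorem \ref{thm--canonical--bundle--formula-sin} in place of the Baily--Borel input used there. Write $g\colon C\to (N^{W,I}_{d,v,u,w,p_a,V_1})^{\mathrm{sn}}$ for the morphism induced by $\xi^{\mathrm{sn}}\circ f$. By Corollary \ref{cor--N--closedpoint} and the construction of $N^{W,I}$ in Definition \ref{de--Num--consruction}, to prove $g$ constant it suffices to produce, for all closed points $p,p'\in C$, an isomorphism $\varphi\colon \mathcal{X}_p\to \mathcal{X}_{p'}$ with $\varphi^*\mathcal{A}_{p'}-\mathcal{A}_p$ numerically trivial. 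The point that makes this possible is that the associated quasimap $q_p$ records only the \emph{unpolarized} fibration $f_p\colon\mathcal{X}_p\to D$: by Theorem \ref{thm--const--qmaps--from--CY--fib} the associated line bundle satisfies $\frac{|u|}{u}f_p^*\mathscr{L}_p\sim k(K_{\mathcal{X}_p}-f_p^*K_D)$, its fixed part equals $kB_p$ for the discriminant $B_p$, and the induced rational map $D\dashrightarrow \overline{V}^{\mathrm{BB}}$ is the classifying map of the general fibers. None of these data involves $\mathcal{A}_p$, so the whole difference between the points of $C$ over a fixed unpolarized fibration is carried by the polarization, whose numerical class I will show to be locally constant.

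First I would establish isotriviality of the family of fibrations. Regard $f\colon\mathcal{X}\to D\times C$ as a single klt--trivial fibration over the surface $D\times C$ and let $\Psi\colon D\times C\dashrightarrow\overline{V}^{\mathrm{BB}}$ be its classifying map, so that $\Psi|_{D\times\{p\}}$ is the map recorded by $q_p$. The hypothesis that $q$ is isotrivial means exactly that these maps agree up to automorphisms of $D$. Passing to a connected quasi-finite cover $C'\to C$ that trivializes the relevant isomorphisms of quasimaps, the resulting automorphisms of $D$ assemble into an automorphism $\sigma$ of $D\times C'$ over $C'$; since pulling back the base along $\sigma$ does not change the isomorphism classes of the fibrations $\mathcal{X}_p$, I may assume that $\Psi$ factors through the first projection $D\times C'\to D$. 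In particular the moduli $\mathbb{Q}$-divisor of $\mathcal{X}_{C'}\to D\times C'$ is a pullback from $D$ and is $\mathbb{Q}$-trivial along the $C'$-direction. Exactly as in the proof of Proposition \ref{prop--baily-borel-V^bb}, Ambro's isotriviality theorem \cite[Theorem 4.7]{Am} then makes $\mathcal{X}_{C'}$ isotrivial in the $C'$-direction, and \cite[Theorem 4.2]{O3} upgrades this to a product decomposition $\mathcal{X}_{C'}\cong \mathcal{X}_{p_0}\times C'$ over $D\times C'$ for a fixed closed point $p_0\in C'$.

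Granting this, the remaining step is short. Under the product identification the pullback of $\mathcal{A}$ becomes a line bundle $\mathcal{B}$ on $\mathcal{X}_{p_0}\times C'$ whose restriction to $\mathcal{X}_{p_0}\times\{p'\}$ is identified with $\mathcal{A}_{\pi(p')}$, where $\pi\colon C'\to C$ is the projection. Because $C'$ is connected, these restrictions form an algebraically equivalent, hence numerically equivalent, family of line bundles on the fixed variety $\mathcal{X}_{p_0}$; thus $\mathcal{A}_p\equiv\mathcal{A}_{p'}$ after the identifications for all $p,p'$ in the image of $\pi$. This yields the required isomorphisms $\varphi$, so $g\circ\pi$ is constant. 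As $\pi$ is surjective, $g$ is set-theoretically constant, and since $C$ is reduced it is constant, which is the assertion.

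The main obstacle is the first step: converting the isotriviality of the quasimap, which is a statement about the base curve together with its discriminant and moduli divisors and the period map, into the isotriviality of the total fibration. This requires combining the canonical bundle formula with the theorems of Ambro and Odaka over the two-dimensional base $D\times C$, and one must carefully handle the ambiguity by $\mathrm{Aut}(D)$ (absorbing it into an automorphism of $D\times C'$ over $C'$) and arrange $C'$ to be connected. One should also check that seminormality of the base and the hypothesis that the general fibers lie in $V_1$ — needed so that the classifying map to $\overline{V}^{\mathrm{BB}}$ and Theorem \ref{thm--const--qmaps--from--CY--fib} remain available — are preserved under these reductions. These are precisely the technical points treated in \cite[Proposition 4.10]{HH2}, and that argument carries over once Theorem \ref{thm--canonical--bundle--formula-sin} supplies the compactification $\overline{V}^{\mathrm{BB}}$ and the associated quasimap.
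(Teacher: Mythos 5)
Your proposal follows essentially the same route as the paper: reduce the statement to (generic) isotriviality of the total family $\mathcal{X}$ in the $C$-direction, and then conclude via Corollary \ref{cor--N--closedpoint}, since on a fixed total space the polarizations $\mathcal{A}_p$ vary algebraically, hence numerically trivially. This is exactly the paper's proof, which cites \cite[Proposition 4.11]{HH2} for the existence of a nonempty open subset $W\subset C$ with $\mathcal{X}\times_C W\cong \mathcal{X}_w\times W$, and then invokes Corollary \ref{cor--N--closedpoint}.

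However, your reconstruction of the step that the paper outsources to \cite{HH2} contains two concrete inaccuracies. First, you claim a product decomposition $\mathcal{X}_{C'}\cong \mathcal{X}_{p_0}\times C'$ over \emph{all} of $C'$ by invoking \cite[Theorem 4.2]{O3} ``as in Proposition \ref{prop--baily-borel-V^bb}''; but in that proposition Odaka's theorem is applied to a family of slc Calabi--Yau varieties, whereas here the fibers in the $C'$-direction are the total spaces $\mathcal{X}_p$, which are good minimal models with $\kappa=1$ (for $u>0$), not Calabi--Yau, so that theorem is not applicable. What one actually gets (and all the paper uses) is isotriviality over a nonempty open subset of $C$; this suffices, since a morphism from the reduced irreducible curve $C$ to the separated space $(N^{W,I}_{d,v,u,w,p_a,V_1})^{\mathrm{sn}}$ that is constant on a dense open subset is constant. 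Second, Ambro's theorem \cite[Theorem 4.7]{Am} requires a klt--trivial fibration whose moduli $\mathbb{Q}$-divisor is $\mathbb{Q}$-trivial; knowing only that the moduli divisor of $\mathcal{X}_{C'}\to D\times C'$ is pulled back from $D$ does not place you in that situation. To extract ``isotriviality in the $C'$-direction'' one must restrict to general slices $\{x\}\times C'$, justify that the moduli divisor restricts correctly to such slices, and then glue the slice-wise conclusions into isotriviality of the family of fibrations $\mathcal{X}_p\to D$; this reduction is precisely the technical content of \cite[Proposition 4.10]{HH2}. Deferring to that result, as both you and the paper do, is legitimate, but your sketch of its proof should not be mistaken for a complete argument.
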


\begin{proof}
As \cite[Proposition 4.11]{HH2}, we can deduce that there exists a nonempty open subset $W\subset C$ such that $\mathcal{X}\times_CW\cong \mathcal{X}_w\times W$ for some closed point $w\in W$.
By Corollary \ref{cor--N--closedpoint}, we obtain the proof.
\end{proof}

By Lemma \ref{lem--isotrivial}, we deduce the quasi-finiteness as \cite[Theorem 4.8]{HH2}.

\begin{prop}\label{prop-q-proj--N}
Let $N^{W,I}_{d,v,u,w,p_a,V_1}$ be the coarse moduli space of $\mathcal{N}^{W,I}_{d,v,u,w,p_a,V_1}$ and $$\overline{\alpha}\colon (N^{W,I}_{d,v,u,w,p_a,V_1})^{\mathrm{sn}}\to M^{\mathrm{Kps,qm}}_{k(u-2p_a+2),\frac{1}{k},u,\iota}$$ the morphism induced by $\alpha$.
Here, we note that $(N^{W,I}_{d,v,u,w,p_a,V_1})^{\mathrm{sn}}$ is the seminormalization of $N^{W,I}_{d,v,u,w,p_a,V_1}$ and the coarse moduli space of $(\mathcal{N}^{W,I}_{d,v,u,w,p_a,V_1})^{\mathrm{sn}}$. 
    Then $\overline{\alpha}$ is a quasi-finite morphism and $(N^{W,I}_{d,v,u,w,p_a,V_1})^{\mathrm{sn}}$ is a quasi-projective scheme.
\end{prop}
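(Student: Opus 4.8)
The plan is to prove the two conclusions—quasi-finiteness of $\overline{\alpha}$ and quasi-projectivity of $(N^{W,I}_{d,v,u,w,p_a,V_1})^{\mathrm{sn}}$—in that order, since the second follows formally from the first. First I would record that $(N^{W,I}_{d,v,u,w,p_a,V_1})^{\mathrm{sn}}$ is a separated algebraic space of finite type over $\mathbbm{k}$, being the coarse moduli space of the separated Deligne--Mumford stack $(\mathcal{N}^{W,I}_{d,v,u,w,p_a,V_1})^{\mathrm{sn}}$ (cf.~Theorem \ref{thm:N--DMstack} and \cite{KeM}). To check quasi-finiteness of $\overline{\alpha}$, I would argue fibrewise over geometric points: by construction $\overline{\alpha}$ is of finite type, so it suffices to show that the fibre of $\overline{\alpha}$ over any geometric point of $M^{\mathrm{Kps,qm}}_{k(u-2p_a+2),\frac{1}{k},u,\iota}$ is zero-dimensional. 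Suppose not; then there exists a nonconstant morphism $C\to (N^{W,I}_{d,v,u,w,p_a,V_1})^{\mathrm{sn}}$ from an affine smooth curve $C$ whose composition with $\overline{\alpha}$ is constant. Lifting along the smooth surjection $\xi^{\mathrm{sn}}$ and along the coarse-space map, I would produce (after an \'etale base change) an object $f\colon(\mathcal{X},\mathcal{A})\to D\times C$ of $\mathcal{M}_{d,I^{d-1}\cdot v,u,r,I^d\cdot w,p_a}(C)$ with $\mathcal{A}$ a line bundle whose associated family of quasimaps $q$ is the pullback of a single quasimap, hence isotrivial, because the composite $C\to M^{\mathrm{Kps,qm}}_{k(u-2p_a+2),\frac{1}{k},u,\iota}$ factors through the constant point via $\overline{\alpha}\circ\overline{\xi}^{\mathrm{sn}}=\overline{\beta}$.

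With the isotriviality of $q$ in hand, Lemma \ref{lem--isotrivial} applies directly and shows that the induced morphism $C\to (N^{W,I}_{d,v,u,w,p_a,V_1})^{\mathrm{sn}}$ is constant, contradicting nonconstancy. This contradiction yields that every fibre of $\overline{\alpha}$ is finite, so $\overline{\alpha}$ is quasi-finite. The key conceptual point, which distinguishes the $\kappa=1$ case from \cite{HH2}, is that $\mathscr{N}$ was \emph{built} precisely to quotient out the numerical ambiguity that made $\overline{\beta}_{v,w}$ fail to be quasi-finite; thus the passage from $\overline{\beta}$ (not quasi-finite) to $\overline{\alpha}$ (quasi-finite) is exactly the payoff of the construction in Section \ref{sec-5}, and Lemma \ref{lem--invariant--linear--equivalence} guarantees that $q$ genuinely depends only on the numerical class, so that isotriviality of $q$ forces constancy in $\mathscr{N}$ rather than merely in the quasimap moduli.

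For the quasi-projectivity, I would invoke Theorem \ref{thm--qm--cm--positive}, which gives that $M^{\mathrm{Kps,qm}}_{k(u-2p_a+2),\frac{1}{k},u,\iota}$ is a \emph{projective} scheme with the ample CM line bundle $\Lambda^{\mathrm{qmaps}}_{\mathrm{CM}}$. Since $\overline{\alpha}$ is quasi-finite and the target is quasi-projective (indeed projective), the pullback $\overline{\alpha}^*\Lambda^{\mathrm{qmaps}}_{\mathrm{CM}}$ is an ample $\mathbb{Q}$-line bundle on $(N^{W,I}_{d,v,u,w,p_a,V_1})^{\mathrm{sn}}$ provided $\overline{\alpha}$ is separated and of finite type with quasi-affine (equivalently, quasi-finite separated) fibres; this is the standard criterion that a quasi-finite separated morphism to a quasi-projective scheme from a scheme with $\overline{\alpha}^{*}(\text{ample})$ ample forces quasi-projectivity. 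Concretely I would first check that $(N^{W,I}_{d,v,u,w,p_a,V_1})^{\mathrm{sn}}$ is a scheme (not merely an algebraic space) by noting that a separated quasi-finite morphism to a scheme has a scheme as source, and then conclude via the ampleness of $\overline{\alpha}^*\Lambda^{\mathrm{qmaps}}_{\mathrm{CM}}$.

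The main obstacle I anticipate is the lifting step in the quasi-finiteness argument: one must produce, from an abstract nonconstant curve in the coarse space $(N^{W,I}_{d,v,u,w,p_a,V_1})^{\mathrm{sn}}$, an \emph{honest family} over a curve $C$ of the form required by Lemma \ref{lem--isotrivial}, with $\mathcal{A}$ represented by a genuine line bundle and the fibres landing in $V_1$. This requires carefully descending through the coarse moduli map (which is only a homeomorphism on points, not an isomorphism of stacks) and through the smooth surjection $\xi^{\mathrm{sn}}$, while keeping control of the polarization up to the numerical equivalence encoded in $\mathbf{Pic}^0$; the technical bookkeeping of these \'etale base changes and the verification that the resulting $q$ is literally a pullback (hence isotrivial) is where the real work lies, though each individual step is of the same character as the corresponding arguments in \cite[Theorem 4.8]{HH2}.
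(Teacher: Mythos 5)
Your overall strategy coincides with the paper's: prove quasi-finiteness of $\overline{\alpha}$ by contradiction, using a curve $C$ in $(N^{W,I}_{d,v,u,w,p_a,V_1})^{\mathrm{sn}}$ mapping constantly to the quasimap moduli and invoking Lemma \ref{lem--isotrivial}, and then deduce quasi-projectivity from quasi-finiteness together with the projectivity of $M^{\mathrm{Kps,qm}}_{k(u-2p_a+2),\frac{1}{k},u,\iota}$ (Theorem \ref{thm--qm--cm--positive}). Your packaging of the last step (Zariski's main theorem plus pullback of the ample CM line bundle, and the fact that a quasi-finite separated algebraic space over a scheme is a scheme) is equivalent to the paper's appeal to \cite[Theorem 7.2.10]{Ols}, by which a quasi-finite separated morphism is quasi-affine, so that $(N^{W,I}_{d,v,u,w,p_a,V_1})^{\mathrm{sn}}$ is quasi-affine over a projective scheme; that part of your argument is fine.

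There is, however, a genuine gap in the quasi-finiteness step. You assert that after lifting along $\xi^{\mathrm{sn}}$ and an \'etale base change one obtains an object $f\colon(\mathcal{X},\mathcal{A})\to D\times C$, i.e.\ a family whose base curves form a \emph{product}. But the lifting only produces a family $f\colon(\mathcal{X},\mathcal{A})\to\mathcal{D}$ over some family of smooth curves $\mathcal{D}\to C$, and constancy of the composite into the quasimap moduli only gives that $\mathcal{D}\to C$ is \emph{isotrivial}. Lemma \ref{lem--isotrivial} is stated for the product $D\times C$, so one must trivialize $\mathcal{D}\to C$ after replacing $C$ by an \'etale cover, and this is not mere bookkeeping of base changes: the paper devotes a case analysis by the genus of the fiber to exactly this point. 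For genus $\ge 2$ it follows from finiteness of automorphism groups; for genus $0$ one argues via the structure of ruled surfaces over an affine curve; but for genus $1$, isotriviality alone does \emph{not} yield a product after a finite \'etale cover, because elliptic curves have positive-dimensional automorphism groups (translations). The paper resolves this by rigidifying the family: it produces a horizontal divisor, takes its Stein factorization to obtain a section $\mathcal{S}$, and then uses that $(\mathcal{D},\mathcal{S})$ is a stable family, whose fiberwise automorphism groups are finite, to conclude $\mathcal{D}\cong \mathcal{D}_c\times C$ after an \'etale cover. Without this rigidification (or an equivalent one, e.g.\ via \'etale-local sections of the smooth morphism $\mathcal{D}\to C$), your invocation of Lemma \ref{lem--isotrivial} does not apply, and the contradiction is not reached.
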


\begin{proof}
    Recall that $N^{W,I}_{d,v,u,w,p_a,V_1}$ exists as a separated algebraic space of finite type by \cite{KeM}.
    Therefore, $\overline{\alpha}$ is separated.
    Note that if $\overline{\alpha}$ is quasi-finite, then $(N^{W,I}_{d,v,u,w,p_a,V_1})^{\mathrm{sn}}$ is quasi-affine over a projective scheme $M^{\mathrm{Kps,qm}}_{k(u-2p_a+2),\frac{1}{k},u,\iota}$ (cf.~Theorem \ref{thm--qm--cm--positive}) by \cite[Theorem 7.2.10]{Ols}.
    By this, it suffices to show that $\overline{\alpha}$ is quasi-finite.
    Assume the contrary.
    Then, there exists a morphism $\varphi\colon C\to (\mathcal{N}^{W,I}_{d,v,u,w,p_a,V_1})^{\mathrm{sn}}$ from a smooth affine curve such that the induced morphism $\overline{\varphi}\colon C\to \mathcal{M}^{\mathrm{Kss,qm}}_{k(u-2p_a+2),\frac{1}{k},u,\iota}$ is constant by the same argument of the proof of \cite[Theorem 4.8]{HH2}.
    Let $q\colon \mathcal{D}\to [\mathrm{Cone}(\overline{V}^{\mathrm{BB}})\times C/\mathbb{G}_{m,C}]$ be the family of quasimaps corresponding to $\overline{\varphi}$.
    Note that $\mathcal{D}\to C$ is isotrivial.
    If any geometric fiber of $\mathcal{D}\to C$ is isomorphic to $\mathbb{P}^1$, then $\mathcal{D}\to C$ is a ruled surface and then we may assume that $\mathcal{D}\cong \mathbb{P}^1\times C$ by replacing $C$ with its \'etale cover.
If any geometric fiber of $\mathcal{D}\to C$ is elliptic, we see that $\mathcal{D}\to C$ has a horizontal divisor.
Taking the Stein factorization of this divisor, we may assume that $\mathcal{D}\to C$ has a section $\mathcal{S}$.
Then $(\mathcal{D},\mathcal{S})$ is a stable family.
Therefore, replacing $C$ with its \'etale cover, we see that $\mathcal{D}\cong C\times \mathcal{D}_c $ for some $c\in C$ because the automorphism group of $(\mathcal{D}_c,\mathcal{S}_c)$ is finite.
In the case where $g(\mathcal{D}_c)\ge2$ for some $c\in C$, replacing $C$ with its \'etale cover, we may assume that $\mathcal{D}\cong C\times \mathcal{D}_c $ for some $c\in C$ as the above argument.
In all the cases, we can apply Lemma \ref{lem--isotrivial} to obtain the quasi-finiteness of $\overline{\alpha}$.
Therefore, $(N^{W,I}_{d,v,u,w,p_a,V_1})^{\mathrm{sn}}$ is quasi-projective.
We obtain the proof.
\end{proof}

\begin{setup}\label{stup--112}
Suppose that $u>0$.
    Let $f\colon (\mathcal{X},\mathcal{A})\to \mathcal{C}$ be an object of $\mathcal{M}_{d,I^{d-1}\cdot v,u,r,I^d\cdot w}(S)$.
    Suppose that $\mathcal{A}$ is a line bundle and $S$ is a normal quasi-projective variety.
Let $\pi_{\mathcal{C}}\colon \mathcal{C}\to S$ and $\pi_{\mathcal{X}}\colon \mathcal{X}\to S$ be the canonical morphisms.
By the choice of $l_0$, there exists $\mathcal{L}$ a $\pi_{\mathcal{C}}$-ample line bundle on $\mathcal{C}$ such that $f^*\mathcal{L}\sim\omega_{\mathcal{X}/S}^{[l_0]}$ independent of the choice of $f$.
Then, there exist $\mathbb{Q}$-divisors $D_0,D_1,D_2,D_3$ and $D_4$ on $S$ such that 
\begin{equation}\label{eq--decomp}
\lambda_{\mathrm{CM},\pi_{\mathcal{X}},(\mathcal{X},\mathcal{A}+q'f^*\mathcal{L})}^{\otimes (\mathcal{A}_t^d+dq'\mathcal{A}_t^{d-1}\cdot \mathcal{L}_t)^2}=q'^4D_4+q'^3D_3+q'^2D_2+q'D_1+D_0,     
\end{equation}
where $t\in S$ is an arbitrary point for any sufficiently large $q'\in\mathbb{Q}_{>0}$.
By the proof of \cite[Proposition 2.63]{HH2}, we note that $D_4\sim_{\mathbb{Q}}D_3\sim_{\mathbb{Q}}0$.

Suppose further that a semisimple linear algebraic group $G$ acts on $(\mathcal{X},\mathcal{A})$ and $S$ equivariantly.
Then $\lambda_{\mathrm{CM},\pi_{\mathcal{X}},(\mathcal{X},\mathcal{A}+q'f^*\mathcal{L})}$ admits a natural $G$-action for any $q'$ (see \cite[Section 2.5.1]{HH2}).
By the uniqueness of the decomposition \eqref{eq--decomp} as $\mathbb{Q}$-divisors, note that $D_0$, $D_1$, and $D_2$ are $\mathbb{Q}$-Cartier $\mathbb{Q}$-divisors with natural $G$-action.
\end{setup}

\begin{prop}\label{prop--approx--CM}
Under the situation of Setup \ref{stup--112}, suppose that a semisimple linear algebraic group $G$ acts on $(\mathcal{X},\mathcal{A})$ and $S$ equivariantly.
Then, we obtain a $G$-equivariant $\mathbb{Q}$-linear equivalence $D_2\sim_{\mathbb{Q}}\frac{dl_0 v}{2Iw}D_1$.

Furthermore, let $q\colon \mathcal{C}\to [\mathrm{Cone}(\overline{V}^{\mathrm{BB}})_S/\mathbb{G}_{m,S}]$ be the object of $\mathcal{M}^{\mathrm{Kss,qm}}_{k(u-2p_a+2),\frac{1}{k},u,\iota}(S)$ defined by the morphism $\beta_{I^{d-1}\cdot v,I^d\cdot w}$.
Then there exists a $G$-equivariant $\mathbb{Q}$-linear equivalence $\gamma D_2\sim_{\mathbb{Q}}\lambda_{\mathrm{CM},q}$ for some $\gamma\in\mathbb{Q}_{>0}$.
\end{prop}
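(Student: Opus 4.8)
The plan is to carry out both $\mathbb{Q}$-linear equivalences at the level of functorial Deligne pairings on $S$, following the strategy of \cite[Proposition 2.63]{HH2} but now exploiting the structural identity $f^*\mathcal{L}\cong\omega_{\mathcal{X}/S}^{[l_0]}$ provided by Lemma \ref{lem--HH2--de--yatta}. Recall that by the Knudsen--Mumford formalism the CM $\mathbb{Q}$-line bundle of $(\mathcal{X},L)$ with $L=\mathcal{A}+q'f^*\mathcal{L}$ can be written, as a $G$-linearized $\mathbb{Q}$-line bundle on $S$, in the form $\mu(q')\langle L^{d+1}\rangle+\tfrac{1}{d!\,a_0(q')}\langle K_{\mathcal{X}/S}\cdot L^{d}\rangle$, where $\langle\,\cdot\,\rangle=\pi_{\mathcal{X}*}$ denotes the Deligne pairing and $a_0(q'),a_1(q')$ are the fiberwise leading coefficients of $\chi(\mathcal{X}_s,\mathcal{O}(mL_s))$. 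Multiplying by $(d!\,a_0(q'))^2=(\mathcal{A}_t^{d}+dq'\,\mathcal{A}_t^{d-1}\cdot\mathcal{L}_t)^2$ clears denominators and presents the left-hand side of \eqref{eq--decomp} as the polynomial $\tfrac{2d!\,a_1(q')}{d+1}\langle L^{d+1}\rangle+d!\,a_0(q')\langle K_{\mathcal{X}/S}\cdot L^{d}\rangle$ in $q'$, all operations being functorial and $G$-equivariant, so that the coefficients $D_i$ are $G$-linearized $\mathbb{Q}$-line bundles. The key inputs are that $f^*\mathcal{L}=l_0K_{\mathcal{X}/S}$, that $(f^*\mathcal{L})_s^2=0$ on each fiber $\mathcal{X}_s$, and the relative dimension $d-1$ of $f$.

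First I would expand the two Deligne pairings in $q'$. Writing $L=\mathcal{A}+q'f^*\mathcal{L}$ and using the projection formula, the pairing $\langle\mathcal{A}^{d+1-i}(f^*\mathcal{L})^{i}\rangle$ vanishes for $i\ge 3$ because $f_*(\mathcal{A}^{d+1-i})$ has negative codimension on $\mathcal{C}$; this already gives $D_4\sim_{\mathbb{Q}}0$ functorially, while the vanishing $(f^*\mathcal{L})_s^2=0$ forces $a_1(q')$ to be constant in $q'$ (its linear coefficient $\beta_1$ is proportional to $K_{\mathcal{X}_s}\cdot\mathcal{A}_s^{d-2}\cdot(f^*\mathcal{L})_s=\tfrac1{l_0}\mathcal{A}_s^{d-2}\cdot(f^*\mathcal{L})_s^2=0$), which yields $D_3\sim_{\mathbb{Q}}0$. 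Collecting the $q'^1$ and $q'^2$ coefficients, substituting $a_1=\beta_0=-\tfrac{1}{2(d-1)!\,l_0}\mathcal{A}_s^{d-1}\cdot(f^*\mathcal{L})_s$ and $d!\,a_0(q')=\mathcal{A}_s^{d}+dq'\,\mathcal{A}_s^{d-1}\cdot(f^*\mathcal{L})_s$, the two contributions proportional to $\langle\mathcal{A}^{d}f^*\mathcal{L}\rangle$ in $D_1$ cancel exactly, and one finds
\[
D_1\sim_{\mathbb{Q}}\tfrac{d\,\mathcal{A}_s^{d}}{l_0}\,\langle\mathcal{A}^{d-1}(f^*\mathcal{L})^2\rangle,\qquad D_2\sim_{\mathbb{Q}}\tfrac{d^2\,\mathcal{A}_s^{d-1}\cdot(f^*\mathcal{L})_s}{2l_0}\,\langle\mathcal{A}^{d-1}(f^*\mathcal{L})^2\rangle .
\]
Since both are $G$-equivariant rational multiples of the single pairing $\langle\mathcal{A}^{d-1}(f^*\mathcal{L})^2\rangle$, their ratio is a universal constant; computing the fiber invariants $\mathcal{A}_s^{d}$ and $\mathcal{A}_s^{d-1}\cdot(f^*\mathcal{L})_s$ against the volume normalizations of $\mathcal{M}_{d,I^{d-1}v,u,r,I^dw}$ shows this constant is $\tfrac{dl_0v}{2Iw}$, proving the first claim.

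For the second claim I would reduce $\langle\mathcal{A}^{d-1}(f^*\mathcal{L})^2\rangle$ to a self-pairing on $\mathcal{C}$. By the projection formula $f_*(\mathcal{A}^{d-1})=\mathrm{vol}(\mathcal{A}|_{\text{$f$-fiber}})\,[\mathcal{C}]=I^{d-1}v\,[\mathcal{C}]$, so $\langle\mathcal{A}^{d-1}(f^*\mathcal{L})^2\rangle\cong I^{d-1}v\cdot\pi_{\mathcal{C}*}(\mathcal{L}^2)$ as $G$-linearized $\mathbb{Q}$-line bundles. On the other hand, the quasimap $q$ produced by $\beta_{I^{d-1}v,I^dw}$ satisfies $f^*\mathscr{L}\sim k(K_{\mathcal{X}/S}-f^*K_{\mathcal{C}/S})$ by Theorem \ref{thm--const--qmaps--from--CY--fib}, whence the canonical bundle formula gives $K_{\mathcal{X}/S}\sim_{\mathbb{Q}}f^*(K_{\mathcal{C}/S}+\tfrac1k\mathscr{L})$ and, combined with $f^*\mathcal{L}=l_0K_{\mathcal{X}/S}$ and the injectivity of $f^*$ on pulled-back $\mathbb{Q}$-line bundles, the relation $\mathcal{L}\sim_{\mathbb{Q}}l_0(K_{\mathcal{C}/S}+\tfrac1k\mathscr{L})$ on $\mathcal{C}$. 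Since $\lambda_{\mathrm{CM},q}$ is by definition the self Deligne pairing $\langle K_{\mathcal{C}/S}+\tfrac1k\mathscr{L},K_{\mathcal{C}/S}+\tfrac1k\mathscr{L}\rangle=\tfrac1{l_0^2}\pi_{\mathcal{C}*}(\mathcal{L}^2)$, both $\lambda_{\mathrm{CM},q}$ and $D_2$ are positive $G$-equivariant rational multiples of $\pi_{\mathcal{C}*}(\mathcal{L}^2)$, and comparing the two multiples produces the desired $\gamma\in\mathbb{Q}_{>0}$ with $\gamma D_2\sim_{\mathbb{Q}}\lambda_{\mathrm{CM},q}$.

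The main obstacle is keeping every identity at the level of $G$-linearized $\mathbb{Q}$-line bundles rather than of mere numerical equivalence: the cancellations and proportionalities must be read off from the functorial Deligne pairings, and the two CM constructions (for the fibration and for the quasimap) must be matched together with their $G$-linearizations as in \cite[Section 2.5.1]{HH2}. The delicate step is the identification $\mathcal{L}\sim_{\mathbb{Q}}l_0(K_{\mathcal{C}/S}+\tfrac1k\mathscr{L})$, which transports the canonical bundle formula of the associated family of quasimaps through $\beta_{I^{d-1}v,I^dw}$; once this is in place, the projection-formula reduction and the comparison of Deligne self-pairings are formal.
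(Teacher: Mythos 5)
Your proposal is correct and takes essentially the same route as the paper: your Deligne-pairing expansion is exactly the ``standard calculation'' the paper imports from \cite[Proposition 2.63]{HH2}, producing the identical formulas $D_1\sim_{\mathbb{Q}}\frac{d\mathcal{A}_s^d}{l_0}(\pi_{\mathcal{X}})_*(\mathcal{A}^{d-1}\cdot(f^*\mathcal{L})^2)$ and $D_2\sim_{\mathbb{Q}}\frac{d^2\,\mathcal{A}_s^{d-1}\cdot(f^*\mathcal{L})_s}{2l_0}(\pi_{\mathcal{X}})_*(\mathcal{A}^{d-1}\cdot(f^*\mathcal{L})^2)$, and your second part simply unfolds the argument the paper cites from \cite[Lemma 4.12]{HH2}, the only cosmetic difference being that the paper justifies $G$-equivariance via \cite[Proposition 1.4]{GIT} (no nontrivial linearizations for semisimple $G$) rather than via functoriality of the pairings. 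One shared caveat, present in the paper's own proof as well and hence not a defect of your argument: the ratio of the two displayed coefficients is $\frac{d\,\mathcal{A}_s^{d-1}\cdot(f^*\mathcal{L})_s}{2\,\mathcal{A}_s^d}=\frac{dl_0uv}{2Iw}$, so the constant $\frac{dl_0v}{2Iw}$ in the statement appears to be missing a factor of $u$.
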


\begin{proof}
By the proof of \cite[Proposition 2.63]{HH2}, we can see that 
\begin{align*}
    D_1&\sim_{\mathbb{Q}}\frac{d\mathcal{A}_s^d}{l_0}(\pi_{\mathcal{X}})_*(\mathcal{A}^{d-1}\cdot\mathcal{L}^2),\\
    D_2&\sim_{\mathbb{Q}}\frac{d^2\mathcal{A}_s^{d-1}\cdot\mathcal{L}_s}{2l_0}(\pi_{\mathcal{X}})_*(\mathcal{A}^{d-1}\cdot\mathcal{L}^2)
\end{align*}
by a standard calculation, where $s\in S$ is a closed point.
Then, we obtain $D_2\sim_{\mathbb{Q}}\frac{dl_0 v}{2Iw}D_1$. By \cite[Proposition 1.4]{GIT}, the $\mathbb{Q}$-linear equivalence preserves the $G$-actions.

The latter assertion follows from the same argument of \cite[Lemma 4.12]{HH2}.
\end{proof}
Under Setup \ref{stup--112}, let $\Lambda_{\mathrm{CM},t}$ also denote the pullback of $\Lambda_{\mathrm{CM},t}$ on $M_{d,I^{d-1}\cdot v,u,r,I^d\cdot w}$ to $(M^{W,I}_{d,v,u,w,p_a,V_1})^\nu$.
By the same argument as the proof of \cite[Proposition 2.63]{HH2}, we see that $$\frac{1}{(dl_0uvI^{d-1})^2}D_2\sim_{\mathbb{Q}}\Lambda_{\mathrm{CM},\infty}|_{(M^{W,I}_{d,v,u,w,p_a,V_1})^\nu},$$ where $\Lambda_{\mathrm{CM},\infty}$ is defined on $(M_{d,I^{d-1}\cdot v,u,I^d\cdot w})^\nu$ as Definition \ref{de--CM--level}.
\begin{thm}\label{thm--ampleness-CM}
$(M^{W,I}_{d,v,u,w,p_a,V_1})^{\mathrm{sn}}$ is a quasi-projective scheme.
Furthermore, $\Lambda_{\mathrm{CM},t}$ is ample on $(M^{W,I}_{d,v,u,w,p_a,V_1})^\nu$ for any sufficiently large $t\in\mathbb{Q}_{>0}$ and $\Lambda_{\mathrm{CM},\infty}|_{(M^{W,I}_{d,v,u,w,p_a,V_1})^\nu}$ is the pullback of some ample $\mathbb{Q}$-line bundle on $(N^{W,I}_{d,v,u,w,p_a,V_1})^\nu$.
\end{thm}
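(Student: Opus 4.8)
\textbf{The plan} is to combine three ingredients already established in the excerpt: the relative ampleness of $\Lambda_{\mathrm{CM},t}$ over $\overline{\xi}$ (Theorem \ref{thm--rel--ample}), the quasi-projectivity of the base $(N^{W,I}_{d,v,u,w,p_a,V_1})^{\mathrm{sn}}$ together with the quasi-finiteness of $\overline{\alpha}$ (Proposition \ref{prop-q-proj--N}), and the limiting identity $\frac{1}{(dl_0uvI^{d-1})^2}D_2\sim_{\mathbb{Q}}\Lambda_{\mathrm{CM},\infty}$ (the displayed equivalence just before the statement) combined with Proposition \ref{prop--approx--CM}. First I would address the quasi-projectivity of $(M^{W,I}_{d,v,u,w,p_a,V_1})^{\mathrm{sn}}$: since $\xi^{\mathrm{sn}}$ is proper by Proposition \ref{prop--representability--quotient--morph} (and hence $\overline{\xi}^{\mathrm{sn}}$ is proper on coarse spaces) and since $(N^{W,I}_{d,v,u,w,p_a,V_1})^{\mathrm{sn}}$ is quasi-projective by Proposition \ref{prop-q-proj--N}, it suffices to produce a relatively ample $\mathbb{Q}$-line bundle for $\overline{\xi}^{\mathrm{sn}}$. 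This is exactly $\Lambda_{\mathrm{CM},\mu}$ for $\mu>t_0$ by Theorem \ref{thm--rel--ample}. A relatively ample bundle over a quasi-projective base through a proper morphism yields a quasi-projective total space, giving the first claim.

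Next I would establish the identification of $\Lambda_{\mathrm{CM},\infty}$ as the pullback of an ample bundle from $(N^{W,I}_{d,v,u,w,p_a,V_1})^\nu$. The key computation is the $G$-equivariant $\mathbb{Q}$-linear equivalence $\gamma D_2\sim_{\mathbb{Q}}\lambda_{\mathrm{CM},q}$ of Proposition \ref{prop--approx--CM}, where $q$ is the family of quasimaps defining $\beta$. Since $\lambda_{\mathrm{CM},q}$ is by construction the pullback of $\lambda_{\mathrm{CM}}^{\mathrm{qmaps}}$, and since $\overline{\alpha}\circ\overline{\xi}^{\mathrm{sn}}=\overline{\beta}$ on coarse spaces, the displayed equivalence $\frac{1}{(dl_0uvI^{d-1})^2}D_2\sim_{\mathbb{Q}}\Lambda_{\mathrm{CM},\infty}$ shows that $\Lambda_{\mathrm{CM},\infty}$ (up to a positive rational multiple) descends to $\overline{\alpha}^*\Lambda_{\mathrm{CM}}^{\mathrm{qmaps}}$. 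Because $\Lambda_{\mathrm{CM}}^{\mathrm{qmaps}}$ is ample on $M^{\mathrm{Kps,qm}}_{k(u-2p_a+2),\frac{1}{k},u,\iota}$ by Theorem \ref{thm--qm--cm--positive} and $\overline{\alpha}$ is quasi-finite (Proposition \ref{prop-q-proj--N}), the pullback $\overline{\alpha}^*\Lambda_{\mathrm{CM}}^{\mathrm{qmaps}}$ is ample on $(N^{W,I}_{d,v,u,w,p_a,V_1})^\nu$. I would need to pass to the normalization and check that the descent and the quasi-finiteness survive, which is routine since normalization preserves ampleness under pullback along finite morphisms.

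Finally, for the ampleness of $\Lambda_{\mathrm{CM},t}$ on the normalization for large $t$, I would interpolate between the relatively ample direction and the asymptotic direction. The idea is that $\nu^*\Lambda_{\mathrm{CM},t}$, for large $t$, is a combination of $\overline{\xi}$-ample part (Theorem \ref{thm--rel--ample}) and the pullback of an ample bundle from the base $(N^{W,I}_{d,v,u,w,p_a,V_1})^\nu$ coming from $\Lambda_{\mathrm{CM},\infty}$. Concretely, writing $\nu^*\Lambda_{\mathrm{CM},t}$ as $\nu^*\Lambda_{\mathrm{CM},\infty}$ plus a term that decays like $O(1/t)$ (the convergence in $\mathrm{Pic}_{\mathbb{Q}}$ noted after the Definition-Lemma following Definition \ref{de--CM--level}), the horizontal positivity provided by $\overline{\alpha}^*\Lambda_{\mathrm{CM}}^{\mathrm{qmaps}}$ dominates, while the fiber positivity for large $t$ is supplied by $\overline{\xi}$-ampleness. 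The standard way to conclude is an Ample interpolation lemma: if $\overline{\xi}^\nu$ is proper with $\overline{\xi}^\nu$-ample bundle $L_{\mathrm{rel}}$ and the base carries an ample $L_{\mathrm{base}}$, then $aL_{\mathrm{rel}}+(\overline{\xi}^\nu)^*L_{\mathrm{base}}$ is ample for $a$ small, equivalently $\nu^*\Lambda_{\mathrm{CM},t}=t^2\bigl(\Lambda_{\mathrm{CM},\infty}+\tfrac{1}{t}(\cdots)\bigr)$ is ample for $t$ large.

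\textbf{The main obstacle} I anticipate is the careful bookkeeping in the interpolation step: I must ensure that the $O(1/t)$ remainder in the expansion of $\nu^*\Lambda_{\mathrm{CM},t}$ is genuinely bounded relative to both the $\overline{\xi}$-ample and the base-ample contributions, and that the decomposition \eqref{eq--decomp} with $D_4\sim_{\mathbb{Q}}D_3\sim_{\mathbb{Q}}0$ and $D_2\sim_{\mathbb{Q}}\frac{dl_0v}{2Iw}D_1$ really places $\Lambda_{\mathrm{CM},\infty}$ in the span of a base-ample class. The subtlety is that $\overline{\xi}$ is not finite for $u>0$ (as Example \ref{ex--cm} shows), so $\Lambda_{\mathrm{CM},t}$ is genuinely not a pullback from the base for finite $t$; only the limit descends. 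Thus the argument must track two distinct sources of positivity simultaneously and verify their compatibility on the normalization $(M^{W,I}_{d,v,u,w,p_a,V_1})^\nu$, where one can safely invoke the Nakai--Moishezon-type ampleness interpolation for projective morphisms to a quasi-projective base.
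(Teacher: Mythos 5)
Your proposal follows the paper's own proof of the $u>0$ case essentially step for step: quasi-projectivity of $(M^{W,I}_{d,v,u,w,p_a,V_1})^{\mathrm{sn}}$ from properness of $\xi$ plus $\overline{\xi}$-ampleness of $\Lambda_{\mathrm{CM},\mu}$ (Theorem \ref{thm--rel--ample}) over the quasi-projective base supplied by Proposition \ref{prop-q-proj--N}; identification of $\Lambda_{\mathrm{CM},\infty}$ with a positive multiple of $(\overline{\xi}^\nu)^*(\overline{\alpha}^\nu)^*\Lambda^{\mathrm{qmaps}}_{\mathrm{CM}}$ via Proposition \ref{prop--approx--CM}, with ampleness of $(\overline{\alpha}^\nu)^*\Lambda^{\mathrm{qmaps}}_{\mathrm{CM}}$ coming from quasi-finiteness and Theorem \ref{thm--qm--cm--positive}; and ampleness of $\Lambda_{\mathrm{CM},t}$ for $t\gg0$ by writing a rescaling of it as $q'^2L_2+q'L_1+L_0$, where $L_2$ and $L_1$ are positive multiples of the pullback of an ample class (this is exactly what $D_2\sim_{\mathbb{Q}}\frac{dl_0v}{2Iw}D_1$ buys, so the $O(1/t)$ remainder you worry about is not merely bounded but has a sign and a structure) and $L_0$ is $\overline{\xi}^\nu$-ample, then invoking the standard interpolation \cite[Tag 0892]{Stacks}, \cite[II, Exercise 7.5]{Ha}. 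The one point you gloss over, which the paper makes explicit, is descent: the decomposition \eqref{eq--decomp} lives on an atlas $S$ with $[S/G]\cong(\mathcal{M}^{W,I}_{d,v,u,w,p_a,V_1})^{\nu}$, and one needs the $G$-linearizations of the $D_i$ together with semisimplicity of $G$ (\cite[Proposition 1.4]{GIT}, \cite[Theorem 10.3]{alper}) to descend both the divisors and the $\mathbb{Q}$-linear equivalences among them to unique classes $L_i$ on the coarse space; this is routine but must be said, since the statement is about line bundles on the coarse moduli space, not on the atlas.

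There is, however, one genuine gap: the theorem is stated in a setup where $u\in\mathbb{Q}_{\ne0}$, and your argument only covers $u>0$. All three ingredients you quote --- Theorem \ref{thm--rel--ample}, Setup \ref{stup--112} together with Proposition \ref{prop--approx--CM}, and the displayed equivalence $\frac{1}{(dl_0uvI^{d-1})^2}D_2\sim_{\mathbb{Q}}\Lambda_{\mathrm{CM},\infty}$ --- are established under the hypothesis $u>0$, so they cannot be cited when $u<0$. The paper disposes of that case by a different and easier route: by Proposition \ref{prop--unnec--u<0} the morphism $\xi$ is finite when $u<0$, so $\overline{\beta}$ is quasi-finite by Proposition \ref{prop-q-proj--N}, and the assertions follow from Theorems \ref{thm--logfanoqmaps} and \ref{thm--previous-hh} by the argument of \cite{HH2}. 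You should either add this case along these lines or explicitly restrict your claim to $u>0$.
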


\begin{proof}
First, we deal with the case where $u>0$.
Since $(M^{W,I}_{d,v,u,w,p_a,V_1})^{\mathrm{sn}}$ is projective over $(N^{W,I}_{d,v,u,w,p_a,V_1})^{\mathrm{sn}}$ by Theorem \ref{thm--rel--ample} and $(N^{W,I}_{d,v,u,w,p_a,V_1})^{\mathrm{sn}}$ is quasi-projective by Proposition \ref{prop-q-proj--N}, we have the first assertion.
For the second assertion, we note that $\Lambda_{\mathrm{CM},t}$ is $\xi^\nu$-ample by Theorem \ref{thm--rel--ample} for any sufficiently large $t>0$.
Let $\overline{\beta}^\nu\colon (M^{W,I}_{d,v,u,w,p_a,V_1})^{\nu}\to M^{\mathrm{Kps,qm}}_{k(u-2p_a+2),\frac{1}{k},u,\iota}$ be the canonical morphism induced by $\beta$.
Let $\bar{\xi}^\nu\colon (M^{W,I}_{d,v,u,w,p_a,V_1})^{\nu}\to(N^{W,I}_{d,v,u,w,p_a,V_1})^{\nu}$ be the morphism induced by $\xi$.
Furthermore, we set the morphism $\overline{\alpha}^\nu\colon (N^{W,I}_{d,v,u,w,p_a,V_1})^{\nu}\to M^{\mathrm{Kps,qm}}_{k(u-2p_a+2),\frac{1}{k},u,\iota}$ induced by $\alpha$.
Note that the normalization morphisms of algebraic spaces of finite type over a field are finite morphisms by \cite[Tag 07U4]{Stacks} and the fact that schemes of finite type over a field are Nagata (cf.~\cite[Section 12]{Mat}).
By Theorem \ref{thm--rel--ample}, we see that $\Lambda_{\mathrm{CM},t}$ is $\bar{\xi}^\nu$-ample for any sufficiently large $t\in\mathbb{Q}_{>0}$.

On the other hand, by the proof of \cite[Theorem 5.1]{HH}, there exist a connected semisimple linear algebraic group $G$ and a quasi-projective normal scheme $S$ such that $G$ acts on $S$ and $[S/G]\cong (\mathcal{M}^{W,I}_{d,v,u,w,p_a,V_1})^{\nu}$.
Let $f\colon (\mathcal{X},\mathcal{A})\to \mathcal{C}$ be the family of uniformly adiabatically K-stable polarized klt--trivial fibrations over curves over a normal quasi-projective variety $S$ corresponding to the composition of the natural morphism $S\to \mathcal{M}^{W,I}_{d,v,u,w,p_a,V_1}$ and $\mathcal{M}^{W,I}_{d,v,u,w,p_a,V_1}\to \mathcal{M}_{d,I^{d-1}\cdot v,u,I^d\cdot w}$.
Note that $\mathcal{A}$ is a line bundle in this case.
As Setup \ref{stup--112}, there exist $\mathbb{Q}$-Cartier $\mathbb{Q}$-divisors $D_0,D_1$ and $D_2$ on $S$ such that $$\lambda_{\mathrm{CM},\pi_{\mathcal{X}},(\mathcal{X},\mathcal{A}+q'f^*\mathcal{L})}^{\otimes (\mathcal{A}_t^d+dq'\mathcal{A}_t^{d-1}\cdot \mathcal{L}_t)^2}=q'^2D_2+q'D_1+D_0,$$ where $t\in S$ is an arbitrary point for any sufficiently large $q'\in\mathbb{Q}_{>0}$.
Note that $D_0$, $D_1$ and $D_2$ admit $G$-linearlizations in a natural way as $\mathbb{Q}$-line bundles.
Since $G$ is semisimple, we have a $\mathbb{Q}$-linear equivalence $D_2\sim_{\mathbb{Q}}\frac{d l_0v}{2Iw}D_1$ that preserves the $G$-linearizations by Proposition \ref{prop--approx--CM}.
Therefore, there exist unique $\mathbb{Q}$-line bundles $L_0$, $L_1$ and $L_2$ on $(M^{W,I}_{d,v,u,w,p_a,V_1})^{\nu}$ such that (cf.~\cite[Theorem 10.3]{alper})
\[
\Lambda_{\mathrm{CM},q'l_0}^{\otimes (\mathcal{A}_t^d+dq'\mathcal{A}_t^{d-1}\cdot \mathcal{L}_t)^2}=q'^2L_2+q'L_1+L_0,
\]
and $L_2\sim_{\mathbb{Q}}\frac{d l_0v}{2Iw}L_1$.
Then, $L_2,L_1,\Lambda_{\mathrm{CM},\infty}|_{(M^{W,I}_{d,v,u,w,p_a,V_1})^\nu}\in\mathbb{Q}_{>0}\cdot(\overline{\beta}^\nu)^*\Lambda^{\mathrm{qmaps}}_{\mathrm{CM}}$ as $\mathbb{Q}$-linear equivalence classes by Proposition \ref{prop--approx--CM} and what we have seen just before this theorem, and $(\overline{\alpha}^\nu)^*\Lambda^{\mathrm{qmaps}}_{\mathrm{CM}}$ is ample by the quasi-finiteness.
By Theorem \ref{thm--rel--ample}, we see that $L_0$ is $\xi^\nu$-ample because $\Lambda_{\mathrm{CM},t}$ is $\bar{\xi}^\nu$-ample for any sufficiently large $t\in\mathbb{Q}_{>0}$.
By \cite[Tag 0892]{Stacks} and \cite[II, Exercise 7.5]{Ha}, we obtain the proof in the case where $u>0$.

Finally, we deal with the case where $u<0$.
By Propositions \ref{prop--unnec--u<0} and \ref{prop-q-proj--N}, we see that $\overline{\beta}$ is quasi-finite.
By this fact and Theorems \ref{thm--logfanoqmaps} and \ref{thm--previous-hh}, the assertions in the case where $u<0$ follow from the same argument as \cite[Theorem 4.8 and Corollary 4.13]{HH2}.
We complete the proof.
\end{proof}


\end{document}